\documentclass[reqno]{amsart}
\usepackage[margin = 1.1in]{geometry}
\usepackage{amsmath, amssymb, amsthm, fancyhdr, verbatim, graphicx}
\usepackage{enumerate}
\usepackage[cmtip, all, 2cell]{xy}
\UseAllTwocells
\usepackage[usenames,dvipsnames]{xcolor}
\usepackage{mathrsfs}
\usepackage{tikz-cd}
\usetikzlibrary{decorations.pathmorphing,decorations.pathreplacing}
\usepackage{framed, hyperref}
\usepackage[titletoc]{appendix}
\usepackage{bbm}
\usepackage{lipsum}
\usepackage{adjustbox}
\usepackage{array}

\usepackage{amssymb}

\usepackage{stmaryrd}
\newcommand{\shear}{\mathbin{\mkern-6mu\fatslash}}

\numberwithin{equation}{subsection}

\DeclareFontFamily{U}{matha}{\hyphenchar\font45}
\DeclareFontShape{U}{matha}{m}{n}{
      <5> <6> <7> <8> <9> <10> gen * matha
      <10.95> matha10 <12> <14.4> <17.28> <20.74> <24.88> matha12
      }{}
\DeclareSymbolFont{matha}{U}{matha}{m}{n}
\DeclareFontFamily{U}{mathx}{\hyphenchar\font45}
\DeclareFontShape{U}{mathx}{m}{n}{
      <5> <6> <7> <8> <9> <10>
      <10.95> <12> <14.4> <17.28> <20.74> <24.88>
      mathx10
      }{}
\DeclareSymbolFont{mathx}{U}{mathx}{m}{n}

\DeclareMathSymbol{\obot}{2}{matha}{"6B}

\RequirePackage{xspace}

\newcommand{\sG}{\ensuremath{\mathscr{G}}\xspace}

\newcommand{\rH}{\ensuremath{\mathrm{H}}\xspace}

\usepackage{color}

\newcommand{\F}{\mathbf{F}}

\newcommand{\G}{\mathbf{G}}

\newcommand{\wt}[1]{\widetilde{#1}}

\newcommand{\Q}{\mathbf{Q}}
\newcommand{\Z}{\mathbf{Z}}

\newcommand{\cl}{\overline}

\newcommand{\ul}[1]{\underline{#1}}
\newcommand{\ol}[1]{\overline{#1}}
\newcommand{\wh}[1]{\widehat{#1}}

\newcommand{\Cal}[1]{\mathcal{#1}}

\newcommand{\mbf}[1]{\mathbf{#1}}

\newcommand{\co}{\colon}
\newcommand{\mrm}[1]{\mathrm{#1}}

\newcommand{\bbm}[1]{\mathbbm{#1}}

\newcommand{\DD}{\mathbf{D}}

\newcommand{\TT}{\mathbb{T}}
\newcommand{\inj}{\hookrightarrow}
\newcommand{\surj}{\twoheadrightarrow}

\newcommand{\tw}[1]{\sm{\langle #1 \rangle}}

\DeclareMathOperator{\Tot}{Tot}
\DeclareMathOperator{\Perf}{Perf}

\DeclareMathOperator{\GL}{GL}

\DeclareMathOperator{\Frob}{Frob}

\DeclareMathOperator{\Tr}{Tr}

\DeclareMathOperator{\Hom}{Hom}

\DeclareMathOperator{\ext}{ext}
\newcommand{\cHom}{\Cal{H}om}
\newcommand{\cExt}{\Cal{E}xt}

\DeclareMathOperator{\rank}{rank}

\DeclareMathOperator{\Aut}{Aut}

\DeclareMathOperator{\Nm}{Nm}
\DeclareMathOperator{\Spec}{Spec\,}

\DeclareMathOperator{\End}{End}
\DeclareMathOperator{\Isom}{Isom}

\DeclareMathOperator{\Bun}{Bun}
\DeclareMathOperator{\Ext}{Ext}
\DeclareMathOperator{\Pic}{Pic}

\DeclareMathOperator{\Id}{Id}

\DeclareMathOperator{\QCoh}{QCoh}

\DeclareMathOperator{\Sym}{Sym}

\DeclareMathOperator{\Proj}{Proj\,}

\DeclareMathOperator{\pt}{pt}

\DeclareMathOperator{\Fix}{Fix}

\DeclareMathOperator{\Sht}{Sht}
\DeclareMathOperator{\length}{length}

\DeclareMathOperator{\Sat}{Sat}

\DeclareMathOperator{\pr}{pr}

\DeclareMathOperator{\Hk}{Hk}

\DeclareMathOperator{\CH}{CH}

\DeclareMathOperator{\Map}{Map}

\DeclareMathOperator{\AS}{AS}

\DeclareMathOperator{\RHom}{RHom}
\DeclareMathOperator{\FT}{FT}

\DeclareMathOperator{\cc}{\mathfrak{c}}

\newcommand{\sm}[1]{{\scriptstyle  #1}}

\newcommand{\limit}{\varprojlim}

\DeclareMathOperator{\can}{can}

\DeclareMathOperator{\ev}{ev}

\renewcommand{\cl}{\mathrm{cl}}

\newcommand{\bt}{\boxtimes}

\newcommand{\incl}{\hookrightarrow}

\newcommand{\Qsh}[1]{\ol{\Q}_{#1}}

\newcommand{\bu}{\bullet}

\newcommand\sss{\subsubsection}

\newcommand\ot{\otimes}

\newcommand{\mbeta}{\beta}
\newcommand\g\gamma

\newcommand{\s}{\sigma}

\newcommand{\y}{\eta}

\newcommand\sh{\sharp}

\newcommand\frL{\mathfrak{L}}

\newcommand\frp{\mathfrak{p}}
\newcommand\frq{\mathfrak{q}}

\newcommand\Corr{\mathrm{Corr}}
\newcommand\CoCorr{\mathrm{CoCorr}}
\newcommand\univ{\mathrm{univ}}

\newcommand{\Dmot}[1]{\cD_{\mrm{mot}}({#1};\ol{\Q})}
\newcommand{\Dmotg}[1]{\cD_{\mrm{mot},\mrm{gm}}({#1};\ol{\Q})}

\DeclareMathOperator{\red}{red}

\newcommand\nc{\newcommand}
\nc\renc{\renewcommand}

\nc{\A}{\bA}
\nc{\V}{\Tot}
\nc{\Ex}{\mrm{Ex}}
\nc{\gys}{\mrm{gys}}
\nc{\fsupp}{\mrm{fsupp}}
\nc{\un}{\mbf{1}}
\nc{\vb}[1]{\langle{#1}\rangle}
\nc{\unit}{\mrm{unit}}
\nc{\counit}{\mrm{counit}}
\nc{\DVect}{\on{DVect}}
\nc{\MGL}{\mrm{MGL}}
\nc{\MGLmod}{\on{\mbf{D}_\MGL}}
\nc{\Nis}{\mrm{Nis}}

\nc{\Gm}{{\bG_{m}}}
\nc{\act}{\mrm{act}}
\nc{\Chom}{\on{C}_\bullet}
\nc{\oH}{{\mrm{H}}}

\DeclareMathOperator{\add}{add}

\usepackage{xparse}

\usepackage{xspace}

\nc{\ssec}{\subsection}
\nc{\sssec}{\subsubsection}
\nc{\on}{\operatorname}
\nc{\term}[1]{#1\xspace}

\usepackage{tikz}
\usetikzlibrary{matrix}
\usepackage{tikz-cd}

\tikzset{
  commutative diagrams/.cd,
  arrow style=tikz,
  diagrams={>=latex}}
\makeatletter
\tikzset{
  column sep/.code=\def\pgfmatrixcolumnsep{\pgf@matrix@xscale*(#1)},
  row sep/.code   =\def\pgfmatrixrowsep{\pgf@matrix@yscale*(#1)},
  matrix xscale/.code=%
    \pgfmathsetmacro\pgf@matrix@xscale{\pgf@matrix@xscale*(#1)},
  matrix yscale/.code=%
    \pgfmathsetmacro\pgf@matrix@yscale{\pgf@matrix@yscale*(#1)},
  matrix scale/.style={/tikz/matrix xscale={#1},/tikz/matrix yscale={#1}}}
\def\pgf@matrix@xscale{1}
\def\pgf@matrix@yscale{1}
\makeatother

\nc{\cA}{\ensuremath{\mathcal{A}}\xspace}
\nc{\cB}{\ensuremath{\mathcal{B}}\xspace}
\nc{\cC}{\ensuremath{\mathcal{C}}\xspace}
\nc{\cD}{\ensuremath{\mathcal{D}}\xspace}
\nc{\cE}{\ensuremath{\mathcal{E}}\xspace}
\nc{\cF}{\ensuremath{\mathcal{F}}\xspace}
\nc{\cG}{\ensuremath{\mathcal{G}}\xspace}
\nc{\cH}{\ensuremath{\mathcal{H}}\xspace}
\nc{\cI}{\ensuremath{\mathcal{I}}\xspace}
\nc{\cJ}{\ensuremath{\mathcal{J}}\xspace}
\nc{\cK}{\ensuremath{\mathcal{K}}\xspace}
\nc{\cL}{\ensuremath{\mathcal{L}}\xspace}
\nc{\cM}{\ensuremath{\mathcal{M}}\xspace}
\nc{\cN}{\ensuremath{\mathcal{N}}\xspace}
\nc{\cO}{\ensuremath{\mathcal{O}}\xspace}
\nc{\cP}{\ensuremath{\mathcal{P}}\xspace}
\nc{\cQ}{\ensuremath{\mathcal{Q}}\xspace}
\nc{\cR}{\ensuremath{\mathcal{R}}\xspace}
\nc{\cS}{\ensuremath{\mathcal{S}}\xspace}
\nc{\cT}{\ensuremath{\mathcal{T}}\xspace}
\nc{\cU}{\ensuremath{\mathcal{U}}\xspace}
\nc{\cV}{\ensuremath{\mathcal{V}}\xspace}
\nc{\cW}{\ensuremath{\mathcal{W}}\xspace}
\nc{\cX}{\ensuremath{\mathcal{X}}\xspace}
\nc{\cY}{\ensuremath{\mathcal{Y}}\xspace}
\nc{\cZ}{\ensuremath{\mathcal{Z}}\xspace}

\nc{\bA}{\ensuremath{\mathbf{A}}\xspace}
\nc{\bB}{\ensuremath{\mathbf{B}}\xspace}
\nc{\bC}{\ensuremath{\mathbf{C}}\xspace}
\nc{\bD}{\ensuremath{\mathbf{D}}\xspace}
\nc{\bE}{\ensuremath{\mathbf{E}}\xspace}
\nc{\bF}{\ensuremath{\mathbf{F}}\xspace}
\nc{\bG}{\ensuremath{\mathbf{G}}\xspace}
\nc{\bH}{\ensuremath{\mathbf{H}}\xspace}
\nc{\bI}{\ensuremath{\mathbf{I}}\xspace}
\nc{\bJ}{\ensuremath{\mathbf{J}}\xspace}
\nc{\bK}{\ensuremath{\mathbf{K}}\xspace}
\nc{\bL}{\ensuremath{\mathbf{L}}\xspace}
\nc{\bM}{\ensuremath{\mathbf{M}}\xspace}
\nc{\bN}{\ensuremath{\mathbf{N}}\xspace}
\nc{\bO}{\ensuremath{\mathbf{O}}\xspace}
\nc{\bP}{\ensuremath{\mathbf{P}}\xspace}
\nc{\bQ}{\ensuremath{\mathbf{Q}}\xspace}
\nc{\bR}{\ensuremath{\mathbf{R}}\xspace}
\nc{\bS}{\ensuremath{\mathbf{S}}\xspace}
\nc{\bT}{\ensuremath{\mathbf{T}}\xspace}
\nc{\bU}{\ensuremath{\mathbf{U}}\xspace}
\nc{\bV}{\ensuremath{\mathbf{V}}\xspace}
\nc{\bW}{\ensuremath{\mathbf{W}}\xspace}
\nc{\bX}{\ensuremath{\mathbf{X}}\xspace}
\nc{\bY}{\ensuremath{\mathbf{Y}}\xspace}
\nc{\bZ}{\ensuremath{\mathbf{Z}}\xspace}

\nc{\bbA}{\ensuremath{\mathbb{A}}\xspace}
\nc{\bbB}{\ensuremath{\mathbb{B}}\xspace}
\nc{\bbC}{\ensuremath{\mathbb{C}}\xspace}
\nc{\bbD}{\ensuremath{\mathbb{D}}\xspace}
\nc{\bbE}{\ensuremath{\mathbb{E}}\xspace}
\nc{\bbF}{\ensuremath{\mathbb{F}}\xspace}
\nc{\bbG}{\ensuremath{\mathbb{G}}\xspace}
\nc{\bbH}{\ensuremath{\mathbb{H}}\xspace}
\nc{\bbI}{\ensuremath{\mathbb{I}}\xspace}
\nc{\bbJ}{\ensuremath{\mathbb{J}}\xspace}
\nc{\bbK}{\ensuremath{\mathbb{K}}\xspace}
\nc{\bbL}{\ensuremath{\mathbb{L}}\xspace}
\nc{\bbM}{\ensuremath{\mathbb{M}}\xspace}
\nc{\bbN}{\ensuremath{\mathbb{N}}\xspace}
\nc{\bbO}{\ensuremath{\mathbb{O}}\xspace}
\nc{\bbP}{\ensuremath{\mathbb{P}}\xspace}
\nc{\bbQ}{\ensuremath{\mathbb{Q}}\xspace}
\nc{\bbR}{\ensuremath{\mathbb{R}}\xspace}
\nc{\bbS}{\ensuremath{\mathbb{S}}\xspace}
\nc{\bbT}{\ensuremath{\mathbb{T}}\xspace}
\nc{\bbU}{\ensuremath{\mathbb{U}}\xspace}
\nc{\bbV}{\ensuremath{\mathbb{V}}\xspace}
\nc{\bbW}{\ensuremath{\mathbb{W}}\xspace}
\nc{\bbX}{\ensuremath{\mathbb{X}}\xspace}
\nc{\bbY}{\ensuremath{\mathbb{Y}}\xspace}
\nc{\bbZ}{\ensuremath{\mathbb{Z}}\xspace}

\newtheorem{thm}{Theorem}[subsection]
\newtheorem{lemma}[thm]{Lemma}
\newtheorem{prop}[thm]{Proposition}
\newtheorem{cor}[thm]{Corollary}

\newtheorem{conj}[thm]{Conjecture}
\newtheorem{defn-prop}[thm]{Definition-Proposition}

\theoremstyle{remark}
\newtheorem{remark}[thm]{Remark} 
\newtheorem{defn}[thm]{Definition}
\newtheorem{constr}[thm]{Construction}
\newtheorem{example}[thm]{Example}

\makeatletter
\def\th@remark{%
  \thm@headfont{\bfseries}%
  \normalfont % body font
  \thm@preskip \thm@preskip 
  \thm@postskip\thm@preskip
}
\def\imod#1{\allowbreak\mkern5mu({\operator@font mod}\,\,#1)}
\makeatother

\title[Proof of the Modularity Conjecture for Higher Theta Series]{Modularity of Higher Theta Series III: \\
Proof of the Modularity Conjecture}

\author{Tony Feng}
\address{University of California Berkeley, Department of Mathematics, Berkeley, CA 94720, USA}
\email{fengt@berkeley.edu}

\author{Zhiwei Yun}
\address{Massachusetts Institute of Technology, Department of Mathematics, 77 Massachusetts Avenue, Cambridge, MA 02139, USA}
\email{zyun@mit.edu}

\author{Wei Zhang}
\address{Massachusetts Institute of Technology, Department of Mathematics, 77 Massachusetts Avenue, Cambridge, MA 02139, USA}
\email{weizhang@mit.edu}

\begin{document}

\begin{abstract}
We prove the Modularity Conjecture for higher theta series on moduli stacks of Hermitian shtukas. For general linear shtukas, we establish a more refined phenomenon that we call \emph{supermodularity}. As a key input, we prove the Trace Conjecture for Hitchin stacks of low corank, realizing virtual fundamental classes of special cycles as categorical traces. 
\end{abstract}

\maketitle

\tableofcontents

\section{Introduction}

\subsection{The main theorem}
Let $k=\F_q$ be a finite field of characteristic $p>2$, let $X$ be a smooth, proper, geometrically connected curve over $k$, and let $\nu\co X'\to X$ be an \'etale double cover. In \cite{FYZ2}, the authors constructed \emph{higher theta series} attached to this data. More precisely, for every \emph{rank} $n \geq 1$, every \emph{corank} $m \in [1, n]$, and every $r = 0, 1, \ldots$, the higher theta series $\wt Z_m^{n,r}$ is a certain Fourier series with coefficients in $\CH_{r(n-m)}(\Sht^r_{U(n)})$, the Chow group of $r(n-m)$-dimensional cycle classes on the moduli stack $\Sht_{U(n)}^r$ of rank $n$ Hermitian shtukas with $r$ legs. 

When $r=0$, $\wt Z_m^{n,r}$ specializes to the classical theta functions. When $r=1$, the construction may be viewed as a function field analogue of arithmetic theta series in the sense of Kudla \cite{Kud04}. The adjective ``higher'' refers to the phenomena established in \cite{FYZ, FHM} that for general $r$, the higher theta series are connected to the $r$th derivatives of automorphic $L$-functions. While the $r=0$ and $r=1$ cases of this story have parallels over number fields, the existence of a story for higher derivatives is a distinctive feature of the function field setting. 

Classical theta series and arithmetic theta series enjoy more symmetries than are evident in their definition. This is formulated precisely as \emph{modularity} of the theta series: they descend to automorphic functions. For arithmetic theta series, the strongest formulations of modularity are still conjectural in most cases (and even the formulations may depend on conjectural ingredients). The main result of the earlier work \cite{FYZ2} was the formulation of an analogous Modularity Conjecture for higher theta series. A priori, $\wt Z_m^{n,r}$ is a function on pairs $(\cG, \cE)$ where $\cG$ is a rank $2m$ skew-Hermitian bundle on $X'$ and $\cE\subset\cG$ is a Lagrangian subbundle. The Modularity Conjecture predicts that $\wt Z_m^{n,r}$ is independent of the choice of $\cE \subset \cG$, and our main result is a proof of this conjecture. 

\begin{thm}\label{thm:main}
        The Modularity Conjecture \cite[Conjecture 4.15]{FYZ2} holds true.
\end{thm}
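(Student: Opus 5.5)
The strategy is to deduce independence of the Lagrangian from a Fourier-analytic statement, following the structure in \cite{FYZ2}, and to reduce that statement to a local-to-global identity of cycle classes. Recall that $\wt Z_m^{n,r}(\cG,\cE)$ is a sum over $a\in \Hom(\cE,\ldots)$ (equivalently over "Hermitian forms" $T$ valued in $\cE$-data) of virtual fundamental classes $[\cZ^r_\cE(a)]$ of special cycles, weighted by a character. Changing the Lagrangian $\cE\subset\cG$ is generated by elementary moves: the Siegel parabolic together with a Weyl element. Invariance under the parabolic part is formal from the definition. So the content is invariance under the "Fourier transform" move, i.e.\ a Poisson-summation identity relating the generating series for $\cE$ and for a transverse $\cE'$. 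Following \cite{FYZ2}, I would reformulate this as the statement that a certain sheaf-theoretic (or cycle-valued) function on the Hitchin-type stack $\cM$ of coranks $\le m$ is preserved by the arithmetic Fourier transform along the fibers of $\cM\to\cA$.

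The key steps are as follows. First, via the reduction in \cite{FYZ2}, express the coefficients $[\cZ^r_\cE(a)]$ as pushforwards of virtual fundamental classes on Hitchin stacks $\cM_\cE$ for the unitary/general linear group of corank $m$. Second, prove the Trace Conjecture for these low-corank Hitchin stacks. The goal there is to realize $[\cM_\cE]^{\mrm{vir}}$ as the categorical trace of Frobenius acting on an explicit (ind-)coherent object, or equivalently on a perverse sheaf $\cK_\cE$ on the classical $r=0$ stack. I would build $\cK_\cE$ from the derived fundamental class of the derived Hitchin stack and identify the trace using the $\ell$-adic sheaf–cycle dictionary. Third, show that $\cK_\cE$ and $\cK_{\cE'}$ are related by Fourier–Deligne transform. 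This is a geometric Weil representation statement: the Fourier transform of the pushforward of the constant sheaf on a vector bundle along a quadratic map is again such a pushforward on the dual. I would prove it by a stationary-phase computation fiberwise, plus a perversity/intermediate-extension argument to upgrade a generic identification to a global one. Fourth, apply the trace to both sides. Compatibility of categorical trace with Fourier transform, i.e.\ trace of Frobenius commuting with sheaf–function-type transforms, converts the sheaf identity into the required identity of Chow-valued Fourier series for every $r$.

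The main obstacle is the second step, the Trace Conjecture. The virtual classes are defined via derived intersection theory on shtuka spaces, while the Fourier symmetry lives naturally on sheaves over Hitchin bases. Matching them requires controlling the derived structure and showing that the relevant sheaves are perverse and determined by their restriction to the locus of maximal rank. I would first attempt this by a dimension estimate showing the Hitchin fibers are small when the corank is low, which gives the needed perversity and lets one argue by generic agreement. For general linear shtukas, the same mechanism should give the refined supermodularity by keeping track of the full parabolic.
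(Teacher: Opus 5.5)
There are two genuine gaps.

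\textbf{First gap: corank.} Your argument only ever reaches corank $m\le n/3$, but the conjecture is stated for all $m\in[1,n]$.

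The dimension estimate you want to feed into the Trace Conjecture breaks down past the threshold $n/3$; parabolic shtukas give counterexamples. In the paper this estimate is $\dim\cZ_\cE^{r,\circ}\le r(n-m)$ for the injective part of the special cycle, proved via saturation strata and framed Hitchin stacks. It is not a smallness statement about Hitchin fibers. So no refinement of your Step 2 will cover high corank.

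The missing idea is an embedding trick:
\begin{enumerate}
\item Since $m\le n=(3n)/3$, low-corank modularity holds in target rank $3n$.
\item Pull back along $\add_{2n,n}\co\Sht^0_{U(2n)}\times\Sht^r_{U(n)}\to\Sht^r_{U(3n)}$. Functoriality of derived special cycles in the unitary variable (Hermitian parameters add, and one takes $\chi_{3n}=\chi_{2n}\chi_n$) gives $\add_{2n,n}^*\wt Z_m^{3n,r}=\wt Z_m^{2n,0}\bt\wt Z_m^{n,r}$.
\item The factor $\wt Z_m^{2n,0}$ is independent of the Lagrangian by classical ($r=0$) modularity.
\item This factor can then be cancelled by evaluating at $\cF_0=\cH\oplus\sigma^*\cH^\vee$, where its value is a nonzero Gauss sum.
\end{enumerate}

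\textbf{Second gap: the Fourier and trace steps.} Your Steps 3--4 fail exactly where earlier work stalled at the generic fiber.

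For $r>0$, an identity of sheaves $\cK_\cE$ versus $\cK_{\cE'}$ is not enough. You need an identity of cohomological correspondences on Hecke stacks. There is also no general compatibility of the shtuka-twisted trace with Fourier transform, because the transform involves $!$-pushforward along non-proper maps.

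The natural comparison pushes both correspondences down to the $V$-row $\Hk_V^\flat$, built from the torsion sheaf $Q_1=\cE_1^*/\sigma^*\cE_2$, and then to $\Hk_S^r$. That last map is not pushable when the legs meet $\operatorname{supp}Q_1$, and this support moves with the transverse pair. Neither an arithmetic Fourier transform with auxiliary level structure nor a perversity/intermediate-extension upgrade from a generic identification gets past this. Both only control the generic fiber, and IC-type arguments say nothing about Chow-valued traces.

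Moreover, the conjecture is Chow-valued. An $\ell$-adic sheaf--cycle dictionary yields only cohomology classes, so you need the motivic version together with a motivic derived Fourier transform.

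The paper's fix has three parts:
\begin{enumerate}
\item It observes that $\cc_U\otimes f^*u_\beta$ and $\cc_{U^\perp}\otimes(f^\perp)^*u_{-\beta}$ are \emph{pulled back} along the pullable maps $f,f^\perp$ from Gaussian-twisted correspondences on the $V$-row. These are Fourier self-dual up to an invertible Gauss factor. That factor is rigid across the bad locus because it is a section of an invertible object trivialized by a finite \'etale cover.
\item It moves to $\Hk_S^r$ using only the base-change theorem and push--pull Fourier functoriality, never pushing forward from $V$.
\item It proves a trace formula for the non-proper pushforwards $\pi_!$ and $\pi^\perp_!$ by projective completion. The boundary is killed by a contracting-correspondence argument.
\end{enumerate}

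\textbf{Smaller issue: your sketch of the Trace Conjecture.} The statement to prove concerns the Gysin cohomological correspondence $\cc_\cM$ on the derived Hecke stack, not a perverse sheaf on the $r=0$ stack.

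On the injective locus, one twists $\cE$ to $\cE(-\nu^*D)$ to make the Hitchin stack smooth. The dimension bound then shows that the part of the special cycle where legs meet $\nu^{-1}(x_0)$ carries no top-degree Chow classes.

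Your sketch also ignores the non-injective loci. There one needs the stratification by saturated kernels $\cK\subset\cE$, with excess classes $c_{\rank(\cK)r}(\cV^r_\cK)$, to match the known formula for $[\cZ^r_\cE]$.
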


While this result is formulated for unitary groups and ``signature $(1,n-1)$'', the method of proof is very general and we expect it to work uniformly for all dual reductive pairs; see Zeff's thesis \cite{ZeffThesis} for initial steps toward such a general formalism.

\subsection{Prior work}\label{ssec:prior-work} We may contextualize the Modularity Conjecture of \cite{FYZ2} in terms of the analogous problem of \emph{modularity of arithmetic theta series}, laid out in Kudla's MSRI article \cite{Kud04}. It is useful to arrange the expectations of \cite{Kud04} into a hierarchy of
successively stronger modularity statements, summarized in Table \ref{tab:prior-work-hierarchy}.
\begin{center}
\refstepcounter{table}\label{tab:prior-work-hierarchy}
\small
\renewcommand{\arraystretch}{1.2}
\begin{adjustbox}{max width=\textwidth}
\begin{tabular}{|>{\raggedright\arraybackslash}p{0.22\textwidth}|>{\raggedright\arraybackslash}p{0.31\textwidth}|>{\raggedright\arraybackslash}p{0.37\textwidth}|}
\hline
\textbf{Tier} & \textbf{Number fields} & \textbf{Function fields} \\
\hline
Generic-fiber cohomology & Kudla--Millson \cite{KM90}. & Unitary groups \cite{FYZ3}. \\
\hline
Generic-fiber Chow groups & Orthogonal Shimura varieties: Borcherds in codimension one \cite{Bor99} (Yuan--Zhang--Zhang \cite{YZZ} over totally real fields), in general Zhang \cite{Zh09}, Bruinier--Westerholt-Raum \cite{BWR15}. & Unitary groups \cite{FK}. \\
\hline
Integral/arithmetic Chow groups & Unitary divisor case \cite{BHKRY1}; Orthogonal Shimura varieties \cite{HM22}. & Unitary groups (this paper). \\
\hline
\end{tabular}
\end{adjustbox}
\smallskip
\textsc{Table \thetable.} Three tiers of modularity problems for (arithmetic) theta series and their prior progress.
\end{center}
\begin{enumerate}
        \item \emph{Modularity in (Betti) cohomology of the generic fiber} was established for arithmetic theta series in the work of Kudla--Millson \cite{KM90}. The work \cite{FYZ3} established modularity of higher theta series in the $\ell$-adic cohomology of the generic fiber, for unitary groups. Work in progress of Xu--Zeff will generalize this to more groups.
        \item \emph{Modularity in the Chow group of the generic fiber} is subtler. For orthogonal Shimura varieties, this problem was raised by Kudla in \cite{Kudla1997a,Kud04}. Borcherds proved the divisor case over $\Q$ using Borcherds products \cite{Bor99} and Yuan--Zhang--Zhang \cite{YZZ} proved the divisor case over any totally real number field; Zhang proved the higher-codimension case conditionally on a convergence assertion \cite{Zh09}; and Bruinier--Westerholt-Raum supplied the convergence, completing the proof of Kudla's conjecture for orthogonal Shimura varieties \cite{BWR15,BR25}. Pollack has given an alternative proof of the automatic convergence theorem \cite{Pollack}; Raum has recently proved the analog in the unitary case over a general totally real number field in \cite{Raum}. There are also conditional extensions, assuming Beilinson--Bloch type conjectures, in work of Kudla and Maeda \cite{Kud21,Mae21}. In related compactified or integral settings, Chow-valued modularity has been proved for divisors on unitary Shimura varieties by Bruinier--Howard--Kudla--Rapoport--Yang \cite{BHKRY1} and for special zero-cycles on toroidal compactifications by Bruinier--Rosu--Zemel \cite{BRZ24}. In the function field setting, the corresponding generic-fiber Chow result for unitary groups was proved in \cite{FK}.
        \item \emph{Modularity in the arithmetic Chow group of an integral model} is the strongest form envisioned in the Kudla program. In the number field setting, it requires not only integral extensions of the special cycles, but also Green currents. Kudla--Rapoport constructed the basic unitary special cycles for nonsingular coefficients \cite{KRI,KRII}; see also the book of Kudla--Rapoport--Yang for low-dimensional cases \cite{KRY}. The divisor case for unitary Shimura varieties was proved in \cite{BHKRY1}. For integral models of orthogonal Shimura varieties, Howard--Madapusi proved Chow-valued modularity, though not Arakelov Chow-valued modularity \cite{HM22}. In the higher codimension case, a detailed construction of the generating series valued in the arithmetic Chow group of an integral model of unitary Shimura varieties can be found in \cite[\S3]{Chen1} and \cite[\S3]{Chen2}; the modularity of these generating series is still conjectural. For function fields, \cite{FYZ2} proposes the formulation of the Modularity Conjecture in this strength, and this paper provides the proof.
\end{enumerate}

Despite the resemblances of the statements, the proofs of modularity (outside the case $r=0$) are completely different in the number field and function field settings. We refer to the Introductions of \cite{FYZ3} and \cite{FK} for discussion and comparison. Our proof of Theorem \ref{thm:main} can be regarded as an improvement on the strategy of \cite{FYZ3, FK}, and will be discussed further in \S \ref{ssec:proof}.

\subsection{Supermodularity for general linear groups}
When the double cover is split, $X'=X\sqcup X$, the unitary groups become
general linear groups, and we prove a more refined phenomenon that we call \emph{supermodularity}. 

To formulate it, we first recall that \cite{FYZ2} already extended the Modularity Conjecture to general linear groups. In the unitary case, we may view the higher theta series $\wt Z_m^{n,r}$ as a function on $\Bun_{P_m}(\F_q)$, where $P_m$ is the Siegel parabolic subgroup of a rank $2m$ unitary group. In the general linear case, the Modularity Conjecture admits parabolic refinements as formulated in \cite[\S 4.9]{FYZ2}. Fix a decomposition $m=m_1+m_2$, and
let $P_{(m_1,m_2)}\subset\GL(m)$ be the corresponding standard parabolic (with the convention that $P_{(m,0)}=P_{(0,m)}=\GL(m)$). We can define a higher theta series $\wt Z_{m_1,m_2}^{\mu}$ (where $\mu$ is a sequence of signs $\pm 1$ recording the leg types; see \S\ref{ssec:main-protagonists}) on
$\Bun_{P_{(m_1,m_2)}}(\F_q)$, so that it depends a priori on a rank $m$ bundle
$\cG$, a rank $m_1$ subbundle $\cE_1\subset\cG$, and the resulting
extension class.  The Modularity Conjecture for general linear groups \cite[Conjecture 4.24]{FYZ2}
asserts that the value depends on the subbundle $\cE_1$ only through $\cG$,
i.e., that $\wt Z_{m_1,m_2}^{\mu}$ descends to $\Bun_m(\F_q)$.

\emph{Supermodularity} 
expresses an even stronger modularity property of the higher theta series: it is essentially independent \emph{even of the parabolic} $P_{(m_1,m_2)}$. Since in the extreme case $(m_1,m_2)=(m,0)$ the higher theta series visibly reduces to the special cycle class $[\cZ_{\cG,0}^{\mu}]$ attached to $\cG$ alone, supermodularity expresses all the $\wt Z_{m_1,m_2}^{\mu}$ directly in terms of $[\cZ_{\cG,0}^{\mu}]$. The precise result is formulated in Theorem \ref{thm: intro supermodularity}.

Supermodularity is a phenomenon special to general linear groups, and therefore has no precedent for Shimura varieties over number fields. We nonetheless expect it to have a manifestation over $p$-adic local fields.

\subsection{Applications} We list various applications (some potential, some already realized) of Theorem \ref{thm:main}, for which the earlier generic modularity results of \cite{FYZ3, FK} are not sufficient. 

\begin{itemize}
\item (\emph{Arithmetic theta lifting}) In Kudla's program, the modularity of (arithmetic) theta series is needed for (arithmetic) theta lifting. 
In \cite{FHM}, Theorem \ref{thm:main} is used to construct a higher theta lifting, and to prove an incarnation of a \emph{higher arithmetic inner product formula}. Specifically, \cite[Corollary 1.2.1]{FHM} is conditional on Theorem \ref{thm:main}. 

\item (\emph{Arithmetic intersection theory}) Modularity of arithmetic theta series was a key input to the results of \cite{SSTT22}, which established exceptional jumps of Picard ranks of K3 surfaces over number fields, using arithmetic intersection theory. Ruofan Jiang and Wenqing Wei are pursuing arithmetic intersection problems that rely on Theorem \ref{thm:main}. In order to control the intersection theory, it is crucial to have the full integral modularity.

\item (\emph{Local modularity}) Yet another direction of application is towards local problems on special cycles. For this, it is again necessary to have modularity that holds integrally (not just over the generic fiber). Zhang \cite{Zh21} applied modularity of arithmetic theta series to prove the Arithmetic Fundamental Lemma (AFL), and Luo \cite{luo2025kudlarapoportconjectureunramifiedmaximal} used it to prove the Kudla--Rapoport Conjecture at places of maximal parahoric level structure.  %Relatedly, the first author, together with He, Mihatsch, Li, and Zhang, is studying \emph{local modularity} in the sense of Li--Zhang \cite{LZ1, LZ2} conditional on an integral version of global modularity, as proved here. 
\end{itemize}

\subsection{On the proof}\label{ssec:proof}
At a high level, the proof is illustrated in Figure \ref{fig:cartoon}. The philosophy is to find the ``right'' proof of modularity for the classical theta series ($r=0$). Such a proof can be seen as an incarnation of the Poisson summation argument. It should then be promoted to the sheaf-theoretic level via the classical sheaf-function correspondence. We then apply an elaboration of that correspondence, called the \emph{sheaf-cycle correspondence} in \cite{FYZ3}, to extract modularity of higher theta series.

\begin{figure}[t]
\centering
\includegraphics[width=0.95\textwidth]{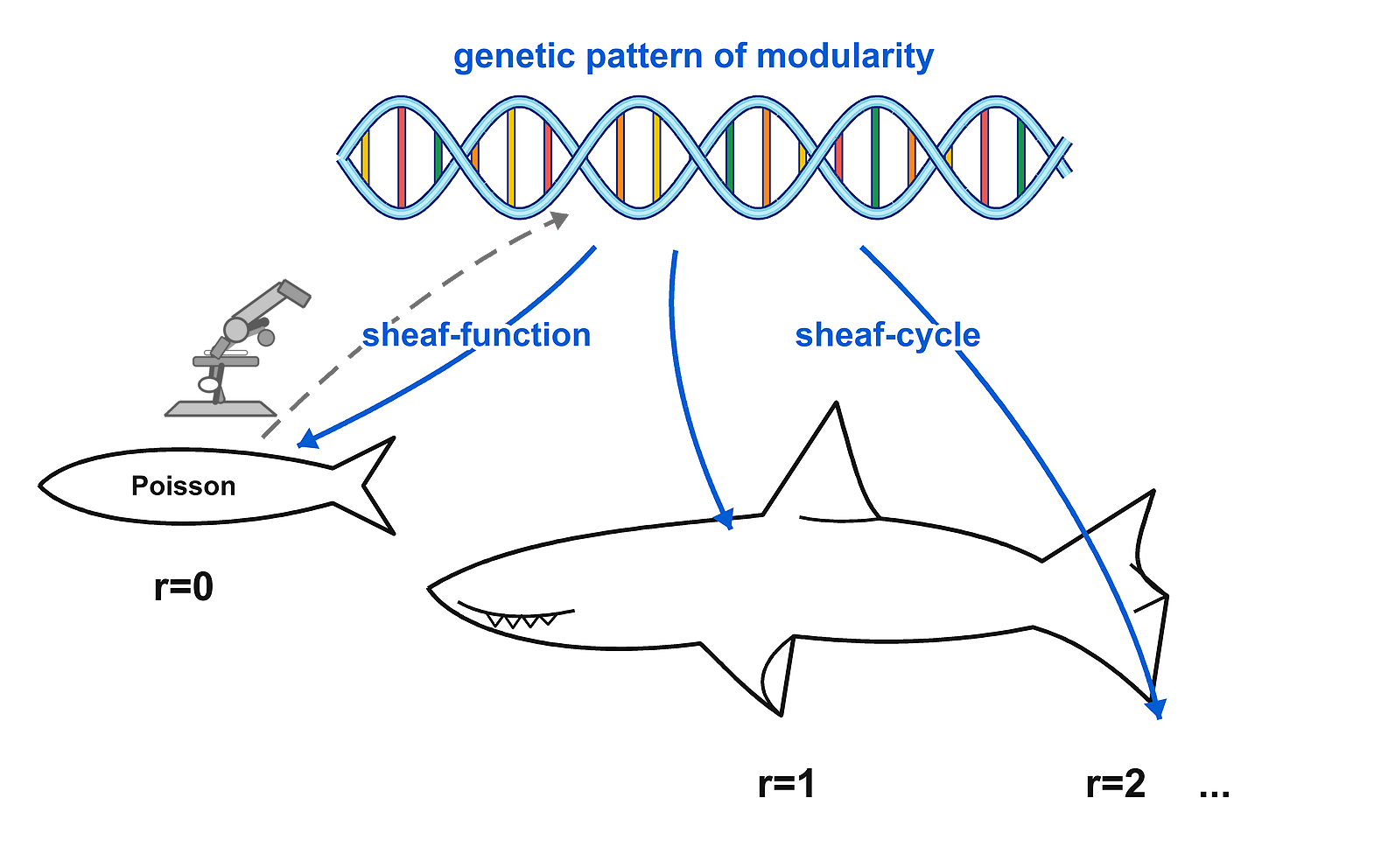}
\caption{Cartoon of the proof of the Modularity Conjecture. By studying the $r=0$ specimen (Poisson), we sequence the ``genetic pattern of modularity''. We promote this to a sheaf-theoretic level using (motivic) \emph{derived Fourier analysis}, and then extract the modularity of all higher theta series via the \emph{sheaf-cycle correspondence}.}\label{fig:cartoon}
\end{figure}

Carrying out this strategy requires much foundational technology developed in \cite{FYZ3} and
\cite{FK}, including the \emph{derived algebraic geometry} approach to virtual fundamental classes of special cycles, the (motivic) \emph{derived Fourier transform} to promote finite Fourier analysis to the sheaf-theoretic level, and the (motivic) sheaf-cycle correspondence to extract Chow cycles via categorical traces. These are described in more detail in the
Introductions of \cite{FYZ3, FK}. We focus on explaining the novelties of the present paper, which allow us to prove the full Modularity Conjecture.

\subsubsection{The Trace Conjecture} The \emph{Trace Conjecture} of \cite[Conjecture 9.4.2]{FH} is a fundamental formula that unites three different constructions of virtual fundamental classes for special cycles:
\begin{itemize}
\item the classical explicit approach present in \cite{FYZ, FYZ2} and all classical works in the Kudla program,
\item the derived algebraic geometry approach introduced in \cite{FYZ2} (with Shimura variety counterpart appearing in \cite{Mad22}), and 
\item the (categorical) trace approach introduced in \cite{FYZ3, FK}.
\end{itemize}
Each of these approaches possesses unique advantages, and all of them are exploited in an essential way here for the proof of the Modularity Conjecture. 

A key new input in this paper is a proof of the Trace Conjecture in the low-corank range $m\le n/3$ (Theorem \ref{thm:trace-conjecture-n/3}). We use this to prove the Modularity Conjecture in the same range, and then deduce modularity for all coranks $m\le n$ by an embedding trick. Both the proof of the Trace Conjecture in low corank and
the embedding trick are insensitive to the group-theoretic setup, and should
apply with little change to orthogonal and symplectic dual pairs once the
relevant statements are formulated.

The previous generic modularity results implicitly used that the Trace Conjecture is easy to prove after restricting away from a point of $X'$, by adding level structure to ``resolve'' the Hitchin stacks. In fact, at several junctures the earlier works \cite{FYZ3, FK} use this trick of adding level structure away from the legs to kill geometric difficulties. This trick is not available if one wants to prove the full Modularity Conjecture, where the legs are unrestricted. 

Roughly speaking, \cite{FYZ2} gave an elementary computation of the virtual fundamental classes of special cycles, expressing them as a linear combination of terms that are products of a ``core'' fundamental class and a Chern class (representing excess intersection). Part of the proof of the Trace Conjecture in the low corank range is a new dimension estimate of the ``core'' part (Theorem \ref{thm:special-cycle-dimension-bound}), showing that their dimensions do not exceed their ``expected dimensions'' when $m \leq n/3$. This principle was not anticipated in the analogous theory for Shimura varieties when the corank $m$ exceeds 1, because it concerns a part of the special cycles which have no analogous counterpart in the context of Shimura varieties.

\subsubsection{The trace formula for Hitchin stacks}
Another source of progress comes from an improved understanding of the sheaf-cycle correspondence in the geometric situations appearing in this problem. The sheaf-cycle correspondence can be viewed as a ``higher'' trace operation producing Chow-valued traces. However, functoriality of the formation of trace requires ``nice'' geometric properties. This is already visible for the usual sheaf-function correspondence, where, for example, the Grothendieck--Lefschetz trace formula requires properness assumptions in general, which are not satisfied by the Hitchin stacks that govern the special cycles. Yet in \S \ref{sec: trace formula} we establish a certain ``trace formula'' which does apply to these Hitchin stacks.

\subsubsection{Fourier duality} Let us also mention a major departure of the Fourier duality argument in this paper from earlier works. In \cite{FYZ3} and \cite{FK}, it was necessary to add level structure
along an uncontrolled finite subset of $X$. For example, this was necessary in order to define the \emph{arithmetic Fourier transform} in \cite{FYZ3, FK}. Adding such level structure is useful for ``taming'' the derived aspect of the geometry, but forces one to restrict to the generic locus. In order to avoid such a restriction, we develop a new Fourier duality approach that interfaces well with derived geometry, and in particular bypasses the arithmetic Fourier transform altogether. A more substantive description of how our approach improves on the one in \cite{FYZ3} is given in \S \ref{ssec:departure}, after the necessary notation has been introduced. 
  
\subsection{AI Methodology} The proof of Theorem \ref{thm:special-cycle-dimension-bound} was found with assistance from GPT 5.5 Pro. Specifically, GPT 5.5 Pro was prompted with the statement of the theorem and produced essentially the same line of argument documented here (which in turn closely follows the arguments in \cite[\S 9]{FYZ} and \cite[\S 8]{FYZ2}). A more detailed technical documentation of this contribution is given in Remark \ref{rem:AI-help}. No other results here were found with AI assistance. The paper was proofread and revised with assistance from various AI tools including Codex, Claude Code, Refine.ink, and Gemini via Google's Paper Assistant Tool (PAT). The figures and tables were produced by Codex.

\subsection{Acknowledgments} T.F. was supported by the NSF (grants DMS-2302520 and DMS-2441922), the Simons Foundation,
and the Alfred P. Sloan Foundation.  Z.Y.
was supported by the Simons Foundation. W.Z. was supported by the NSF
(grant DMS-2401548) and the Simons Foundation.

\section{Notation} Notation is as in \cite{FK} and \cite{FYZ3}.

\subsection{Main protagonists}\label{ssec:main-protagonists}
Let $k=\F_q$ be a finite field of characteristic $p>2$, let $X$ be a smooth, projective, geometrically connected curve over $k$ of genus $g$, and let $\nu\co X'\to X$ be an \'etale double cover, possibly split, with involution $\sigma$.

For a non-negative integer $n$, we denote by $\Bun_n$ the moduli stack of rank $n$ vector bundles on $X$. For a non-negative integer $r$ and $\mu = (\mu_1, \ldots, \mu_r)$ a sequence with $\mu_i \in \{\pm 1\}$ and $\sum \mu_i = 0$, we denote by $\Sht_n^\mu$ the moduli stack of rank $n$ shtukas on $X$ with simple modifications dictated by $\mu$, defined in \cite[\S 5.1]{YZ}. 

Fix a non-trivial additive character $\psi_0\co\F_p\to\ol{\Q}^{\times}$, and write
\[
\psi:=\psi_0\circ\Tr_{\F_q/\F_p}\co\F_q\to\ol{\Q}^{\times}.
\]

 \subsection{Derived Artin stacks}
   Unless noted otherwise, we always work in the category of derived Artin stacks. Hence when we say ``Cartesian square'' we mean what might be called ``derived Cartesian square'' (sometimes we keep the adjective ``derived'' for emphasis). 

    For a derived Artin stack $A$, we denote by $A_{\cl}$ its classical truncation.
    
    By definition, a map of derived Artin stacks is a \emph{closed embedding} or \emph{proper} if the induced map of classical truncations has this property. For example, the inclusion of the classical truncation $A_{\cl} \inj A$ is a closed embedding.

We say a map of derived Artin stacks is \emph{schematic} if it is representable in derived schemes.

  \subsection{Cotangent complexes}

    For a map $f \co A \to B$ of derived Artin stacks, we denote by $\bL_f :=  \bL_{A/B} \in \Perf(A)$ the relative cotangent complex.

    Let $f \co A \to B$ be a map of derived Artin stacks that is locally finitely presented on classical truncations.
    The map $f$ is \emph{étale} if the relative cotangent complex $\bL_f$ vanishes (i.e., is isomorphic to $0 \in \Perf(A)$).
    The map $f$ is \emph{smooth} (resp. \emph{quasi-smooth}) if the relative cotangent complex $\bL_f$ is perfect of tor-amplitude $[0, \infty)$ (resp. $[-1, \infty)$).
    We use cohomological indexing for these tor-amplitude intervals; thus vector bundles lie in degree $0$, positive degrees record ``stacky directions'', and negative directions record ``derived directions''.
    
    Note that unlike properness, these properties cannot in general be detected on classical truncations.
    Moreover, while a smooth map is also smooth on classical truncations, quasi-smoothness is typically destroyed by classical truncation. Informally speaking, the ubiquity of quasi-smoothness\footnote{This is sometimes called the ``Hidden smoothness philosophy'', and was anticipated by Deligne, Drinfeld, and Kontsevich.} (in contrast to its classical counterpart, the property of being a local complete intersection) underpins the significance of derived algebraic geometry in our work. 
    
    When $\bL_f$ is perfect, we write $d(f)$ for its virtual rank (or Euler characteristic), and call it the \emph{relative dimension} of $f$. This integer is locally constant on $A$; for disconnected sources, all Chow degrees, shifts, and Tate twists involving $d(f)$ are read componentwise.
    
  \subsection{Derived motives}
For a derived Artin stack $A$, we write $\Dmot{A}$ for the derived category of motives on $A$ with $\ol{\Q}$-coefficients in the sense of \cite[\S 3.1]{FK}, and $\Dmotg{A} \subset \Dmot{A}$ for the full subcategory of geometric motives in the sense of \cite[\S 3.3]{FK}. 

We denote the monoidal unit of $\Dmot{A}$ by $\Qsh{A}$.

We use cohomological indexing. For $\cK \in \Dmot{A}$ and $i \in \Z$, we write $\cK\sm{[i]}$ for the cohomological shift, $\cK\sm{(i)}$ for Tate twist, and $\cK\tw{i} := \cK\sm{[2i](i)} \in \Dmot{A}$ for the indicated shift and Tate twist. 

\subsection{Chow groups}\label{ssec:chow-groups-notation}
Let $A$ be a derived Artin stack. We define the \emph{Chow cohomology groups} of $A$ (with $\ol{\Q}$-coefficients) by
\[
\CH^i(A) := \rH^{2i}(A; \Qsh{A}\sm{(i)}) \cong \rH^0(A; \Qsh{A}\tw{i}), \quad \text{for } i \in \bZ.
\]

Now suppose that $A$ is locally of finite type over a field $\F$, and let $\pi \co A \rightarrow \Spec(\F)$ be the structural morphism. We define the \emph{Chow groups} of $A$ (with $\ol{\Q}$-coefficients) by
\[
\CH_{i}(A) := \rH^{-2i}(A; \pi^! \Qsh{\Spec(\F)}\sm{(-i)} )
\cong \rH^0(A; \pi^!\Qsh{\Spec(\F)}\tw{-i}), \quad \text{ for } i \in \bZ
\]
This definition agrees with the classical definition of Chow groups with $\ol{\Q}$-coefficients \emph{under assumptions} that $A$ is ``reasonable'': see the discussion of \cite[\S 3.5]{FK}. If $A$ is a derived Artin stack, then by the derived invariance of $\Dmot{A}$, the inclusion of the classical truncation $A_{\cl} \inj A$ induces isomorphisms $\CH_i(A_{\cl}) \cong \CH_{i}(A)$.

More generally, for a morphism $f \co A \rightarrow B$ of derived Artin stacks over a field $\F$ that is locally of finite type, we define the \emph{relative Chow groups of $f$} to be
\[
\CH_{i}(A/B) := \rH^{-2i}(A; f^! \Qsh{B}\sm{(-i)})
\cong \rH^0(A; f^!\Qsh{B}\tw{-i}) , \quad \text{for } i \in \bZ.
\]
For the identity morphism of $A$, these definitions give a canonical identification
\[
\CH_i(A/A) \cong \CH^{-i}(A).
\]

\noindent \textbf{Functorial properties.}
\begin{itemize}
\item The Chow groups are covariantly functorial with respect to proper morphisms.
That is, if $f \co A \to B$ is a proper morphism of derived Artin stacks, then we have pushforward maps
\begin{equation}
f_! \co \CH_i(A) \to \CH_i(B)
\end{equation}
and more generally $f_! \co \CH_i(A/C) \to \CH_i(B/C)$ if $f$ is defined over some $C$.

\item Let $f \co A \rightarrow B$ be a quasi-smooth map of derived Artin stacks, of relative dimension $d(f)$.
There are (virtual) Gysin pullback maps
\begin{equation}\label{eq: Gysin pullback}
f^* \co \CH_i(B) \to \CH_{i+d(f)}(A),
\end{equation}
and more generally $f^* : \CH_i(B/C) \to \CH_{i+d(f)}(A/C)$ if $B$ is defined over $C$. These are functorial and satisfy a base change formula with respect to proper pushforwards.
\end{itemize}
The maps \eqref{eq: Gysin pullback} are induced by a natural transformation
\begin{equation}\label{eq: gys}
[f] \co f^* \rightarrow f^! \tw{-d(f)}
\end{equation}
called the \emph{relative fundamental class} (or \emph{Gysin}  natural transformation), constructed in \cite[\S 3]{KhanI}. It satisfies various natural compatibilities detailed in \cite[\S 3.2]{KhanI} or \cite[\S 3.4]{FYZ3}. 

\begin{example}[Derived fundamental classes] In particular, one has a relative (virtual) fundamental class
\begin{equation}
[A/B] \in \CH_{d(f)}(A/B)
\end{equation}
defined as the Gysin pullback of the unit in $\CH_0(B/B) \cong \CH^0(B)$.
Equivalently, it is determined by the morphism
\begin{equation}\label{eq: Gysin}
\Qsh{A} \cong f^* \Qsh{B} \xrightarrow{\gys_f} f^! \Qsh{B} \tw{-d(f)}
\end{equation}
obtained by evaluating the Gysin transformation \eqref{eq: gys} on $\Qsh{B}$. When $B = \pt := \Spec \F$, we abbreviate $[A] := [A/B]$ and call it the (derived) \emph{fundamental class} of $A$. 
\end{example}

  \subsection{Derived vector bundles}\label{ssec: notate dvb}

    Let $A$ be a derived Artin stack.
    
    Given a perfect complex $\cE$ on $A$, we denote by $\V(\cE)$ the derived stack of sections of $\cE$, as in \cite[\S 6.1.1]{FYZ3}.
    We refer to \emph{$\Tot(\cE)$} as the \emph{derived vector bundle} associated with $\cE$. In terms of the functor of points, $\Tot(\cE)$ is the derived stack over $A$ sending an $A$-scheme $u : T \to A$ to the mapping space $\Map_{\QCoh(T)}(\cO_T, u^*\cE)$.

    Throughout we use calligraphic letters such as $\cE$ for perfect complexes, and Roman letters such as $E$ for the corresponding total spaces. 
    
    We denote by $\cE^*$ the dual perfect complex to $\cE$. When $\cE$ is a perfect complex on $X\times A$ (or on $X'\times A$, with $\omega_{X'}$ in place of $\omega_X$), we denote by $\cE^\vee:=\cE^*\otimes\omega_X$ the \emph{Serre dual} perfect complex, with $\omega_X$ implicitly pulled back from $X$. We will denote the dual derived vector bundle to $E=\Tot(\cE)$ by $\wh{E} = \Tot(\cE^*)$.

\part{The Trace Conjecture}

\section{Dimension bounds on special cycles in low corank}
In this section we prove a key dimension bound on the ``injective part'' of the special cycles defined in \cite{FYZ2}, in low corank. This will be a key input to the proof of the Trace Conjecture in low corank.

\subsection{Setup and statement}
Let $k=\mathbb F_q$ have odd characteristic, let $X$ be a smooth proper
geometrically connected curve over $k$, and let
$\nu:X'\to X$ be an \'etale quadratic cover with involution $\sigma$.
Throughout Part I, the cover $X'\to X$ is assumed connected;
the split case is treated separately in Theorem
\ref{thm:split-trace-conjecture-n/3} of Part
\ref{part: supermodularity}.
Fix integers $1\le m\le n$, an integer $r\ge0$, and a rank-$m$
vector bundle $\cE$ on $X'$.

% {\color{red}
% We have the usual definitions of the unitary Hecke stack
% $\Hk_{U(n)}^r$, the unitary shtuka stack $\Sht_{U(n)}^r$, and the
% special cycle $\cZ_{\cE}^r$.  Thus a point of
% $\cZ_{\cE}^r$ consists of a Hermitian shtuka
% \[
%  \cF_0 \dashrightarrow\cdots\dashrightarrow \cF_r
%       \cong \Frob^* \cF_0
% \]
% with legs $x'_1,\ldots,x'_r$, together with compatible maps
% \[
%         t_i:\cE\longrightarrow \cF_i,\qquad 0\le i\le r.
% \]
% The open substack
% $\cZ_{\cE}^{r,\circ} \subset \cZ_{\cE}^r $ is defined by requiring every $t_i$ to be
% injective on every geometric fiber of the test scheme. The main result of this section is the following theorem. 
% }

\subsubsection{Hermitian bundles} We recall the definitions needed in this section. For a vector bundle $\cF$ on $X'\times S$, write $\cF^\vee:=\cHom_{X'\times S}(\cF,\omega_{X'}\boxtimes\cO_S)$. A rank $n$ Hermitian, or unitary, bundle is a pair $(\cF,h)$ where $\cF$ is a rank $n$ vector bundle on $X'\times S$ and $h\co \cF\xrightarrow{\sim}\sigma^*\cF^\vee$ satisfies $\sigma^*h^\vee=h$; see \cite[Definition 6.1]{FYZ}. The stack of such bundles is denoted $\Bun_{U(n)}$.

\subsubsection{Hecke correspondences} The Hecke stack $\Hk_{U(n)}^r$ is the stack of $r$ successive unitary modifications from \cite[Definition 6.3]{FYZ}. Its $S$-points consist of legs $x_1',\ldots,x_r'\in X'(S)$, Hermitian bundles $(\cF_i,h_i)$ for $0\le i\le r$, and isomorphisms
\begin{equation}\label{eq:setup-Hecke-generic-isomorphisms}
f_i\co \cF_{i-1}|_{X'\times S-(\Gamma_{x_i'}\cup\Gamma_{\sigma x_i'})}
\xrightarrow{\sim}
\cF_i|_{X'\times S-(\Gamma_{x_i'}\cup\Gamma_{\sigma x_i'})}
\quad (1\le i\le r)
\end{equation}
compatible with the Hermitian structures. The modification $f_i$ is required to be lower of length $1$ at $x_i'$ and upper of length $1$ at $\sigma(x_i')$ in the terminology of \cite[Definition 6.5]{FYZ}: equivalently, there is an intermediate vector bundle $\cF_{i-1/2}^{\flat}$ with injections $\cF_{i-1/2}^{\flat}\hookrightarrow\cF_{i-1}$ and $\cF_{i-1/2}^{\flat}\hookrightarrow\cF_i$ whose cokernels are line bundles supported on the graphs of $x_i'$ and $\sigma(x_i')$, respectively.

\subsubsection{Hermitian shtukas} The stack of rank $n$ Hermitian shtukas is defined as in \cite[Definition 6.6]{FYZ} by the Cartesian square
\begin{equation}\label{eq:setup-unitary-shtuka-square}
\begin{tikzcd}[ampersand replacement=\&]
\Sht_{U(n)}^r \ar[r] \ar[d] \&
\Hk_{U(n)}^r \ar[d, "{(\pr_0,\pr_r)}"] \\
\Bun_{U(n)} \ar[r, "{(\Id,\Frob)}"'] \&
\Bun_{U(n)}\times \Bun_{U(n)} .
\end{tikzcd}
\end{equation}
We abbreviate an $S$-point of $\Sht_{U(n)}^r$ as 
\begin{equation}\label{eq:setup-shtuka-chain}
(\cF_0,h_0)\dashrightarrow(\cF_1,h_1)\dashrightarrow\cdots\dashrightarrow
(\cF_r,h_r)\xrightarrow{\varphi}(\Frob_S^*\cF_0,\Frob_S^*h_0).
\end{equation}

\subsubsection{Special cycles} The special cycle $\cZ_{\cE}^r$ is defined in \cite[Definition 7.1]{FYZ}. Its $S$-points are points of $\Sht_{U(n)}^r(S)$ together with maps
\begin{equation}\label{eq:setup-special-cycle-maps}
\begin{tikzcd}[ampersand replacement=\&]
\cE\boxtimes\cO_S \ar[d, "t_0"'] \ar[r, equals] \&
\cE\boxtimes\cO_S \ar[d, "t_1"] \ar[r, equals] \&
\cdots \ar[d] \ar[r, equals] \&
\cE\boxtimes\cO_S \ar[d, "t_r"] \ar[r, "\sim"] \&
\Frob_S^*(\cE\boxtimes\cO_S) \ar[d, "\Frob_S^*t_0"] \\
\cF_0 \ar[r, dashed, "f_1"'] \&
\cF_1 \ar[r, dashed] \&
\cdots \ar[r, dashed, "f_r"'] \&
\cF_r \ar[r, "\varphi"'] \&
\Frob_S^*\cF_0 .
\end{tikzcd}
\end{equation}
such that $\varphi$ identifies $t_r$ with $\Frob_S^*t_0$, and for each $1\le i\le r$ the maps $t_{i-1}$ and $t_i$ agree under the generic isomorphism $f_i$ in \eqref{eq:setup-Hecke-generic-isomorphisms}. For this compatibility, the composite maps
\begin{equation}\label{eq:setup-Hermitian-coefficient}
a_t\co
\cE\boxtimes\cO_S
\xrightarrow{t_i}
\cF_i
\xrightarrow{h_i}
\sigma^*\cF_i^\vee
\xrightarrow{\sigma^*t_i^\vee}
\sigma^*\cE^\vee\boxtimes\cO_S ,
\end{equation}
are independent of $i$, and invariant under $\Frob_S$. This implies that they must lie in the set $\cA_{\cE}(k)$ of Hermitian maps $\cE\to\sigma^*\cE^\vee$. This induces an open-closed decomposition
\[
\cZ_\cE^r=\coprod_{a\in\cA_{\cE}(k)}\cZ_\cE^r(a).
\]

Finally, $\cZ_{\cE}^{r,\circ}\subset\cZ_{\cE}^r$ denotes the open substack where the maps in \eqref{eq:setup-special-cycle-maps} are injective after pullback to every geometric fiber of the test scheme. In the notation of \cite[Definition 7.4]{FYZ}, it restricts on the coefficient-$a$ summand to $\cZ_\cE^r(a)^\circ$. The virtual class $[\cZ_{\cE}^{r,\circ}]$ is defined in \cite[Definition 4.4]{FYZ2}, with an equivalent quotient formulation in \cite[Definition 4.5]{FYZ2}, but the present section only uses the underlying geometric locus $\cZ_{\cE}^{r,\circ}$.

The main result of this section is the following theorem.

\begin{thm}\label{thm:special-cycle-dimension-bound}
If $m \le n/3 $, then for all $\cE$ and $r$ we have 
\begin{equation}\label{eq:main-bound}
        \dim \cZ_{\cE}^{r, \circ} \leq (n-m)r .
\end{equation}
\end{thm}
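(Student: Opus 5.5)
Following \cite[\S 9]{FYZ} and \cite[\S 8]{FYZ2}, the plan is to stratify $\cZ_{\cE}^{r,\circ}$ by the combinatorics of how the injections $t_i\co \cE\hookrightarrow \cF_i$ interact with the modifications, and to bound the dimension of each stratum separately.

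\textbf{Step 1: saturation and a quotient stratification.} Since each $t_i$ is fiberwise injective, let $\cE_i'\subset \cF_i$ be the saturation of $t_i(\cE)$. Then $\cE\subset\cE_i'$ is a modification with torsion cokernel of some length $d_i$. The compatibility of $t_{i-1},t_i$ under $f_i$ forces $\cE'_{i-1}$ and $\cE'_i$ to differ by a modification at $x_i'$ and $\sigma(x_i')$ of total length at most $1$ in each direction. The shtuka condition gives $\cE'_r\cong\Frob^*\cE'_0$ relative to $\cE$. Hence the divisors $D_i=\mathrm{coker}(\cE\to\cE'_i)$ form a ``shtuka-like'' chain of effective divisors on $X'$ of bounded degree $d$, which has finitely many possibilities up to the legs.

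We stratify by $d$ and by the set $J\subset\{1,\dots,r\}$ of indices at which the leg $x_i'$ or $\sigma x_i'$ actually modifies $\cE_i'$ (i.e.\ hits the saturated part). On a stratum, the legs indexed by $J$ are constrained to lie in the support of the $D_i$. The Frobenius-periodicity then forces those legs to lie in a $0$-dimensional set over a fixed point of the finite-type data. So the $J$-legs contribute dimension $0$ instead of $|J|$, in the manner of \cite[\S 9]{FYZ}.

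\textbf{Step 2: the fibration over the saturated data.} Fix the chain $(\cE'_i)$. The remaining data are the quotient chain $\cF_i/\cE'_i$, a rank $n-m$ bundle with modifications at the legs not in $J$, together with the Hermitian form. The form need not be nondegenerate on $\cE'_i$, so the quotient is not unitary. The structure is instead that of a shtuka for the parabolic stabilizing an isotropic-or-not rank $m$ subsheaf. I plan to bound the fiber dimension via a Lang-isogeny argument: the shtuka condition cuts the Hecke stack by the graph of Frobenius, so the dimension equals the dimension of the relevant Hecke fibers minus that of the base. Each free leg then contributes at most $(n-1)$ for a unitary modification, reduced by the constraint that the modification be compatible with $\cE'_i$, i.e.\ the $m$ directions are frozen. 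This should yield roughly $n-m$ per leg outside $J$, which gives $(n-m)(r-|J|)$. To this we add the dimension of the stack of extension and gluing data, namely $\dim\Hom(\cF/\cE',\cE')$-type terms minus automorphisms. Via Riemann--Roch these are controlled by $h^1$ terms involving $\cE$, $\deg D$, and the degeneracy of $a$.

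\textbf{Step 3 (main obstacle): absorbing the excess.} The difficulty is the excess coming from the $H^1$/degeneracy terms and the extra length $d$. The bound \eqref{eq:main-bound} needs these to be dominated by the savings $(n-m)|J|$ and by the savings from the Hermitian-form constraints (roughly $m$ per leg when the form is degenerate along the image). I would first try proving a per-leg inequality: each leg in $J$ saves at least $n-m$ dimensions, while each unit of $d$ and each leg costs at most $2m$ in extra freedom, namely $m$ from moving the saturation and $m$ from the dual side under $\sigma$. The condition $n-m\ge 2m$, i.e.\ $m\le n/3$, is exactly what makes the saving dominate the cost. Verifying this inequality stratum by stratum, especially on strata where $a$ is very degenerate so that $\cE'_i$ meets its own orthogonal, is where I expect the real work. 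I would handle those strata by further splitting $\cE'_i$ into its radical and nondegenerate parts and running the same estimate on each.
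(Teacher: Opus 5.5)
\textbf{There is a genuine gap.} It sits exactly where $\rank\cE=m>1$ departs from the rank-one argument of \cite[\S 9]{FYZ}.

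In Step 1 you treat $D_i=\mathrm{coker}(\cE\to\cE'_i)$ as divisors. From this you conclude that, up to the legs, the chain of saturations has finitely many possibilities, so that the legs in $J$ cost nothing. For $m>1$ the cokernel is only a torsion sheaf, and the saturations containing $t_i(\cE)$ move in positive-dimensional (Quot-type) families.

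Concretely, an upper modification of the saturation at a leg $x$ is $\cE'\mapsto\cE'+\cI_x^{-1}L$ for a line $L\subset\cE'_x$, and $\cE$ imposes no condition on $L$. For instance, for $a=0$ every $\cE'_i$ is isotropic, so the condition $L\subset(\cE'_{\sigma x})^\perp$ defining this type is automatic and $L$ ranges over all of $\bbP(\cE'_x)$. Moreover, only \emph{lower} modifications of the saturation are forced into the support of $\cE'/t(\cE)$, so such a leg $x$ is not pinned down by Step 1 either. A leg enlarging the saturation therefore costs about $m$, not $0$. This cost is real: steps of this kind are exactly what make the hypothesis $m\le n/3$ necessary.

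Step 2 compounds the problem. In a Lang-type count, the graph of Frobenius absorbs all base directions, so the dimension of a shtuka stratum is bounded by the local Hecke parameters plus the $r$ legs. Adding ``extension and gluing data'' with $h^1$, $\deg D$ and degeneracy-of-$a$ terms on top of that is not a valid bound. Step 3, which carries all the content, remains a heuristic. Nothing in it forces the costly steps to be paired with the saving ones.

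\textbf{Missing ideas.} The paper's proof supplies three things you do not have.

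(i) It never counts saturations relative to $\cE$. It stratifies so that the saturations $\cP_i\subset\cF_i$ form subbundles in families, and maps each stratum to framed shtukas $(\cP_\bullet\subset\cF_\bullet)$, forgetting $t_\bullet$. This map is quasi-finite, because its composite to $\Sht^r_{U(n)}$ is the quasi-finite map $\cZ_\cE^r(a)^\circ\to\Sht^r_{U(n)}$ of \cite[Proposition 7.5]{FYZ}.

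(ii) It classifies one-step modifications of $(\cP\subset\cF)$ into four types $0,+,-,\pm$, according to whether $H\supset\cP_y$ and whether $H^\perp\subset\cP_x$. The corresponding parameter spaces have relative dimensions $n-m-1$, $n-1$, $m-1$, $m-1$ over $\cN\times X'$. A Lang-type lemma, with smooth envelopes that are allowed to be only locally closed, then gives
$\dim({}_\tau\Sht^r_\cN)\le(n-m)|I_0|+n|I_+|+m(|I_-|+|I_\pm|)$.

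(iii) The Frobenius condition gives $\deg\cP_r=\deg\cP_0$, hence $|I_+|=|I_-|$. So each expensive $+$ step (cost $m$ above the baseline $n-m$) is paired with a $-$ step (saving $n-2m$). The slack is
$(n-m)r-\dim=(n-3m)|I_+|+(n-2m)|I_\pm|$,
which is nonnegative exactly when $m\le n/3$.

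This bound does not depend on $a$, so no splitting into radical and nondegenerate parts is needed.
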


\begin{remark}[Description of AI assistance]\label{rem:AI-help}
Theorem \ref{thm:special-cycle-dimension-bound} is a similar result to \cite[Proposition 9.5]{FYZ} which treated the case $\rank \cE = 1$, but the proof --- which was obtained with the aid of GPT 5.5 Pro --- carries some additional subtleties. The main technical difference with the proof of \emph{loc. cit.} is that when $\rank \cE >1$, the saturation of $\cE$ in an ambient bundle $\cF_i$ not necessarily given by twisting with a line bundle. 

Now we describe the contribution of GPT 5.5 Pro in more detail. The first author queried GPT 5.5 Pro whether a bound of the form \eqref{eq:main-bound} was true for $m \leq \epsilon n$ for some $\epsilon>0$. We knew from examples coming from parabolic shtukas that necessarily $\epsilon \leq 1/3$. GPT 5.5 Pro returned a response concluding that $\epsilon = 1/3$, which gave us confidence that it might be correct. We partially read this response, and explain how it relates to our eventual argument, as written below. 
\begin{itemize}
\item GPT 5.5 Pro's response contained a passage very similar to the material of \S \ref{ssec:saturation-stratification}, although it gave a different argument based on Quot schemes, which we did not read. 

\item GPT 5.5 Pro's response contained the material of \ref{ssec:framed-hitching-stacks}, and in fact we rewrote that section starting from its output. This material was already known to the authors, and is a straightforward generalization of the analysis in \cite[\S 9.2]{FYZ}. 

\item The key new trick in GPT 5.5 Pro's response (from the authors' perspective) was to use Lemma \ref{lem:quasi-finite} to kill the dimensions coming from the possible saturations of $\cE$ in the $\cF_i$. 
\end{itemize}
The argument below was written by the authors, except for the figures, which were generated by Codex. 
\end{remark}

\subsection{The saturation stratification}\label{ssec:saturation-stratification}

\begin{defn}\label{def:saturation-subbundle}
For a geometric point $(\cE, \cF_{\bu}, t_{\bu})$ of $\cZ_{\cE}^{r, \circ}$, for each $i=0, \ldots, r$ let
\begin{equation}\label{eq:Pi-definition}
        \cP_i:=\Sat_{\cF_i}(t_i(\cE)) \quad \text{be the saturation of $t_i(\cE)$ in $\cF_i$.}
\end{equation}
 Then $\cP_i\subset\cF_i$ is a rank-$m$ subbundle, and
$\cF_i/\cP_i$ is locally free.
\end{defn}

\begin{lemma}\label{lem:stratify-saturation}
Let $X'$ be a smooth projective curve over a field $k$.  Let $K/k$ be a
field extension.  Let $T$ be a reduced finite-type
$K$-scheme. Let $\cE$ and $\cF$ be vector bundles of ranks $m$ and $n$,
respectively, on $X_T':=X'\times_k T$, and let
\[
    t:\cE \longrightarrow \cF
\]
be a morphism which is injective after pullback to all geometric points of $T$. Then there is a finite locally closed stratification
\[
    T=\coprod_{\alpha=1}^{N}T_\alpha
\]
such that after pullback to each $T_\alpha$, the fiberwise saturation of $t(\cE|_{T_\alpha})$ in $\cF|_{T_\alpha}$ is represented by a rank-$m$ subbundle $\cP_\alpha\subset \cF|_{T_\alpha}$
with locally free quotient. Moreover, $\cP_\alpha/t(\cE|_{T_\alpha})$
is finite flat over $T_\alpha$.
\end{lemma}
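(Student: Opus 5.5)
We argue by noetherian induction on $T$. Since $T$ is reduced and of finite type over $K$, it suffices to show that every nonempty closed subscheme $T'\subset T$ contains a dense open $U\subset T'$ over which the conclusion holds. Replacing $T$ by $T'$ and shrinking to an irreducible affine open, we may take $T$ integral with generic point $\eta$, and we seek a dense open $U\subset T$ with the required properties.

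The key object is the quotient $\cQ:=\cF/t(\cE)$. First we shrink $T$ so that $t$ is injective as a map of sheaves and $\cQ$ is flat over $T$. Over the generic fiber, $t_\eta$ is injective, and generic flatness applied to $\cQ$ on the finite-type $T$-scheme $X'_T$ gives a dense open $U$ over which $\cQ$ is $T$-flat. Flatness of $\cQ$ together with the fiberwise injectivity hypothesis then implies that $0\to\cE\to\cF\to\cQ\to0$ stays exact after any base change $T''\to U$ (local criterion / Tor-vanishing), so the construction commutes with base change. Next, let $\cQ_{\mathrm{tors}}\subset\cQ$ be the maximal subsheaf whose fibers are torsion. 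Concretely, at $\eta$ the torsion subsheaf $\cQ_{\eta,\mathrm{tors}}$ of $\cQ_\eta$ on the curve $X'_\eta$ is finite over $\eta$, and $\cQ_\eta/\cQ_{\eta,\mathrm{tors}}$ is locally free of rank $n-m$. Both objects spread out to some open of $T$, giving coherent sheaves $\cQ_{\mathrm{tors}}$ and $\cQ':=\cQ/\cQ_{\mathrm{tors}}$ on $X'_U$. Shrinking further by generic flatness, we may assume both are $U$-flat, that $\cQ'$ is locally free on $X'_U$ of rank $n-m$ (openness of the locally free locus on fibers plus flatness), and that $\cQ_{\mathrm{tors}}$ has support finite over $U$. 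Since $\cQ_{\mathrm{tors}}$ is then finite over $U$ and flat, it is finite flat.

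Now set $\cP:=\ker(\cF\to\cQ')$. It contains $t(\cE)$ and is a rank-$m$ subbundle with locally free quotient $\cQ'$, and $\cP/t(\cE)\cong\cQ_{\mathrm{tors}}$ is finite flat over $U$. By flatness, the formation of $\cP$ commutes with passage to each geometric fiber $s$. On such a fiber, $\cP_s\supset t(\cE_s)$ with torsion quotient $\cQ_{\mathrm{tors},s}$ and locally free cokernel $\cQ'_s$, which is exactly the characterization of the saturation. So $\cP$ represents the fiberwise saturation over $U$, and noetherian induction yields the finite stratification.

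The main obstacle is the second step: arranging on an open of $T$ that $\cQ$ splits as a finite flat torsion part and a locally free part compatibly with base change. The fiberwise torsion length can jump, which is exactly why a stratification is needed. Here we would lean on generic flatness applied separately to the spread-outs of $\cQ_{\eta,\mathrm{tors}}$ and $\cQ_\eta/\cQ_{\eta,\mathrm{tors}}$, and then check that the resulting fiberwise torsion subsheaf agrees with the restriction of the spread-out. We would verify this via the fiberwise criterion: a flat family that is locally free on the generic fiber is locally free on an open neighborhood.
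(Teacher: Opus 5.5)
Your proposal is correct and follows essentially the same route as the paper: pass to the generic point $\eta$, spread out the generic saturation, and shrink so that the saturated quotient is locally free and $\cP/t(\cE)$ has support finite over the base. You then use flatness of $\cF/t(\cE)$ (from the fiberwise injectivity criterion) to get finite flatness, and conclude by noetherian induction. The differences are cosmetic. You spread out the torsion/torsion-free decomposition of $\cF/t(\cE)$ rather than $\cP_\eta\subset\cF_\eta$ directly, so $t(\cE)\subset\cP$ is automatic, whereas the paper needs a separate properness argument for this containment. Your extra appeals to generic flatness are harmless but unnecessary, since flatness of the torsion part already follows from flatness of $\cF/t(\cE)$ and of the locally free quotient.
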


\begin{figure}[htbp]
\centering
\begin{tikzpicture}[
        x=1cm,y=1cm,
        font=\small,
        surface/.style={draw=black!45, fill=black!4, line width=.6pt},
        base/.style={draw=black!55, fill=black!6, line width=.6pt},
        stratumA/.style={fill=ForestGreen!18, draw=ForestGreen!55!black},
        stratumB/.style={fill=RoyalBlue!14, draw=RoyalBlue!55!black},
        stratumC/.style={fill=RedOrange!15, draw=RedOrange!65!black},
        fiber/.style={draw=black!30, line width=.45pt},
        eband/.style={draw=black!70, line width=.8pt, densely dashed},
        pA/.style={draw=ForestGreen!65!black, line width=1.8pt},
        pB/.style={draw=RoyalBlue!70!black, line width=1.8pt},
        pC/.style={draw=RedOrange!75!black, line width=1.8pt},
        torsion/.style={fill=white, draw=black!60, line width=.55pt}]
    \fill[surface]
        (-.2,1.1) .. controls (1.3,1.45) and (2.5,1.35) .. (3.55,1.55)
        .. controls (4.8,1.8) and (6.3,1.55) .. (7.55,1.78)
        -- (7.55,3.9)
        .. controls (6.2,3.65) and (4.9,3.95) .. (3.55,3.7)
        .. controls (2.35,3.5) and (1.25,3.65) .. (-.2,3.35)
        -- cycle;

    \fill[stratumA] (.05,.05) rectangle (2.35,.65);
    \fill[stratumB] (2.35,.05) rectangle (5.05,.65);
    \fill[stratumC] (5.05,.05) rectangle (7.3,.65);
    \draw[base] (.05,.05) rectangle (7.3,.65);
    \draw[black!45] (2.35,.05) -- (2.35,.65);
    \draw[black!45] (5.05,.05) -- (5.05,.65);

    \foreach \X in {.55,1.35,2.15,2.85,3.65,4.45,5.35,6.15,6.95} {
        \draw[fiber] (\X,.65) -- (\X,3.78);
    }

    \draw[eband]
        (.35,2.2) .. controls (1.45,2.38) and (2.45,2.18) .. (3.5,2.34)
        .. controls (4.55,2.5) and (5.75,2.25) .. (7.05,2.43);

    \draw[pA]
        (.3,2.55) .. controls (1.0,2.74) and (1.55,2.64) .. (2.2,2.78);
    \draw[pB]
        (2.5,2.88) .. controls (3.25,2.72) and (4.15,2.92) .. (4.9,2.76);
    \draw[pC]
        (5.25,2.5) .. controls (5.85,2.67) and (6.55,2.57) .. (7.15,2.72);

    \node[torsion, circle, inner sep=1.6pt] at (1.25,2.65) {};
    \node[torsion, circle, inner sep=1.6pt] at (3.45,2.82) {};
    \node[torsion, circle, inner sep=1.6pt] at (4.25,2.84) {};
    \node[torsion, circle, inner sep=1.6pt] at (6.1,2.61) {};

    \node[below] at (1.2,.02) {$T_\alpha$};
    \node[below] at (3.7,.02) {$T_\beta$};
    \node[below] at (6.15,.02) {$T_\gamma$};
    \node[left] at (.05,.35) {$T$};
    \node[right] at (7.65,2.45) {$X'_T$};

    \node[text=ForestGreen!55!black] at (1.25,3.13) {$\cP_\alpha$};
    \node[text=RoyalBlue!60!black] at (3.75,3.16) {$\cP_\beta$};
    \node[text=RedOrange!75!black] at (6.15,3.05) {$\cP_\gamma$};
    \node[black!70] at (6.7,2.15) {$t(\cE)$};
    \node[font=\scriptsize, align=center] at (3.7,4.18)
        {subbundles on the pieces $X'_{T_\alpha}$, $X'_{T_\beta}$, $X'_{T_\gamma}$};
    \node[font=\scriptsize, align=center] at (3.75,-.62)
        {stratification $T=\coprod T_\alpha$};
\end{tikzpicture}
\caption{Schematic picture of Lemma \ref{lem:stratify-saturation}.  After stratifying the base $T$, the fiberwise saturation of $t(\cE)$ in $\cF$ is represented over each stratum $T_\alpha$ by a rank-$m$ subbundle $\cP_\alpha\subset \cF|_{X'_{T_\alpha}}$.}
\label{fig:saturation-stratification}
\end{figure}

\begin{proof}By Noetherian induction, it suffices to produce a non-empty open subset $U \subset T$ and a rank $m$ saturated sub-bundle $\cP_U \subset \cF|_U$ such that the following properties hold: 
\begin{enumerate}
\item[(i)] $t(\cE|_U) \subset \cP_U \subset \cF|_U$,
\item[(ii)] After pullback to any geometric point of $U$, $\cP_U$ is the saturation of $t(\cE|_U)$.
\item[(iii)] $\cP_U/t(\cE|_U)$ is finite flat over $U$.
\end{enumerate}

\emph{Step 1.} Choose a generic point $\eta$ of an irreducible component of $T$. The saturation of $t(\cE|_\eta)$ in $\cF|_\eta$ is a rank $m$ sub-bundle that we denote $\cP_\eta$. Choose an open neighborhood $U$ of $\eta$ contained in that irreducible component. Since $X'_\eta$ is a smooth curve and the quotient $\cF_\eta/\cP_\eta$ is torsion-free on $X'_\eta$, it is locally free of rank $n-m$.  Spreading out the exact
sequence
\[
        0\to\cP_\eta\to\cF_\eta\to (\cF_\eta/\cP_\eta) \to 0
\]
and shrinking $U$ if necessary, we obtain a rank-$m$ subbundle
$\cP_U\subset\cF|_U$ such that $\cF|_U/\cP_U$ is locally free.

We next argue that it is possible to shrink $U$ to arrange that $t(\cE|_U) \subset \cP_U$. The composite map $\cE|_U \rightarrow \cF|_U/\cP_U$ vanishes at $\eta$. The support of its image is therefore a closed subset of $X'_U$ missing the fiber over $\eta$. By properness of $X'$, the image of this closed subset in $U$ is a closed subset missing $\eta$. Therefore, after replacing $U$ by an open subset, still containing $\eta$, we may assume that $\cE|_U \rightarrow \cF|_U/\cP_U$ vanishes, so $\cP_U$ contains $t(\cE|_U)$. 

\emph{Step 2.} Consider the quotient $\cQ_U := \cP_U/t(\cE|_U)$. Its support is closed in $X'_U$, hence is proper over $U$. Since $\cP_\eta/t_\eta(\cE_\eta)$ has finite support
on $X'_\eta$, after shrinking $U$ to a further open subset containing $\eta$, we may further assume that $\cQ_U$ has support which is finite over $U$. 

We claim that $\cP_U$ represents the fiberwise saturated image over any geometric point $u \rightarrow U$. Since
$\cF_U/\cP_U$ and $\cP_U$ are locally free, $(\cF_U/\cP_U)_u \cong \cF_u/\cP_u$ is torsion-free on the smooth curve $X'_{u}$. Since $\cQ_U$ is finite over $U$, $\cQ_u = (\cP_U/t(\cE|_U))_u \cong (\cP_u/t_u(\cE_u)) $ has finite support on
$X'_{u}$, hence is torsion. Therefore
$\cP_{u}/t_u(\cE_u)$ is precisely the torsion subsheaf of
$\cF_{u}/t_u(\cE_u)$, so $\cP_u$ is the saturation of $t_u(\cE_u)$.

\emph{Step 3.} We have an exact sequence
\[
        0\to \cP_U/t(\cE|_U)\to  (\cF|_U)/t(\cE|_U)  \to (\cF|_U)/\cP_U \to0 .
\]
Thanks to the assumption that the map $\cE|_U \rightarrow \cF|_U$ is injective after restriction to all geometric points of $U$, \cite[Tag 00ME]{stacks-project} applies to show that $(\cF|_U/t(\cE|_U))$ is flat over $U$.
The right-hand term is locally free on $X'_U$, hence flat over $U$. Therefore
$\cP_U/t(\cE|_U)$ is flat over $U$. Since its support is finite over $U$ by
Step 2, $\cP_U/t(\cE|_U)$ is finite flat over $U$, as required.
\end{proof}

\subsection{Framed Hitchin stacks}\label{ssec:framed-hitching-stacks}

Fix $1 \leq m \leq n$. Let $\cN$ be the stack over $k$ whose $R$-points for a commutative $k$-algebra $R$ form the groupoid of triples $(\cP, \cF, h)$, where:
\begin{itemize}
        \item $(\cF, h)$ is a rank $n$ Hermitian bundle on $X'_R$, and
        \item $\cP \subset \cF$ is a rank $m$ sub-bundle such that $\cF/\cP$ is locally free over $X'_R$. 
\end{itemize} 
This is a framed version of the Hitchin stack recalled in \S \ref{ssec: trace conjecture formulation}.

Let $\Hk_{\cN}^r$ be the Hecke stack with $R$-points the groupoid of $\cF_{\bu} \in \Hk_{U(n)}^r(R)$ together with saturated sub-bundles $\cP_i \subset \cF_i$ such that the isomorphisms 
\[
\cF_i|_{X'_R \setminus (\Gamma_{x_i'} \cup \Gamma_{\sigma x'_i})} \cong \cF_{i-1}|_{X'_R \setminus (\Gamma_{x_i'} \cup \Gamma_{\sigma x'_i})} \qquad i = 0, \ldots, r
\]
identify 
\[
\cP_i|_{X'_R \setminus (\Gamma_{x_i'} \cup \Gamma_{\sigma x'_i})} = \cP_{i-1}|_{X'_R \setminus (\Gamma_{x_i'} \cup \Gamma_{\sigma x'_i})} \qquad i = 0, \ldots, r.
\]
For $0\le i\le r$, let $\pr_i\co\Hk_{\cN}^r\to\cN$ be the map recording $(\cP_i,\cF_i,h_i)$.
Let $\pr_{\mathrm{legs}}\co\Hk_{\cN}^r\to(X')^r$ be the map recording the legs.

\begin{prop}\label{prop:one-step-sat}
Over $\cN \times X'$, let $x$ be the universal point of $X'$ and set $y := \sigma(x)$. Over the universal projective bundle $\bbP(\cF_x^{\univ})\cong \bbP((\cF_y^{\univ})^\vee) \rightarrow \cN \times X'$ let $L \subset \cF_x^{\univ}$ be the universal line; $L$ is equivalent to the datum of the universal hyperplane $H=L^\perp\subset \cF_y^{\univ}$, where orthogonal complements are taken with respect to the given Hermitian form. On each ambient projective bundle in the table, $L$ and $H$ denote the pullbacks of these universal objects. Define the four relative
parameter spaces over $\cN \times X'$ as follows:
\[
\begin{array}{c|c|c}
\textup{type} & \textup{ambient projective bundle} & \textup{parameter space}\\
\hline
0 & \bbP((\cF_y^{\univ}/\cP_y^{\univ})^\vee) &
      H^\perp\not\subset \cP_x^{\univ}\\
+ & \bbP(\cF_y^{\univ, \vee}) &
      H\not\supset \cP_y^{\univ},\ H^\perp\not\subset \cP_x^{\univ}\\
- & \bbP(\cP_x^{\univ}) &
      L\subset (\cP_y^{\univ})^\perp\\
\pm & \bbP(\cP_x^{\univ}) &
      L\not\subset (\cP_y^{\univ})^\perp
\end{array}
\]
(In types $0,-,\pm$, the ambient projective bundle already enforces one of the closed fiber conditions, so the parameter space column records the non-automatic condition.) Note that the loci in type $0,+,\pm$ are open
in their ambient projective bundles, while the type $-$ locus is closed.

Then there is a stratification of $\Hk_{\cN}^1$ into loci $_?\Hk_{\cN}^1$ with $? \in \{0,+,-,\pm\}$ such that the morphism
\[
        (\pr_1,\pr_{X'})\co {}_?\Hk_{\cN}^1\longrightarrow \cN\times X',
\]
where $\pr_{X'}$ records the leg, realizes the stratum $_?\Hk_{\cN}^1$ as the corresponding parameter space.

\end{prop}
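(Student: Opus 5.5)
The plan is to analyze a single unitary modification $\cF_0 \dashrightarrow \cF_1$ with leg $x$ in terms of its target $(\cP_1,\cF_1,h_1)$, as in \cite[\S 9.2]{FYZ}. Given $\cF_1$ and the leg $x$ (with $y=\sigma(x)$; we may take $x \ne y$ since the cover is étale, so the graphs are disjoint), a unitary modification lower at one point and upper at the conjugate point is determined by one datum: a line $L\subset \cF_{1,x}$, or equivalently the hyperplane $H=L^\perp\subset\cF_{1,y}$. Concretely, $\cF_0^\flat=\cF_0\cap\cF_1$ is the kernel of $\cF_1\to\cF_{1,y}/H$, and $\cF_0$ is obtained by adjoining $x^{-1}L$; the Hermitian form forces these two pieces of data to determine each other. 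This identifies $\Hk^1_{U(n)}\to\Bun_{U(n)}\times X'$ with the projective bundle $\bbP(\cF_x^{\univ})\cong\bbP((\cF_y^{\univ})^\vee)$.

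Next, for the framed version, $\cP_0$ is forced: it must agree with $\cP_1$ away from $x,y$ and be saturated in $\cF_0$, so $\cP_0=\Sat_{\cF_0}(\cP_1|_{X'\setminus\{x,y\}}\cap\cF_0)$. Hence $\Hk^1_\cN\to\cN\times X'$ is a bijection onto $\bbP(\cF_x^\univ)$ on field-valued points, and the stratification is by the relative position of $(L,H)$ with respect to $(\cP_x,\cP_y)$. The relevant conditions are whether $H\supset\cP_y$, i.e.\ $L\subset(\cP_y)^\perp$ (note $H\supset \cP_y$ iff $H\in\bbP((\cF_y/\cP_y)^\vee)$), and whether $L\subset\cP_x$. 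The cases are as follows.
\begin{itemize}
\item Type $0$: $H\supset\cP_y$ and $L\not\subset\cP_x$.
\item Type $+$: $H\not\supset\cP_y$ and $L\not\subset\cP_x$.
\item Type $-$: $L\subset\cP_x$ and $L\subset\cP_y^\perp$.
\item Type $\pm$: $L\subset\cP_x$ but $L\not\subset \cP_y^\perp$.
\end{itemize}
Here $H^\perp = L$, which explains the table's notation. These four conditions partition $\bbP(\cF_x)$ into locally closed pieces. The names record how $\cP_0$ differs from $\cP_1$: in type $0$, $\cP_0=\cP_1$ as subsheaves; in type $+$, $\cP_0$ is a lower modification at $y$; in type $-$, it is an upper modification at $x$; in type $\pm$, it is both.

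The main step, and the expected obstacle, is showing that this is an isomorphism of stacks on each stratum, not merely a bijection on points. This requires that over each locally closed stratum $S$ the saturation $\cP_0$ forms a subbundle with locally free quotient, compatibly with base change. I would construct $\cP_0$ explicitly on each stratum:
\begin{itemize}
\item type $0$: $\cP_0=\cP_1$, which lies in $\cF_0^\flat$ since $\cP_y\subset H$;
\item type $+$: $\cP_0=\cP_1\cap\cF_0^\flat=\ker(\cP_1\to\cF_{1,y}/H)$, a surjection exactly because $\cP_y\not\subset H$;
\item type $-$: $\cP_0=\cP_1+x^{-1}L$;
\item type $\pm$: $\cP_0=\ker(\cP_1\to \cF_{1,y}/H)+x^{-1}L$.
\end{itemize}
In each case one checks that $\cF_0/\cP_0$ is locally free, using that the relevant maps of fibers have constant rank on the stratum, and then invokes uniqueness of saturated extensions to see that this is the only choice. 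Uniform constant rank on each stratum is exactly what the open or closed conditions in the table enforce; this gives mutually inverse maps $_?\Hk^1_\cN\leftrightarrow$ the parameter spaces. A secondary point is whether the ``$\not\subset$'' conditions are really open in the stated ambient bundles; this follows since they are non-vanishing conditions on maps of vector bundles.
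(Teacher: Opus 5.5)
Your proposal is correct and follows essentially the same route as the paper. Both arguments encode the one-step modification by the line $L=H^\perp\subset\cF_{1,x}$, use uniqueness of the saturated extension of $\cP_1$ across $\Gamma_x\cup\Gamma_y$, classify the four types by the fiber conditions $H\supset\cP_y$ and $L\subset\cP_x$, and write down the same explicit formulas for $\cP_0$ on each stratum. The paper leaves implicit the monomorphism to $\bbP(\cF_x^{\univ})$ and the openness of the non-containment conditions, which you spell out.
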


\begin{proof}
To complete $(\cP, \cF, h, x) \in \cN \times X'(R)$ to a point of $\Hk_{\cN}^1(R)$, we need to construct the diagram
\[
\begin{tikzcd}
 \cF' & \cF^\flat \ar[l, hook'] \ar[r, hook] & \cF  \\
\cP'  \ar[u, hook] &  & \cP \ar[u, hook]
\end{tikzcd}        
\]
where $\cF'$ is related to $\cF$ by a lower modification of $\cF$ along $y$, followed by the Hermitian-dual
upper modification at $x$. Let $H\subset\cF_y$ be the hyperplane defining the lower modification, so that 
\[
        \cF^\flat=\ker(\cF\to \cF_y/H),\qquad
        \cF'=\cF^\flat+\mathcal I_{x}^{-1}L.
\]
Then $\cF'$ carries the Hermitian structure induced by $h$.

Given $(\cP,\cF,h,x)$ and $H$, or equivalently $L=H^\perp$, at most one saturated rank-$m$ subbundle $\cP'\subset\cF'$ agrees with $\cP$ away from $\Gamma_x\cup\Gamma_y$. Indeed, if $\cP'$ and $\cP''$ both satisfy this property, then $\cP''\to\cF'\to\cF'/\cP'$ vanishes off the relative Cartier divisor $\Gamma_x\cup\Gamma_y$. Thus $\cP''\subset\cP'$, and symmetry gives equality.

To classify the modification at geometric points, assume for the moment that $R$ is a field. The Hermitian structure gives the perfect pairing
\begin{equation}\label{eq:fiber-pairing}
        \langle\ ,\ \rangle_{x,y}:\cF_x\times \cF_y\longrightarrow
        \omega_{X'}|_y .
\end{equation}
A hyperplane $H\subset\cF_y$ determines the lower modification at $y$, and the orthogonal line $H^\perp\subset\cF_x$ determines its Hermitian-dual upper modification at $x$. Thus $\cP$ and $\cP'$ agree away from $x$ and $y$, and at each of these points the change has length zero or one. At $y$ we have
\[
\cF^\flat = \ker(\cF \surj \cF_y/H).
\]
while the generic identification of $\cF'$ with $\cF^\flat$ gives
\[
\cF' = \cF^\flat + \cI_x^{-1} H^\perp
\]
where $\cI_x$ is the ideal sheaf of the graph of $x$. The two conditions $H\supset\cP_y$ and $H^\perp\subset\cP_x$ therefore determine the modification type, with the precise fiber changes recorded in \eqref{eq:fiber-change-tests}.
\begin{equation}\label{eq:fiber-change-tests}
\begin{array}{c|c}
\textup{fiber condition} & \textup{sub-bundle modification}\\
\hline
H \supset \cP_y  & \cP'_y=\cP_y\\
H \not\supset \cP_y  & \length_y(\cP/\cP')=1\\
H^\perp\not\subset \cP_x & \cP'_x=\cP_x\\
H^\perp\subset \cP_x & \length_x(\cP'/\cP)=1 .
\end{array}
\end{equation}
\begin{figure}[htbp]
 \centering
\vspace{1.2ex}
\begin{minipage}{.78\textwidth}
\centering
\begin{adjustbox}{max width=\linewidth}
\begin{tikzcd}[ampersand replacement=\&, cells={nodes={inner sep=0pt}}]
{\begin{tikzpicture}[
        x=1cm,y=1cm,
        font=\small,
        line cap=round,
        line join=round,
        pband/.style={fill=black!5},
        pboundary/.style={draw=black!65, line width=.8pt},
        pprime/.style={draw=black!48, line width=1.35pt},
        neutral cut/.style={fill=white},
        common p/.style={draw=black!80, line width=.7pt},
        base tick/.style={draw=black!45, line width=.65pt},
        brace/.style={decorate, decoration={brace, amplitude=4pt}, line width=.55pt}]
    \fill[pband] (0,0) -- (9,0) -- (9,1.55) -- (0,1.55) -- cycle;
    \draw[pboundary] (0,1.55) -- (0,0) -- (9,0) -- (9,1.55);

    \fill[pband] (1.18,1.55) rectangle (2.22,2.08);
    \fill[neutral cut] (5.72,.92) rectangle (6.76,1.55);

    \draw[pprime]
        (0,1.55) -- (1.18,1.55) -- (1.18,2.08) -- (2.22,2.08)
        -- (2.22,1.55) -- (5.72,1.55) -- (5.72,.92) -- (6.76,.92)
        -- (6.76,1.55) -- (9,1.55);
    \draw[common p] (0,1.47) -- (9,1.47);

    \draw[base tick] (1.7,0) -- (1.7,-.17);
    \draw[base tick] (6.24,0) -- (6.24,-.17);
    \node[below] at (1.7,-.2) {$x$};
    \node[below] at (6.24,-.2) {$y$};

    \node[right] at (9.16,.78) {$\cP$};
    \node[above, text=black!48] at (4.5,2.24) {$\cP'$};

    \draw[brace] (1.18,2.22) -- (2.22,2.22)
        node[midway, above=4pt] {$\length_x(\cP'/\cP)=1$};
    \draw[brace] (6.76,.82) -- (5.72,.82)
        node[midway, below=4pt] {$\length_y(\cP/\cP')=1$};
\end{tikzpicture}}
\end{tikzcd}
\end{adjustbox}
\end{minipage}
\caption{Cartoon of a modification between $\cP$ and $\cP'$. The bump at $x$ indicates $\length_x(\cP'/\cP)=1$, while the dip at $y$ indicates $\length_y(\cP/\cP')=1$.}
\label{fig:length-one-modifications}
\end{figure}
The four combinations give the following modification types.
\newcommand{\satmodpic}[2]{%
\vcenter{\hbox{%
\begin{tikzpicture}[
        x=.36cm,y=.36cm,
        line cap=round,
        line join=round,
        pband/.style={fill=black!5},
        pboundary/.style={draw=black!65, line width=.45pt},
        pprime/.style={draw=black!48, line width=.85pt},
        common p/.style={draw=black!80, line width=.4pt},
        tick/.style={draw=black!45, line width=.35pt}]
    \fill[pband] (0,0) -- (6,0) -- (6,.8)
        \ifnum#1=1
            -- (4.95,.8) -- (4.95,.25) -- (3.95,.25) -- (3.95,.8)
        \fi
        \ifnum#2=1
            -- (2.05,.8) -- (2.05,1.4) -- (1.05,1.4) -- (1.05,.8)
        \fi
        -- (0,.8) -- cycle;
    \draw[pboundary] (0,.8) -- (0,0) -- (6,0) -- (6,.8);
    \draw[pprime] (0,.8)
        \ifnum#2=1
            -- (1.05,.8) -- (1.05,1.4) -- (2.05,1.4) -- (2.05,.8)
        \fi
        -- (3.95,.8)
        \ifnum#1=1
            -- (3.95,.25) -- (4.95,.25) -- (4.95,.8)
        \fi
        -- (6,.8);
    \draw[common p] (0,.68) -- (6,.68);
    \draw[tick] (1.55,0) -- (1.55,-.12);
    \draw[tick] (4.45,0) -- (4.45,-.12);
    \node[font=\scriptsize, below] at (1.55,-.12) {$x$};
    \node[font=\scriptsize, below] at (4.45,-.12) {$y$};
\end{tikzpicture}}}}
\[
\renewcommand{\arraystretch}{1.35}
\begin{array}{@{}c|c|c|c@{}}
\textup{type} & \textup{condition at }y & \textup{condition at }x
& \begin{gathered}\textup{sub-bundle modification}\end{gathered}\\
\hline
0 & H\supset \cP_y & H^\perp\not\subset \cP_x
& \vspace{.1cm}

\satmodpic{0}{0}
\\
+ & H\not\supset \cP_y & H^\perp\not\subset \cP_x
& \vspace{.1cm}

\satmodpic{1}{0}
\\
- & H\supset \cP_y & H^\perp\subset \cP_x
& \vspace{.1cm}

\satmodpic{0}{1}
\\
\pm & H\not\supset \cP_y & H^\perp\subset \cP_x
& \vspace{.1cm}

\satmodpic{1}{1}
\end{array}
\]
These are illustrated in Figure \ref{fig:move-types}. 

For an arbitrary test ring $R$, take $?\in\{0,+,-,\pm\}$ and let $\cP$ and $H=L^\perp$ satisfy the corresponding parameter-space conditions. Define $\cP'$ by the following formula:
\[
\begin{array}{c|c}
\textup{type} & \cP'\\
\hline
0 & \cP\\
+ & \ker(\cP\to \cF_y/H)\\
- & \cP+\mathcal I_{x}^{-1}L\\
\pm & \ker(\cP\to \cF_y/H)+\mathcal I_{x}^{-1}L .
\end{array}
\]
Each formula gives a saturated rank-$m$ subbundle $\cP'\subset\cF'$ with locally free quotient $\cF'/\cP'$.

At every geometric point, exactly one of these four types occurs. Hence the substacks ${}_?\Hk_{\cN}^1$ stratify $\Hk_{\cN}^1$. Their geometry is read from the defining fiber conditions as follows.
\begin{itemize}
\item In type $0$, $H$ is a hyperplane in $\cF_y$ containing $\cP_y$ and such that $H^\perp\not\subset \cP_x$. Imposing the first condition gives the projective bundle of hyperplanes in
$\cF_y$ containing $\cP_y$, namely
$\bbP((\cF_y/\cP_y)^\vee)$, and the second condition is open.

\item In type $+$, $H$ is a hyperplane in $\cF_y$ satisfying the two open conditions $H\not\supset \cP_y$ and $H^\perp\not\subset \cP_x$.

\item To understand the type $-$ and $\pm$ spaces, it is more convenient to choose $L$ first. It must lie in the projective bundle of lines in $\cP_x$, namely $\bbP(\cP_x)$. Then the type $-$ parameter space is cut out by the closed condition
$L\subset \cP_y^\perp$, while the type $\pm$ parameter space is its open complement.

\end{itemize}
This completes the proof.
\end{proof}

\begin{figure}[htbp]
\centering
\begin{tikzpicture}[
        x=1cm,y=1cm,
        font=\small,
        fiber/.style={draw=black!45, fill=black!7, rounded corners=1pt},
        lower/.style={draw=red!70!black, line width=1.3pt, line cap=round},
        upper/.style={draw=red!70!black, line width=1.3pt, line cap=round},
        sub/.style={draw=blue!70!black, line width=1.7pt, line cap=round},
        base/.style={draw=black!25, line width=.4pt},
        panel/.style={draw=black!22, rounded corners=2pt, line width=.45pt}]
    \foreach \X/\Y/\T in {
        0/0/{0},
        4.7/0/{-},
        0/-4.2/{+},
        4.7/-4.2/{\pm}} {
        \begin{scope}[shift={(\X,\Y)}]
            \draw[panel] (-.25,-.68) rectangle (3.7,2.85);
            \draw[fiber] (0,0) rectangle (.9,2.15);
            \draw[fiber] (2.55,0) rectangle (3.45,2.15);
            \draw[base] (-.15,-.28) -- (3.6,-.28);
            \node at (1.72,2.62) {$\T$};
            \node[font=\scriptsize] at (.45,-.55) {$y$};
            \node[font=\scriptsize] at (3,-.55) {$x$};
        \end{scope}
    }

    \begin{scope}[shift={(0,0)}]
        \draw[lower] (.13,1.08) -- (.77,1.08);
        \draw[sub] (.24,1.08) -- (.66,1.08);
        \draw[sub] (2.66,.38) -- (3.43,1.80);
        \draw[upper] (2.86,1.08) -- (3.14,1.08);
    \end{scope}

    \begin{scope}[shift={(4.7,0)}]
        \draw[lower] (.13,1.08) -- (.77,1.08);
        \draw[sub] (.24,1.08) -- (.66,1.08);
        \draw[sub] (2.66,.38) -- (3.43,1.80);
        \draw[upper] (2.86,.75) -- (3.23,1.43);
    \end{scope}

    \begin{scope}[shift={(0,-4.2)}]
        \draw[lower] (.13,1.08) -- (.77,1.08);
        \draw[sub] (.30,.72) -- (.62,1.42);
        \draw[sub] (2.66,.38) -- (3.43,1.80);
        \draw[upper] (2.86,1.08) -- (3.14,1.08);
    \end{scope}

    \begin{scope}[shift={(4.7,-4.2)}]
        \draw[lower] (.13,1.08) -- (.77,1.08);
        \draw[sub] (.30,.72) -- (.62,1.42);
        \draw[sub] (2.66,.38) -- (3.43,1.80);
        \draw[upper] (2.86,.75) -- (3.23,1.43);
    \end{scope}
\end{tikzpicture}
\caption{Cartoon of the four possible modification types. The grey rectangles depict the fibers of $\cF_i$ at $x$ and $y$. The blue line depicts the fibers of $\cP$ at $x$ and $y$. The red line over $y$ depicts $H$ and the red line over $x$ depicts $H^\perp$.}
\label{fig:move-types}
\end{figure}

\begin{defn}
Let $\tau \co \{ 1, \ldots, r\} \rightarrow \{0, +, -, \pm\}$ be a labeling of the modifications by the four possible types. Equivalently, we may think of $\tau$ as a tuple $(I_0, I_+, I_-, I_{\pm})$ forming a partition of $\{1, \ldots, r\}$, with $I_?$ designating the indices of type $?$. 

We define $_\tau \Hk_{\cN}^r$ to be the locally closed stratum of $\Hk_{\cN}^r$ where the modifications in $I_?$ lie in the stratum of type $? \in \{0, +, -, \pm\}$. 
\end{defn}

\begin{cor}\label{cor:local-envelope}
Let $\tau=(I_0,I_+,I_-,I_\pm)$ be a partition of
$\{1,\ldots,r\}$. For every geometric point $w\in{}_\tau\Hk_{\cN}^r$
there exist
\begin{itemize}
\item a scheme $U$, 
\item a smooth morphism $q:U\longrightarrow{}_\tau\Hk_{\cN}^r$ 
whose image contains $w$, 
\item a locally closed immersion $U\hookrightarrow\overline U$,
and 
\item a smooth morphism $\overline U\longrightarrow\cN\times(X')^r $ 
extending $(\pr_r,\pr_{\mathrm{legs}})\circ q$, of relative dimension $D_\tau+e$, where
\begin{equation}\label{eq:local-envelope-dim}
        D_\tau:=\sum_{i=1}^r d_{\tau(i)}
        =(n-m-1)|I_0|+(n-1)|I_+|
        +(m-1)|I_-|+(m-1)|I_\pm|,
\end{equation}
and $e$ is the relative dimension of $q$ on the chosen component of $U$.
\end{itemize} 
\end{cor}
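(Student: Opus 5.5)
We argue by induction on $r$, building the local envelope by iterating the one-step description of Proposition \ref{prop:one-step-sat}. For $r=0$ there is nothing to do: take $U$ a smooth atlas of $\cN$, $\overline U=U$, with $D_\tau=0$. The relative dimensions in Proposition \ref{prop:one-step-sat} are read off from the ambient projective bundles. The type $0$ stratum is open in $\bbP((\cF_y/\cP_y)^\vee)$, which has fiber dimension $n-m-1$. The type $+$ stratum is open in $\bbP(\cF_y^\vee)$, of dimension $n-1$. The type $-$ and $\pm$ strata lie in $\bbP(\cP_x)$, of dimension $m-1$. The type $-$ locus is only closed in its ambient bundle, and this is why the statement asks for an immersion $U\hookrightarrow\overline U$ rather than an open one. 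These dimensions $d_0=n-m-1$, $d_+=n-1$, $d_-=d_\pm=m-1$ sum to \eqref{eq:local-envelope-dim}.

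Since the direction of the maps matters, we orient each step so that the new datum is parametrized over the \emph{later} bundle $(\cP_i,\cF_i)$. The diagram defining $\Hk^1_{\cN}$ is symmetric after swapping the roles of $x$ and $y=\sigma(x)$, so Proposition \ref{prop:one-step-sat} applies with either endpoint as base. The resulting parameter spaces are over $\cN\times X'$ via $(\pr_1,\pr_{X'})$, which is the orientation already in the statement. Write ${}_\tau\Hk^r_{\cN}={}_{\tau(1)}\Hk^1_{\cN}\times_{\cN}{}_{\tau'}\Hk^{r-1}_{\cN}$, where $\tau'$ is the restriction of $\tau$ to $\{2,\dots,r\}$. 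The fiber product is formed along $\pr_1$ of the first factor and $\pr_0$ of the second. In the last factor $\pr_r$ records the endpoint.

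By induction, around the image of $w$ in ${}_{\tau'}\Hk^{r-1}_{\cN}$ we have data $q'\colon U'\to{}_{\tau'}\Hk^{r-1}_{\cN}$, $U'\hookrightarrow\overline U{}'$, and $\overline U{}'\to\cN\times(X')^{r-1}$ smooth of relative dimension $D_{\tau'}+e'$. Pulling back along $U'\to\cN$ via $\pr_0$, we adjoin the first modification. This is locally an open or closed piece of the projective bundle $\bbP(\cdots)\to U'\times X'$ listed in Proposition \ref{prop:one-step-sat}. The projective bundle is built from the universal $\cF_0,\cP_0$, and these extend to $\overline U{}'$ after shrinking, since a projective bundle is locally trivial and we only need the ambient $\bbP^{d}$. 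Thus we set $\overline U=\overline U{}'\times X'\times\bbP^{d_{\tau(1)}}$ locally. Then $U$, the locally closed locus cut out by the type conditions, immerses into $\overline U$. The map $\overline U\to\cN\times(X')^r$ remains smooth, of relative dimension $D_{\tau'}+d_{\tau(1)}+e'=D_\tau+e$. Here $q$ stays smooth because it is a base change of $q'$.

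The main obstacle is the bookkeeping of the smooth chart in this inductive step. The map $U'\to\cN$ via $\pr_0$ is not the map along which $\overline U{}'$ is smooth, and the universal bundles live only on $U'$, not on $\overline U{}'$. To handle this, we first try to trivialize the relevant projective bundle Zariski-locally on $U'$. We then take $\overline U$ to be a product with a projective space, or an open subset of one, over a neighborhood in $\overline U{}'$. This means the conditions defining $U$ inside $\overline U$ need only be locally closed, which is exactly what the statement permits.
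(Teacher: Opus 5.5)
Your proposal is correct and is essentially the paper's argument. Inducting on $r$ by peeling off the first modification is the same as the paper's descending induction over the partial chains $C_{[a,r]}$. The key step also matches exactly: you pull back the ambient projective bundle of Proposition~\ref{prop:one-step-sat} along the initial endpoint of the shorter chain, trivialize it on a Zariski open $V\subset U\times X'$, write $V=\overline V\cap(U\times X')$ with $\overline V\subset\overline U\times X'$ open, and embed into $\bbP^{d_{\tau(a)}}_{\overline V}$.

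One minor remark: your aside that the proposition applies with either endpoint as base is unnecessary. Taken literally, it would exchange the roles of types $+$ and $-$, and with them the dimensions $d_+=n-1$ and $d_-=m-1$. You rightly use only the $\pr_1$-orientation, so this does not affect the argument.
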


\begin{proof}
The result is trivial if $r =0$, so we henceforth assume that $r \geq 1$. 

For $1\leq a\leq r$, let $C_{[a,r]}$ denote the stratum of
chains
\[
(\cP_{a-1},\cF_{a-1})\dashrightarrow\cdots
\dashrightarrow(\cP_r,\cF_r)
\]
whose $i$-th modification has type $\tau(i)$. We prove, by
descending induction on $a$, the following statement: after
pullback to a smooth scheme chart $U\to C_{[a,r]}$ of relative
dimension $e$, every point of $U$ has a neighborhood locally closed in
a scheme smooth over $\cN\times (X')^{r-a+1}$ (via the last projection $\pr_r$ times the legs $x_a,\ldots,x_r$) of relative dimension
$\sum_{i=a}^r d_{\tau(i)}+e$. This is the assertion of the corollary when
$a=1$.

For each type $?\in\{0,+,-,\pm\}$, write
\[
        B_?\longrightarrow\cN\times X'
\]
for the ambient projective bundle appearing in
Proposition~\ref{prop:one-step-sat}. Its relative dimension is $d_?$ where
\[
        d_0=n-m-1,\qquad d_+=n-1,\qquad
        d_-=d_\pm=m-1.
\]
The base case $a=r$ is Proposition~\ref{prop:one-step-sat}: the
one-step stratum becomes, after pullback to a smooth scheme chart of
$\cN\times X'$, locally closed in the corresponding ambient projective
bundle $B_{\tau(r)}$. The chart dimension contributes the additional
$e$.

Suppose the assertion has been proved for $C_{[a+1,r]}$. Fix a point in $C_{[a,r]}$, and choose a point of a smooth scheme chart
$U\to C_{[a+1,r]}$ above its image, together with a locally closed immersion $U\hookrightarrow \overline U$,
where $\overline U$ is smooth over $\cN\times(X')^{r-a}$ of relative
dimension $\sum_{i=a+1}^r d_{\tau(i)}+e$. Put
\[
        \widetilde U:=C_{[a,r]}\times_{C_{[a+1,r]}}U.
\]
The morphism $\widetilde U\to C_{[a,r]}$ is smooth of relative dimension
$e$. The map
$\pr_a:C_{[a+1,r]}\to\cN$ pulls the ambient projective bundle
$B_{\tau(a)}$ back to a projective bundle
\[
        Q\longrightarrow U\times X'
\]
of relative dimension $d_{\tau(a)}$. By base change from
Proposition~\ref{prop:one-step-sat}, $\widetilde U$ is locally closed in
$Q$.

Choose a Zariski open neighborhood $V\subset U\times X'$ of the chosen
point on which the projective bundle $Q$ is trivial. Since $U\hookrightarrow\overline U$ is
locally closed, after shrinking around its image we may write
$V=\overline V\cap(U\times X')$ for an open
$\overline V\subset\overline U\times X'$. The trivialization identifies
$Q|_V$ with $\bbP^{d_{\tau(a)}}_V$, so
$\widetilde U|_V$ is locally closed in
$\bbP^{d_{\tau(a)}}_{\overline V}$. This projective-space bundle is
smooth over $\cN\times(X')^{r-a+1}$ of relative dimension
\[
        d_{\tau(a)}+\sum_{i=a+1}^r d_{\tau(i)}+e.
\]
Thus $\widetilde U|_V\to C_{[a,r]}$ and its embedding in $\bbP^{d_{\tau(a)}}_{\overline V}$ provide the
required chart to prove the induction step.
\end{proof}

\subsection{Framed shtukas}
We define the stack of framed shtukas $\Sht_{\cN}^r $ as the fibered product
\begin{equation}\label{eq:framed-shtuka-square}
\begin{tikzcd}
        \Sht_{\cN}^r \ar[r] \ar[d] & \Hk_{\cN}^r \ar[d,"{(\pr_0,\pr_r)}"] \\
        \cN \ar[r,"{(\Id, \Frob)}"] & \cN \times \cN
\end{tikzcd}
\end{equation}
For $\tau \co \{1, \ldots, r \} \rightarrow \{0,+, -, \pm \}$ we define ${}_\tau \Sht_{\cN}^r$ as the fibered product
\begin{equation}\label{eq:framed-shtuka-stratum-square}
\begin{tikzcd}
        _\tau \Sht_{\cN}^r \ar[r] \ar[d]  & _\tau \Hk_{\cN}^r \ar[d,"{(\pr_0,\pr_r)}"] \\
        \cN \ar[r,"{(\Id, \Frob)}"] & \cN \times \cN
\end{tikzcd}
\end{equation}
As $\tau$ varies over all such maps the $_\tau \Hk_{\cN}^r$ form a locally closed stratification of $ \Hk_{\cN}^r$, hence the $_\tau \Sht_{\cN}^r$ form a locally closed stratification of $\Sht_{\cN}^r$.

\begin{lemma}\label{lem:balance}
Let $\tau = (I_0, I_+, I_-, I_{\pm})$. If $_\tau \Sht^r_{\cN}$ is non-empty, then $|I_+|=|I_-|$.
\end{lemma}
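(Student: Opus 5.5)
The plan is to compare the degree of the subbundle $\cP_0$ with the degree of $\cP_r\cong\Frob^*\cP_0$, which must be equal. We then compute the change in degree of $\cP_i$ along each modification from the table in Proposition~\ref{prop:one-step-sat}.

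\emph{Step 1: degree changes.} By \eqref{eq:fiber-change-tests}, each modification $(\cP_{i-1},\cF_{i-1})\dashrightarrow(\cP_i,\cF_i)$ changes the degree of the subbundle as follows:
\begin{itemize}
\item type $0$: the degree does not change;
\item type $+$: $\cP_i\subset\cP_{i-1}$ with colength one at $y$, so the degree drops by $1$;
\item type $-$: $\cP_{i-1}\subset\cP_i$ with colength one at $x$, so the degree rises by $1$;
\item type $\pm$: the degree drops by one at $y$ and rises by one at $x$, for a net change of $0$.
\end{itemize}
All degrees here are computed on $X'$ over a geometric point. This is a statement about geometric points, so it suffices to check it over an algebraically closed field. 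There we use the explicit formulas for $\cP'$ in the proof of Proposition~\ref{prop:one-step-sat}, together with additivity of $\deg$ in the short exact sequences $0\to\cP'\to\cP\to(\text{length one at }y)\to0$ and $0\to\cP\to\cP'\to(\text{length one at }x)\to0$. Summing over $i=1,\ldots,r$ gives
\[
\deg\cP_r-\deg\cP_0=|I_-|-|I_+|.
\]

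\emph{Step 2: Frobenius invariance.} On a geometric point of ${}_\tau\Sht^r_{\cN}$, the Cartesian square \eqref{eq:framed-shtuka-stratum-square} says that $(\cP_r,\cF_r,h_r)$ is identified with $\Frob^*(\cP_0,\cF_0,h_0)$. In particular, $\cP_r\cong\Frob^*\cP_0$ as bundles on $X'_{\bar k}$. Pullback along the (relative) Frobenius of the base preserves the degree of a vector bundle on $X'\times S$ over each geometric point, because degree is a locally constant invariant computed from the Euler characteristic, and $\Frob_S$ acts only on the $S$-factor. Hence $\deg\cP_r=\deg\cP_0$.

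Combining the two steps gives $|I_+|=|I_-|$ whenever the stratum has a geometric point, i.e.\ whenever it is non-empty.

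The only point needing care is Step 1 in type $\pm$. There one should make sure that the two length-one changes at $x$ and $y$ are genuinely independent: if $x=y$ the legs would collide, but $x$ and $\sigma(x)$ are disjoint because $\sigma$ acts freely on the étale double cover. Once that is granted, the net change of $0$ follows, and I expect no real obstacle.
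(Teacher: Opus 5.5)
Your proposal is correct and is essentially the paper's proof: you track $\deg\cP_i$ along the chain using the four modification types, and compare with $\deg\cP_r=\deg(\Id\times\Frob)^*\cP_0=\deg\cP_0$. One harmless slip: in Proposition~\ref{prop:one-step-sat} the given pair $(\cP,\cF)$ is the one recorded by $\pr_1$, i.e.\ the \emph{later} term of the modification, so the inclusions in your Step~1 are reversed (the paper gets $\deg\cP_0-\deg\cP_r=|I_-|-|I_+|$); this does not affect the conclusion, since only the vanishing of the net degree change is used.
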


\begin{proof}
Suppose $_\tau \Sht^r_{\cN}$ has a geometric point $(x_{\bu}, \cP_{\bu}, \cF_{\bu})$. Then $\cP_r \cong (\Id_{X'}\times\Frob_{\bar k})^*\cP_0$, so their total degrees on $X'$ agree. On the other hand, we have
\[
\deg \cP_0 - \deg \cP_r = |I_-| - |I_+|.
\]
Here $\Frob_{\bar k}$ is the $q$-Frobenius of the geometric test point in \eqref{eq:framed-shtuka-square}. Since $\cP_r \cong \Frob^* \cP_0$, we have $\deg \cP_r = \deg \cP_0$ hence if $_\tau \Sht^r_{\cN}$ is non-empty then we must have $|I_-| = |I_+|$.
\end{proof}

The following lemma generalizes \cite[Lemma 9.3]{FYZ} slightly: compared to \emph{loc. cit.} the smooth envelope is allowed to contain $U$ only as a locally closed subscheme, and we include an auxiliary $T$-factor.

\begin{lemma}\label{lem:VL-local}
Fix an algebraic closure $\ol{k}$ of $k$. For a $\ol{k}$-scheme $Z$, write
$Z^{(1)}:=Z\times_{\Spec\ol{k},\Frob_q}\Spec\ol{k}$ for its $q$-Frobenius twist and
$\Frob:Z\to Z^{(1)}$ for the relative $q$-Frobenius. Let $W,Z,T$ be finite-type schemes over $\ol{k}$, and let
\[
        (h_0,h_1,h_T):
        W\longrightarrow Z\times Z^{(1)}\times T
\]
be a morphism. Suppose that every point $w\in W$ has an open
neighborhood $U\subset W$ for which there exist a finite-type scheme
$\overline U$, a locally closed immersion $j :U\hookrightarrow\overline U$, and a smooth morphism
\[
        \overline h=(\overline h_1,\overline h_T):
        \overline U\longrightarrow Z^{(1)}\times T
\]
of relative dimension $d$, such that $\overline h\circ j=(h_1,h_T)|_U$. Define $V$ by the Cartesian square
\begin{equation}\label{eq:VL-square}
\begin{tikzcd}[ampersand replacement=\&]
V \arrow[r] \arrow[d] \&
W \arrow[d,"{(h_0,h_1)}"]\\
Z \arrow[r,"{(\Id,\Frob)}"'] \&
Z\times Z^{(1)} .
\end{tikzcd}
\end{equation}
Then every fiber of $V\to T$ has dimension at most $d$.
\end{lemma}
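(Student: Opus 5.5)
The plan is the Lang-isogeny argument: the Frobenius graph is transverse to everything, so the fiber dimension of $V$ is governed by the relative dimension $d$ of the smooth envelope over the $Z^{(1)}\times T$-factor.

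\emph{Step 1: reduce to a local statement.} The question is local on $W$, and the open sets $U$ cover $W$. So it suffices to fix one $U$ with its data $j\co U\hookrightarrow \overline U$ and $\overline h$, and bound the fibers over $T$ of $V_U:=V\times_W U$. By the Cartesian square \eqref{eq:VL-square},
\[
V_U=\{(z,u)\in Z\times U : h_0(u)=z,\ h_1(u)=\Frob(z)\},
\]
and projecting to $U$ identifies $V_U$ with the locus $\{u\in U : h_1(u)=\Frob(h_0(u))\}$. Since the question concerns only dimensions, we may work with reduced structures throughout.

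\emph{Step 2: enlarge to the envelope.} The map $h_0$ is only defined on $U$, not on $\overline U$. After shrinking $U$ and $\overline U$ further, we may take $U$ closed in an open $\overline U$; everything here is affine-local. Choose an affine open embedding $Z\subset\mathbf A^N$; after shrinking $Z$ and $U$, extend $h_0|_U$ to a morphism $\overline h_0\co \overline U\to \mathbf A^N$. This is possible because $U$ is closed in the affine $\overline U$, so coordinate functions lift. Now consider
\[
\overline V:=\{\bar u\in \overline U : \overline h_1(\bar u)=\Frob(\overline h_0(\bar u))\},
\]
where we view $Z^{(1)}$ inside $(\mathbf A^N)^{(1)}$ and $\Frob$ as the Frobenius of $\mathbf A^N$. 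Then $V_U$ is a closed subscheme of $\overline V$, and the map $V_U\to T$ is the restriction of $\overline h_T$.

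\emph{Step 3: the transversality estimate, which is the main point.} Fix $t\in T$ and let $\overline U_t$ be the fiber of $\overline h_T$ over $t$. Since $\overline h$ is smooth, $\overline U_t\to Z^{(1)}$ is smooth of relative dimension $d$. The locus $\overline V_t$ is the preimage of the graph of $\Frob\circ\overline h_0$ in $\overline U_t$. Locally we trivialize: étale-locally on $\overline U_t$, write $\overline U_t\to Z^{(1)}\times\mathbf A^d$ étale. Since $Z$ need not be smooth, I would avoid a tangent-space argument on $Z$ and proceed by counting equations as follows.

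Embed $Z^{(1)}\subset (\mathbf A^N)^{(1)}$ and put $\overline V_t=\overline U_t\times_{\mathbf A^N\times\mathbf A^N} \Gamma$, where $\overline U_t\to \mathbf A^N\times \mathbf A^N$ is $(\overline h_0,\overline h_1)$ and $\Gamma$ is the graph of $\Frob$. The key observation is that the composite
\[
\overline U_t\to \mathbf A^N\times\mathbf A^N\xrightarrow{(a,b)\mapsto b-\Frob(a)}\mathbf A^N
\]
has differential equal to that of $\overline h_1$, because $d\Frob=0$. Hence the composite is "as smooth as $\overline h_1$", and $\overline V_t$ is its fiber over $0$ intersected with the condition $\overline h_1\in Z^{(1)}$. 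More cleanly: let $G\co \overline U_t\to (\mathbf A^N)^{(1)}$ be $\bar u\mapsto \overline h_1(\bar u)-\Frob(\overline h_0(\bar u))+\Frob(\text{base})$. To keep the target equal to $Z^{(1)}$, I would instead use the standard Lang trick, as in the proof of \cite[Lemma 9.3]{FYZ}: replace $Z$ by a smooth chart. That is, stratify $Z$ into smooth locally closed pieces $Z_\beta$, since only dimensions matter and $h_0$ lands in finitely many strata. Over $Z_\beta$ smooth, the map $\overline U_t\times_{Z^{(1)}}Z_\beta^{(1)}\to Z_\beta^{(1)}$ is smooth of relative dimension $d$. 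Twisting it by $\Frob\circ h_0$ yields a morphism with the same (surjective) differential, hence again smooth of relative dimension $d$. Its fiber over the diagonal condition therefore has dimension at most $d$.

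\emph{Step 4: conclude.} Combining Steps 1 to 3, each fiber $V_t$ is covered by finitely many pieces, each of dimension at most $d$, so $\dim V_t\le d$.

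The obstacle I expect is Step 3: making the "differential of Frobenius vanishes" argument rigorous when $Z$ is singular and $U$ is only locally closed in its envelope. Stratifying $Z$ into smooth pieces and extending $h_0$ locally are the devices I would try first.
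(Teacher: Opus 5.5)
Your Steps 1 and 2 are sound and match the paper's setup. The paper likewise reduces to $T$ a geometric point and $Z$ affine, makes $U$ closed in an affine envelope, and lifts the coordinates $h_0^*x_i$ to functions $f_i$ on the envelope.

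The gap is Step 3, the only substantive step, and the argument you settle on there does not work as written.
\begin{itemize}
\item ``Twisting'' $\overline h_1$ by $\Frob\circ h_0$ is Lang's map $g\mapsto g^{-1}F(g)$, which requires a group law on the target. $Z_\beta^{(1)}$ has none. In $(\mathbf A^N)^{(1)}$ the difference map is in general not smooth, because its differential $d\overline h_1$ lands only in $TZ^{(1)}$.
\item $h_0$ is not defined on $\overline U_t\times_{Z^{(1)}}Z_\beta^{(1)}$ with values in $Z_\beta$. Your Step 2 extension lands in $\mathbf A^N$, and $h_0|_U$ need not send $U\times_{Z^{(1)}}Z_\beta^{(1)}$ into $Z_\beta$; only points of $V$ are forced there. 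You would need a further extension argument, for instance henselian lifting for smooth $Z_\beta$.
\item ``The fiber over the diagonal condition'' is not a fiber of any smooth map.
\end{itemize}
The stated reason for this detour is also a misconception. Singularity of $Z$ does not obstruct a tangent-space argument: Zariski tangent spaces of fiber products are fiber products of tangent spaces, and $d\Frob=0$ on Zariski tangent spaces for any $Z$.

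In fact the route you abandoned already finishes the proof. Put $\Phi:=\overline h_1-\Frob\circ\overline h_0\co\overline U_t\to(\mathbf A^N)^{(1)}$, so that $\overline V_t=\Phi^{-1}(0)$ scheme-theoretically. At a closed point $\bar u$,
\[
T_{\bar u}\overline V_t=\ker d\Phi=\ker\bigl(d\overline h_1\co T_{\bar u}\overline U_t\to TZ^{(1)}\bigr),
\]
which is the tangent space of the smooth $d$-dimensional fiber of $\overline h_1$. You do not need $\Phi$ to be smooth; you only need $\dim_{\bar u}\overline V_t\le\dim T_{\bar u}\overline V_t=d$. More directly still, $T_vV_t\subset\ker(d\overline h_1)$ with no extension of $h_0$ at all.

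The paper makes the same computation explicit in coordinates. It writes the smooth map to $Z^{(1)}$ locally as $\overline W=\Spec\Omega[\xi,y]_\Delta/(I^{(1)}(\xi),r_1,\dots,r_s)$ with $\det(\partial r_a/\partial y_b)$ invertible. It then places $V$ inside $\Spec\Omega[\xi,y]_\Delta/(\xi_i-f_i^q,\ r_a)$, deliberately discarding $I^{(1)}(\xi)$. The Jacobian of this system has an identity block in $\xi$, since $\partial(f_i^q)=0$, and the invertible block in $y$, so the ambient is smooth of dimension $d$. Discarding $I^{(1)}$ is what removes all dependence on the singularities of $Z$, so no stratification is needed.
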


\begin{proof}
Fix a geometric point $v\in V_t$, with image $w\in W$ and $t \in T$. It is enough to prove $\dim_vV_t\le d$. Choose $U\hookrightarrow\overline U$ at $w$ as in the hypothesis and replace $W$ by $U$. We may then work with a single smooth morphism
\[
        \overline W\longrightarrow Z^{(1)}\times T
\]
with $W$ locally closed in $\overline W$. For the purpose of proving the Lemma, we may make the following reductions:
\begin{itemize}
\item Base change to a geometric point $\Spec\Omega\to T$ and replace $\ol k$ by $\Omega$, reducing to the case where $T=\Spec\Omega$.  
\item Replace $Z$ by an affine neighborhood $Z'$ of $h_0(w)$, and replace $\overline W$ by $\overline h_1^{-1}((Z')^{(1)})$ and $W$ by $h_0^{-1}(Z')\cap\overline h_1^{-1}((Z')^{(1)})$, reducing to the case where $Z$ is affine. 
\end{itemize} 
Since $Z$ is affine, we may write
\[
        Z=\Spec R,\qquad \text{where} \qquad 
        R=\Omega[x_1,\ldots,x_l]/I .
\]
Then
\[
        Z^{(1)}=\Spec R^{(1)},\qquad\text{where} \qquad 
        R^{(1)}\cong\Omega[\xi_1,\ldots,\xi_l]/I^{(1)},
\]
where $\xi_i=1\otimes x_i$ and $\Frob^*(\xi_i)=x_i^q$. After shrinking around $w$, we may assume that $\overline W$ is affine and $W\subset\overline W$ is closed. The standard local form of the smooth morphism $\overline h_1:\overline W\to Z^{(1)}$ of relative dimension $d$ implies that there is a presentation
\[
        \overline W=
        \Spec
        \frac{\Omega[\xi_1,\ldots,\xi_l,
        y_1,\ldots,y_{s+d}]_\Delta}
        {(I^{(1)}(\xi),r_1,\ldots,r_s)}
\]
for some element $\Delta$, with
\[
        J:=
        \det\left(\frac{\partial r_a}{\partial y_b}\right)_{1\le a,b\le s}
\]
invertible on $\overline W$.

Let $\bar f_i=h_0^*x_i\in\Gamma(W,\cO_W)$.  Because $W$ is closed in
$\overline W$, we may choose lifts
\[
        f_i\in
        \Omega[\xi_1,\ldots,\xi_l,y_1,\ldots,y_{s+d}]_\Delta
\]
of the functions $\bar f_i$.  Set
\[
        g_i:=\xi_i-f_i^q,\qquad 1\le i\le l .
\]
By \eqref{eq:VL-square}, the equations $\xi_i=\bar f_i^q$ cut out $V$ on $W$. Hence, on this affine neighborhood, $V$ is a closed subscheme of
\[
        U_{\mathrm{amb}}:=
        \Spec
        \frac{\Omega[\xi_1,\ldots,\xi_l,
        y_1,\ldots,y_{s+d}]_\Delta}
        {(g_1,\ldots,g_l,r_1,\ldots,r_s)} .
\]

Apart from the leading term $\xi_i$, each $g_i$ depends on the variables through $q$th powers. In characteristic $p$, we have
 $\frac{\partial g_i}{\partial \xi_j}=\delta_{ij}$ and $\frac{\partial g_i}{\partial y_b}=0$, so the Jacobian matrix for the equations
$g_1,\ldots,g_l,r_1,\ldots,r_s$, using the column order
\[
        \xi_1,\ldots,\xi_l
        \,;\,
        y_1,\ldots,y_s
        \,;\,
        y_{s+1},\ldots,y_{s+d},
\]
has the block form
\[
\begin{pmatrix}
\Id_l & 0 & 0\\
* & \left(\frac{\partial r_a}{\partial y_b}\right)_{1\le a,b\le s} & *
\end{pmatrix}.
\]
The matrix has rank $l+s$, so $U_{\mathrm{amb}}$ is smooth of dimension $(l+s+d)-(l+s)=d$. Since $V$ is locally a closed subscheme of $U_{\mathrm{amb}}$, this gives $\dim_vV_t\le d$, proving the lemma.
\end{proof}

\begin{cor}[Stratum bound]\label{cor:stratum-bound} Let $\tau = (I_0, I_+, I_-, I_\pm )$. Then we have
\begin{equation}\label{eq:stratum-bound}
\dim \left(_\tau \Sht_{\cN}^r \right) \leq (n-m)|I_0|+n|I_+|+m|I_-|+m|I_\pm|.
\end{equation}
\end{cor}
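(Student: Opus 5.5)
We apply Lemma \ref{lem:VL-local} to the Cartesian square \eqref{eq:framed-shtuka-stratum-square}. The local envelopes come from Corollary \ref{cor:local-envelope}, and the auxiliary factor $T$ is taken to be a point. Each fiber of $V\to T$ is then all of $V$, so the lemma bounds $\dim V$ itself. The dimension this produces is $D_\tau+\dim\cN$ minus a correction accounting for stacky charts. Corollary \ref{cor:local-envelope} gives envelopes of relative dimension $D_\tau=(n-m-1)|I_0|+(n-1)|I_+|+(m-1)|I_-|+(m-1)|I_\pm|$ over $\cN\times(X')^r$, and the legs contribute $r=|I_0|+|I_+|+|I_-|+|I_\pm|$. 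Together these give exactly the right-hand side of \eqref{eq:stratum-bound}, with $\dim\cN$ cancelling against the Frobenius graph. So the content of the proof is making this dimension count rigorous on the stacky objects.

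\emph{Step 1: atlases.} Choose a smooth atlas $Z\to\cN$ of relative dimension $e_0$; then $\dim Z=\dim\cN+e_0$. Here $\cN$ is locally of finite type, so it suffices to work on finite-type opens. Using the Frobenius-twisted atlas $Z^{(1)}\to\cN$, form
\[
W:=Z\times_{\cN}{}_\tau\Hk_{\cN}^r\times_{\cN}Z^{(1)},
\]
with maps $h_0,h_1$ to $Z$ and $Z^{(1)}$ induced by $\pr_0$ and $\pr_r$. The fiber product $V$ in \eqref{eq:VL-square} is then a smooth cover of ${}_\tau\Sht_{\cN}^r$ of relative dimension $e_0$. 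The point is that $(\Id,\Frob)$ pulls back the second atlas factor to the Frobenius twist of the first, and $Z^{(1)}\to\cN$ is a Frobenius twist of $Z\to\cN$, hence also smooth of relative dimension $e_0$. Consequently $\dim{}_\tau\Sht_{\cN}^r=\dim V-e_0$.

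\emph{Step 2: envelopes for $W$.} Consider $W\to{}_\tau\Hk_{\cN}^r\times_{\cN}Z^{(1)}$. Base change of Corollary \ref{cor:local-envelope} along $Z^{(1)}\to\cN$ gives, locally on $W$, a locally closed embedding $U\hookrightarrow\overline U$ with $\overline U$ smooth over $Z^{(1)}\times(X')^r$ of relative dimension $D_\tau+e$. Here $e$ is the relative dimension of the chart, which includes the $e_0$ coming from the $Z$-factor.

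Keeping track of $e$ is the main obstacle. The cleanest way to handle it is to run the induction in Corollary \ref{cor:local-envelope} directly with the chart $U=W$ restricted over ${}_\tau\Hk$. The chart $Z\times_{\cN}(-)\to(-)$ has relative dimension $e_0$, so the envelope is smooth over $Z^{(1)}\times (X')^r$ of relative dimension $D_\tau+e_0$. Now compose with the smooth map $(X')^r\to\pt$ of relative dimension $r$. This gives $\overline U$ smooth over $Z^{(1)}$ of relative dimension $D_\tau+r+e_0$.

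\emph{Step 3: conclude.} Lemma \ref{lem:VL-local} with $T=\pt$ gives $\dim V\le D_\tau+r+e_0$. Subtracting $e_0$ yields $\dim{}_\tau\Sht_{\cN}^r\le D_\tau+r$, and $D_\tau+r$ equals $(n-m)|I_0|+n|I_+|+m|I_-|+m|I_\pm|$ by the computation above.
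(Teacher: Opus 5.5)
Your argument is correct, but it takes a different route from the paper's.

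\textbf{The paper's route.} It imposes a deep level structure at an auxiliary closed point $v$. This forces it to localize on the leg space to $T_v=(X'\setminus\nu^{-1}(v))^r$. In exchange, the level datum transports through every modification, so a single finite-type scheme $Z$ covering a truncation of $\cN$ serves at both ends. The level Hecke stratum $W$ then has envelopes of relative dimension exactly $D_\tau$ over $Z^{(1)}\times T_v$. Lemma \ref{lem:VL-local} bounds the fibers over $T_v$ by $D_\tau$. Finite \'etaleness of forgetting level structure on shtukas transfers this bound to ${}_\tau\Sht_{\cN}^r$, and adding $\dim T=r$ finishes.

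\textbf{Your route.} You use two independent smooth atlases $Z$ and $Z^{(1)}$ and no level structure. Write $p\co Z\to\cN$ for the atlas. Then $p^{(1)}\circ\Frob_Z=\Frob_{\cN}\circ p$ gives $V\cong Z\times_{\cN}{}_\tau\Sht_{\cN}^r$. So the atlas dimension $e_0$ enters once in the envelope, through the $Z$-factor on the $\pr_0$ side, and once in $V\to{}_\tau\Sht_{\cN}^r$, and it cancels. You also fold the legs into the envelope by taking $T=\pt$.

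\textbf{Comparison.} Your route avoids the auxiliary point, the localization on $(X')^r$, and the appeal to the level-structure construction of \cite{FYZ}. The paper's route avoids the atlas bookkeeping. It also yields the fiberwise bound $D_\tau$ over $(X')^r$; you would get that too by taking $T=(X')^r$ instead of $T=\pt$.

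\textbf{The step to spell out.} Corollary \ref{cor:local-envelope} as stated produces envelopes over $\cN\times(X')^r$ for its own smooth charts of ${}_\tau\Hk_{\cN}^r$. Lemma \ref{lem:VL-local} instead needs Zariski-local envelopes on $W$ over $Z^{(1)}$. You can supply this in either of two ways.
\begin{enumerate}
\item Rerun the induction with $Z^{(1)}$ as the base chart. This gives envelopes of relative dimension $D_\tau$ for ${}_\tau\Hk_{\cN}^r\times_{\cN}Z^{(1)}$ over $Z^{(1)}\times(X')^r$. Then lift a local standard-smooth presentation of $W\to{}_\tau\Hk_{\cN}^r\times_{\cN}Z^{(1)}$ into that envelope; this is where the $+e_0$ comes from.
\item Pass to a further smooth chart $W'\to W$ of relative dimension $e$. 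Then $W'\times_W V\to V$ is smooth of the same relative dimension $e$, so $e$ cancels exactly as $e_0$ does.
\end{enumerate}
Your phrase ``run the induction with the chart $U=W$'' points at the first option, but the extension across the smooth map $W\to{}_\tau\Hk_{\cN}^r\times_{\cN}Z^{(1)}$ deserves a sentence.
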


\begin{proof}
Let $D_\tau$ be the integer defined in
\eqref{eq:local-envelope-dim}, and let $T:=(X')^r$. We first show that every geometric fiber of ${}_\tau\Sht_{\cN}^r\longrightarrow T$
has dimension at most $D_\tau$.

We may base change to $\ol{k}$. The assertion is local on $T$.
Given a geometric point of $T$, choose a closed point
$v\in |X|$ whose inverse image in $X'$ is disjoint from all of its coordinates, and replace $T$ by the open neighborhood
\[
        T_v:=(X'\setminus\nu^{-1}(v))^r.
\]
As $v$ varies, these opens cover $T$.

It is also enough to work on finite-type truncations. Choose a Frobenius-stable
truncation datum $\mu$, consisting of a Harder--Narasimhan bound for the
$\cF_i$'s together with a lower bound for the locally constant integers
$\deg\cP_i$, and restrict to the locus on which all the framed bundles
\[
        (\cP_i,\cF_i),\qquad 0\leq i\leq r,
\]
are bounded by $\mu$. As $\mu$ varies, these loci exhaust
${}_\tau\Sht_{\cN}^r$.

Impose a sufficiently deep level structure at $v$, as in
\cite[proof of Proposition~9.1(2)]{FYZ}. Denote the resulting
finite-type scheme covering the corresponding truncation of
$\cN$ by $Z$, and denote the corresponding level Hecke stratum by
$W$. Since the legs lie in $T_v$, the level structure transports
through every modification, and we obtain morphisms
\[
        h_0:W\longrightarrow Z,\qquad
        h_r:W\longrightarrow Z^{(1)},\qquad
        h_T:W\longrightarrow T_v.
\]
Let $V$ be defined by the Cartesian square
\[
\begin{tikzcd}[ampersand replacement=\&]
V \arrow[r] \arrow[d] \&
W \arrow[d,"{(h_0,h_r)}"]\\
Z \arrow[r,"{(\Id,\Frob)}"'] \&
Z\times Z^{(1)} .
\end{tikzcd}
\]

Corollary~\ref{cor:local-envelope} implies that every point of $W$ has an open
neighborhood locally closed in a finite-type scheme smooth over $Z^{(1)}\times T_v$ of relative dimension $D_\tau$. Lemma~\ref{lem:VL-local} therefore shows that
every fiber of $V\longrightarrow T_v$ 
has dimension at most $D_\tau$.

Since forgetting level structure is a finite \'etale map, it does not affect fiber dimensions. It follows that
every geometric fiber of the map
${}_\tau\Sht_{\cN}^r\longrightarrow T$
has dimension at most $D_\tau$.

Since $\dim T=r$, we conclude that
\[
\dim({}_\tau\Sht_{\cN}^r)
 \leq D_\tau+r = (n-m)|I_0|+n|I_+|
       +m|I_-|+m|I_\pm|,
\]
as claimed.
\end{proof}

\subsubsection{Special cycles}

We prove Theorem \ref{thm:special-cycle-dimension-bound}. Since the coefficient map lands in the finite discrete set $\cA_\cE(k)$, the injective locus $\cZ_\cE^{r,\circ}$ is the finite disjoint union of the open-and-closed substacks $\cZ_\cE^r(a)^\circ$. It suffices to prove that if $m \leq n/3$, then $(\cZ_{\cE}^r(a)^\circ)_{\red}$ is exhausted by finite type open substacks with dimension $\leq r(n-m)$. We may thus impose a Harder--Narasimhan truncation and pass to a finite type open substack. Then we may apply Lemma \ref{lem:stratify-saturation} to the restriction of the universal family $\{t_{\bu} \co \cE \inj \cF_{\bu}\}$ (on a finite cover by a scheme) to obtain finitely many reduced locally closed strata where the saturation $\cP_{\bu} \subset \cF_{\bu}$ exists.

On each such reduced stratum, the saturations are compatible with the chain of modifications, thanks to the compatibility condition between each pair $t_{i-1}$ and $t_i$. Thus the universal data on the stratum define a morphism to $\Hk_{\cN}^r$, and the shtuka condition promotes it to a morphism to $\Sht_{\cN}^r$. We now refine by pulling back the locally closed stratification $\{{}_\tau\Hk_{\cN}^r\}_\tau$; on each refined stratum the type $\tau(i)\in\{0,+,-,\pm\}$ is constant for every leg.
It suffices to show that each such refined stratum has dimension $\leq r(n-m)$.

Pick any such stratum $\cS$, and let $\tau  = (I_0, I_+, I_-, I_{\pm})$ be its type. Then we obtain a map $\cS \rightarrow {_{\tau}\Sht_{\cN}^r}$. 

\begin{lemma}\label{lem:quasi-finite}
The map 
\begin{equation}\label{eq:factor-through-framed-shtukas}
\cS \rightarrow {_\tau \Sht_{\cN}^r}
\end{equation}
is quasi-finite.
\end{lemma}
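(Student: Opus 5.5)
To prove that $\cS \to {}_\tau\Sht_{\cN}^r$ is quasi-finite, I will check that its geometric fibers are finite. Fix a geometric point $(x_\bu, \cP_\bu, \cF_\bu, h_\bu)$ of ${}_\tau\Sht_{\cN}^r$ over an algebraically closed field $\Omega$. A point of $\cS$ lying over it consists of this same shtuka, up to the identifications already present in the target, together with compatible injections $t_i\co \cE_\Omega \inj \cF_i$ whose saturations are the given $\cP_i$. So each $t_i$ factors through $\cP_i$. By the compatibility of $t_{i-1}$ and $t_i$ under the generic isomorphisms $f_i$, the whole tuple $t_\bu$ is determined by $t_0$: the $f_i$ identify $\cP_{i-1}$ and $\cP_i$ away from the legs, and maps from a vector bundle are determined generically. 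The same argument shows that $t_0$ is determined by its restriction to the generic point.

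The fiber is therefore a subset of the set of injections $t_0\co \cE_\Omega \to \cP_0$ satisfying two conditions: the induced generic maps extend to maps $\cE\to\cP_i$ for each $i$, and the Frobenius condition $\varphi\circ t_r = \Frob^* t_0$ holds. Under the shtuka isomorphism $\cP_r\cong \Frob^*\cP_0$, set $P := \cP_0|_{\mathrm{generic}}$ and $E := \cE$ at the generic point. The composite of the $f_i$ gives a $\sigma$-semilinear (Frobenius-twisted) isomorphism $\Phi\co P \to \Frob^* P$ of $\Omega(X')$-vector spaces of dimension $m$. The condition on $t_0$ then reads $\Phi\circ t_0 = \Frob^*(t_0)$, where $\Frob^*\cE\cong\cE$ because $\cE$ is defined over $k$. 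Thus $t_0$ is a Frobenius-fixed point of the $k$-structure on $\Hom(E,P)$ given by the semilinear map $t\mapsto \Phi^{-1}\circ \Frob^* t$.

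The main step is to show that the fixed points form a finite set. This is the usual Lang-type argument. The space of $t_0$ satisfying the integrality conditions at all $\cP_i$ is a finite-dimensional $\Omega$-vector space, namely $\rH^0$ of a coherent sheaf $\cHom(\cE,\cP_0)$, bounded further by the conditions at the legs. Call this space $M$. The Frobenius condition defines the fixed points of a $q$-semilinear map $\Phi'\co M\to M^{(1)}$ that is injective on the relevant locus. I expect the main obstacle to be making this rigorous in the presence of poles and zeros at the legs. I would phrase it as follows: inside $\Tot(M)$ the solution set is cut out by the equations $\xi - \Phi'(\xi)^{(q)}=0$. These equations have invertible Jacobian, exactly as in the proof of Lemma \ref{lem:VL-local}, so the solution scheme is étale over $\Omega$ of finite type, hence finite.

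Since $\cS$ is of finite type (we have imposed Harder--Narasimhan truncations), finiteness of the geometric fibers gives quasi-finiteness. If needed, I would pass to finite-type truncations of the target so that the morphism is of finite type.
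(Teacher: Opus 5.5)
Your argument is correct in substance, but it takes a different route from the paper. The paper never analyzes the fibers. It observes that the composite $\cS\to{}_\tau\Sht_{\cN}^r\to\Sht_{U(n)}^r$ is the restriction of $\cZ_{\cE}^r(a)^\circ\to\Sht_{U(n)}^r$ to a locally closed stratum. That map is quasi-finite by \cite[Proposition 7.5]{FYZ}, and every fiber of the first map lies inside a fiber of the composite. You instead reprove that quasi-finiteness directly, by a Lang-type count of Frobenius-compatible sections. Your argument never really uses the framing: it works verbatim with $M=\Hom(\cE,\bigcap_i\cF_i)$. This is exactly why the paper can forget $\cP_\bu$ and cite the unframed statement. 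The paper's route is two lines given the citation; yours is self-contained and makes the source of finiteness explicit.

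One step should be stated more carefully. The operator $t\mapsto\Phi^{-1}\circ\Frob^*t$ does not preserve $M$. After transport, $\varphi^{-1}\circ\Frob^*t$ lands in $\cP_r$ but need not land in $\cP_0,\ldots,\cP_{r-1}$. So the Frobenius condition is not a fixed-point equation for a self-map of $M$, and its Jacobian is a full-rank rectangular matrix rather than an invertible one.

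Write the condition instead as $A(t)=F(t)$ in $N:=\Hom(\cE,\Frob^*\cP_0)$. Here $A\co M\to N$, $t_0\mapsto\varphi\circ t_r$, is $\Omega$-linear and injective, and $F=\Frob^*$ is $q$-semilinear. In coordinates this is $\dim N$ equations in $\dim M$ variables, and the Jacobian is the matrix of $A$, which has full column rank. That already makes the solution scheme unramified over $\Omega$, hence a finite set of points.

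A simpler alternative: the solutions form an $\F_q$-subspace $\Lambda\subset M$. Take a minimal relation $\sum_j c_j\lambda_j=0$ with $c_1=1$. Then $A(\sum_j c_j^q\lambda_j)=F(\sum_j c_j\lambda_j)=0$, so $\sum_j(c_j-c_j^q)\lambda_j=0$ is a shorter relation, forcing all $c_j\in\F_q$. Hence $\F_q$-independent elements of $\Lambda$ remain $\Omega$-independent, and $|\Lambda|\le q^{\dim_\Omega M}$.

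(Minor: you call $\Phi$ ``$\sigma$-semilinear'', but $\sigma$ is the involution of $X'$ here; you mean Frobenius-semilinear.)
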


\begin{proof}
Consider the composition
\[
\cS \rightarrow {_\tau \Sht_{\cN}^r} \rightarrow \Sht_{U(n)}^r.
\]
The composition $\cS\to\Sht_{U(n)}^r$ is obtained from the coefficient-$a$
injective-locus map
$\cZ_{\cE}^r(a)^\circ\to\Sht_{U(n)}^r$ by restricting to a locally closed stratum. Since
that map is quasi-finite by \cite[Proposition 7.5]{FYZ}, the composition
$\cS\to\Sht_{U(n)}^r$ is quasi-finite.  Because this composition factors
through the map \eqref{eq:factor-through-framed-shtukas}, every fiber of
\eqref{eq:factor-through-framed-shtukas} is contained in a fiber of
$\cS\to\Sht_{U(n)}^r$.  Hence \eqref{eq:factor-through-framed-shtukas} is
quasi-finite.
\end{proof}

\begin{proof}[Completion of the proof of Theorem \ref{thm:special-cycle-dimension-bound}]
Thanks to Lemma~\ref{lem:quasi-finite}, and after pulling back along smooth
charts of the target so that quasi-finite maps do not increase dimension, it
suffices to show that if
${}_\tau \Sht_{\cN}^r$ is non-empty, then
$\dim({}_\tau \Sht_{\cN}^r) \leq r(n-m)$. Let
$\tau = (I_0, I_+, I_-, I_\pm)$ and
\[
        u:=|I_+|=|I_-|,\qquad b:=|I_\pm|
\]
where the equality $|I_+|=|I_-|$ is Lemma~\ref{lem:balance}.

Corollary~\ref{cor:stratum-bound} gives
\begin{equation}\label{eq:dimension-after-balance}
        \dim ({}_\tau \Sht_{\cN}^r) \le (n-m)|I_0|+(n+m)u+mb .
\end{equation}
Since $r=|I_0|+2u+b$, we have
\begin{align}\label{eq:numerical-inequality}
&(n-m)r-\bigl((n-m)|I_0|+(n+m)u+mb\bigr) \nonumber \\
&\qquad=(n-m)(|I_0|+2u+b)-(n-m)|I_0|-(n+m)u-mb \nonumber \\
&\qquad=(n-3m)u+(n-2m)b .
\end{align}
The integers $u$ and $b$ are nonnegative.  The hypothesis $3m\le n$
implies $n-3m\ge0$, and also $n-2m=(n-3m)+m\ge0$.  Hence \eqref{eq:numerical-inequality} is nonnegative, giving
\[
        (n-m)|I_0|+(n+m)u+mb\le (n-m)r .
\]
Together with \eqref{eq:dimension-after-balance}, this proves
$\dim({}_\tau \Sht_{\cN}^r)\le (n-m)r$.
\end{proof}

\section{Proof of the Trace Conjecture in low corank}\label{sec: trace conjecture}

In \cite[\S 4--6]{FK}, the \emph{motivic sheaf-cycle correspondence} is developed, refining the $\ell$-adic version developed in \cite[\S 3--5]{FYZ3}. We use the notation and terminology of \cite{FK}, including:
\begin{itemize}
\item The notion of \emph{cohomological correspondences} between motivic sheaves.
\item The notion of \emph{pushable/pullable} squares and \emph{pushable/pullable} maps of correspondences.
\item The definition of \emph{trace} of a self-cohomological correspondence between \emph{geometric} motivic sheaves.
\item The shtuka-twisted trace.
\end{itemize}
The purpose of this section is to prove the low-corank form of the Trace Conjecture, Theorem \ref{thm:trace-conjecture-n/3}, which realizes the derived fundamental classes of special cycles as the shtuka-twisted traces of certain cohomological correspondences.
Notably, we expect that the argument will generalize well to other settings, such as orthosymplectic groups.

\subsection{Formulation of the Trace Conjecture}\label{ssec: trace conjecture formulation} Let $\cM := \cM_{H_1, H_2}$ be a derived Hitchin stack as in \cite[Definition 5.23]{FYZ2}. Here $BH_1$ is a smooth $m$-framed gerbe over $X$ and $BH_2$ is a smooth $n$-framed gerbe of unitary type over $X$. Let $\Hk_{\cM}^r$ be the derived Hecke stack for $\cM$ as defined in \cite[\S 5.6]{FYZ2}. Then we have a correspondence
\[
\begin{tikzcd}
\cM & \Hk_{\cM}^r \ar[l, "h_0"'] \ar[r, "h_r"]  &  \cM
\end{tikzcd}
\]

\begin{lemma}\label{lem: hecke quasismooth} The map $h_r \co \Hk_{\cM}^r \rightarrow \cM$ is quasi-smooth.
\end{lemma}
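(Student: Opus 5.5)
The plan is to reduce quasi-smoothness of $h_r$ to a one-step statement and then compute the relative cotangent complex directly from the moduli description of \cite[\S 5.6]{FYZ2}. Recall that $\cM=\cM_{H_1,H_2}$ is the derived mapping stack of sections of a derived vector bundle over $\Bun_{H_1}\times\Bun_{H_2}$: a point is $(\cE,\cF,t)$ with $t\in R\Gamma(X', \cHom(\cE,\cF))$. In the same way, $\Hk_{\cM}^r$ is the derived stack of a point of $\Bun_{H_1}\times\Hk_{H_2}^r$ together with compatible sections $t_i\co\cE\to\cF_i$. Since $\Hk_{\cM}^r$ is an iterated fiber product $\Hk_{\cM}^1\times_{\cM}\cdots\times_{\cM}\Hk_{\cM}^1$, and quasi-smooth maps are stable under base change and composition, it suffices to treat $r=1$. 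There $h_1=h_r$ projects to the last factor.

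For $r=1$, I would factor $h_1$ as
\[
\Hk_{\cM}^1\to \cM\times_{\Bun_{H_2}}\Hk_{H_2}^1\to\cM .
\]
The second map is the base change of $\pr_1\co\Hk_{H_2}^1\to\Bun_{H_2}$ (together with the leg), which is smooth, indeed a projective bundle fibration over $\Bun_{H_2}\times X'$ as in Proposition \ref{prop:one-step-sat}. So it is enough to show the first map is quasi-smooth.

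The first map records the datum of $t_0$ given $t_1$, compatibly away from the legs. Since $\cF_0\supset\cF^\flat\subset\cF_1$, I would use the intermediate bundle $\cF^\flat$ and write the space of compatible pairs as $R\Gamma(X',\cHom(\cE,\cF^\flat))$. Then the fiber of the first map over a point is the derived fiber of
\[
R\Gamma(\cHom(\cE,\cF^\flat))\to R\Gamma(\cHom(\cE,\cF_1))
\]
over $t_1$. Equivalently, it is $\Tot$ of the perfect complex $R\Gamma(\cHom(\cE,\cF_1/\cF^\flat))[-1]$. Here $\cF_1/\cF^\flat$ is a line bundle on the graph of the leg, so this complex is a vector bundle in degree $1$, twisted by $\cE^*$ restricted to the leg. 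Its total space is quasi-smooth: the relative cotangent complex is the dual of that complex, which has tor-amplitude $[-1,-1]$. One has to be careful that the compatible-pair description is honestly a derived fiber product of derived vector bundles. I would deduce this from the exact triangles $\cF^\flat\to\cF_1\to\cF_1/\cF^\flat$ and $\cF^\flat\to\cF_0\to \cF_0/\cF^\flat$, together with the fact that $\Tot$ takes fiber sequences of perfect complexes to Cartesian squares. Note that $t_0$ is then determined by its factorization through $\cF^\flat$.

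The main obstacle is this identification: matching the definition of $\Hk_{\cM}^r$ in \cite{FYZ2}, phrased via the framed gerbes $BH_1,BH_2$, with the concrete description through $\cF^\flat$. The structure of the mapping stack must be tracked in families rather than at points. Once this is done, the tor-amplitude check is formal. Tor-amplitude $[-1,0]$ is preserved under composition with the smooth map, so $\bL_{h_r}$ has tor-amplitude in $[-1,\infty)$, as required.
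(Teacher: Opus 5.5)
Your proposal is correct and follows essentially the paper's argument: factor $h_r$ through the smooth base change $\cM\times_{\Bun_{H_2}}\Hk_{H_2}^r\to\cM$, then observe that the remaining map is the derived base change of the zero section of a vector bundle (namely $R\Gamma$ of a torsion sheaf), hence quasi-smooth. The differences are minor: the paper does not reduce to $r=1$, since the cone of $\cK^\flat\to h_r^*\cK$ is already the relative cohomology of a torsion sheaf for every $r$, and the identification of $\Hk_{\cM}^r$ with a derived vector bundle, which you flag as the main obstacle, is imported directly from \cite[Proposition 5.35]{FYZ2}.
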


\begin{proof}
By \cite[Proposition 5.35]{FYZ2}, $\cM$ is the derived vector bundle over $\Bun_{H_1} \times \Bun_{H_2}$ associated to a certain perfect complex $\cK$, and $\Hk_{\cM}^r$ is the derived vector bundle over $\Bun_{H_1} \times \Hk_{H_2}^r$ associated to a certain perfect complex $\cK^\flat$. Let $\wt{\cM}  := \Tot_{\Bun_{H_1}\times\Hk_{H_2}^r} ( h_r^* \cK)$ be the base change of $\cM$ to $\Bun_{H_1}\times\Hk_{H_2}^r$. Since the map $\Hk_{H_2}^r \rightarrow \Bun_{H_2}$ is smooth by \cite[Lemma 2.6]{FYZ2}, the map $\wt{\cM} \rightarrow \cM$ is smooth. Hence it suffices to show that the map $\Hk_{\cM}^r \rightarrow \wt{\cM}$ is quasi-smooth. This follows from the observation that the cone of $\cK^\flat \rightarrow h_r^* \cK$ is represented by a locally free sheaf, since it arises as the relative cohomology of $X'$ with coefficients in a torsion coherent sheaf, as explained in the proof of \cite[Lemma 9.1.3(1)]{FYZ3}. Equivalently, $\Hk_{\cM}^r\to\wt{\cM}$ is the derived base change of the zero section of the vector bundle associated to this cone, so it is quasi-smooth.
\end{proof}

Consequently, the map of correspondences
\[
\begin{tikzcd}
\cM \ar[d] & \Hk_{\cM}^r \ar[l, "h_0"'] \ar[r, "h_r"]  \ar[d]  &\cM\ar[d]  \\
\pt & \pt \ar[l] \ar[r] & \pt
\end{tikzcd}
\]
is right pullable, so the pullback $\Corr_{\pt}(\Qsh{\pt}, \Qsh{\pt}) \rightarrow \Corr_{\Hk_{\cM}^r }(\Qsh{\cM}, \Qsh{\cM}\tw{-d(h_r)})$ exists. We define $\cc_{\cM} \in \Corr_{\Hk_{\cM}^r }(\Qsh{\cM}, \Qsh{\cM}\tw{-d(h_r)})$ to be the pullback of the identity cohomological correspondence $\cc_{\pt} \in \Corr_{\pt}(\Qsh{\pt}, \Qsh{\pt}) $. More explicitly, $\cc_{\cM}$ is the composition
\[
h_0^* \Qsh{\cM} \cong \Qsh{\Hk_{\cM}^r} \cong h_r^* \Qsh{\cM} \xrightarrow{\gys_{h_r}} h_r^! \Qsh{\cM}\tw{-d(h_r)}.
\]

\begin{conj}[The Trace Conjecture for Hitchin spaces, {\cite[\S 9.4]{FH}}]\label{thm: trace conjecture} For any $r \geq 0$, and for any $m \in [1, n]$, we have
\begin{equation}\label{eq: trace conjecture}
\Tr^{\Sht}(\cc_{\cM}) = [\Sht_{\cM}^r] \in \CH_{d(h_r)}(\Sht_{\cM}^r).
\end{equation}
\end{conj}

\begin{thm}[Trace Conjecture in low corank]\label{thm:trace-conjecture-n/3} For any $r \geq 0$ and for $m \leq n/3$, \eqref{eq: trace conjecture} holds true.
\end{thm}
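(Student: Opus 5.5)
We will prove the equality \eqref{eq: trace conjecture} by comparing both sides after decomposing $\Sht_{\cM}^r$ according to the "injective part'' and the "excess'' part, following the explicit description of the derived fundamental class from \cite{FYZ2}. The plan has four steps.

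\textbf{Reduction to a dimension statement.} Both sides of \eqref{eq: trace conjecture} are classes in $\CH_{d(h_r)}(\Sht_{\cM}^r)$. The first step is to verify that they agree after restriction to a dense open substack. Concretely, we will check that they agree on any open locus where the relevant geometry is "nice'': there the Hecke correspondence can be resolved, for instance away from a point of $X'$ by adding level structure, as in the generic arguments of \cite{FYZ3, FK}. On such a locus the shtuka-twisted trace of the pullback of $\cc_{\pt}$ is computed by the compatibility of $\Tr^{\Sht}$ with pullable maps of correspondences. That compatibility identifies $\Tr^{\Sht}(\cc_{\cM})$ with the Gysin pullback of the unit along the quasi-smooth map $\Sht_{\cM}^r\to\pt$, which is $[\Sht_{\cM}^r]$.

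\textbf{Handling the complement.} The difference of the two sides is then supported on a closed complement. We will show this difference vanishes by exhibiting it as pushed forward from a locus of dimension $<d(h_r)$, so that the relevant Chow group vanishes. Decompose $\Sht_{\cM}^r$ by the rank of the map $\cE\to\cF$ via the formula of \cite{FYZ2}. This expresses everything in terms of the "core'' injective loci $\cZ_{\cE'}^{r,\circ}$ for sub/quotient bundles $\cE'$ of smaller rank $m'\le m$, multiplied by Chern classes of excess bundles. Both $\Tr^{\Sht}(\cc_{\cM})$ and $[\Sht_{\cM}^r]$ should admit this same kind of decomposition. For the trace, we will use the trace formula for Hitchin stacks from \S\ref{sec: trace formula}, together with excess intersection / functoriality of trace along the stratification by kernel of $t$. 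This reduces the comparison to the injective loci.

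\textbf{The key step: the injective loci.} Here Theorem \ref{thm:special-cycle-dimension-bound} enters. Since $m'\le m\le n/3$, we have $\dim\cZ_{\cE'}^{r,\circ}\le (n-m')r$, which is exactly the expected (virtual) dimension. This is the step we expect to be the main obstacle. On a quasi-smooth derived stack whose classical truncation has dimension $\le$ the virtual dimension, the top-dimensional Chow group is freely generated by irreducible components of maximal dimension. Two classes in it agree as soon as they agree at the generic point of each top-dimensional component. So it suffices to compare $\Tr^{\Sht}$ and the fundamental class generically on each such component.

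\textbf{Generic comparison.} At a generic point of a component of dimension exactly $(n-m')r$, we will argue via the framed-shtuka stratification and Corollary \ref{cor:stratum-bound}. Equality of dimensions forces $u=b=0$ (or $n=3m$ boundary cases), that is, all modifications are of type $0$. This is a locus where the maps are locally given by smooth data. In particular, there is a neighborhood on which the stack is l.c.i. of the expected dimension, and there the generic argument of \cite{FYZ3, FK} applies without level structure. Concretely, the relevant squares become pushable and pullable, and the trace of the Gysin correspondence is the local fundamental class.

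The delicate points are making the two decompositions match term by term, and handling the boundary case $n=3m$ of \eqref{eq:numerical-inequality}. In that case components with $u>0$ can have full dimension, and we will need to verify the generic comparison there too. We would try first to show that on such strata the correspondence is still locally a smooth pullback, so that the comparison is again formal.
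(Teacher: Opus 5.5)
Your skeleton matches the paper's: stratify by the kernel of $t$, obtain excess Chern classes via Corollary \ref{cor: pullback cc excess}, match with \cite[Theorem 6.6]{FYZ2}, and use Theorem \ref{thm:special-cycle-dimension-bound} so that top-degree classes on the injective loci are detected on a dense open. The gap is the step that carries the real content: computing $\Tr^{\Sht}$ generically on the injective loci.

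\emph{First step.} You invoke ``compatibility of $\Tr^{\Sht}$ with pullable maps of correspondences.'' Compatibility with pullback is only available along smooth maps \cite[Proposition 6.2.2]{FK}. Since $\cc_{\cM}$ is itself the pullback of $\cc_{\pt}$ along a quasi-smooth map, compatibility with quasi-smooth pullback is exactly what the Trace Conjecture asserts. The usable input is \cite[Corollary 6.4.1]{FK}, which needs the source of the correspondence to be smooth.

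\emph{Fourth step.} The type-$0$ analysis does not supply a smooth source. Lying in a type-$0$ framed stratum only says the saturations $\cP_i$ are constant. $\cM_{\cE}$ stays genuinely derived there: at any point of the injective locus its obstruction space surjects onto $\rH^1$ of the Hermitian forms on $\cE$, i.e.\ onto the obstructions of the derived Hitchin base. Knowing that $\cZ^{r,\circ}$ is lci of expected dimension near a point does not compute the trace there; that is what has to be proved. You also leave $n=3m$ open, where strata with $u>0$ can be top-dimensional, although the theorem includes this case.

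Two smaller points. The dense-open reduction cannot be run on all of $\Sht^r_{\cM}$: the stratum $t=0$ is $\Sht^r_{U(n)}$, of dimension $rn>d(h_r)$. And the trace-side decomposition comes from trace additivity, the open-and-closed splitting along kernel-rank strata, and vanishing of cross terms by \cite[Theorem 7.5.1]{FK}. It does not use the trace formula of \S\ref{sec: trace formula}.

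The missing idea is the twist-and-contract argument of Proposition \ref{prop: trace of injective part}, which makes any analysis of which strata attain the bound unnecessary.
\begin{enumerate}
\item After a Harder--Narasimhan truncation, the dimension bound is used only to choose $x_0\in|X|$ so that $\nu^{-1}(x_0)$ avoids the finitely many points that are images of non-dominant leg projections of $r(n-m)$-dimensional components.
\item Then the locus where some leg meets $\nu^{-1}(x_0)$ has dimension $<r(n-m)$. By excision it suffices to prove the identity over legs in $(X'\setminus\nu^{-1}(x_0))^r$.
\item Put $\wt\cE=\cE(-\nu^*D)$ with $D=dx_0$, $d\gg0$. Then $\cM^{\le\nu}_{\wt\cE}$ is smooth, so \cite[Corollary 6.4.1]{FK} gives trace equal to fundamental class for $\wt\cE$, hence also on its injective open.
\item Over legs avoiding $\nu^{-1}(x_0)$, $\cM^\circ_{\cE}\hookrightarrow\cM^\circ_{\wt\cE}$ is a closed embedding stabilized by the Hecke correspondence. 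Gysin base change identifies the restricted correspondence with $\cc_{\cM^\circ_{\cE}}$, and \cite[Theorem 7.5.1]{FK} transfers the identity.
\end{enumerate}
This works uniformly for all $m\le n/3$, including $n=3m$.
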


\subsection{Localized version}\label{ssec: localized trace conjecture} We formulate an equivalent version of Conjecture \ref{thm: trace conjecture} which is localized over a point of $\Bun_{H_1}$. The stack $\cM$ comes equipped with maps to $\Bun_{H_1}$, such that
\begin{equation}\label{eq: Hk to Bun_H1}
\text{$\Hk_{\cM}^r \xrightarrow{h_i} \cM \rightarrow \Bun_{H_1}$ is independent of $i \in \{0, \ldots, r\}$.}
\end{equation}
Let $\cE \in \Bun_{H_1}(\F_q)$, let $\cM_{\cE}$ be the derived fiber of $\cM$ over $\cE$, and let $\Hk_{\cM_{\cE}}^r$ be the derived fiber of $\Hk_{\cM}^r$ over $\cE$.

By Lemma \ref{lem: hecke quasismooth}, the base changed map $h_r \co \Hk_{\cM_{\cE}}^r \rightarrow \cM_{\cE}$ is quasi-smooth, so we can define
\[
\cc_{\cM_{\cE}} \in \Corr_{\Hk_{\cM_{\cE}}^r}(\Qsh{\cM_{\cE}}, \Qsh{\cM_{\cE}}\tw{-d(h_r)})
\]
to be the pullback of the identity correspondence in $\Corr_{\pt}(\Qsh{\pt}, \Qsh{\pt})$.

\begin{thm}\label{thm: trace conjecture E} For any $r \geq 0$, and for $m \leq n/3$, we have
\begin{equation}\label{eq: trace conjecture E}
\Tr^{\Sht}(\cc_{\cM_{\cE}}) = [\cZ_{\cE}^r] \in \CH_{d(h_r)}(\cZ_{\cE}^r).
\end{equation}
\end{thm}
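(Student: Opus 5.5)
We first establish the equivalence of Theorem \ref{thm: trace conjecture E} with Theorem \ref{thm:trace-conjecture-n/3}. Then we prove the localized statement by a dimension/excess-intersection argument, with Theorem \ref{thm:special-cycle-dimension-bound} as the key input.

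\emph{Step 1 (reduction to the localized form).} By \eqref{eq: Hk to Bun_H1}, the whole correspondence $\cM \leftarrow \Hk^r_{\cM} \to \cM$ lives over $\Bun_{H_1}$. Hence $\Sht^r_{\cM}$ decomposes over $\Bun_{H_1}(\F_q)$ as $\coprod_{\cE} \cZ^r_{\cE}$, the $\cE$-summand being the shtuka-twisted fixed locus of the fiber correspondence over $\cE$. The shtuka-twisted trace is compatible with this decomposition, and with base change along the (quasi-smooth, $\Bun_{H_1}$-pointwise) inclusion of the fiber; this uses the pullability of the relevant squares from \cite{FK}. The derived fundamental class $[\Sht^r_{\cM}]$ restricts to $[\cZ^r_{\cE}]$ for the same reason, since both are Gysin pullbacks of units. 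So it suffices to prove \eqref{eq: trace conjecture E} for each $\cE$.

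\emph{Step 2 (decomposition of $\cZ^r_{\cE}$).} Both sides of \eqref{eq: trace conjecture E} are classes in $\CH_{d(h_r)}(\cZ^r_{\cE})$, and $d(h_r)=r(n-m)$. Following \cite[\S 4]{FYZ2}, we stratify $\cZ^r_{\cE}$ according to the generic rank of the maps $t_i$, equivalently the kernel $\cE' \subset \cE$ (a saturated subbundle of rank $m-m'$). Each stratum is the injective locus $\cZ^{r,\circ}_{\cE/\cE'}$ for a smaller corank $m' \le m \le n/3$, fibered over choices of $\cE'$. The virtual class on the stratum indexed by $\cE'$ is $[\cZ^{r,\circ}_{\cE/\cE'}]$ capped with a Chern class of an excess bundle. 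The plan is to prove the trace identity separately on the open injective part and then propagate through this stratification by induction on $m$. On the injective part, Theorem \ref{thm:special-cycle-dimension-bound} gives $\dim \cZ^{r,\circ}_{\cE} \le r(n-m)$, which is exactly the degree of both classes. Therefore $\CH_{r(n-m)}$ of the injective locus is spanned by the fundamental classes of its top-dimensional irreducible components. Two classes in this group agree once they agree generically on each such component, i.e.\ after restricting to a dense open substack.

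\emph{Step 3 (generic comparison).} On a dense open of each top-dimensional component we can add level structure away from the legs, or restrict to a smooth locus. There, the argument of \cite{FYZ3, FK} (the ``easy'' case of the Trace Conjecture, where the Hitchin stack is resolved) identifies $\Tr^{\Sht}(\cc_{\cM_\cE})$ with the virtual class: both are computed by the Gysin map of a quasi-smooth fixed-point square whose classical truncation is generically of the expected dimension. Then we use excision, via the localization sequence
$\CH_{r(n-m)}(\text{closed complement}) \to \CH_{r(n-m)}(\cZ^{r}_{\cE}) \to \CH_{r(n-m)}(\text{open})$.
The difference of the two sides is supported on lower strata. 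On those strata, the inductive hypothesis for smaller corank, together with compatibility of the shtuka-twisted trace with pushforward along the proper maps $\cZ^{r}_{\cE/\cE'} \to \cZ^r_{\cE}$, matches the contributions term by term. This uses the trace formula for Hitchin stacks of \S\ref{sec: trace formula} to handle the non-properness.

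\emph{Main obstacle.} I expect the hardest part to be making the functoriality of $\Tr^{\Sht}$ valid along the non-representable, non-proper stratification by kernels, so that the trace on $\cZ^r_\cE$ really splits as the sum of excess-intersection contributions matching \cite{FYZ2}'s formula. My first attempt would be to realize each kernel stratum via a correspondence of Hitchin stacks $\cM_{\cE/\cE'} \to \cM_{\cE}$ (a linear map of derived vector bundles). I would then verify the pushable/pullable conditions of \cite{FK} for it, so that the trace commutes with the induced Gysin/excess maps.
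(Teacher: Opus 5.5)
Your treatment of the injective locus follows the paper's Proposition \ref{prop: trace of injective part}. Theorem \ref{thm:special-cycle-dimension-bound} makes $\CH_{r(n-m)}$ of a Harder--Narasimhan truncation of $\cZ^{r,\circ}_\cE$ detectable on the locus where the legs avoid $\nu^{-1}(x_0)$, for a suitably chosen $x_0$. One correction there. The paper does not restrict to a smooth locus of $\cM_\cE$. Instead it replaces $\cE$ by $\wt\cE=\cE(-\nu^*D)$ with $D=dx_0$, so that $\cM^{\le\nu}_{\wt\cE}$ is smooth, and applies \cite[Corollary 6.4.1]{FK} there. It then transfers the identity to $\cM^\circ_\cE$, which is a closed substack stable under the Hecke correspondence over $(X'\setminus\nu^{-1}(x_0))^r$, using \cite[Theorem 7.5.1]{FK}. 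Your Step 1 is not needed, since the statement is already the localized one.

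The genuine gap is the passage from the injective locus to all of $\cZ^r_\cE$. Excision cannot finish the job: the degenerate strata have dimension up to $r(n-m+\rank\cK)$, so their $\CH_{r(n-m)}$ is far from zero. Knowing only that the difference is supported there gives no handle on it. Your proposed mechanism also fails. Pushing the inductive hypothesis forward along $\cZ^r_{\cE/\cK}\to\cZ^r_\cE$ produces a class in $\CH_{r(n-m+\rank\cK)}$. The $\cK$-contribution to $[\cZ^r_\cE]$, however, is $c_{r\rank\cK}(\cV^r_\cK)\cap[\cZ^{r,\circ}_{\cE/\cK}]\in\CH_{r(n-m)}$. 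So the excess Chern class has to appear on the trace side, at the level of cohomological correspondences, and pullability will not deliver it. The square comparing $\Hk_{\cM_{\cE/\cK}}\to\cM_{\cE/\cK}$ with $\Hk_{\cM_\cE}\to\cM_\cE$ is only Cartesian on classical truncations. Its comparison map has relative tangent complex $\cV^r_\cK[-2]$, so it is not quasi-smooth. The trace formula of \S\ref{sec: trace formula} is also irrelevant here, because the kernel-stratum maps are closed embeddings.

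The paper fills exactly this gap, in the following steps.
\begin{enumerate}
\item The rank-of-kernel strata of $\cM_\cE$ are Hecke-stable. In $\cZ^r_\cE$ they become open-and-closed, indexed by $\F_q$-rational saturated $\cK\subset\cE$ \cite[Proposition 7.5.4]{FK}.
\item Trace additivity for $j_!j^*\to\Id\to i_*i^*$ \cite{Jin24}, combined with \cite[Theorem 7.5.1]{FK}, kills the cross terms. It also identifies the restriction of $\Tr^{\Sht}(\cc_{\cM_\cE})$ to the $\cK$-piece with $\Tr^{\Sht}(\cc_{\cM_\cE[\cK]})$, where $\cM_\cE[\cK]$ carries the derived structure from $\cM_\cE$.
\item The excess intersection square of Lemma \ref{lem: excess square} gives $\cc_{\cM_\cE[\cK]}=\cc_{\cM^\circ_{\cE/\cK}}\otimes c_{\rank(\cK)r}(\cV^r_\cK)$ (Corollary \ref{cor: pullback cc excess}).
\item Lemma \ref{lem: twist trace by Chern class} makes the trace linear in this twist. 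The injective-locus result is then applied to $\cE/\cK$. No induction on $m$ is needed, since $\rank(\cE/\cK)\le n/3$; the case $\cE/\cK=0$ is \cite[Corollary 6.4.1]{FK}.
\item Summing over $\cK$ gives $\Tr^{\Sht}(\cc_{\cM_\cE})=\sum_\cK(\iota_\cK)_!\bigl(c_{\rank(\cK)r}(\cV^r_\cK)\cap[\cZ^{r,\circ}_{\cE/\cK}]\bigr)$, which equals $[\cZ^r_\cE]$ by \cite[Theorem 6.6]{FYZ2}.
\end{enumerate}
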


\begin{lemma}\label{lem:trace-local-global} For all $1 \leq m \leq n$, the identity \eqref{eq: trace conjecture} is equivalent to the collection of identities \eqref{eq: trace conjecture E} for all $\cE \in \Bun_{H_1}(\F_q)$.
\end{lemma}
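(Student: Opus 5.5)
The plan is to exploit the fibration structure \eqref{eq: Hk to Bun_H1}: $\cM$, $\Hk_{\cM}^r$ and hence $\Sht_{\cM}^r$ all lie over $\Bun_{H_1}$. On $\Sht$ the map to $\Bun_{H_1}$ lands in the Frobenius-fixed points, so $\Sht_{\cM}^r$ decomposes as a disjoint union over $\cE \in \Bun_{H_1}(\F_q)$. More precisely, it decomposes over isomorphism classes, each summand being the stack quotient of $\cZ_{\cE}^r$ by the finite group $\Aut(\cE)$, or equivalently the fiber over the point $\cE \to \Bun_{H_1}^{\Frob} = \coprod_{\cE} B\Aut(\cE)$. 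I would first make this decomposition precise as an open–closed decomposition of derived stacks:
\[
\Sht_{\cM}^r \cong \coprod_{\cE} \cZ_{\cE}^r/\Aut(\cE).
\]
Since Chow groups of a disjoint union are the product of Chow groups, and pullback along the finite étale cover $\cZ_{\cE}^r \to \cZ_{\cE}^r/\Aut(\cE)$ is injective with rational coefficients, an identity in $\CH_{d(h_r)}(\Sht_{\cM}^r)$ is equivalent to the collection of its pullbacks to each $\cZ_{\cE}^r$.

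It then suffices to check that both sides of \eqref{eq: trace conjecture} restrict to the corresponding sides of \eqref{eq: trace conjecture E}. For the right-hand side this is the compatibility of derived fundamental classes with étale (in particular open-closed plus finite étale) pullback. We also need to identify $\cZ_{\cE}^r$ with $\Sht_{\cM_\cE}^r$ as derived stacks, which follows from the definitions by commuting the fiber products defining $\Sht$ with the fiber over $\cE$.

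For the left-hand side, I would use the base-change compatibility of the shtuka-twisted trace from \cite{FK}. The map of correspondences from $(\cM_\cE \leftarrow \Hk^r_{\cM_\cE} \to \cM_\cE)$ to $(\cM \leftarrow \Hk^r_\cM \to \cM)$ is a base change along $\cE\colon \pt \to \Bun_{H_1}$, so it is pullable in the sense of \cite{FK}. The pullback of $\cc_{\cM}$ along it is $\cc_{\cM_\cE}$, because both are defined as pullbacks of $\cc_{\pt}$ and pullbacks compose; the Gysin maps are compatible since $\Hk^r_{\cM_\cE} \to \cM_\cE$ is the derived base change of $h_r$. The trace then commutes with this pullback, up to the finite étale map $\Sht_{\cM_\cE}^r = \cZ_\cE^r \to \Sht_\cM^r$.

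The main obstacle is this last step. The pullback along $\pt \to \Bun_{H_1}$ is not smooth, so I must verify that the relevant squares are ``pullable'' in the precise sense required for trace functoriality in \cite{FK}. The cleaner route is to factor through the smooth map $\Bun_{H_1}^{\Frob}$-wise. I would first pull back along the smooth cover $\pt \to B\Aut(\cE)$ and the open-closed pieces of $\Bun_{H_1}$ near $\cE$. Here it helps that only the Frobenius-fixed locus matters after taking $\Sht$, so the trace localizes onto the open-closed summand indexed by $\cE$. Summing over $\cE$ then gives the equivalence in both directions.
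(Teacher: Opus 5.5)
Your skeleton matches the paper's in most respects. You decompose $\Sht_{\cM}^r$ into open-closed summands $\cZ_{\ol{\cE}}^r=[\cZ_{\cE}^r/\Gamma_{\cE}]$ with $\Gamma_{\cE}=\Aut(\cE)(\F_q)$. Note that the finite group $\Gamma_{\cE}$, not the group scheme $\Aut(\cE)$, acts here, and the fixed-point stack of $\Bun_{H_1}$ is $\coprod_{\cE} B\Gamma_{\cE}$. You then pull back smoothly along the $\Aut(\cE)$-torsor $\cM_{\cE}\to\cM_{\ol{\cE}}:=[\cM_{\cE}/\Aut(\cE)]$ via \cite[Proposition 6.2.2]{FK}, and conclude by injectivity of $q_{\cE}^*$ via transfer.

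The gap is the remaining step: identifying $\Tr^{\Sht}(\cc_{\cM})|_{\cZ^r_{\ol{\cE}}}$ with $\Tr^{\Sht}(\cc_{\cM_{\ol{\cE}}})$. The map $\ol{\cE}=B\Aut(\cE)\inj\Bun_{H_1}$ is only a locally closed embedding, typically of positive codimension in its connected component. It is not open-closed, so there are no ``open-closed pieces of $\Bun_{H_1}$ near $\cE$'' to restrict to. That $\cZ^r_{\ol{\cE}}$ is open-closed in $\Sht^r_{\cM}$ does not by itself show the restricted trace equals the trace of the restricted correspondence. After restricting to an open substack in which $\ol{\cE}$ is closed, you still face pullback along a closed embedding. \cite{FK} gives trace compatibility for proper pushforward and smooth (in particular open) pullback, but not for this. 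Your remark that the Cartesian square along $\pt\to\Bun_{H_1}$ is pullable only yields the pulled-back correspondence, not compatibility of traces. So your sentence ``the trace localizes onto the open-closed summand indexed by $\cE$'' is exactly what needs proof.

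The missing idea is contracting-correspondence localization. By \eqref{eq: Hk to Bun_H1}, both legs of $\Hk^r_{\cM}$ induce the same map to $\Bun_{H_1}$. Hence $h_0^{-1}(\cM_{\ol{\cE}})=h_r^{-1}(\cM_{\ol{\cE}})$, i.e.\ $\Hk^r_{\cM}$ stabilizes $\cM_{\ol{\cE}}$ in the sense of \cite[Definition 7.2.1]{FK}. Its Frobenius twist is therefore contracting near $\cM_{\ol{\cE}}$ by \cite[Example 7.2.2]{FK}, and \cite[Theorem 7.5.1]{FK} gives the needed equality. With this replacing your open-closed step, the rest of your argument goes through as in the paper.
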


\begin{proof}
Note that $\Sht_{\cM}^r$ is fibered over the fixed-point stack of $\Bun_{H_1}$. For $\cE \in \Bun_{H_1}(\F_q)$, put $\ol{\cE}:=[\{\cE\}/\Aut(\cE)]$ and let $\Gamma_{\cE}:=\Aut(\cE)(\F_q)$. The corresponding open-and-closed summand of $\Sht_{\cM}^r$ is
\[
\cZ_{\ol{\cE}}^r:=[\cZ_{\cE}^r/\Gamma_{\cE}],
\]
where the quotient is by the finite group of Frobenius-fixed automorphisms of $\cE$. This gives the decomposition
\begin{equation}\label{eq: trace reformulation 1}
[\Sht_{\cM}^r] = \sum_{\cE \in \Bun_{H_1}(\F_q)} [\cZ_{\ol{\cE}}^r] \in \CH_{d(h_r)}(\Sht_{\cM}^r).
\end{equation}
The RHS is an infinite sum, but over each connected component of $\Sht_{\cM}^r$ only finitely many summands are supported. Equivalently, only finitely many summands are supported on any finite-type truncation.

Consider the locally closed embedding $\ol{\cE} \inj \Bun_{H_1}$. We write $\cM_{\ol{\cE}}:=[\cM_{\cE}/\Aut(\cE)]$ for the stack-theoretic quotient by the full automorphism group scheme of $\cE$, and let $\cc_{\cM_{\ol{\cE}}}$ be the induced pullback of $\cc_{\cM}$. By \eqref{eq: Hk to Bun_H1}, $\Hk_{\cM}^r$ \emph{stabilizes} (cf. \cite[Definition 7.2.1]{FK}) $\cM_{\ol{\cE}}$, so its Frobenius twist is \emph{contracting} near $\cM_{\ol{\cE}}$ (cf. \cite[Example 7.2.2]{FK}). Hence \cite[Theorem 7.5.1]{FK} implies that
\begin{equation}\label{eq: trace reformulation 2}
\Tr^{\Sht}(\cc_{\cM})|_{\cZ_{\ol{\cE}}^r} = \Tr^{\Sht}(\cc_{\cM_{\ol{\cE}}}) \in \CH_{d(h_r)} (\cZ_{\ol{\cE}}^r).
\end{equation}

Let $q_{\cE}\co \cZ_{\cE}^r\to \cZ_{\ol{\cE}}^r$ be the natural finite \'{e}tale $\Gamma_{\cE}$-torsor. The smooth $\Aut(\cE)$-torsor $\cM_{\cE}\to\cM_{\ol{\cE}}$ induces a Cartesian map of correspondence data by \eqref{eq: Hk to Bun_H1}, which induces $q_{\cE}$ by formation of shtuka spaces. Compatibility of trace with smooth pullbacks, \cite[Proposition 6.2.2]{FK}, gives
\begin{equation}\label{eq: trace reformulation 3}
q_{\cE}^*\Tr^{\Sht}(\cc_{\cM_{\ol{\cE}}}) = \Tr^{\Sht}(\cc_{\cM_{\cE}})\in \CH_{d(h_r)} (\cZ_{\cE}^r).
\end{equation}
Moreover $q_{\cE}^*[\cZ_{\ol{\cE}}^r]=[\cZ_{\cE}^r]$, and with $\ol{\Q}$-coefficients the transfer identity $q_{\cE,!}q_{\cE}^*=|\Gamma_{\cE}|\operatorname{id}$ makes $q_{\cE}^*$ injective. Hence
\begin{equation}\label{eq: trace reformulation 4}
\Tr^{\Sht}(\cc_{\cM_{\ol{\cE}}}) = [\cZ_{\ol{\cE}}^r] \iff \Tr^{\Sht}(\cc_{\cM_{\cE}}) = [\cZ_{\cE}^r].
\end{equation}

If \eqref{eq: trace conjecture} holds, then we obtain \eqref{eq: trace conjecture E} by restricting to $\cZ_{\ol{\cE}}^r$ and then pulling back along $q_{\cE}$, using \eqref{eq: trace reformulation 2} and \eqref{eq: trace reformulation 3} to compute the restrictions. Conversely, if \eqref{eq: trace conjecture E} holds for every $\cE \in \Bun_{H_1}(\F_q)$, then \eqref{eq: trace reformulation 4} gives the desired equality on every open-and-closed summand $\cZ_{\ol{\cE}}^r$; summing these identities by \eqref{eq: trace reformulation 1} gives \eqref{eq: trace conjecture}.

\end{proof}

Henceforth we will focus on proving Theorem \ref{thm: trace conjecture E}. (In any case, this is the version that we will actually use later.)

\begin{remark}
The split-cover refinement needed for supermodularity is
stated separately as Theorem \ref{thm:split-trace-conjecture-n/3} in
Part \ref{part: supermodularity}.
\end{remark}

\subsection{The injective locus}
Recall that $\cM_{\cE}$ parametrizes an $H_2$-bundle $\cF$ along with a map $t \co \cE \rightarrow \cF$. Let $\cM_{\cE}^\circ \subset \cM_{\cE}$ be the locus where the map $t$ is injective (as a map of coherent sheaves) after restriction to all geometric points of the test scheme. This is the fiber over $\{\cE\}$ of \cite[Definition 3.4]{FYZ2}. This is an open subset preserved by the correspondence $\Hk_{\cM_{\cE}}^r$, in the sense that the restriction gives a map of correspondences
\[
\begin{tikzcd}
\cM_{\cE}^\circ \ar[d, hook]  & \Hk_{\cM_{\cE}^\circ}^r  \ar[l, "h_0"'] \ar[d, hook] \ar[r, "h_r"] & \cM_{\cE}^\circ \ar[d, hook]  \\
\cM_{\cE} & \Hk_{\cM_{\cE}}^r \ar[l, "h_0"'] \ar[r, "h_r"] & \cM_{\cE}
\end{tikzcd}
\]
with both squares Cartesian, hence pushable and pullable. In particular, we have the pullback cohomological correspondence $\cc_{\cM_{\cE}^\circ} \in \Corr_{\Hk_{\cM_{\cE}^\circ}^r }(\Qsh{\cM_{\cE}^\circ}, \Qsh{\cM_{\cE}^\circ} \tw{-d(h_r)})$ of $\cc_{\cM_{\cE}}$.

\begin{prop}\label{prop: trace of injective part}
For $m \leq n/3$, we have
\begin{equation}\label{eq: trace of injective part}
\Tr^{\Sht}(\cc_{\cM_{\cE}^\circ}) = [\cZ_{\cE}^{r, \circ}] \in \CH_{r(n-m)}(\cZ_{\cE}^{r, \circ}).
\end{equation}
\end{prop}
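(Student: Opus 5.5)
The plan is to combine the dimension bound of Theorem~\ref{thm:special-cycle-dimension-bound} with a ``generic'' form of the Trace Conjecture that holds after throwing away a point of $X$. Set $d:=r(n-m)$. Because $m\le n/3$, Theorem~\ref{thm:special-cycle-dimension-bound} gives $\dim \cZ_{\cE}^{r,\circ}\le d$. Both sides of \eqref{eq: trace of injective part} live in $\CH_d(\cZ_{\cE}^{r,\circ})$, which is the top-degree Borel--Moore motivic homology of a stack of dimension $\le d$. So I expect this group to have no relations: it should be the $\ol{\Q}$-vector space on the $d$-dimensional irreducible components, with no rational equivalence in the top degree.

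The first step is to make this precise and extract the consequence I need. Pick a finite-type open substack, an atlas, and the localization sequence. Whenever $V\subset \cZ_{\cE}^{r,\circ}$ is open, the kernel of $\CH_d(\cZ)\to\CH_d(V)$ receives a surjection from $\CH_d(\cZ\setminus V)$. Applying this to an open cover $\{V_\alpha\}$, one gets that $\CH_d(\cZ_{\cE}^{r,\circ})\to\prod_\alpha \CH_d(V_\alpha)$ is injective. Indeed, a class killed on every $V_\alpha$ is a combination of $d$-dimensional components, each of which meets some $V_\alpha$. This is a standard argument but needs care for stacks with motivic $\ol{\Q}$-coefficients; I would reduce to schemes by a smooth atlas and the comparison of \cite[\S 3.5]{FK}.

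For the cover, I take, for each closed point $v\in|X|$, the open $V_v\subset\cZ_{\cE}^{r,\circ}$ where all legs avoid $\nu^{-1}(v)$. These cover, since only finitely many legs are involved. The correspondence $\Hk^r_{\cM_{\cE}^\circ}$ restricted to legs in $(X'\setminus\nu^{-1}(v))^r$ is an open sub-correspondence, obtained by Cartesian base change along $(X'\setminus \nu^{-1}(v))^r\subset (X')^r$. By compatibility of $\Tr^{\Sht}$ with restriction to open (Cartesian, hence pullable) maps of correspondences, \cite[Proposition 6.2.2]{FK}, the restriction of $\Tr^{\Sht}(\cc_{\cM_{\cE}^\circ})$ to $V_v$ is the trace of the restricted correspondence. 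Likewise, $[\cZ_{\cE}^{r,\circ}]|_{V_v}$ is the derived fundamental class of $V_v$.

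It therefore remains to prove the identity on each $V_v$. Here I would invoke the argument underlying the generic modularity results of \cite{FYZ3, FK}: with legs away from $v$, add deep level structure at $v$. This resolves the Hitchin stack into a situation where $h_r$ and the relevant stacks are nice enough that the trace of the Gysin correspondence is computed by the Lefschetz-type formula, and it equals the virtual class. The resulting identity descends along the finite étale map forgetting level, as in the proof of Corollary~\ref{cor:stratum-bound}. This generic identity is the step I expect to be the main obstacle. The whole point of the proposition is to avoid assuming it globally, and one must check that the level-structure argument of \cite{FYZ3, FK} is formulated over the open $V_v$, with compatible Gysin classes and trace functoriality, rather than only over a generic locus of the base. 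If it only yields the identity on a smaller dense open of $V_v$, that still suffices, by the injectivity statement of the first step, as long as that open meets every $d$-dimensional component.
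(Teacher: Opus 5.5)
Your outer skeleton matches the paper's. You use the dimension bound of Theorem~\ref{thm:special-cycle-dimension-bound}, restrict to shtukas whose legs avoid $\nu^{-1}(v)$, and observe that $\CH_{r(n-m)}$ only sees the $r(n-m)$-dimensional components. The paper chooses a single $x_0$ avoiding the finitely many images of non-dominant leg projections from top-dimensional components and then applies excision; your open-cover injectivity is an equivalent packaging. You should work on a Harder--Narasimhan truncation rather than an arbitrary finite-type open, because the key step below needs a uniform bound on $\cF$.

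The gap is the step on $V_v$, which is the real content of the proposition. Adding level structure at $v$ to the bundles and descending along the finite \'etale forgetful map cannot supply it. Level structure is a smooth base change on $\Bun_{U(n)}$ (finite \'etale on shtukas), so it leaves the derived structure of $\cM_{\cE}$ untouched. The fibers are still $\Tot$ of $\RHom(\cE,\cF)$ with its $H^1$ obstruction term, and $\dim\Hom(\cE,\cF)$ still jumps. Moreover, the trace identity on the level cover is equivalent to the one you want (as in Lemma~\ref{lem:trace-local-global}), not easier.

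The missing idea is to twist $\cE$ instead. Put $D=d\,x_0$ and $\wt\cE:=\cE(-\nu^*D)$ with $d\gg0$ relative to the HN bound. Then the truncated Hitchin stack $\cM_{\wt\cE}$ is smooth \cite[\S 10.1.1]{FYZ3}, so \cite[Corollary 6.4.1]{FK} gives $\Tr^{\Sht}=[\cZ_{\wt\cE}^{r}]$ there, and this identity restricts to the open injective locus.

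To return to $\cE$, note that $\cM_{\cE}^\circ\hookrightarrow\cM_{\wt\cE}^\circ$ is a closed embedding, since extending $\wt\cE\to\cF_i$ across $\nu^*D$ is a closed condition. Once the legs lie in $(X'\setminus\nu^{-1}(x_0))^r$, every modification is an isomorphism near $\nu^*D$, so this closed substack is stabilized by the Hecke correspondence. Its Frobenius twist is therefore contracting near $\cM_{\cE}^\circ$ \cite[Example 7.2.2]{FK}. The derived vector bundle description identifies the leg-restricted $\Hk^r_{\cM^\circ_{\cE}}$ with the derived pullback of $\Hk^r_{\cM^\circ_{\wt\cE}}$. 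Gysin base change then identifies the restricted correspondence with $\cc_{\cM^\circ_{\cE}}$, and \cite[Theorem 7.5.1]{FK} transfers the identity.

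This closed-embedding-plus-contraction argument is exactly where avoiding $\nu^{-1}(x_0)$ is used, and a finite \'etale descent has no analogue of it.
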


\begin{proof}
We can write $\cZ_{\cE}^{r, \circ}$ as the increasing union of Harder--Narasimhan truncations $\cZ_{\cE}^{r, \circ,\leq \nu}$. Here, for any of the fibers $\cM_{\star}$ below, the bounded Hecke correspondence means
\[
\Hk_{\cM_{\star}}^{r,\leq\nu}:=\bigcap_{i=0}^r h_i^{-1}(\cM_{\star}^{\leq\nu}),
\]
and $\cZ_{\star}^{r,\leq\nu}$ is its shtuka fixed-point stack. Thus $\cc_{\cM_{\star}^{\leq\nu}}$ denotes the open restriction of $\cc_{\cM_{\star}}$ to this bounded correspondence; a single Hecke modification need not preserve an HN bound, but these bounded open correspondences exhaust $\Hk_{\cM_{\star}}^r$ as $\nu$ varies. The argument below uses the identification
\begin{equation}\label{eq: injective HN Chow limit}
\CH_{r(n-m)}(\cZ_{\cE}^{r, \circ})  = \limit_{\nu} \CH_{r(n-m)}(\cZ_{\cE}^{r, \circ, \leq \nu}),
\end{equation}
so it suffices to show \eqref{eq: trace of injective part} after pullback to every Harder--Narasimhan truncation.

Fix the HN truncation $\nu$. We first choose a closed point $x_0\in|X|$ as follows. By Theorem \ref{thm:special-cycle-dimension-bound}, $\dim \cZ_{\cE}^{r,\circ,\le\nu}\le r(n-m)$, and the finite-type truncation has finitely many irreducible components of dimension $r(n-m)$. For each such component and each leg projection to $X'$, either the projection is dominant or its image is a single point of $X'$. Therefore, there is a finite subset $B\subset |X'|$ of points that can occur as the image of a non-dominant leg projection from an $r(n-m)$-dimensional component. Choose $x_0$ such that $\nu^{-1}(x_0)\cap B=\varnothing$; choosing $x_0$ on $X$ makes every divisor $D=d x_0$ and $\nu^*D$ defined over $\F_q$ and $\sigma$-invariant. Given this $x_0$, we may choose $d\gg0$ so that for $\wt{\cE}:=\cE(-\nu^*D)$, $\cM_{\wt{\cE}}^{\leq \nu}$ is smooth, cf. \cite[\S 10.1.1]{FYZ3}, so then \cite[Corollary 6.4.1]{FK} applies to the bounded correspondence above to yield
\[
\Tr^{\Sht}(\cc_{\cM_{\wt{\cE}}^{\leq \nu}}) = [\cZ_{\wt{\cE}}^{r, \leq \nu}] \in \CH_*(\cZ_{\wt{\cE}}^{r,\leq \nu}).
\]
Then compatibility of trace with open restrictions, the open-embedding case of \cite[Proposition 6.2.2]{FK}, implies that
\[
\Tr^{\Sht}(\cc_{\cM_{\wt{\cE}}^{\leq \nu, \circ}}) = [\cZ_{\wt{\cE}}^{r, \circ, \leq \nu}] \in \CH_*(\cZ_{\wt{\cE}}^{r, \circ,  \leq \nu}).
\]

Let $X'_0:=X'\setminus\nu^{-1}(x_0)$. We observe that $\cM_{\cE}^\circ \rightarrow \cM_{\wt{\cE}}^\circ$ is a closed embedding stabilized by $\Hk_{\cM_{\wt{\cE}}^\circ}^r|_{(X'_0)^r}$. Indeed, over $(X'_0)^r$ the modifications are isomorphisms near $\nu^*D$, so extension of the maps from $\wt{\cE}$ to $\cE$ across $\nu^*D$ is a closed condition at every point of the chain. Hence $\cc_{\cM_{\wt{\cE}}^{\leq \nu, \circ}}^{(1)}|_{(X'_0)^r}$ is contracting near $\cM_{\cE}^{\circ,  \leq \nu}$ by \cite[Example 7.2.2]{FK}. Furthermore, the derived vector bundle description of the Hecke stacks identifies $\Hk_{\cM_{\cE}^{\circ}}^r|_{(X'_0)^r}$ with the derived pullback of $\Hk_{\cM_{\wt{\cE}}^\circ}^r|_{(X'_0)^r}$ along $\cM_{\cE}^\circ\hookrightarrow\cM_{\wt{\cE}}^\circ$, and Gysin base change for this derived Cartesian square identifies $\cc_{\cM_{\cE}^{\leq \nu, \circ}}^{(1)}|_{(X'_0)^r}$ with the restriction of $\cc_{\cM_{\wt{\cE}}^{\leq \nu, \circ}}^{(1)}|_{(X'_0)^r}$ in the sense of \cite[Construction 7.2.3]{FK}. Hence \cite[Theorem 7.5.1]{FK} applies to yield
\begin{equation}\label{eq: restrict trace of injective part}
\Tr^{\Sht}(\cc_{\cM_{\cE}^{\circ, \leq \nu}}|_{(X'_0)^r}) = [\cZ_{\cE}^{r, \circ, \leq \nu}|_{(X'_0)^r}] \in \CH_{r(n-m)}(\cZ_{\cE}^{r, \circ, \leq\nu}|_{(X'_0)^r}).
\end{equation}
The choice of $x_0$ above ensures that no $r(n-m)$-dimensional component is contained in the boundary
\[
\partial \cZ := \cZ_{\cE}^{r,\circ,\leq\nu}\setminus \bigl(\cZ_{\cE}^{r,\circ,\leq\nu}|_{(X'_0)^r}\bigr).
\]
Thus, we have $\dim \partial \cZ<r(n-m)$ and hence $\CH_{r(n-m)}(\partial \cZ)=0$. By the excision sequence, this implies that the restriction map
\[
\CH_{r(n-m)} (\cZ_{\cE}^{r, \circ, \leq \nu}) \rightarrow \CH_{r(n-m)}( \cZ_{\cE}^{r, \circ, \leq \nu}|_{(X'_0)^r})
\]
is an isomorphism. Therefore, the desired equality \eqref{eq: trace of injective part} can be checked after restriction to $\cZ_{\cE}^{r, \circ, \leq \nu}|_{(X'_0)^r}$. After such restriction, it becomes \eqref{eq: restrict trace of injective part}, so we are done.
\end{proof}

\subsection{Twisting by Chow cohomology}

Let $C$ be a derived Artin stack. We use the Chow cohomology groups $\CH^i(C)$ defined in \S\ref{ssec:chow-groups-notation}.
Note that the $i$th Chern class of a vector bundle is valued in $\CH^i(C)$.

Suppose $L \in \Dmot{C}$ is dualizable, and $c \in \Hom_C( L,  L\tw{i})$. Then we may form the \emph{trace} of $c$ as an element of Chow cohomology, $\Tr(c) \in \CH^i(C)$.

\begin{example}\label{ex: trace chow}
Note that if $L$ is $\otimes$-invertible, then we have
\[
c \in \Hom_C(L, L \tw{i}) \cong  \Hom_C(L \otimes L^\vee, L \otimes L^\vee \tw{i}) \cong \Hom_C(\Qsh{C}, \Qsh{C}\tw{i}) = \CH^i(C).
\]
In this case, $\Tr(c)$ agrees with the image of $c$ in $\CH^i(C)$. %In particular, taking $L = \Qsh{C}$, we observe that we can twist cohomological correspondences by Chow cohomology classes, and the formation of Trace is linear over this twisting.
\end{example}

A cohomological correspondence $\cc \in \Corr_C(\cK_0, \cK_1\tw{-d})$ can be twisted by $c \in \Hom_C(L, L \tw{i})$ to obtain $\cc \otimes c \in \Corr_C(\cK_0 \otimes L, (\cK_1 \otimes L)\tw{-d+i})$.
Recall also that Chow groups form a module over Chow cohomology.

\begin{lemma}\label{lem: twist trace by Chern class}
Let $A \leftarrow C \rightarrow A$ be a correspondence of derived Artin stacks, and let $\rho_C\co\Fix(C)\to C$ be the natural map. Let $\cK \in \Dmotg{A}$ and $\cc \in \Corr_C(\cK, \cK\tw{j})$ for any $j \in \Z$. Let $c\in\CH^i(C)=\Hom_C(\Qsh{C},\Qsh{C}\tw{i})$, viewed as an endomorphism of the unit object on $C$. Then we have
\[
\Tr_C(\cc \otimes c) = \Tr_C(\cc)\cap \rho_C^*c \in \CH_*(\Fix(C)).
\]
\end{lemma}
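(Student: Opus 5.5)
We will unwind the definition of the categorical trace of a cohomological correspondence in \cite[\S 4--5]{FK}. We will then check that the twist by $c$ passes through each step of that construction as a cap product with the pullback of $c$. Write the correspondence as $A \xleftarrow{c_0} C \xrightarrow{c_1} A$. Here $\cc\in\Corr_C(\cK,\cK\tw{j})$ is a map $c_0^*\cK\to c_1^!\cK\tw{j}$. The twist $\cc\otimes c$ is the composite
\[
c_0^*\cK \cong c_0^*\cK\otimes\Qsh{C}\xrightarrow{\cc\otimes c} c_1^!\cK\tw{j}\otimes\Qsh{C}\tw{i}\cong c_1^!\cK\tw{j+i}.
\]
The last isomorphism is the canonical action $c_1^!(-)\otimes(-)\to c_1^!(-\otimes -)$ of the unit object. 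Equivalently, $\cc\otimes c=\cc\circ c$, where $c$ acts on $c_0^*\cK$ via the $\CH^*(C)$-module structure on $\Hom$ out of $c_0^*\cK$. It is equally the post-composition with $c$ acting on $c_1^!\cK$. These two agree because $c$ is an endomorphism of the unit, which acts centrally on every object.

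Next we recall the trace. Since $\cK$ is geometric, hence dualizable in the relevant sense, $\cc$ corresponds under the Künneth and duality identifications to a section over $C$ of $(c_0\times c_1)^!(\DD\cK\boxtimes\cK)\tw{j}$. The trace is the image of this section under the map induced by the evaluation $\Delta^*(\DD\cK\boxtimes\cK)\to\omega_A$, after base change along the Cartesian square
\[
\Fix(C)=C\times_{A\times A}A.
\]
The resulting class lies in $\rH^0(\Fix(C);\omega_{\Fix(C)}\tw{j})=\CH_{-j}(\Fix(C))$. The key observation is that every map in this chain is $\CH^*(C)$-linear. Concretely, these maps are the Künneth isomorphism, the base change $\Delta^*(c_0\times c_1)_*\to\rho_{C*}\Delta'^*$ (or its $!$-version), and the evaluation. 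The group $\CH^*(C)$ acts on sections over $C$ by cup product with unit endomorphisms, and it acts on Borel--Moore classes on $\Fix(C)$ through $\rho_C^*$ followed by cap product. Each natural transformation in the construction is a morphism of functors that is compatible with the projection-formula action of endomorphisms of the unit: for a map $f$, we have $f^!(\cF)\otimes f^*\Qsh{}\to f^!(\cF\otimes \Qsh{})$, and similarly for $f^*$ and base change. Hence the module structures match along the chain, and $\Tr_C(\cc\circ c)=\Tr_C(\cc)\cap\rho_C^*c$.

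The main obstacle is the bookkeeping needed to make the linearity in the middle step precise. One has to verify that the base change isomorphism and the Künneth identification intertwine the $\CH^*(C)$-action on the correspondence side with the $\rho_C^*$-action on $\Fix(C)$. We plan to handle this by reducing to a universal statement: $c$ is a map $\Qsh{C}\to\Qsh{C}\tw{i}$, and all the transformations in the trace construction are natural in the coefficient object and monoidal for the action of $\Dmot{C}$ on $c_1^!$ via the projection formula. Applying naturality to the morphism $c$ then gives the result. For a sanity check, the case $\cK=\Qsh{A}$ with $C=A$ and identity correspondences recovers Example \ref{ex: trace chow}, namely $\Tr(c)=c\cap[A]$.
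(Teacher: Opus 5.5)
Your argument is correct and is essentially the paper's own proof, which is just ``clear from the definition.'' Once $\cc$ is identified with a section $s\co\Qsh{C}\to(c_0\times c_1)^!(\DD\cK\bt\cK)\tw{j}$, the twist $\cc\otimes c$ is $s$ precomposed with $c$, applying $\rho_C^*$ turns this into precomposition with $\rho_C^*c$, and every later step is a post-composition; those later steps are the $*$-$!$ exchange for the square defining $\Fix(C)$ (not the pushforward base change you wrote) followed by the evaluation. So the ``main obstacle'' you describe is automatic.

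Your closing sanity check is wrong, though it does not affect the proof. For the identity correspondence, $\Fix(C)=A\times_{A\times A}A$ is the derived loop stack and $\Tr(\Id_{\Qsh{A}})$ lies in $\CH_0$, not $\CH_{\dim A}$, so $\Tr(c)=c\cap[A]$ fails in general. Nor is this case Example~\ref{ex: trace chow}, which concerns the categorical trace in $\Dmot{C}$ with values in $\CH^i(C)$.
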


\begin{proof}
Clear from the definition. 
\end{proof}

\begin{example}\label{ex: trace twist} Taking $L = \Qsh{C}$ in Example \ref{ex: trace chow}, and applying Lemma \ref{lem: twist trace by Chern class}, we learn that cohomological correspondences can be tensored with Chow cohomology, and that formation of trace is linear over such twisting.
\end{example}

\subsection{Degeneracy stratification}

For each saturated subbundle $\cK\subset\cE$, there is a closed substack defined by the condition that $t\co\cE\to\cF$ kills $\cK$, equivalently where $t$ factors through $\cE/\cK$. Inside it, let $\cM_{\cE}[\cK]$ be the open substack where the induced map $\cE/\cK\to\cF$ is injective as a map of coherent sheaves after restriction to every geometric point of the test scheme, as for $\cM_{\cE}^{\circ}$ above. This closed substack is stable under $\Hk_{\cM}^r$.

Thus, for each $\cK$ we have a map of correspondences
\begin{equation}\label{eq: degeneracy stratification 1}
\begin{tikzcd}
\cM_{\cE}[\cK] \ar[d] & \Hk_{\cM_{\cE}[\cK] }^r\ar[r] \ar[l]  \ar[d] & \cM_{\cE}[\cK] \ar[d] \\
\cM_{\cE} & \ar[l]  \Hk_{\cM_{\cE}}^r \ar[r] & \cM_{\cE}
\end{tikzcd}
\end{equation}
where both squares are Cartesian. Notice that we have obvious maps $\cM_{\cE/\cK}^\circ \rightarrow \cM_{\cE}[\cK]$ and $\Hk_{\cM_{\cE/\cK}^\circ}^r  \rightarrow \Hk_{\cM_{\cE}[\cK] }^r$, which are each isomorphisms on classical truncations. We will use nil-invariance for motivic Chow groups and cohomological correspondences to identify the corresponding groups for these derived stacks having the same classical truncation. The commutative square
\begin{equation}\label{eq: degeneracy excess intersection 1}
\begin{tikzcd}
\Hk_{\cM_{\cE/\cK}^\circ}^r \ar[r] \ar[d] & \cM_{\cE/\cK}^\circ \ar[d] \\
\Hk_{\cM_{\cE}}^r \ar[r] & \cM_{\cE}
\end{tikzcd}
\end{equation}
is not Cartesian (unless $\cK = 0$), but it is Cartesian on classical truncations: on classical points, $t_i(\cK)$ is generically zero and hence is zero as a subsheaf of the vector bundle $\cF_i$. Furthermore, both horizontal maps are quasi-smooth, by Lemma \ref{lem: hecke quasismooth}.

\begin{lemma}\label{lem: excess square}
The commutative square \eqref{eq: degeneracy excess intersection 1} is an \emph{excess intersection square} in the sense of \cite[Proposition 3.15]{KhanI}, with excess bundle $\cV_{\cK}^r$ of \cite[\S 5.8.1]{FYZ2}.
\end{lemma}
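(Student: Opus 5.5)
To prove Lemma \ref{lem: excess square}, I would use the derived vector bundle descriptions of all four corners, as in the proof of Lemma \ref{lem: hecke quasismooth}. By \cite[Proposition 5.35]{FYZ2}, after fixing $\cE$ we have $\cM_{\cE}=\Tot(\cK_{\cE})$ over $\Bun_{H_2}$ and $\Hk_{\cM_{\cE}}^r=\Tot(\cK^\flat_{\cE})$ over $\Hk_{H_2}^r$. Here $\cK_{\cE}=R\Gamma(X',\cE^*\otimes\cF)$, and $\cK^\flat_{\cE}$ is the analogous complex built from the intersection $\cF^\flat$ of the chain, which records compatible maps $t_i$. Similarly $\cM_{\cE/\cK}^\circ$ and its Hecke stack are open in $\Tot(\cK_{\cE/\cK})$ and $\Tot(\cK^\flat_{\cE/\cK})$. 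The maps $\cE\to\cE/\cK$ induce maps of complexes $\cK_{\cE/\cK}\to\cK_{\cE}$ and $\cK^\flat_{\cE/\cK}\to\cK^\flat_{\cE}$.

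The first step is to compute the derived fiber product $P:=\cM_{\cE/\cK}^\circ\times_{\cM_{\cE}}\Hk_{\cM_{\cE}}^r$. Working over $\Hk_{H_2}^r$ (restricted to the open locus), $P$ is the total space of the fiber product of $h_r^*\cK_{\cE/\cK}\to h_r^*\cK_{\cE}\leftarrow\cK^\flat_{\cE}$. I would then compare it with $\Tot(\cK^\flat_{\cE/\cK})$ through the canonical map $\Hk_{\cM_{\cE/\cK}^\circ}^r\to P$. This map is a closed immersion that is an isomorphism on classical truncations, by the remark preceding the lemma. Its failure to be an equivalence is measured by a cofiber complex: the totalization of the square formed by $\cK^\flat_{\cE/\cK},\cK_{\cE/\cK},\cK^\flat_{\cE},\cK_{\cE}$. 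Using the octahedral axiom, this cofiber is identified with the cofiber of the cones $C_{\cE/\cK}\to C_{\cE}$, where $C_\bullet:=\mathrm{cone}(\cK^\flat_\bullet\to h_r^*\cK_\bullet)$.

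The second step uses the fact, noted in the proof of Lemma \ref{lem: hecke quasismooth}, that each $C_\bullet$ is the relative cohomology of a torsion sheaf supported on the legs, hence a vector bundle concentrated in one degree. The map $C_{\cE/\cK}\to C_{\cE}$ is a split injection of bundles with quotient $C_\cK$; this is the local torsion contribution of $\cK$. Consequently $\Hk_{\cM_{\cE/\cK}^\circ}^r\to P$ is the derived zero locus of a section of a vector bundle. On the injective locus that section vanishes identically, because the component of $t$ on $\cK$ is zero. This is exactly the shape of an excess intersection square in the sense of \cite[Proposition 3.15]{KhanI}: the square is Cartesian on classical truncations, both horizontal maps are quasi-smooth (Lemma \ref{lem: hecke quasismooth}), and the excess is the bundle $C_\cK$, i.e.\ the difference of the two normal bundles.

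The final step is to identify this bundle with $\cV_{\cK}^r$ of \cite[\S 5.8.1]{FYZ2}. I expect this to amount to unwinding that definition as the pushforward along the legs of $\cK^*\otimes(\cF_i/\cF_{i-1/2}^\flat)$-type torsion terms. I expect the main obstacle to be the bookkeeping in the first step. One must check that the natural map is indeed the derived zero section of a bundle rather than a more general quasi-smooth map, and that the orientation and degree of $C_\cK$ match the convention of \cite{FYZ2}, including the shifts, since the torsion cohomology may sit in degree $1$ rather than $0$. Should that identification be awkward, I would first prove it fiberwise over $(X')^r$ for a single leg ($r=1$) and then induct along the chain, using composition of excess intersection squares.
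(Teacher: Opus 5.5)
Your overall strategy matches the paper's. Both arguments reduce to the total fiber of the square of perfect complexes $\cK^\flat_{\cE/\cK}\to\cK^\flat_{\cE}$, $h_r^*\cK_{\cE/\cK}\to h_r^*\cK_{\cE}$ on $\Hk^r_{H_2}$. Both use that the cone $C_\bullet$ of each $\flat$-complex into its unflat counterpart is the relative cohomology of a torsion sheaf, hence locally free in degree $0$. The paper states the required input as the fiber of $\bT_{\Hk^r_{\cM^\circ_{\cE/\cK}}/\cM^\circ_{\cE/\cK}}\to f^*\bT_{\Hk^r_{\cM_{\cE}}/\cM_{\cE}}$, which is exactly the relative tangent complex of your comparison map to $P$. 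It cancels the common summand from $\bT_{\Hk^r_{H_2}/\Bun_{H_2}}$ and takes the row fibers $\RHom(\cK,\cF^\flat_\star)[-1]\to\RHom(\cK,\cF)[-1]$ first rather than your column cones; that reordering is harmless. The problem is a shift error in your first step, which makes the geometric description you then rely on false.

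Write $A:=\cK^\flat_{\cE/\cK}$ and $B:=h_r^*\cK_{\cE/\cK}\times_{h_r^*\cK_{\cE}}\cK^\flat_{\cE}$. The cofiber of $A\to B$ is the total fiber of the square shifted by $[1]$. By contrast, $\operatorname{cofib}(C_{\cE/\cK}\to C_{\cE})$ is the total cofiber, i.e.\ the total fiber shifted by $[2]$. Concretely, the octahedral axiom for $A\to B\to h_r^*\cK_{\cE/\cK}$ gives $\operatorname{fib}(A\to B)\simeq\operatorname{fib}(C_{\cE/\cK}\to C_{\cE})[-1]\simeq C_{\cK}[-2]$. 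Hence $\operatorname{cofib}(A\to B)\simeq C_{\cK}[-1]$ is a bundle in cohomological degree $1$, not $C_{\cK}$.

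Consequently $\Hk^r_{\cM^\circ_{\cE/\cK}}\to P$ is the base change of the zero section of $\Tot(C_{\cK}[-1])$, the derived self-intersection of the zero section of $\Tot(C_{\cK})$. It is not the derived zero locus of a section of the vector bundle $C_{\cK}$. It is not even quasi-smooth: its relative tangent complex is $C_{\cK}[-2]$.

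Your description would make it quasi-smooth with tangent complex $C_{\cK}[-1]$, of virtual relative dimension $-\rank C_{\cK}$. That contradicts the dimension count. The relative dimension of $h_r$ on $\Hk^r_{\cM^\circ_{\cE/\cK}}$ exceeds that on $\Hk^r_{\cM_{\cE}}$ by $\rank C_{\cE}-\rank C_{\cE/\cK}=\rank C_{\cK}$. It would also place the excess bundle in the wrong degree for Khan's criterion.

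What must be shown, and what the paper shows, is that this relative tangent complex is $\cV^r_{\cK}[-2]$, a locally free sheaf in degree $2$. With the shift corrected, your computation gives exactly this. The ``section vanishes identically'' step and the fallback induction on legs then become unnecessary, since classical Cartesianness is already known.
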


\begin{proof}
We have already observed that the horizontal maps in \eqref{eq: degeneracy excess intersection 1} are quasi-smooth and the square is Cartesian on classical truncations. Let $f \co \Hk_{\cM_{\cE/\cK}^\circ}^r \rightarrow \Hk_{\cM_{\cE}}^r$ be the left vertical map in \eqref{eq: degeneracy excess intersection 1}. It remains to identify the (derived) fiber of the map
\begin{equation}\label{eq: excess bundle 1}
\bT_{\Hk_{\cM_{\cE/\cK}^\circ}^r / \cM_{\cE/\cK}^\circ } \rightarrow f^* \bT_{\Hk_{\cM_{\cE}}^r/\cM_{\cE}}
\end{equation}
with $\cV_{\cK}^r\sm{[-2]}$. Consider the commutative diagram
\[
\begin{tikzcd}
\Hk_{\cM_{\cE/\cK}^\circ}^r \ar[r] \ar[d] & \Hk_{\cM_{\cE}}^r \ar[r] \ar[d] & \Hk_{H_2}^r \ar[d] \\
\cM_{\cE/\cK}^\circ \ar[r] & \cM_{\cE} \ar[r] & \Bun_{H_2}
\end{tikzcd}
\]
This induces a commutative diagram of perfect complexes on $\Hk_{\cM_{\cE/\cK}^\circ}^r$,
\begin{equation}\label{eq: tangent complex 1}
\begin{tikzcd}
\bT_{\Hk_{\cM_{\cE/\cK}^\circ}^r / \Hk_{H_2}^r} \ar[d]  \ar[r] & f^* \bT_{\Hk_{\cM_{\cE}}^r/ \Hk_{H_2}^r } \ar[d] \\
\bT_{\cM_{\cE/\cK}^\circ / \Bun_{H_2}} \ar[r] &  f^* \bT_{\cM_{\cE} / \Bun_{H_2}}
\end{tikzcd}
\end{equation}
The derived fibers along the columns of \eqref{eq: tangent complex 1} differ from the two terms in \eqref{eq: excess bundle 1} by the same smooth summand pulled back from $\bT_{\Hk_{H_2}^r/\Bun_{H_2}}$. The comparison is the identity on this common summand, so the derived fiber of \eqref{eq: excess bundle 1} is obtained by cancelling it and then interchanging the order of forming derived fibers in \eqref{eq: tangent complex 1}: first taking the derived fibers along the rows, and then the derived fiber along the column.

Referring to the notation of \cite[\S 5.8]{FYZ2}, \cite[Proposition 5.35]{FYZ2} implies that $\Hk_{\cM_{\cE}}^r \rightarrow \Hk_{H_2}^r$ is the derived vector bundle associated to $\ul{\RHom_{X'}(\cE, \cF^\flat_{\star})}$, and $\cM_{\cE} \rightarrow \Bun_{H_2}$ is the derived vector bundle associated to $\ul{\RHom_{X'}(\cE, \cF)}$, and similarly for $\cE$ replaced by $\cE/\cK$. Hence the derived fibers along the rows of \eqref{eq: tangent complex 1} are
\[
\begin{tikzcd}
\ul{\RHom_{X'}(\cK, \cF^\flat_\star)}\sm{[-1]} \ar[d] \\
\ul{\RHom_{X'}(\cK, \cF)} \sm{[-1]}
\end{tikzcd}
\]
and then forming the derived fiber along the column gives $\cV_{\cK}^r\sm{[-2]}$, as desired.

\end{proof}

\begin{cor}\label{cor: pullback cc excess}
We have
\[
\cc_{\cM_{\cE}[\cK]} = \cc_{\cM_{\cE/\cK}^\circ} \otimes c_{\rank(\cK)r }(\cV_{\cK}^r) \in \Corr_{\Hk_{\cM_{\cE}[\cK]}^r}(\Qsh{\cM_{\cE}[\cK]}, \Qsh{\cM_{\cE}[\cK]}\tw{-d(h_r)}).
\]
\end{cor}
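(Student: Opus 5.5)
We will deduce the identity from the excess intersection formula applied to the square \eqref{eq: degeneracy excess intersection 1}, together with nil-invariance. By construction, $\cc_{\cM_{\cE}[\cK]}$ is the pullback of $\cc_{\cM_{\cE}}$ along the Cartesian map of correspondences \eqref{eq: degeneracy stratification 1}. Unwinding the definition, its underlying morphism is the composite
\[
\Qsh{\Hk^r_{\cM_{\cE}[\cK]}} \cong f^*h_r^*\Qsh{\cM_{\cE}} \xrightarrow{f^*\gys_{h_r}} f^*h_r^!\Qsh{\cM_{\cE}}\tw{-d(h_r)} \to h_r'^!\,g^*\Qsh{\cM_{\cE}}\tw{-d(h_r)} \cong h_r'^!\Qsh{\cM_{\cE}[\cK]}\tw{-d(h_r)}.
\]
Here $f,g$ are the vertical maps, $h_r'$ is the top horizontal map, and the unlabeled arrow is the base change transformation. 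Because the square in \eqref{eq: degeneracy stratification 1} is Cartesian, the Gysin transformation for $h_r'$ is the base change of $\gys_{h_r}$. The issue is that $h'_r$ here is the derived base change, not the map $\Hk^r_{\cM^\circ_{\cE/\cK}}\to\cM^\circ_{\cE/\cK}$.

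The next step is to pass to $\cM^\circ_{\cE/\cK}$ using nil-invariance. The maps $\cM^\circ_{\cE/\cK}\to\cM_{\cE}[\cK]$ and $\Hk^r_{\cM^\circ_{\cE/\cK}}\to\Hk^r_{\cM_{\cE}[\cK]}$ are isomorphisms on classical truncations, so they identify the motivic categories and the relevant $\Corr$ groups. Under this identification, the square \eqref{eq: degeneracy excess intersection 1} replaces the Cartesian square. We then apply the excess intersection formula of \cite[Proposition 3.15]{KhanI}, with input Lemma \ref{lem: excess square}. It says that the base change of $\gys_{h_r}$ along the non-Cartesian square equals the Gysin map of the quasi-smooth map $\Hk^r_{\cM^\circ_{\cE/\cK}}\to\cM^\circ_{\cE/\cK}$, composed with multiplication by the Euler class $e(\cV_\cK^r)=c_{\mathrm{top}}(\cV_\cK^r)$ of the excess bundle. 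This yields
\[
\cc_{\cM_{\cE}[\cK]} = \cc_{\cM^\circ_{\cE/\cK}}\otimes e(\cV^r_\cK).
\]
The degree shift $-d(h_r)$ on the left matches the twist on the right, because $d(h'_r)$ for the smaller Hecke map differs from $d(h_r)$ by exactly $\rank\cV^r_\cK$. Twisting a correspondence by a Chow cohomology class is meant in the sense of Example \ref{ex: trace twist}.

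It remains to check that $\rank \cV_\cK^r = \rank(\cK)\, r$, so that the Euler class is $c_{\rank(\cK)r}$. From the proof of Lemma \ref{lem: excess square}, $\cV^r_\cK$ is, up to shift, the cofiber of
\[
\ul{\RHom_{X'}(\cK,\cF^\flat_\star)}\to\ul{\RHom_{X'}(\cK,\cF)}.
\]
This cofiber is $\RHom$ from $\cK$ into a torsion sheaf of total length $r$ supported on the legs, namely one length-one quotient per modification. Its Euler characteristic is therefore $\rank(\cK)\cdot r$. Since it is concentrated in a single degree, it is a vector bundle of that rank, which is consistent with the cone argument in the proof of Lemma \ref{lem: hecke quasismooth}.

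The main obstacle is the compatibility step. We must verify that the excess intersection formula for Gysin transformations holds at the level of natural transformations, compatibly with the base change isomorphisms used to define pullback of cohomological correspondences, and not merely on Chow groups. We must also check that the sign and shift conventions, with $\cV\sm{[-2]}$ appearing as the fiber of the tangent map, produce exactly the top Chern class. For this we would appeal directly to the functorial form of \cite[Proposition 3.15]{KhanI}.
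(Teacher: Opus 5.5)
Your proposal is correct and follows essentially the same route as the paper. You pass via nil-invariance to the square \eqref{eq: degeneracy excess intersection 2}, recognize it as an excess intersection square with excess bundle $\cV_{\cK}^r$ by Lemma \ref{lem: excess square} (your rank count $\rank(\cK)\,r$ is right), and apply the excess intersection formula. The natural-transformation-level compatibility you flag as the main obstacle is avoided in the paper: with unit coefficients, $\Corr_{\Hk_{\cM_{\cE}[\cK]}^r}(\Qsh{\cM_{\cE}[\cK]}, \Qsh{\cM_{\cE}[\cK]}\tw{-d(h_r)}) \cong \CH_{d(h_r)}(\Hk_{\cM_{\cE}[\cK]}^r/\cM_{\cE}[\cK])$ compatibly with Gysin base change, so only the excess formula for relative fundamental classes \cite[\S 6.1.12]{FYZ2} is needed.
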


\begin{proof}
We can factor the diagram \eqref{eq: degeneracy excess intersection 1} through the right half of \eqref{eq: degeneracy stratification 1}, obtaining a commutative square
\begin{equation}\label{eq: degeneracy excess intersection 2}
\begin{tikzcd}
\Hk_{\cM_{\cE/\cK}^\circ}^r \ar[r] \ar[d] & \cM_{\cE/\cK}^\circ \ar[d] \\
\Hk_{\cM_{\cE}[\cK]}^r \ar[r] & \cM_{\cE}[\cK]
\end{tikzcd}
\end{equation}
in which the vertical maps are isomorphisms on classical truncations. Since \eqref{eq: degeneracy excess intersection 1} factors through the derived Cartesian right square of \eqref{eq: degeneracy stratification 1}, Lemma \ref{lem: excess square} shows that \eqref{eq: degeneracy excess intersection 2} is also an excess intersection square, with excess bundle $\cV_{\cK}^r$. Then the excess intersection formula \cite[\S 6.1.12]{FYZ2} applies to yield
\[
[\Hk_{\cM_{\cE}[\cK]}^r/  \cM_{\cE}[\cK]] = c_{\rank(\cK)r} (\cV_{\cK}^r) [\Hk_{\cM_{\cE/\cK}^\circ}^r /  \cM_{\cE/\cK}^\circ]  \in \CH_{d(h_r)}(\Hk_{\cM_{\cE/\cK}^\circ}^r /  \cM_{\cE/\cK}^\circ).
\]
Under the isomorphism
\[
\Corr_{\Hk_{\cM_{\cE}[\cK]}^r}(\Qsh{\cM_{\cE}[\cK]}, \Qsh{\cM_{\cE}[\cK]}\tw{-d(h_r)}) \cong \CH_{d(h_r)}(\Hk_{\cM_{\cE}[\cK]}^r /  \cM_{\cE}[\cK])
\]
which is compatible with Gysin base change along the derived Cartesian squares above, the cohomological correspondence $\cc_{\cM_{\cE}[\cK]}$ maps to $[\Hk_{\cM_{\cE}[\cK]}^r/  \cM_{\cE}[\cK]]$. Under the nil-invariance identification with the top row, the correspondence $\cc_{\cM_{\cE/\cK}^\circ}\otimes c_{\rank(\cK)r}(\cV_{\cK}^r)$ maps to $c_{\rank(\cK)r}(\cV_{\cK}^r)\cap[\Hk_{\cM_{\cE/\cK}^\circ}^r /  \cM_{\cE/\cK}^\circ]$. The excess formula identifies these two classes, completing the proof.
\end{proof}

\begin{remark}\label{rem:degeneracy-stratum-identification}
The natural map $\cM_{\cE/\cK}^\circ \rightarrow \cM_{\cE}[\cK]$ is an isomorphism on classical truncations: an injective map $\cE/\cK\to\cF$ is the same thing as a map $\cE\to\cF$ with kernel exactly $\cK$. Thus on classical truncations we have $\Sht_{\cM_{\cE}[\cK]}^r \cong \cZ_{\cE/\cK}^{r,\circ}$. This is the identification used in Proposition \ref{prop: trace on degeneracy stratum}.
\end{remark}

\begin{prop}\label{prop: trace on degeneracy stratum}
Assume that $\rank(\cE/\cK) \leq n/3$. Let $\rho_{\cK}$ be the natural map from $\cZ_{\cE/\cK}^{r,\circ}$ to $\Hk_{\cM_{\cE/\cK}^\circ}^r$. Then we have
\[
\Tr^{\Sht}(\cc_{\cM_{\cE}[\cK] }) = \rho_{\cK}^*c_{\rank (\cK) r} (\cV_{\cK}^r)\cap[\cZ_{\cE/\cK}^{r, \circ}] \in \CH_*(\cZ_{\cE/\cK}^{r, \circ}).
\]
\end{prop}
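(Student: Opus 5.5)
The plan is to combine three earlier results: Corollary \ref{cor: pullback cc excess}, the linearity of trace under twisting by Chow cohomology (Lemma \ref{lem: twist trace by Chern class}), and Proposition \ref{prop: trace of injective part} applied with $\cE/\cK$ in place of $\cE$.

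\emph{Step 1.} By Corollary \ref{cor: pullback cc excess},
\[
\cc_{\cM_{\cE}[\cK]} = \cc_{\cM_{\cE/\cK}^\circ}\otimes c_{\rank(\cK)r}(\cV_{\cK}^r).
\]
Both sides are viewed as cohomological correspondences supported on $\Hk^r_{\cM_{\cE/\cK}^\circ}$. This uses the nil-invariance identification of $\Hk^r_{\cM_{\cE}[\cK]}$ with $\Hk^r_{\cM^\circ_{\cE/\cK}}$ (they have the same classical truncation). The shtuka-twisted trace only depends on motivic data, and that data is insensitive to derived structure. So $\Tr^{\Sht}(\cc_{\cM_{\cE}[\cK]})$ may be computed from the right-hand side. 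Its value lies in $\CH_*$ of the fixed-point stack, which is identified with $\cZ^{r,\circ}_{\cE/\cK}$ by Remark \ref{rem:degeneracy-stratum-identification}.

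\emph{Step 2.} Apply Lemma \ref{lem: twist trace by Chern class}, or rather its shtuka-twisted analogue, to pull the Chern class out of the trace:
\[
\Tr^{\Sht}\bigl(\cc_{\cM^\circ_{\cE/\cK}}\otimes c_{\rank(\cK)r}(\cV^r_\cK)\bigr)=\Tr^{\Sht}(\cc_{\cM^\circ_{\cE/\cK}})\cap\rho_\cK^*c_{\rank(\cK)r}(\cV^r_\cK).
\]
The lemma is stated for the ordinary trace $\Tr_C$ of a correspondence $A\leftarrow C\to A$. The shtuka-twisted trace is the ordinary trace applied to the Frobenius-twisted correspondence $\cM\leftarrow \Hk\xrightarrow{\Frob\circ h_r}\cM$. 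Twisting by a class pulled back to $\Hk$ commutes with this Frobenius twist, so the lemma transfers directly. Here $\rho_\cK$ is the map from the fixed-point stack to $\Hk$.

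\emph{Step 3.} Since $\rank(\cE/\cK)\le n/3$, Proposition \ref{prop: trace of injective part} applied to the bundle $\cE/\cK$ gives
\[
\Tr^{\Sht}(\cc_{\cM^\circ_{\cE/\cK}})=[\cZ^{r,\circ}_{\cE/\cK}].
\]
Substituting this into Step 2 proves the claim.

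The main obstacle I expect is Step 1. One must check that the nil-invariance identification is compatible with forming $\Tr^{\Sht}$. One must also check that the pullback correspondence $\cc_{\cM_{\cE}[\cK]}$, defined via the derived Cartesian squares \eqref{eq: degeneracy stratification 1}, matches the correspondence appearing in Corollary \ref{cor: pullback cc excess} under that identification; in particular the Tate twists $\tw{-d(h_r)}$ must agree. I would handle this by noting that both are classes in $\CH_{d(h_r)}$ of the relative Chow group $\CH_*(\Hk/\cM)$, which is nil-invariant. Trace is then functorial in such classes, because the trace construction of \cite{FK} only passes through the motivic categories, and these are unchanged by passing to classical truncations.
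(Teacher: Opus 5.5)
This is essentially the paper's proof: Corollary \ref{cor: pullback cc excess}, then Lemma \ref{lem: twist trace by Chern class} (which the paper also applies directly to $\Tr^{\Sht}$), then Proposition \ref{prop: trace of injective part} for $\cE/\cK$. The one thing you skip is the degenerate case $\cK=\cE$. It is needed for the top stratum $d=m$ of the later telescoping argument, but $\cE/\cK=0$ falls outside the standing hypothesis $\rank\ge 1$ under which Proposition \ref{prop: trace of injective part} is proved. The paper therefore treats it separately: $\cM^\circ_{\cE/\cK}=\Bun_{H_2}$ is smooth, $\cZ^{r,\circ}_{\cE/\cK}=\Sht^r_{H_2}$, and $\cc_{\cM^\circ_{\cE/\cK}}$ is the canonical Hecke correspondence, so \cite[Corollary 6.4.1]{FK} gives $\Tr^{\Sht}(\cc_{\cM^\circ_{\cE/\cK}})=[\Sht^r_{H_2}]$ directly.
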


\begin{proof}
By Corollary \ref{cor: pullback cc excess} and then Lemma \ref{lem: twist trace by Chern class}, we have
\[
\Tr^{\Sht}(\cc_{\cM_{\cE}[\cK]} ) = \Tr^{\Sht}(\cc_{\cM_{\cE/\cK}^\circ} \otimes c_{\rank(\cK)r }(\cV_{\cK}^r) ) = \Tr^{\Sht}(\cc_{\cM_{\cE/\cK}^\circ})\cap\rho_{\cK}^*c_{\rank (\cK) r} (\cV_{\cK}^r)  \in \CH_*(\cZ_{\cE/\cK}^{r, \circ}).
\]
If $\cE/\cK=0$, then $\cM_{\cE/\cK}^{\circ}=\Bun_{H_2}$ and
$\cZ_{\cE/\cK}^{r,\circ}=\Sht_{H_2}^r$.  The corresponding
cohomological correspondence is the canonical Hecke correspondence, so
\cite[Corollary 6.4.1]{FK} gives
$\Tr^{\Sht}(\cc_{\cM_{\cE/\cK}^{\circ}})
=[\cZ_{\cE/\cK}^{r,\circ}]$.  Hence assume that
$\rank(\cE/\cK)>0$. Then we may apply Proposition \ref{prop: trace of injective part}, which identifies
\[
\Tr^{\Sht}(\cc_{\cM_{\cE/\cK}^\circ})  = [\cZ_{\cE/\cK}^{r, \circ}] \in \CH_*(\cZ_{\cE/\cK}^{r, \circ}).
\]
\end{proof}

\subsection{Completion of the proof}

Let $\cM_{\geq d} \subset \cM_{\cE}$ be the closed substack where $\rank(\ker (t \co \cE \rightarrow \cF)) \geq d$ and let $\cM_{\leq d} \subset \cM_{\cE}$ be the open substack where $\rank(\ker (t \co \cE \rightarrow \cF)) \leq d$. These are the usual determinantal rank loci for the map $t$ on geometric fibers. They intersect in the locally closed substack $\cM_d \inj \cM_{\cE}$.

Furthermore, $j \co \cM_d \inj \cM_{\geq d}$ is an open embedding, while $i \co  \cM_{\geq d+1} \inj \cM_{\geq d}$ is a closed embedding, and both are stable under $\Hk_{\cM_{\cE}}^r$: along a Hecke chain the kernels $\ker(t_i)\subset\cE$ have the same generic fiber, hence the same saturation and the same rank. Thus the pullbacks $i^*$ and $j^*$ exist on cohomological correspondences.

Denote by $\Sht_{\cM_d}^r$ and $\Sht_{\cM_{\geq d+1}}^r$ the corresponding moduli stacks of shtukas. By \cite[Proposition 7.5.4]{FK}, the maps $\Sht_{\cM_d}^r \rightarrow \Sht_{\cM_{\cE}}^r = \cZ_{\cE}^r$ are open-closed for all $d$.

For any stable locally closed substack $\cN \subset \cM_{\cE}$ (in this subsection, $\cN$ will be one of $\cM_d$, $\cM_{\geq d}$, $\cM_{\geq d+1}$, or $\cM_{\cE}[\cK]$), write $\Hk_{\cN}^r$ for the derived preimage of $\cN$ under the right leg $h_r$ of $\Hk_{\cM_{\cE}}^r$; by stability, the left leg factors through $\cN$. We write $\cc_{\cN}$ for the pullback of $\cc_{\cM_{\cE}}$ to $\Hk_{\cN}^r$.

For $d<m$, decompose $\cM_{\geq d}=\cM_d\sqcup\cM_{\geq d+1}$ and compute in $\CH_*(\Sht_{\cM_{\geq d}}^r)$ as follows.
\begin{align}\label{eq: trace of stratification}
\Tr^{\Sht}(\cc_{\cM_{\geq d}}) &\stackrel{(1)}= \Tr^{\Sht}(j_! j^*  \cc_{\cM_{\geq d}})  + \Tr^{\Sht}(i_* i^* \cc_{\cM_{\geq d}}) \nonumber \\
& \stackrel{(2)}= \Tr^{\Sht}(j_!  \cc_{\cM_{d}}) +  \Tr^{\Sht}(i_* \cc_{\cM_{\geq d+1}}) \nonumber \\
&\stackrel{(3)}=  \Sht(j)_! (\Tr^{\Sht}(j_! \cc_{\cM_{d}})|_{\Sht_{\cM_{d}}^r}) + \Sht(i)_!(\Tr^{\Sht}(j_!  \cc_{\cM_{d}})|_{\Sht_{\cM_{\geq d+1}}^r} ) \nonumber \\
& \hspace{1cm} + \Sht(j)_!( \Tr^{\Sht}( i_* \cc_{\cM_{\geq d+1}})|_{\Sht_{\cM_{d}}^r})  + \Sht(i)_!( \Tr^{\Sht}( \cc_{\cM_{\geq d+1}})|_{\Sht_{\cM_{\geq d+1}}^r}  ).
\end{align}
The three numbered equalities are justified as follows.
\begin{enumerate}
\item Equality (1) is trace additivity; the proof of \cite[Lemma 5.2.7]{Jin24} applies verbatim. Both legs of the Frobenius-twisted correspondence preserve the two strata, so $j_!,j^*,i_*,i^*$ act on the restricted correspondences.
\item Equality (2) uses the derived Cartesian restriction squares
\[
\begin{tikzcd}
\Hk_{\cM_d}^r \ar[r] \ar[d]  & \cM_d \ar[d, "j"] \\
\Hk_{\cM_{\geq d}}^r \ar[r] & \cM_{\geq d}
\end{tikzcd}
\qquad
\begin{tikzcd}
\Hk_{\cM_{\geq d+1}}^r \ar[r] \ar[d]  & \cM_{\geq d+1} \ar[d, "i"] \\
\Hk_{\cM_{\geq d}}^r \ar[r] & \cM_{\geq d}
\end{tikzcd}
\]
which are Cartesian by the derived-preimage convention for $\Hk_{\cN}^r$ and identify $j^*\cc_{\cM_{\geq d}}=\cc_{\cM_d}$ and $i^*\cc_{\cM_{\geq d}}=\cc_{\cM_{\geq d+1}}$.
\item Equality (3) uses compatibility of $\Tr^{\Sht}$ with proper pushforward \cite[Proposition 6.2.1]{FK}; both $\Sht(j)$ and $\Sht(i)$ are open-and-closed embeddings. It also uses the decomposition
\[
\Sht_{\cM_{\geq d}}^r = \Sht_{\cM_d}^r \coprod \Sht_{\cM_{\geq d+1}}^r .
\]
\end{enumerate}

We have 
\begin{equation}\label{eq:sht-Md-decomposition}
\Sht_{\cM_d}^r = \coprod_{\substack{\cK \subset \subset \cE,\ \F_q\textup{-rational}\\ \rank(\cK) = d}} \Sht_{\cM_{\cE}[\cK]}^r.
\end{equation}

The four terms in the last line of \eqref{eq: trace of stratification} simplify as follows.
\begin{itemize}
\item The third summand vanishes because $\Tr^{\Sht}(i_*\cc_{\cM_{\geq d+1}})=\Sht(i)_!\Tr^{\Sht}(\cc_{\cM_{\geq d+1}})$ is supported on the disjoint open-and-closed summand $\Sht_{\cM_{\geq d+1}}^r$.

\item For the first summand, compatibility of trace with pullback along open embeddings gives
\begin{align*}
\Tr^{\Sht}(j_! \cc_{\cM_{d}})|_{\Sht_{\cM_{d}}^r}  = \Tr^{\Sht}(j^* j_! \cc_{\cM_{d}} ) = \Tr^{\Sht}(\cc_{\cM_{d}}).
\end{align*}
The decomposition \eqref{eq:sht-Md-decomposition} then gives
\begin{equation}\label{eq:trace-decomposition}
\Tr^{\Sht}(\cc_{\cM_{d}}) = \sum_{\substack{\cK \subset \subset \cE,\ \F_q\textup{-rational}\\ \rank(\cK) = d}}  (i_{\cK})_! i_{\cK}^* \Tr^{\Sht}(\cc_{\cM_{d}})
\end{equation}
where $i_{\cK}$ is the (open and closed) inclusion of $\Sht_{\cM_{\cE}[\cK]}^r$ into $\Sht_{\cM_d}^r $. 

The correspondence $\Hk_{\cM_d}^r$ stabilizes $\cM_{\cE}[\cK]$. In this Cartesian setting, the restriction of \cite[Construction 7.2.3]{FK} agrees with the pullback defining $\cc_{\cM_{\cE}[\cK]}$. Hence \cite[Example 7.2.2]{FK} and \cite[Theorem 7.5.1]{FK} give
\[
i_{\cK}^* \Tr^{\Sht}(\cc_{\cM_{d}}) = \Tr^{\Sht}(\cc_{\cM_{\cE}[\cK]}).
\]
Proposition \ref{prop: trace on degeneracy stratum} identifies this class as
\[
 \Tr^{\Sht}(\cc_{\cM_{\cE}[\cK]}) =:
 \alpha_{\cK}
 =\rho_{\cK}^*c_{\rank (\cK) r} (\cV_{\cK}^r)
 \cap[\cZ_{\cE/\cK}^{r, \circ}].
 \]
Substituting these observations into \eqref{eq:trace-decomposition} gives
\begin{align*}
\Tr^{\Sht}(\cc_{\cM_{d}}) &  =  \sum_{\substack{\cK \subset \subset \cE,\ \F_q\textup{-rational} \\ \rank(\cK) = d}} (i_{\cK})_!\alpha_{\cK}.
\end{align*}

\item Since $\Hk_{\cM_{\geq d}}^r$ stabilizes $\cM_{\geq d+1}$, $(\Hk_{\cM_{\geq d}}^r)^{(1)}$ is contracting near $\cM_{\geq d+1}$ by \cite[Example 7.2.2]{FK}, and then \cite[Theorem 7.5.1]{FK} shows that
\[
\Tr^{\Sht}(j_!  \cc_{\cM_{d}})|_{\Sht_{\cM_{\geq d+1}}^r} = 0.
\]
\end{itemize}
Write $i_{\cK,\ge d}:=\Sht(j)\circ i_{\cK}\co
\Sht_{\cM_{\cE}[\cK]}^r\to\Sht_{\cM_{\ge d}}^r$.
Equation \eqref{eq: trace of stratification} therefore becomes
\begin{align}\label{eq: trace of stratum}
\Tr^{\Sht}(\cc_{\cM_{\geq d}})   = \left( \sum_{\substack{\cK \subset \subset \cE,\ \F_q\textup{-rational} \\ \rank(\cK) = d}} (i_{\cK,\ge d})_!\alpha_{\cK} \right) + (\Sht(i)_! \Tr^{\Sht}( \cc_{\cM_{\geq d+1}})|_{\Sht_{\cM_{\geq d+1}}^r})
\end{align}

When $d=m$, we have $\cM_{\geq m}=\cM_m$ and $\cM_{\geq m+1}=\varnothing$, so \eqref{eq: trace of stratum} holds without its second summand. The identities for $0\le d\le m$ telescope after pushing each class $\alpha_{\cK}$ to $\CH_*(\cZ_{\cE}^r)$ along the open-closed map
\[
\iota_{\cK}\co
\cZ_{\cE/\cK}^{r,\circ}\cong \Sht_{\cM_{\cE}[\cK]}^r \longrightarrow \cZ_{\cE}^r,
\]
we obtain
\[
\Tr^{\Sht}(\cc_{\cM_{\cE}}) = \sum_{\substack{\cK \subset \subset \cE\\ \cK\ \F_q\textup{-rational}}} (\iota_{\cK})_!\alpha_{\cK},
\]
which agrees with $[\cZ_{\cE}^r]$ by \cite[Theorem 6.6]{FYZ2}. This concludes the proof of Theorem \ref{thm: trace conjecture E}. \qed

\part{General tools}

\section{Motivic derived Fourier transform}\label{sec: motivic derived Fourier transform}

In this section we develop a refined version of the homogeneous motivic Fourier transform introduced in \cite{FK}. Our version can be applied to inhomogeneous sheaves, and agrees with the homogeneous version of \emph{loc. cit.} on homogeneous motivic sheaves. There are two reasons we need this generalization:
\begin{itemize}
        \item We will need to apply the Fourier transform to inhomogeneous motivic sheaves.
        \item We will use the fact that the refined Fourier transform is given by a kernel which behaves like a ``local system'' (is trivial after a finite \'etale cover), which is not true of the homogeneous Fourier transform.
\end{itemize}

\subsection{Motivic Artin--Schreier sheaves}
Let $p=\operatorname{char}\F_q$. We recall the additive-character convention fixed in the Notation section: $\psi_0\co\F_p\to\ol{\Q}^{\times}$ is the character used to split the $p$-Artin--Schreier cover, and $\psi=\psi_0\circ\Tr_{\F_q/\F_p}$ is the resulting character of $\F_q$. Let
\begin{equation}\label{eq:motivic-AS-cover}
f\co \A^1_{\F_q}\longrightarrow \A^1_{\F_q},
\qquad
u\longmapsto u^p-u,
\end{equation}
be the Artin--Schreier morphism.  It is a finite \'{e}tale $\F_p$-torsor.  With respect to this action, set
\begin{equation}\label{eq:motivic-AS-idempotent}
e_{\psi_0}:=\frac{1}{p}\sum_{a\in \F_p}\psi_0(a)^{-1}[a]
\in \ol{\Q}[\F_p].
\end{equation}
This is the primitive idempotent projecting to the $\psi_0$-isotypic summand.  The motivic Artin--Schreier sheaf attached to $\psi_0$ is
\begin{equation}\label{eq:motivic-AS-sheaf}
\AS_{\psi_0}:=e_{\psi_0}\bigl(f_!\Qsh{\A^1_{\F_q}}\bigr)
\in \Dmot{\A^1_{\F_q}}.
\end{equation}

\begin{lemma}
Let $m\co\A^1_{\F_q}\times\A^1_{\F_q}\to\A^1_{\F_q}$ denote the addition map. Then there is a canonical isomorphism 
\begin{equation}\label{eq:motivic-AS-additivity}
m^*\AS_{\psi_0}\xrightarrow{\sim}
\pr_1^*\AS_{\psi_0}\otimes\pr_2^*\AS_{\psi_0}.
\end{equation}
Together with the canonical trivialization at the zero section, this makes $\AS_{\psi_0}$ into a character sheaf on the additive group $\G_a$.
\end{lemma}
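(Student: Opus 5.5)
The plan is to deduce the additivity isomorphism from the corresponding statement for the Artin--Schreier cover itself: the cover is a homomorphism of group schemes, and the idempotent $e_{\psi_0}$ is a character projector, so additivity comes down to a computation in group algebras.

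\emph{Step 1: the torsor squares.} The map $f(u)=u^p-u$ is additive, so $f\times f\co\A^2\to\A^2$ is an $\F_p\times\F_p$-torsor and $m\circ(f\times f)=f\circ m$. Form the Cartesian square of $f$ along $m$. Its total space is
\[
P:=\{(u,x,y):u^p-u=x+y\}.
\]
The map $\A^2\to P$, $(u_1,u_2)\mapsto(u_1+u_2,f(u_1),f(u_2))$, is equivariant for the quotient homomorphism $\F_p\times\F_p\to\F_p$ given by addition. Hence $P=(\A^2)\times^{\F_p\times\F_p,\,\mathrm{add}}\F_p$, i.e. $P$ is the pushout of the torsor $f\times f$ along addition.

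\emph{Step 2: base change and Künneth.} By proper base change for the finite map $f$,
\[
m^*f_!\Qsh{\A^1}\cong(\pi_P)_!\Qsh{P},
\]
compatibly with the $\F_p$-actions. By Künneth for $f\times f$,
\[
\pr_1^*f_!\Qsh{}\otimes\pr_2^*f_!\Qsh{}\cong(f\times f)_!\Qsh{\A^2},
\]
compatibly with the $\F_p\times\F_p$-actions. Both Künneth for finite étale pushforwards and proper base change are available in $\Dmot{-}$, as in \cite{FK}.

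\emph{Step 3: isotypic projections.} Apply $e_{\psi_0}\otimes e_{\psi_0}\in\ol\Q[\F_p\times\F_p]$ to the right side. Since $e_{\psi_0}\otimes e_{\psi_0}$ is the projector for the character $\psi_0\circ\mathrm{add}$, which is inflated along addition, we get
\[
(e_{\psi_0}\otimes e_{\psi_0})(f\times f)_!\Qsh{}\cong e_{\psi_0}\bigl((\pi_P)_!\Qsh{P}\bigr).
\]
To justify this, write the pushout as the invariants of the kernel $\{(a,-a)\}$ acting on $(f\times f)_!\Qsh{}$; this is a direct summand since $p$ is invertible in $\ol\Q$. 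On that summand the $\psi_0\otimes\psi_0$-isotypic piece is the $\psi_0$-isotypic piece for the quotient action. Combining with Step 2 gives \eqref{eq:motivic-AS-additivity}. The trivialization at zero comes from $f^{-1}(0)=\F_p$, constant: there $f_!\Qsh{}|_0=\ol\Q[\F_p]$ is the regular representation, and its $\psi_0$-part is canonically $\ol\Q$ via $e_{\psi_0}$.

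\emph{The main obstacle} is checking the character sheaf axioms: compatibility of the isomorphism with the unit and with associativity over $\A^3$. I would handle these by rerunning the same torsor-pushout argument with $\F_p^3\to\F_p$. All the isomorphisms come from canonical identifications of torsors, so the coherence diagrams commute because they already commute at the level of torsors, and idempotent-splitting is functorial.
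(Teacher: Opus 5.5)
Your proposal is correct and follows essentially the same route as the paper. The paper also lifts addition to the Artin--Schreier cover equivariantly for addition $\F_p\times\F_p\to\F_p$ on deck groups, notes that $\psi_0\boxtimes\psi_0$ is trivial on the antidiagonal kernel so that the isotypic summand descends, and derives the unit and associativity constraints from those of the torsor; your fiber-product/contracted-product formulation, with explicit base change, K\"unneth, and antidiagonal invariants, simply spells out the steps the paper leaves implicit.
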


\begin{proof}
Let us temporarily denote the Artin--Schreier cover as $f \co \wt \A^1_{\F_q} \rightarrow \A^1_{\F_q}$. Since $f$ is finite \'{e}tale, $f_!=f_*$, and $f_!\Qsh{\wt\A^1_{\F_q}}$ carries the regular action of the deck group $\F_p$.\footnote{We fix once and for all the \emph{pullback} convention for this action: $a\in\F_p$ acts via $\sigma_a^*$, where $\sigma_a(u)=u+a$ is the deck transformation. With this convention the image of the $e_{\psi_0}$-summand under any $\ell$-adic realization is the Artin--Schreier local system with trace function $x\mapsto\psi_0(x)$.} The addition map $m\co\A^1_{\F_q}\times\A^1_{\F_q}\to\A^1_{\F_q}$ lifts to a map $\wt m \co\wt\A^1_{\F_q}\times\wt\A^1_{\F_q}\to\wt\A^1_{\F_q}$, because
\[
(u+v)^p-(u+v)=(u^p-u)+(v^p-v).
\]
This covering map is equivariant for the addition homomorphism $\F_p\times\F_p\to\F_p$ of deck groups. The fiber of this homomorphism is the anti-diagonal $\{(a,-a)\}$, on which the character $\psi_0\boxtimes\psi_0$ is trivial. Therefore the $(\psi_0\boxtimes\psi_0)$-isotypic summand of $\pr_1^*f_!\Qsh{\wt\A^1}\otimes\pr_2^*f_!\Qsh{\wt\A^1}$ descends canonically to the $\psi_0$-isotypic summand of $m^*f_!\Qsh{\wt\A^1}$.

The addition map $(a,b)\mapsto a+b$ of $\F_p$-groups induces a map $\ol{\Q}[\F_p]\otimes_{\ol{\Q}}\ol{\Q}[\F_p]\to\ol{\Q}[\F_p]$ carrying
\[
e_{\psi_0}\boxtimes e_{\psi_0}\longmapsto e_{\psi_0}.
\]
This gives the isomorphism \eqref{eq:motivic-AS-additivity}.  The associativity and unit compatibilities follow from the associativity and unit of addition on the Artin--Schreier torsor, together with the canonical trivialization of the torsor over $0$.
\end{proof}

For an $\F_q$-stack $S$, let $\AS_{\psi,S}$ denote the pullback of $\AS_{\psi_0}$ to $\A^1_S$.  The construction of \eqref{eq:motivic-AS-additivity} is compatible with arbitrary base change in $S$.  

\subsection{Motivic derived Fourier transform}
The notion of \emph{derived vector bundles} is recalled in \cite[\S 6.1]{FYZ3}. Let $E\to S$ be a derived vector bundle over a derived Artin stack $S$ over $\F_q$, and let $\wh E\to S$ be its dual.  Write
\begin{equation}\label{eq:motivic-derived-evaluation}
\langle\, ,\,\rangle_E\co \wh E\times_S E\longrightarrow \A^1_S
\end{equation}
for the evaluation morphism, and let $\pr_{\wh E}$ and $\pr_E$ denote the two projections from $\wh E\times_S E$.  Let $d(E/S)$ be the virtual relative dimension of $E\to S$.  It is locally constant on $S$; whenever $S$ is disconnected, all shifts and Tate twists involving $d(E/S)$ are understood componentwise.  The motivic Fourier transform associated with $\psi$ is the $\ol{\Q}$-linear functor 
$\Dmot{E}\to
\Dmot{\wh E}$ given by
\begin{equation}\label{eq:motivic-derived-FT}
\FT_E^\psi(\cK)
:=
\pr_{\wh E,!}\bigl(\pr_E^*\cK\otimes
\langle\, ,\,\rangle_E^*\AS_{\psi,S}\bigr)\sm{[d(E/S)]}.
\end{equation}
This is the motivic analogue of the Fourier--Deligne transform. The shift in \eqref{eq:motivic-derived-FT} is the Deligne--Laumon normalization. We use $\FT$ for this derived motivic Fourier transform throughout.

\subsection{Formal properties of the motivic Fourier transform}
We now record the motivic derived analogues of the formal derived Fourier analysis used below.  The proofs of all statements in this subsection are deferred to the forthcoming work of Tong Zhou \cite{ZhouMotivicFourier}. They are formulated for the motivic categories $\Dmot{(-)}$ and for the Artin--Schreier kernel $\AS_{\psi,S}$ fixed above.  The motivic six-operation formalism for $\Dmot{(-)}$ on derived Artin stacks is the one recalled in \cite[\S 3.1]{FK}, ultimately based on Khan's extension of motivic homotopy theory and its six operations to derived Artin stacks \cite[Appendix A]{KhanI}. In the rest of this section we write $\FT_E=\FT_E^\psi$, unless the additive character must be displayed.

\begin{thm}[Formal properties]\label{thm:motivic-FT-formalism}
Let $E\to S$ be a derived vector bundle over a derived Artin stack over $\F_q$, let $\wh E\to S$ be its dual, and put $d=d(E/S)$.  The functor $\FT_E$ of \eqref{eq:motivic-derived-FT} satisfies the following canonical compatibilities.
\begin{enumerate}
\item\label{item:motivic-FT-base-change} For any morphism $h\co \wt S\to S$, write $\wt E=E\times_S\wt S$, and let $h^E\co\wt E\to E$ and $h^{\wh E}\co\wh{\wt E}\to\wh E$ be the induced maps.  There are canonical natural isomorphisms
\begin{equation}\label{eq:motivic-FT-base-change-star-shriek}
\FT_{\wt E}\circ(h^E)^*\xrightarrow{\sim}(h^{\wh E})^*\circ\FT_E,
\qquad
\FT_{\wt E}\circ(h^E)^!\xrightarrow{\sim}(h^{\wh E})^!\circ\FT_E,
\end{equation}
and
\begin{equation}\label{eq:motivic-FT-base-change-pushes}
\FT_E\circ h^E_!\xrightarrow{\sim}h^{\wh E}_!\circ\FT_{\wt E},
\qquad
\FT_E\circ h^E_*\xrightarrow{\sim}h^{\wh E}_*\circ\FT_{\wt E}.
\end{equation}

\item\label{item:motivic-FT-involutivity} There is a canonical involutivity isomorphism
\begin{equation}\label{eq:motivic-FT-involutivity}
\FT_{\wh E}^{\psi}\circ\FT_E^{\psi}
\xrightarrow{\sim}
[-1]_E^*\sm{(-d)},
\end{equation}
where $[-1]_E\co E\to E$ denotes fiberwise multiplication by $-1$.

\item\label{item:motivic-FT-linear-functoriality} Let $f\co E'\to E$ be a linear map of derived vector bundles over $S$, and let $\wh f\co\wh E\to\wh E'$ be its dual.  Put $d'=d(E'/S)$.  There are canonical natural isomorphisms
\begin{equation}\label{eq:motivic-FT-linear-functoriality}
\begin{gathered}
\wh f^*\circ\FT_{E'}\xrightarrow{\sim}\FT_E\circ f_!\sm{[d'-d]},
\qquad
\wh f^!\circ\FT_{E'}\xrightarrow{\sim}\FT_E\circ f_*\sm{[d-d'](d-d')},\\
\FT_{E'}\circ f^*\xrightarrow{\sim}\wh f_!\circ\FT_E\sm{[d-d'](d-d')},
\qquad
\FT_{E'}\circ f^!\xrightarrow{\sim}\wh f_*\circ\FT_E\sm{[d'-d]}.
\end{gathered}
\end{equation}
These isomorphisms intertwine the adjunctions $(f_!,f^!)$ and $(\wh f^*,\wh f_*)$, and the adjunctions $(f^*,f_*)$ and $(\wh f_!,\wh f^!)$, with the shifts and Tate twists displayed in \eqref{eq:motivic-FT-linear-functoriality}.

\item\label{item:motivic-FT-delta-constant} If $z_E\co S\to E$ and $z_{\wh E}\co S\to\wh E$ are the zero sections, set $\delta_E:=z_{E!}\Qsh{S}$ and $\delta_{\wh E}:=z_{\wh E!}\Qsh{S}$.  Then
\begin{equation}\label{eq:motivic-FT-delta-constant}
\FT_E(\delta_E)\xrightarrow{\sim}\Qsh{\wh E}\sm{[d]},
\qquad
\FT_E(\Qsh{E})\xrightarrow{\sim}\delta_{\wh E}\sm{[-d](-d)}.
\end{equation}

\item\label{item:motivic-FT-convolution-tensor} For $\cK_0,\cK_1\in\Dmot{E}$, with convolution taken along the additive group structure on $E/S$, there are canonical natural isomorphisms
\begin{equation}\label{eq:motivic-FT-convolution-tensor}
\FT_E(\cK_0\star\cK_1)
\xrightarrow{\sim}
\FT_E(\cK_0)\otimes\FT_E(\cK_1)\sm{[-d]},
\end{equation}
and
\begin{equation}\label{eq:motivic-FT-tensor-convolution}
\FT_E(\cK_0\otimes\cK_1)
\xrightarrow{\sim}
\FT_E(\cK_0)\star\FT_E(\cK_1)\sm{[d](d)},
\end{equation}
where the convolution on the right side of \eqref{eq:motivic-FT-tensor-convolution} is taken on $\wh E/S$.

\item\label{item:motivic-FT-Verdier-duality} Let $\DD_E$ and $\DD_{\wh E}$ denote Verdier duality on $E$ and $\wh E$.  There is a canonical natural isomorphism
\begin{equation}\label{eq:motivic-FT-Verdier-duality}
\DD_{\wh E}\bigl(\FT_E^\psi(\cK)\bigr)
\xrightarrow{\sim}
\FT_E^{\psi^{-1}}\bigl(\DD_E(\cK)\bigr)\sm{(d)}.
\end{equation}

\item\label{item:motivic-FT-Plancherel} For $\cK_1,\cK_2\in\Dmot{E}$, there is a canonical Plancherel isomorphism
\begin{equation}\label{eq:motivic-FT-Plancherel}
\pi_{\wh E!}\bigl(\FT_E(\cK_1)\otimes\FT_E(\cK_2)\bigr)
\xrightarrow{\sim}
\pi_{E!}\bigl(\cK_1\otimes[-1]_E^*\cK_2\bigr)\sm{(-d)}.
\end{equation}
\end{enumerate}
\end{thm}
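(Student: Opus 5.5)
The plan is to follow Laumon's proof of the formal properties of the Fourier--Deligne transform. All $\ell$-adic inputs are replaced by the motivic six-functor formalism on derived Artin stacks of \cite[\S 3.1]{FK} and \cite[Appendix A]{KhanI}, together with two facts about $\AS_{\psi_0}$. The first is the character-sheaf isomorphism \eqref{eq:motivic-AS-additivity}. The second is a vanishing statement, which I would prove first: for the structure map $\pi\co\A^1_S\to S$ we have $\pi_!\AS_{\psi,S}=0$. To see this, note that $\pi_!e_{\psi_0}f_!\Qsh{}=e_{\psi_0}(\pi\circ f)_!\Qsh{}$, and $\pi\circ f$ is the structure map of $\A^1$. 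The $\F_p$-action on $(\pi\circ f)_!\Qsh{}$ is therefore trivial, and since $\psi_0\neq 1$ its $\psi_0$-isotypic part vanishes. Combining this with base change along a linear form gives the basic orthogonality: for $E=V$ a classical vector bundle, $\pr_{\wh E,!}\langle\,,\,\rangle^*\AS_{\psi}$ is supported on the zero section and equals $\delta_{\wh E}\sm{[-2d](-d)}$. Away from zero one reduces by base change to a nonzero linear form, where the pushforward vanishes; at zero one uses $\pi_!\Qsh{V}=\Qsh{S}\sm{[-2d](-d)}$ and a localization triangle.

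With these in hand, the properties are proved in the following order.

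(1) Base change follows from proper base change and the projection formula, since the kernel is pulled back from $\A^1$. The $h^E_*$ and $(h^E)^!$ versions come from the fact that the kernel is ``\'etale-locally constant'' along $\wh E$; I would verify this by writing $\FT$ as a $!$-pushforward along a correspondence whose two legs are related by the smooth map $\pr_E$.

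(3) For the linear functoriality, use the identity $\langle f x',\xi\rangle=\langle x',\wh f\xi\rangle$ on $\wh E\times_S E'$. This lets one rewrite both sides as integral transforms with the same kernel. The shifts come from $d,d'$ and the relative dimensions of $\pr$.

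(4) The delta and constant computations follow directly: $\FT_E(\delta_E)$ is immediate, and $\FT_E(\Qsh{E})$ is the orthogonality statement above.

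(5) Convolution goes to tensor product by \eqref{eq:motivic-AS-additivity} and base change. Tensor product goes to convolution by applying involutivity to the first isomorphism.

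(2) For involutivity, compute $\FT_{\wh E}\FT_E$ as an integral transform on $E\times_S E$ with kernel $\pr_!\langle\xi,x+y\rangle^*\AS$. By orthogonality this kernel is the delta on the antidiagonal, up to $\sm{[-2d](-d)}$, which yields $[-1]^*\sm{(-d)}$ after the normalization shifts.

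(6) For Verdier duality, use that the dual of $\AS_{\psi_0}$ is $\AS_{\psi_0^{-1}}\sm{[2](1)}$, which holds because the sheaf is a rank-one summand of a finite \'etale pushforward. Combine this with $\DD\pr_!=\pr_*\DD$ and, by (1), the fact that $\pr_*=\pr_!$ up to twist on this kernel.

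(7) Plancherel is $\pi_{\wh E!}$ of (5) for the tensor-to-convolution isomorphism, restricted to the zero section, i.e.\ the formula $\pi_!(\cK_1\otimes\cK_2)=z^*(\cK_1\star\cK_2)$ evaluated via (4).

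The main obstacle is passing from classical vector bundles to derived vector bundles $\Tot(\cE)$, whose virtual dimension $d$ can be negative and which may be stacky or derived. My plan is to reduce to this situation, using smooth descent in $S$ (all statements are compatible with smooth pullback by (1)), to the case where $\cE$ is a two-term complex $[V^{-1}\to V^0\to V^1]$ presented by vector bundles. Then $E$ is obtained from the classical bundle $V^0$ by two operations: a quotient by $V^{-1}$, giving a stacky $BV$-direction, and a derived zero fiber of $V^0\to V^1$. I would treat the two elementary cases separately. For $E=BV=\Tot(\cV[1])$, with dual $\wh E=\Tot(\cV^*[-1])=0\times_{V^*}0$, I would check directly that $\FT$ exchanges $\Qsh{BV}$ with the derived delta. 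This is where the normalization $\sm{[d(E/S)]}$ with $d<0$ must be checked. Having done so, I would propagate all seven properties along the linear maps $V^0\to E$ and $E\to\Tot(\cV^1)$ using the linear functoriality (3) already established for classical bundles. The delicate point is coherence: the resulting isomorphisms must be independent of the presentation. For this I would rely on the canonical nature of each construction, since each is defined directly by \eqref{eq:motivic-derived-FT}, and the reductions are used only to check that certain canonical maps are isomorphisms.
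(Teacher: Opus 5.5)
The paper gives no proof of this theorem; it defers the whole subsection to \cite{ZhouMotivicFourier}. Your sketch therefore has to stand on its own. Its classical-bundle core is Laumon's argument and is fine. This covers the vanishing of $\pi_!\AS_{\psi,S}$ by homotopy invariance, orthogonality, items (2), (4), (5), (7), and the $*$-pullback/$!$-pushforward halves of (1) and (3). The gap is in the remaining halves: the $(h^E)^!$ and $h^E_*$ isomorphisms in (1), the $\wh f^!$ and $f_*$ isomorphisms in (3), and your claim in (6) that $\pr_*$ and $\pr_!$ agree on $\pr_E^*\cK\otimes\cL_\psi$, where $\cL_\psi:=\langle\,,\,\rangle_E^*\AS_{\psi,S}$.

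Local constancy of the kernel together with smoothness of $\pr_E$ cannot give these. The same reasoning applies verbatim to the constant kernel $\Qsh{}$, where the transform is $\cK\mapsto\pi_{\wh E}^*\pi_{E!}\cK$ and the conclusion is false. Take $S=\A^1_t$, $j\co\G_m\inj\A^1_t$, $E$ the trivial line bundle, and $\cK$ the constant motive on the curve $\{xt=1\}$, which is closed in $\A^2$. Then one side is $\pi_{\wh E}^*j_!\Qsh{\G_m}$ and the other is $\pi_{\wh E}^*j_*\Qsh{\G_m}$. That the forget-supports map is an isomorphism on these particular objects is Laumon's theorem $\FT_!\xrightarrow{\sim}\FT_*$. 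It is a genuinely non-formal property of the Artin--Schreier kernel (classically it comes from wild ramification at $\infty$), and it does not follow from (1).

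You do not actually need it. The proof of (2) uses only $!$-pushforward, $*$-pullback, the projection formula and orthogonality, so prove (2) right after the easy half of (1). Then $\FT_E$ is an equivalence with inverse $[-1]_E^*\FT_{\wh E}\sm{(d)}$. The missing isomorphisms in (1) and (3) are then the mates of the ones you already have, which also makes the adjunction compatibilities in (3) automatic. For (6), the identities $\DD\pr_!=\pr_*\DD$, $\DD\pr^*=\pr^!\DD$ and $\DD(-\otimes\cL_\psi)\cong\DD(-)\otimes\cL_{\psi^{-1}}$ identify $\DD_{\wh E}\circ\FT^\psi_E$ with the right adjoint of $\FT^\psi_{\wh E}$, precomposed with $\DD_E$. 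A right adjoint of an equivalence is its inverse, and by (2) for $\wh E$ that inverse is $\FT^{\psi^{-1}}_E\sm{(d)}$. This route never uses $\pr^!\cong\pr^*\tw{d}$, so unlike yours it survives unchanged for derived $E$.

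The derived extension is not yet an argument. Your orthogonality proof uses $\pi_{E!}\Qsh{E}\cong\Qsh{S}\tw{-d}$ and a localization triangle around a closed zero section, and both break for derived vector bundles. Motives only see classical truncations, so for $\cE=\cO\oplus\cO\sm{[-1]}$ one has $d=0$ but $E_{\cl}=\A^1_S$. For $\wh E=BV$ the zero section is a smooth $V$-torsor, not a closed immersion. So orthogonality, and with it (2), (4) and (7), must be re-proved in the derived setting. One way is a direct computation on a local presentation $E\simeq[V^0/V^{-1}]\times_{V^1}0$. There the $\AS$-integral over $\ker(b\co V^0\to V^1)$ is supported on $\mathrm{im}(b^*)$, which is exactly the $V^{1*}$-orbit of zero in $\wh E$, and the stacky and derived factors supply compensating shifts and twists. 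Separately, (3) ``established for classical bundles'' cannot be applied to maps whose source or target is $E$. The maps you name are also not the natural linear maps: there is no chain map $\cV^0\to\cE$, and the natural ones are $V^0\times_{V^1}0\to E\to[V^0/V^{-1}]$.
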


\begin{prop}[Compatibility with proper base change]\label{prop:motivic-FT-proper-base-change}
Consider a Cartesian square of globally presented derived vector bundles over a common base, together with its Fourier-dual Cartesian square,
\begin{equation}\label{eq:motivic-FT-proper-base-change-square}
\begin{tikzcd}[ampersand replacement=\&]
\& B \ar[dl, "g'"'] \ar[dr, "f'"] \\
A  \ar[dr, "f"'] \& \& D  \ar[dl, "g"] \\
\& C
\end{tikzcd}
\qquad
\begin{tikzcd}[ampersand replacement=\&]
\& \wh C \ar[dl, "\wh f"'] \ar[dr, "\wh g"] \\
\wh A  \ar[dr, "\wh g'"'] \& \& \wh D  \ar[dl, "\wh f'"] \\
\& \wh B .
\end{tikzcd}
\end{equation}
Put $d_f=d(f)$ and $d_g=d(g)$.  Then the diagram of functors $\Dmot{A}\to\Dmot{\wh D}$
\begin{equation}\label{eq:motivic-FT-proper-base-change}
\begin{tikzcd}[ampersand replacement=\&, column sep=large, row sep=large]
\wh g_!\wh f^*\FT_A
\ar[r, "\sim"] \ar[d, "\sim"'] \&
\FT_D\, g^*f_!\sm{[d_f+d_g](d_g)}
\ar[d, "\sim"] \\
(\wh f')^*\wh g'_!\FT_A
\ar[r, "\sim"'] \&
\FT_D\, f'_!(g')^*\sm{[d_f+d_g](d_g)}
\end{tikzcd}
\end{equation}
commutes. Here the right vertical arrow is the Beck--Chevalley equivalence $g^*f_!\simeq f'_!(g')^*$ for the first square in \eqref{eq:motivic-FT-proper-base-change-square}, after applying $\FT_D$ and the displayed shifts and twists.  The left vertical arrow is the Beck--Chevalley equivalence $\wh g_!\wh f^*\simeq(\wh f')^*\wh g'_!$ for the Fourier-dual square. The horizontal arrows are the Fourier functoriality isomorphisms of \eqref{eq:motivic-FT-linear-functoriality}, and the vertical arrows are the proper base-change isomorphisms for \eqref{eq:motivic-FT-proper-base-change-square} and its Fourier-dual square.
\end{prop}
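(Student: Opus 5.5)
The plan is to reduce the commutativity of \eqref{eq:motivic-FT-proper-base-change} to a coherence statement about base change and projection-formula isomorphisms in the motivic six-functor formalism of \cite[Appendix A]{KhanI}, applied to explicit integral kernels. First I will unwind the four arrows. The top horizontal arrow is the composite of $\wh f^*\FT_A\simeq\FT_C f_!$ (first isomorphism of \eqref{eq:motivic-FT-linear-functoriality}, applied to $f$) and $\wh g_!\FT_C\simeq\FT_D g^*$ (third isomorphism of \eqref{eq:motivic-FT-linear-functoriality}, applied to $g$). The bottom horizontal arrow is the composite of $\wh g'_!\FT_A\simeq\FT_B (g')^*$ and $(\wh f')^*\FT_B\simeq\FT_D f'_!$, obtained in the same way from $g'$ and $f'$. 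I will recall, or re-derive from \eqref{eq:motivic-derived-FT}, how each of these four isomorphisms is built.

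\emph{The $!$-type isomorphisms} ($\wh f^*\FT\simeq\FT f_!$). These come purely from proper base change and the projection formula, together with the equality of maps to $\A^1$
\[
\langle \xi, f(a)\rangle_C=\langle \wh f(\xi),a\rangle_A .
\]
Thus they are integral-transform manipulations with no analytic content.

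\emph{The $*$-type isomorphisms} ($\wh g_!\FT\simeq\FT g^*$). These additionally use \eqref{eq:motivic-FT-delta-constant}, i.e.\ $\FT(\Qsh{})\simeq\delta$, along the fibers of the relevant linear map. Concretely, $\wh g_!\FT_C(\cK)$ is computed by pushing the kernel along $\wh C\to\wh D$, and the resulting integral over $\ker \wh g$ produces a delta on the image of $g$, which collapses the transform to $\FT_D g^*$.

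Next, I will write both composites $\Dmot{A}\to\Dmot{\wh D}$ as integral transforms with a common kernel. The natural space is $A\times_S \wh D$ (or a fiber product through $C$), and the kernel is the pullback of $\AS_{\psi,S}$ along $(a,\eta)\mapsto\langle \wh g(\eta'),f(a)\rangle$, where $\eta'$ ranges over preimages. Cartesianness of the first square in \eqref{eq:motivic-FT-proper-base-change-square} gives $B=A\times_C D$. Dually, it gives $\wh C\simeq \wh A\times_{\wh B}\wh D$, so the two chains of intermediate spaces (one passing through $\wh C$, the other through $B$ and $\wh B$) both map to a single ``big'' correspondence
\[
A\times_S\wh C\times_S D ,
\]
on which all relevant pairings agree as morphisms to $\A^1_S$. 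Each arrow in \eqref{eq:motivic-FT-proper-base-change} then becomes a pasting of base-change, projection-formula, and $\delta$-collapse isomorphisms for Cartesian squares inside this big diagram. By the $\infty$-categorical coherence of the six operations (functoriality on correspondences), two such pastings agree whenever they are pastings of the same cube of Cartesian squares, once the kernel identifications agree.

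I expect the main obstacle to be the $*$-type steps, where the $\delta$-collapse \eqref{eq:motivic-FT-delta-constant} is applied along $\ker\wh g$ on the top route but along $\ker\wh g'$ on the bottom route. Since $\ker \wh g\simeq\ker\wh g'$ by Cartesianness, I would first try to show that the collapse isomorphism is itself compatible with base change, using \eqref{eq:motivic-FT-base-change-star-shriek} from Theorem \ref{thm:motivic-FT-formalism}(\ref{item:motivic-FT-base-change}). Then both collapses would be pulled back from the single collapse on $C\to C\times_S\wh{\ker}$. If this compatibility is not available off the shelf, the fallback is to factor $g$ through its graph, reducing to the two basic cases of a closed zero section and a smooth projection of derived vector bundles, and to check each case separately.

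Finally, I will compare shifts and Tate twists using $d(f')=d_f$ and $d(g')=d_g$, which follow from base change of virtual ranks, so that both routes carry $\sm{[d_f+d_g](d_g)}$. The key fact to verify is that no sign arises from reordering these shifts, which holds because only even-weight twists are commuted past one another.
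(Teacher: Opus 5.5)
The paper gives no proof of this proposition to compare against: it is one of the formal properties whose proofs are deferred to \cite{ZhouMotivicFourier}. Judged on its own merits, your outline unwinds the two horizontal arrows correctly and correctly notes that $\operatorname{fib}(\wh g)\simeq\operatorname{fib}(\wh g')$. However, it has a genuine gap at exactly the step you call the main obstacle, and the formal reduction that leads up to it is misstated.

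On $A\times_S\wh C\times_S D$ the pairings $\langle\eta',f(a)\rangle_C$ and $\langle\wh g(\eta'),d\rangle_D=\langle\eta',g(d)\rangle_C$ differ by $\langle\eta',f(a)-g(d)\rangle_C$. So they agree only on $B\times_S\wh C$, not on the big space you propose. Your displayed kernel $\langle\wh g(\eta'),f(a)\rangle$ also pairs $\wh D$ with $C$, and it depends on the choice of $\eta'$.

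The correct picture is as follows. After base change and the projection formula, the corners $\wh g_!\wh f^*\FT_A$ and $\FT_D f'_!(g')^*$ are the integral transforms $\Dmot{A}\to\Dmot{\wh D}$ with kernels
\[
(\wh g\times 1_A)_!(\wh f\times 1_A)^*\langle-,-\rangle_A^*\AS_{\psi,S}
\qquad\text{and}\qquad
(1_{\wh D}\times g')_!(1_{\wh D}\times f')^*\langle-,-\rangle_D^*\AS_{\psi,S}
\]
on $\wh D\times_S A$. In general these differ by the twist $[2d_g](d_g)$. No pasting of base-change and projection-formula isomorphisms can produce such a twist, so both routes must pass through a $\delta$-collapse as in \eqref{eq:motivic-FT-delta-constant}.

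The top route uses the collapse for $D\to C\to\operatorname{cofib}(g)$ on $\wh D\times_S C$, pulled back along $1_{\wh D}\times f$; there the second kernel is identified via base change for the original square. The bottom route uses the collapse for $B\to A\to\operatorname{cofib}(g')$ on $\wh B\times_S A$, pulled back along $\wh f'\times 1_A$; there the first kernel is identified via base change for the dual square. Up to formal coherence, the proposition is precisely the equality of these two isomorphisms of kernels. Your outline contains no argument for this equality.

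The remedy you suggest does not supply one. The isomorphisms \eqref{eq:motivic-FT-base-change-star-shriek} only change the parameter stack $S$. The two collapses, by contrast, live on different total spaces and are related through the linear map $f$. What you actually need is naturality of the collapse with respect to the morphism of triangles $(B\to A\to\operatorname{cofib}(g'))\to(D\to C\to\operatorname{cofib}(g))$. Morphisms of this kind that induce an equivalence on cofibres are exactly Cartesian squares, so this requirement is a kernel-level restatement of the proposition rather than a reduction of it.

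The fallback of factoring $g$ through its graph has its own problems:
\begin{itemize}
\item It additionally needs the isomorphisms of \eqref{eq:motivic-FT-linear-functoriality} to be compatible with composition of linear maps and with pasting of squares, which you do not address.
\item For derived vector bundles with positive-degree (stacky) directions the zero section is not a closed embedding, so ``closed zero section versus smooth projection'' is not the right dichotomy.
\item The fibres of $\wh g$ are only possibly empty derived torsors, so the collapse itself must be constructed with care.
\end{itemize}

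You also never use the global-presentation hypothesis that the statement carries. Finally, your sign remark is not a valid justification. The shifts $[d_f]$ and $[d_g]$ that are interchanged between the two routes need not be even, so the absence of a sign has to be checked against the chosen normalization of \eqref{eq:motivic-FT-linear-functoriality}.
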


\begin{prop}[Fourier transform of the Gysin map]\label{prop:motivic-FT-Gysin}
Let $f\co E'\to E$ be a globally presented quasi-smooth linear map of derived vector bundles over $S$, and let $\wh f\co\wh E\to\wh E'$ be its dual. The quasi-smoothness of $f$ implies that $\wh f$ is separated and representable in derived schemes, by \cite[Lemma 6.1.5]{FYZ3}, so that the forget-supports transformation $\can(\wh f)\co\wh f_!\to\wh f_*$ is defined. Then the diagram of functors $\Dmot{E}\to\Dmot{\wh E'}$
\begin{equation}\label{eq:motivic-FT-Gysin}
\begin{tikzcd}[ampersand replacement=\&, column sep=large, row sep=large]
\wh f_!\FT_E \ar[r, "\can(\wh f)"] \ar[d, "\sim"'] \&
\wh f_*\FT_E \ar[d, "\sim"] \\
\FT_{E'}f^*\sm{[d(f)](d(f))} \ar[r, "{[f]}"'] \&
\FT_{E'}f^!\sm{[-d(f)]}
\end{tikzcd}
\end{equation}
commutes, where $[f]$ denotes the natural transformation induced by the relative fundamental class of $f$, and $\can(\wh f)\co\wh f_!\to\wh f_*$ is the forget-supports map.
\end{prop}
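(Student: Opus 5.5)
The plan is to reduce, by base change, to the universal case of a zero section, and then to split into two elementary cases.

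\emph{Reduction to a zero section.} Since $f\co E'\to E$ is globally presented, it comes from a map of perfect complexes $\cE'\to\cE$ on $S$. Let $\cC$ be its cofiber and $C:=\Tot(\cC)$. The cofiber sequence gives a derived Cartesian square
\[
\begin{tikzcd}[ampersand replacement=\&]
E' \ar[r,"f"] \ar[d] \& E \ar[d,"g"]\\
S \ar[r,"z_C"'] \& C
\end{tikzcd}
\]
in which all maps are linear. Quasi-smoothness of $f$ is equivalent to quasi-smoothness of $z_C$, that is, to $\cC$ having tor-amplitude in $(-\infty,1]$. Dually, $\wh f$ is the base change of the projection $\pi\co\wh C\to S$ along $\wh g\co\wh E'\leftarrow\ldots$; more precisely, the Fourier-dual square has $\wh f$ opposite $\pi$.

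I would then use three compatibilities to reduce \eqref{eq:motivic-FT-Gysin} for $f$ to the same statement for $z_C$:
\begin{itemize}
\item Proposition \ref{prop:motivic-FT-proper-base-change} handles the vertical isomorphisms in the square.
\item The Gysin transformation is compatible with derived base change, $[f]=g^*[z_C]$ suitably interpreted; this is \cite[\S 3.2]{KhanI}.
\item The forget-supports map $\can$ is compatible with base change along $\wh g$. Here one needs that $\wh g$ is (co)smooth enough that $\wh g^*\wh\pi_*\simeq \wh f_*\wh g'^{*}$. I would arrange this by instead base changing along the smooth map $\pi_{E}\co E\to S$, or by noting that both sides are computed by integral kernels on $\wh E\times_S E$, so that base change can be checked on kernels.
\end{itemize}

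\emph{Dévissage of $\cC$.} Locally (and globally, by global presentation) $\cC$ is built from a vector bundle $V$ in degree $0$ and a vector bundle $W$ in degree $1$. I would factor $z_C$ as $S\to \Tot(V)\to C$, or compose zero sections accordingly. Both $[-]$ and $\can$ are compatible with composition, and so are the Fourier isomorphisms \eqref{eq:motivic-FT-linear-functoriality}. The proposition is therefore reduced to two cases.
\begin{enumerate}
\item[(a)] $C=\Tot(V)$ with $V$ a vector bundle. Then $z$ is a regular closed immersion, and its dual $\pi\co \wh V\to S$ is a smooth vector bundle projection.
\item[(b)] $C=\Tot(W[1])=BW$. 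Then $z\co S\to BW$ is smooth, and its dual is the zero section of $W^*$, which is a proper (closed) embedding. In this case $\can$ is an isomorphism and $[z]\co z^*\to z^!\tw{-d}$ is the smooth purity isomorphism. The claim then reduces to checking that the purity isomorphism matches the identity under \eqref{eq:motivic-FT-linear-functoriality}, which should be essentially built into how those isomorphisms are constructed (via \eqref{eq:motivic-FT-delta-constant}).
\end{enumerate}

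\emph{Case (a), the main obstacle.} Both composites are natural transformations $\pi_!\FT_V\to \FT_S z^!\sm{[-d]}$, with $d=\rank V$. Both functors are given by integral kernels over $S$, namely the restriction of $\langle\,,\rangle^*\AS_\psi$, and by homotopy invariance it suffices to compare them on the universal object. Concretely, I would use adjunction to reduce to comparing two elements of $\Hom(\pi_!\Qsh{\wh V},\pi_*\Qsh{\wh V}\tw{?})$ after applying $\FT$ to $\delta$. On the Fourier side, $\can(\pi)$ evaluated on $\Qsh{\wh V}\sm{[d]}=\FT(\delta_V)$ is the map $\pi_!\Qsh{\wh V}\to\pi_*\Qsh{\wh V}$, which corresponds to the Thom class of $\wh V$. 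On the other side, $[z]$ evaluated on $\delta_V$ is the Euler/Thom class of $V$, that is, the fundamental class of the zero section.

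So the crux is the identification ``Fourier transform of the Thom class of $V$ equals forget-supports on $\wh V$''. I would attack this first by deformation to the normal cone, which is trivial here since $z$ is already the zero section of $V$. This reduces everything to a rank-one computation via the splitting principle, pulling back to the flag bundle, where pullback is injective on motives with $\ol{\Q}$-coefficients. For $V=\A^1_S$ the identity becomes the explicit computation that $\pi_!\AS_\psi\simeq 0$, together with the identification of the resulting boundary map with the fundamental class of $0\in\A^1$. This rank-one computation is the step I expect to require real care with signs and Tate twists.
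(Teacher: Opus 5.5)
The paper gives no proof of this proposition; it defers to \cite{ZhouMotivicFourier}. So I can only assess your argument on its own terms, and its backbone, the reduction from $f$ to $z_C$, fails on the Fourier side. On the $E$-side the reduction is fine, since $[f]$ is the base change of $[z_C]$ along $g$. On the dual side the geometry runs the other way. Dualizing $\cE'\to\cE\to\cC$ gives the fiber sequence $\cC^*\to\cE^*\to\cE'^*$, so the dual square is $\wh C\simeq\wh E\times_{\wh E'}S$, with $S\to\wh E'$ the zero section. Thus $\pi_{\wh C}\colon\wh C\to S$ is the base change of $\wh f$ along $z_{\wh E'}$, not the reverse; your condition $\wh g^*\wh\pi_*\simeq\wh f_*(\wh g')^*$ does not even compose.

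Base change can therefore at best transport the $z_C$-identity to the $z_{\wh E'}^*$-restriction of the square you want, and that loses information in two ways:
\begin{itemize}
\item $z_{\wh E'}^*$ is far from conservative on $\Dmot{\wh E'}$;
\item $\can$ is not compatible with $*$-restriction to a closed substack, because the base-change map $z_{\wh E'}^*\wh f_*\to\pi_{\wh C*}\wh g^*$ is not invertible in general.
\end{itemize}
Your suggested fixes do not help: $f^!$ and $\wh f_*\FT_E$ are not presented as integral transforms $\cK\mapsto p_!(q^*\cK\otimes\cP)$, so there is no evident universal object to test on. The defect also propagates to the d\'evissage, which only produces linear maps that are not zero sections. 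In $S\to\Tot(V)\to C$ the second map is a $W$-torsor with cofiber $W[1]$. After splitting, passing from $L_1\oplus\cdots\oplus L_r$ to rank one needs either the general case again (the zero section factors through $L_1\to L_1\oplus L_2$) or a K\"unneth compatibility of $\FT$, $[-]$ and $\can$, which you have not set up.

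There are further problems inside the cases.
\begin{itemize}
\item Quasi-smoothness of $f$ means $\bL_f\simeq\cC^*[1]$ has tor-amplitude in $[-1,\infty)$. In the paper's cohomological convention this says $\cC$ has tor-amplitude in $(-\infty,0]$, not $(-\infty,1]$. So pieces $W[k]$ with $k\ge 2$ occur (already for $BW\to S$), and your two cases do not exhaust them.
\item In case (b), the dual of $z\colon S\to BW$ is $\Tot(W^*[-1])\simeq S\times_{\Tot(W^*)}S\to S$, not the zero section of $W^*$. This map is still proper, so $\can$ is invertible. But then the assertion that the Fourier isomorphisms carry purity to the identity is the whole content of that case, not a formality.
\item Most seriously, in case (a) evaluating at $\delta_V$ cannot determine a natural transformation on $\Dmot{V}$.
\end{itemize}

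What determines $[f]$ is its value on the unit, namely the fundamental class $\Qsh{E'}\to f^!\Qsh{E}\tw{-d(f)}$, together with the map $\diamond\colon f^*(-)\otimes f^!\Qsh{E}\to f^!(-)$. On the Fourier side that unit value is only a normalization: both ends become $\delta_{\wh E'}$, and $\can(\wh f)$ is tautologically invertible on $\delta_{\wh E}$. The missing step is therefore to show two things:
\begin{itemize}
\item $\FT$ carries $\diamond$ to the push--push map $\wh f_!(-)\star\wh f_*(-)\to\wh f_*(-\star-)$ for convolution;
\item that map, with $\delta_{\wh E}$ in the second slot, is $\can(\wh f)$.
\end{itemize}
This is an argument in the spirit of Lemma \ref{lem: tensor-convolution dual}. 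Your $\delta_V$ computation is only a consistency check. Even there, $[z]$ produces $e(V)$ while $\can(\pi_{\wh V})$ produces $e(V^*)=(-1)^{\rank V}e(V)$, so the sign has to be traced through the normalization of the vertical isomorphisms.
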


\subsection{Self-duality of Gaussians}
Recall our standing assumption that $p\ne2$. Throughout this subsection we write $\FT_E$ for $\FT_E^\psi$.
\begin{constr}[Induced Gaussian kernels]\label{constr:motivic-Gaussian-self-duality}
Let $E\to S$ be a vector bundle of virtual relative dimension $d(E/S)$, and let
\[
h_E\co E\xrightarrow{\sim}\wh E
\]
be a symmetric self-duality; that is, under the canonical biduality identification $E\cong\wh{\wh E}$, one has $h_E^\vee=h_E$.  Define
\begin{equation}\label{eq:motivic-induced-quadratic}
\beta_{h_E}\co
E\xrightarrow{(h_E,\Id)}
\wh E\times_S E
\xrightarrow{\langle\, ,\,\rangle_E}
\A^1_S .
\end{equation}
The symmetry of $h_E$ implies the following identity of morphisms $E\times_S E\to\A^1_S$:
\begin{equation}\label{eq:motivic-induced-quadratic-polarization}
\beta_{h_E}(e_1+e_2)-\beta_{h_E}(e_1)-\beta_{h_E}(e_2)
=
2\langle h_E(e_1),e_2\rangle_E.
\end{equation}
Thus $\beta_{h_E}$ is the quadratic function associated with the symmetric bilinear form defined by $h_E$, with the normalization responsible for the multiplication-by-$2$ map $[2]_{\wh E}$ in \eqref{eq:motivic-Gaussian-self-duality-on-E}.  The self-duality also determines the dual quadratic function
\begin{equation}\label{eq:motivic-induced-dual-quadratic}
\beta_{h_E}^{\vee}\co
\wh E\xrightarrow{(\Id,h_E^{-1})}
\wh E\times_S E
\xrightarrow{\langle\, ,\,\rangle_E}
\A^1_S .
\end{equation}
One has $h_E^*\beta_{h_E}^{\vee}=\beta_{h_E}$. Define the associated Gaussian object by
\begin{equation}\label{eq:motivic-induced-Gaussian}
\sG_{h_E}:=\beta_{h_E}^*\AS_{\psi,S}
\in\Dmot{E}.
\end{equation}
Via $h_E$, the positive-dual Gaussian
$(\beta_{h_E}^{\vee})^*\AS_{\psi,S}$ on $\wh E$ identifies canonically with $\sG_{h_E}$ on $E$.  On the other hand,
$h_E^*(-\beta_{h_E}^{\vee})=-\beta_{h_E}$, so upon using $h_E$ to identify $\wh{E}$ with $E$, the Fourier-dual Gaussian is
$(-\beta_{h_E})^*\AS_{\psi,S}$, which need not be isomorphic to $\sG_{h_E}$.
\end{constr}

\begin{lemma}[Fourier transform of Gaussian sheaves]\label{lem:motivic-Gaussian-self-duality}
With the notation of Construction \ref{constr:motivic-Gaussian-self-duality}, there is a canonical isomorphism
\begin{equation}\label{eq:motivic-Gaussian-self-duality-on-E}
[2]_{\wh E}^*\FT_E(\sG_{h_E})
\xrightarrow{\sim}
(-\beta_{h_E}^{\vee})^*\AS_{\psi,S}\otimes
\pi_{\wh E}^*\bigl(\pi_{E!}\sG_{h_E}\sm{[d(E/S)]}\bigr),
\end{equation}
where $[2]_{\wh E}\co\wh E\to\wh E$ is fiberwise multiplication by $2$, and $\pi_E\co E\to S$ and $\pi_{\wh E}\co\wh E\to S$ are the structure morphisms.
\end{lemma}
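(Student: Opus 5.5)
The plan is to reduce to the classical Gaussian sum computation, fiberwise, by completing the square in the Fourier kernel. Set $d:=d(E/S)$ and $\cK:=\sG_{h_E}=\beta_{h_E}^*\AS_{\psi,S}$. By definition \eqref{eq:motivic-derived-FT},
\[
\FT_E(\sG_{h_E})=\pr_{\wh E,!}\bigl(\pr_E^*\sG_{h_E}\otimes\langle\, ,\,\rangle_E^*\AS_{\psi,S}\bigr)\sm{[d]} .
\]
Pulling back along $[2]_{\wh E}$ and using base change for $\pr_{\wh E,!}$, this becomes the $!$-pushforward along $\pr_{\wh E}$ of the sheaf $(\beta_{h_E}\circ\pr_E+2\langle\pr_{\wh E},\pr_E\rangle_E)^*\AS_{\psi,S}$. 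Here I use the additivity isomorphism \eqref{eq:motivic-AS-additivity} to combine the two Artin--Schreier pullbacks into the pullback along the sum of the two functions.

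The key step is the completing-the-square identity of morphisms $\wh E\times_S E\to\A^1_S$:
\[
\beta_{h_E}(e)+2\langle \xi,e\rangle_E=\beta_{h_E}\bigl(e+h_E^{-1}\xi\bigr)-\beta^\vee_{h_E}(\xi).
\]
This follows from the polarization identity \eqref{eq:motivic-induced-quadratic-polarization} with $e_1=e$, $e_2=h_E^{-1}\xi$, together with $\beta_{h_E}(h_E^{-1}\xi)=\beta^\vee_{h_E}(\xi)$ and the symmetry of $h_E$.

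Next, apply the automorphism $\Phi\co(\xi,e)\mapsto(\xi,e+h_E^{-1}\xi)$ of $\wh E\times_S E$ over $\wh E$. It commutes with $\pr_{\wh E}$, so the pushforward is unchanged. Using additivity once more, the integrand becomes
\[
\pr_{\wh E}^*(-\beta^\vee_{h_E})^*\AS_{\psi,S}\otimes \pr_E^*\sG_{h_E}.
\]
By the projection formula, the first factor comes out of $\pr_{\wh E,!}$. What remains is $\pr_{\wh E,!}\pr_E^*\sG_{h_E}$, and proper (smooth) base change for the Cartesian square formed by $\wh E\times_S E$ over $\wh E$, $E$ and $S$ identifies it with $\pi_{\wh E}^*\pi_{E!}\sG_{h_E}$. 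Reinserting the shift $\sm{[d]}$ then gives \eqref{eq:motivic-Gaussian-self-duality-on-E}.

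The main obstacle is not this computation but making each step canonical and valid in the derived/motivic setting. First, the identity of functions must be promoted to a canonical isomorphism of pulled-back character sheaves, which needs the character-sheaf structure of $\AS_{\psi_0}$: the associativity and unit data in the lemma above, so that the several additivity isomorphisms compose coherently. Second, $E$ is allowed to be a derived vector bundle, so the translation automorphism and the identity $h_E^*\beta^\vee_{h_E}=\beta_{h_E}$ must be treated as equivalences of derived stacks. This is harmless, since all maps are group-theoretic and $h_E$ is an equivalence. Third, the polarization identity already includes the factor $2$, so no division by $2$ is needed beyond invertibility of $[2]$ in the final normalization, which uses $p\neq 2$.
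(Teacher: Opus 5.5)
Your proposal is correct and follows essentially the same route as the paper's proof. The steps match: base change along $[2]_{\wh E}\times\Id_E$, the completing-the-square identity obtained from polarization and the symmetry of $h_E$, Artin--Schreier additivity, invariance of $\pr_{\wh E,!}$ under the translation $(\xi,e)\mapsto(\xi,e+h_E^{-1}\xi)$ over $\wh E$, the projection formula, and base change for the product square; the paper only applies the projection formula before the translation step.

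One small inaccuracy: $p\neq 2$ plays no role in this lemma, since only the pullback $[2]_{\wh E}^*$ appears; invertibility of $[2]_{\wh E}$ is first needed in the subsequent lemma on the relative Gauss cohomology.
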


\begin{proof}
Consider the Cartesian square
\begin{equation}\label{diag:motivic-Gaussian-two-pullback-square}
\begin{tikzcd}[ampersand replacement=\&]
\wh E\times_S E \ar[r, "{[2]_{\wh E}\times\Id_E}"] \ar[d, "\pr_{\wh E}"'] \&
\wh E\times_S E \ar[d, "\pr_{\wh E}"]\\
\wh E \ar[r, "{[2]_{\wh E}}"'] \&
\wh E
\end{tikzcd}
\end{equation}
Base change for the square \eqref{diag:motivic-Gaussian-two-pullback-square}, together with the definition \eqref{eq:motivic-derived-FT} and the equality
$\pr_E\circ([2]_{\wh E}\times\Id_E)=\pr_E$, gives a canonical identification
\begin{equation}\label{eq:motivic-Gaussian-two-pullback}
[2]_{\wh E}^*\FT_E(\sG_{h_E})
\cong
\pr_{\wh E,!}\Bigl(\pr_E^*\sG_{h_E}\otimes
\bigl(\bigl\langle[2]_{\wh E}\circ\pr_{\wh E},\pr_E\bigr\rangle_E\bigr)^*\AS_{\psi,S}\Bigr)\sm{[d(E/S)]}.
\end{equation}
Let
\begin{equation}\label{eq:motivic-Gaussian-translation}
\tau_{h_E}\co\wh E\times_S E\longrightarrow\wh E\times_S E,
\qquad
(\eta,e)\longmapsto\bigl(\eta,e+h_E^{-1}(\eta)\bigr)
\end{equation}
be translation in the $E$-coordinate by $h_E^{-1}(\eta)$.  It is an automorphism over $\wh E$.  The polarization identity \eqref{eq:motivic-induced-quadratic-polarization} implies the following identity of morphisms $\wh E\times_S E\to\A^1_S$:
\begin{equation}\label{eq:motivic-Gaussian-completing-square}
\beta_{h_E}\circ\pr_E+
\bigl\langle[2]_{\wh E}\circ\pr_{\wh E},\pr_E\bigr\rangle_E
=
\beta_{h_E}\circ\pr_E\circ\tau_{h_E}
-\beta_{h_E}^{\vee}\circ\pr_{\wh E}.
\end{equation}
Indeed, for $(\eta,e) \in \wh{E} \times_S E$ and $v=h_E^{-1}(\eta)$, the right hand side of \eqref{eq:motivic-Gaussian-completing-square} is
$\beta_{h_E}(e+v)-\beta_{h_E}(v)=\beta_{h_E}(e)+2\langle h_E(e),v\rangle_E$, which equals $\beta_{h_E}(e)+\langle 2\eta,e\rangle_E$ by symmetry of $h_E$.
Applying the Artin--Schreier additivity isomorphism \eqref{eq:motivic-AS-additivity} to \eqref{eq:motivic-Gaussian-completing-square} yields a canonical isomorphism
\begin{equation}\label{eq:motivic-Gaussian-kernel-splitting}
\pr_E^*\sG_{h_E}\otimes
\bigl(\bigl\langle[2]_{\wh E}\circ\pr_{\wh E},\pr_E\bigr\rangle_E\bigr)^*\AS_{\psi,S}
\cong
\tau_{h_E}^*\pr_E^*\sG_{h_E}\otimes
\pr_{\wh E}^*\bigl((-\beta_{h_E}^{\vee})^*\AS_{\psi,S}\bigr).
\end{equation}
Substituting \eqref{eq:motivic-Gaussian-kernel-splitting} into \eqref{eq:motivic-Gaussian-two-pullback} and applying the projection formula for $\pr_{\wh E}$ gives
\begin{equation}\label{eq:motivic-Gaussian-after-projection-formula}
[2]_{\wh E}^*\FT_E(\sG_{h_E})
\cong
(-\beta_{h_E}^{\vee})^*\AS_{\psi,S}\otimes
\pr_{\wh E,!}\bigl(\tau_{h_E}^*\pr_E^*\sG_{h_E}\bigr)\sm{[d(E/S)]}.
\end{equation}
Because $\tau_{h_E}$ is an automorphism over $\wh E$, the canonical isomorphism
$\tau_{h_E,!}\tau_{h_E}^*\cong\Id$ gives
\begin{equation}\label{eq:motivic-Gaussian-translation-invariance}
\pr_{\wh E,!}\bigl(\tau_{h_E}^*\pr_E^*\sG_{h_E}\bigr)
\cong
\pr_{\wh E,!}\pr_E^*\sG_{h_E}.
\end{equation}
Finally, base change for the Cartesian square
\[
\begin{tikzcd}[ampersand replacement=\&]
\wh E\times_S E \ar[r, "\pr_E"] \ar[d, "\pr_{\wh E}"'] \&
E \ar[d, "\pi_E"]\\
\wh E \ar[r, "\pi_{\wh E}"'] \&
S
\end{tikzcd}
\]
identifies
\begin{equation}\label{eq:motivic-Gaussian-product-base-change}
\pr_{\wh E,!}\pr_E^*\sG_{h_E}
\cong
\pi_{\wh E}^*\pi_{E!}\sG_{h_E}.
\end{equation}
Combining \eqref{eq:motivic-Gaussian-after-projection-formula}, \eqref{eq:motivic-Gaussian-translation-invariance}, and \eqref{eq:motivic-Gaussian-product-base-change} gives
\[
[2]_{\wh E}^*\FT_E(\sG_{h_E})
\cong
(-\beta_{h_E}^{\vee})^*\AS_{\psi,S}\otimes
\pi_{\wh E}^*\bigl(\pi_{E!}\sG_{h_E}\sm{[d(E/S)]}\bigr),
\]
which is the desired isomorphism \eqref{eq:motivic-Gaussian-self-duality-on-E}.
\end{proof}

\begin{lemma}[Invertibility of the relative Gauss cohomology]\label{lem:motivic-relative-Gauss-invertible}
Maintain the notation of Construction \ref{constr:motivic-Gaussian-self-duality}, and assume in addition that $E\to S$ is a vector bundle in the classical sense (i.e., the total space of a locally free sheaf). Set
\begin{equation}\label{eq:motivic-relative-Gauss-object}
\mathfrak{G}(E,h_E):=
\pi_{E!}\sG_{h_E}\sm{[d(E/S)]}
\in\Dmot{S}.
\end{equation}
Then $\mathfrak{G}(E,h_E)$ is tensor-invertible.  More precisely, there is a canonical isomorphism
\begin{equation}\label{eq:motivic-relative-Gauss-inverse}
\mathfrak{G}(E,h_E)\otimes
\mathfrak{G}(E,-h_E)(d(E/S))
\xrightarrow{\sim}
\Qsh{S}.
\end{equation}
\end{lemma}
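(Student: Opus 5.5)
The plan is to compute the tensor product of the two Gauss objects as a single pushforward and evaluate it with the Fourier formalism of Theorem \ref{thm:motivic-FT-formalism}, in the spirit of the classical identity $|G(h)|^2 = q^{d}$. Write $d=d(E/S)$. By the Künneth/base change formula,
\[
\mathfrak{G}(E,h_E)\otimes\mathfrak{G}(E,-h_E)\cong \pi_{E\times_S E,!}\bigl((\beta_{h_E}\boxplus(-\beta_{h_E}))^*\AS_{\psi,S}\bigr)\sm{[2d]},
\]
using the additivity isomorphism \eqref{eq:motivic-AS-additivity} to merge the two Artin--Schreier pullbacks into a single one along the function $(e_1,e_2)\mapsto\beta_{h_E}(e_1)-\beta_{h_E}(e_2)$ on $E\times_S E$.

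Next, I would change variables on $E\times_SE$ by the automorphism $(e_1,e_2)\mapsto(e_1+e_2,e_2)$, i.e.\ set $e_1=e+v$, $e_2=v$. By the polarization identity \eqref{eq:motivic-induced-quadratic-polarization},
\[
\beta_{h_E}(e+v)-\beta_{h_E}(v)=\beta_{h_E}(e)+\langle 2h_E(e),v\rangle_E .
\]
Again by additivity, the kernel becomes $\pr_1^*\sG_{h_E}\otimes\langle 2h_E\circ\pr_1,\pr_2\rangle_E^*\AS_{\psi,S}$. Pushing forward first along $\pr_1$, the projection formula together with the definition \eqref{eq:motivic-derived-FT} of the Fourier transform (applied to the constant object, base changed along the linear map $2h_E\colon E\to\wh E$, which is an isomorphism since $p\neq2$) identifies the inner pushforward with $(2h_E)^*\FT_E(\Qsh{E})\sm{[-d]}$. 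By \eqref{eq:motivic-FT-delta-constant} this is $(2h_E)^*\delta_{\wh E}\sm{[-2d](-d)}\cong z_{E!}\Qsh S\sm{[-2d](-d)}$. Tensoring with $\sG_{h_E}$, whose restriction to the zero section is canonically trivial because $\beta_{h_E}\circ z_E=0$, and pushing forward to $S$, we obtain $\Qsh S\sm{[-2d](-d)}$. Combined with the shift $[2d]$ this gives $\mathfrak{G}(E,h_E)\otimes\mathfrak{G}(E,-h_E)\cong\Qsh S(-d)$, which is \eqref{eq:motivic-relative-Gauss-inverse} after twisting by $(d)$. Tensor-invertibility follows immediately, since we have exhibited an explicit inverse $\mathfrak{G}(E,-h_E)(d)$.

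I expect the main obstacle to be bookkeeping rather than substance. First, the shifts and twists must be tracked exactly through the Deligne--Laumon normalization. Second, the isomorphism must be \emph{canonical}, i.e.\ independent of local trivializations. Third, the classical-bundle hypothesis is where the argument needs it: it guarantees that the translation automorphism is defined, and that for an honest vector bundle $\FT_E(\Qsh{E})$ is a delta object (for a derived bundle with stacky directions $h_E$ would not give an invertible rescaling). If canonicity of the Fourier step is delicate, I would fall back on a direct argument: check invertibility Zariski-locally on $S$, where $E$ splits as an orthogonal sum of rank-one pieces (using $p\neq2$), reduce via Künneth to the one-dimensional Gauss sum over $\A^1_S$, and compute that case directly.
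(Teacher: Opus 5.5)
Your argument is correct, and it takes a different route from the paper's.

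\textbf{The paper's route.} The paper first applies Lemma \ref{lem:motivic-Gaussian-self-duality} to both $h_E$ and $-h_E$. This writes $[2]_{\wh E}^*\FT_E(\sG_{\pm h_E})$ as a dual Gaussian tensored with $\pi_{\wh E}^*\mathfrak{G}(E,\pm h_E)$. It then feeds $\cK_1=\sG_{h_E}$ and $\cK_2=\sG_{-h_E}$ into the Plancherel isomorphism \eqref{eq:motivic-FT-Plancherel}. On one side the two dual Gaussians cancel; on the other, $\sG_{h_E}\otimes[-1]_E^*\sG_{-h_E}\cong\Qsh{E}$. This gives $\mathfrak{G}(E,h_E)\otimes\mathfrak{G}(E,-h_E)\otimes\pi_{\wh E!}\Qsh{\wh E}\cong\pi_{E!}\Qsh{E}(-d)$. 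Finally the paper uses the classical-bundle hypothesis to identify both $\pi_{E!}\Qsh{E}$ and $\pi_{\wh E!}\Qsh{\wh E}$ with the invertible object $\Qsh{S}[-2d](-d)$, and cancels it.

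\textbf{Your route.} You write the product directly as one pushforward from $E\times_S E$, using K\"unneth and Artin--Schreier additivity. You then complete the square with the shear $(e,v)\mapsto(e+v,v)$ and integrate out $v$ using only \eqref{eq:motivic-FT-delta-constant}. This collapses everything onto the zero section, where $\sG_{h_E}$ is canonically trivial. The shifts check out: $[2d]$ from the two normalizations against $[-2d](-d)$ from $(2h_E)^*\FT_E(\Qsh{E})[-d]$ gives $\Qsh{S}(-d)$, as required.

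\textbf{Comparison.} Yours is the sheaf-theoretic form of $\sum_{e,v}\psi(\beta_{h_E}(e)+2\langle h_E(e),v\rangle)=q^{d}$. It needs neither the Gaussian Fourier lemma nor Plancherel, only base change, the projection formula, additivity, and the Fourier transform of the constant object. The paper's version instead reuses Lemma \ref{lem:motivic-Gaussian-self-duality}, which it needs later anyway for the Fourier transform of the Gaussian correspondences.

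\textbf{A correction to your closing remarks.} The classical-bundle hypothesis is not actually used anywhere in your argument. Translations exist on any derived vector bundle. The map $2h_E$ is an isomorphism whenever $h_E$ is, since $p\neq 2$. And \eqref{eq:motivic-FT-delta-constant} is asserted in Theorem \ref{thm:motivic-FT-formalism} for arbitrary derived vector bundles. In the paper's proof, the hypothesis enters exactly at the cancellation of $\pi_{E!}\Qsh{E}$ against $\pi_{\wh E!}\Qsh{\wh E}$; for a genuinely derived self-dual bundle these objects need not be invertible. Your computation never forms them, so, granting the formalism as stated, it proves the statement for symmetric self-dual derived vector bundles as well. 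The fallback via local orthogonal splitting is unnecessary.
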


\begin{proof}
Apply Lemma \ref{lem:motivic-Gaussian-self-duality} to $h_E$ and to $-h_E$.  Since $p\ne2$, the map $[2]_{\wh E}$ is an automorphism over $S$, and Lemma \ref{lem:motivic-Gaussian-self-duality} gives
\begin{align*}
[2]_{\wh E}^*\FT_E(\sG_{h_E})&\cong (-\beta_{h_E}^{\vee})^*\AS_{\psi,S}\otimes\pi_{\wh E}^*\mathfrak{G}(E,h_E),\\
[2]_{\wh E}^*\FT_E(\sG_{-h_E})&\cong (-\beta_{-h_E}^{\vee})^*\AS_{\psi,S}\otimes\pi_{\wh E}^*\mathfrak{G}(E,-h_E).
\end{align*}
Here $\beta_{-h_E}^{\vee}=-\beta_{h_E}^{\vee}$, so
\[
(-\beta_{h_E}^{\vee})^*\AS_{\psi,S}\otimes(-\beta_{-h_E}^{\vee})^*\AS_{\psi,S}\cong\Qsh{\wh E}
\]
canonically. (Under the identification $h_E\co E\cong\wh E$ these Fourier-dual Gaussians are $\sG_{-h_E}$ and $\sG_{h_E}$, respectively, though we will not need this.)
Use the Plancherel isomorphism in \eqref{eq:motivic-FT-Plancherel} with $\cK_1=\sG_{h_E}$ and $\cK_2=\sG_{-h_E}$.  Because $\beta_{-h_E} \circ [-1]_E = \beta_{-h_E}$, one has $[-1]_E^*\sG_{-h_E}=\sG_{-h_E}$, and the tensor product $\sG_{h_E}\otimes\sG_{-h_E}$ is canonically $\Qsh{E}$.  Thus the right side of Plancherel is
\[
\pi_{E!}\Qsh{E}\sm{(-d(E/S))}.
\]
The left side, using the two displayed Fourier-transform identifications (after inserting $[2]_{\wh E}^*$, which does not change $\pi_{\wh E!}$ since $[2]_{\wh E}$ is an automorphism over $S$), is
\[
\mathfrak{G}(E,h_E)\otimes\mathfrak{G}(E,-h_E)\otimes \pi_{\wh E!}\Qsh{\wh E}.
\]
Since $E$ is a vector bundle in the classical sense, of rank $d(E/S)$, we have 
\[
\pi_{E!}\Qsh{E}\cong\pi_{\wh E!}\Qsh{\wh E}\cong \Qsh{S}\sm{[-2d(E/S)](-d(E/S))}.
\]
Cancelling this common factor leaves the canonical isomorphism
\[
\mathfrak{G}(E,h_E)\otimes\mathfrak{G}(E,-h_E)(d(E/S))
\cong
\Qsh{S},
\]
which is \eqref{eq:motivic-relative-Gauss-inverse}.
\end{proof}

\begin{lemma}
\label{lem:motivic-relative-Gauss-finite-etale}
Assume the hypotheses of Lemma \ref{lem:motivic-relative-Gauss-invertible} and that $d=d(E/S)$ is constant. Then there are tensor-invertible objects $\mathfrak g_d\in\Dmot{\Spec\F_q}$ and $\varepsilon(E,h_E)\in\Dmot{S}$ such that
\begin{itemize}
    \item There is a finite \'{e}tale surjection $u\co S'\to S$ such that
$u^*\varepsilon(E,h_E)\cong\Qsh{S'}$;
    \item There is an isomorphism
    \[
\mathfrak G(E,h_E)
\cong
\left.\mathfrak g_d\right|_S\otimes\varepsilon(E,h_E).
\]
\end{itemize}
\end{lemma}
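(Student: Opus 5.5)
We first reduce to the case of a standard split quadratic form, and then argue by classical diagonalization of quadratic forms. Since $p\ne 2$, the nondegenerate symmetric form $h_E$ on the rank-$d$ bundle $E$ is, étale locally on $S$, isomorphic to the standard form $\mathrm{diag}(1,\ldots,1)$. More precisely, the isomorphisms $(E,h_E)\cong(\cO_S^d,\mathrm{std})$ form an $O_d$-torsor $\mathrm{Isom}$ over $S$. The problem is that $O_d$ is not finite, so this torsor alone does not give a finite étale trivialization. The plan is to isolate the dependence of $\mathfrak G(E,h_E)$ on the torsor through a finite quotient of $O_d$.

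\emph{Step 1 (standard factor).} Define $\mathfrak g_d:=\mathfrak G(\A^d_{\F_q},\mathrm{std})$. By Künneth for $\pi_!$ together with Artin--Schreier additivity \eqref{eq:motivic-AS-additivity}, this equals $\mathfrak g_1^{\otimes d}$, where $\mathfrak g_1=\pi_!\,(x\mapsto x^2)^*\AS_{\psi}\sm{[1]}$. Lemma \ref{lem:motivic-relative-Gauss-invertible} shows that $\mathfrak g_d$ is invertible. Then set
\[
\varepsilon(E,h_E):=\mathfrak G(E,h_E)\otimes \mathfrak g_d|_S^{-1}.
\]
This is invertible by Lemma \ref{lem:motivic-relative-Gauss-invertible}, and the displayed isomorphism holds by construction. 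It remains to show that $\varepsilon$ is trivialized on a finite étale cover.

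\emph{Step 2 (the finite cover).} Over $\mathrm{Isom}$, base change for $\pi_!$ gives $\mathfrak G(E,h_E)|_{\mathrm{Isom}}\cong \mathfrak g_d|_{\mathrm{Isom}}$. The key point is to show that this trivialization of $\varepsilon$ on $\mathrm{Isom}$ is invariant under the identity component $SO_d$. I would argue this in two parts. First, the action of $O_d$ on $\varepsilon|_{\mathrm{Isom}}$ is through automorphisms of the unit object, i.e. through $\rH^0(O_d;\Qsh{})$-valued characters. Second, since $SO_d$ is connected, such a character must be trivial on it, by $\A^1$/connectedness invariance of $\End(\Qsh{})$ in degree $0$. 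Consequently $\varepsilon$ descends to a trivialization over $\mathrm{Isom}/SO_d$. That quotient is the finite étale double cover of $S$ given by $\mu_2$-torsors of determinants, namely square roots of the discriminant $\det h_E\in \cO_S^\times/(\cO_S^\times)^2$. A cleaner route avoiding the group-action bookkeeping is available: pass to the étale cover $S''\to S$ where $\det h_E$ becomes a square. There, locally, diagonalize $h_E$ and use Gaussian sums of rank one. We have $\mathfrak G(L,a)\cong \mathfrak g_1\otimes \chi_a$, where $\chi_a$ is the Kummer-type sheaf $e_{\mathrm{sgn}}(\sqrt a)_!\Qsh{}$ attached to the quadratic character. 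This identity is checked by the substitution $x\mapsto\sqrt a\,x$ on the double cover. The product of the $\chi_{a_i}$ is then $\chi_{\det}$, which is trivial after adjoining $\sqrt{\det h_E}$.

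\emph{Main obstacle.} The hard part is making the local diagonalization argument glue, i.e. showing that the identification $\varepsilon\cong\chi_{\det h_E}$ is canonical and independent of the chosen orthogonal basis. Once that holds, the finite étale cover $u\colon S'=\Spec_S \cO_S[\sqrt{\det h_E}]\to S$ (a $\mu_2$-torsor, finite étale since $p\neq 2$) trivializes $\varepsilon$. I would try first to establish canonicity via Step 2's connectedness argument: any two diagonalizations differ by an element of $O_d$, and the element acts on $\varepsilon$ only through $\det$. If that fails motivically, a fallback suffices, because the statement only requires existence. One takes $S'$ to be the finite étale cover of $\mathrm{Isom}/SO_d$ combined with the Zariski-local diagonalizations, and checks that the cocycle defining $\varepsilon$ with respect to that cover is trivial.
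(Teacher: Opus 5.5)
Your Steps 1--2 are the paper's proof: set $\varepsilon(E,h_E):=\mathfrak G(E,h_E)\otimes(\mathfrak g_d|_S)^{-1}$, trivialize it on the $\mathrm O_d$-torsor $P=\Isom_S\bigl((\cO_S^d,q_d),(E,\beta_{h_E})\bigr)$ via the universal isometry, and note that the discrepancy on $P\times_S P\cong P\times\mathrm O_d$ is a unit of $\rH^0(\mathrm O_d,\Qsh{\mathrm O_d})$ equal to $1$ on the connected subgroup $\mathrm{SO}_d$, so $\varepsilon$ is trivialized on the finite \'etale cover $P/\mathrm{SO}_d\to S$. The diagonalization/Kummer-sheaf detour and the ``main obstacle'' paragraph are not needed, since Step 2 already completes the argument.
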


\begin{proof}
The case $d=0$ is immediate. Suppose $d>0$, and set
\[
q_d(x_1,\ldots,x_d):=x_1^2+\cdots+x_d^2,
\qquad
\mathfrak g_d:=
\pi_{d!}q_d^*\AS_{\psi_0}\sm{[d]},
\qquad
\pi_d\co\A^d_{\F_q}\to\Spec\F_q.
\]
Lemma \ref{lem:motivic-relative-Gauss-invertible} shows that $\mathfrak g_d$ is tensor-invertible. Define
\[
\varepsilon(E,h_E):=
\mathfrak G(E,h_E)\otimes
\left(\left.\mathfrak g_d\right|_S\right)^{-1}.
\]
Let
\[
\pi_P\co P:=
\Isom_S\bigl((\cO_S^d,q_d),(E,\beta_{h_E})\bigr)\longrightarrow S.
\]
Because $p\ne2$, the quadratic form $\beta_{h_E}$ is \'{e}tale-locally isometric to $q_d$; hence $P$ is an $\mathrm O_d=\Aut(q_d)$-torsor. The universal isometry over $P$ induces
\begin{equation}\label{eq:orthogonal-trivialization}
\pi_P^*\mathfrak G(E,h_E)
\xrightarrow{\sim}
\left.\mathfrak g_d\right|_P,
\end{equation}
and therefore
$\pi_P^*\varepsilon(E,h_E)\cong\Qsh{P}$.

We have two pullbacks of \eqref{eq:orthogonal-trivialization} to $P \times_S P$, and under the isomorphism $P\times_S P\cong P\times\mathrm O_d$, the discrepancy between them is the action of $\mathrm O_d$ on $\mathfrak g_d$. Since $\mathfrak g_d$ is tensor-invertible, this action gives a class in $\Aut_{\Dmot{\mathrm O_d}}(\Qsh{\mathrm O_d})\cong \rH^0(\mathrm O_d,\Qsh{\mathrm O_d})^\times$. Its restriction to $\mathrm{SO}_d$ is constant because $\mathrm{SO}_d$ is connected, and its value at the identity is $1$. Hence $\varepsilon(E,h_E)$ is trivialized by the finite \'{e}tale cover $P/\mathrm{SO}_d\to S$.
\end{proof}

\section{Cohomological correspondences}\label{sec: cohomological correspondences}

The notion of \emph{cohomological correspondence} is reviewed in \cite[\S 4.1.1]{FYZ3}. Here we develop some supplementary technical material about cohomological correspondences that will be needed later.  

\subsection{Tensoring cohomological correspondences}\label{ssec: tensoring cohomological correspondences}
There is an operation of tensoring a cohomological correspondence with a map, defined formally as follows. Here $A_0 \xleftarrow{a_0} C^\flat \xrightarrow{a_1} A_1$ is a correspondence of derived Artin stacks, and $\cK_i, \cL_i \in \Dmot{A_i}$. Consider a cohomological correspondence
\[
a_0^*\cK_0 \xrightarrow{\cc} a_1^!\cK_1 \in \Corr_{C^\flat}(\cK_0, \cK_1).
\]
Suppose we also have a map
\[
u:a_0^*\cL_0\xrightarrow{\sim}a_1^*\cL_1 .
\]

\begin{defn}\label{def:tensor-coh-corr}
 The tensor product $\cc\otimes u \in \Corr_{C^\flat}(\cK_0 \otimes \cL_0, \cK_1 \otimes \cL_1)$ is the composite
\[
a_0^*(\cK_0\otimes \cL_0)
=a_0^*\cK_0\otimes a_0^*\cL_0
\xrightarrow{\cc\otimes u}
a_1^!\cK_1\otimes a_1^*\cL_1
\xrightarrow{\diamond}
a_1^!(\cK_1\otimes \cL_1).
\]
Here the arrow $\diamond$ is the
base change natural transformation $f^!(-)\otimes f^*(-)\to f^!(-\otimes -)$, applied with $f=a_1$. For a morphism $f\co A\to B$, this is the Beck--Chevalley morphism called the ``pull-pull'' base change transformation $\diamond$ in \cite[\S 3.5]{FYZ3}, obtained as the mate of the proper base change isomorphism for the Cartesian square
\begin{equation}\label{eq: tensoring-base-change-square}
\begin{tikzcd}
A \ar[r, "{(\Id_A,f)}"] \ar[d, "f"'] & A \times B \ar[d, "{f\times\Id_B}"] \\
B \ar[r, "\Delta_B"']  & B \times B
\end{tikzcd}
\end{equation}
\end{defn}

We are particularly interested in the case $\cK_0 = \cK_1$ and $\cL_0 = \cL_1$. 

\subsection{Compositions}\label{ssec: compositions of cohomological correspondences}
We will use compositions of ordinary and cohomological correspondences. Some of this material appears in \cite[\S 8]{FHM}. 

\subsubsection{Composing correspondences}
Two correspondences are composable if the codomain of one is the domain of the other: 
\begin{equation}\label{eq: two correspondences to compose}
\begin{tikzcd}
        & C_{01} \ar[dl, "a_0"'] \ar[dr, "a_1"] &   &  C_{12} \ar[dl, "c_1"'] \ar[dr, "c_2"]  \\ 
A_0 & &  A_1 & &  A_2 .
\end{tikzcd}
\end{equation}
In this case, let $C_{02}$ be the fibered product
\begin{equation}\label{eq: composition fiber product square}
\begin{tikzcd}
        & & C_{02}  \ar[dl, "\pr_{01}"'] \ar[dr, "\pr_{12}"] \\ 
        & C_{01} \ar[dl, "a_0"'] \ar[dr, "a_1"] &   &  C_{12} \ar[dl, "c_1"'] \ar[dr, "c_2"]  \\ 
A_0 & &  A_1 & &  A_2 .
\end{tikzcd}
\end{equation}
The composite correspondence is
\begin{equation}\label{eq: composite correspondence}
\begin{tikzcd}[ampersand replacement=\&]
A_0 \&
C_{02} \ar[l, "p_0"'] \ar[r, "p_2"] \&
A_2 ,
\end{tikzcd}
\qquad
p_0=a_0\pr_{01},\quad p_2=c_2\pr_{12}.
\end{equation}

We will apply this to a 1-legged Hecke correspondence of the form
\[
\begin{tikzcd}[ampersand replacement=\&, column sep=small]
Y \&
\Hk_Y^1 \ar[l, "h_0"'] \ar[r, "h_1"] \&
Y .
\end{tikzcd}
\]
For $r \geq 1$, the $r$-fold composition of such a correspondence is
\begin{equation}\label{eq: iterated Hecke correspondence}
\Hk_Y^r=
\underbrace{\Hk_Y^1\times_{h_1,Y,h_0}\Hk_Y^1
\times_{h_1,Y,h_0}\cdots
\times_{h_1,Y,h_0}\Hk_Y^1}_{r\text{ factors}},
\qquad
\begin{tikzcd}[ampersand replacement=\&]
Y \&
\Hk_Y^r \ar[l, "h_0"'] \ar[r, "h_r"] \&
Y .
\end{tikzcd}
\end{equation}
For $r=0$, we use the convention $\Hk_Y^0=Y$.

\subsubsection{Composing cohomological correspondences} Next we explain how to compose cohomological correspondences.

\begin{defn}\label{def: composite cohomological correspondence}
Let
\[
\cc_{01}\co a_0^*\cK_0\longrightarrow a_1^!\cK_1,
\qquad
\cc_{12}\co c_1^*\cK_1\longrightarrow c_2^!\cK_2
\]
be cohomological correspondences on \eqref{eq: two correspondences to compose}.  Their composite
$\cc_{12}\circ\cc_{01}\in\Corr_{C_{02}}(\cK_0,\cK_2)$ is defined to be the morphism on $C_{02}$
\begin{equation}\label{eq: composition cohomological correspondence}
\begin{aligned}
p_0^*\cK_0
&=\pr_{01}^*a_0^*\cK_0
\xrightarrow{\pr_{01}^*\cc_{01}}
\pr_{01}^*a_1^!\cK_1 \\
&\xrightarrow{\diamond}
\pr_{12}^!c_1^*\cK_1
\xrightarrow{\pr_{12}^!\cc_{12}}
\pr_{12}^!c_2^!\cK_2
=p_2^!\cK_2,
\end{aligned}
\end{equation}
where $\diamond$ is the pull-pull Beck--Chevalley transformation of \cite[\S 3.5]{FYZ3} for the pullback square in \eqref{eq: composition fiber product square}.
\end{defn}

Composition of cohomological correspondences is compatible with tensoring, in the following sense.

\begin{lemma}\label{lem: composition tensor compatibility}
With the notation of Definition \ref{def: composite cohomological correspondence}, suppose we are given
$u_{01}\co a_0^*\cL_0\xrightarrow{\sim}a_1^*\cL_1$ and
$u_{12}\co c_1^*\cL_1\xrightarrow{\sim}c_2^*\cL_2$. Let
$u_{02}\co p_0^*\cL_0\xrightarrow{\sim}p_2^*\cL_2$ be the composite
\begin{equation}\label{eq: composed tensoring isomorphism}
p_0^*\cL_0
=\pr_{01}^*a_0^*\cL_0
\xrightarrow{\pr_{01}^*u_{01}}
\pr_{01}^*a_1^*\cL_1
=\pr_{12}^*c_1^*\cL_1
\xrightarrow{\pr_{12}^*u_{12}}
\pr_{12}^*c_2^*\cL_2
=p_2^*\cL_2.
\end{equation}
Then we have 
\begin{equation}\label{eq: composition tensor compatibility}
(\cc_{12}\otimes u_{12})\circ(\cc_{01}\otimes u_{01})
=
(\cc_{12}\circ\cc_{01})\otimes u_{02} \in \Corr_{C_{02}}(\cK_0 \otimes \cL_0, \cK_2 \otimes \cL_2).
\end{equation}
\end{lemma}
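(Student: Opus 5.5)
We argue by unwinding both sides into composites of elementary natural transformations on $C_{02}$ and comparing them step by step. By Definition \ref{def: composite cohomological correspondence} and Definition \ref{def:tensor-coh-corr}, the left side of \eqref{eq: composition tensor compatibility} is the composite
\[
p_0^*(\cK_0\otimes\cL_0)\to \pr_{01}^*\bigl(a_1^!\cK_1\otimes a_1^*\cL_1\bigr)\xrightarrow{\diamond}\pr_{01}^*a_1^!(\cK_1\otimes\cL_1)\xrightarrow{\diamond}\pr_{12}^!c_1^*(\cK_1\otimes\cL_1)\to\pr_{12}^!\bigl(c_2^!\cK_2\otimes c_2^*\cL_2\bigr)\xrightarrow{\diamond} p_2^!(\cK_2\otimes\cL_2),
\]
where the first unlabeled arrow is $\pr_{01}^*(\cc_{01}\otimes u_{01})$ and the fourth is $\pr_{12}^!(\cc_{12}\otimes u_{12})$. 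The right side is $\cc_{12}\circ\cc_{01}$ followed by a single $\diamond$ for $p_2$, tensored with $u_{02}$.

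The plan is to commute the $\cL$-factor past each step. First, since $\pr_{01}^*$ is symmetric monoidal, $\pr_{01}^*(\cc_{01}\otimes u_{01})=\pr_{01}^*\cc_{01}\otimes\pr_{01}^*u_{01}$, so the $u_{01}$ part can be split off immediately. The remaining task, which is the heart of the argument, is a coherence lemma comparing the different $\diamond$ maps.

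\textbf{Key step 1.} The composite
\[
\pr_{01}^*a_1^!\cK_1\otimes\pr_{01}^*a_1^*\cL_1\xrightarrow{\diamond_{a_1}}\pr_{01}^*a_1^!(\cK_1\otimes\cL_1)\xrightarrow{\diamond}\pr_{12}^!c_1^*(\cK_1\otimes\cL_1)
\]
agrees with the composite
\[
\pr_{01}^*a_1^!\cK_1\otimes\pr_{12}^*c_1^*\cL_1\xrightarrow{\diamond\otimes\Id}\pr_{12}^!c_1^*\cK_1\otimes\pr_{12}^*c_1^*\cL_1\xrightarrow{\diamond_{\pr_{12}}}\pr_{12}^!(c_1^*\cK_1\otimes c_1^*\cL_1).
\]
This follows by taking mates of base change isomorphisms. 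Both composites are mates of proper base change for a single cube-shaped diagram built from the Cartesian square \eqref{eq: composition fiber product square} and the squares \eqref{eq: tensoring-base-change-square} for $a_1$ and $\pr_{12}$. Mates are compatible with pasting, and the pasted squares agree.

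\textbf{Key step 2.} Naturality of $\diamond_{\pr_{12}}$ in the $*$-variable lets $\pr_{12}^*u_{12}$ commute past $\pr_{12}^!\cc_{12}$.

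\textbf{Key step 3.} The transitivity $\pr_{12}^!\diamond_{c_2}\circ\diamond_{\pr_{12}}=\diamond_{p_2}$, where $p_2=c_2\pr_{12}$, collapses the two remaining $\diamond$'s into the single $\diamond_{p_2}$. This again follows from pasting of mates, applied to the composite of the squares \eqref{eq: tensoring-base-change-square} for $\pr_{12}$ and $c_2$.

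\textbf{Assembly.} After these rewritings, the $\cL$-factor is transported exactly by $\pr_{12}^*u_{12}\circ\pr_{01}^*u_{01}=u_{02}$, as in \eqref{eq: composed tensoring isomorphism}. The $\cK$-factor is transported by $\cc_{12}\circ\cc_{01}$, which yields \eqref{eq: composition tensor compatibility}.

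\textbf{Main obstacle.} The only real difficulty is Key step 1, namely keeping track of the coherence of the pull-pull transformations in the $(\infty,1)$-categorical six-functor formalism. I would handle it by expressing every $\diamond$ as an explicit mate and invoking the functoriality of mates for horizontally and vertically pasted Cartesian squares, as recorded in \cite[\S 3.5]{FYZ3}. An alternative is to reduce to the universal case $\cK_i\otimes\cL_i$ on products, where everything is an external tensor product and the $\diamond$'s become Künneth isomorphisms.
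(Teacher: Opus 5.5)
Your proposal is correct and is essentially the paper's proof. Your Key step 1 is precisely the paper's square comparing the pull-pull map $\diamond_{\mathrm{pp}}$ with the tensoring maps, which the paper also justifies by compatibility of base-change transformations with concatenation of Cartesian squares (the graph square of $a_1$ pasted with \eqref{eq: composition fiber product square}); your Key steps 2 and 3 match the paper's observation that $\pr_{12}^!$ of the tensoring square for $\cc_{12}$ commutes by naturality (in both variables of $\diamond_{\pr_{12}}$) once $\diamond_\otimes$ for $p_2=c_2\circ\pr_{12}$ is factored as the tensoring map for $\pr_{12}$ followed by $\pr_{12}^!$ of the tensoring map for $c_2$.
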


\begin{proof}
After pulling Definition \ref{def:tensor-coh-corr} back to $C_{02}$, the tensoring square for $\cc_{01}$ is
\[
\begin{tikzcd}[ampersand replacement=\&, column sep = huge]
\pr_{01}^*a_0^*(\cK_0\otimes\cL_0)
\ar[r, "\pr_{01}^*\cc_{01}\otimes\pr_{01}^*u_{01}"] \ar[d, equals] \&
\pr_{01}^*\bigl(a_1^!\cK_1\otimes a_1^*\cL_1\bigr)
\ar[d, "\diamond"] \\
\pr_{01}^*a_0^*\cK_0\otimes\pr_{01}^*a_0^*\cL_0
\ar[r, "\pr_{01}^*(\cc_{01}\otimes u_{01})"'] \&
\pr_{01}^*a_1^!(\cK_1\otimes\cL_1)
\end{tikzcd}
\]
Compatibility of tensoring with the pull-pull map in \eqref{eq: composition cohomological correspondence} amounts to the commutativity of
\[
\begin{tikzcd}[ampersand replacement=\&, column sep = huge]
\pr_{01}^*a_1^!\cK_1\otimes\pr_{01}^*a_1^*\cL_1
\ar[r, "\pr_{01}^*\diamond_{\otimes}"] \ar[d, "\diamond_{\mathrm{pp}}\otimes\Id"'] \&
\pr_{01}^*a_1^!(\cK_1\otimes\cL_1)
\ar[d, "\diamond_{\mathrm{pp}}"] \\
\pr_{12}^!c_1^*\cK_1\otimes\pr_{12}^*c_1^*\cL_1
\ar[r, "\diamond_{\otimes}"'] \&
\pr_{12}^!c_1^*(\cK_1\otimes\cL_1)
\end{tikzcd}
\]
Here $\diamond_{\mathrm{pp}}$ is the pull-pull map for \eqref{eq: composition fiber product square}, and $\diamond_{\otimes}$ is the tensoring Beck--Chevalley map $f^!(-)\otimes f^*(-)\to f^!(-\otimes-)$. The morphism $\diamond_{\mathrm{pp}}\otimes\Id$ uses the identity $\pr_{01}^*a_1^*\cL_1=\pr_{12}^*c_1^*\cL_1$. Applying $\pr_{12}^!$ to the tensoring square for $\cc_{12}$ gives
\[
\begin{tikzcd}[ampersand replacement=\&, column sep = huge]
\pr_{12}^!c_1^*\cK_1\otimes\pr_{12}^*c_1^*\cL_1
\ar[r, "\pr_{12}^!\cc_{12}\otimes\pr_{12}^*u_{12}"] \ar[d, "\diamond_{\otimes}"'] \&
\pr_{12}^!c_2^!\cK_2\otimes\pr_{12}^*c_2^*\cL_2
\ar[d, "\diamond_{\otimes}"] \\
\pr_{12}^!c_1^*(\cK_1\otimes\cL_1)
\ar[r, "\pr_{12}^!(\cc_{12}\otimes u_{12})"'] \&
\pr_{12}^!c_2^!(\cK_2\otimes\cL_2)
\end{tikzcd}
\]
The morphism $\diamond_{\otimes}$ is the composite of the tensoring map for $\pr_{12}$ with $\pr_{12}^!$ applied to the tensoring map for $c_2$. The tensoring squares for $\cc_{01}$ and $\cc_{12}$ commute by definition and naturality. In the square involving $\diamond_{\mathrm{pp}}$, the two paths correspond to the two decompositions of the Cartesian rectangle obtained by concatenating the graph square of $a_1$ with \eqref{eq: composition fiber product square}. Compatibility of base-change transformations with concatenation shows that they agree, as in \cite[\S\S 3.4--3.5]{FYZ3} and (for the motivic setting) \cite[\S 4.2]{FK}. Consequently, the same composite is expressed either as $(\cc_{12}\otimes u_{12})\circ(\cc_{01}\otimes u_{01})$ or, after forming $\cc_{12}\circ\cc_{01}$ and $u_{02}$ as in \eqref{eq: composed tensoring isomorphism}, as $(\cc_{12}\circ\cc_{01})\otimes u_{02}$. This proves \eqref{eq: composition tensor compatibility}.
\end{proof}

\subsection{Cohomological co-correspondences}
Let 
\begin{equation}\label{eq:corr-cocorr}
\begin{tikzcd}
& C^\flat \ar[dl,"a_0"'] \ar[dr, "a_1"] \\
A_0  \ar[dr, "a_1'"'] & & A_1  \ar[dl, "a_0'"] \\
& C^{\sh}
\end{tikzcd}
\end{equation}
be a commutative diagram of derived Artin stacks which is Cartesian on the underlying reduced stacks. (This weaker condition suffices for the proper base-change step $a_{1!}a_0^*\cong(a'_0)^*a'_{1!}$ in the construction of \eqref{eq:corr-cocorr-conversion}, by topological invariance of $\Dmot{(-)}$, cf.\ \cite[Remark 4.2.4]{FK}: the comparison map from $C^\flat$ to the derived fiber product $A_0\times_{C^\sh}A_1$ induces an equivalence of motivic categories, under which the exchange isomorphism for the honest Cartesian square transports to this diamond.)
For $\cK_0\in\Dmot{A_0}$ and $\cK_1\in\Dmot{A_1}$, this Cartesian diamond gives the standard correspondence/co-correspondence conversion (cf. \cite[4.8.1]{FYZ3})
\begin{equation}\label{eq:corr-cocorr-conversion}
\gamma_C\co
\Corr_{C^\flat}(\cK_0,\cK_1)
\xrightarrow{\sim}
\CoCorr_{C^\sh}(\cK_0,\cK_1).
\end{equation}
It is the isomorphism obtained from adjunctions and proper base change:
\[
\begin{aligned}
\Hom_{C^\flat}(a_0^*\cK_0,a_1^!\cK_1)
&\cong
\Hom_{A_1}(a_{1!}a_0^*\cK_0,\cK_1) \\
&\cong
\Hom_{A_1}((a'_0)^*a'_{1!}\cK_0,\cK_1) \\
&\cong
\Hom_{C^\sh}(a'_{1!}\cK_0,a'_{0*}\cK_1).
\end{aligned}
\]
We write $\gamma_C^{-1}$ for the inverse operation, converting a cohomological co-correspondence on $C^\sh$ into the corresponding cohomological correspondence on $C^\flat$.

\subsection{Fourier transform of cohomological correspondences}\label{ssec: motivic derived FT of cc}
In this subsection all Fourier transforms are the motivic derived Fourier transforms of \S \ref{sec: motivic derived Fourier transform}; we write $\FT_E=\FT_E^\psi$.  The statements below are the motivic analogues of the cohomological-correspondence Fourier analysis in \cite[\S 7]{FYZ3}.  We do not repeat the proofs: the arguments of \cite[\S 7]{FYZ3} carry over verbatim with motivic sheaves, using the formalism of \S \ref{sec: motivic derived Fourier transform}.

We first treat correspondences of derived vector bundles over a fixed base.  Assume that
\begin{equation}\label{eq:corr-cocorr-Fourier}
\begin{tikzcd}
& C^\flat \ar[dl,"a_0"'] \ar[dr, "a_1"] \\
A_0  \ar[dr, "a_1'"'] & & A_1  \ar[dl, "a_0'"] \\
& C^{\sh}
\end{tikzcd}
\end{equation}
is a Cartesian diagram of derived vector bundles over a common base $S$.  Its dual diagram is
\begin{equation}\label{eq:corr-cocorr-Fourier-dual}
\begin{tikzcd}[ampersand replacement=\&]
\& \wh{C^{\sh}} \ar[dl,"\wh{a'_1}"'] \ar[dr, "\wh{a'_0}"] \\
\wh A_0  \ar[dr, "\wh a_0"'] \& \& \wh A_1  \ar[dl, "\wh a_1"] \\
\& \wh{C^{\flat}} .
\end{tikzcd}
\end{equation}
For $\cK_i\in\Dmot{A_i}$, put $\wh{\cK}_i:=\FT_{A_i}(\cK_i)$.  The Fourier transform of a cohomological correspondence first gives a co-correspondence on $\wh{C^\flat}$:
\begin{equation}\label{eq:motivic-derived-FT-corr-prime-same-base}
\FT'_{C^\flat} \co
\Corr_{C^\flat}(\cK_0,\cK_1)
\xrightarrow{\sim}
\CoCorr_{\wh{C^\flat}}
\bigl(\wh{\cK}_0,\wh{\cK}_1\sm{[d(a_0)+d(a_1)](d(a_0))}\bigr).
\end{equation}
Applying the correspondence/co-correspondence conversion $\gamma_{\wh C}^{-1}$ for the dual Cartesian diamond \eqref{eq:corr-cocorr-Fourier-dual} gives the same-base Fourier transform of cohomological correspondences
\begin{equation}\label{eq:motivic-derived-FT-corr-same-base}
\FT_{C^\flat}:=\gamma_{\wh C}^{-1}\circ\FT'_{C^\flat}\co
\Corr_{C^\flat}(\cK_0,\cK_1)
\xrightarrow{\sim}
\Corr_{\wh{C^\sh}}
\bigl(\wh{\cK}_0,\wh{\cK}_1\sm{[d(a_0)+d(a_1)](d(a_0))}\bigr).
\end{equation}

We also need the varying-base form.  Suppose that
\[
\begin{tikzcd}[ampersand replacement=\&]
A_0 \ar[d] \& C^\flat \ar[l, "p_0"'] \ar[r, "p_1"] \ar[d] \& A_1 \ar[d] \\
S_0 \& S_C \ar[l, "h_0"'] \ar[r, "h_1"] \& S_1
\end{tikzcd}
\]
is a map of correspondences in which $A_i\to S_i$ and $C^\flat\to S_C$ are derived vector bundles, and $p_0,p_1$ are linear.  Let $\wt A_i:=A_i\times_{S_i,h_i}S_C$, and let $\wt p_i\co C^\flat\to\wt A_i$ be the induced maps.  Let $C^\sh$ be the pushout of $\wt A_0\xleftarrow{\wt p_0}C^\flat\xrightarrow{\wt p_1}\wt A_1$ in derived vector bundles over $S_C$; then the diamond
\begin{equation}\label{eq:motivic-derived-varying-base-diamond}
\begin{tikzcd}[ampersand replacement=\&]
\& C^\flat \ar[dl, "\wt p_0"'] \ar[dr, "\wt p_1"] \\
\wt A_0 \ar[dr] \& \& \wt A_1 \ar[dl] \\
\& C^\sh
\end{tikzcd}
\end{equation}
is Cartesian.  Let $h_i^{\wh A}\co\wh{\wt A_i}\to\wh A_i$ be the base-change maps on the dual derived vector bundles.  The varying-base motivic derived Fourier transform is the canonical isomorphism
\begin{equation}\label{eq:motivic-derived-FT-corr-varying-base}
\FT_{C^\flat}\co
\Corr_{C^\flat}(\cK_0,\cK_1)
\xrightarrow{\sim}
\Corr_{\wh{C^\sh}}
\bigl((h_0^{\wh A})^*\FT_{A_0}(\cK_0),
(h_1^{\wh A})^!\FT_{A_1}(\cK_1)\sm{[d(\wt p_0)+d(\wt p_1)](d(\wt p_0))}\bigr).
\end{equation}
Following \cite[\S 7]{FYZ3}, this isomorphism is \emph{defined} as the composite of the following three steps. First, one uses the identification
\[
\Corr_{C^\flat}(\cK_0,\cK_1)
=
\Corr_{C^\flat}\bigl((h_0^{A})^{*}\cK_0,(h_1^{A})^{!}\cK_1\bigr)
\]
of correspondence groups over $S_C$, where $h_i^{A}\co\wt A_i\to A_i$ denote the projections. Second, one applies the transform \eqref{eq:motivic-derived-FT-corr-same-base} for the diamond \eqref{eq:motivic-derived-varying-base-diamond}. Third, one uses the base-change compatibilities $\FT_{\wt A_i}\circ(h_i^{A})^{*}\cong(h_i^{\wh A})^{*}\circ\FT_{A_i}$ and $\FT_{\wt A_i}\circ(h_i^{A})^{!}\cong(h_i^{\wh A})^{!}\circ\FT_{A_i}$, up to the indicated shifts and twists, of Theorem \ref{thm:motivic-FT-formalism}(1). When the base-change maps $h_i^{\wh A}$ are clear from context, we suppress them from the notation.

\begin{prop}[Push-pull compatibility]\label{prop:motivic-derived-FT-cc-functoriality}
Consider a morphism of varying-base linear correspondences
\[
\begin{tikzcd}[ampersand replacement=\&]
A_0 \ar[d, "f_0"'] \&
C^\flat \ar[l, "p_0"'] \ar[r, "p_1"] \ar[d, "f^\flat"] \&
A_1 \ar[d, "f_1"] \\
B_0 \&
D^\flat \ar[l, "q_0"] \ar[r, "q_1"'] \&
B_1
\end{tikzcd}
\]
over the same base correspondence $S_0\xleftarrow{h_0}S_C\xrightarrow{h_1}S_1$.  Let $\wt p_i$ and $\wt q_i$ denote the correspondence maps after base change to $S_C$, and let $C^\sh,D^\sh$ be the pushouts used in \eqref{eq:motivic-derived-varying-base-diamond}. The morphism of spans induces a map $f^\sh\co C^\sh\to D^\sh$. We denote its dual by $\wh f^\sh\co\wh{D^\sh}\to\wh{C^\sh}$, and similarly write $\wh f_i\co\wh B_i\to\wh A_i$ for the dual of $f_i$. Assume the induced diagram and its Fourier-dual diagram are globally presented.

If $f^\flat$ is left pushable, then for $\cK_i\in\Dmot{A_i}$ the diagram
\begin{equation}\label{eq:motivic-derived-FT-cc-push-pull}
\xymatrix{
\Corr_{C^\flat}(\cK_0,\cK_1) \ar[rr]^-{\FT_{C^\flat}}\ar[d]_{f^\flat_!} &&
\Corr_{\wh{C^\sh}}\bigl(\FT_{A_0}\cK_0,\FT_{A_1}\cK_1\sm{[d(\wt p_0)+d(\wt p_1)](d(\wt p_0))}\bigr)
\ar[d]^{(\wh f^\sh)^*} \\
\Corr_{D^\flat}(f_{0!}\cK_0,f_{1!}\cK_1)\ar[rr]^-{\TT_{\sm{[d(f_0)]}}\FT_{D^\flat}} &&
\Corr_{\wh{D^\sh}}\bigl((\wh f_0)^*\FT_{A_0}\cK_0,
(\wh f_1)^*\FT_{A_1}\cK_1\sm{[d(\wt q_0)+d(\wt q_1)+d(f_0)-d(f_1)](d(\wt q_0))}\bigr)
}
\end{equation}
commutes.  If $f^\flat$ is right pullable, then, setting $\delta_{f^\flat} := d(\wt p_1)-d(\wt q_1) = d(f^\flat)-d(f_1)$ for the defect of the right square (recall that the pullback of cohomological correspondences along a right pullable map carries the twist $\tw{-\delta_{f^\flat}}$, cf.\ \cite[\S 4.4]{FYZ3}), for $\cK_i\in\Dmot{B_i}$ the diagram
\begin{equation}\label{eq:motivic-derived-FT-cc-pull-push}
\begin{tikzcd}[ampersand replacement=\&, column sep=huge]
\Corr_{D^\flat}(\cK_0,\cK_1)
\ar[rr, "\FT_{D^\flat}"] \ar[d, "(f^\flat)^*"'] \&\&
\Corr_{\wh{D^\sh}}\bigl(\FT_{B_0}\cK_0,\FT_{B_1}\cK_1\sm{[d(\wt q_0)+d(\wt q_1)](d(\wt q_0))}\bigr)
\ar[d, "(\wh f^\sh)_!"] \\
\Corr_{C^\flat}(f_0^*\cK_0,f_1^*\cK_1\tw{-\delta_{f^\flat}})
\ar[rr, "\TT_{\sm{[d(f_0)](d(f_0))}}\FT_{C^\flat}"'] \&\&
\Corr_{\wh{C^\sh}}\bigl((\wh f_0)_!\FT_{B_0}\cK_0,
(\wh f_1)_!\FT_{B_1}\cK_1\sm{[d(\wt q_0)+d(\wt q_1)](d(\wt q_0))}\bigr)
\end{tikzcd}
\end{equation}
commutes.
\end{prop}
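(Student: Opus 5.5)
The plan is to reduce both diagrams to the corresponding statements for ordinary (co)correspondences of derived vector bundles. Those statements are built from the formal Fourier functoriality of Theorem \ref{thm:motivic-FT-formalism}(\ref{item:motivic-FT-linear-functoriality}) together with Propositions \ref{prop:motivic-FT-proper-base-change} and \ref{prop:motivic-FT-Gysin}. I would then follow \cite[\S 7]{FYZ3} step by step.

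\textbf{Step 1.} Unwind $\FT_{C^\flat}$ as the three-step composite given in its definition: identify $\Corr_{C^\flat}(\cK_0,\cK_1)$ with correspondences over $S_C$; apply $\gamma$ and the same-base transform $\FT'$; then apply base change. The base-change step commutes with $f^\flat_!$ and $(f^\flat)^*$ by Theorem \ref{thm:motivic-FT-formalism}(\ref{item:motivic-FT-base-change}) and naturality. This reduces us to the same-base case $S_0=S_C=S_1$.

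\textbf{Step 2 (same-base push-pull).} An element $\cc\colon a_0^*\cK_0\to a_1^!\cK_1$ corresponds under $\gamma_C$ to a co-correspondence $a'_{1!}\cK_0\to a'_{0*}\cK_1$ on $C^\sh$. The pushforward $f^\flat_!$, defined using left pushability, corresponds to applying $f^\sh_!$ together with the base-change maps, in the sense of \cite[\S 4.4]{FYZ3}. Applying $\FT$ to a co-correspondence on $C^\sh$ and using the linear-functoriality isomorphisms \eqref{eq:motivic-FT-linear-functoriality} converts $f^\sh_!$, $a'_{1!}$ and $a'_{0*}$ into $(\wh f^\sh)^*$, $\wh{a'_1}{}^{*}$ and $\wh{a'_0}{}^{!}$, with the displayed shifts. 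Commutativity of \eqref{eq:motivic-derived-FT-cc-push-pull} then reduces to two facts. First, these isomorphisms are compatible with composition of linear maps. Second, they intertwine the adjunctions as asserted in \eqref{eq:motivic-FT-linear-functoriality}, so they commute with the adjunction isomorphisms defining $\gamma$. The proper base change step inside $\gamma$ is handled by Proposition \ref{prop:motivic-FT-proper-base-change}.

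\textbf{Step 3 (pull-push).} The pullback $(f^\flat)^*$ along a right pullable map involves the Gysin transformation $[f^\flat]$, or more precisely the Gysin map of the right square with defect $\delta_{f^\flat}$. Its Fourier transform is computed by Proposition \ref{prop:motivic-FT-Gysin}, which turns it into the forget-supports map $\can(\wh f^\sh)$. That map is exactly what enters the definition of $(\wh f^\sh)_!$ on correspondences. The rest is the same adjunction and base-change bookkeeping as in Step 2, with the roles of $!$ and $*$ exchanged. The twist $\tw{-\delta_{f^\flat}}$ on the left matches the Tate twist in the bottom row.

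The main obstacle will be the coherence bookkeeping. Each square built from base-change and adjunction isomorphisms must genuinely commute, not merely up to an unspecified automorphism. The shifts and twists, such as $d(f_0)-d(f_1)$ and $d(\wt q_0)$, must also match exactly. I would handle this by arguing, as \cite[\S 7]{FYZ3} does, that all the isomorphisms are mates of a single proper base change isomorphism for the Fourier kernel on the fiber products $\wh E\times_S E$. Commutativity then follows from the compatibility of mates with pasting of squares.
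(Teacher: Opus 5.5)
This is essentially the paper's approach. The paper gives no separate argument here: it simply says the proofs of \cite[\S 7]{FYZ3} carry over verbatim, using Theorem~\ref{thm:motivic-FT-formalism} and Propositions~\ref{prop:motivic-FT-proper-base-change} and~\ref{prop:motivic-FT-Gysin}, which is the reduction you outline.

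Two small corrections to your bookkeeping. First, Proposition~\ref{prop:motivic-FT-Gysin} is also needed in Step~2, because $(\wh f^\sh)^*$ on correspondences uses the Gysin class of the quasi-smooth right-square comparison map of $\wh f^\sh$. Second, in Step~3 that proposition turns the Gysin class of the right-square comparison map of $f^\flat$ into the forget-supports map of the dual of that comparison map, not into $\can(\wh f^\sh)$.
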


Next we investigate compatibility with compositions. Assume that the two correspondences in \eqref{eq: two correspondences to compose} are the upper halves of globally presented Cartesian diamonds of derived vector bundles
\begin{equation}\label{eq: composable Cartesian diamonds for Fourier composition}
\begin{tikzcd}[ampersand replacement=\&]
\& C_{01}^{\flat} \ar[dl, "a_0"'] \ar[dr, "a_1"] \\
A_0 \ar[dr, "a'_1"'] \& \& A_1 \ar[dl, "a'_0"]\\
\& C_{01}^{\sh}
\end{tikzcd}
\qquad
\begin{tikzcd}[ampersand replacement=\&]
\& C_{12}^{\flat} \ar[dl, "c_1"'] \ar[dr, "c_2"] \\
A_1 \ar[dr, "c'_2"'] \& \& A_2 \ar[dl, "c'_1"]\\
\& C_{12}^{\sh}.
\end{tikzcd}
\end{equation}
Here $C_{01}^{\flat}=C_{01}$ and $C_{12}^{\flat}=C_{12}$.  Let $C_{02}^{\flat}=C_{01}^{\flat}\times_{A_1}C_{12}^{\flat}$, and let $C_{02}^{\sh}$ be the lower term of the Cartesian diamond for the composite correspondence; equivalently, in global perfect-complex presentations, $C_{02}^{\sh}$ is the pushout of $C_{01}^{\sh}\leftarrow A_1\rightarrow C_{12}^{\sh}$.  Thus the Fourier-dual correspondence is
\begin{equation}\label{eq: dual composite correspondence}
\begin{tikzcd}[ampersand replacement=\&]
\wh A_0 \&
\wh{C_{02}^{\sh}}\cong \wh{C_{01}^{\sh}}\times_{\wh A_1}\wh{C_{12}^{\sh}}
\ar[l] \ar[r] \&
\wh A_2 .
\end{tikzcd}
\end{equation}
Write $\wh{\cK}_i=\FT_{A_i}(\cK_i)$, and set 
\[
\begin{gathered}
\sigma_{01}=d(a_0)+d(a_1),\quad \tau_{01}=d(a_0),\\
\sigma_{12}=d(c_1)+d(c_2),\quad \tau_{12}=d(c_1),\\
\sigma_{02}=d(p_0)+d(p_2),\quad \tau_{02}=d(p_0).
\end{gathered}
\]
Base change and additivity of virtual relative dimension give $\sigma_{02}=\sigma_{01}+\sigma_{12}$ and $\tau_{02}=\tau_{01}+\tau_{12}$.
For $ij= \in \{01,12,02\}$, let $\FT_{C_{ij}^{\flat}}$ denote the shifted-and-twisted Fourier transform of cohomological correspondences of \eqref{eq:motivic-derived-FT-corr-same-base}, so that
\[
\FT_{C_{ij}^{\flat}}(\cc_{ij})\in
\Corr_{\wh{C_{ij}^{\sh}}}
\bigl(\wh{\cK}_i,\wh{\cK}_j\sm{[\sigma_{ij}](\tau_{ij})}\bigr).
\]

\begin{lemma}\label{lem: composition Fourier compatibility}
In the globally presented setup \eqref{eq: composable Cartesian diamonds for Fourier composition}, Fourier transform is compatible with composition of cohomological correspondences.  More precisely,
\begin{equation}\label{eq: Fourier composition compatibility}
\FT_{C_{02}^{\flat}}(\cc_{12}\circ\cc_{01})
=
\bigl(\FT_{C_{12}^{\flat}}(\cc_{12})\sm{[\sigma_{01}](\tau_{01})}\bigr)
\circ
\FT_{C_{01}^{\flat}}(\cc_{01})
\in 
\Corr_{\wh{C_{02}^{\sh}}}
\bigl(\wh{\cK}_0,\wh{\cK}_2\sm{[\sigma_{02}](\tau_{02})}\bigr).
\end{equation}
\end{lemma}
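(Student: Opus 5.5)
The plan is to reduce the statement to the definitions by unwinding the Fourier transform of a cohomological correspondence into adjunctions, then checking that composition commutes with each elementary step. Recall that $\FT_{C^\flat}=\gamma_{\wh C}^{-1}\circ\FT'_{C^\flat}$. The map $\FT'_{C^\flat}$ first converts $\cc$ into the adjoint morphism $a_{1!}a_0^*\cK_0\to\cK_1$, and then transports it through the linear functoriality isomorphisms of Theorem \ref{thm:motivic-FT-formalism}(\ref{item:motivic-FT-linear-functoriality}). Concretely, these are $\FT_{A_1}a_{1!}\cong \wh a_1^*\FT_{C^\flat}\sm{[\cdots]}$ and $\FT_{C^\flat}a_0^*\cong \wh a_{0!}\FT_{A_0}\sm{[\cdots](\cdots)}$, which yield a co-correspondence $\wh a_{0!}\wh\cK_0\to \wh a_{1*}\wh\cK_1$ on $\wh{C^\flat}$. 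I would therefore view both sides of \eqref{eq: Fourier composition compatibility} as morphisms $p_{2!}p_0^*\cK_0\to\cK_2$ after undoing the Fourier transforms, so that the identity can be verified on the untransformed side.

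\emph{Step 1.} First I would express the composite $\cc_{12}\circ\cc_{01}$ of Definition \ref{def: composite cohomological correspondence} in adjoint form:
\[
p_{2!}p_0^*\cK_0\cong c_{2!}\pr_{12!}\pr_{01}^*a_0^*\cK_0\cong c_{2!}c_1^*a_{1!}a_0^*\cK_0\to c_{2!}c_1^*\cK_1\to\cK_2.
\]
This uses proper base change for \eqref{eq: composition fiber product square}, which is the mate description of the pull-pull map $\diamond$. The composition of the corresponding co-correspondences on the dual side has the same shape. It is built from the Fourier-dual Cartesian square $\wh{C_{02}^\sh}\cong\wh{C_{01}^\sh}\times_{\wh A_1}\wh{C_{12}^\sh}$ of \eqref{eq: dual composite correspondence}, together with the proper base change isomorphism there.

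\emph{Step 2.} Next I would apply $\FT$ term by term. The linear functoriality isomorphisms convert $c_{2!}c_1^*a_{1!}a_0^*$ into the dual composite of $\wh{(\cdot)}_!$ and $\wh{(\cdot)}^*$ functors. The point to check is that the base change isomorphism $c_1^*a_{1!}\cong\pr_{12!}\pr_{01}^*$ is carried to the base change isomorphism for the dual square. This is exactly Proposition \ref{prop:motivic-FT-proper-base-change}, applied to the Cartesian square $C_{02}^\flat$ over $A_1$ with Fourier-dual square \eqref{eq: dual composite correspondence}. Naturality of the linear functoriality isomorphisms, together with their compatibility with composition of linear maps (which I take as part of the formalism), handles the remaining outer factors $a_0^*$ and $c_{2!}$. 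It also handles the conversions $\gamma^{-1}$, since these too are built from adjunctions and proper base change for Cartesian diamonds.

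\emph{Step 3.} Finally I would reconcile the shifts and twists. In the composite, the second factor carries the shift $[\sigma_{01}](\tau_{01})$ coming from the first factor, and $\sigma_{02}=\sigma_{01}+\sigma_{12}$ and $\tau_{02}=\tau_{01}+\tau_{12}$ by additivity. I expect the main obstacle to be this bookkeeping. It requires checking that the twist attached to $a_0^*$, $c_1^*$ and the other pieces in Theorem \ref{thm:motivic-FT-formalism}(\ref{item:motivic-FT-linear-functoriality}) assembles to exactly $(\tau_{02})$ with no sign discrepancy. It also requires checking that the mates (adjunction units and counits) used in $\gamma$ commute with the $\FT$ isomorphisms, for which the intertwining-of-adjunctions clause of Theorem \ref{thm:motivic-FT-formalism}(\ref{item:motivic-FT-linear-functoriality}) is the key input. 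As in \cite[\S 7]{FYZ3}, I would handle this by reducing everything to a single large commutative diagram of Beck--Chevalley and adjunction cells, each of which commutes either by Proposition \ref{prop:motivic-FT-proper-base-change} or by that intertwining statement.
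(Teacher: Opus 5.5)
Your proposal follows essentially the same route as the paper. You unwind the definitions and isolate, as the only non-formal input, the compatibility of the composition Beck--Chevalley map with $\FT$ via Proposition~\ref{prop:motivic-FT-proper-base-change}; you then handle the conversions $\gamma^{-1}$ by adjunction and proper base change, and match the twists using $\sigma_{02}=\sigma_{01}+\sigma_{12}$ and $\tau_{02}=\tau_{01}+\tau_{12}$.

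One small correction: the Fourier dual of the square \eqref{eq: composition fiber product square} is the square with vertices $\wh A_1$, $\wh{C_{01}^\flat}$, $\wh{C_{12}^\flat}$, $\wh{C_{02}^\flat}$, which governs composition of co-correspondences, not the square \eqref{eq: dual composite correspondence}. You reach the latter only after passing through the two diamonds via $\gamma^{-1}$, which is what the paper means by ``using the two Cartesian diamonds.''
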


\begin{proof}
By Definition \ref{def: composite cohomological correspondence}, the composition
$\cc_{12}\circ\cc_{01}$ is obtained by pulling $\cc_{01}$ and $\cc_{12}$
to $C_{02}^{\flat}=C_{01}^{\flat}\times_{A_1}C_{12}^{\flat}$, inserting the
pull-pull Beck--Chevalley transformation for the Cartesian square
\eqref{eq: composition fiber product square}, and then composing.  The two
diamonds in \eqref{eq: composable Cartesian diamonds for Fourier composition}
identify the Fourier-dual lower term of the composite with the fiber product
$\wh{C_{01}^{\sh}}\times_{\wh A_1}\wh{C_{12}^{\sh}}$, as displayed in
\eqref{eq: dual composite correspondence}.

The only non-formal input is the compatibility of this Beck--Chevalley
transformation with Fourier transform.  Applying Proposition
\ref{prop:motivic-FT-proper-base-change} to the Cartesian square
\eqref{eq: composition fiber product square}, using the two Cartesian
diamonds in \eqref{eq: composable Cartesian diamonds for Fourier composition},
shows that the Fourier transform of the pull-pull map used in the definition of
$\cc_{12}\circ\cc_{01}$ is the pull-pull Beck--Chevalley map for the dual
Cartesian square in \eqref{eq: dual composite correspondence}.  The
correspondence/co-correspondence conversions $\gamma_{\wh C}^{-1}$ are
defined by adjunction and proper base change, so they are compatible with this
identified Beck--Chevalley map.

After this identification, expanding the definition of
$\FT_{C_{ij}^{\flat}}$ in \eqref{eq:motivic-derived-FT-corr-same-base}
gives \eqref{eq: Fourier composition compatibility}.  The source and target twists agree because
base change in \eqref{eq: composable Cartesian diamonds for Fourier composition}
gives $\sigma_{02}=\sigma_{01}+\sigma_{12}$ and
$\tau_{02}=\tau_{01}+\tau_{12}$; this is exactly the additional
$\sm{[\sigma_{01}](\tau_{01})}$ applied to
$\FT_{C_{12}^{\flat}}(\cc_{12})$ in \eqref{eq: Fourier composition compatibility}.
\end{proof}

\subsection{Tensor-convolution duality}

Given a commutative group stack $A/S$ with addition operation $+ \co A \times_S A \rightarrow A$, we can define the convolution of $\cK, \cL \in \Dmot{A}$ as
\[
\cK \star\cL:=+_!(\cK \boxtimes_S \cL ) \in \Dmot{A}.
\]

We next define the Fourier-dual operation to tensoring. Assume now that the diagram \eqref{eq:corr-cocorr} is of commutative group stacks and homomorphisms over a common base $S$. Consider a
cohomological co-correspondence
\begin{equation}\label{eq:cocorr-convolution-inputs}
a'_{1!}\cK_0 \xrightarrow{\cc} a'_{0*}\cK_1 \in \CoCorr_{C^\sh}(\cK_0, \cK_1).
\end{equation}
Suppose we are given an isomorphism
\[
v:a'_{1!}\cL_0\xrightarrow{\sim}a'_{0!}\cL_1 .
\]
\begin{defn}
We define the convolution $\cc \star v \in \CoCorr_{C^\sh}(\cK_0 \star \cL_0, \cK_1 \star \cL_1)$ to be the composite
\begin{equation}\label{eq:conv-cocorr}
a'_{1!}(\cK_0 \star \cL_0)
\cong(a'_{1!}\cK_0)\star(a'_{1!}\cL_0)
\xrightarrow{\cc\star v}
(a'_{0*}\cK_1)\star(a'_{0!}\cL_1)
\rightarrow
a'_{0*}(\cK_1 \star \cL_1).
\end{equation}
Here the first isomorphism is the canonical convolution compatibility for the homomorphism $a'_1$. The last map is the Beck--Chevalley morphism, called the ``push-push'' base change natural transformation $\diamond$ in \cite[\S 3.3]{FYZ3}, obtained as the mate of the proper base change isomorphism for the Cartesian square
\[
\begin{tikzcd}[ampersand replacement=\&]
A \times_S A \ar[r, "+"] \ar[d, "f \times \Id"'] \& A \ar[d, "f"] \\
B \times_S A \ar[r, "+"] \& B
\end{tikzcd}
\]
with $f = a_0'$, where the bottom $+$ is the action map $(b,a)\mapsto b+f(a)$. This form of the square places the $*$-pushforward on the first factor, matching the middle term of \eqref{eq:conv-cocorr}; it is the Fourier-dual of the tensoring square \eqref{eq: tensoring-base-change-square}.
\end{defn}

\begin{lemma}\label{lem: tensor-convolution dual} Below, for $\cK \in \Dmot{A}$ we abbreviate $\wh \cK := \FT_A(\cK)$. 

Assume that the diagrams \eqref{eq:corr-cocorr-Fourier} and \eqref{eq: tensoring-base-change-square} with $f=a_1$
are globally presented in the sense of \cite[\S 6.3.1]{FYZ3} (so that motivic Fourier transform is compatible with proper base change, by Proposition \ref{prop:motivic-FT-proper-base-change}). Put $r_i:=d(A_i/S)$ and $\delta_i:=d(a_i)$ for $i=0,1$.

Let
\[
a_0^* \cK_0 \xrightarrow{\cc} a_1^! \cK_1 \in \Corr_{C^\flat}(\cK_0, \cK_1),
\qquad
u: a_0^*  \cL_0 \xrightarrow{\sim}a_1^*  \cL_1.
\]
Denote their Fourier transforms\footnote{See \eqref{eq:motivic-derived-FT-corr-prime-same-base} and \eqref{eq:motivic-derived-FT-corr-same-base} for the motivic derived Fourier transform of cohomological correspondences.} by
\[
\begin{aligned}
\wh{\cc} &:=\FT'_{C^\flat}(\cc) \in 
\CoCorr_{\wh{C^\flat}}
\bigl(\wh{\cK}_0,\wh{\cK}_1\sm{[\delta_0+\delta_1](\delta_0)}\bigr), \\
\wh u &:
\wh{a}_{0!}\wh{\cL}_0\xrightarrow{\sim}
\wh{a}_{1!} \wh{\cL}_1\sm{[\delta_0-\delta_1](\delta_0-\delta_1)},
\end{aligned}
\]
where $\wh{a}_i:\wh A_i\to\wh C^\flat$ denotes the map dual to
$a_i$. Under the Fourier-dual
tensor/convolution identification
\[
\FT_{A_i}(\cK_i\otimes\cL_i)
\cong \wh{\cK}_i\star\wh{\cL}_i\sm{[r_i](r_i)}
\]
from \eqref{eq:motivic-FT-tensor-convolution}, we have
\[
\TT_{\sm{[-r_0](-r_0)}}\FT'_{C^\flat}(\cc\otimes u)
=\wh{\cc}\star\wh u
\in
\CoCorr_{\wh{C^\flat}}
\bigl(\wh{\cK}_0\star\wh{\cL}_0,\wh{\cK}_1\star\wh{\cL}_1\sm{[2\delta_0](2\delta_0-\delta_1)}\bigr).
\]
\end{lemma}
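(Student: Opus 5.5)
The plan is to reduce the identity to a comparison of Beck--Chevalley morphisms under Fourier transform, as in the proof of Lemma \ref{lem: composition Fourier compatibility}. Both sides are composites of three kinds of arrows.
\begin{itemize}
\item On the left, $\cc\otimes u$ is the composite of the tensor product $\cc\otimes u$ of Definition \ref{def:tensor-coh-corr} with the pull-pull map $\diamond$ for the graph square \eqref{eq: tensoring-base-change-square} with $f=a_1$. After that, one applies $\FT'_{C^\flat}$.
\item On the right, $\wh\cc\star\wh u$ is the composite of the convolution compatibility isomorphism for $\wh a_0$, the convolution $\wh\cc\star\wh u$, and the push-push map $\diamond$ of \eqref{eq:conv-cocorr}.
\end{itemize}
The strategy is to match these three arrows one by one under $\FT$.

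First I will unwind $\FT'_{C^\flat}$ (following \cite[\S 7]{FYZ3}). It is obtained from a correspondence $a_0^*\cK_0\to a_1^!\cK_1$ as follows:
\begin{itemize}
\item apply $\FT_{C^\flat}$ to the adjoint form $a_{1!}a_0^*\cK_0\to\cK_1$, or equivalently to the triangle $a_{0}^*\to a_1^!$;
\item use the linear functoriality isomorphisms of Theorem \ref{thm:motivic-FT-formalism}(\ref{item:motivic-FT-linear-functoriality}), which turn $\FT_{C^\flat}\circ a_0^*$ into $\wh a_{0!}\circ\FT_{A_0}$ and $\FT_{C^\flat}\circ a_1^!$ into $\wh a_{1*}\circ\FT_{A_1}$, with the displayed shifts.
\end{itemize}
Applying this to $\cc\otimes u$, the middle arrow $\cc\otimes u$ (before the $\diamond$) is sent to $\wh\cc\star\wh u$. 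This uses the tensor-convolution isomorphism \eqref{eq:motivic-FT-tensor-convolution} on $C^\flat$, naturality of that isomorphism, and its compatibility with $f^*$ and $f_!$ for linear $f$. The last compatibility is the statement that \eqref{eq:motivic-FT-tensor-convolution} intertwines $a_0^*(\cK\otimes\cL)\cong a_0^*\cK\otimes a_0^*\cL$ with the convolution compatibility $\wh a_{0!}(\wh\cK\star\wh\cL)\cong\wh a_{0!}\wh\cK\star\wh a_{0!}\wh\cL$. I will deduce it from Proposition \ref{prop:motivic-FT-proper-base-change}: I write the tensor product as $\Delta^*$ of an exterior product and convolution as $+_!$ of one, and $\Delta$ and $+$ are Fourier dual linear maps.

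Second, the key step is the $\diamond$ arrow. Here I will show that $\FT$ carries the pull-pull morphism $a_1^!(-)\otimes a_1^*(-)\to a_1^!(-\otimes-)$ to the push-push morphism $(\wh a_{1*}-)\star(\wh a_{1!}-)\to \wh a_{1*}(-\star-)$. Both are mates of proper base-change isomorphisms for Fourier-dual Cartesian squares: the graph square \eqref{eq: tensoring-base-change-square} is dual to the addition/action square, which is why the global-presentation hypothesis is imposed on both. Proposition \ref{prop:motivic-FT-proper-base-change} identifies the underlying base-change isomorphisms. Compatibility of the linear functoriality isomorphisms with adjunctions (the last sentence of Theorem \ref{thm:motivic-FT-formalism}(\ref{item:motivic-FT-linear-functoriality})) then shows that the mates correspond, exactly as in the proof of Lemma \ref{lem: composition Fourier compatibility}. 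A small extra point is that the $*$ sits on the first factor; this is matched by the choice of action square in \eqref{eq:conv-cocorr}.

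Finally I will check the shifts and twists. The tensor-convolution isomorphism contributes $[r_i](r_i)$ on each $A_i$. The term on $A_0$ is removed by the normalization $\TT_{\sm{[-r_0](-r_0)}}$. The $\wh u$ contribution $[\delta_0-\delta_1](\delta_0-\delta_1)$ combines with $[\delta_0+\delta_1](\delta_0)$ from $\wh\cc$. Using $r_1-r_0=\delta_0-\delta_1$ (from $d(C^\flat/S)=r_0+\delta_0=r_1+\delta_1$), this should give the stated $[2\delta_0](2\delta_0-\delta_1)$.

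I expect the main obstacle to be the second step, namely the coherence that the Fourier transform of a mate is the mate of the Fourier-transformed square. I would prove it by writing both Beck--Chevalley maps as explicit composites of unit, base change and counit maps, and applying Proposition \ref{prop:motivic-FT-proper-base-change} to each.
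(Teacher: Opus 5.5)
Your proposal follows the paper's proof. It splits $\cc\otimes u$ into the tensored arrow and the pull-pull map $\diamond$ for the graph square, sends the first to the first arrow of $\wh{\cc}\star\wh{u}$ via the tensor/convolution identification, and sends the second to the push-push Beck--Chevalley map by applying Proposition \ref{prop:motivic-FT-proper-base-change} to the Fourier-dual Cartesian squares; your additional details (the convolution compatibility for $\wh{a}_0$ and the shift/twist bookkeeping via $r_1-r_0=\delta_0-\delta_1$) are correct and make explicit steps the paper leaves implicit.
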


\begin{proof}
By definition, $\cc \otimes u$ is given by the composition 
\begin{equation}\label{eq:cc-u-def}
\cc\otimes u \co a_0^*(\cK_0\otimes\cL_0) \rightarrow
a_1^!\cK_1\otimes a_1^*\cL_1
\xrightarrow{\diamond}
a_1^!(\cK_1\otimes\cL_1),
\end{equation}
where $\diamond$ is the Beck--Chevalley transformation mated to proper base change for the Cartesian diagram \eqref{eq: tensoring-base-change-square}. After applying
$\FT'_{C^\flat}$, using the preceding tensor/convolution identification, and then applying the shift-twist operator $\TT_{\sm{[-r_0](-r_0)}}$, the first arrow in \eqref{eq:cc-u-def} becomes
\[
(\wh{a}_{0!}\wh{\cK}_0)\star(\wh{a}_{0!}\wh{\cL}_0)
\rightarrow
(\wh{a}_{1*}\wh{\cK}_1\sm{[\delta_0+\delta_1](\delta_0)})
\star
(\wh{a}_{1!}\wh{\cL}_1\sm{[\delta_0-\delta_1](\delta_0-\delta_1)}).
\]
This is the first arrow in the definition \eqref{eq:conv-cocorr} of $\wh{\cc} \star \wh u$, applied to the dual diamond \eqref{eq:corr-cocorr-Fourier-dual} under the dictionary $a'_1=\wh{a}_0$, $a'_0=\wh{a}_1$, $C^\sh=\wh{C^\flat}$.

The Fourier transform of the second arrow in \eqref{eq:cc-u-def} is the Beck--Chevalley transformation mated to proper base change for the dual Cartesian diagram. Since we have assumed that the relevant diagrams are globally presented, Proposition \ref{prop:motivic-FT-proper-base-change} implies that the motivic Fourier transform carries the proper base change isomorphism for a Cartesian square to the
proper base change isomorphism for the Fourier-dual square. This implies that the motivic Fourier transform of the second arrow in \eqref{eq:cc-u-def} is the second arrow in the definition \eqref{eq:conv-cocorr} of $\wh{\cc} \star \wh u$, completing the proof.
\end{proof}

\section{A trace formula for the sheaf-cycle correspondence}\label{sec: trace formula}

In this section we prove a technical result establishing compatibility of pushforwards with formation of traces for certain \emph{nonproper} maps, the proper case being in \cite{FK}.

\subsection{Formulation}\label{ssec: trace formula formulation} Let
\[
\begin{tikzcd}S &  S^\flat \ar[l, "h_0"'] \ar[r, "h_1"] &  S
\end{tikzcd}
\]
be a correspondence of derived Artin stacks locally of finite type over $\F_q$. Let $\cE$ on $S$ and $\cE^\flat$ on $S^\flat$ be \emph{coconnective} perfect complexes, that is, perfect complexes of tor-amplitude in $[0,\infty)$ in our cohomological indexing. Recall from \cite[\S 6.1]{FYZ3} that $\Tot(\cE)$ is the functor $T\mapsto\Map_{\QCoh(T)}(\cO_T,u^*\cE)$, and set $E:=\Tot_S(\cE)$ and $E^\flat:=\Tot_{S^\flat}(\cE^\flat)$. By \cite[Lemma 6.1.5]{FYZ3}, these are relative affine derived schemes over their bases. They are classical affine schemes when the complexes are locally free in degree $0$; in general, their classical truncations are the associated abelian cones. Their zero sections are therefore closed embeddings.

For each $i\in\{0,1\}$, suppose we are given a map $\cE^\flat\to h_i^*\cE$ whose derived fiber has tor-amplitude in $[1,\infty)$. By \cite[Lemma 6.1.5]{FYZ3}, the induced map of total spaces $E^\flat\to h_i^*E$ is then a closed embedding of derived vector bundles.
This implies that the map of correspondences
\begin{equation}\label{eq: bundle correspondence map}
\begin{tikzcd}
E \ar[d, "f_0"] & E^\flat \ar[l, "a_0"'] \ar[r, "a_1"] \ar[d, "f^\flat"] & E  \ar[d, "f_1"] \\
S & S^\flat \ar[l, "h_0"'] \ar[r, "h_1"]  & S
\end{tikzcd}
\end{equation}
is pushable, since its left comparison map is the closed embedding $E^\flat\inj h_0^*E$. Consequently
\[
f_! \co \Corr_{E^\flat}(\cK_0, \cK_1) \rightarrow \Corr_{S^\flat}(f_{0!} \cK_0, f_{1!} \cK_1)
\]
is defined.

The derived pushforwards below are along schematic maps locally of finite type and therefore preserve geometricity of motivic sheaves (cf.\ \cite[\S 3.3]{FK}). The required compatibility is the following.

\begin{prop}\label{prop: trace of Hitchin pushforward} Assume the setup above, and let $\cc  \in \Corr_{E^\flat}(\cK, \cK\tw{-i})$ with $\cK \in \Dmotg{E}$. Then the induced map $\Sht(f) \co \Sht(E^\flat) \rightarrow \Sht(S^\flat)$ is proper, and we have
\[
\Tr^{\Sht}(f_{! } (\cc))  = \Sht(f)_! \Tr^{\Sht}(\cc) \in \CH_i(\Sht(S^\flat)).
\]
Traces are formed using the natural Weil structures (cf.\ \cite[\S 6.4.3]{FK}). 
\end{prop}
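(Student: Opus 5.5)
We first establish properness of $\Sht(f)$, then reduce the trace identity to the proper case in \cite{FK} by compactifying the linear fibers.

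\emph{Properness.} A point of $\Sht(E^\flat)$ over a point of $\Sht(S^\flat)$ is a section $e$ of $E^\flat$ whose images $a_0(e)$ and $a_1(e)$ are related by Frobenius: $a_1(e)=\Frob^*a_0(e)$. Because $E^\flat\inj h_0^*E$ is a closed embedding, $\Sht(f)$ is identified, on classical truncations, with a closed substack of the fixed points of the map $e\mapsto \Frob^*(e)$ on fibers. More precisely it is locally cut out in the abelian cone by equations of the form $\xi=\lambda(\xi^q)$, with $\lambda$ linear; these are the Lang-type equations.

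Such a locus is finite over the base. One way to see this is via the weights of the $\G_m$-scaling. Scaling by $c$ sends a solution $\xi$ to a solution only if $c=c^q$, and the defining equations are integral over the base coordinates: each coordinate satisfies a monic polynomial obtained by iterating $\xi=\lambda(\xi^q)$. Hence $\Sht(f)$ is quasi-finite and integral, so finite and in particular proper. We will make this precise by pulling back to a smooth atlas of $S^\flat$ on which $\cE,\cE^\flat$ are globally presented by complexes. There the degree-$0$ part gives affine space, and the positive-degree terms only contribute stacky quotients by unipotent vector groups, which do not affect properness.

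\emph{Trace identity.} We compactify. Let $\ol E=\Tot(\cE)\cup\{\infty\}$ be the fiberwise projective completion $\bbP(\cE\oplus\cO)$; in the derived or stacky setting we apply this to a global presentation. Define $\ol E^\flat$ similarly. This gives a map of correspondences $\ol E\leftarrow \ol E^\flat\to\ol E$ over $S\leftarrow S^\flat\to S$. We then extend $\cc$ by $j_!$ along the open embeddings $j\co E\inj\ol E$, giving $j_!\cc$; this extension exists because the squares for $j$ are Cartesian, so $j$ is left pushable.

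The proper-pushforward compatibility \cite[Proposition 6.2.1]{FK} applies to $\ol f\co \ol E\to S$ and yields
\[
\Tr^{\Sht}(f_!\cc)=\Tr^{\Sht}(\ol f_!j_!\cc)=\Sht(\ol f)_!\Tr^{\Sht}(j_!\cc).
\]
It remains to show that $\Tr^{\Sht}(j_!\cc)$ is supported on $\Sht(E^\flat)$ and restricts there to $\Tr^{\Sht}(\cc)$. Restriction to the open part follows from the open-embedding case of \cite[Proposition 6.2.2]{FK}. Vanishing on the boundary uses the contraction criterion \cite[Theorem 7.5.1]{FK}. Near $\infty$, the Frobenius-twisted correspondence is contracting: in the coordinate $1/\xi$ at infinity, the Frobenius graph is tangent to zero, and the boundary divisor $\{\infty\}$ is stabilized. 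Hence the local term at the boundary of a $!$-extension vanishes, in the style of Fujiwara/Varshavsky.

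\emph{Main obstacle.} The main difficulty is making the contraction near infinity rigorous in the derived, non-split setting, where $\cE$ has positive-degree terms and $E^\flat\to h_0^*E$ is only a closed embedding. I would first reduce via Zariski/smooth descent on $S^\flat$, using \cite[Proposition 6.2.2]{FK}, to the case where both complexes are split and the maps are split inclusions on degree $0$. In that case the contracting property can be checked by hand on the projective completion of $\cE^0$. The positive-degree parts are handled by noting that they form a smooth classifying-stack factor, over which traces commute with pullback.
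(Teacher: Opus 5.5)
\textbf{Trace identity.} This part follows the paper's route. You compactify by $\ol E=\bbP(E\oplus\cO)$, write $f_!\cc=\ol f_!(\jmath_!\cc)$, and apply \cite[Proposition 6.2.1]{FK} to the proper compactified map. You then identify $\Tr^{\Sht}(\jmath_!\cc)$ with $\Tr^{\Sht}(\cc)$ on the interior via the open case of \cite[Proposition 6.2.2]{FK}, and kill it on the boundary by contraction. The paper cites \cite[Corollary 7.5.7]{FK} for the last step, since the boundary correspondence is for $\imath^*\jmath_{!}\cK=0$.

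\textbf{The gap: properness of $\Sht(f)$.} Your claim that a locus cut out by $\xi=\lambda(\xi^q)$ is finite because ``each coordinate satisfies a monic polynomial obtained by iterating'' is false for general linear $\lambda$. Over $\A^1_\lambda$, the locus $\xi=\lambda\xi^q$ is \'etale but not finite: its $q-1$ nonzero points $\xi^{q-1}=\lambda^{-1}$ run off to infinity as $\lambda\to 0$. The $\G_m$-scaling remark does not bear on finiteness.

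In your setting, solving for $\xi$ through one injective leg yields a $\lambda$ (a left inverse of that leg composed with the Frobenius twist of the other) that need not be invertible. Finiteness then comes only from the residual equations together with injectivity of the other leg. Your argument never uses the closed-embedding hypothesis at this point, yet that hypothesis is exactly what prevents solutions from escaping to the boundary.

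\textbf{The fix.} Obtain properness from the compactification you already build, as the paper does.
\begin{enumerate}
\item The maps $(\cE^\flat\to h_i^*\cE)\oplus\Id_{\cO}$ have fibers of tor-amplitude $[1,\infty)$. So they induce closed embeddings $\ol{E^\flat}\to h_i^*\ol E$ preserving strata, and $\ol a_i^{-1}(\bbP(E))=\bbP(E^\flat)$ scheme-theoretically. Without injectivity, $\ol a_i$ would not even be defined along $\bbP(E^\flat)$.
\item Hence the boundary is stabilized, and by \cite[Example 7.2.2]{FK} the Frobenius twist is contracting near it.
\item By \cite[Proposition 7.5.4]{FK}, $\Sht(\bbP(E^\flat))$, and hence its complement $\Sht(E^\flat)$, is open and closed in $\Sht(\ol{E^\flat})$.
\item Then $\Sht(f)=\Sht(\ol f)\circ\Sht(\jmath)$ is proper, since $\Sht(\ol f)$ is.
\end{enumerate}
You need this open-closed decomposition in the trace step anyway, and your proposal never states it. Without it $\Sht(\jmath)_!$ is not defined, and the restrictions to the interior and boundary would not determine $\Tr^{\Sht}(\jmath_!\cc)$.

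\textbf{Your ``main obstacle'' is not one.} The projective completion is defined directly as $(\Tot(\cE\oplus\cO)\smallsetminus z)/\G_m$. Contraction follows formally from scheme-theoretic stabilization, so no splitting, descent, or coordinate check is needed. Also, the positive-degree terms of a coconnective complex contribute derived directions, not stacky ones: $\Tot$ of such a complex is a relative affine derived scheme by \cite[Lemma 6.1.5]{FYZ3}.
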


\begin{remark}
This does \emph{not} fall under the general compatibility results of \cite[\S 6.2]{FK}, the point being that the vertical maps in \eqref{eq: bundle correspondence map} are not proper. Proposition \ref{prop: trace of Hitchin pushforward} can be thought of as a (relative) version of the Grothendieck--Lefschetz trace formula for certain types of non-proper spaces. 
\end{remark}

\subsection{Proof of Proposition \ref{prop: trace of Hitchin pushforward}}\label{ssec: trace formula proof}

The strategy is to compactify the map, and show that the boundary does not contribute to the trace.

\sss{Projective completions}
The relevant projective completion is the following.

\begin{lemma}\label{lem: projective completion}
Let $T$ be a derived Artin stack locally of finite type over $\F_q$, let $\cF$ be a coconnective perfect complex on $T$, and let $F := \Tot_T(\cF)$. Define
\[
\bbP(F\oplus\cO) \,:=\, \bigl(\Tot_T(\cF\oplus\cO)\smallsetminus z_{F\oplus\cO}\bigr)/\G_m,
\]
where $z_{F\oplus\cO}$ denotes the zero section (a closed embedding, by coconnectivity) and $\G_m$ acts by fiberwise scaling. We likewise write $z_F$ for the zero section of $F$. Then:
\begin{enumerate}
\item The classical truncation of $\bbP(F\oplus\cO)$ is the projectivized abelian cone $\Proj_{T_{\cl}}\Sym(\cQ\oplus\cO)$, where $\cQ := H^0(\cF^*)$ is coherent. In particular $\bbP(F\oplus\cO) \rightarrow T$ is proper.
\item There is a stratification
\[
\bbP(F\oplus\cO) = F \sqcup \bbP(F),
\]
where $F \inj \bbP(F\oplus\cO)$ is the open locus on which the $\cO$-coordinate is invertible, and the boundary identifies with $\bbP(F) := (\Tot_T(\cF)\smallsetminus z_F)/\G_m$.
\end{enumerate}
\end{lemma}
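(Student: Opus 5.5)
Both parts are local on $T$, so I would work with the definition of $\bbP(F\oplus\cO)$ as a $\G_m$-quotient of an open substack of a total space, and reduce every claim to a statement about total spaces of coconnective perfect complexes. The first step is to identify classical truncations. By \cite[Lemma 6.1.5]{FYZ3}, for a coconnective perfect complex $\cG$ on $T$ the classical truncation of $\Tot_T(\cG)$ is the abelian cone $\Spec_{T_{\cl}}\Sym H^0(\cG^*)$. The reason is that maps $\cO_U\to u^*\cG$ for classical $U$ only see the connective truncation of $\cG^*$ in degree $0$. Apply this to $\cG=\cF\oplus\cO$. Since $H^0((\cF\oplus\cO)^*)=\cQ\oplus\cO$, we get $\Tot_T(\cF\oplus\cO)_{\cl}=\Spec_{T_{\cl}}\Sym(\cQ\oplus\cO)$. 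The zero section is cut out by the irrelevant ideal $\Sym^{>0}$. Removing it and taking the $\G_m$-quotient commute with classical truncation: the quotient is by a smooth group, and open complements are determined on underlying topological spaces. The standard identification $(\Spec\Sym\cM\smallsetminus 0)/\G_m\cong\Proj\Sym\cM$ for a graded algebra generated in degree one then gives the first claim of (1). Properness follows because properness is defined on classical truncations, and $\Proj$ of a finitely generated graded $\cO_{T_{\cl}}$-algebra generated in degree one is proper. (Coherence of $\cQ$ uses that $\cF$ is perfect and $T$ is locally noetherian.)

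For (2), I would consider the linear coordinate function $\Tot_T(\cF\oplus\cO)\to\Tot_T(\cO)=\A^1_T$ given by projection. It is $\G_m$-equivariant, so its zero locus and the complementary open locus descend to the quotient. Since $\Tot$ turns direct sums into fiber products, $\Tot_T(\cF\oplus\cO)=F\times_T\A^1_T$. On the open locus where the second coordinate is invertible, we have $F\times_T\G_{m,T}$, which automatically misses the zero section. Dividing by the diagonal $\G_m$ and normalizing the second coordinate to $1$ identifies this quotient with $F$. The normalization is realized by the $\G_m$-equivariant isomorphism $(v,\lambda)\mapsto((\lambda^{-1}v,1),\lambda)$, and this works equally in derived stacks because it is a statement about functors of points. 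The complementary closed locus is the derived fiber over $0\in\A^1$, namely $F\times_T\{0\}=F$, minus the zero section, which is $F\smallsetminus z_F$. Its $\G_m$-quotient is $\bbP(F)$ by definition.

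The main obstacle is the meaning of ``stratification'' in the derived setting. The closed complement $\bbP(F)$ should be taken as the derived zero locus of the coordinate, which is a derived Cartier divisor. This is compatible with the conventions of the paper, since closedness and open/closed decompositions are read off on classical truncations and $\Dmot{(-)}$ is nil-invariant. I would also make sure that the zero section $z_{F\oplus\cO}$ is a closed embedding, using coconnectivity as in \cite[Lemma 6.1.5]{FYZ3}, so that all complements above are genuinely open.
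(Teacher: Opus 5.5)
Your proposal is correct and follows the paper's proof. In part (1) you identify the classical truncation of $\Tot_T(\cF\oplus\cO)$ with the abelian cone $\Spec_{T_{\cl}}\Sym(\cQ\oplus\cO)$ and read properness off classical truncations. In part (2) you use the $\G_m$-equivariant coordinate $\Tot_T(\cF\oplus\cO)\to\A^1_T$: its derived zero locus gives $\bbP(F)$, and on the locus where it is invertible, rescaling to $1$ identifies the quotient with $F$. You simply spell out details the paper calls ``clear'', such as why truncation commutes with open complements and smooth quotients, and the explicit equivariant normalization isomorphism.
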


\begin{proof}
(1) The fact that the
classical truncation of $\bbP(F\oplus\cO)$ is the projectivized abelian cone $\Proj_{T_{\cl}}\Sym(\cQ\oplus\cO)$ is clear from the definition. Properness is detected on classical truncations, so $\bbP(F\oplus\cO)\to T$ is proper.

(2) The projection $\cF\oplus\cO\to\cO$ gives a $\G_m$-equivariant fiberwise-linear map $\Tot_T(\cF\oplus\cO)\to\bbA^1_T$, hence a section of $\cO(1)$ on the quotient. Its derived fiber over $z_{\bbA^1_T}\co T\to\bbA^1_T$ is $\Tot_T(\cF)$, so its derived vanishing locus is $(\Tot_T(\cF)\smallsetminus z_F)/\G_m=\bbP(F)$. On the complement, the $\cO$-coordinate is invertible; rescaling it to $1$ identifies the complement with $\cF$. 
\end{proof}

Set $\ol E:=\bbP(E\oplus\cO)$ and $\ol{E^\flat}:=\bbP(E^\flat\oplus\cO)$. By Lemma \ref{lem: projective completion}, these are proper over their bases, with open interiors $\jmath_E\co E\inj\ol E$ and $\jmath^\flat\co E^\flat\inj\ol{E^\flat}$ and boundaries $\bbP(E)$ and $\bbP(E^\flat)$.

The maps $(\cE^\flat\to h_i^*\cE)\oplus\Id_{\cO}$ induce closed embeddings $\ol{E^\flat}\to h_i^*\ol E$. Since each has the form $(\text{linear})\oplus\Id_{\cO}$, it preserves the interior and boundary strata. Let $\ol a_i\co\ol{E^\flat}\to\ol E$ be the composite with the projection. The pullback of the boundary $\bbP(E)\subset\ol E$ pulls back under $\ol a_i$ to $\bbP(E^\flat)$, so the boundary is stabilized scheme-theoretically, as required in \cite[Definition 7.2.1]{FK}.

On the complementary opens, we have $\ol a_i^{\,-1}(E)=E^\flat$ as derived open substacks. Hence \eqref{eq: bundle correspondence map} extends to the compactified map
\begin{equation}\label{eq: compactified bundle correspondence map}
\begin{tikzcd}
\ol E \ar[d, "\ol f_0"] & \ol{E^\flat} \ar[l, "\ol a_0"'] \ar[r, "\ol a_1"] \ar[d, "\ol f^\flat"] & \ol E  \ar[d, "\ol f_1"] \\
S & S^\flat \ar[l, "h_0"'] \ar[r, "h_1"]  & S
\end{tikzcd}
\end{equation}
All vertical maps in \eqref{eq: compactified bundle correspondence map} are proper, and its comparison maps $\ol{E^\flat}\to h_i^*\ol E$ are closed embeddings; the map is therefore proper and pushable. The boundary strata form a correspondence $\bbP(E)\xleftarrow{\ol a_0}\bbP(E^\flat)\xrightarrow{\ol a_1}\bbP(E)$, which is the restriction of \eqref{eq: compactified bundle correspondence map} in the sense of \cite[Construction 7.2.3]{FK}. The open inclusions define a map of correspondences $\jmath$ from the top row of \eqref{eq: bundle correspondence map} to that of \eqref{eq: compactified bundle correspondence map}. Its squares are derived Cartesian, being preimages of open substacks, so $\jmath$ is pushable. Since \eqref{eq: bundle correspondence map} is the composite of $\jmath$ with \eqref{eq: compactified bundle correspondence map}, compatibility with composition of pushable maps (cf.\ \cite[\S 3.2, especially Proposition 3.2.3]{FYZ3}) gives
\begin{equation}\label{eq: trace formula factorization}
f_!\,\cc = \ol f_{!}\,(\jmath_!\,\cc).
\end{equation}

\sss{Decomposition of the shtuka stacks}
Both legs of \eqref{eq: compactified bundle correspondence map} preserve the stratification $\ol E=\bbP(E)\sqcup E$. Since the boundary is stabilized scheme-theoretically, the Frobenius twist of the compactified self-correspondence is contracting near $\bbP(E)$ by \cite[Example 7.2.2]{FK}. It follows from \cite[Proposition 7.5.4]{FK} that $\Sht(\bbP(E^\flat))\to\Sht(\ol{E^\flat})$ is an open-closed embedding on reduced substacks. Its complement is $\Sht(E^\flat)$: the underlying point of any shtuka lies in exactly one of the two strata, each stable under both legs. Thus we have 
\begin{equation}\label{eq: compactified Sht decomposition}
\Sht(\ol{E^\flat}) = \Sht(E^\flat) \sqcup \Sht(\bbP(E^\flat))
\end{equation}
on reduced substacks. Let $\jmath$ and $\imath$ denote the maps of correspondences
\[
\begin{tikzcd}
E \ar[d, "\jmath_0"'] & E^\flat \ar[l, "a_0"'] \ar[r, "a_1"] \ar[d, "\jmath^\flat"] & E \ar[d, "\jmath_1"] \\
\ol E & \ol{E^\flat} \ar[l, "\ol a_0"] \ar[r, "\ol a_1"'] & \ol E
\end{tikzcd}
\qquad
\begin{tikzcd}
\bbP(E) \ar[d, "\imath_0"'] & \bbP(E^\flat) \ar[l] \ar[r] \ar[d, "\imath^\flat"] & \bbP(E) \ar[d, "\imath_1"] \\
\ol E & \ol{E^\flat} \ar[l, "\ol a_0"] \ar[r, "\ol a_1"'] & \ol E
\end{tikzcd}
\]
given by the interior and boundary inclusions; in particular $\jmath_0 = \jmath_1 = \jmath_E$, and $\imath_0 = \imath_1$ is the inclusion $\bbP(E)\inj\ol E$. Write $\Sht(\jmath)$ and $\Sht(\imath)$ for the two open-closed embeddings. Furthermore, the morphism
\[
\Sht(\ol f) \co \Sht(\ol{E^\flat}) \rightarrow \Sht(S^\flat)
\]
is proper by \cite[Proposition 6.2.1]{FK} applied to the Frobenius twist of \eqref{eq: compactified bundle correspondence map}. Hence $\Sht(f) = \Sht(\ol f)\circ\Sht(\jmath)$ is proper, being the composition of an open-closed embedding with a proper map; in particular $\Sht(f)_!$ is defined on Chow groups, as asserted in Proposition \ref{prop: trace of Hitchin pushforward}.

\sss{Conclusion}
By \eqref{eq: trace formula factorization} and the compatibility of the trace with proper pushforward, \cite[Proposition 6.2.1]{FK}, applied to the proper map of correspondences \eqref{eq: compactified bundle correspondence map}, we have
\begin{equation}\label{eq: trace formula step 1}
\Tr^{\Sht}(f_!\,\cc) \,=\, \Tr^{\Sht}\bigl(\ol f_!(\jmath_!\,\cc)\bigr) \,=\, \Sht(\ol f)_!\,\Tr^{\Sht}(\jmath_!\,\cc).
\end{equation}
It remains to show that
\begin{equation}\label{eq: trace formula step 2}
\Tr^{\Sht}(\jmath_!\,\cc) \,=\, \Sht(\jmath)_!\,\Tr^{\Sht}(\cc) \,\in\, \CH_i(\Sht(\ol{E^\flat})).
\end{equation}
By the open-closed decomposition \eqref{eq: compactified Sht decomposition}, it suffices to compare the restrictions to its two components. On $\Sht(E^\flat)$, the open case of \cite[Proposition 6.2.2]{FK} identifies the restriction of the left side with $\Tr^{\Sht}(\jmath^*\jmath_!\,\cc)=\Tr^{\Sht}(\cc)$, which is also the restriction of the right side. On $\Sht(\bbP(E^\flat))$, the Frobenius twist is contracting near the boundary, and the restriction of $\jmath_!\,\cc$ to the boundary correspondence in the sense of \cite[Construction 7.2.3]{FK} is a cohomological correspondence for $\imath_0^*\,\jmath_{E!}\,\cK=0$. Hence \cite[Corollary 7.5.7]{FK} implies that the restriction of $\Tr^{\Sht}(\jmath_!\,\cc)$ vanishes, while the right side vanishes there for reasons of support. This proves \eqref{eq: trace formula step 2}.

Combining \eqref{eq: trace formula step 1} and \eqref{eq: trace formula step 2} with the equality $\Sht(\ol f)_!\,\Sht(\jmath)_! = \Sht(f)_!$ completes the proof. \qed

\part{The Modularity Conjecture}

\section{The genetic pattern of modularity}\label{sec:genetics-of-modularity}

In this section we establish certain sheaf-theoretic results which can be viewed as the ``genetics of modularity'' from Figure \ref{fig:cartoon}. The results here are essentially agnostic about the ground field.

\subsection{The bestiary of derived vector bundles}\label{ssec:bestiary}

Let $\cG \in \Bun_{GU^{-}(2m)}(k)$. Let $\cE_1, \cE_2 \subset \cG$ be transverse Lagrangians in the sense of \cite[\S 2]{FYZ3}. As discussed in \cite[\S 2, \S 9.1]{FYZ3}, the transversality assumption implies that we have short exact sequences
\begin{equation}\label{eq: bestiary torsion sequences}
\begin{gathered}
0 \rightarrow \sigma^* \cE_2 \rightarrow \cE_1^* \rightarrow Q_1 \rightarrow 0,\\
0 \rightarrow \sigma^* \cE_1 \rightarrow \cE_2^* \rightarrow Q_2 \rightarrow 0
\end{gathered}
\end{equation}
where $Q_1$ and $Q_2 \cong \sigma^* Q_1^* := \sigma^* \cExt^1_{X'}(Q_1, \cO_{X'})$ are torsion coherent sheaves on $X'$.\footnote{Our $Q_i$ agrees with the $\wt Q_i$ of \cite[\S 9.1]{FYZ3}, i.e.\ with $\sigma^*$ applied to the $Q_i$ of \cite[\S 2]{FYZ3}.} 

 Fix a Harder--Narasimhan truncation $\mu$ for $\Bun_{U(n)}$ and write $S = \Bun_{U(n)}^{\leq \mu}$. We let $\Hk_S^r$ be the corresponding Harder--Narasimhan truncation of the Hecke stack \cite[\S 9.1.2]{FYZ3}. This fits into a correspondence diagram 
 \[
 S \xleftarrow{h_0} \Hk_S^r \xrightarrow{h_r} S.
 \] 
 
From this setup we constructed a constellation of moduli spaces in \cite[\S 9.1,\S 9.2]{FYZ3}, and we will use the same notation here. 
\begin{enumerate}
\item For $i \in \{0,r\}$: we have derived vector bundles $U_i,V_i, W_i$ over $S$; and their respective dual derived vector bundles $W_i^\perp = \wh{U}_i, V'_i = \wh{V}_i, U_i^\perp = \wh{W}_i$. 

\item For $i \in \{0,r\}$: pulling back $U_i, V_i, W_i$ to $\Hk_S^r$ via $h_i$ gives derived vector bundles $\wt U_i, \wt V_i, \wt W_i$. Similarly, we have $\wt W_i^\perp, \wt V_i', \wt U_i^\perp$. 

\item The Hecke stacks 
\[
\Hk_U^\flat, \Hk_U^\sharp, \Hk_V^\flat, \Hk_V^\sharp, \Hk_W^\flat, \Hk_W^\sharp,
\]
which are derived vector bundles over $\Hk_S^r$; and their respective dual derived vector bundles 
\[
\Hk_{W^\perp}^{\sharp}, \Hk_{W^\perp}^{\flat}, \Hk_{\wh{V}}^\sharp, \Hk_{\wh{V}}^\flat, \Hk_{U^\perp}^\sharp, \Hk_{U^\perp}^\flat.
\]
\end{enumerate}

We briefly recall the definitions of these objects.  

\subsubsection{$U,V,W$ and their duals} Let $\cF_{\univ}$ denote the universal Hermitian bundle on $X'\times S$. The six base vector bundles are the total spaces of the following perfect complexes on $S$:
\[
\begin{array}{lll}
\cU=\ul{\RHom(\cF_{\univ}^*,\cE_1^*)},&
\cV=\ul{\RHom(\cF_{\univ}^*,Q_1)},&
\cW=\ul{\RHom(\cF_{\univ}^*,\sigma^*\cE_2\sm{[1]})},\\[2mm]
\cU^\perp=\ul{\RHom(\cF_{\univ}^*,\cE_2^*)},&
\cV'=\ul{\RHom(\cF_{\univ}^*,Q_2)},&
\cW^\perp=\ul{\RHom(\cF_{\univ}^*,\sigma^*\cE_1\sm{[1]})}.
\end{array}
\]
Then $U=\Tot_S(\cU)$, $V=\Tot_S(\cV)$, $W=\Tot_S(\cW)$, and similarly for the perpendicular collection. For $i=0,r$ we write $U_i,V_i,W_i$ for the corresponding copies over the $i$th copy $S_i$ of $S$, and we write $\wt U_i,\wt V_i,\wt W_i$ for their pullbacks along $h_i\co\Hk_S^r\to S_i$. The exact sequences \eqref{eq: bestiary torsion sequences} induce exact triangles in $\Perf(S)$,
\begin{equation}\label{eq: bestiary base triangles}
\cU\to\cV\to\cW,
\qquad
\cU^\perp\to\cV'\to\cW^\perp .
\end{equation}
Passing to total spaces, this induces the following pair of derived Cartesian squares over $S$:
\begin{equation}\label{eq: UVW zero fibers}
\begin{tikzcd}
U \ar[r, "f"] \ar[d, "\pi_U"'] & V \ar[d, "g"] \\
S \ar[r, "z_W"'] & W
\end{tikzcd}
\qquad \qquad 
\begin{tikzcd}
U^\perp \ar[r, "\wh g"] \ar[d, "\wh{z}_W"'] & \wh V \ar[d, "\wh{f}"] \\
S \ar[r, "\wh{\pi}_U"'] & W^\perp
\end{tikzcd}
\end{equation}
Here $z_W$ and $\wh{\pi}_U=z_{W^\perp}$ denote the zero sections of their respective derived vector bundles, while $\pi_U$ and $\wh{z}_W  = \pi_{U^\perp}$ is the structure morphism $U^\perp\to S$. From \eqref{eq: UVW zero fibers} we have corresponding derived Cartesian squares $(U_i,V_i,S_i,W_i)$ and $(U_i^\perp,\wh V_i,S_i,W_i^\perp)$ for $i=0,r$.
Here $\wh V_i$ denotes the Serre-dual vector bundle to $V_i$; relative Serre duality identifies $\Tot_S(\cV')$ with $\wh V$, identifies $U^\perp$ with $\wh W$, and identifies $W^\perp$ with $\wh U$. 

\subsubsection{Hecke correspondences}
We next recall the Hecke versions. Over $\Hk_S^r$ there are two universal complexes on $X'\times\Hk_S^r$.  For an $R$-point $(\cF_\star)\in\Hk_S^r(R)$, the complex $\cF_\bu^\flat$ on $X'_R$ is the two-term complex in degrees $0$ and $1$
\[
\left(\cF_{1/2}^\flat\oplus\cdots\oplus\cF_{r-1/2}^\flat\right)
\longrightarrow
\left(\cF_1\oplus\cdots\oplus\cF_{r-1}\right),
\]
where the differential maps $(s_{1/2},\ldots,s_{r-1/2})$ to $(s_{1/2}-s_{3/2},\ldots,s_{r-3/2}-s_{r-1/2})$. The sheaf $\cF_\bu^\sharp$ fits into the exact triangle
\begin{equation}\label{eq: bestiary flat sharp triangle}
\cF_\bu^\flat\to \cF_0\oplus\cF_r\to \cF_\bu^\sharp .
\end{equation}
The map $\cF_\bu^\flat \rightarrow \cF_0\oplus\cF_r$ has components induced by the inclusions $\cF_{1/2}^\flat \subset \cF_0$ and $\cF_{r-1/2}^\flat \subset \cF_r$, with the sign conventions of \cite[\S 9.2]{FYZ3}. However, when $r=0$, these are interpreted according to the following convention: $\Hk_S^0 = S$ with the identity correspondence, $\cF_\bu^\flat := \cF_0$ (for $r=0$ the displayed two-term complex would be the zero complex), and \eqref{eq: bestiary flat sharp triangle} is defined to be the split triangle $\cF_0 \xrightarrow{(1,1)} \cF_0\oplus\cF_0 \rightarrow \cF_0$, so that $\cF_\bu^\sharp = \cF_0$.

Replacing $\cF_{\univ}^*$ in the definitions of $\cU,\cV,\cW$ by $(\cF_\bu^\flat)^*$ gives perfect complexes $\cU^\flat_{\Hk},\cV^\flat_{\Hk},\cW^\flat_{\Hk}$ on $\Hk_S^r$, and hence derived vector bundles $\Hk_U^\flat,\Hk_V^\flat,\Hk_W^\flat$. Replacing $(\cF_\bu^\flat)^*$ by $(\cF_\bu^\sharp)^*$ gives $\Hk_U^\sharp,\Hk_V^\sharp,\Hk_W^\sharp$. Applying the same construction to the second exact sequence in \eqref{eq: bestiary torsion sequences} gives $\Hk_{U^\perp}^{\flat/\sharp}$, $\Hk_{\wh V}^{\flat/\sharp}$, and $\Hk_{W^\perp}^{\flat/\sharp}$. These fit into derived Cartesian squares
\begin{equation}\label{eq: Hk UVW zero fibers}
\begin{tikzcd}
\Hk_U^\flat \ar[r, "f"] \ar[d, "\pi"'] & \Hk_V^\flat \ar[d, "g"] \\
\Hk_S^r \ar[r, "z_{\Hk_W^\flat}"'] & \Hk_W^\flat
\end{tikzcd}
\qquad
\begin{tikzcd}
\Hk_{U^\perp}^\flat \ar[r, "\wh g"] \ar[d, "\wh z"'] & \Hk_{\wh V}^\flat \ar[d, "\wh f"] \\
\Hk_S^r \ar[r, "\wh \pi"'] & \Hk_{W^\perp}^\flat
\end{tikzcd}
\end{equation}
The analogous $\sharp$-squares are obtained from the same construction using $(\cF_\bu^\sharp)^*$. (In the second square of \eqref{eq: Hk UVW zero fibers}, the maps with $\wh{()}$ are the Fourier duals of the corresponding $\sharp$-maps; for instance $\wh g = \widehat{g^{\sharp}}$ under the identification $\wh{\Hk_W^{\sharp}} = \Hk_{U^\perp}^{\flat}$.)

For $i\in\{0,r\}$, write
$h_i^U,h_i^V,h_i^W,h_i^{U^\perp},h_i^{\wh V},h_i^{W^\perp}$
for the corresponding base-change projections to the bundles. We use the maps
\[
\begin{gathered}
a_i:=h_i^U\circ\wt a_i,\qquad
b_i:=h_i^V\circ\wt b_i,\qquad
c_i:=h_i^W\circ\wt c_i,\\
a_i^\perp:=h_i^{U^\perp}\circ\wt a_i^\perp,\qquad
\mbeta_i:=h_i^{\wh V}\circ\wt\mbeta_i,\qquad
c_i^\perp:=h_i^{W^\perp}\circ\wt c_i^\perp.
\end{gathered}
\]

The exact triangle \eqref{eq: bestiary flat sharp triangle} induces three derived Cartesian squares of derived vector bundles over $\Hk_S^r$:
\begin{equation}\label{eq: three squares 1}
\begin{tikzcd}
& \Hk_U^{\flat} \ar[dl, "\wt{a}_0"'] \ar[dr, "\wt{a}_r"] \\ 
\wt{U}_0 \ar[dr, "\wt{a}_r'"'] & & \wt{U}_r \ar[dl, "\wt{a}_0'"]  \\
& \Hk_U^{\sharp} 
\end{tikzcd} \hspace{1cm}
\begin{tikzcd}
& \Hk_V^{\flat} \ar[dl, "\wt{b}_0"'] \ar[dr, "\wt{b}_r"] \\ 
\wt{V}_0 \ar[dr, "\wt{b}_r'"'] & & \wt{V}_r \ar[dl, "\wt{b}_0'"]  \\
& \Hk_V^{\sharp} 
\end{tikzcd}\hspace{1cm}
\begin{tikzcd}
& \Hk_W^{\flat} \ar[dl, "\wt{c}_0"'] \ar[dr, "\wt{c}_r"] \\ 
\widetilde{W}_0 \ar[dr, "\widetilde{c}_r'"'] & & \widetilde{W}_r \ar[dl, "\widetilde{c}_0'"]  \\
& \Hk_W^{\sharp} 
\end{tikzcd}
\end{equation}
Switching the roles of $\cE_{1}$ (resp. $Q_1$) and $\cE_{2}$ (resp. $Q_2$), the exact triangle \eqref{eq: bestiary flat sharp triangle} induces three (derived) Cartesian squares of derived vector bundles over $\Hk_S^r$:
\begin{equation}\label{eq: three squares 2}
\begin{tikzcd}
& \Hk_{U^{\perp}}^{\flat} \ar[dl, "\wt{a}_0^{\perp}"'] \ar[dr, "\wt{a}_r^{\perp}"] \\ 
\wt{U}_0^{\perp} \ar[dr, "(\wt{a}_r')^{\perp}"'] & & \wt{U}_r^{\perp} \ar[dl, "(\wt{a}_0')^{\perp}"]  \\
& \Hk_{U^{\perp}}^{\sharp} 
\end{tikzcd} \hspace{1cm}
\begin{tikzcd}
& \Hk_{\wh V}^{\flat} \ar[dl, "\wt\mbeta_{0}"'] \ar[dr, "\wt\mbeta_{r}"] \\ 
\wh{\wt V}_0 \ar[dr, "\wt\mbeta'_{r}"'] & & \wh{\wt V}_r \ar[dl, "\wt\mbeta'_{0}"]  \\
& \Hk_{\wh V}^{\sharp} 
\end{tikzcd}\hspace{1cm}
\begin{tikzcd}
& \Hk_{W^{\perp}}^{\flat} \ar[dl, "\wt{c}_0^{\perp}"'] \ar[dr, "\wt{c}_r^{\perp}"] \\ 
\widetilde{W}_0^{\perp} \ar[dr, "(\widetilde{c}_r')^{\perp}"'] & & \widetilde{W}_r^{\perp} \ar[dl, "(\widetilde{c}_0')^{\perp}"]  \\
& \Hk_{W^{\perp}}^{\sharp} 
\end{tikzcd}
\end{equation}

The construction is summarized by the following pair of diagrams:
\begin{equation}\label{eq: big diagram for E_1}
\adjustbox{scale=0.8, center}{\begin{tikzcd}
& & \Hk_U^{\flat} \ar[dl, "\wt{a}_0"', color=blue] \ar[dr, "\wt{a}_r"] \ar[ddd, bend left, "f"', color=orange] \\
& \ar[dl, "h_0^U"'] \wt{U}_0 \ar[ddd, "\wt{f}_0"', color=red] \ar[dr, "\wt{a}_r'"'] && \wt{U}_r \ar[dl, "\wt{a}_0'"] \ar[ddd, "\wt{f}_r"] \ar[dr, "h_r^U"] \\
U_0 \ar[ddd, "f_0"]  & & \Hk_U^{\sharp}  \ar[ddd, bend right, "f^{\sharp}"]   &  &  U_r  \ar[ddd, "f_r"] \\
& & \Hk_V^{\flat} \ar[dl, "\wt{b}_0"', color=green] \ar[dr, "\wt{b}_r"] \ar[ddd, bend left, "g"']   \\
& \wt{V}_0 \ar[ddd, "\wt{g}_0"']  \ar[dr, "\wt{b}_r'"']  \ar[dl, "h_0^V"'] && \wt{V}_r \ar[dl, "\wt{b}_0'"]  \ar[ddd, "\wt{g}_r"]  \ar[dr, "h_r^V"] \\
V_0 \ar[ddd, "g_0"] & & \Hk_V^{\sharp} \ar[ddd, bend right, "g^{\sharp}"]  & & V_r  \ar[ddd, "g_r"] \\
& & \Hk_W^{\flat}  \ar[dl, "\wt{c}_0"'] \ar[dr, "\wt{c}_r"] \\
& \wt{W}_0 \ar[dr, "\wt{c}_r'"'] \ar[dl, "h_0^W"']  & &  \wt{W}_r \ar[dl, "\wt{c}_0'"]  \ar[dr, "h_r^W"]  \\
W_0 & & \Hk_W^{\sharp}  & & W_r 
\end{tikzcd}\hspace{1cm}
\begin{tikzcd}
& & \Hk_{U^{\perp}}^{\flat} \ar[dl, "\wt{a}_0^{\perp}"'] \ar[dr, "\wt{a}_r^{\perp}"] \ar[ddd, bend left, "f^{\perp}"'] \\
& \ar[dl, "h_0^{U^{\bot}}"'] \wt{U}_0^{\perp} \ar[ddd, "\wt{f}_0^{\perp}"'] \ar[dr, "(\wt{a}_r')^{\perp}"'] && \wt{U}_r^{\perp} \ar[dl, "(\wt{a}_0')^{\perp}"] \ar[ddd, "\wt f_r^{\perp}"] \ar[dr, "h_r^{U^{\bot}}"]  \\
U_0^{\perp} \ar[ddd, "f_0^{\perp}"] & & \Hk_{U^{\perp}}^{\sharp}  \ar[ddd, bend right, "(f^{\sharp})^{\perp}"]   & & U_r^{\perp} \ar[ddd, "f_r^{\perp}"] \\
& & \Hk_{\wh{V}}^{\flat} \ar[dl, "\wt{\mbeta}_0"'] \ar[dr, "\wt{\mbeta}_r"] \ar[ddd, bend left, "g^{\bot}"']   \\
& \ar[dl, "h_0^{\wh V}"']  \wh{\wt{V}}_0 \ar[ddd, "\wt{g}_0^{\perp}"', color=red]  \ar[dr, "\wt\mbeta'_{r}"', color=green]  && \wh{\wt{V}}_r \ar[dl, "\wt\mbeta'_{0}"]  \ar[ddd, "\wt{g}_r^{\perp}"]  \ar[dr, "h_r^{\wh V}"]  \\
\wh{V}_0 \ar[ddd, "g_0^{\perp}"]  & & \Hk_{\wh{V}}^{\sharp} \ar[ddd, bend right, "(g^{\sharp})^{\perp}", color=orange] & & \wh{V}_r \ar[ddd, "g_r^{\perp}"]  \\	
& & \Hk_{W^{\perp}}^{\flat}  \ar[dl, "\wt{c}_0^{\perp}"'] \ar[dr, "\wt{c}_r^{\perp}"] \\
& \ar[dl, "h_0^{W^{\bot}}"']  \wt{W}_0^{\perp} \ar[dr, "(\wt{c}_r')^{\perp}"', color=blue]  & &  \wt{W}_r^{\perp} \ar[dl, "(\wt{c}_0')^{\perp}"]  \ar[dr, "h_r^{W^{\bot}}"]  \\
W_0^{\perp} & & \Hk_{W^{\perp}}^{\sharp} & & W_r^{\perp} 
\end{tikzcd}
}
\end{equation}
In each diagram: 
\begin{itemize}
\item The maps in the columns are base changes (along the maps $h_i$, where appropriate) of the morphisms in the derived zero-fiber squares \eqref{eq: UVW zero fibers} and \eqref{eq: Hk UVW zero fibers}.
\item The three diamonds in the middle are derived Cartesian. 
\item The four parallelograms on the left and right sides are derived  Cartesian. 
\end{itemize}
The diagram on the right is dual to the diagram on the left. The duality exchanges $U$ with $W^{\perp}$, $V$ with $\wh{V}$, and $W$ with $U^{\perp}$, and exchanges $\flat$ and $\sharp$ superscripts. Examples of dual morphisms are colored with the same color. By \cite[\S 9.3]{FYZ3}, each of these diagrams is globally presented, so that the global-presentation formalism applies to them.

\begin{figure}[htbp]
\centering
\begin{tikzpicture}[
        x=1cm,y=1cm,
        font=\small,
        line cap=round,
        line join=round,
        bundle/.style={draw=black!55, fill=black!5, rounded corners=1pt,
            minimum width=1.28cm, minimum height=.52cm, inner sep=2pt},
        arr/.style={->, draw=black!65, line width=.75pt},
        dual/.style={<->, draw=black!42, dashed, line width=.6pt}]
    \node[bundle] (U) at (0,2.2) {$U$};
    \node[bundle] (V) at (0,1.1) {$V$};
    \node[bundle] (W) at (0,0) {$W$};

    \node[bundle] (Up) at (6.4,2.2) {$U^\perp$};
    \node[bundle] (Vh) at (6.4,1.1) {$\wh V$};
    \node[bundle] (Wp) at (6.4,0) {$W^\perp$};

    \draw[arr] (U) -- (V);
    \draw[arr] (V) -- (W);
    \draw[arr] (Up) -- (Vh);
    \draw[arr] (Vh) -- (Wp);

    \draw[black!28, dashed] (3.2,-.48) -- (3.2,2.68);
    \node[font=\scriptsize, fill=white, inner sep=1pt] at (3.2,2.67) {$\FT$};

    \draw[dual] (U.east) -- (Wp.west);
    \draw[dual] (V.east) -- (Vh.west);
    \draw[dual] (W.east) -- (Up.west);
\end{tikzpicture}
\caption{Cartoon of the dual pair of diagrams in \eqref{eq: big diagram for E_1}. Fourier duality reverses the towers by exchanging $U$ with $W^\perp$, $V$ with $\wh V$, and $W$ with $U^\perp$.}
\label{fig:uvw-duality-cartoon}
\end{figure}

\subsection{Departure from \cite{FYZ3}} \label{ssec:departure}
The bestiary setup in \S \ref{ssec:bestiary} is identical to the one from \cite[\S 9.1--9.2]{FYZ3}. At this point, our proof will diverge in a crucial way from \cite{FYZ3}. Let us indicate the shape of this departure, leaving some precise definitions for later. 

The Trace Conjecture implies that
\[
\Tr^{\Sht}(\cc_U) = [\cZ_{\cE_1}^r],
\qquad
\Tr^{\Sht}(\cc_{U^\perp}) = [\cZ_{\cE_2}^r].
\]
Therefore, where the quadratic function $\beta$ is defined in \S\ref{ssec: gaussians for E1}, the trace
\[
\Tr^{\Sht}\bigl(\pi_! (\cc_U \otimes \beta^* \AS_{\psi, S})\bigr)
\]
is essentially (i.e., up to signs and powers of $q$) the higher theta series $\wt Z_m^{n, r}(\cG, \cE_1)$, while the trace
\[
\Tr^{\Sht}\bigl(\pi^\perp_! (\cc_{U^\perp} \otimes (-\beta)^* \AS_{\psi, S})\bigr)
\]
is essentially $\wt Z_m^{n, r}(\cG, \cE_2)$. 

The strategy of \cite{FYZ3} is to relate $\cc_U \otimes \beta^* \AS_{\psi, S}$ with $\cc_{U^\perp} \otimes (-\beta)^* \AS_{\psi, S}$ by pushing them both forward to the correspondence 
\begin{equation}\label{eq:V-correspondence}
V_0 \leftarrow \Hk_V^\flat \rightarrow V_r
\end{equation}
and comparing them there. This comparison turns out to ultimately be a manifestation of the Plancherel formula. 

However, this strategy has a key defect: in order to effect an actual comparison in $\CH_*(\Sht_S^r)$, we would have to push forward from \eqref{eq:V-correspondence} to 
\begin{equation}\label{eq:S-correspondence}
S \leftarrow \Hk_S^r \rightarrow S .
\end{equation}
It turns out that the map from \eqref{eq:V-correspondence} to \eqref{eq:S-correspondence} is not pushable, so this is not possible. This is due to a certain ``wildness'' of the geometry of $\Hk_V^\flat$ when the legs meet the support of the torsion sheaf $Q_1$ defining $V$. On the complement of this locus, the map is in fact pushable, which is why this approach can work there. However, as the transverse pair $(\cE_1,\cE_2)$ varies, the support of the torsion sheaf $Q_1$ varies uncontrollably, so one is left only with automorphy over the generic fiber. 

The key new insight here is that the relation between $\cc_U \otimes \beta^* \AS_{\psi, S}$ and $\cc_{U^\perp} \otimes (-\beta)^* \AS_{\psi, S}$ is already baked into their genetics: they are both \emph{pulled back} from a pair of essentially self-dual cohomological correspondences on \eqref{eq:V-correspondence}. From this starting point, the desired comparison can be effected by Fourier duality, without trying to push forward to \eqref{eq:V-correspondence}.

\subsection{Geometric properties}\label{sssec: geometric properties for E1}  We establish some needed geometric properties of the maps in \eqref{eq: big diagram for E_1}.

We use the terminology of \emph{pushable} and \emph{pullable} squares \cite[\S 3.1 and \S 4.2]{FYZ3}. A commutative square induces a comparison map $\alpha$ to the fibered product
\begin{equation}\label{eq: pushable pullable square comparison}
\begin{tikzcd}[ampersand replacement=\&, column sep=large, row sep=large]
A \ar[rr] \ar[dd] \ar[dr, dashed] \& \& B \ar[dd] \\
\& C\times_D B \ar[ur] \ar[dl] \& \\
C \ar[rr] \& \& D
\end{tikzcd}
\end{equation}
and we say that it is pushable if the (dashed) comparison map is proper, and pullable if it is quasi-smooth. We say that a map of correspondences 
\begin{equation}\label{eq: pushable pullable correspondence map}
\begin{tikzcd}[ampersand replacement=\&, column sep=large, row sep=large]
A_0 \ar[d, "f_0"'] \& C \ar[l, "c_0"'] \ar[r, "c_1"] \ar[d, "f"] \& A_1 \ar[d, "f_1"] \\
B_0 \& D \ar[l, "d_0"] \ar[r, "d_1"'] \& B_1
\end{tikzcd}
\end{equation}
is \emph{pushable} if the left square is pushable, and \emph{pullable} if the right square is pullable. Following \cite[\S 3.1]{FYZ3}, we call the virtual relative dimension of the comparison map of a pullable square its \emph{defect}. Pullback of cohomological correspondences along a pullable map of correspondences carries a twist by the defect, which we track at each use.

\begin{lemma}\label{lem:pushable-and-pullable}
(1) The maps of correspondences
\[
\begin{tikzcd}
U_0 \ar[d, "f_0"] &
\Hk_U^\flat \ar[r, "a_r"] \ar[d, "f"] \ar[l, "a_0"'] &
U_r \ar[d, "f_r"] \\
V_0 &
\Hk_V^\flat \ar[l, "b_0"'] \ar[r, "b_r"] &
V_r
\end{tikzcd}
\qquad
\begin{tikzcd}
U_0^\perp \ar[d, "f_0^\perp"] &
\Hk_{U^\perp}^\flat \ar[r, "a_r^\perp"] \ar[d, "f^\perp"] \ar[l, "a_0^\perp"'] &
U_r^\perp \ar[d, "f_r^\perp"] \\
\wh V_0 &
\Hk_{\wh V}^\flat \ar[l, "\mbeta_0"'] \ar[r, "\mbeta_r"] &
\wh V_r
\end{tikzcd}
\]
are pullable.

(2) The maps of correspondences
\[
\begin{tikzcd}
U_0 \ar[d, "\pi_0"] &
\Hk_U^\flat \ar[r, "a_r"] \ar[d, "\pi"] \ar[l, "a_0"'] &
U_r \ar[d, "\pi_r"] \\
S_0 &
\Hk_S^r \ar[l, "h_0"'] \ar[r, "h_r"] &
S_r
\end{tikzcd}
\qquad
\begin{tikzcd}
U_0^\perp \ar[d, "\pi_0^\perp"] &
\Hk_{U^\perp}^\flat \ar[r, "a_r^\perp"] \ar[d, "\pi^\perp"] \ar[l, "a_0^\perp"'] &
U_r^\perp \ar[d, "\pi_r^\perp"] \\
S_0 &
\Hk_S^r \ar[l, "h_0"'] \ar[r, "h_r"] &
S_r
\end{tikzcd}
\]
are pushable.

(3) The maps of correspondences
\[
\begin{tikzcd}[ampersand replacement=\&]
V_0 \ar[d, "g_0"] \&
\Hk_V^\flat \ar[r, "b_r"] \ar[d, "g"] \ar[l, "b_0"'] \&
V_r \ar[d, "g_r"] \\
W_0 \&
\Hk_W^\flat \ar[l, "c_0"'] \ar[r, "c_r"] \&
W_r
\end{tikzcd}
\qquad
\begin{tikzcd}[ampersand replacement=\&]
\wh V_0 \ar[d, "g_0^\perp"] \&
\Hk_{\wh V}^\flat \ar[r, "\mbeta_r"] \ar[d, "g^{\bot}"] \ar[l, "\mbeta_0"'] \&
\wh V_r \ar[d, "g_r^\perp"] \\
W_0^\perp \&
\Hk_{W^\perp}^\flat \ar[l, "c_0^\perp"'] \ar[r, "c_r^\perp"] \&
W_r^\perp
\end{tikzcd}
\]
are pushable.

(4) The maps of correspondences
\[
\begin{tikzcd}[ampersand replacement=\&]
S_0 \ar[d, "z_{W_0}"] \&
\Hk_S^r \ar[r, "h_r"] \ar[d, "z_{\Hk_W^\flat}"] \ar[l, "h_0"'] \&
S_r \ar[d, "z_{W_r}"] \\
W_0 \&
\Hk_W^\flat \ar[l, "c_0"'] \ar[r, "c_r"] \&
W_r
\end{tikzcd}
\qquad
\begin{tikzcd}[ampersand replacement=\&]
S_0 \ar[d, "z_{W_0^\perp}"] \&
\Hk_S^r \ar[r, "h_r"] \ar[d, "z_{\Hk_{W^\perp}^\flat}"] \ar[l, "h_0"'] \&
S_r \ar[d, "z_{W_r^\perp}"] \\
W_0^\perp \&
\Hk_{W^\perp}^\flat \ar[l, "c_0^\perp"'] \ar[r, "c_r^\perp"] \&
W_r^\perp
\end{tikzcd}
\]
are pullable.

\end{lemma}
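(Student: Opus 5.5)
The plan is to reduce every assertion to a statement about the comparison map of the relevant square. Each such square is a map between derived vector bundles over $\Hk_S^r$, or over its pullbacks, and these bundles are total spaces of perfect complexes built from $\cF_\bu^\flat$, $\cF_0$ and $\cF_r$. Taking total spaces converts maps of perfect complexes into maps of derived stacks. By \cite[Lemma 6.1.5]{FYZ3}:
\begin{itemize}
\item a map of perfect complexes whose cofiber has tor-amplitude in $[1,\infty)$ yields a closed embedding, hence a proper map;
\item a map whose cofiber is concentrated in tor-amplitude $[-1,\infty)$, or more simply whose fiber is a vector bundle placed in degree $0$, yields a quasi-smooth map.
\end{itemize}
So for each square I will write the comparison map to the fibered product as $\Tot$ of a map of complexes over $\Hk_S^r$, compute its cone via the triangles \eqref{eq: bestiary base triangles} and \eqref{eq: bestiary flat sharp triangle}, and read off the amplitude. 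By the duality exchanging the two diagrams in \eqref{eq: big diagram for E_1}, the $\perp$-versions are proved identically once the relevant triangles are swapped. In each part I will therefore treat only the left-hand diagram.

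For (2), the left square has comparison map $\Hk_U^\flat \to h_0^*U_0=\wt U_0$, which is $\wt a_0$. It is induced by $\ul{\RHom((\cF_\bu^\flat)^*,\cE_1^*)}\leftarrow\ul{\RHom(\cF_0^*,\cE_1^*)}$ (up to the variance conventions of \cite[\S9.2]{FYZ3}). The cofiber is $\RHom$ out of the cone of $\cF^\flat_{1/2}\to\cF_0$, which is a torsion sheaf supported on the legs; pairing it with a vector bundle gives $\Ext^1$ only. This is exactly the situation of the proof of Lemma \ref{lem: hecke quasismooth}, but with the opposite shift. I expect the cone to sit in degree $1$, which makes $\wt a_0$ a closed embedding and the square pushable. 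Part (3) is identical with $\cE_1^*$ replaced by $Q_1$ and $\sigma^*\cE_2[1]$. For the $V$-term one needs care, because $\RHom$ of a torsion sheaf into the torsion sheaf $Q_1$ has both $\Hom$ and $\Ext^1$. However, pushability of the left square in (3) only requires properness of $\Hk_V^\flat\to \wt V_0\times_{\wt W_0}\Hk_W^\flat$. Comparing fibers along the Cartesian diamonds of \eqref{eq: three squares 1}, this comparison is a base change of $\Hk_U^\flat\to\wt U_0$, which we have already shown to be a closed embedding.

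For (1) and (4), the right squares are treated similarly. In (4), the fibered product $S_r\times_{W_r}\Hk_W^\flat$ is the derived zero fiber of $c_r$, and the comparison map $\Hk_S^r\to$ it is the zero section of $\Tot$ of the fiber of $\Hk_W^\flat\to\wt W_r$, shifted by $[-1]$. Since that fiber is a vector bundle in degree $0$, or $\Ext^1$ of torsion against $\sigma^*\cE_2[1]$ landing in degree $0$, the zero section is quasi-smooth. For (1), the parallelograms in \eqref{eq: big diagram for E_1} are derived Cartesian, so $\Hk_U^\flat\cong \wt U_r\times_{\wt V_r}\Hk_V^\flat$ up to the factor $\Hk_W^\flat$. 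The comparison map for the right square of (1) is then identified with a base change of $z_{\Hk_W^\flat}$ relative to $z_{\wt W_r}$, namely the comparison map of (4). Since quasi-smoothness is stable under base change, (1) follows from (4).

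The main obstacle is the amplitude bookkeeping in (3)–(4). There the torsion sheaf $Q_1$ interacts with the torsion cone of $\cF^\flat\to\cF_0$, and $\RHom$ between two torsion sheaves is not concentrated in a single degree. This is exactly the ``wildness'' mentioned in \S\ref{ssec:departure}. To avoid computing it directly, I will deduce properness and quasi-smoothness from the $U$- and $W$-rows, where one argument is a vector bundle and the amplitude is clean, and then transfer the result through the derived Cartesian diamonds and parallelograms.
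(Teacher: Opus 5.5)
Your proposal is correct in substance, but it organizes the four parts differently from the paper.

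\textbf{The paper's route.} It uses a single geometric input: $\wt c_r\co\Hk_W^\flat\to\wt W_r$ is a vector bundle projection \cite[Lemma 9.1.3(3)]{FYZ3}. This gives (4), and then (1) follows by derived base change, exactly as in your argument. The paper next obtains (3) from (1) formally, using that Fourier dualization exchanges pullable and pushable maps of correspondences \cite[Lemma 7.2.1]{FYZ3}. Finally it gets (2) from (3) by base change: $\wt a_0$ is the derived base change of the left comparison map of (3) along the zero section of $\Hk_W^\flat$, as in \eqref{eq:US-VW-left-comparison-basechange}.

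\textbf{Your route.} You use a second amplitude computation, for $\wt a_0$ (this is \cite[Lemma 9.1.3(1)]{FYZ3}), to prove (2) directly. You then transfer this to (3), and you handle each $\perp$-half by the same computation instead of by duality. This avoids the duality lemma entirely and yields the stronger conclusion that the left comparison maps are closed embeddings. The paper's route needs one fewer geometric input, and it exhibits the pushable/pullable exchange under Fourier duality that drives the rest of the argument.

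\textbf{The step in (3) that needs repair.} You say the comparison map $\Hk_V^\flat\to\wt V_0\times_{\wt W_0}\Hk_W^\flat$ is a base change of $\wt a_0$. The Cartesian square goes the other way: $\wt a_0$ is the base change of the (3) comparison map, which is precisely how the paper gets (2) from (3). Properness does not ascend along base change, so this sentence as written does not prove (3).

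The correct justification is the one your phrase ``comparing fibers'' points to. Both maps are $\Tot$ of linear maps of complexes over $\Hk_S^r$, and they have the same fiber $\cK=\operatorname{fib}(\cU^\flat_{\Hk}\to h_0^*\cU)$. This holds because the map $\cV^\flat_{\Hk}\to h_0^*\cV\times_{h_0^*\cW}\cW^\flat_{\Hk}$ lies over $\cW^\flat_{\Hk}$. The complex $\cK$ sits in degree $1$. Hence both maps are base changes of the zero section of $\Tot(\cK[1])$, which is a closed embedding; equivalently, \cite[Lemma 6.1.5]{FYZ3} applies to each map directly.

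Two smaller points. First, that criterion asks that the \emph{fiber} have tor-amplitude in $[1,\infty)$, not the cofiber. Your ``$\RHom$ out of the torsion cone'' is in fact the fiber, so the computation comes out right despite the mislabeling. Second, the squares relevant to (3), and to the identifications in (1), are the vertical zero-fiber squares \eqref{eq: UVW zero fibers} and \eqref{eq: Hk UVW zero fibers}, not the $\flat/\sharp$ diamonds.
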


\begin{proof}

(4) We focus first on the left map of correspondences. By definition, we need to show pullability of the right square, in other words the map from $\Hk_S^r$ to the fibered product of $\Hk_W^\flat$ and $S_r$ over $W_r$ is quasi-smooth. The canonical base-change isomorphism
\[
\Hk_W^\flat\times_{W_r}S_r
\cong
\Hk_W^\flat\times_{\wt W_r}\Hk_S^r
\]
identifies its comparison map with the comparison map of the diagram
\begin{equation}\label{eq:SW-right-pullable-check}
\begin{tikzcd}
\Hk_S^r \ar[r, "\Id"] \ar[d, "z_{\Hk_W^\flat}"'] & \Hk_S^r \ar[d, "z_{\wt W_r}"] \\
\Hk_W^\flat \ar[r, "\wt c_r"'] & \wt W_r 
\end{tikzcd}
\end{equation}
It therefore suffices to show that \eqref{eq:SW-right-pullable-check} is right pullable. The morphism $\wt c_r:\Hk_W^\flat\to\wt W_r$ is smooth (a vector bundle projection) by \cite[Lemma 9.1.3(3)]{FYZ3}, while $\Id_{\Hk_S^r}$ is smooth. Hence \eqref{eq:SW-right-pullable-check} is pullable by \cite[Example 3.1.3]{FYZ3}. The same argument applies to the other map of correspondences, finishing the proof of (4).

We next deduce (1) from (4). By symmetry it suffices to handle the left map of correspondences. We similarly reduce to pullability of the diagram 
\begin{equation}\label{eq:UV-right-pullable-check}
\begin{tikzcd}
\Hk_U^\flat \ar[r, "\wt a_r"] \ar[d, "f"'] & \wt U_r \ar[d, "\wt f_r"] \\
\Hk_V^\flat \ar[r, "\wt b_r"'] & \wt V_r .
\end{tikzcd}
\end{equation}
We have derived Cartesian squares
\begin{equation}\label{eq:UV-SW-basechange-squares}
\begin{tikzcd}[ampersand replacement=\&, column sep=large, row sep=large]
\Hk_U^\flat \ar[r, "\pi"] \ar[d, "f"'] \& \Hk_S^r \ar[d, "z_{\Hk_W^\flat}"] \\
\Hk_V^\flat \ar[r, "g"'] \& \Hk_W^\flat
\end{tikzcd}
\qquad
\begin{tikzcd}[ampersand replacement=\&, column sep=large, row sep=large]
\wt U_r \ar[r] \ar[d, "\wt f_r"'] \& \Hk_S^r \ar[d, "z_{\wt W_r}"] \\
\wt V_r \ar[r, "\wt g_r"'] \& \wt W_r .
\end{tikzcd}
\end{equation}
The first square in \eqref{eq:UV-SW-basechange-squares} identifies $\Hk_U^\flat$ with $\Hk_V^\flat\times_{\Hk_W^\flat}\Hk_S^r$, and the second identifies $\wt U_r$ with $\wt V_r\times_{\wt W_r}\Hk_S^r$. Hence the comparison map of \eqref{eq:UV-right-pullable-check} is the derived base change of the comparison map of \eqref{eq:SW-right-pullable-check}, and quasi-smoothness is stable under derived base change.

(3) follows from (1) by duality: the first map of correspondences in (1) is dual to the second map in (3), and the second map in (1) is dual to the first map in (3). Indeed, dualization exchanges the $\flat$- and $\sharp$-vertices. It also exchanges pullability and pushability by \cite[Lemma 7.2.1]{FYZ3} in its varying-base form (see the end of \cite[\S 7.2]{FYZ3}).

(2) is deduced from (3) by base change, similarly to how (1) was deduced from (4). Indeed, for the left square in (2), the comparison map $\wt a_0\co\Hk_U^\flat\to\wt U_0$ is the top horizontal arrow in the Cartesian square
\begin{equation}\label{eq:US-VW-left-comparison-basechange}
\begin{tikzcd}[ampersand replacement=\&, column sep=large, row sep=large]
\Hk_U^\flat \ar[r, "\wt a_0"] \ar[d, "f"'] \& \wt U_0 \ar[d] \\
\Hk_V^\flat \ar[r] \& \Hk_W^\flat\times_{W_0}V_0 .
\end{tikzcd}
\end{equation}
The lower horizontal arrow in \eqref{eq:US-VW-left-comparison-basechange} is the left-square comparison map for (3), so $\wt a_0$ is its derived base change along $z_{\Hk_W^\flat}$. We conclude using that properness is stable under derived base change.
\end{proof}

In the proof of Lemma \ref{lem:pushable-and-pullable}, we showed that the map of correspondences
\[
\begin{tikzcd}
\wt{U}_0 \ar[d] & \Hk_U^\flat \ar[l, "\wt a_0"'] \ar[r, "\wt a_r"] \ar[d] & \wt{U}_r \ar[d] \\
 \wt{V}_0 &  \Hk_V^\flat \ar[l, "\wt b_0"'] \ar[r, "\wt b_r"] &  \wt{V}_r
 \end{tikzcd}
\]
is pullable. 

\begin{lemma}\label{lem: V-to-S pullable}
The map of correspondences 
\[
\begin{tikzcd}
 \wt{V}_0 \ar[d] &  \Hk_V^\flat \ar[l, "\wt b_0"'] \ar[r, "\wt b_r"] \ar[d]   &  \wt{V}_r\ar[d]  \\
\Hk_S^r  & \Hk_S^r \ar[l, "\Id"'] \ar[r, "\Id"]  & \Hk_S^r
 \end{tikzcd}
\]
is pullable. Consequently, the map of correspondences
\[
\begin{tikzcd}
\wt{U}_0 \ar[d] & \Hk_U^\flat \ar[l, "\wt a_0"'] \ar[r, "\wt a_r"] \ar[d] & \wt{U}_r \ar[d] \\
\Hk_S^r &  \Hk_S^r \ar[l, "\Id"'] \ar[r, "\Id"] & \Hk_S^r
 \end{tikzcd}
\] 
is pullable. The same statements hold after replacing $\wt{U}_i$ (resp. $\wt{V}_i$) with $U_i$ (resp. $V_i$) and the bottom left (resp. bottom right) $\Hk_S^r$ with $S$.
\end{lemma}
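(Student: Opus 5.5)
We have to show that the right square of the first map of correspondences is pullable. That square is
\[
\begin{tikzcd}
\Hk_V^\flat \ar[r, "\wt b_r"] \ar[d] & \wt V_r \ar[d] \\
\Hk_S^r \ar[r, "\Id"'] & \Hk_S^r .
\end{tikzcd}
\]
The fibered product $\Hk_S^r\times_{\Hk_S^r}\wt V_r$ is just $\wt V_r$. So the comparison map is $\wt b_r\co\Hk_V^\flat\to\wt V_r$ itself, and it suffices to prove that $\wt b_r$ is quasi-smooth.

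\textbf{Quasi-smoothness of $\wt b_r$.} Both $\Hk_V^\flat$ and $\wt V_r$ are derived vector bundles over $\Hk_S^r$, namely $\Tot$ of $\cV^\flat_{\Hk}=\ul{\RHom((\cF^\flat_\bu)^*,Q_1)}$ and of $h_r^*\cV=\ul{\RHom(\cF_r^*,Q_1)}$. The map $\wt b_r$ is induced by the linear map of perfect complexes $\cV^\flat_{\Hk}\to h_r^*\cV$, which comes from $\cF^\flat_\bu\to\cF_r$.

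For a linear map $\Tot(\cA)\to\Tot(\cB)$ of derived vector bundles over a common base, the relative tangent complex is the pullback of $\mathrm{fib}(\cA\to\cB)$. Quasi-smoothness therefore amounts to $\mathrm{fib}(\cV^\flat_{\Hk}\to h_r^*\cV)$ having tor-amplitude in $[0,1]$, equivalently cotangent amplitude in $[-1,\infty)$.

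To compute this fiber, use the triangle \eqref{eq: bestiary flat sharp triangle} for the $r$-side. The cone of $\cF^\flat_\bu\to\cF_r$ is controlled by the successive modifications: it is built from $\cF_0$ and the torsion cokernels $\cF_{i}/\cF^\flat_{i-1/2}$. Applying $\RHom((-)^*,Q_1)$ then gives a fiber assembled from two kinds of terms:
\begin{itemize}
\item $\ul{\RHom(\cF_0^*,Q_1)}$, which sits in degree $0$ because $Q_1$ is torsion;
\item terms $\ul{\RHom}$ of length-one torsion sheaves into $Q_1$, which sit in degrees $0,1$ by the analogue of \cite[Lemma 9.1.3]{FYZ3}.
\end{itemize}
Hence the fiber is perfect with amplitude in $[0,1]$, and $\wt b_r$ is quasi-smooth.

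I expect this amplitude count to be the main obstacle. Alternatively, one can argue as in the proof of Lemma \ref{lem:pushable-and-pullable}(4): $\wt b_r$ should be identified with a base change of maps already shown smooth or quasi-smooth. Concretely, the fiber of $\wt g$, $\wt c_r$ and the triangle $\cU\to\cV\to\cW$ combine to express $\mathrm{fib}(\wt b_r)$ through $\mathrm{fib}(\wt c_r)$, which is smooth by \cite[Lemma 9.1.3(3)]{FYZ3}, and $\mathrm{fib}(\wt a_r)$, which is quasi-smooth by Lemma \ref{lem: hecke quasismooth}. I would try this route first, since it reuses established inputs.

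\textbf{The consequence for $\wt U$.} The second map of correspondences is the composite of the pullable map from the $\wt U$-row to the $\wt V$-row, established in the proof of Lemma \ref{lem:pushable-and-pullable}, with the map just treated. Pullable maps compose: the composite comparison map factors as a base change of one comparison map followed by the other, and quasi-smoothness is stable under both base change and composition. Hence the composite is pullable.

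\textbf{The untilded version.} For $U_i,V_i$ over $S$, the right square becomes $\Hk_V^\flat\to V_r$ over $\Hk_S^r\to S_r$. Its comparison map $\Hk_V^\flat\to\Hk_S^r\times_{S_r}V_r=\wt V_r$ is again $\wt b_r$, so the same argument applies. The left-hand versions follow symmetrically.
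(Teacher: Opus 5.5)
Your proof is correct and follows the paper's own argument. Pullability of the right square is exactly quasi-smoothness of $\wt b_r$, which the paper simply cites from \cite[Lemma 9.1.3(2)]{FYZ3}. The $\wt U$-row then follows by composing with the pullable $\wt U\to\wt V$ map, and the untilded version holds because $\Hk_S^r\times_{S_r}V_r=\wt V_r$.

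Of your two justifications of that quasi-smoothness, the sound one is the triangle of relative tangent complexes of $\wt a_r,\wt b_r,\wt c_r$ induced by $\cU\to\cV\to\cW$. In your first sketch, $\mathrm{cone}(\cF^\flat_\bu\to\cF_r)$ has no $\cF_0$ term. It is an iterated extension of the torsion cokernels $\cF_i/\cF^\flat_{i-1/2}$, so the fiber is $\ul{\RHom(T_r,Q_1)}$ with $T_r$ torsion, consistent with virtual relative dimension $0$. The amplitude bound $[0,1]$ survives regardless.
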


\begin{proof} 
The last sentence follows immediately from the definition of pullability, as in the proof of Lemma \ref{lem:pushable-and-pullable}. 

The second assertion follows from stability of pullable maps under composition (two-out-of-three for pullable squares, \cite[\S 3.1]{FYZ3}). We now focus on proving the first statement. This amounts to showing that the square 
\[
\begin{tikzcd}
\Hk_V^\flat \ar[r, "\wt b_r"] \ar[d] & \wt{V}_r \ar[d] \\
\Hk_S^r \ar[r, "\Id"'] &  \Hk_S^r
\end{tikzcd}
\]
is pullable, which is equivalent to $\wt b_r$ being quasi-smooth. This holds by \cite[Lemma 9.1.3(2)]{FYZ3}. 

\end{proof}

\begin{lemma}\label{lem: V-row endpoint maps vdim zero}
For each $i$, the maps 
\[
\wt b_i\co \Hk_V^\flat\longrightarrow \wt V_i
\]
of \eqref{eq: big diagram for E_1} are quasi-smooth of virtual relative dimension $0$.
\end{lemma}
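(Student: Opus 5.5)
The plan is to compute the relative tangent complex of $\wt b_i$ directly from the derived vector bundle descriptions, and read off both quasi-smoothness and the virtual rank. We already know quasi-smoothness for $i=r$ from \cite[Lemma 9.1.3(2)]{FYZ3}, as used in Lemma \ref{lem: V-to-S pullable}. The case $i=0$ should follow by the symmetric argument, with the roles of the two ends of the Hecke chain exchanged. So the substance is the dimension count.

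Both $\Hk_V^\flat$ and $\wt V_i$ are derived vector bundles over $\Hk_S^r$, and $\wt b_i$ is the linear map induced by a map of perfect complexes. Concretely, $\cV^\flat_{\Hk}=\ul{\RHom((\cF_\bu^\flat)^*,Q_1)}\to \ul{\RHom(\cF_i^*,Q_1)}$, induced by the $i$th component $\cF_\bu^\flat\to\cF_i$ of the map in \eqref{eq: bestiary flat sharp triangle}. The relative tangent complex of a linear map $\Tot(\cA)\to\Tot(\cB)$ is the pullback of the fiber of $\cA\to\cB$. Hence
\[
d(\wt b_i)=\chi(\cV^\flat_{\Hk})-\chi(h_i^*\cV).
\]
Since $Q_1$ is torsion, for any vector bundle (or perfect complex) $\cP$ on $X'$ we have $\chi(\RHom(\cP^*,Q_1))=\rank(\cP)\cdot \length(Q_1)$. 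This holds because $\RHom(\cP^*,Q_1)=R\Gamma(\cP\otimes Q_1)$, and that complex has Euler characteristic $\rank(\cP)\cdot\length(Q_1)$, independent of degree.

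Now I would compute the ranks. The complex $\cF_\bu^\flat$ has virtual rank $rn-(r-1)n=n$, since its $r$ terms $\cF_{j-1/2}^\flat$ sit in degree $0$ and its $r-1$ terms $\cF_j$ sit in degree $1$. Each $\cF_i$ also has rank $n$. Therefore both Euler characteristics equal $n\cdot\length(Q_1)$, and $d(\wt b_i)=0$. For $r=0$ the map is the identity, consistent with the convention $\cF_\bu^\flat=\cF_0$.

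The one real check is quasi-smoothness, i.e.\ that the fiber of $\cV^\flat_{\Hk}\to h_i^*\cV$ has tor-amplitude in $[-1,\infty)$. In the relevant setup this should mean that the cone of $\cF_\bu^\flat\to\cF_i$ is a torsion-type complex, because the modifications are isomorphisms generically. Tensoring a torsion-type complex with the torsion sheaf $Q_1$ produces $\mathcal{T}or$ terms in one degree lower, and bounding that amplitude is where care is needed. I would simply cite \cite[Lemma 9.1.3(2)]{FYZ3} for both endpoints rather than redo it; only the Euler characteristic computation above is new.
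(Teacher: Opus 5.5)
Your proposal is correct and follows essentially the paper's argument: quasi-smoothness is cited from \cite[Lemma 9.1.3(2)]{FYZ3}, and the virtual relative dimension comes from an Euler characteristic count on the relative tangent complex. The paper organizes the count as $\chi_{\mathrm{Eul}}(X',\RHom(T_i,Q_1))=0$ for the torsion cofiber $T_i$ of $\cF_i^*\to\cF_\bu^{\flat *}$, whereas you compute it as a difference of two Euler characteristics, each equal to $n\cdot\length(Q_1)$; the two are equivalent by additivity.
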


\begin{proof}
The quasi-smoothness is \cite[Lemma 9.1.3(2)]{FYZ3}. For the relative dimension, recall that over an $R$-point $(\cF_\star)$ of $\Hk_S^r$, the map $\wt b_i$ is induced by the map $\cF_i^*\longrightarrow \cF_{\bu}^{\flat *}$. Write $\cF_{\univ,\bu}^{\flat}$ for the universal complex whose
pullback at $(\cF_\star)$ is $\cF_\bu^\flat$, and define the universal
perfect complex
\[
T_{i,\univ}:=\operatorname{cofib}
\bigl(h_i^*\cF_{\univ}^*\longrightarrow
\cF_{\univ,\bu}^{\flat *}\bigr)
\quad\text{in }\Perf(X'\times\Hk_S^r),
\]
and let $T_i$ be its pullback to $X'_R$.  At a classical $R$-point of
$\Hk_S^r$, $T_i$ is a torsion sheaf supported at the legs.
The relative tangent complex of $\wt b_i$ is the pullback from $\Hk_S^r$ of
$\ul{\RHom(T_{i,\univ},Q_1)}$. The virtual relative dimension may be checked at geometric points.
There Riemann--Roch gives
$\chi_{\mathrm{Eul}}(X',\RHom(T_i,Q_1))=0$, because both $T_i$ and $Q_1$ are torsion.
\end{proof}

\begin{lemma}\label{lem: one-leg V-row endpoint maps lci}
Assume $r=1$.  For $i=0,r$, the maps
\[
\wt b_i\co \Hk_V^\flat\longrightarrow \wt V_i
\]
are LCI morphisms of virtual relative dimension $0$.
\end{lemma}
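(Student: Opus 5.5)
The plan is to upgrade the quasi-smoothness of Lemma~\ref{lem: V-row endpoint maps vdim zero} to an honest LCI statement. The tool is an explicit two-term resolution of the tangent complex, available when $r=1$, followed by a dimension count on the classical truncation. Write $T_{i,\univ}$ as in the proof of Lemma~\ref{lem: V-row endpoint maps vdim zero}. For one leg, $T_{i,\univ}$ is (up to shift) a line bundle supported on a single graph, $\Gamma_{x'}$ or $\Gamma_{\sigma x'}$ depending on $i$. Zariski locally on $\Hk_S^1$ we therefore have a resolution $T_{i,\univ}\simeq[\cL(-\Gamma)\to\cL]$ by line bundles on $X'\times\Hk_S^1$.

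\emph{Step 1: present $\wt b_i$ as a derived zero locus.} Since $Q_1$ is a torsion sheaf pulled back from $X'$, it is finite flat over the base. Hence
\[
\ul{\RHom(T_{i,\univ},Q_1)}\simeq\bigl[\,\pi_*(Q_1\otimes\cL^{*})\longrightarrow\pi_*(Q_1\otimes\cL(-\Gamma)^{*})\,\bigr]
\]
is a two-term complex of vector bundles $\cA\to\cB$ of equal rank $N=\length(Q_1)$; this recovers the virtual dimension $0$. Using the fiber sequence defining $\wt b_i$, I would identify $\Hk_V^\flat$, locally over $\wt V_i$, with the derived zero locus of a section $s$ of $\cB$ pulled back to $\Tot(\cA)\to\wt V_i$. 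Thus $\wt b_i$ factors as a smooth map of relative dimension $N$ followed by a derived zero locus of $N$ equations. Such a morphism is LCI of virtual dimension $0$, with derived structure equal to the classical one, exactly when $s$ is a Koszul-regular sequence.

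\emph{Step 2: regularity via dimension.} I would check regularity fiberwise over $\Hk_S^1$, where everything is a linear-algebra problem. Over a point of $\Hk_S^1$, the section $s$ is affine-linear in the fiber coordinates of $\Tot(\cA)\times_{\Hk_S^1}\wt V_i$. Its linear part is the map $\Hom(\cL,Q_1)\to\Hom(\cL(-\Gamma),Q_1)$, whose kernel is $\Hom(T_i,Q_1)$. This kernel vanishes when the leg avoids $\operatorname{supp}Q_1$ and has dimension $\le 1$ when it meets the support. I would stratify $\Hk_S^1$ by whether the relevant leg ($x'$ or $\sigma x'$) lies in the finite set $\operatorname{supp}Q_1$. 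This incidence locus has codimension $1$, because the leg map $\Hk_S^1\to X'$ is smooth and surjective and the support is finite. The one-dimensional excess kernel on this locus is then compensated by the codimension, so the zero locus of $s$ inside the ambient space, which is smooth over $\wt V_i$, has codimension exactly $N$ everywhere. Since $\Tot(\cA)$ is flat over $\Hk_S^1$ and $s$ is affine-linear in the fiber coordinates of the ambient space over $\Hk_S^1$, codimension $N$ should give regularity of $s$ by the standard criterion: a sequence cutting out the expected codimension in a Cohen--Macaulay ambient is regular. One may need to first reduce to the linear-algebra model over $\Hk_S^1$, which is smooth over $\Bun_{U(n)}$.

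\emph{Main obstacle.} The delicate point is Step 2. The base $\wt V_i$ is itself only a derived vector bundle over $\Hk_S^1$, not obviously Cohen--Macaulay, so I cannot naively apply ``expected codimension implies regular'' on $\Tot(\cA)$ over $\wt V_i$. The fix I would try first is to work relative to the smooth stack $\Hk_S^1$. Present both $\wt V_i$ and $\Hk_V^\flat$ as derived zero loci of sections of vector bundles on smooth total spaces over $\Hk_S^1$, using the global presentations of \cite[\S 9.3]{FYZ3}. Then compare their classical dimensions via the stratification above, and conclude by the two-out-of-three property for LCI morphisms, noting that $\wt b_i$ is the quasi-smooth map between them. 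If this fails on the incidence stratum, the fallback is a direct local computation there: the kernel $\Hom(T_i,Q_1)$ is at most a line, so a single equation needs to be checked to be a nonzerodivisor.
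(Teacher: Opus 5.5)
Your argument has a genuine gap in Step 2. It is also missing the one observation on which the paper's proof rests. For $r=1$ the complex $\cF_\bu^\flat$ is just the vector bundle $\cF_{1/2}^\flat$ in degree $0$. So both $\ul{\RHom((\cF_{1/2}^\flat)^*,Q_1)}$ and $\ul{\RHom(\cF_i^*,Q_1)}$ are locally free sheaves in degree $0$ of rank $n\cdot\length(Q_1)$: the sources are locally free and $Q_1$ is torsion, so there is no higher cohomology and formation commutes with base change. Hence $\Hk_V^\flat$ and $\wt V_i$ are \emph{classical} vector bundles over the smooth stack $\Hk_S^1$, and $\wt b_i$ is a linear map between them. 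Any such map is LCI: factor it as the graph embedding $E\hookrightarrow E\times_{\Hk_S^1}E'$, which is a section of a smooth map, followed by the smooth projection to $E'$. Equal ranks give virtual relative dimension $0$. In particular $\wt V_i$ is smooth, so the Cohen--Macaulay worry in your ``main obstacle'' is unfounded, and no regularity check is needed.

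The dimension count in Step 2 is wrong in two ways.
\begin{itemize}
\item $\Hom(T_i,Q_1)$ is the $\mathfrak m$-socle of the stalk of $Q_1$ at the leg. Its dimension can be as large as $m$: for transverse Lagrangians, $Q_1$ can be locally $k(x')^{\oplus m}$. So your bound ``$\le 1$'' fails for $m\ge 2$. An $m$-dimensional excess on a codimension-one locus of $\Hk_S^1$ would not be absorbed, and your argument would not close.
\item In fact there is no excess at all, because you only looked at the $\cA$-direction of the linear part. Over each point of $\Hk_S^1$, the full linear map $(v,a)\mapsto s_0(v)-d(a)$ on $\wt V_i\oplus\cA$ surjects onto $\cB$. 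The reason is that $\Ext^1((\cF_{1/2}^\flat)^*,Q_1)=0$ forces $\Hom(\cF_i^*,Q_1)\to\Ext^1(T_i,Q_1)=\operatorname{coker}(d)$ to be onto. Thus the kernel and cokernel of $d$ cancel fiberwise.
\end{itemize}
Your zero locus is therefore smooth over $\Hk_S^1$ of relative dimension $n\cdot\length(Q_1)$. This is just a restatement of the vector bundle observation above. Once you see it, you can drop the derived-zero-locus presentation, the stratification by incidence with $\operatorname{supp}Q_1$, and the fallback entirely.
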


\begin{proof}
When $r=1$, the complex $\cF_\bu^\flat$ in \eqref{eq: bestiary flat sharp triangle} is the vector bundle $\cF_{1/2}^\flat$ placed in degree $0$.  Thus $(\cF_\bu^\flat)^*=(\cF_{1/2}^\flat)^*$ is also a vector bundle.  Since $Q_1$ is torsion, the relative Hom complexes
\[
\ul{\RHom\bigl((\cF_\bu^\flat)^*,Q_1\bigr)}
\quad\text{and}\quad
\ul{\RHom(\cF_i^*,Q_1)}
\]
on $\Hk_S^1$ are represented by locally free coherent sheaves in degree $0$.  Hence $\Hk_V^\flat$ and $\wt V_i$ are ordinary vector bundles over $\Hk_S^1$, and $\wt b_i$ is a morphism of vector bundles over $\Hk_S^1$. A map of vector bundles over the same base is always LCI, since it can be factored as the composition of a regular embedding into its graph, followed by a smooth projection. Both bundles here are locally free of the same rank $n \cdot \length(Q_1)$, so the virtual relative dimension is $0$.
\end{proof}

\subsection{Gaussians}\label{ssec: gaussians for E1}

Abbreviate $\wt S:= \Hk_S^r$, $V^\flat:=\Hk_V^\flat$ and $V^\sh:=\Hk_V^\sharp$.

For $i=0,r$, let
$\wt\pi_i\co\wt V_i\to \wt S$ be the projection to the base of the derived vector bundle, and let $h_i^V\co\wt V_i\to V_i\cong V$ be the base change of $h_i\co \wt S \to S$.  

We have a derived Cartesian diagram of vector bundles over $\wt S$.
\begin{equation}\label{eq: Gaussian self-dual square}
\begin{tikzcd}[ampersand replacement=\&]
\& V^\flat \ar[dl, "\wt b_0"'] \ar[dr, "\wt b_r"] \\
\wt V_0 \ar[dr, "\wt b_r'"'] \& \& \wt V_r \ar[dl, "\wt b_0'"] \\
\& V^\sh .
\end{tikzcd}
\end{equation}

Following \cite[\S 2]{FYZ3}, the torsion sheaf $Q_1$ carries a canonical Hermitian structure. In the notation of \emph{loc.\ cit.}\ one has $Q_1=\sigma^*Q$, and the pullback of the Hermitian structure $h_{12}\co Q\xrightarrow{\sim}\sigma^*Q^*$ is
\[
\sigma^*h_{12}\co Q_1=\sigma^*Q\xrightarrow{\sim}Q^*\cong\sigma^*Q_1^*,
\qquad Q^*:=\cExt^1_{X'}(Q,\cO_{X'}).
\]
We continue to denote this transported form on $Q_1$ by $h_{12}$. (There are two natural choices $h_{12}$ and $h_{21}$, related by $h_{12} = -h_{21}$ \cite[\S 2]{FYZ3}; we fix the normalization $h_{12}$ throughout, following \cite{FYZ3}.) Tensoring $h_{12}$ with the Hermitian structure of $\cF_{\univ}$ and applying relative Serre duality along $X'\times S\rightarrow S$ produces a symmetric self-duality
\[
h_V\co V\xrightarrow{\ \sim\ }\wh V,
\]
whose associated quadratic function on $\F_q$-points is the quadratic form $\frq_{12}$ of \cite[\S 2]{FYZ3}.

We single out the features that we will use.
\begin{itemize}
\item The symmetric self-duality $h_V\co V\xrightarrow{\sim}\wh V$ pulls back to symmetric self-dualities
\[
h_{V,i}\co\wt V_i\xrightarrow{\sim}\wh{\wt V_i}
\]
over $\wt S$, for $i=0,r$.
\item There is an identification $V^\flat\xrightarrow{\sim}\wh{V^\sh}$. Thus, the diagram \eqref{eq: Gaussian self-dual square} is self-dual. 
\item With respect to these identifications, $\wt b_r'$ is the dual morphism to $\wt b_0$, and $\wt b_0'$ is the dual morphism to $\wt b_r$. (These three assertions are established in \cite[\S 9.1--9.2]{FYZ3}, transported along the self-duality $h_V$ defined above.)
\item The diagram \eqref{eq: Gaussian self-dual square}, the tensoring square \eqref{eq: tensoring-base-change-square} with $f=\wt b_r$, and their Fourier-dual diagrams are globally presented in the sense of \cite[Definition 6.3.1]{FYZ3}.
\item The morphism $\wt b_r$ is quasi-smooth of virtual relative dimension $0$ by Lemma \ref{lem: V-row endpoint maps vdim zero}, and $\wt b'_0$ is separated, so that the relative derived fundamental class $[\wt b_r]\co\wt b_r^*\to\wt b_r^!$ and the forget-supports map $\can(\wt b'_0)\co\wt b'_{0!}\to\wt b'_{0*}$ are defined.
\end{itemize}
Write $d(V/S)$ for the virtual relative dimension of $V\to S$, equivalently of either $\wt\pi_i\co\wt V_i\to \wt S$.

\subsubsection{Gaussian sheaves}
Let $\AS_{\psi_0}\in\Dmot{\A^1_{\F_q}}$ be the motivic Artin--Schreier sheaf of \eqref{eq:motivic-AS-sheaf}, and let $\AS_{\psi,S}$ and $\AS_{\psi,\wt S}$ denote its pullbacks to $\A^1_S$ and $\A^1_{\wt S}$.  Applying Construction \ref{constr:motivic-Gaussian-self-duality} to $h_V$ gives
\[
\beta:=\beta_{h_V}\co V\longrightarrow \A^1_S,
\qquad
\beta^\vee:=\beta_{h_V}^{\vee}\co \wh V\longrightarrow \A^1_S .
\]
Set
\[
\sG_\beta:=\beta^*\AS_{\psi,S}\in\Dmot{V},
\qquad
\sG_{-\beta}:=(-\beta)^*\AS_{\psi,S}\in\Dmot{V}.
\]
For $i=0,r$, let
\[
\beta_i:=(h_i^V)^*\beta\co\wt V_i\longrightarrow\A^1_{\wt S}.
\]
Equivalently, $\beta_i$ is the quadratic function induced by the pulled-back self-duality $h_{V,i}$.  Let $\beta_i^\vee$ denote the corresponding dual quadratic function supplied by Construction \ref{constr:motivic-Gaussian-self-duality}.

\begin{lemma} We have 
\begin{equation}\label{eq: Gaussian beta compatibility}
\beta_0\circ\wt b_0=\beta_r\circ\wt b_r
\quad\text{as morphisms }V^\flat\to\A^1_{\wt S}.
\end{equation}
\end{lemma}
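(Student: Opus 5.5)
We must show $\beta_0\circ\wt b_0=\beta_r\circ\wt b_r$ on $V^\flat$. The plan is to reduce this to a formal identity about the self-dual Cartesian diamond \eqref{eq: Gaussian self-dual square}. By Construction \ref{constr:motivic-Gaussian-self-duality}, $\beta_i$ is the composite
\[
\wt V_i\xrightarrow{(h_{V,i},\Id)}\wh{\wt V_i}\times_{\wt S}\wt V_i\xrightarrow{\langle\,,\,\rangle}\A^1_{\wt S}.
\]
So for a point $v\in V^\flat$ we have $\beta_i(\wt b_i v)=\langle h_{V,i}\wt b_i v,\wt b_i v\rangle$.

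The first step is to use the self-duality of the diamond. Under the identification $V^\flat\cong\wh{V^\sh}$, the dual of $\wt b'_r$ is $\wt b_0$, and the dual of $\wt b'_0$ is $\wt b_r$, with $h_{V,i}$ as the identifications. The compatibility to establish is that $h_V$ on $V^\flat$ intertwines the two composites:
\[
\wt b_0^{\vee}\circ h_{V,0}\circ \wt b_0 \;=\; \wt b_r^{\vee}\circ h_{V,r}\circ \wt b_r \;\colon\; V^\flat\to \wh{V^\flat}.
\]
Equivalently, the induced symmetric form on $V^\flat$ is the same whether it is pulled back from $\wt V_0$ or from $\wt V_r$. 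Pairing this identity with $v$ gives \eqref{eq: Gaussian beta compatibility} directly.

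To prove the intertwining I would work at the level of the underlying complexes on $X'\times\wt S$. Here $V^\flat=\Tot\,\ul{\RHom((\cF^\flat_\bu)^*,Q_1)}$, and the maps $\wt b_i$ are induced by $\cF_i^*\to(\cF_\bu^\flat)^*$, coming from the inclusions $\cF^\flat_{1/2}\subset\cF_0$ and $\cF^\flat_{r-1/2}\subset\cF_r$ through the chain. The form $\beta_i$ is built by tensoring $h_{12}$ on $Q_1$ with the Hermitian form $h_i$ on $\cF_i$, then taking the Serre duality trace. The key input is that the Hecke modifications $f_i$ are isometries generically. Consequently, for consecutive $\cF_{j-1}\supset\cF^\flat_{j-1/2}\subset\cF_j$, the pullbacks of $h_{j-1}$ and $h_j$ to $\cF^\flat_{j-1/2}$ agree as maps $\cF^\flat_{j-1/2}\to\sigma^*(\cF^\flat_{j-1/2})^\vee$. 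Telescoping along the chain identifies the two forms on the degree-$0$ part of $\cF_\bu^\flat$. In the derived direction the differential is $s_{j-1/2}-s_{j+1/2}$, so the forms coincide on the cohomology represented by $V^\flat$.

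There is a cleaner alternative that avoids complexes. Since \eqref{eq: Gaussian self-dual square} is Cartesian and self-dual, $V^\flat=\wt V_0\times_{V^\sh}\wt V_r$. The self-duality of $V^\sh$ restricts to the forms on $\wt V_0$ and $\wt V_r$ with opposite signs, which comes from the sign convention in \eqref{eq: bestiary flat sharp triangle}. The isotropy of $V^\flat\to V^\sh$, $(x,y)\mapsto \wt b'_r x-\wt b'_0 y$, then forces $\beta_0(x)=\beta_r(y)$.

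The main obstacle is the sign bookkeeping: checking that the identification $V^\flat\cong\wh{V^\sh}$ from \cite[\S 9.1--9.2]{FYZ3}, transported via $h_V$, produces exactly the equality rather than $\beta_0\circ\wt b_0=-\beta_r\circ\wt b_r$. I would handle this first on classical truncations, where $r=1$ and everything is an honest vector bundle, using Lemma \ref{lem: one-leg V-row endpoint maps lci}. Next I would compose $r$ one-leg steps, using that $\Hk_S^r$ is the iterated fiber product. Since both sides are maps to $\A^1_{\wt S}$ from a derived vector bundle, I would conclude by checking equality of the underlying quadratic forms on the global presentation, since these are determined by the symmetric bilinear data.
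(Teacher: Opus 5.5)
Your main line is correct, but it takes a different route from the paper. The paper's proof is purely formal. Write $\iota_\sh\co V^\sh\cong\wh{V^\flat}$ for the dual of $V^\flat\cong\wh{V^\sh}$. The duality assertions you quote in your first step then give $\wh{\wt b_0}\circ h_{V,0}=\iota_\sh\circ\wt b'_r$ and $\wh{\wt b_r}\circ h_{V,r}=\iota_\sh\circ\wt b'_0$. Hence $\beta_0(\wt b_0x)=\langle\iota_\sh\wt b'_r\wt b_0x,x\rangle$ and $\beta_r(\wt b_rx)=\langle\iota_\sh\wt b'_0\wt b_rx,x\rangle$, and these agree because the diamond \eqref{eq: Gaussian self-dual square} commutes.

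So the ``intertwining'' identity you set out to prove by hand is, given those duality statements, literally the commutativity $\wt b'_r\wt b_0=\wt b'_0\wt b_r$. You had both ingredients and did not combine them. Your ``cleaner alternative'' is a garbled form of this argument. $V^\sh$ carries no self-duality; the relevant identification is $V^\flat\cong\wh{V^\sh}$. The difference map $(x,y)\mapsto\wt b'_rx-\wt b'_0y$ is defined on $\wt V_0\times_{\wt S}\wt V_r$, not on $V^\flat$. The correct statement is that $V^\flat\to\wt V_0\times_{\wt S}\wt V_r$ is isotropic for $\beta_0\oplus(-\beta_r)$, which merely restates the lemma.

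Your hands-on route instead checks the identity from the moduli description, and it works. For one leg, $V^\flat$ is a classical vector bundle over $\Hk_S^1$. The value $\beta_i(\wt b_iv)$ depends only on $h_{12}$, the residue pairing, and the restricted form $\cF^\flat_{1/2}\to\cF_i\xrightarrow{h_i}\sigma^*\cF_i^\vee\to\sigma^*(\cF^\flat_{1/2})^\vee$. The two restricted forms agree off the graphs of the legs, because the modification is an isometry there, and the target is locally free, so they agree everywhere. For $r$ legs, compose through the iterated derived fiber product $V^\flat_{1/2}\times_{\wt V_1}\cdots\times_{\wt V_{r-1}}V^\flat_{r-1/2}$ of \eqref{eq: Gaussian cc r-leg factorization diagram}. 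This is what supplies the required homotopy, and it is how the derived step should be phrased, rather than ``the forms coincide on the cohomology.''

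The trade-off is as follows. The paper's argument is two lines, but it relies on the duality statements imported from \cite{FYZ3} having signs arranged compatibly with the commutativity of the diamond. Yours is self-contained and automatically sign-free, because a quadratic function is invariant under $v\mapsto-v$, so the signs in $\cF^\flat_\bu\to\cF_0\oplus\cF_r$ never matter. In particular, the ``main obstacle'' you flag does not actually arise on your route; your computation independently confirms the sign the formal argument takes for granted.
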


\begin{proof} Indeed, writing $\iota_\flat \co V^\flat \cong \wh{V^\sh}$ for the identification above and $\iota_\sh \co V^\sh \cong \wh{V^\flat}$ for its dual, the duality assertions give 
\begin{align*}
        \wh{\wt b_0}\circ h_{V,0} = \iota_\sh \circ \wt b'_r \qquad \text{and} \qquad 
        \wh{\wt b_r}\circ h_{V,r} = \iota_\sh \circ \wt b'_0.
\end{align*}
Hence 
\begin{align*}
        \beta_0(\wt b_0 x) = \langle \iota_\sh\, \wt b'_r\, \wt b_0\, x,\ x\rangle, \qquad \text{and} \qquad 
        \beta_r(\wt b_r x) = \langle \iota_\sh\, \wt b'_0\, \wt b_r\, x,\ x \rangle,
\end{align*} 
which agree by the commutativity of the diagram \eqref{eq: Gaussian self-dual square}.
\end{proof} 

Define
\begin{equation}\label{eq: Gaussian indexed sheaves definition}
\sG_{\beta,i}:=\beta_i^*\AS_{\psi,\wt S}\in\Dmot{\wt V_i},
\qquad
\sG_{-\beta,i}:=(-\beta_i)^*\AS_{\psi,\wt S}\in\Dmot{\wt V_i}.
\end{equation}
The equality $h_{V,i}^*\beta_i^\vee=\beta_i$ identifies the positive-dual Gaussian with $\sG_{\beta,i}$, while $h_{V,i}^*(-\beta_i^\vee)=-\beta_i$, so the transported Fourier-dual Gaussian is $\sG_{-\beta,i}$.

\subsubsection{Cohomological correspondences}
Pulling $\AS_{\psi,\wt S}$ back along \eqref{eq: Gaussian beta compatibility} gives
\begin{equation}\label{eq: Gaussian pullback iso}
u_{\beta}\co
\wt b_0^*\sG_{\beta,0}\xrightarrow{\sim}
\wt b_r^*\sG_{\beta,r} .
\end{equation}
Applying the same construction to
$(-\beta_0)\circ\wt b_0=(-\beta_r)\circ\wt b_r$ gives
\begin{equation}\label{eq: Gaussian negative pullback iso}
u_{-\beta}\co
\wt b_0^*\sG_{-\beta,0}\xrightarrow{\sim}
\wt b_r^*\sG_{-\beta,r} .
\end{equation}
By Lemma \ref{lem: V-to-S pullable}, we may define a cohomological correspondence $\cc_{\wt V} \in \Corr_{V^\flat}(\Qsh{\wt V_0},\Qsh{\wt V_r})$ as the pullback of the identity cohomological correspondence on $\wt S$. Concretely, $\cc_{\wt V}$ is given explicitly by the composition 
\begin{equation}\label{eq: Gaussian ccV definition}
\cc_{\wt V} \co
\wt b_0^*\Qsh{\wt V_0}\cong \wt b_r^*\Qsh{\wt V_r}
\xrightarrow{[\wt b_r]}
\wt b_r^!\Qsh{\wt V_r} .
\end{equation}

Tensoring \eqref{eq: Gaussian ccV definition} with \eqref{eq: Gaussian pullback iso} gives
\begin{equation}\label{eq: Gaussian tensor definition}
\cc_{\wt V} \otimes u_{\beta}\in
\Corr_{V^\flat}\bigl(\sG_{\beta,0},\sG_{\beta,r}\bigr),
\end{equation}
and tensoring with \eqref{eq: Gaussian negative pullback iso} gives
\[
\cc_{\wt V} \otimes u_{-\beta}\in
\Corr_{V^\flat}\bigl(\sG_{-\beta,0},\sG_{-\beta,r}\bigr).
\]
Both tensor products are in the sense of Definition \ref{def:tensor-coh-corr}.

\subsubsection{Fourier transform of Gaussians}
Let $\pi\co V\to S$ be the structure morphism.  Set
\begin{equation}\label{eq: Gaussian endpoint Gauss factor}
\mathfrak{G}_Q:=\pi_!\sG_\beta\sm{[d(V/S)]}\in\Dmot{S},
\qquad
\mathfrak{G}_{Q,i}:=h_i^*\mathfrak{G}_Q\in\Dmot{\wt S}
\quad (0\leq i\leq r).
\end{equation}
By the definition of $V$ in \S\ref{ssec:bestiary}, it is the derived vector bundle associated to the perfect complex $\cV = \ul{\RHom(\cF_{\univ}^*, Q_1)}$. Since $\cF_{\univ}^*$ is 
locally free and $Q_1$ is a fixed torsion sheaf on $X'$, this is a locally free sheaf in degree $0$, so $V$ is a vector bundle in the classical sense. Hence Lemma \ref{lem:motivic-relative-Gauss-invertible} implies that $\mathfrak{G}_Q$, and with it each $\mathfrak{G}_{Q,i}$, is tensor-invertible.  

Base change for $\wt V_i=V\times_{S,h_i}\wt S$, followed by Lemma \ref{lem:motivic-Gaussian-self-duality}, gives a canonical isomorphism
\begin{equation}\label{eq: Gaussian shifted Fourier kernel}
\relax[2]_{\wt V_i}^*h_{V,i}^*\FT_{\wt V_i}(\sG_{\beta,i})
\xrightarrow{\sim}
\sG_{-\beta,i}\otimes\wt\pi_i^*\mathfrak{G}_{Q,i}.
\end{equation}
The scalar automorphism $[2]$ acts on $\wt V_0,\wt V_r,V^\flat$, and $V^\sh$.  We write $[2]_V^*$ for the induced scalar pullback on the corresponding sheaves, correspondences, and co-correspondences.

We define a normalized Fourier transform of cohomological correspondences
\begin{equation}\label{eq:FT-corr}
\begin{aligned}
\relax[2]_V^*\TT_{\sm{[-d(V/S)](-d(V/S))}}\FT_{V^\flat}\co\;&
\Corr_{V^\flat}(\sG_{\beta,0},\sG_{\beta,r}) \\
&\longrightarrow
\Corr_{V^\flat}\Bigl(
(\sG_{-\beta,0}\otimes\wt\pi_0^*\mathfrak{G}_{Q,0})
\sm{[-d(V/S)](-d(V/S))}, \\
&\hspace{7em}
(\sG_{-\beta,r}\otimes\wt\pi_r^*\mathfrak{G}_{Q,r})
\sm{[-d(V/S)](-d(V/S))}
\Bigr),
\end{aligned}
\end{equation}
the composite of the Fourier transform \eqref{eq:motivic-derived-FT-corr-same-base} for \eqref{eq: Gaussian self-dual square} (whose legs have virtual relative dimension $0$ by Lemma \ref{lem: V-row endpoint maps vdim zero}, so that no shift or twist appears), the identification $V^\flat\cong\wh{V^\sh}$, \eqref{eq: Gaussian shifted Fourier kernel}, and the displayed common shift and Tate twist.

\begin{lemma}\label{lem: one-leg Gaussian purity}
Assume $r=1$.  For each sign $\epsilon=\pm1$, with $u_{+\beta}:=u_\beta$, the morphism
\[
[\wt b_r]_{\epsilon\beta}\co
\wt b_r^*\sG_{\epsilon\beta,r}\longrightarrow
\wt b_r^!\sG_{\epsilon\beta,r}
\]
obtained from the relative derived fundamental class $[\wt b_r]$ is an isomorphism.  Consequently, after applying the isomorphisms
$u_{\epsilon\beta}\co\wt b_0^*\sG_{\epsilon\beta,0}\xrightarrow{\sim}\wt b_r^*\sG_{\epsilon\beta,r}$, there are canonical identifications
\begin{equation}\label{eq: Gaussian one-leg scalar identification}
\Corr_{V^\flat}(\sG_{\epsilon\beta,0},\sG_{\epsilon\beta,r})
= 
\Hom_{V^\flat}(\wt b_0^*\sG_{\epsilon\beta,0},\wt b_r^!\sG_{\epsilon\beta,r}) \cong 
\End_{V^\flat}(\wt b_0^*\sG_{\epsilon\beta,0}) \cong 
\rH^0(V^\flat,\Qsh{V^\flat}).
\end{equation}
\end{lemma}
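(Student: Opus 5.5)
The argument reduces to the case of the constant sheaf by tensoring with the invertible object $\sG_{\epsilon\beta,r}$, and then uses that in the one-leg case $\wt b_r$ is an LCI map of ordinary vector bundles of virtual relative dimension $0$. By Lemma \ref{lem: one-leg V-row endpoint maps lci}, $\wt b_r\co V^\flat\to\wt V_r$ is a morphism of classical vector bundles over $\wt S=\Hk_S^1$, of the same rank $n\cdot\length(Q_1)$.

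\emph{Reduction to the constant sheaf.} Since $\sG_{\epsilon\beta,r}=(\epsilon\beta_r)^*\AS_{\psi,\wt S}$ and $\AS_{\psi_0}$ is a character sheaf, \eqref{eq:motivic-AS-additivity} shows $\sG_{\epsilon\beta,r}\otimes\sG_{-\epsilon\beta,r}\cong\Qsh{\wt V_r}$. Hence $\sG_{\epsilon\beta,r}$ is $\otimes$-invertible, and in particular dualizable. For dualizable $L$ the transformation $\diamond\co \wt b_r^!\Qsh{}\otimes\wt b_r^*L\to\wt b_r^!L$ is an isomorphism. The Gysin transformation $[\wt b_r]$ is compatible with tensoring by pullbacks (\cite[\S 3.2]{KhanI}). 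Together these identify $[\wt b_r]_{\epsilon\beta}$ with $[\wt b_r]_{\Qsh{}}\otimes\Id_{\wt b_r^*\sG_{\epsilon\beta,r}}$. So it suffices to show that
\[
\gys_{\wt b_r}\co\Qsh{V^\flat}\to\wt b_r^!\Qsh{\wt V_r}
\]
is an isomorphism.

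\emph{Purity for the constant sheaf.} Factor $\wt b_r$ through its graph: $V^\flat\xrightarrow{\Gamma}V^\flat\times_{\wt S}\wt V_r\xrightarrow{p}\wt V_r$. Here $p$ is smooth, being a vector bundle projection of rank $N=\rank V^\flat$. The graph $\Gamma$ is a regular closed immersion of codimension $N$, namely the zero locus of a section of a rank-$N$ bundle. Both factors are quasi-smooth and Gysin maps compose (\cite[\S 3.2]{KhanI}), so it suffices to treat each factor. For $p$ smooth, $\gys_p$ is the purity isomorphism $p^*\cong p^!\tw{-N}$. For $\Gamma$ we need $\gys_\Gamma\co\Qsh{}\to\Gamma^!\Qsh{}\tw{N}$ to be an isomorphism. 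This is absolute purity for a regular immersion into a space that is smooth over the base. We do not need smoothness of $\wt S$: $\Gamma$ is the base change of the zero section of the bundle $V^\flat\times_{\wt S}V^\flat\to V^\flat$ (via $(x,y)\mapsto y-\wt b_r(x)$-type linear algebra), i.e. of a zero section of a vector bundle. For a zero section $z$ of a vector bundle, $z^!$ of the constant sheaf is computed by homotopy invariance and the Thom isomorphism, so $\gys_z$ is an isomorphism. Base change of Gysin maps then gives the claim for $\Gamma$.

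\emph{Consequences.} With $[\wt b_r]_{\epsilon\beta}$ an isomorphism, the first equality in \eqref{eq: Gaussian one-leg scalar identification} is the definition of $\Corr$. The second identification composes with $([\wt b_r]_{\epsilon\beta}\circ u_{\epsilon\beta})^{-1}$. The last one uses invertibility of $\wt b_0^*\sG_{\epsilon\beta,0}$ as in Example \ref{ex: trace chow}, giving $\End(L)\cong\End(\Qsh{V^\flat})=\rH^0(V^\flat,\Qsh{V^\flat})$. I expect the main obstacle to be the step identifying the graph immersion with a base change of a zero section, and checking that the derived and classical structures agree so that Khan's Gysin class matches the Thom-purity map. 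This should follow because $\wt b_r$ is a map of classical vector bundles with virtual relative dimension $0$ equal to the actual rank difference.
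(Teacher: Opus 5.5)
The proposal is correct. Its overall shape matches the paper's proof: reduce to the constant sheaf, then prove purity for the virtual-dimension-$0$ LCI map $\wt b_r$. Both steps, however, are carried out differently.

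\emph{The paper's route.} It passes to the finite \'etale Artin--Schreier cover $p_{\epsilon,r}\co\widetilde V_{\epsilon,r}\to\wt V_r$ and writes $\sG_{\epsilon\beta,r}=e_{\psi_0}\,p_{\epsilon,r!}\Qsh{}$. It then uses base change to move $\wt b_r^*$ and $\wt b_r^!$ past $p_{\epsilon,r!}$. Finally it invokes absolute purity for rational Beilinson motives \cite[Theorem 14.4.1]{CD19} for the base-changed classical LCI map $\widetilde b_{r,\epsilon}$. That theorem applies because everything lies over the smooth stack $\Hk_S^1$ and is therefore regular.

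\emph{Your route.} You reduce via $\otimes$-invertibility of $\sG_{\epsilon\beta,r}$: $\diamond\co f^!\Qsh{}\otimes f^*L\to f^!L$ is an isomorphism for dualizable $L$, and the Gysin transformation on a general object is built from the one on the unit. This is more direct and needs no auxiliary cover. Your purity step uses only smooth purity plus functoriality and base change of fundamental classes. So it needs no regularity of $V^\flat$ or $\wt V_r$, and it shows directly that the specific map induced by $[\wt b_r]$ is invertible. The paper's version is shorter once the absolute purity theorem is taken as a black box.

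Two small corrections.
\begin{enumerate}
\item The bundle you name should be $V^\flat\times_{\wt S}\wt V_r\to V^\flat$, not $V^\flat\times_{\wt S}V^\flat\to V^\flat$. Equivalently, $\Gamma$ is the base change of the zero section of $\wt V_r\to\wt S$ along the smooth map $(x,y)\mapsto y-\wt b_r(x)$. Base change preserves invertibility of the Gysin map because the exchange map is an isomorphism for smooth base change.
\item There is a simpler way to see that $[\Gamma]$ is invertible. Since $\Gamma$ is a section of the smooth projection $\pr_1\co V^\flat\times_{\wt S}\wt V_r\to V^\flat$, functoriality gives $[\pr_1\circ\Gamma]=[\Id]$. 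Invertibility of $[\pr_1]$ then forces $[\Gamma]$ to be invertible on the unit.
\end{enumerate}

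The obstacle you anticipate about derived versus classical structures does not arise. By Lemma \ref{lem: one-leg V-row endpoint maps lci}, $V^\flat$ and $\wt V_r$ are classical vector bundles over the classical stack $\Hk_S^1$. The graph factorization involves only flat base changes, so derived and classical fiber products agree.
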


\begin{proof}
By Lemma \ref{lem: V-row endpoint maps vdim zero}, the morphism $\wt b_r$ is quasi-smooth of virtual relative dimension $0$. Thus the relative derived fundamental class has the form $[\wt b_r]\co\wt b_r^*\to\wt b_r^!$ without any additional shift or Tate twist.

Let $p_{\epsilon,r}\co\widetilde V_{\epsilon,r}\to\wt V_r$ be the finite \'{e}tale Artin--Schreier cover obtained by pulling back \eqref{eq:motivic-AS-cover} along the composite of $\epsilon\beta_r\co\wt V_r\to\A^1_{\wt S}$ with the projection $\A^1_{\wt S}\rightarrow\A^1_{\F_q}$, with $\epsilon=\pm1$.  Then $\sG_{\epsilon\beta,r}
=e_{\psi_0}\,p_{\epsilon,r!}\Qsh{\widetilde V_{\epsilon,r}}$ where $e_{\psi_0}$ is the idempotent of $\ol{\Q}[\F_p]$ defined in \eqref{eq:motivic-AS-idempotent}.
Form the Cartesian square
\[
\begin{tikzcd}[ampersand replacement=\&]
\widetilde V_{\epsilon}^{\flat} \ar[r, "\widetilde b_{r,\epsilon}"] \ar[d, "q_{\epsilon}"'] \&
\widetilde V_{\epsilon,r} \ar[d, "p_{\epsilon,r}"] \\
V^\flat \ar[r, "\wt b_r"'] \& \wt V_r .
\end{tikzcd}
\]
Since $p_{\epsilon,r}$ is finite \'{e}tale, proper base change gives isomorphisms
\[
\wt b_r^*\sG_{\epsilon\beta,r}
\cong
e_{\psi_0}\,q_{\epsilon!}\Qsh{\widetilde V_{\epsilon}^{\flat}},
\qquad
\wt b_r^!\sG_{\epsilon\beta,r}
\cong
e_{\psi_0}\,q_{\epsilon!}\widetilde b_{r,\epsilon}^!\Qsh{\widetilde V_{\epsilon,r}} .
\]
By Lemma \ref{lem: one-leg V-row endpoint maps lci}, the map
$\widetilde b_{r,\epsilon}$ is a classical LCI morphism of virtual relative
dimension $0$. Absolute purity for rational Beilinson
motives \cite[Theorem 14.4.1]{CD19} implies that $\widetilde b_{r,\epsilon}^!\Qsh{\widetilde V_{\epsilon,r}} \cong \Qsh{\widetilde V_{\epsilon}^{\flat}}$, and then that $\wt b_r^! \sG_{\epsilon\beta,r} \cong \wt b_r^* \sG_{\epsilon\beta,r} \cong \wt b_0^* \sG_{\epsilon\beta,0}$. Since $\wt b_0^*\sG_{\epsilon\beta,0}$ is $\otimes$-invertible, the result follows.
\end{proof}

\begin{lemma}\label{lem: Gaussian generic V-row isomorphisms}
Let
\[
\begin{aligned}
X^\circ&:=X-\nu(|Q_1|\cup|Q_2|),\\
\Hk_S^{1,\circ}&:=\Hk_S^1\times_X X^\circ,\\
V^{\flat,\circ}&:=V^\flat\times_{\Hk_S^1}\Hk_S^{1,\circ},\\
V^{\sh,\circ}&:=V^\sh\times_{\Hk_S^1}\Hk_S^{1,\circ}.
\end{aligned}
\]
For $i=0,1$, let $\wt V_i:=V\times_{S,h_i}\Hk_S^1$, and let
$\wt b_i\co V^\flat\to\wt V_i$ be the map over $\Hk_S^1$. Then the restrictions $\wt b_i^\circ\co V^{\flat,\circ}\xrightarrow{\sim}\wt V_i^\circ$ are isomorphisms.
\end{lemma}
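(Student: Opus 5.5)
Since $r=1$, Lemma \ref{lem: one-leg V-row endpoint maps lci} shows that both $V^\flat$ and $\wt V_i$ are classical vector bundles over $\Hk_S^1$. They are the total spaces of the locally free sheaves $\ul{\RHom((\cF^\flat_{1/2})^*,Q_1)}$ and $\ul{\RHom(\cF_i^*,Q_1)}$, and both have rank $n\cdot\length(Q_1)$. The map $\wt b_i$ is the linear map induced by applying $\ul{\RHom(-,Q_1)}$ to the dual of the inclusion $\cF^\flat_{1/2}\inj\cF_i$. To show that a linear map of vector bundles is an isomorphism over the open $\Hk_S^{1,\circ}$, it suffices to show that the underlying map of locally free sheaves is an isomorphism there. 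Since source and target have the same rank, it is enough to check this fiberwise at geometric points, or to show directly that the cofiber vanishes.

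The first step is to identify the cofiber. As in the proof of Lemma \ref{lem: V-row endpoint maps vdim zero}, the cofiber of $h_i^*\cF_{\univ}^*\to(\cF^\flat_{1/2})^*$ is the perfect complex $T_i$. At classical points $T_i$ is a torsion sheaf supported on the graphs $\Gamma_{x'}\cup\Gamma_{\sigma x'}$ of the leg and its conjugate. Indeed, $\cF_i/\cF^\flat_{1/2}$ is a line bundle on one of these graphs, and dualizing turns this into an $\cExt^1$ supported on the same graph. The cone of $\wt b_i$ on tangent complexes is then $\ul{\RHom(T_i,Q_1)}$ up to shift, where the Hom is taken relative to $X'\times\Hk_S^1\to\Hk_S^1$.

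The second step uses the support condition. Over $\Hk_S^{1,\circ}$ the leg $x'$ satisfies $\nu(x')\in X^\circ$, so $\Gamma_{x'}\cup\Gamma_{\sigma x'}$ is disjoint from $|Q_1|\times\Hk_S^{1,\circ}$. Since $|Q_1|$ is $\sigma$-stable up to replacing it by $|Q_1|\cup|Q_2|$, removing $\nu(|Q_1|\cup|Q_2|)$ suffices. The local $\RHom$ sheaf $\cRHom(T_i,Q_1)$ vanishes because the two supports are disjoint, so the relative pushforward vanishes too. This can be phrased base-change-compatibly: $\cRHom$ between complexes with disjoint supports is zero, and this holds after any base change $R$. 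Hence the cone of $\wt b_i^\circ$ vanishes. A map of vector bundles that induces an isomorphism of underlying sheaves, equivalently has zero cone, is an isomorphism of total spaces.

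I expect the main obstacle to be the first step: making sure that the cofiber of $\wt b_i$ really is $\ul{\RHom(T_i,Q_1)}$ up to shift, and that it is computed compatibly with base change. In other words, I need the relative $\RHom$ into the fixed torsion sheaf $Q_1$ to be exact in the first variable and to commute with base change. This is standard because $Q_1$ is flat over the base (it is pulled back from $X'$) and $T_i$ is perfect.
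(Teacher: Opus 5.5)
Your proposal is correct and is essentially the paper's argument. The paper observes that $\frp_i^*\co\cF_i^*\to\cF_\bu^{\flat*}$ has torsion cokernel supported on $\Gamma_{x'}\cup\Gamma_{\sigma x'}$, so over $\Hk_S^{1,\circ}$ it is an isomorphism near $|Q_1|\times\Spec R$ and hence induces an isomorphism on $\RHom(-,Q_1\otimes R)$, functorially in $R$; this is exactly your vanishing of $\RHom(T_i,Q_1)$ by disjointness of supports (your appeals to the classical vector-bundle structure and to $\sigma$-stability are harmless but unnecessary, since removing $\nu(|Q_1|)$ already keeps both $x'$ and $\sigma x'$ off $|Q_1|$).
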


\begin{proof}
We prove the assertion for $\wt b_i^\circ$; the proof for $\wt b_i'{}^\circ$ is identical, applied to the map $(\cF_\bu^\sharp)^* \rightarrow \cF_i^*$ obtained by dualizing \eqref{eq: bestiary flat sharp triangle}, whose cokernel is again torsion supported on $\Gamma_{x'}\cup\Gamma_{\sigma x'}$.  Over an $R$-point $(\cF_\star)$ of $\Hk_S^{1,\circ}$, the map $\wt b_i$ is induced by the morphism
\[
\frp_i^*\co \cF_i^*\longrightarrow \cF_{\bu}^{\flat *}
\]
from the one-leg local model.  The cokernel of $\frp_i^*$ is a torsion sheaf supported set-theoretically on $\Gamma_{x'}\cup\Gamma_{\sigma x'}$, where $x'$ denotes the leg of $(\cF_\star)$: for $i=0$ the support lies on $\Gamma_{x'}$, and for $i=1$ on $\Gamma_{\sigma x'}$. Since the fiber product defining $\Hk_S^{1,\circ}$ is taken over $X$, the image of the leg in $X$ lies in $X^\circ$, so both $x'$ and $\sigma(x')$ are disjoint from $|Q_1|\cup|Q_2|$ (note that this set is $\sigma$-stable, since $Q_2\cong\sigma^*Q_1^*$). Therefore $\frp_i^*$ is an isomorphism in an open neighborhood of
$|Q_1|\times\Spec R\subset X'_R$, and hence induces an isomorphism
\[
\RHom_{X'_R}(\cF_{\bu}^{\flat *},Q_1\ot R)
\xrightarrow{\sim}
\RHom_{X'_R}(\cF_i^*,Q_1\ot R).
\]
This identification is functorial in $R$, so it identifies the corresponding total spaces over $\Hk_S^{1,\circ}$.
\end{proof}

\begin{defn}
On $V^{\flat,\circ}$, Lemma \ref{lem: Gaussian generic V-row isomorphisms} and \eqref{eq: Gaussian beta compatibility} give an isometry
\[
\wt b_1^\circ\circ(\wt b_0^\circ)^{-1}\co
(\wt V_0^\circ,\beta_0|_{\wt V_0^\circ})
\xrightarrow{\sim}
(\wt V_1^\circ,\beta_1|_{\wt V_1^\circ})
\]
We denote by
\begin{equation}\label{eq:xi-circ}
\xi^\circ \co
\left.\wt b_0^*\wt\pi_0^*\mathfrak{G}_{Q,0}\right|_{V^{\flat,\circ}}
\xrightarrow{\sim}
\left.\wt b_1^*\wt\pi_1^*\mathfrak{G}_{Q,1}\right|_{V^{\flat,\circ}}
\end{equation}
the induced transport isomorphism of relative Gauss factors in $\Dmot{V^{\flat,\circ}}$.
\end{defn}

\begin{prop}\label{prop: Fourier Gaussian cc}
\begin{enumerate}
\item Assume $r=1$. There is a unique isomorphism $\xi_1\co \wt b_0^*\wt\pi_0^*\mathfrak{G}_{Q,0}\xrightarrow{\sim}\wt b_1^*\wt\pi_1^*\mathfrak{G}_{Q,1}$ in $\Dmot{V^\flat}$ whose restriction to $V^{\flat,\circ}$ is $\xi^\circ$. Moreover, with respect to the normalized map \eqref{eq:FT-corr}, we have 
\begin{align}\label{eq: Gaussian cc Fourier identity}
& \relax[2]_V^*\!\left(
\TT_{\sm{[-d(V/S)](-d(V/S))}}
\FT_{V^\flat}(\cc_{\wt V}\otimes u_{\beta})
\right)
 =
(\cc_{\wt V}\otimes u_{-\beta})\otimes\xi_1\sm{[-d(V/S)](-d(V/S))} \\
& \in \Corr_{V^\flat}\bigl(
(\sG_{-\beta,0}\otimes\wt\pi_0^*\mathfrak{G}_{Q,0})\sm{[-d(V/S)](-d(V/S))},\,
(\sG_{-\beta,1}\otimes\wt\pi_1^*\mathfrak{G}_{Q,1})\sm{[-d(V/S)](-d(V/S))}
\bigr) \nonumber,
\end{align}
the tensor operation being that of Definition \ref{def:tensor-coh-corr}.
\item
For general $r$, the identity \eqref{eq: Gaussian cc Fourier identity} holds with $\xi_1$ replaced by the isomorphism $\xi_r\co \wt b_0^*\wt\pi_0^*\mathfrak{G}_{Q,0}\xrightarrow{\sim}\wt b_r^*\wt\pi_r^*\mathfrak{G}_{Q,r}$ defined as the composite of the pullbacks of $\xi_1$ along the factorization of $(\wt V_0\leftarrow V^\flat\rightarrow\wt V_r)$ into one-leg correspondences (and $\xi_0:=\Id$), as depicted in \eqref{eq: Gaussian cc r-leg factorization diagram}.
\begin{equation}\label{eq: Gaussian cc r-leg factorization diagram}
\begin{tikzcd}[ampersand replacement=\&, column sep=small, row sep=large]
\& V_{1/2}^{\flat}
\ar[dl, "\wt b_0^{(1)}"']
\ar[dr, "\wt b_1^{(1)}"]
\& \& V_{3/2}^{\flat}
\ar[dl, "\wt b_1^{(2)}"']
\ar[dr, "\wt b_2^{(2)}"]
\& \& \cdots
\& \& V_{r-1/2}^{\flat}
\ar[dl, "\wt b_{r-1}^{(r)}"']
\ar[dr, "\wt b_r^{(r)}"]
\& \\
\wt V_0 \& \& \wt V_1 \& \& \wt V_2
\& \& \wt V_{r-1} \& \& \wt V_r .
\end{tikzcd}
\end{equation}
\end{enumerate}
\end{prop}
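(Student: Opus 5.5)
For $r=1$ we work inside the scalar picture of Lemma \ref{lem: one-leg Gaussian purity}. That lemma identifies both sides of \eqref{eq: Gaussian cc Fourier identity}, each a cohomological correspondence between $\otimes$-invertible objects on $V^\flat$, with elements of $\rH^0(V^\flat,\Qsh{V^\flat})$ once the identification is also applied to the target Gaussians $\sG_{-\beta,i}\otimes\wt\pi_i^*\mathfrak G_{Q,i}$.

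\emph{Existence and uniqueness of $\xi_1$.} The object $\mathcal{H}om(\wt b_0^*\wt\pi_0^*\mathfrak G_{Q,0},\wt b_1^*\wt\pi_1^*\mathfrak G_{Q,1})$ is $\otimes$-invertible. By Lemma \ref{lem:motivic-relative-Gauss-finite-etale}, after pulling back along the finite \'etale $\mathrm{O}/\mathrm{SO}$-covers trivializing $\varepsilon(V,h_V)$, it becomes $\Qsh{}$ up to a sign character. Hence isomorphisms between the two Gauss factors form a torsor under $\rH^0(V^\flat,\Qsh{})^\times$, with locally constant values. Uniqueness then follows once every connected component of $V^\flat$ meets $V^{\flat,\circ}$. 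This holds because $V^\flat\to\Hk_S^1$ is a vector bundle with connected fibers, and $\Hk_S^1\times_X X^\circ$ is dense in each fiber of $\Hk_S^1\to X$ direction, the leg map being smooth. For existence, we first compare the determinants (discriminants) of $\beta_0$ and $\beta_1$ pulled back to $V^\flat$; this comparison controls the $\varepsilon$-torsors. The determinant-line comparison $\det\wt V_0\cong\det\wt V_1$ on $V^\flat$ is induced by the length-zero torsion cofiber $\RHom(T_1,Q_1)$ (Lemma \ref{lem: V-row endpoint maps vdim zero}), and it is compatible with the quadratic forms. So the $\varepsilon$ classes agree globally, and $\xi^\circ$ extends, because a locally constant isomorphism extends across a non-dense complement.

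\emph{The Fourier identity.} Both sides of \eqref{eq: Gaussian cc Fourier identity} are then scalar functions in $\rH^0(V^\flat,\Qsh{})$, and these are locally constant. It suffices to compare them on $V^{\flat,\circ}$, using the same density of $V^{\flat,\circ}$ in every component and the compatibility of Fourier transforms of correspondences with open restriction to $\Hk_S^{1,\circ}$ (Proposition \ref{prop:motivic-derived-FT-cc-functoriality}, base change along an open of the base). Over $V^{\flat,\circ}$, Lemma \ref{lem: Gaussian generic V-row isomorphisms} makes $\wt b_0,\wt b_1,\wt b_0',\wt b_1'$ isomorphisms. The diamond \eqref{eq: Gaussian self-dual square} becomes trivial, and $\cc_{\wt V}\otimes u_\beta$ reduces to the transport of $\sG_{\beta,0}$ along the isometry $\wt b_1^\circ(\wt b_0^\circ)^{-1}$. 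The Fourier transform of such a transport is the transport of the Fourier transforms (Theorem \ref{thm:motivic-FT-formalism}(3) for an isomorphism $f$). By Lemma \ref{lem:motivic-Gaussian-self-duality} and \eqref{eq: Gaussian shifted Fourier kernel}, the transported kernel splits canonically as the transport of $\sG_{-\beta}$ tensored with the transport of Gauss factors. The first is $\cc_{\wt V}\otimes u_{-\beta}$ restricted, and the second is $\xi^\circ$ by definition. Everything here reduces to naturality of the isomorphism in Lemma \ref{lem:motivic-Gaussian-self-duality} under isometries.

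\emph{General $r$.} We factor the correspondence $(\wt V_0\leftarrow V^\flat\rightarrow \wt V_r)$ as the composite in \eqref{eq: Gaussian cc r-leg factorization diagram} of one-leg correspondences, each pulled back from $\Hk_S^1$. We then check that $\cc_{\wt V}$ is the composite of the one-leg $\cc$'s (Gysin classes compose), that $u_\beta$ is the composite of the one-leg $u$'s, and so $\cc_{\wt V}\otimes u_\beta$ is the composite by Lemma \ref{lem: composition tensor compatibility}. Fourier transform commutes with composition by Lemma \ref{lem: composition Fourier compatibility}, and each factor obeys the $r=1$ identity (pulled back along $\Hk_S^r\to\Hk_S^1$, Proposition \ref{prop:motivic-derived-FT-cc-functoriality}). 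Reassembling with Lemma \ref{lem: composition tensor compatibility} gives $\xi_r$.

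The main obstacle is the global existence of $\xi_1$ across the bad legs, i.e.\ showing that the discriminant/sign torsors of $\beta_0$ and $\beta_1$ agree on all of $V^\flat$ and not just generically. If the determinant-line argument fails, I would instead define $\xi_1$ directly from the one-leg Fourier identity. By Lemma \ref{lem: one-leg Gaussian purity}, the left side of \eqref{eq: Gaussian cc Fourier identity} is an isomorphism between invertible objects and $\cc_{\wt V}\otimes u_{-\beta}$ is invertible, so $\xi_1$ can be solved for and then checked to restrict to $\xi^\circ$.
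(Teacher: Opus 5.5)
Your proof is essentially correct. Two of its three components coincide with the paper's:
\begin{itemize}
\item the reduction from general $r$ to $r=1$ via Lemmas \ref{lem: composition tensor compatibility} and \ref{lem: composition Fourier compatibility};
\item the generic computation on $V^{\flat,\circ}$, where the correspondence becomes the graph of the isometry $\wt b_1^\circ(\wt b_0^\circ)^{-1}$ and naturality of \eqref{eq: Gaussian shifted Fourier kernel} produces $\xi^\circ$.
\end{itemize}
Where you differ is the order in which $\xi_1$ and the identity are obtained. You first build $\xi_1$ by extending $\xi^\circ$, and then check the identity scalar-by-scalar on the dense open.

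The paper takes what you list as your fallback. Lemma \ref{lem: one-leg Gaussian purity} (for $\epsilon=-1$, tensored with the Gauss factors) identifies the target group of \eqref{eq:FT-corr} with $\rH^0(V^\flat,\cL)$, where $\cL=(\wt b_0^*\wt\pi_0^*\mathfrak G_{Q,0})^{-1}\otimes\wt b_1^*\wt\pi_1^*\mathfrak G_{Q,1}\cong(\pi^\flat)^*(h_0^*\varepsilon_Q^{-1}\otimes h_1^*\varepsilon_Q)$. Then $\xi_1$ is \emph{defined} as the unique section $s$ solving the identity. The generic computation gives $s|_{V^{\flat,\circ}}=\xi^\circ$. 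Density of $V^{\flat,\circ}$, together with finite-\'etale triviality of $\cL$, gives both uniqueness and invertibility of $s$. In this order existence costs nothing, and no global comparison of the two $\varepsilon$-torsors is ever needed.

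Your primary existence step, as written, does not hold up. The cofiber $\RHom(T_1,Q_1)$ has Euler characteristic zero, but it is acyclic only where the leg avoids $|Q_1|$. So it does not canonically trivialize its own determinant, and $\det\wt b_i$ really does vanish along the bad locus.

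It can be repaired. Because $\beta_0\circ\wt b_0=\beta_1\circ\wt b_1$, the rational section $\det\wt b_1\cdot(\det\wt b_0)^{-1}$ squares to the unit $\mathrm{disc}(\beta_0)/\mathrm{disc}(\beta_1)$. On the smooth (hence normal) base $\Hk_S^1$ it is therefore itself a unit.

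The detour is unnecessary, though. $\cL$ becomes trivial on a finite \'etale cover of the smooth stack $V^\flat$, and $V^{\flat,\circ}$ meets every (irreducible) component in a dense open. Hence any section of $\cL$ over $V^{\flat,\circ}$, in particular $\xi^\circ$, extends uniquely: extend the locally constant function upstairs and descend. So the ``main obstacle'' you identify is not a real one. Either this corrected version of your route or your fallback yields a complete proof.
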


\begin{proof}
If $r=0$, both sides are the identity correspondence by the zero-leg convention, and $\xi_0=\Id$, so the assertion is immediate. Hence assume $r>0$.

First we will reduce to the case $r=1$. For each $1\leq i\leq r$, let $\rho_i\co\wt S\to\Hk_S^1$ be the map recording the $i$-th one-step modification, and pull the one-leg identity back along $\rho_i$. This gives the correspondence
\[
\wt V_{i-1} \leftarrow V_{i-1/2}^\flat \rightarrow \wt V_i
\]
It is globally presented over the common base $\wt S$, being the base change of the globally presented one-leg diamond. By \eqref{eq: Gaussian cc r-leg factorization diagram}, these correspondences compose to $V^\flat$. Base change and functoriality of relative fundamental classes \cite[Theorems 3.12--3.13]{KhanI}, together with Definition \ref{def: composite cohomological correspondence}, identify $\cc_{\wt V}$ with the corresponding composite of the one-leg cohomological correspondences defined for $r=1$. Lemma \ref{lem: composition tensor compatibility} shows that the cohomological correspondence $\cc_{\wt V} \otimes u_\beta$ also factors as the composite of the $r=1$ case, while Lemma \ref{lem: composition Fourier compatibility} shows that Fourier transform is compatible with compositions of cohomological correspondences. Thus we reduce to the case $r=1$, which we now assume.

By Lemma \ref{lem:motivic-relative-Gauss-invertible}, each $\mathfrak{G}_{Q,i}$ is tensor-invertible; set
\[
\cL:=
\bigl(\wt b_0^*\wt\pi_0^*\mathfrak{G}_{Q,0}\bigr)^{-1}
\otimes
\wt b_1^*\wt\pi_1^*\mathfrak{G}_{Q,1}
\in\Dmot{V^\flat}.
\]
The object $\cL$ is tensor-invertible. Tensor the identifications of Lemma \ref{lem: one-leg Gaussian purity} for $\epsilon=-1$ with $\wt b_i^*\wt\pi_i^*\mathfrak{G}_{Q,i}$ and apply the common shift and Tate twist in \eqref{eq:FT-corr}. This identifies the target group of \eqref{eq:FT-corr} with $\rH^0(V^\flat,\cL)$, sending $s$ to $(\cc_{\wt V}\otimes u_{-\beta})\otimes s\sm{[-d(V/S)](-d(V/S))}$; here $\cc_{\wt V}\otimes u_{-\beta}$ corresponds to $1$ under \eqref{eq: Gaussian one-leg scalar identification}. There is therefore a unique $s\in\rH^0(V^\flat,\cL)$ such that
\[
\relax[2]_V^*\bigl(\TT_{\sm{[-d(V/S)](-d(V/S))}}\FT_{V^\flat}(\cc_{\wt V}\otimes u_\beta)\bigr)=(\cc_{\wt V}\otimes u_{-\beta})\otimes s\sm{[-d(V/S)](-d(V/S))}.
\]
We must show that $s$ is an isomorphism restricting on $V^{\flat,\circ}$ to transport along the isometries of Lemma \ref{lem: Gaussian generic V-row isomorphisms}. Let $\pi^\flat\co V^\flat\to\wt S=\Hk_S^1$ be the structure morphism; then $\pi^\flat=\wt\pi_i\circ\wt b_i$ for $i=0,1$.
Lemma
\ref{lem:motivic-relative-Gauss-finite-etale}, applied to $(V,h_V)$, gives
\[
\mathfrak G_Q\cong
\left.\mathfrak g_{d(V/S)}\right|_S\otimes\varepsilon_Q,
\]
where $\varepsilon_Q$ is tensor-invertible and becomes trivial after a finite \'{e}tale base change. Cancelling the constant Gauss factors in the definition of $\cL$ gives
\[
\cL\cong
(\pi^\flat)^*\bigl(h_0^*\varepsilon_Q^{-1}
\otimes h_1^*\varepsilon_Q\bigr).
\]
The RHS is likewise trivial after a finite \'{e}tale base change. Moreover, $V^{\flat,\circ}$ is dense in every connected component, since each component of $\Hk_S^1$ dominates $X$ through the smooth leg map and hence meets the dense open $\Hk_S^{1,\circ}$. It follows that restriction to $V^{\flat,\circ}$ is injective on $\rH^0(-,\cL)$.

On $V^{\flat,\circ}$, the maps $\wt b_i^\circ$ identify the correspondence with the graph of an isometry of quadratic bundles. Naturality of \eqref{eq: Gaussian shifted Fourier kernel} then identifies the restriction of the left side with $(\cc_{\wt V}\otimes u_{-\beta})|_{V^{\flat,\circ}}$ tensored with the tautological transport isomorphism $\xi^\circ$. Thus $s|_{V^{\flat,\circ}}=\xi^\circ$. This is an isomorphism, and the finite \'{e}tale local triviality used above implies that $s$ is an isomorphism. Taking $\xi_1:=s$ completes the proof.
\end{proof}

\subsubsection{The fundamental identity}
We now pass from the $\wt S$-relative correspondence
$\wt V_0\leftarrow V^\flat\rightarrow \wt V_r$ to the
varying-base correspondence
\[
V_0\longleftarrow V^\flat\longrightarrow V_r.
\]
Recall that
\[
b_i:=h_i^V\circ \wt b_i\co V^\flat\longrightarrow V_i
\qquad (i=0,r).
\]
The maps $h_i\co\wt S\to S$ are smooth, schematic, and separated by
\cite[Lemma 6.9]{FYZ}; hence their base changes
$h_i^V\co\wt V_i\to V_i$ are smooth of virtual relative dimension
$d(h_i)$.  By Lemma \ref{lem: V-row endpoint maps vdim zero}, the maps
$\wt b_i$ are quasi-smooth of virtual relative dimension $0$.  Thus
$b_i=h_i^V\circ\wt b_i$ is quasi-smooth of virtual relative dimension
$d(h_i)$.

For $\cK_0\in\Dmot{V_0}$ and $\cK_r\in\Dmot{V_r}$, we use the following cohomological correspondence group with varying base:
\begin{equation}\label{eq: Gaussian varying-base Corr identification}
\begin{aligned}
\Corr_{V^\flat}(\cK_0,\cK_r)
&:=\Hom_{V^\flat}(b_0^*\cK_0,b_r^!\cK_r) \\
&=\Hom_{V^\flat}\bigl(\wt b_0^*(h_0^V)^*\cK_0,
\wt b_r^!(h_r^V)^!\cK_r\bigr).
\end{aligned}
\end{equation}

The equality
$\beta\circ b_0=\beta\circ b_r$, viewed as an equality of morphisms
$V^\flat\to\A^1_{\F_q}$, induces an isomorphism 
\begin{equation}\label{eq: Gaussian varying-base pullback iso}
u_\beta\co
b_0^*\sG_\beta\xrightarrow{\sim} b_r^*\sG_\beta,
\end{equation}
and similarly $(-\beta)\circ b_0=(-\beta)\circ b_r$ induces
$u_{-\beta}$.

We define $\cc_V \in \Corr_{V^\flat}(\Qsh{V_0}, \Qsh{V_r}\tw{-d(h_r)})$ to be the pullback of the identity correspondence on $\pt$. Concretely, it is the composition 
\begin{equation}\label{eq: Gaussian varying-base cc definition}
\cc_V\co
b_0^*\Qsh{V_0}\cong b_r^*\Qsh{V_r}
\xrightarrow{[b_r]}
b_r^!\Qsh{V_r}\tw{-d(h_r)}
\end{equation}
where $[b_r]$ is the relative fundamental class attached to the quasi-smooth map $b_r$. We compare it with $\cc_{\wt V}$ through
\eqref{eq: Gaussian varying-base Corr identification}.  Since $h_r^V$ is
the base change of the smooth map $h_r$, relative purity for $h_r^V$
gives
\begin{equation}\label{eq: Gaussian hrV purity cancellation}
[h_r^V]\co
(h_r^V)^*\Qsh{V_r}
\xrightarrow{\sim}
(h_r^V)^!\Qsh{V_r}\tw{-d(h_r)}.
\end{equation}
Compositional compatibility of relative derived fundamental classes (cf.\ \cite[\S 3.2]{KhanI}) identifies $[b_r]$ with the composite of $\wt b_r^*([h_r^V])$ followed by $[\wt b_r]$; note that for the smooth map $h_r^V$ the fundamental class coincides with the purity isomorphism \eqref{eq: Gaussian hrV purity cancellation}.  Tensoring
\eqref{eq: Gaussian varying-base cc definition} with
\eqref{eq: Gaussian varying-base pullback iso} gives a cohomological correspondence 
\begin{equation}\label{eq: Gaussian varying-base tensor definition}
\cc_V\otimes u_\beta
\in
\Corr_{V^\flat}\bigl(\sG_\beta,\sG_\beta\tw{-d(h_r)}\bigr).
\end{equation}
Similarly, tensoring with $u_{-\beta}$ gives
\begin{equation}\label{eq: Gaussian varying-base negative tensor definition}
\cc_V\otimes u_{-\beta}
\in
\Corr_{V^\flat}\bigl(\sG_{-\beta},\sG_{-\beta}\tw{-d(h_r)}\bigr).
\end{equation}

Let $\FT_{V^\flat}$ denote, in this varying-base formulation, the
Fourier transform of \eqref{eq:motivic-derived-FT-corr-varying-base}.  After applying the Gaussian isomorphism of
Lemma \ref{lem:motivic-Gaussian-self-duality}, the normalized varying-base
Fourier transform is
\begin{equation}\label{eq: Gaussian varying-base FT map}
\begin{aligned}
\relax[2]_V^*\TT_{\sm{[-d(V/S)](-d(V/S))}}\FT_{V^\flat}\co\quad
&\Corr_{V^\flat}\bigl(\sG_\beta,\sG_\beta\tw{-d(h_r)}\bigr)\\
&\longrightarrow
\Corr_{V^\flat}\Bigl(
\begin{aligned}[t]
&(\sG_{-\beta}\otimes\pi^*\mathfrak{G}_Q)\sm{[-d(V/S)](-d(V/S))},\\
&(\sG_{-\beta}\otimes\pi^*\mathfrak{G}_Q)\sm{[-d(V/S)](-d(V/S))}\tw{-d(h_r)}
\end{aligned}\Bigr).
\end{aligned}
\end{equation}
Under \eqref{eq: Gaussian varying-base Corr identification}, the target
group in \eqref{eq: Gaussian varying-base FT map} becomes the target group
of \eqref{eq:FT-corr}; in the notation of
\eqref{eq: Gaussian indexed sheaves definition}, this is the pair of
identifications
\begin{align*}
(h_0^V)^*\bigl((\sG_{-\beta}\otimes\pi^*\mathfrak{G}_Q)\sm{[-d(V/S)](-d(V/S))}\bigr)
&=
\bigl(\sG_{-\beta,0}\otimes\wt\pi_0^*\mathfrak{G}_{Q,0}\bigr)\sm{[-d(V/S)](-d(V/S))},\\
(h_r^V)^!\bigl((\sG_{-\beta}\otimes\pi^*\mathfrak{G}_Q)\sm{[-d(V/S)](-d(V/S))}\tw{-d(h_r)}\bigr)
&\cong
\bigl(\sG_{-\beta,r}\otimes\wt\pi_r^*\mathfrak{G}_{Q,r}\bigr)\sm{[-d(V/S)](-d(V/S))}.
\end{align*}
The second identification is obtained by tensoring
\eqref{eq: Gaussian hrV purity cancellation} with
$(h_r^V)^*(\sG_{-\beta}\otimes\pi^*\mathfrak{G}_Q)\sm{[-d(V/S)](-d(V/S))}$; the twist $\tw{-d(h_r)}$ cancels the
purity shift and Tate twist in $(h_r^V)^!$.

\begin{prop}[Varying-base Gaussian correspondence]\label{prop: Fourier Gaussian cc varying base}
With respect to the normalized varying-base map
\eqref{eq: Gaussian varying-base FT map}, one has
\begin{align}\label{eq: Gaussian varying-base cc Fourier identity}
&\relax[2]_V^*\!\left(
\TT_{\sm{[-d(V/S)](-d(V/S))}}
\FT_{V^\flat}
(\cc_V\otimes u_\beta) \right)
= (\cc_V\otimes u_{-\beta})\otimes\xi_r\sm{[-d(V/S)](-d(V/S))}\\
&\in \Corr_{V^\flat}\Bigl(
(\sG_{-\beta}\otimes\pi^*\mathfrak{G}_Q)\sm{[-d(V/S)](-d(V/S))}, 
(\sG_{-\beta}\otimes\pi^*\mathfrak{G}_Q)\sm{[-d(V/S)](-d(V/S))}\tw{-d(h_r)}
\Bigr),
\end{align}
where $\xi_r$ is the isomorphism of Proposition \ref{prop: Fourier Gaussian cc}, viewed through \eqref{eq: Gaussian varying-base Corr identification} (note that $b_i^*\pi^*\mathfrak{G}_Q = \wt b_i^*\wt\pi_i^*\mathfrak{G}_{Q,i}$).
\end{prop}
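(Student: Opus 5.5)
The plan is to deduce Proposition \ref{prop: Fourier Gaussian cc varying base} from the fixed-base identity of Proposition \ref{prop: Fourier Gaussian cc}. The argument unwinds the three-step definition of the varying-base transform \eqref{eq:motivic-derived-FT-corr-varying-base} and checks that each step matches the corresponding step on the fixed-base side.

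First, the source group. Under \eqref{eq: Gaussian varying-base Corr identification} we rewrite $\cc_V\otimes u_\beta$ over $\wt S$. Compositional compatibility of fundamental classes gives $[b_r]=[\wt b_r]\circ\wt b_r^*[h_r^V]$, where the smooth factor $[h_r^V]$ is the purity isomorphism \eqref{eq: Gaussian hrV purity cancellation}. Hence, after using $(h_i^V)^*\sG_{\pm\beta}=\sG_{\pm\beta,i}$, the correspondence $\cc_V$ corresponds to $\cc_{\wt V}$. The isomorphism $u_\beta$ of \eqref{eq: Gaussian varying-base pullback iso} pulls back to \eqref{eq: Gaussian pullback iso}, since both come from the same equality of maps to $\A^1$. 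One also has to check that the tensoring operation of Definition \ref{def:tensor-coh-corr} commutes with this identification. This reduces to compatibility of the map $\diamond$ with the purity isomorphism for the smooth map $h_r^V$, namely $(h_r^V)^!(-)\otimes(h_r^V)^*(-)\cong(h_r^V)^!(-\otimes-)$. Consequently $\cc_V\otimes u_\beta$ corresponds to $\cc_{\wt V}\otimes u_\beta$, and the same argument gives the analogous statement for $-\beta$.

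Second, the transform itself. By definition, the varying-base $\FT_{V^\flat}$ is the same-base transform for the diamond \eqref{eq: Gaussian self-dual square}, conjugated by the base-change isomorphisms $\FT_{\wt V_i}\circ(h_i^V)^{*}\cong(h_i^{\wh V})^{*}\circ\FT_{V_i}$ and their $!$-analogues from Theorem \ref{thm:motivic-FT-formalism}(1). So it remains to show that the Gaussian identification \eqref{eq: Gaussian shifted Fourier kernel} on $\wt V_i$ is the base change along $h_i$ of Lemma \ref{lem:motivic-Gaussian-self-duality} on $V$. This holds because the proof of that lemma consists only of base change, the projection formula and Artin--Schreier additivity, all of which commute with pullback along $h_i$. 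For $i=r$ the relevant pullback is $(h_r^V)^!$; we convert it to $(h_r^V)^*$ by purity, and the twist $\tw{-d(h_r)}$ absorbs the shift.

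Combining these two steps, the left side of \eqref{eq: Gaussian varying-base cc Fourier identity} is carried to the left side of \eqref{eq: Gaussian cc Fourier identity}, and the right side $(\cc_V\otimes u_{-\beta})\otimes\xi_r$ is carried to $(\cc_{\wt V}\otimes u_{-\beta})\otimes\xi_r$, again by the first step. Proposition \ref{prop: Fourier Gaussian cc}(2) then gives the claim.

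I expect the main obstacle to be the bookkeeping in the second step: verifying that the base-change isomorphism for $\FT$ along $(h_r^V)^!$ commutes with the purity identification and with \eqref{eq: Gaussian shifted Fourier kernel}. Here I would use that $h_r$ is smooth, so that $(h_r^V)^!\cong(h_r^V)^*\tw{d(h_r)}$ holds naturally and compatibly with the $*$-base change of $\FT$. This lets me reduce to the $*$-version, where compatibility is formal.
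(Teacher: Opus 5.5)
Your proposal is correct and follows the same route as the paper: you translate $\cc_V\otimes u_{\pm\beta}$ into $\cc_{\wt V}\otimes u_{\pm\beta}$ through the identification \eqref{eq: Gaussian varying-base Corr identification}, using the composition/purity factorization of $[b_r]$ through the smooth map $h_r^V$. You then observe that the normalized varying-base transform corresponds to the fixed-base transform \eqref{eq:FT-corr}, with $\mathfrak{G}_{Q,i}=h_i^*\mathfrak{G}_Q$, and invoke Proposition \ref{prop: Fourier Gaussian cc}(2). The only difference is that you spell out compatibilities the paper absorbs into ``by definition'', namely tensoring versus purity for $h_r^V$, and the base change along $h_i$ of the Gaussian isomorphism together with the $(h_r^V)^!$-to-$(h_r^V)^*$ conversion.
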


\begin{proof}
By definition of the varying-base Fourier transform \eqref{eq:motivic-derived-FT-corr-varying-base}, it can be related to the fixed-base Fourier transform for different sheaves. In this case, the translation goes through 
\eqref{eq: Gaussian varying-base Corr identification}. After applying that identification, $\cc_V\otimes u_\beta$ becomes
$\cc_{\wt V}\otimes u_\beta$: the class $[b_r]$ becomes
$[\wt b_r]$ by the composition and purity discussion preceding
\eqref{eq: Gaussian varying-base tensor definition}, and
$(h_r^V)^!(\sG_\beta\tw{-d(h_r)})\cong\sG_{\beta,r}$.  The same
argument identifies $\cc_V\otimes u_{-\beta}$ with
$\cc_{\wt V}\otimes u_{-\beta}$, and the Gauss factors correspond via $\mathfrak{G}_{Q,i}=h_i^*\mathfrak{G}_Q$, matching the shifted target groups.

By the definition of the varying-base transform \eqref{eq:motivic-derived-FT-corr-varying-base}, the varying-base Fourier transform of cohomological correspondences \eqref{eq: Gaussian varying-base FT map} corresponds, under the identification \eqref{eq: Gaussian varying-base Corr identification}, to the fixed Fourier transform \eqref{eq:FT-corr}. Therefore, after translated between fixed-base and varying-based groups of cohomological correspondences using \eqref{eq: Gaussian varying-base Corr identification}, the image of \eqref{eq: Gaussian varying-base cc Fourier identity} becomes precisely the identity of Proposition \ref{prop: Fourier Gaussian cc}, with the same $\xi_r$. 
\end{proof}

\section{Proof of the Modularity Conjecture}\label{sec:proof-modularity-conj}
In this section we complete the proof of the Modularity Conjecture of \cite{FYZ2}. We first recall its formulation. A point of $\Bun_{GU^{-}(2m)}$ is a triple $(\cG,\mathfrak M,h)$, where $\cG$ is a rank $2m$ vector bundle on $X'$, $\mathfrak M$ is a line bundle on $X$, and
\[
h\co \cG\xrightarrow{\sim}\sigma^*\cG^\vee\ot\nu^*\mathfrak M
=\sigma^*\cG^*\ot\nu^*(\omega_X\ot\mathfrak M)
\]
is skew-Hermitian.  Put $\frL:=\omega_X\ot\mathfrak M$.  Let $\Bun_{\wt P_m}$ be the stack of quadruples $(\cG,\mathfrak M,h,\cE)$, with $(\cG,\mathfrak M,h)\in\Bun_{GU^{-}(2m)}$ and $\cE\subset\cG$ a Lagrangian sub-bundle. Forgetting $\cE$ defines a surjective map
\begin{equation}\label{eq: unitary parabolic forgetful}
\Bun_{\wt P_m}(\F_q)\surj
\Bun_{GU^{-}(2m)}(\F_q).
\end{equation}

With the normalization of \cite[\S 4.6]{FYZ2}, the higher theta series is the function
\begin{equation}\label{eq: unitary theta function notation}
\wt Z_m^{n,r}\co
\Bun_{\wt P_m}(\F_q)\longrightarrow
\CH_{r(n-m)}(\Sht_{GU(n)}^r),
\end{equation}
whose value at $(\cG,\mathfrak M,h,\cE)$ is
\begin{equation}\label{eq: unitary theta value recall}
\wt Z_m^{n,r}(\cG,\cE)
=
\chi(\det\cE)\,
q^{n(\deg\cE-\deg\frL-\deg\omega_X)/2}
\sum_{a\in\cA_{\cE,\frL}(\F_q)}
\psi(\langle e_{\cG,\cE},a\rangle)\,
[\cZ_{\cE,\frL}^{r}(a)]
\end{equation}
with the notation explained as follows:
\begin{itemize}
\item $\chi: \Pic_{X'}(k)\to \ol{\Q}^{\times}$ is a character satisfying $\chi\circ\nu^*=\y^{n}$ on $\Pic_X(k)$, where $\y: \Pic_{X}(k)\to \{\pm1\}$ is the character corresponding to the double cover $X'/X$. We also write $\eta_{F'/F}=\y$.
\item $\psi:\F_{q}\to \ol{\Q}^{\times}$ is a nontrivial character.
\item $\cA_{\cE,\frL}(\F_q)$ is the set of Hermitian maps $a: \cE\to \s^{*}\cE^{\vee}\ot\nu^{*}\frL$. 
\item $e_{\cG,\cE}\in\Ext^1(\sigma^*\cE^*\ot\nu^*\frL,\cE)$ is the extension class for the short exact sequence
\begin{equation}\label{eq: unitary theta extension class}
0\longrightarrow\cE\longrightarrow\cG
\longrightarrow\sigma^*\cE^*\ot\nu^*\frL
\longrightarrow0,
\end{equation}
and $\langle e_{\cG,\cE},a\rangle$ is the Serre duality pairing.  
\end{itemize}

\begin{conj}[The Modularity Conjecture]\label{conj: modularity} The function $\wt Z_m^{n,r}$ descends along \eqref{eq: unitary parabolic forgetful}; in other words, $\wt Z_m^{n,r}(\cG,\cE)$ is independent of the choice of Lagrangian $\cE \subset \cG$.
\end{conj}

To keep the notation manageable, we write out the proof below only in the case of the trivial-similitude fiber $\frL=\cO_X$. The same ideas apply straightforwardly to the general case. On this fiber $\mathfrak M=\omega_X^{-1}$, the skew-Hermitian form is an isomorphism $\cG\cong\sigma^*\cG^*$.  We write $\Bun_{GU^{-}(2m)}^{\mathrm{triv}}$ and $\Bun_{P_m}$ for the corresponding fibers of $\Bun_{GU^{-}(2m)}$ and $\Bun_{\wt P_m}$, and we write $\Sht^r_{U(n)}=\Sht^r_{U(n),\cO_X}$. We likewise abbreviate $\cA_{\cE} := \cA_{\cE,\cO_X}$ and $\cZ^r_{\cE}(a) := \cZ^r_{\cE,\cO_X}(a)$.

\subsection{Modularity in low corank}\label{ssec: unitary modularity low corank}

We first prove the Modularity Conjecture in the range $m \leq n/3$, where we have established the Trace Conjecture in Theorem \ref{thm:trace-conjecture-n/3}.

\begin{thm}[Modularity in low corank]\label{thm:unitary-low-corank-modularity-section}
Assume $m\le n/3$. Then for every $\cG\in\Bun_{GU^{-}(2m)}^{\mathrm{triv}}(\F_q)$ and every pair of Lagrangian sub-bundles $\cE_1,\cE_2\subset\cG$,
\begin{equation}\label{eq: unitary low corank modularity}
\wt Z_m^{n,r}(\cG,\cE_1)
=
\wt Z_m^{n,r}(\cG,\cE_2)
\in \CH_{r(n-m)}(\Sht_{U(n)}^r).
\end{equation}
\end{thm}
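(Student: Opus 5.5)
First reduce to the case where $\cE_1,\cE_2\subset\cG$ are transverse Lagrangians in the sense of \S\ref{ssec:bestiary}. Any two Lagrangian sub-bundles of $\cG$ can be connected by a finite chain in which consecutive members are transverse: given $\cE_1,\cE_2$, choose an $\F_q$-rational Lagrangian $\cE_3$ transverse to both. If $\F_q$-points are not plentiful enough, pass to the standard argument of \cite{FYZ3} using Harder--Narasimhan bounds and twisting. It is therefore enough to prove \eqref{eq: unitary low corank modularity} for a transverse pair. We also fix a Harder--Narasimhan truncation $S=\Bun_{U(n)}^{\le\mu}$, since Chow classes on $\Sht_{U(n)}^r$ are determined by their restrictions to all such truncations.

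Next express both sides as shtuka-traces. By Theorem \ref{thm:trace-conjecture-n/3}, in the localized form Theorem \ref{thm: trace conjecture E}, we have $\Tr^{\Sht}(\cc_U)=[\cZ^r_{\cE_1}]$ and $\Tr^{\Sht}(\cc_{U^\perp})=[\cZ^r_{\cE_2}]$. Twisting by the Gaussian splits this by coefficient $a$ and weights each summand by $\psi(\langle e_{\cG,\cE_1},a\rangle)$. The plan is to check that $U\xrightarrow{f}V\xrightarrow{\beta}\A^1$ evaluates on $\F_q$-points exactly to this pairing, so that $\beta\circ f$ realizes the function $a\mapsto\langle e,a\rangle$; this is Poisson summation in the $r=0$ case. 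Then use Lemma \ref{lem: twist trace by Chern class} together with Proposition \ref{prop: trace of Hitchin pushforward}, whose hypotheses come from Lemma \ref{lem:pushable-and-pullable}(2) and coconnectivity of $\cU$. Together these give, up to the normalizing constants $\chi(\det\cE_i)q^{n\deg\cE_i/2}$,
\[
\wt Z(\cG,\cE_1)\doteq\Tr^{\Sht}\bigl(\pi_!(f^*(\cc_V\otimes u_\beta))\bigr),\qquad
\wt Z(\cG,\cE_2)\doteq\Tr^{\Sht}\bigl(\pi^\perp_!(\wh g^*(\cc_{\wh V}\otimes u_{-\beta}))\bigr).
\]
Here $\cc_U=f^*\cc_V$ holds by Lemma \ref{lem: V-to-S pullable} and the pullability in Lemma \ref{lem:pushable-and-pullable}(1). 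The identification $\wh V\cong V$ is made via $h_V$.

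The core step is Fourier duality. Start from $\pi_!f^*(\cc_V\otimes u_\beta)$, which is a correspondence on $S$. By the formula $\FT_{\Qsh{}}=\delta$ of Theorem \ref{thm:motivic-FT-formalism}(4), we have $\pi_!=z^*_{W^\perp}$-type evaluation of the Fourier transform along $U\to W^\perp$: $\pi_!$ equals restriction to the zero section of $\FT_U$. Apply Proposition \ref{prop:motivic-derived-FT-cc-functoriality} twice, along the columns $U\to V$ and $S\to W$ of \eqref{eq: big diagram for E_1}. This converts $\pi_!f^*$ on the $V$-side into $\wh z^{\,*}\wh g_!$-type operations on the dual side, ending at $U^\perp$ and then at $S$. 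It is important that no pushforward from $V^\flat$ to $S$ is ever needed, because the pushes happen only on the $U$/$U^\perp$ level, where they are proper. Proposition \ref{prop: Fourier Gaussian cc varying base} then identifies $\FT(\cc_V\otimes u_\beta)$ with $(\cc_V\otimes u_{-\beta})\otimes\xi_r$, up to the invertible Gauss factor $\mathfrak G_Q$ and shifts and twists. Combining these steps gives an equality of cohomological correspondences on $S$:
\[
\pi_!f^*(\cc_V\otimes u_\beta)\cong\pi^\perp_!\wh g^*(\cc_{\wh V}\otimes u_{-\beta})\otimes(\mathfrak G_Q,\xi_r).
\]
Taking $\Tr^{\Sht}$ and applying Lemma \ref{lem: twist trace by Chern class} in the form of Example \ref{ex: trace chow} reduces the claim to computing the trace of the twisting datum $(\mathfrak G_Q,\xi_r)$ on $\Sht$. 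This trace is a locally constant scalar. Using Lemma \ref{lem:motivic-relative-Gauss-finite-etale} and the $r=0$ case, it equals the classical Gauss sum $\varepsilon\, q^{n\deg Q_1/2}$. The bookkeeping then matches this with $\chi(\det\cE_1)^{-1}\chi(\det\cE_2)q^{n(\deg\cE_2-\deg\cE_1)/2}$, since $\deg Q_1=\deg\cE_1^*-\deg\sigma^*\cE_2$.

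I expect the main obstacle to be the Fourier-duality step. Two points need care there. First, the intermediate correspondences must be pushed and pulled only along the maps certified pushable or pullable in Lemma \ref{lem:pushable-and-pullable}, so that both Proposition \ref{prop:motivic-derived-FT-cc-functoriality} and trace functoriality apply; the defect twists have to cancel. Second, one must verify that $\Tr^{\Sht}$ of the $\xi_r$-twist is the constant Weil--Gauss factor. I would attack the second point by restricting to the dense open where legs avoid $|Q_1|$, where $\xi_r$ is tautological transport by Lemma \ref{lem: Gaussian generic V-row isomorphisms}. I would then argue that a locally constant scalar is determined there, using Frobenius-trace compatibility of $\varepsilon_Q$ under the shtuka identification $\cF_r\cong\Frob^*\cF_0$.
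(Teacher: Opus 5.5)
Your plan follows the paper's proof essentially step for step: transverse reduction and HN truncation; the low-corank Trace Conjecture to realize both theta series as pushed traces of the Gaussian-twisted $\cc_U$ and $\cc_{U^\perp}$; base change together with Proposition~\ref{prop:motivic-derived-FT-cc-functoriality} and Proposition~\ref{prop: Fourier Gaussian cc varying base}; Proposition~\ref{prop: trace of Hitchin pushforward}; and the Gauss-twist trace evaluated where the legs avoid $\nu^{-1}(D_Q)$. The one organizational difference is that the paper base-changes first on the $V$-side, $\pi_!f^*\cong z^*g_!$, and then Fourier-transforms $g_!$ and $z^*$, which is the mirror image of your order.

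Two corrections. First, your displayed identity of correspondences omits the normalization shift $[d-2d(U/S)](d-d(U/S))$ with $d=d(V/S)$. These twists do not cancel: their trace contributes $q^{-d+d(U/S)}=q^{\chi_{\mathrm{Eul}}(X',\cF\otimes\sigma^*\cE_2)}=q^{n\deg\cE_2}$. Only with this factor does the Gauss sum $q^{d/2}\eta_{F'/F}(D_Q)^n=q^{-n(\deg\cE_1+\deg\cE_2)/2}\eta_{F'/F}(D_Q)^n$ become the required $\chi(\det\cE_1)^{-1}\chi(\det\cE_2)\,q^{n(\deg\cE_2-\deg\cE_1)/2}$. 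Second, you must descend $\xi_r$ to $\bar\xi_r$ on $\Hk_S^r$ before twisting a correspondence on $\Hk_S^r$ by it; this follows from homotopy invariance for the classical vector bundle $V^\flat\to\Hk_S^1$.
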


\noindent\emph{Proof of Theorem
\ref{thm:unitary-low-corank-modularity-section}.}

\subsubsection{Reductions}\label{sssec: unitary low corank reductions} Let $S=\Bun_{U(n)}^{\le\mu}$ be a Harder--Narasimhan truncation and let $\Sht_S^r\subset\Sht_{U(n)}^r$ be the induced open substack. We have
\begin{equation}\label{eq: unitary HN Chow limit}
\CH_{r(n-m)}(\Sht_{U(n)}^r)
=
\varprojlim_{\mu}
\CH_{r(n-m)}(\Sht_S^r).
\end{equation}
Thus it is enough to prove \eqref{eq: unitary low corank modularity} after restriction to every $\CH_{r(n-m)}(\Sht_S^r)$.  The transverse reduction of \cite[\S 2.2]{FYZ3} says that, in a fixed fiber of $p_m$, the equivalence relation generated by transverse pairs of Lagrangians is the full relation.  Thus we may assume that $\cE_1,\cE_2\subset\cG$ are transverse.  We will prove that
\begin{equation}\label{eq: unitary transverse target}
\wt Z_m^{n,r}(\cG,\cE_1)\big|_S
=
\wt Z_m^{n,r}(\cG,\cE_2)\big|_S .
\end{equation}

Now the formulations and results of \S \ref{sec:genetics-of-modularity} apply, and we maintain the notation from there.

\subsubsection{Preparations}
To summarize the setup, we have short exact sequences
\begin{equation}\label{eq: unitary transverse Q}
0\longrightarrow\sigma^*\cE_2
\longrightarrow\cE_1^*
\longrightarrow Q_1
\longrightarrow0,
\qquad
0\longrightarrow\sigma^*\cE_1
\longrightarrow\cE_2^*
\longrightarrow Q_2
\longrightarrow0,
\end{equation}
where $Q_2\cong\sigma^*Q_1^*$ and $Q_1^*=\cExt^1_{X'}(Q_1,\cO_{X'})$.  Let $D_Q$ be the effective divisor on $X$, counted with lengths, whose pullback to $X'$ is the scheme-theoretic support divisor of $Q_1$, equivalently of $Q_2$.  Thus $\det(Q_1)\cong\nu^*\cO_X(D_Q)$.  In what follows $U,V,W,U^\perp,\wh V,W^\perp$, their Hecke correspondences, and the maps $f,\pi,f^\perp,\pi^\perp$ are as in \eqref{eq: bestiary base triangles}, \eqref{eq: UVW zero fibers}, \eqref{eq: Hk UVW zero fibers}, and \eqref{eq: big diagram for E_1}.  Abbreviate $d:=d(V/S)$.

The skew-Hermitian form on $\cG$ induces two opposite Hermitian forms on the common torsion quotient, as in \cite[\S 2.3]{FYZ3}:
\begin{equation}\label{eq: unitary Q hermitian forms}
h_{12},h_{21}\co Q_1\xrightarrow{\sim}\sigma^*Q_1^*,
\qquad h_{21}=-h_{12}.
\end{equation}
(In particular $\operatorname{div}(Q_1)$ is $\sigma$-invariant, which justifies the existence of the divisor $D_Q$ on $X$ introduced above.) Let $\beta=\beta_{12}\co V\to\A^1_S$ be the quadratic function attached to $h_{12}$, and let $-\beta=\beta_{21}$ be the quadratic function attached to $h_{21}$.  We write $\sG_\beta=\beta^*\AS_{\psi,S}$ and $\sG_{-\beta}=(-\beta)^*\AS_{\psi,S}$.

\begin{lemma}\label{lem: unitary relative Gauss}
The relative Gauss cohomology of \eqref{eq: Gaussian endpoint Gauss factor}, equivalently
\begin{equation}\label{eq: unitary relative Gauss trace}
\mathfrak{G}_Q:=
\pi_{V!}\sG_\beta\sm{[d]}
\in \Dmot{S}
\end{equation}
has constant Frobenius trace function on $S(\F_q)$, which for every $s\in S(\F_q)$ is equal to
\begin{equation}\label{eq: unitary relative Gauss scalar}
\Tr(\Frob,\mathfrak{G}_Q)(s)
=
(-1)^d q^{d/2}\eta_{F'/F}(D_Q)^n .
\end{equation}
More generally, for every finite extension $\F_{q^k}$ and every $s\in S(\F_{q^k})$, one has
\[
\Tr(\Frob_q^k,\mathfrak{G}_Q)(s)
=(-1)^d q^{kd/2}\eta_{F'_k/F_k}(D_{Q,k})^n
=\bigl((-1)^d q^{d/2}\eta_{F'/F}(D_Q)^n\bigr)^k
\]
where the subscript $k$ indicates base change to $\F_{q^k}$. 
\end{lemma}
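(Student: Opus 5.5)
Fix $s\in S(\F_{q^k})$, corresponding to a Hermitian bundle $\cF$ of rank $n$ on $X'_{\F_{q^k}}$. By proper base change for $\pi_{V!}$ and the sheaf--function dictionary for the motivic Artin--Schreier sheaf (its $\ell$-adic realization has trace function $x\mapsto\psi(\Tr_{\F_{q^k}/\F_q}x)$), the trace of $\Frob_q^k$ on the stalk of $\mathfrak G_Q$ at $s$ equals
\[
(-1)^d\sum_{v\in V_s(\F_{q^k})}\psi\bigl(\Tr_{\F_{q^k}/\F_q}\beta(v)\bigr).
\]
Here $V_s=\Hom_{X'}(\cF^*,Q_1)=H^0(X',\cF\otimes Q_1)$ is a finite-dimensional $\F_{q^k}$-space of dimension $d=n\cdot\length(Q_1)$, and $\beta$ is the quadratic form $\frq_{12}$ induced by $h_{12}$ and the Hermitian structure on $\cF$. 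So the claim reduces to evaluating a classical finite-field Gauss sum of a nondegenerate quadratic form, which is $q^{kd/2}$ times a sign.

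First, decompose $Q_1=\bigoplus_{y}Q_{1,y}$ over closed points $y$ of $X$ in the support, so that $V_s$ splits orthogonally and the Gauss sum factors as a product over $y$. Second, near each $y$, trivialize $\cF$ locally as a Hermitian lattice. Over the local ring this is possible, since Hermitian lattices over unramified $\cO_{F'_y}/\cO_{F_y}$ (split or inert) are all isometric to the standard one. Thus $(V_s,\beta)_y\cong (Q_{1,y},h_{12})^{\oplus n}$ orthogonally, i.e.\ $n$ copies of the rank-$\length$ quadratic space underlying the torsion Hermitian module $Q_{1,y}$ viewed over $\F_q$ by trace. This is exactly why the answer is independent of $s$, and it explains the exponent $n$.

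Third, compute the normalized Gauss sum $\gamma_y:=q^{-\dim/2}\sum\psi(\beta)$ for a single torsion Hermitian module $Q_{1,y}$. Reduce by a Jordan-type decomposition to cyclic pieces $\cO_{F'_y}/\varpi^{e}$ with a Hermitian form. A cyclic piece contributes a Gauss sum over $\cO_{F'}/\varpi^e$ viewed as a $2e\deg(y)$-dimensional $\F_q$-quadratic space, namely the trace form of a norm form. In the split case this is hyperbolic, giving $\gamma=1$. In the inert case the norm form of $\F_{q^{2f}}/\F_{q^f}$ is anisotropic in each graded piece, giving $\gamma=(-1)^{e\deg y}$ after accounting for the scaling $\psi(\Tr)$. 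Together these give $\gamma_y=\eta_{F'/F}(y)^{\length_y}$, hence the product equals $\eta_{F'/F}(D_Q)$, and raising to the $n$th power gives the claimed $(-1)^dq^{d/2}\eta(D_Q)^n$. The version over $\F_{q^k}$ follows by the same computation after base change, and the identity $\eta_{F'_k/F_k}(D_{Q,k})=\eta_{F'/F}(D_Q)^k$ (inert points of degree $f$ stay inert or split according to parity) gives the power form.

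The main obstacle is the third step: getting the exact sign of the inert local Gauss sum, including its independence of the Hermitian unit and of the choice $h_{12}$ versus $h_{21}$. I would handle it by noting that the Hermitian form makes the quadratic space a direct sum of $\F_{q^f}$-lines carrying the norm form scaled by an element of $F_y$. Its Weil index is insensitive to the scalar, since norms surject onto $\F_{q^f}^\times$, so the standard evaluation $\sum_{x\in\F_{q^{2f}}}\psi(\Tr\,\Nm x)=-q^{f}$ does the job.
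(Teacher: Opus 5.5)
Your reduction is the paper's: proper base change and the Grothendieck--Lefschetz formula turn the stalk trace at $s$ into $(-1)^d$ times the Gauss sum of the finite quadratic space $(V_s,\frq_{12})$. The paper then cites \cite[Lemma 2.3.8]{FYZ3} for the value $q^{d/2}\eta_{F'/F}(D_Q)^n$. You instead evaluate it by hand: orthogonal splitting over points of $X$, local trivialization of the unimodular Hermitian lattice to get $n$ copies of $Q_{1,y}$, then Jordan decomposition. That strategy is sound, and it makes the independence of $s$ and the exponent $n$ transparent.

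However, the inert local factor you write down is wrong. For a cyclic piece $M=\cO_{F'_y}/\varpi^e$ at an inert point $y$ of degree $f$, the normalized Gauss sum is $(-1)^e$, not $(-1)^{e\deg y}$.
\begin{itemize}
\item For $e=1$ this is your own closing computation. $M=\F_{q^{2f}}$ has $\F_q$-dimension $2f$, and $\sum_x\psi(\Tr(c\,\Nm x))=1-(q^f+1)=-q^f$, so the normalized sum is $-1$ for every $f$.
\item For general $e$, the graded pieces $\varpi^jM/\varpi^{j+1}M$ are not orthogonal summands, since piece $j$ pairs with piece $e-1-j$. So \emph{anisotropic in each graded piece} is not the mechanism. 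Use instead the isotropic submodule $L=\varpi^{\lceil e/2\rceil}M$, for which $\sum_M=|L|\sum_{L^\perp/L}$. Here $L^\perp/L$ is $0$ for $e$ even, and $\cO_{F'_y}/\varpi$ with a form of the same type for $e$ odd.
\end{itemize}
With your formula $(-1)^{e\deg y}$, the product over points would be $(-1)^{\sum_{y\text{ inert}}e_y\deg y}$. This is not $\eta_{F'/F}(D_Q)=(-1)^{\sum_{y\text{ inert}}e_y}$ as soon as an inert point of even degree occurs with odd multiplicity, because $\eta_{F'/F}(y)=-1$ for every inert $y$ regardless of its degree. Your next assertion, $\gamma_y=\eta_{F'/F}(y)^{\length_y}$ with $\length_y$ the length over the local ring of $X'$, is the correct one. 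But it does not follow from the formula you state just before it.

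Two smaller points:
\begin{itemize}
\item The final equality $(-1)^dq^{kd/2}\eta_{F'/F}(D_Q)^{nk}=\bigl((-1)^dq^{d/2}\eta_{F'/F}(D_Q)^n\bigr)^k$ needs $(-1)^d=(-1)^{dk}$. This holds because $d=n\dim_{\F_q}H^0(Q_1)=n\deg Q_1=2n\deg D_Q$ is even, so the sign is in fact $1$; you should record this. Note also that $d$ is this $\F_q$-dimension, not $n$ times the length over $\cO$.
\item The identity $\eta_{F'_k/F_k}(D_{Q,k})=\eta_{F'/F}(D_Q)^k$ is cleanest without a case analysis of how inert points decompose. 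Use that $\eta_{F'_k/F_k}$ is $\eta_{F'/F}$ composed with pushforward of divisors along $X_{\F_{q^k}}\to X$, and that this pushforward sends $D_{Q,k}$ to $kD_Q$.
\end{itemize}
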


\begin{proof}
The statement is pointwise, so fix $s\in S(\F_{q^k})$; to lighten notation take $k=1$. The formation of $\pi_{V!}\sG_\beta$ commutes with the base change along $s\rightarrow S$, and since $Q_1$ is torsion, the fiber $V_s = \Hom(\cF_s^*,Q_1)$ is a finite-dimensional $\F_q$-quadratic space with quadratic form $\frq_{12}$. By the Grothendieck--Lefschetz trace formula,
\begin{equation}\label{eq: unitary relative Gauss fiber sum}
\Tr(\Frob,\pi_{V!}\sG_\beta)(s)=\sum_{v\in V_s(\F_q)}\psi(\beta_s(v)),
\end{equation}
the Gauss sum of $(V_s,\frq_{12})$. The computation of \cite[Lemma 2.3.8]{FYZ3} gives
\[
\sum_{v\in V_s(\F_q)}\psi(\beta_s(v))
=q^{d/2}\eta_{F'/F}(D_Q)^n,
\]
independently of the Hermitian bundle $\cF_s$. For general $k$ with the additive character $\psi_k:=\psi\circ\Tr_{\F_{q^k}/\F_q}$, the same placewise calculation over $X_k$ gives
\begin{equation}\label{eq:gauss-sum}
\sum_{v\in V_s(\F_{q^k})}\psi_k(\beta_s(v))
=q^{kd/2}\eta_{F'_k/F_k}(D_{Q,k})^n.
\end{equation}
Moreover,
$\eta_{F'_k/F_k}(D_{Q,k})=\eta_{F'/F}(D_Q)^k$. Finally, the shift $\sm{[d]}$ in
\eqref{eq: unitary relative Gauss trace} contributes the sign $(-1)^d$.
Here $d=n\deg_{X'}(\nu^*D_Q)=2n\deg D_Q$ is even, so this sign is $1$. Together with \eqref{eq:gauss-sum}, this proves the result.
\end{proof}

Let $\pi^\flat\co V^\flat\to\Hk_S^1$ be the projection. This is a vector bundle in the classical sense by the proof of Lemma \ref{lem: one-leg V-row endpoint maps lci}. Therefore $(\pi^\flat)^*$ is fully faithful (by homotopy invariance), and via the identifications
\[
(\pi^\flat)^*h_i^*\mathfrak{G}_Q=b_i^*\pi^*\mathfrak{G}_Q
=\wt b_i^*\wt\pi_i^*\mathfrak{G}_{Q,i},
\]
the isomorphism $\xi_1$ of Proposition \ref{prop: Fourier Gaussian cc} descends uniquely to an isomorphism
\[
\bar\xi_1\co h_0^*\mathfrak{G}_Q\xrightarrow{\sim}h_1^*\mathfrak{G}_Q.
\]
For $0\leq i\leq r$, let
$h_i\co\Hk_S^r\to S_i$ record the $i$-th intermediate bundle, where
each $S_i$ is a copy of $S$. For $1\leq i\leq r$, let
$\rho_i\co\Hk_S^r\to\Hk_S^1$ record the $i$-th modification. Then
$\rho_i^*\bar\xi_1$ is an isomorphism from
$h_{i-1}^*\mathfrak{G}_Q$ to $h_i^*\mathfrak{G}_Q$. For general $r$,
let
\begin{equation}\label{eq:bar-xi}
\bar\xi_r:=
\rho_r^*\bar\xi_1\circ\cdots\circ\rho_1^*\bar\xi_1
\co h_0^*\mathfrak{G}_Q\xrightarrow{\sim}h_r^*\mathfrak{G}_Q
\end{equation}
and set $\bar\xi_0=\Id$. The one-leg factorization is displayed
schematically as
\begin{equation}\label{eq: bar xi r one-leg factorization diagram}
\begin{tikzcd}[ampersand replacement=\&, column sep=small, row sep=large]
\& \Hk_S^1 \ar[dl, "h_0"'] \ar[dr, "h_1"]
\& \& \Hk_S^1 \ar[dl, "h_0"'] \ar[dr, "h_1"]
\& \& \cdots
\& \& \Hk_S^1 \ar[dl, "h_0"'] \ar[dr, "h_1"]
\& \\
S_0 \& \& S_1 \& \& S_2
\& \& S_{r-1} \& \& S_r ,
\end{tikzcd}
\end{equation}
where the $i$-th copy of $\Hk_S^1$ is the correspondence from
$S_{i-1}$ to $S_i$, pulled back to $\Hk_S^r$ along $\rho_i$.
Thus the pullback of $\bar\xi_r$ along $\Hk_V^\flat\to\Hk_S^r$ is the isomorphism $\xi_r$ of Proposition \ref{prop: Fourier Gaussian cc}.

\begin{cor}\label{cor: Gauss twist trace constant}
The trace of $(\mathfrak{G}_Q,\bar\xi_r)$ twisted by Frobenius is the constant class
\[
\Tr^{\Sht}(\mathfrak{G}_Q,\bar\xi_r)
=
(-1)^d q^{d/2}\eta_{F'/F}(D_Q)^n\cdot 1_{\Sht_S^r}
\]
in $\rH^0(\Sht_S^r,\Qsh{})$, where $1_{\Sht_S^r}$ denotes the unit
section.
\end{cor}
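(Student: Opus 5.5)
The plan is to reduce the shtuka-twisted trace of the pair $(\mathfrak{G}_Q,\bar\xi_r)$ to a scalar computation. The trace is formed from the self-correspondence $S\xleftarrow{h_0}\Hk_S^r\xrightarrow{h_r}S$ together with the isomorphism $\bar\xi_r\co h_0^*\mathfrak{G}_Q\xrightarrow{\sim}h_r^*\mathfrak{G}_Q$, composed with the Weil structure $\Frob^*\mathfrak{G}_Q\cong\mathfrak{G}_Q$. Since $\mathfrak{G}_Q$ is tensor-invertible (Lemma \ref{lem:motivic-relative-Gauss-invertible}), Example \ref{ex: trace chow} applies. Pulled back to $\Sht_S^r$, the composite of $\bar\xi_r$ with the Frobenius structure is an automorphism of the invertible object $\rho^*\mathfrak{G}_Q$, where $\rho\co\Sht_S^r\to S$ is the map recording $\cF_0$. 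Its trace is therefore the corresponding unit in $\rH^0(\Sht_S^r,\Qsh{})^\times\subset\CH^0(\Sht_S^r)$. The claim thus reduces to showing that this unit is the constant $(-1)^dq^{d/2}\eta_{F'/F}(D_Q)^n$.

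\textbf{Step 1 (local constancy).} I would use Lemma \ref{lem:motivic-relative-Gauss-finite-etale} to write $\mathfrak{G}_Q\cong\mathfrak g_d|_S\otimes\varepsilon_Q$, where $\varepsilon_Q$ is trivialized by a finite \'etale cover $P/\mathrm{SO}_d\to S$. Over this cover, the automorphism in question becomes an automorphism of $\Qsh{}$ on a finite \'etale cover of $\Sht_S^r$. Such an automorphism is a locally constant function (an element of $\rH^0$ of the unit, i.e.\ a $\ol{\Q}^\times$-valued function on $\pi_0$). It descends to a locally constant function on $\Sht_S^r$. It therefore suffices to compute its value at one geometric point of each connected component. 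Since these values are determined by finite-field points, I would compute them at $\F_{q^k}$-points for varying $k$.

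\textbf{Step 2 (pointwise evaluation).} At an $\F_{q^k}$-point of $\Sht_S^r$, the value of the unit should be recovered from Frobenius traces, by the compatibility of Chow-valued trace with the sheaf-function dictionary in degree zero. Concretely, the value is the scalar by which $\Frob_{q^k}$, transported via $\bar\xi_r$, acts on the rank-one fiber of $\mathfrak{G}_Q$. I would then identify $\bar\xi_r$ on fibers. On the dense open $\Hk_S^{1,\circ}$ each $\bar\xi_1$ is the transport along an isometry of quadratic spaces (Lemma \ref{lem: Gaussian generic V-row isomorphisms} and the defining property of $\xi^\circ$). Transport along an isometry intertwines the Frobenius actions on Gauss sums. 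Hence the Frobenius eigenvalue on the fiber at $\cF_0$ is read off from $\Tr(\Frob_q^k,\mathfrak{G}_Q)(s)$, which Lemma \ref{lem: unitary relative Gauss} gives as $\bigl((-1)^dq^{d/2}\eta(D_Q)^n\bigr)^k$, independent of $s$. Because the eigenvalue is independent of the point, the shtuka identification $\cF_r\cong\Frob^*\cF_0$ contributes exactly this scalar raised to the power $k$ at an $\F_{q^k}$-point. This yields the constant unit $(-1)^dq^{d/2}\eta(D_Q)^n$ (normalized so that the $k$-th power appears at $\F_{q^k}$-points, which is consistent).

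\textbf{Main obstacle.} The hardest step is justifying that $\bar\xi_r$ agrees with the tautological isometry transport on every component of $\Sht_S^r$, including points where legs meet $\nu^{-1}(D_Q)$. Such components may not meet the generic locus $\Hk_S^{1,\circ}$ after imposing the shtuka condition. I would handle this by working with the locally constant function on $\Hk_S^r$ itself before intersecting with the graph of Frobenius. On $\Hk_S^r$, the ratio between $\bar\xi_r$ and any Frobenius-compatible trivialization is locally constant, and every component of $\Hk_S^1$ meets $\Hk_S^{1,\circ}$ (as in the proof of Proposition \ref{prop: Fourier Gaussian cc}). Hence the value is determined generically and then pulls back to all of $\Sht_S^r$.
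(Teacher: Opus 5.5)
Your outline matches the paper's: tensor-invertibility makes the trace a locally constant function, you evaluate it where $\bar\xi_r$ is isometry transport, and you obtain a Gauss sum. But neither of the two load-bearing steps is correctly justified.

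\textbf{Step 2 (the pointwise evaluation) is confused.} Restricting a class in $\rH^0(\Sht_S^r,\Qsh{})$ to a point gives the value on that point's component, whatever the point's field of definition. So the value at an $\F_{q^k}$-point is not a $k$-th power of anything. The shtuka condition uses the $q$-Frobenius. At a point $y$, the twisted automorphism of the Gauss line is transport along a single $q$-semilinear isometry $(\varphi_y|_{\nu^{-1}(D_Q)})\circ\tau_y$ of $V_y=\Hom(\cF_0^*,Q_1)$, where $\tau_y$ composes the Hecke modifications near $\nu^{-1}(D_Q)$.

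Nor can Lemma~\ref{lem: unitary relative Gauss} be used as you propose. It computes $\Frob_{q^k}$ on the stalk at one $\F_{q^k}$-point of $S$. The automorphism at $y$, by contrast, is built from $\bar\xi_r$ and $\varphi_y$, which compare stalks at the distinct points $\cF_0$, $\cF_r$, $\Frob^*\cF_0$. Stalkwise eigenvalues cannot detect these transition maps: rescaling $\bar\xi_1$ by a constant $\lambda$ multiplies the trace by $\lambda^r$ without changing any eigenvalue.

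The missing idea is that $(\varphi_y|_{\nu^{-1}(D_Q)})\circ\tau_y$ is a descent datum turning $V_y$ into a finite Hermitian quadratic space over $\F_q$. The twisted Grothendieck--Lefschetz formula identifies the trace with the Gauss sum of this descended space. The placewise computation of \cite[Lemma 2.3.8]{FYZ3} evaluates that sum as $q^{d/2}\eta_{F'/F}(D_Q)^n$, and the shift $[d]$ contributes $(-1)^d$.

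If you insist on working through $\F_{q^k}$-points, you would need two further steps. First, show that the $k$-th iterate of the semilinear isometry differs from $\Frob_{q^k}$ by a unitary automorphism acting trivially on the Gauss line. Second, extract $k$-th roots. That requires rational points over coprime extensions on each component, and can be inconclusive on components that are not geometrically connected.

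\textbf{Your ``main obstacle'' does not arise.} The leg map $\Sht^r_{U(n)}\to(X')^r$ is smooth, hence open. So every connected component of $\Sht_S^r$ meets the locus where all legs avoid $\nu^{-1}(D_Q)$. On that locus $\bar\xi_r$ is the $r$-fold composite of $\xi^\circ$, and this is how the paper reduces to the generic case.

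Your workaround would not remove the obstacle even if it existed. The trace on $\Sht_S^r$ is not pulled back from a function on $\Hk_S^r$: it involves $\varphi$ and the Weil structure, which live only over the graph of Frobenius. There is also no canonical ``Frobenius-compatible trivialization'' of $h_0^*\mathfrak{G}_Q^{-1}\otimes h_r^*\mathfrak{G}_Q$ on $\Hk_S^r$ against which to measure $\bar\xi_r$. So knowing $\bar\xi_r$ generically on $\Hk_S^r$ would not compute the trace on a component of $\Sht_S^r$ lying entirely over the special leg locus.
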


\begin{proof}
By Lemma \ref{lem:motivic-relative-Gauss-invertible}, applied as in the paragraph following \eqref{eq: Gaussian endpoint Gauss factor}, the object $\mathfrak{G}_Q$ is tensor-invertible. Hence the trace is given by an element of $\rH^0(\Sht_S^r,\Qsh{})$, i.e., a locally constant function. It is enough to compute this locally constant function on a nonempty open substack of each connected component.

Let $\Sht_S^{r,\circ}\subset\Sht_S^r$ be the locus where all legs avoid $\nu^{-1}(D_Q)$. If $r=0$, this is all of $\Sht_S^0$. If $r>0$, let $C$ be a connected component of $\Sht_S^r$. By \cite[\S 6]{FYZ}, the leg map $\Sht^r_{U(n)}\to (X')^r$ is smooth, hence open; therefore the image of $C$ is a nonempty open subset of a connected component of $(X')^r$. Since $(X'\setminus\nu^{-1}(D_Q))^r$ is dense in every connected component of $(X')^r$, the component $C$ meets $\Sht_S^{r,\circ}$.

Over $\Sht_S^{r,\circ}$, all bundle modifications in the shtuka restrict to isomorphisms on a neighborhood of $\nu^{-1}(D_Q)$. Thus the restriction of $\bar\xi_r$ is the $r$-fold composition of the tautological isomorphism \eqref{eq:xi-circ}. Fix a geometric point $y$ of $\Sht_S^{r,\circ}$. Let
$\tau_y$ be the isometry from
$\cF_0|_{\nu^{-1}(D_Q)}$ to
$\cF_r|_{\nu^{-1}(D_Q)}$ obtained by composing the Hecke modifications, and
let
$\varphi_y\co\cF_r\xrightarrow{\sim}\Frob_q^*\cF_0$ be the identification in the definition of shtuka. The composite
$(\varphi_y|_{\nu^{-1}(D_Q)})\circ\tau_y$ is the Frobenius-semilinear
isometry that supplies descent on the finite quadratic space
$V_y=\Hom(\cF_0^*,Q_1)$. By the definition of $\xi^\circ$ in
\eqref{eq:xi-circ} and of $\bar\xi_r$, the Frobenius-twisted endomorphism
of the Gauss cohomology at $y$ is precisely the transport induced by this
semilinear isometry. The twisted Grothendieck--Lefschetz formula identifies
its trace with the Gauss sum on the fixed subspace, which is the
descended finite Hermitian quadratic space over $\F_q$. The computation of
\cite[Lemma 2.3.8]{FYZ3}, applied to that descended space, gives
$q^{d/2}\eta_{F'/F}(D_Q)^n$, and the shift $\sm{[d]}$ contributes
$(-1)^d$.
\end{proof}

\subsubsection{Sheaf-cycle correspondence}
Recall the cohomological correspondence $\cc_V$ from
\eqref{eq: Gaussian varying-base cc definition}. On the Fourier-dual row, write
\[
\cc_{\wh V}\in
\Corr_{\Hk_{\wh V}^{\flat}}\bigl(\Qsh{\wh V_0},\Qsh{\wh V_r}\tw{-d(h_r)}\bigr)
\]
for the analogous pullback of the identity correspondence, defined using the quasi-smooth map $\mbeta_r:\Hk_{\wh V}^{\flat}\to\wh V_r$. Under the self-duality identifications used in Proposition \ref{prop: Fourier Gaussian cc varying base}, $\cc_V$ identifies with $\cc_{\wh V}$. The pullback cohomological
correspondences $\cc_U=f^*\cc_V$ and
$\cc_{U^\perp}=(f^\perp)^*\cc_{\wh V}$ exist by Lemma
\ref{lem:pushable-and-pullable}.  We also use the Gaussian pullback
isomorphisms $u_\beta$ and $u_{-\beta}$ defined in and immediately after
\eqref{eq: Gaussian varying-base pullback iso}: $u_\beta$ is
induced by the equality $\beta\circ b_0=\beta\circ b_r$, and $u_{-\beta}$
is induced by the equality $(-\beta)\circ b_0=(-\beta)\circ b_r$.

\begin{lemma}\label{lem: unitary trace Fourier comparison}
In $\CH_{r(n-m)}(\Sht_S^r)$, one has
\begin{equation}\label{eq: unitary trace comparison before theta}
\begin{aligned}
&\Sht(\pi)_!\Tr^{\Sht}\!\left(\cc_U\otimes f^*u_\beta\right)\\
&\quad =
q^{-d+d(U/S)+d/2}
\eta_{F'/F}(D_Q)^n\,
\Sht(\pi^\perp)_!\Tr^{\Sht}\!\left(\cc_{U^\perp}\otimes(f^\perp)^*u_{-\beta}\right).
\end{aligned}
\end{equation}
\end{lemma}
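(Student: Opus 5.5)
The plan is to convert both sides into traces of correspondences on the $V$-row and compare them there by Fourier duality. Pushing forward is never used; all the movement is by pullback.

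\emph{Step 1: rewrite the left side.} Since $\cc_U=f^*\cc_V$ by Lemma \ref{lem:pushable-and-pullable}(1), and pullback of correspondences is compatible with tensoring by pulled-back isomorphisms, we have $\cc_U\otimes f^*u_\beta=f^*(\cc_V\otimes u_\beta)$. The map $\pi\colon U\to S$ is the base change of the zero section $z_W$ along $g$. The correspondence on the $U$-row is then
\[
\pi_!\,f^*(\cc_V\otimes u_\beta)=z_W^*\,g_!(\cc_V\otimes u_\beta),
\]
by base change for the squares in \eqref{eq: big diagram for E_1}. This uses pushability of $g$ and of $\pi$ (Lemma \ref{lem:pushable-and-pullable}(2),(3)) and pullability of $z_W$ (part (4)). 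I would rather avoid pushing along $g$, so instead I apply Proposition \ref{prop: trace of Hitchin pushforward} directly to $\pi$. Its hypotheses hold: $\cU$ is coconnective, and the comparison map $\Hk_U^\flat\to h_0^*U$ is a closed embedding of derived vector bundles. This gives
\[
\Sht(\pi)_!\Tr^{\Sht}(\cc_U\otimes f^*u_\beta)=\Tr^{\Sht}\bigl(\pi_!(\cc_U\otimes f^*u_\beta)\bigr).
\]
The same applies on the $\perp$ side. So it suffices to compare the two correspondences $\pi_!f^*(\cc_V\otimes u_\beta)$ and $\pi^\perp_!(f^\perp)^*(\cc_{\wh V}\otimes u_{-\beta})$ on $S\leftarrow\Hk_S^r\to S$. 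Equivalently, after Fourier transform on $S$-relative data of rank zero, we compare them as endomorphism-valued correspondences up to the scalar.

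\emph{Step 2: Fourier transform.} Write $\pi_!f^*=z_{W}^{\ast}$-type operations in Fourier-dual form, using Proposition \ref{prop:motivic-derived-FT-cc-functoriality} and Theorem \ref{thm:motivic-FT-formalism}(3),(4). The key identity is
\[
\pi_!f^*\mathcal K\cong \FT\text{-dual of }\,\wh z_W^*\,\wh f^*\FT_V(\mathcal K),
\]
up to shifts. Concretely, $\pi_!f^*$ on $V$ corresponds under $\FT_V$ to $(\pi^\perp)_!(f^\perp)^*$ on $\wh V$, twisted by $\sm{[?](?)}$ with exponents determined by $d(U/S)$ and $d$. This is the derived Fourier duality of the two Cartesian squares in \eqref{eq: UVW zero fibers}: the duality exchanges $U\leftrightarrow W^\perp$ and $W\leftrightarrow U^\perp$. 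Applying this at the level of cohomological correspondences with the push–pull compatibility gives
\[
\pi_!f^*(\cc_V\otimes u_\beta)=\TT_{\ast}\,\pi^\perp_!(f^\perp)^*\,\FT_{V^\flat}(\cc_V\otimes u_\beta).
\]
Proposition \ref{prop: Fourier Gaussian cc varying base} then identifies $\FT_{V^\flat}(\cc_V\otimes u_\beta)$, after $[2]^*$ (harmless, since $[2]$ is an automorphism commuting with all the maps), with
\[
(\cc_{\wh V}\otimes u_{-\beta})\otimes\bar\xi_r
\]
shifted by $\sm{[-d](-d)}$. Here $\cc_V$ is identified with $\cc_{\wh V}$, and $\bar\xi_r$ is pulled back from $\Hk_S^r$.

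\emph{Step 3: take traces.} The correspondence on the $U^\perp$ side becomes
\[
\pi^\perp_!(f^\perp)^*\bigl(\cc_{\wh V}\otimes u_{-\beta}\bigr)\otimes\bar\xi_r,
\]
with $\bar\xi_r$ an automorphism of the invertible object $\mathfrak G_Q$ on $S$. By the projection formula and Lemma \ref{lem: twist trace by Chern class} (in the invertible-object form, Example \ref{ex: trace chow}), the trace factors as $\Tr^{\Sht}(\mathfrak G_Q,\bar\xi_r)\cap(\ldots)$. Corollary \ref{cor: Gauss twist trace constant} gives this factor as $q^{d/2}\eta_{F'/F}(D_Q)^n$, since $d$ is even. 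The remaining Tate twists contribute powers of $q$ through Frobenius weights on Chow traces. I would track these to obtain $q^{-d+d(U/S)}$, reading the exponent off from the shifts in Step 2 and the $\sm{[-d](-d)}$ normalization.

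\emph{Main obstacle.} I expect the hardest step to be Step 2. There, each pullback and pushforward of correspondences must actually be pullable or pushable, which is what Lemmas \ref{lem:pushable-and-pullable} and \ref{lem: V-to-S pullable} supply. The shift bookkeeping must also come out exactly to $q^{-d+d(U/S)}$. I would first verify the exponent on the $r=0$ case, where it reduces to classical Poisson summation for $U=\ker(V\to W)$ with $|U^\perp|=|\wh V|/|U|$.
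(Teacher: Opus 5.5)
Your plan is essentially the paper's proof. It uses the base change $\pi_!f^*(\cc_V\otimes u_\beta)\cong z^*g_!(\cc_V\otimes u_\beta)$ via Lemma~\ref{lem:pushable-and-pullable}, then the push--pull compatibility of Proposition~\ref{prop:motivic-derived-FT-cc-functoriality} for $g_!$ and $z^*$ to reach $\pi^\perp_!(f^\perp)^*\FT_{V^\flat}(\cc_V\otimes u_\beta)$, then Proposition~\ref{prop: Fourier Gaussian cc varying base}, Proposition~\ref{prop: trace of Hitchin pushforward} for $\Sht(\pi)_!$ and $\Sht(\pi^\perp)_!$, and Corollary~\ref{cor: Gauss twist trace constant} for the Gauss twist.

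The repairs needed are small. $g_!$ is genuinely used; what the argument avoids is pushing from the $V$-row down to the $S$-row, which is not pushable. Proposition~\ref{prop: trace of Hitchin pushforward} needs both legs $\wt a_0,\wt a_r$ (and $\wt a_0^\perp,\wt a_r^\perp$) to be closed embeddings, not just the left one. Finally, your unspecified $\TT_\ast$ totals $\sm{[d-2d(U/S)](d-d(U/S))}$, since $d(g_0)=d(U/S)$ and $d(z_{W,0})=d(U/S)-d$. On traces this gives $(-1)^dq^{-d+d(U/S)}$, which together with $(-1)^dq^{d/2}\eta_{F'/F}(D_Q)^n$ from the corollary is exactly the stated scalar.
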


\begin{proof}
Under the self-duality identifications used in Proposition
\ref{prop: Fourier Gaussian cc varying base}, the right hand side of
\eqref{eq: Gaussian varying-base cc Fourier identity} is the cohomology correspondence
$\cc_{\wh V}\otimes u_{-\beta}$ on the row with
$\Hk_{\wh V}^{\flat}$ in \eqref{eq: big diagram for E_1}.
By
definition of $\cc_U$ and $\cc_{U^\perp}$, and by compatibility of
pullback with tensoring (the pullable analogue of Lemma \ref{lem: composition tensor compatibility}, proved by the same concatenation-of-base-change argument), we have
\begin{equation}\label{eq: unitary trace Fourier pullback of Gaussian cc}
f^*(\cc_V\otimes u_\beta)
=
\cc_U\otimes f^*u_\beta,
\qquad
(f^\perp)^*(\cc_{\wh V}\otimes u_{-\beta})
=
\cc_{U^\perp}\otimes (f^\perp)^*u_{-\beta}.
\end{equation}

We next compare the pushed correspondences on the base correspondence
$S\xleftarrow{h_0}\Hk_S^r\xrightarrow{h_r}S$. Apply the
Base Change Theorem for cohomological correspondences, \cite[Theorem
4.4.2]{FK}, to the square of maps of correspondences induced by the first
square in \eqref{eq: Hk UVW zero fibers}. The corresponding arrows for its
top, right, left, and bottom maps are, respectively,
$f$, $g$, $\pi$, and $z_{\Hk_W^\flat}$.
Parts (1)--(4) of Lemma \ref{lem:pushable-and-pullable} say respectively
that $f$ is pullable, $\pi$ is pushable, $g$ is pushable, and
$z_{\Hk_W^\flat}$ is pullable. The corresponding commutative squares are
the base changes of the derived Cartesian square
\eqref{eq: UVW zero fibers}, hence are themselves Cartesian. Thus the hypotheses of the Base Change Theorem
hold, and it gives the canonical identification
\begin{equation}\label{eq: unitary trace Fourier base-change}
\pi_!f^*(\cc_V\otimes u_\beta)
\cong
z_{\Hk_W^\flat}^{\,*}\,g_!(\cc_V\otimes u_\beta),
\end{equation}
where $z_{\Hk_W^\flat}\co\Hk_S^r\to\Hk_W^\flat$ denotes the zero
section in \eqref{eq: Hk UVW zero fibers}.

We now apply the motivic derived Fourier transform to
\eqref{eq: unitary trace Fourier base-change}.  On the $S$-row the Fourier
transform is the identity, since $S$ is the zero derived vector bundle over
itself. The required hypotheses are precisely the global-presentation
hypotheses asserted for both diagrams after
\eqref{eq: big diagram for E_1} in \S\ref{ssec:bestiary}. First, the left-pushable
part of Proposition \ref{prop:motivic-derived-FT-cc-functoriality}, applied to
the map with vertical arrow $g$, gives
\begin{equation}\label{eq: unitary trace Fourier g functoriality}
\TT_{\sm{[d(g_0)]}}\FT_{\Hk_W^\flat}\!\left(g_!(\cc_V\otimes u_\beta)\right)
\cong
(f^\perp)^*
\FT_{\Hk_V^\flat}(\cc_V\otimes u_\beta).
\end{equation}
Here the Fourier-dual map has vertical arrow
$f^\perp=\widehat{g^\sharp}$, denoted $\wh g$ in the second diagram of
\eqref{eq: Hk UVW zero fibers}. Second, the right-pullable part of
Proposition \ref{prop:motivic-derived-FT-cc-functoriality}, applied to the
zero-section map
$z_{\Hk_W^\flat}\co\Hk_S^r\to\Hk_W^\flat$, gives
\begin{equation}\label{eq: unitary trace Fourier zero section functoriality}
\FT_{\Hk_S^r}\!\left(z_{\Hk_W^\flat}^{\,*}g_!(\cc_V\otimes u_\beta)\right)
\cong
\pi^\perp_!
\TT_{\sm{[-d(z_{W,0})](-d(z_{W,0}))}}
\FT_{\Hk_W^\flat}\!\left(g_!(\cc_V\otimes u_\beta)\right).
\end{equation}
The Fourier-dual of the zero section $z_W\co S\to W$ is the
projection $U^\perp=\wh W\to S$, hence the pushforward in
\eqref{eq: unitary trace Fourier zero section functoriality} is
$\pi^\perp_!$.  Here $z_{W,0}\co S\to W_0$ denotes the zero section over
the first copy of $S$. Combining
\eqref{eq: unitary trace Fourier base-change},
\eqref{eq: unitary trace Fourier g functoriality}, and
\eqref{eq: unitary trace Fourier zero section functoriality}, and then using
Proposition \ref{prop: Fourier Gaussian cc varying base}, gives the
following identity of cohomological correspondences on $\Hk_S^r$:
\begin{equation}\label{eq: unitary normalized pushed correspondence}
\pi_!f^*(\cc_V\otimes u_\beta)
\cong
\left(\pi^\perp_!(f^\perp)^*(\cc_{\wh V}\otimes u_{-\beta})\right)
\otimes(\mathfrak{G}_Q,\bar\xi_r)
\sm{[d-2d(U/S)](d-d(U/S))}.
\end{equation}
Here $\bar\xi_r$ is as in \eqref{eq:bar-xi}, and $\otimes(\mathfrak{G}_Q,\bar\xi_r)$ is the tensor operation of Definition \ref{def:tensor-coh-corr} with this datum.
The shift and Tate twist in
\eqref{eq: unitary normalized pushed correspondence} are obtained by combining
the following contributions:
\[
\begin{array}{l|l}
\textup{operation} & \textup{shift and twist} \\
\hline
\textup{normalized FT \eqref{eq: Gaussian varying-base FT map}}
& \sm{[d](d)} \\
\textup{Fourier commutation with $g_!$ \eqref{eq: unitary trace Fourier g functoriality}}
& \TT_{\sm{[-d(g_0)]}} \\
\textup{zero-section functoriality \eqref{eq: unitary trace Fourier zero section functoriality}}
& \TT_{\sm{[-d(z_{W,0})](-d(z_{W,0}))}} \\
\textup{common normalization on the Gauss factor} &
\sm{[-d](-d)} \\
\hline
\textup{total after rank additivity}
& \sm{[d-2d(U/S)](d-d(U/S))}.
\end{array}
\]
The extra row records that the normalized Gaussian Fourier identity carries the factor $\mathfrak{G}_Q\sm{[-d](-d)}$, while \eqref{eq: unitary normalized pushed correspondence} is written as a twist by $\mathfrak{G}_Q$ together with the displayed common shift and Tate twist.
Finally, the scalar
multiplication $[2]_V^*$ appearing in
\eqref{eq: Gaussian varying-base FT map} commutes with all linear maps in
\eqref{eq: UVW zero fibers} and \eqref{eq: Hk UVW zero fibers}, and becomes the identity on $S$.

Taking Frobenius traces in
\eqref{eq: unitary normalized pushed correspondence}, and using Corollary \ref{cor: Gauss twist trace constant} for the Gauss twist, gives
\begin{equation}\label{eq: unitary normalized weighted trace comparison}
\begin{aligned}
\Tr^{\Sht}\!\left(\pi_! f^*(\cc_V\otimes u_\beta)\right)
&=
(-1)^d q^{-d+d(U/S)}
\Tr(\Frob,\mathfrak{G}_Q)\,
\Tr^{\Sht}\!\left(\pi^\perp_!(f^\perp)^*(\cc_{\wh V}\otimes u_{-\beta})\right).
\end{aligned}
\end{equation}

It remains to commute the pushforwards past the formation of the trace, using Proposition \ref{prop: trace of Hitchin pushforward}. We check that the maps of correspondences in question satisfy its hypotheses.  Indeed, $\cF_\bu^\flat$ has tor-amplitude $[0,1]$, so its dual has amplitude $[-1,0]$; applying relative $\RHom$ into the fixed vector bundles and then relative cohomology on the curve produces coconnective perfect complexes.  The same calculation applies to the Fourier-dual row.  The comparison maps $\wt a_i$ and $\wt a_i^\perp$ are closed embeddings of derived vector bundles by \cite[Lemma 9.1.3(1)]{FYZ3}; equivalently, the fibers of the defining maps of perfect complexes have tor-amplitude $[1,\infty)$ by \cite[Lemma 6.1.5]{FYZ3}.  Finally, $\AS_{\psi}$ is a direct summand of the pushforward of the unit along the finite \'{e}tale Artin--Schreier cover \eqref{eq:motivic-AS-cover}; hence it is geometric, and so are its Gaussian pullbacks and their tensor products with the unit coefficient objects.  Thus Proposition \ref{prop: trace of Hitchin pushforward} applies and gives
\begin{equation}\label{eq: unitary weighted pushforward compatibility}
\begin{aligned}
\Tr^{\Sht}\!\left(\pi_!f^*(\cc_V\otimes u_\beta)\right)
&=
\Sht(\pi)_!\Tr^{\Sht}\!\left(\cc_U\otimes f^*u_\beta\right),\\
\Tr^{\Sht}\!\left(\pi^\perp_!(f^\perp)^*(\cc_{\wh V}\otimes u_{-\beta})\right)
&=
\Sht(\pi^\perp)_!\Tr^{\Sht}\!\left(\cc_{U^\perp}\otimes(f^\perp)^*u_{-\beta}\right).
\end{aligned}
\end{equation}
Thus \eqref{eq: unitary normalized weighted trace comparison} and
\eqref{eq: unitary weighted pushforward compatibility} imply
\begin{equation}\label{eq: unitary trace Fourier comparison with Gauss scalar}
\begin{aligned}
&\Sht(\pi)_!\Tr^{\Sht}\!\left(\cc_U\otimes f^*u_\beta\right)\\
&\quad =
(-1)^d q^{-d+d(U/S)}
\Tr(\Frob,\mathfrak{G}_Q)\,
\Sht(\pi^\perp)_!\Tr^{\Sht}\!\left(\cc_{U^\perp}
\otimes(f^\perp)^*u_{-\beta}\right).
\end{aligned}
\end{equation}
Finally, substituting the relative Gauss-sum calculation
\eqref{eq: unitary relative Gauss scalar}, namely
\[
\Tr(\Frob,\mathfrak{G}_Q)
=
(-1)^dq^{d/2}\eta_{F'/F}(D_Q)^n,
\]
into \eqref{eq: unitary trace Fourier comparison with Gauss scalar}
gives \eqref{eq: unitary trace comparison before theta}, with scalar
\[
(-1)^d q^{-d+d(U/S)}\cdot(-1)^d q^{d/2}\eta_{F'/F}(D_Q)^n
=
q^{-d+d(U/S)+d/2}\eta_{F'/F}(D_Q)^n.
\]
\end{proof}

We now express higher theta functions as a trace. 

\begin{lemma}\label{lem: unitary trace theta calculation}
The two pushed trace classes satisfy
\begin{equation}\label{eq: unitary theta normalizations}
\begin{aligned}
\wt Z_m^{n,r}(\cG,\cE_1)\big|_S
&=
\chi(\det\cE_1)
q^{n(\deg\cE_1-\deg\omega_X)/2}
\Sht(\pi)_!\Tr^{\Sht}\!\left(\cc_U\otimes f^*u_\beta\right),\\
\wt Z_m^{n,r}(\cG,\cE_2)\big|_S
&=
\chi(\det\cE_2)
q^{n(\deg\cE_2-\deg\omega_X)/2}
\Sht(\pi^\perp)_!\Tr^{\Sht}\!\left(
\cc_{U^\perp}\otimes(f^\perp)^*u_{-\beta}
\right).
\end{aligned}
\end{equation}
\end{lemma}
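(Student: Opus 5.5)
The plan is to compute the trace $\Tr^{\Sht}(\cc_U\otimes f^*u_\beta)$ on the shtuka stack $\Sht(\Hk_U^\flat)$ fiberwise over the $\F_q$-points of $U$, and then match it term by term with the definition \eqref{eq: unitary theta value recall}. By construction, the derived stack $U=\Tot_S(\ul{\RHom(\cF_{\univ}^*,\cE_1^*)})$ parametrizes maps $\cF^*\to\cE_1^*$, equivalently $t\co\cE_1\to\cF$. Hence $U\leftarrow\Hk_U^\flat\rightarrow U$ is the localized Hitchin correspondence $\cM_{\cE_1}\leftarrow\Hk^r_{\cM_{\cE_1}}\to\cM_{\cE_1}$ of \S\ref{ssec: localized trace conjecture}, after HN truncation. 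I will check that under this identification $\cc_U=f^*\cc_V$ coincides with $\cc_{\cM_{\cE_1}}$. Both are pullbacks of the identity correspondence on $\pt$, so this follows from functoriality of Gysin classes for the composite $\Hk_U^\flat\to\Hk_V^\flat\to V_r$. Theorem \ref{thm: trace conjecture E} (valid since $m\le n/3$) then gives $\Tr^{\Sht}(\cc_U)=[\cZ^r_{\cE_1}]$, restricted over $S$.

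Next I handle the Gaussian twist. The object $f^*\sG_\beta$ is the pullback of $\AS_\psi$ along $\beta\circ f\co U\to\A^1$. On a classical point $t\co\cE_1\to\cF$, the composite $\cF^*\to\cE_1^*\to Q_1$ records the class of $t$ in $V$. I need to show that $\beta(f(t))$ equals $\langle e_{\cG,\cE_1},a_t\rangle$, where $a_t$ is the Hermitian coefficient \eqref{eq:setup-Hermitian-coefficient}. This is the computation relating $\frq_{12}$ to the extension class, as in \cite[\S 2]{FYZ3}, and I will cite it. On $\Sht(\Hk_U^\flat)=\cZ^r_{\cE_1}|_S=\coprod_a\cZ^r_{\cE_1}(a)$ the function $\beta\circ f$ is Frobenius-invariant and locally constant, with value $\langle e,a\rangle$ on the $a$-summand. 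Therefore the pair $(f^*\sG_\beta,f^*u_\beta)$ is a rank-one object with a Frobenius-twisted structure whose Frobenius-twisted trace is the locally constant class $\psi(\langle e,a\rangle)$.

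To separate the two factors I use the analogue of Lemma \ref{lem: twist trace by Chern class} for tensoring with an invertible object carrying an isomorphism $u$. I will argue it via Example \ref{ex: trace chow}: the trace of $\cc\otimes u$ equals $\Tr^{\Sht}(\cc)$ multiplied by the trace of $u$ composed with Frobenius, and the latter lies in $\rH^0(\Sht,\Qsh{})$. I expect this step to be the main technical point, since $u$ is not literally a Chow cohomology class. The fallback is to trivialize the Artin--Schreier sheaf after a finite \'etale cover and reduce to Lemma \ref{lem: twist trace by Chern class}, with a Grothendieck--Lefschetz computation at geometric points. This yields
\[
\Tr^{\Sht}(\cc_U\otimes f^*u_\beta)=\sum_{a}\psi(\langle e_{\cG,\cE_1},a\rangle)[\cZ^r_{\cE_1}(a)].
\]
Pushing forward along $\Sht(\pi)$ maps each class to its image in $\Sht_S^r$.

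Finally I compare normalizations with \eqref{eq: unitary theta value recall} at $\frL=\cO_X$, where the prefactor is $\chi(\det\cE_1)q^{n(\deg\cE_1-\deg\omega_X)/2}$. The $\cE_2$ case is symmetric with $U^\perp$, $\cc_{U^\perp}$ and $-\beta=\beta_{21}$. The sign flip matches the use of $h_{21}$, that is, the extension class of $\cE_2\subset\cG$ pairs via $\beta_{21}$. I also need to check that $e_{\cG,\cE_2}$ is read through $Q_2\cong\sigma^*Q_1^*$ consistently with the conventions after \eqref{eq: bestiary torsion sequences}.
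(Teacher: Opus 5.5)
Your proposal is correct and follows the paper's proof essentially step by step. You split off the Gaussian factor via Example \ref{ex: trace chow} and Lemma \ref{lem: twist trace by Chern class}; the paper invokes these directly, citing only tensor-invertibility of Artin--Schreier sheaves, so your fallback is not needed. You then identify $\cc_U$ and $\cc_{U^\perp}$ with the restricted Hitchin correspondences and apply Theorem \ref{thm: trace conjecture E} on each summand indexed by $a\in\cA_{\cE_i}(\F_q)$. You evaluate the Gaussian trace on the $a$-summand as $\psi(\langle e_{\cG,\cE_i},a\rangle)$, which the paper obtains by citing \cite[\S 2.3 and Lemma 10.2.7]{FYZ3}, and finally compare with the normalization in \eqref{eq: unitary theta value recall}.
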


\begin{proof}
Since Artin--Schreier sheaves are tensor-invertible, Example \ref{ex: trace chow} and Lemma \ref{lem: twist trace by Chern class} yield
\[
\begin{aligned}
\Tr^{\Sht}\!\left(\cc_U\otimes f^*u_\beta\right)
&=
\Tr^{\Sht}(\cc_U)\cap \Tr^{\Sht}(f^*u_\beta),\\
\Tr^{\Sht}\!\left(\cc_{U^\perp}\otimes(f^\perp)^*u_{-\beta}\right)
&=
\Tr^{\Sht}(\cc_{U^\perp})\cap
\Tr^{\Sht}((f^\perp)^*u_{-\beta}).
\end{aligned}
\]

We identify the two pushed trace classes with the two unnormalized
theta sums.  Let
$\cM_{\cE_i,S}\to\cA_{\cE_i}$ be the Hitchin stack over $S$ for the
Lagrangian $\cE_i$. The moduli descriptions of the two $U$-rows give
canonical identifications
\[
\Sht(\Hk_U^\flat)\cong\cZ_{\cE_1}^r|_S,
\qquad
\Sht(\Hk_{U^\perp}^\flat)\cong\cZ_{\cE_2}^r|_S.
\]
Under these identifications, $\cc_U$ and $\cc_{U^\perp}$ are,
respectively, the restrictions over $S$ of the canonical Hitchin
cohomological correspondences $\cc_{\cM_{\cE_1}}$ and
$\cc_{\cM_{\cE_2}}$. The fixed-point Hitchin base is (using
\cite[Lemma 6.14]{FYZ2})
\begin{equation}\label{eq: unitary Hitchin coefficient decomposition}
\cA_{\cE_i}\times_{(\Id,\Frob),\,\cA_{\cE_i}\times\cA_{\cE_i},\,\Delta}
\cA_{\cE_i}
\cong
\coprod_{a\in\cA_{\cE_i}(\F_q)}\Spec\F_q .
\end{equation}
The low-corank Trace Conjecture, Theorem \ref{thm: trace conjecture E}, applies because $m\le n/3$.  Restricted to the open-and-closed summand indexed by $a$ in \eqref{eq: unitary Hitchin coefficient decomposition}, it identifies the unweighted trace class with the derived fundamental class of $\cZ_{\cE_i}^r(a)|_S$. (Here we use that $\Tr^{\Sht}$ commutes with restriction to open substacks, by \cite[Proposition 6.2.2]{FK}.)  The extension-class calculation of \cite[\S 2.3 and Lemma 10.2.7]{FYZ3} identifies the trace of $f^*u_\beta$ on the $a$-summand for $\cE_1$ with $\psi(\langle e_{\cG,\cE_1},a\rangle)$, and identifies the trace of $(f^\perp)^*u_{-\beta}$ on the $a$-summand for $\cE_2$ with $\psi(\langle e_{\cG,\cE_2},a\rangle)$.  Thus
\begin{equation}\label{eq: unitary theta trace identification 1}
\Sht(\pi)_!\Tr^{\Sht}\!\left(\cc_U\otimes f^*u_\beta\right)
=
\sum_{a\in\cA_{\cE_1}(\F_q)}
\psi(\langle e_{\cG,\cE_1},a\rangle)
[\cZ_{\cE_1}^{r}(a)]\big|_S,
\end{equation}
and
\begin{equation}\label{eq: unitary theta trace identification 2}
\Sht(\pi^\perp)_!\Tr^{\Sht}\!\left(
\cc_{U^\perp}\otimes(f^\perp)^*u_{-\beta}
\right)
=
\sum_{a\in\cA_{\cE_2}(\F_q)}
\psi(\langle e_{\cG,\cE_2},a\rangle)
[\cZ_{\cE_2}^{r}(a)]\big|_S.
\end{equation}
Comparing \eqref{eq: unitary theta trace identification 1} and
\eqref{eq: unitary theta trace identification 2} with (the trivial similitude
specialization of) \eqref{eq: unitary theta value recall} gives
\eqref{eq: unitary theta normalizations}.
\end{proof}

Combining Lemma \ref{lem: unitary trace theta calculation} with
Lemma \ref{lem: unitary trace Fourier comparison} gives
\begin{equation}\label{eq: unitary theta comparison before accounting}
\begin{aligned}
\wt Z_m^{n,r}(\cG,\cE_1)\big|_S
&=
q^{-d+d(U/S)+d/2+\frac{n(\deg\cE_1-\deg\omega_X)}{2}
-\frac{n(\deg\cE_2-\deg\omega_X)}{2}}\\
&\quad\cdot
\eta_{F'/F}(D_Q)^n
\chi(\det\cE_1)\chi(\det\cE_2)^{-1}\,
\wt Z_m^{n,r}(\cG,\cE_2)\big|_S.
\end{aligned}
\end{equation}

\subsubsection{Completion of the proof of Theorem \ref{thm:unitary-low-corank-modularity-section}}
It remains to show that the scalar factor in \eqref{eq: unitary theta comparison before accounting}, namely
\[
q^{-d+d(U/S)+d/2+\frac{n(\deg\cE_1-\deg\omega_X)}{2}
-\frac{n(\deg\cE_2-\deg\omega_X)}{2}}
\eta_{F'/F}(D_Q)^n
\chi(\det\cE_1)\chi(\det\cE_2)^{-1}
\]
is equal to $1$.

Applying
$\ul{\RHom(\cF_{\univ}^*,-)}$ to the first exact sequence in
\eqref{eq: unitary transverse Q} gives an exact triangle
\[
\ul{\RHom(\cF_{\univ}^*,\sigma^*\cE_2)}
\longrightarrow
\ul{\RHom(\cF_{\univ}^*,\cE_1^*)}
\longrightarrow
\ul{\RHom(\cF_{\univ}^*,Q_1)}.
\]
The total spaces of the middle and right terms are $U$ and $V$,
respectively, and $d=d(V/S)$.  Hence additivity of virtual ranks gives,
on each connected component of $S$,
\begin{equation}\label{eq: unitary q exponent direct rank}
-d+d(U/S)+\frac d2
=
\chi_{\mathrm{Eul}}(X',\cF_{\univ}\otimes\sigma^*\cE_2)
+\frac12\chi_{\mathrm{Eul}}(X',\cF_{\univ}\otimes Q_1).
\end{equation}
For a geometric point $\cF$ of
$S$, the Hermitian condition
$\cF\cong\sigma^*\cF^\vee$, with
$\cF^\vee=\cHom(\cF,\omega_{X'})$, implies
\[
\deg\cF=\frac n2\deg\omega_{X'}=n\deg\omega_X.
\]
Riemann--Roch on $X'$ therefore gives
\[
\chi_{\mathrm{Eul}}(X',\cF\otimes\sigma^*\cE_2)=n\deg\cE_2.
\]
Since $Q_1$ is torsion,
\[
\chi_{\mathrm{Eul}}(X',\cF\otimes Q_1)=n\deg Q_1.
\]
The first exact sequence in \eqref{eq: unitary transverse Q} gives
$\deg Q_1=-\deg\cE_1-\deg\cE_2$.  Substituting these two
Euler characteristic computations into
\eqref{eq: unitary q exponent direct rank} gives
\begin{equation}\label{eq: unitary q exponent direct cancellation}
-d+d(U/S)+\frac d2
=
\frac{n(\deg\cE_2-\deg\cE_1)}2.
\end{equation}
Consequently the total exponent of $q$ in
\eqref{eq: unitary theta comparison before accounting} is
\[
\frac{n(\deg\cE_2-\deg\cE_1)}2
+\frac{n(\deg\cE_1-\deg\omega_X)}2
-\frac{n(\deg\cE_2-\deg\omega_X)}2
=0.
\]

It remains to identify the character factor.  Taking determinants in
the first exact sequence of \eqref{eq: unitary transverse Q} gives
\[
\det(\cE_1^*)
\cong
\det(\sigma^*\cE_2)\otimes\det(Q_1).
\]
By the definition of $D_Q$, the determinant of the torsion sheaf
$Q_1$ is $\nu^*\cO_X(D_Q)$.  Equivalently,
\begin{equation}\label{eq: unitary determinant direct accounting}
\det\cE_1\otimes\nu^*\cO_X(D_Q)
\cong
\sigma^*(\det\cE_2^*).
\end{equation}
The character $\chi$ restricts to
$\eta_{F'/F}^{\,n}$ on $\nu^*\Pic_X(\F_q)$.  Moreover
$\chi(\sigma^*L)=\chi(L)^{-1}$ for every
$L\in\Pic_{X'}(\F_q)$, because
$\sigma^*L\otimes L=\nu^*\Nm(L)$ and
$\eta_{F'/F}$ is trivial on norms.  Applying $\chi$ to
\eqref{eq: unitary determinant direct accounting} gives
\begin{equation}\label{eq: unitary character direct cancellation}
\chi(\det\cE_1)\eta_{F'/F}(D_Q)^n
=
\chi(\det\cE_2).
\end{equation}

Equation \eqref{eq: unitary q exponent direct cancellation} makes the $q$-power in \eqref{eq: unitary theta comparison before accounting} equal to $1$, and \eqref{eq: unitary character direct cancellation} does the same for the character and Gauss factor. Hence the scalar is $1$, proving Theorem \ref{thm:unitary-low-corank-modularity-section}. \qed

\subsection{The embedding trick}\label{ssec: unitary all corank reduction}
We now pass from Theorem \ref{thm:unitary-low-corank-modularity-section} to the full Modularity Conjecture using a version of the ``embedding trick'' for theta series. A similar idea has appeared before, for instance, in the work of Yuan--Zhang--Zhang \cite{YZZ}, Kudla \cite{Kud21}, and Howard--Madapusi \cite{HM22} on modularity of theta series on orthogonal Shimura varieties. This will involve considering Hermitian shtukas of varying ranks, so we introduce some temporary notation that makes the rank explicit. We write
\[
\wt Z_m^{N,r}(\cG,\cE)
\in
\CH_{r(N-m)}(\Sht^r_{U(N)})
\]
for the higher theta series of Hermitian rank $N$ and corank $m$.
For each $N$, the theta series is normalized using an admissible
character
\[
\chi_N\co \Pic_{X'}(k)\longrightarrow \ol{\Q}^{\times}
\]
whose restriction along $\nu^*\co\Pic_X(k)\to\Pic_{X'}(k)$ is
$\eta_{F'/F}^{\,N}$. Note that this condition
implies $\chi_N(\sigma^*L)=\chi_N(L)^{-1}$, since
$\sigma^*L\otimes L=\nu^*\Nm(L)$ and
$\eta_{F'/F}^{\,N}(\Nm(L))=1$.

\begin{lemma}\label{lem: unitary direct sum factorization}
The direct-sum morphism
\begin{equation}\label{eq: unitary add map}
\add_{N_1,N_2}\co
\Sht^0_{U(N_1)}\times\Sht^r_{U(N_2)}
\longrightarrow
\Sht^r_{U(N_1+N_2)}
\end{equation}
is LCI, so that the pullback $\add^*_{N_1, N_2}$ exists on Chow groups, via the Gysin pullback \eqref{eq: Gysin pullback}. Let $N_1,N_2\ge m$, and suppose
\begin{equation}\label{eq: unitary character product}
\chi_{N_1+N_2}=\chi_{N_1}\chi_{N_2}.
\end{equation}
Then for every $(\cG,\cE)\in\Bun_{P_m}(k)$ one has
\begin{equation}\label{eq: unitary direct sum factorization}
\add_{N_1,N_2}^*
\left(\wt Z_m^{N_1+N_2,r}(\cG,\cE)\right)
=
\wt Z_m^{N_1,0}(\cG,\cE)\bt
\wt Z_m^{N_2,r}(\cG,\cE).
\end{equation}
\end{lemma}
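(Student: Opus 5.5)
We split the statement into the LCI assertion and the factorization identity \eqref{eq: unitary direct sum factorization}. For the LCI claim, the plan is to view $\add_{N_1,N_2}$ as the base change, along the shtuka fixed-point construction, of the direct-sum map on Hecke stacks. Since $\Sht^0_{U(N_1)}$ is a disjoint union of classifying stacks of finite groups $[\pt/U(\cF)(k)]$ for $\cF\in\Bun_{U(N_1)}(k)$, it is étale over $\pt$. We would then factor $\add$ locally as $[\pt/\Gamma]\times\Sht^r_{U(N_2)}\to\Sht^r_{U(N_1+N_2)}$ and compare tangent complexes. Both source and target are smooth over $(X')^r$: $\Sht^r_{U(N)}\to(X')^r$ is smooth by \cite[\S 6]{FYZ}. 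A map between stacks smooth over a common base is LCI, being the graph embedding into the product (a regular immersion, since the diagonal of a smooth stack is LCI) followed by a smooth projection. This gives the Gysin pullback $\add^*$ with virtual relative dimension $-rN_1$, which matches the degrees $r(N_1+N_2-m)-rN_1=r(N_2-m)$ and $0$.

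For the identity we work summand by summand over $a\in\cA_\cE(k)$. At the level of moduli, a point of $\cZ^r_{\cE}(a)$ in rank $N_1+N_2$ pulled back along $\add$ consists of $\cF'\oplus\cF''_\bullet$ together with maps $t_i=(t'_i,t''_i)$. Since the modifications of $\cF'$ are trivial, $t'$ is a Frobenius-fixed map $\cE\to\cF'$ with Hermitian coefficient $a'$, and $a=a'+a''$. Hence we expect a derived Cartesian square
\[
\coprod_{a'+a''=a}\cZ^0_{\cE}(a')\times\cZ^r_{\cE}(a'')\longrightarrow \cZ^r_{\cE}(a)
\]
over $\add$. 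The key step is to check that this holds at the level of derived special cycles, i.e.\ of the derived Hitchin stacks of \cite{FYZ2}. In the derived vector bundle description, $\RHom(\cE,\cF'\oplus\cF''_\star)$ splits as a direct sum, so the derived fiber product of $\cM_{\cE}$ for rank $N_1+N_2$ along the direct sum of base stacks is the product of the two derived Hitchin stacks. Passing to shtukas and fibering over the Hitchin base, where $\cA_\cE$ is linear in $a$, gives the square. Base change of virtual classes along this LCI map then yields
\[
\add^*[\cZ^r_\cE(a)]=\sum_{a'+a''=a}[\cZ^0_\cE(a')]\boxtimes[\cZ^r_\cE(a'')].
\]

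It remains to sum with the weights. The Fourier weight splits because $\psi(\langle e_{\cG,\cE},a'+a''\rangle)=\psi(\langle e,a'\rangle)\psi(\langle e,a''\rangle)$. The prefactor splits because $q^{(N_1+N_2)(\cdot)/2}$ is multiplicative in $N$ and $\chi_{N_1+N_2}=\chi_{N_1}\chi_{N_2}$ by \eqref{eq: unitary character product}. We expect the main obstacle to be the derived base change for virtual fundamental classes, namely the compatibility of Gysin pullback along $\add$ with the derived classes of \cite[Definition 4.4]{FYZ2}. We would try to deduce it from the functoriality of $[\cdot]$ under derived Cartesian squares with quasi-smooth legs (\cite[\S 3]{KhanI}), applied to the square of derived shtuka stacks. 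The needed compatibility of the shtuka construction with products is formal.
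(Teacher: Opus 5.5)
Your proposal is correct and follows essentially the same route as the paper. Smoothness of source and target gives LCI with $d(\add_{N_1,N_2})=-rN_1$; you use smoothness over $(X')^r$ where the paper uses smoothness over $k$. The fixed-$a$ identity $\add_{N_1,N_2}^*[\cZ^{N_1+N_2,r}_{\cE}(a)]=\sum_{a'+a''=a}[\cZ^{N_1,0}_{\cE}(a')]\bt[\cZ^{N_2,r}_{\cE}(a'')]$ then follows from a derived Cartesian square of special cycles, and additivity of $\psi$ plus $\chi_{N_1+N_2}=\chi_{N_1}\chi_{N_2}$ finish the argument. The one difference is the step you flag as the main obstacle, namely the derived Cartesian square and compatibility of virtual classes: the paper does not reprove this but cites the functoriality of derived special cycles in the unitary variable, \cite[Proposition 7.5]{FYZ2}, which gives exactly the isomorphism of derived stacks you propose to build from the splitting of the derived vector bundle.
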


\begin{proof}
The source and target of $\add_{N_1,N_2}$ are smooth: $\Sht^r_{U(N)}$ is smooth by \cite[\S 6]{FYZ}, while $\Sht^0_{U(N_1)}$ is the discrete groupoid $\Bun_{U(N_1)}(\F_q)$. Hence $\add_{N_1,N_2}$ is LCI.  By \cite[Lemma 6.9(2)]{FYZ}, $\Sht^r_{U(N)}$ has pure dimension $rN$, including the $r$ leg parameters, whereas $\Sht^0_{U(N_1)}$ has dimension $0$.  Thus
\[
d(\add_{N_1,N_2})=rN_2-r(N_1+N_2)=-rN_1.
\]
Consequently its Gysin pullback carries
$\CH_{r(N_1+N_2-m)}$ to
$\CH_{r(N_1+N_2-m)-rN_1}=\CH_{r(N_2-m)}$, which is precisely the degree of the product class on the right side of \eqref{eq: unitary direct sum factorization}. For target rank $N$, let $\cZ_{\cE}^{N,r}(a)\subset\Sht^r_{U(N)}$ denote the special cycle of Hermitian parameter $a\in\cA_{\cE}(k)$. Functoriality in the unitary variable \cite[Proposition 7.5]{FYZ2} gives, for every $a$,
\begin{equation}\label{eq: unitary fixed-a direct-sum pullback}
\add_{N_1,N_2}^*
[\cZ_{\cE}^{N_1+N_2,r}(a)]
=
\sum_{a_1+a_2=a}
[\cZ_{\cE}^{N_1,0}(a_1)]\bt
[\cZ_{\cE}^{N_2,r}(a_2)]
\end{equation}
in $\CH_*(\Sht^0_{U(N_1)}\times\Sht^r_{U(N_2)})$. 
%In more detail, apply \cite[Proposition 7.5]{FYZ2} to $B(U(N_1)\times U(N_2))\to BU(N_1+N_2)$, restrict to the open-closed component $\Sht^0_{U(N_1)}\times\Sht^r_{U(N_2)}$ on which all legs modify the second factor, and pull back along $\pt\to B\Aut(\cE)$ as in \cite[Example 7.6]{FYZ2}. The source decomposes by the two Hermitian parameters, whose sum is the target parameter. At an $R$-point $(t_1,t_2)$, this is the identity\footnote{The isomorphism of derived stacks in \cite[Proposition 7.5]{FYZ2} gives \eqref{eq: unitary fixed-a direct-sum pullback} before taking fundamental classes.}
%\begin{equation}\label{eq: unitary direct-sum Hermitian parameter}
%a(t_1\oplus t_2)
%=
%\sigma^*(t_1\oplus t_2)^\vee\circ h_{\cF_1\oplus\cF_2}\circ(t_1\oplus t_2)
%=
%a(t_1)+a(t_2),
%\end{equation}
%because $h_{\cF_1\oplus\cF_2}=h_{\cF_1}\oplus h_{\cF_2}$ is block diagonal. 
Additivity of the Artin--Schreier character and \eqref{eq: unitary fixed-a direct-sum pullback} now give the unnormalized factorization
\[
\begin{aligned}
&\add_{N_1,N_2}^*
\left(
\sum_{a}
\psi(\langle e_{\cG,\cE},a\rangle)
[\cZ_{\cE}^{N_1+N_2,r}(a)]
\right)\\
&\quad =
\sum_{a_1,a_2}
\psi(\langle e_{\cG,\cE},a_1+a_2\rangle)
[\cZ_{\cE}^{N_1,0}(a_1)]\bt
[\cZ_{\cE}^{N_2,r}(a_2)]\\
&\quad =
\left(
\sum_{a_1}
\psi(\langle e_{\cG,\cE},a_1\rangle)
[\cZ_{\cE}^{N_1,0}(a_1)]
\right)
\bt
\left(
\sum_{a_2}
\psi(\langle e_{\cG,\cE},a_2\rangle)
[\cZ_{\cE}^{N_2,r}(a_2)]
\right).
\end{aligned}
\]
The character identity \eqref{eq: unitary character product} likewise gives
\[
\chi_{N_1+N_2}(\det\cE)\,
q^{(N_1+N_2)(\deg\cE-\deg\omega_X)/2}
=
\prod_{i=1}^2
\chi_{N_i}(\det\cE)\,
q^{N_i(\deg\cE-\deg\omega_X)/2}.
\]
Together, these two factorizations prove \eqref{eq: unitary direct sum factorization}.
\end{proof}

Throughout the rest of this subsection, we use the identification of $\CH_0(\Sht^0_{U(N)})$ with the space of $\ol\Q$-valued functions on the groupoid $\Bun_{U(N)}(\F_q)$, as in \cite[\S 4]{FYZ2} and \cite[\S 2.3]{FYZ3}; under this identification, $[\cZ^0_{\cE}(a)](\cF)=\#\{t\co\cE\rightarrow\cF \mid a(t)=a\}$, with no automorphism weighting.

\begin{lemma}\label{lem: unitary zero leg nonvanishing}
Fix $m\le n$ and $(\cG,\cE)\in\Bun_{P_m}(k)$.  There is a point $\cF_0\in\Sht^0_{U(2n)}(k)$ such that
\[
\wt Z_m^{2n,0}(\cG,\cE)(\cF_0)\ne0.
\]
\end{lemma}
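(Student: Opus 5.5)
The plan is to compute $\wt Z_m^{2n,0}(\cG,\cE)(\cF_0)$ directly as a finite character sum over $\F_q$. I will show that this sum is a Gauss sum of an $\F_q$-quadratic form, and such a sum never vanishes in odd characteristic. In fact the argument should give nonvanishing for every $\cF_0\in\Bun_{U(2n)}(\F_q)$. So the only real input is that this groupoid is nonempty, which is where the even rank $2n$ is convenient.

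\emph{Step 1 (existence of a point).} Take any line bundle $L$ on $X'$ defined over $\F_q$, and put $\cF_0:=(L\oplus\sigma^*L^\vee)^{\oplus n}$. Give each summand the hyperbolic Hermitian form, i.e.\ the tautological pairing between $L$ and $\sigma^*L^\vee$, symmetrized using $\sigma^*\sigma^*=\Id$. This is a rank $2n$ Hermitian bundle over $\F_q$, so $\cF_0\in\Sht^0_{U(2n)}(k)=\Bun_{U(2n)}(\F_q)$.

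\emph{Step 2 (unwinding the value).} By \eqref{eq: unitary theta value recall} with $r=0$, together with the convention that $[\cZ^0_\cE(a)](\cF)$ counts the maps $t\colon\cE\to\cF$ with $a(t)=a$, we get
\[
\wt Z_m^{2n,0}(\cG,\cE)(\cF_0)=\chi_{2n}(\det\cE)\,q^{2n(\deg\cE-\deg\omega_X)/2}\sum_{t\in\Hom(\cE,\cF_0)}\psi\bigl(\langle e_{\cG,\cE},a(t)\rangle\bigr).
\]
The prefactor is a nonzero scalar, so only the sum matters. Here $H:=\Hom(\cE,\cF_0)$ is a finite-dimensional $\F_q$-vector space. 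The assignment $t\mapsto a(t)=\sigma^*t^\vee\circ h\circ t$ is quadratic over $\F_q$, with associated bilinear map $(t,t')\mapsto \sigma^*t'^\vee h t+\sigma^*t^\vee h t'$. The Serre duality pairing $\langle e_{\cG,\cE},-\rangle$ is $\F_q$-linear. Hence $Q(t):=\langle e_{\cG,\cE},a(t)\rangle$ is a quadratic form $H\to\F_q$ in the usual sense: $Q(\lambda t)=\lambda^2Q(t)$, and the polarization of $Q$ is $\F_q$-bilinear.

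\emph{Step 3 (Gauss sums are nonzero).} Since $p\neq2$, diagonalize $Q$ as $Q=\sum_{j\le\rho}c_jx_j^2$ with $c_j\ne0$; it vanishes on the radical, which has dimension $\dim H-\rho$. Then
\[
\sum_{t\in H}\psi(Q(t))=q^{\dim H-\rho}\prod_{j\le\rho}g(c_j),\qquad g(c)=\sum_{x\in\F_q}\psi(cx^2).
\]
Each factor satisfies $|g(c)|^2=q$, so the sum has absolute value $q^{\dim H-\rho/2}\neq0$. This is the same classical fact underlying \cite[Lemma 2.3.8]{FYZ3} and Lemma \ref{lem: unitary relative Gauss}.

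The only step needing care is Step 2: one has to check that $Q$ is genuinely $\F_q$-quadratic, i.e.\ that the Hermitian (semilinear) structure does not spoil homogeneity of degree $2$ under scalars in $\F_q$. This holds because $\F_q$-scalars are $\sigma$-invariant, so $a(\lambda t)=\lambda^2a(t)$ for $\lambda\in\F_q$. If instead one wanted to pass to the $\ell$-adic complex values of $\psi$, nonvanishing is unaffected, since $\ol{\Q}$ embeds in $\mathbf{C}$.
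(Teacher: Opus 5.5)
Your proposal is correct and follows essentially the same route as the paper. The paper takes the hyperbolic Hermitian bundle $\cH\oplus\sigma^*\cH^\vee$; your $(L\oplus\sigma^*L^\vee)^{\oplus n}$ is the case $\cH=L^{\oplus n}$ up to reordering summands. It then writes the value as a nonzero prefactor times the Gauss sum of the homogeneous $\F_q$-quadratic form $t\mapsto\langle e_{\cG,\cE},a(t)\rangle$ on $\Hom(\cE,\cF_0)$, and concludes that this sum is nonzero; your explicit diagonalization, and your remark that the sum is nonzero for every $\cF_0$, spell out what the paper leaves implicit.
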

\begin{proof}
Choose a rank $n$ vector bundle $\cH$ on $X'$ defined over $\F_q$, for instance $\cO_{X'}^{\oplus n}$.  For
\[
\cF_0=\cH\oplus\sigma^*\cH^\vee
\]
we have $\sigma^*\cF_0^\vee\simeq\sigma^*\cH^\vee\oplus\cH$, and the isomorphism $\cF_0\xrightarrow{\sim}\sigma^*\cF_0^\vee$ exchanging the two summands is Hermitian.  Thus $\cF_0$ defines a $k$-point of $\Bun_{U(2n)}$, hence a point $\cF_0\in\Sht^0_{U(2n)}(k)$. Then
\begin{equation}\label{eq: unitary zero leg gauss sum}
\wt Z_m^{2n,0}(\cG,\cE)(\cF_0)
=
\chi_{2n}(\det\cE)q^{n(\deg\cE-\deg\omega_X)}
\sum_{t\in\Hom_{X'}(\cE,\cF_0)}
\psi\bigl(\langle e_{\cG,\cE},a(t)\rangle\bigr),
\end{equation}
where $a(t)=\sigma^*t^\vee\circ h_{\cF_0}\circ t$ is the Hermitian parameter induced by $t$.  The prefactor in \eqref{eq: unitary zero leg gauss sum} is nonzero, and the function $t\mapsto\langle e_{\cG,\cE},a(t)\rangle$ is a homogeneous quadratic form on the finite $k$-vector space $\Hom_{X'}(\cE,\cF_0)$. Hence \eqref{eq: unitary zero leg gauss sum} is nonzero.
\end{proof}

\begin{thm}[The Modularity Conjecture]\label{thm: unitary full modularity}
For all $m\le n$ and all $r\ge0$, the map
\[
\wt Z_m^{n,r}\co
\Bun_{P_m}(k)
\longrightarrow
\CH_{r(n-m)}(\Sht^r_{U(n)})
\]
descends to $\Bun_{GU^-(2m)}^{\mathrm{triv}}(k)$.  Equivalently, if $\cG$ is a trivial-similitude skew-Hermitian bundle of rank $2m$ and $\cE_1,\cE_2\subset\cG$ are Lagrangian subbundles, then
\[
\wt Z_m^{n,r}(\cG,\cE_1)=
\wt Z_m^{n,r}(\cG,\cE_2)
\]
in $\CH_{r(n-m)}(\Sht^r_{U(n)})$.
\end{thm}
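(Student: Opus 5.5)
We deduce the full statement from the low-corank case, Theorem \ref{thm:unitary-low-corank-modularity-section}, by the embedding trick. Fix $m\le n$, $r\ge 0$, and Lagrangians $\cE_1,\cE_2\subset\cG$. Choose $N_1:=2n$ and $N_2:=n$, so that $N:=N_1+N_2=3n\ge 3m$. Theorem \ref{thm:unitary-low-corank-modularity-section} in rank $N$ (with character $\chi_N$) then gives
\[
\wt Z_m^{N,r}(\cG,\cE_1)=\wt Z_m^{N,r}(\cG,\cE_2)\in\CH_{r(N-m)}(\Sht^r_{U(N)}).
\]
We choose the admissible characters compatibly. Fix $\chi_n$ as in the statement, fix any admissible $\chi_{2n}$ (one restricting to $\eta_{F'/F}^{2n}=1$, e.g.\ the trivial character), and set $\chi_{3n}:=\chi_{2n}\chi_n$. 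This is admissible of level $3n$, so \eqref{eq: unitary character product} holds. Since the modularity statement for $\wt Z_m^{3n,r}$ was proved for any admissible character, it holds for this choice.

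Next, pull back along $\add_{2n,n}$ and apply Lemma \ref{lem: unitary direct sum factorization}. For $j=1,2$ this gives
\[
\add_{2n,n}^*\wt Z_m^{3n,r}(\cG,\cE_j)=\wt Z_m^{2n,0}(\cG,\cE_j)\bt \wt Z_m^{n,r}(\cG,\cE_j),
\]
so the two right-hand sides agree in $\CH_{r(n-m)}(\Sht^0_{U(2n)}\times\Sht^r_{U(n)})$. The left factor $\wt Z_m^{2n,0}$ is a zero-leg theta function. Its modularity is the classical ($r=0$) case, i.e.\ Poisson summation. It is also covered by Theorem \ref{thm:unitary-low-corank-modularity-section}, since $r=0$ is allowed there but the hypothesis $m\le 2n/3$ may fail. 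So we invoke the classical modularity of theta functions (e.g.\ \cite[\S 2]{FYZ3}, or \cite[\S 4]{FYZ2}). Alternatively, we can rerun the argument above with $r=0$ and $N=3\cdot 2n$ to bootstrap: apply the trick first at $r=0$, where the $\Sht^0$ factors are discrete and the zero-leg cycles are honest counts. Either way, $\phi:=\wt Z_m^{2n,0}(\cG,\cE_1)=\wt Z_m^{2n,0}(\cG,\cE_2)$, a function on $\Bun_{U(2n)}(\F_q)$.

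To finish, restrict to the component $\{\cF_0\}\times\Sht^r_{U(n)}$. Because $\Sht^0_{U(2n)}$ is a discrete groupoid, this is an open-closed substack; it is a quotient of $\Sht^r_{U(n)}$ by the finite group $\Aut(\cF_0)$, which acts trivially on Chow with $\ol{\Q}$-coefficients up to a scalar identification. Pulling back yields
\[
\phi(\cF_0)\,\wt Z_m^{n,r}(\cG,\cE_1)=\phi(\cF_0)\,\wt Z_m^{n,r}(\cG,\cE_2).
\]
Lemma \ref{lem: unitary zero leg nonvanishing} provides $\cF_0$ with $\phi(\cF_0)\ne0$, and dividing by it gives the claim.

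The main obstacle is bookkeeping in two places. First, we must justify that the restriction of the exterior product $\phi\bt Z$ to the $\cF_0$-slice is exactly $\phi(\cF_0)Z$; this needs care with automorphism groupoid weightings under the identification of $\CH_0(\Sht^0)$ with functions. Second, we must justify the $r=0$ modularity in rank $2n$ independently of the present theorem.
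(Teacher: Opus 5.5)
Your proposal is correct and follows essentially the same route as the paper. Both arguments apply low-corank modularity at rank $3n$ with $\chi_{2n}=1$ and $\chi_{3n}=\chi_n$, pull back along $\add_{2n,n}$ via Lemma \ref{lem: unitary direct sum factorization}, use the classical $r=0$ modularity of $\wt Z_m^{2n,0}$ from \cite[\S\S 2.2--2.3]{FYZ3}, and cancel the nonzero scalar $\wt Z_m^{2n,0}(\cG,\cE_1)(\cF_0)$ of Lemma \ref{lem: unitary zero leg nonvanishing} after pulling back to $\{\cF_0\}\times\Sht^r_{U(n)}$. Your optional bootstrap for the $r=0$ input also works, but only because the auxiliary rank-$4n$ zero-leg factor satisfies $m\le 4n/3$ and so falls under the low-corank theorem; without that observation it would again rely on the classical citation.
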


\begin{proof}
Fix the rank $n$ character $\chi_n$.  For the auxiliary ranks set $\chi_{2n}=1$ and $\chi_{3n}=\chi_n$.

Let $\cG$ be a trivial-similitude skew-Hermitian bundle of rank $2m$, and let $\cE_1,\cE_2\subset\cG$ be Lagrangian subbundles.  Since $m\le n=(3n)/3$, Theorem \ref{thm:unitary-low-corank-modularity-section} applies to give
\begin{equation}\label{eq: unitary rank 3n equality}
\wt Z_m^{3n,r}(\cG,\cE_1)
=
\wt Z_m^{3n,r}(\cG,\cE_2)
\in \CH_{r(3n-m)}(\Sht^r_{U(3n)}) .
\end{equation}
Pulling \eqref{eq: unitary rank 3n equality} back along
\[
\add_{2n,n}\co
\Sht^0_{U(2n)}\times\Sht^r_{U(n)}
\longrightarrow
\Sht^r_{U(3n)}
\]
gives, by Lemma \ref{lem: unitary direct sum factorization},
\begin{equation}\label{eq: unitary tensor equality}
\wt Z_m^{2n,0}(\cG,\cE_1)\bt
\wt Z_m^{n,r}(\cG,\cE_1)
=
\wt Z_m^{2n,0}(\cG,\cE_2)\bt
\wt Z_m^{n,r}(\cG,\cE_2).
\end{equation}

The $r=0$ case of \cite[Conjecture 4.15]{FYZ2} is the classical automorphy of the theta series for the dual pair $(GU^-(2m),GU(2n))$, and is proved in \cite[\S\S 2.2--2.3]{FYZ3}. Therefore we have $\wt Z_m^{2n,0}(\cG,\cE_1)=
\wt Z_m^{2n,0}(\cG,\cE_2)$. Putting this into \eqref{eq: unitary tensor equality} yields
\begin{equation}\label{eq: unitary common factor}
\wt Z_m^{2n,0}(\cG,\cE_1)\bt
\left(
\wt Z_m^{n,r}(\cG,\cE_1)-
\wt Z_m^{n,r}(\cG,\cE_2)
\right)=0.
\end{equation}

\begin{figure}[htbp]
\centering
\begin{tikzpicture}[
        x=1cm,y=1cm,
        font=\small,
        line cap=round,
        line join=round,
        block/.style={draw=black!55, fill=black!6, line width=.65pt},
        common/.style={draw=black!65, fill=black!14, line width=.75pt},
        arr/.style={->, draw=black!58, line width=.75pt},
        eq/.style={font=\large}]
    \draw[block] (0,2.45) rectangle (4.9,3.05);
    \draw[block] (6.1,2.45) rectangle (11,3.05);
    \draw (3.15,2.45) -- (3.15,3.05);
    \draw (9.25,2.45) -- (9.25,3.05);
    \node[font=\scriptsize] at (1.57,2.75) {$2n$};
    \node[font=\scriptsize] at (4.02,2.75) {$n$};
    \node[font=\scriptsize] at (7.67,2.75) {$2n$};
    \node[font=\scriptsize] at (10.12,2.75) {$n$};
    \node[eq] at (5.5,2.75) {$=$};
    \node[font=\scriptsize] at (2.45,3.35) {$3n$};
    \node[font=\scriptsize] at (8.55,3.35) {$3n$};

    \draw[arr] (2.45,2.38) -- (2.45,1.75);
    \draw[arr] (8.55,2.38) -- (8.55,1.75);

    \draw[common] (.35,1.05) rectangle (2.35,1.55);
    \draw[block] (2.65,1.05) rectangle (4.65,1.55);
    \draw[common] (6.45,1.05) rectangle (8.45,1.55);
    \draw[block] (8.75,1.05) rectangle (10.75,1.55);
    \node[font=\scriptsize] at (1.35,1.3) {$2n$};
    \node[font=\scriptsize] at (3.65,1.3) {$n$};
    \node[font=\scriptsize] at (7.45,1.3) {$2n$};
    \node[font=\scriptsize] at (9.75,1.3) {$n$};
    \node[eq] at (5.5,1.3) {$=$};

    \draw[arr] (1.35,.98) -- (1.35,.32);
    \draw[arr] (7.45,.98) -- (7.45,.32);
    \fill[black!60] (1.35,.14) circle (.055);
    \fill[black!60] (7.45,.14) circle (.055);
    \node[font=\scriptsize] at (1.35,-.15) {$\cF_0$};
    \node[font=\scriptsize] at (7.45,-.15) {$\cF_0$};

    \draw[arr] (3.65,.98) -- (3.65,.35);
    \draw[arr] (9.75,.98) -- (9.75,.35);
    \draw[block] (2.65,-.45) rectangle (4.65,.05);
    \draw[block] (8.75,-.45) rectangle (10.75,.05);
    \node[font=\scriptsize] at (3.65,-.2) {$n$};
    \node[font=\scriptsize] at (9.75,-.2) {$n$};
    \node[eq] at (6.7,-.2) {$=$};
\end{tikzpicture}
\caption{Cartoon for the embedding-and-cancellation step following \eqref{eq: unitary common factor}. Pullback from rank $3n$ splits into a rank $2n$ factor and a rank $n$ factor; evaluation at $\cF_0$ makes the common rank $2n$ factor nonzero, so the rank $n$ factors agree.}
\label{fig:embedding-cancellation}
\end{figure}

By Lemma \ref{lem: unitary zero leg nonvanishing}, there is a point
$\cF_0\in\Sht^0_{U(2n)}(k)$ with $\wt Z_m^{2n,0}(\cG,\cE_1)(\cF_0)\neq 0$.
Pulling \eqref{eq: unitary common factor} back along
$\{\cF_0\}\times\Sht^r_{U(n)}\to
\Sht^0_{U(2n)}\times\Sht^r_{U(n)}$ gives
\[
\wt Z_m^{2n,0}(\cG,\cE_1)(\cF_0)
\left(
\wt Z_m^{n,r}(\cG,\cE_1)-
\wt Z_m^{n,r}(\cG,\cE_2)
\right)=0
\quad\text{in}\quad
\CH_{r(n-m)}(\Sht^r_{U(n)}).
\]
Since the scalar $\wt Z_m^{2n,0}(\cG,\cE_1)(\cF_0)$ is nonzero, we conclude that
\[
\wt Z_m^{n,r}(\cG,\cE_1)=
\wt Z_m^{n,r}(\cG,\cE_2)
\quad\text{in}\quad
\CH_{r(n-m)}(\Sht^r_{U(n)}).
\]
This proves the theorem.
\end{proof}

\part{Supermodularity for general linear groups}\label{part: supermodularity}

\section{Supermodularity}
\subsection{Formulation of supermodularity}
We begin by formulating supermodularity. Fix integers $m \leq n$ and a decomposition $m=m_1+m_2$ with $m_1,m_2\geq0$. Let $P_{(m_1,m_2)}\subset \GL(m)$ be the standard parabolic subgroup with Levi quotient $\GL(m_1)\times \GL(m_2)$, with the convention $\GL(0)=\{e\}$. Thus a point of $\Bun_{P_{(m_1,m_2)}}(\F_q)$ is a rank $m$ vector bundle $\cG$ on $X$ together with a rank $m_1$ subbundle $\cE_1\subset \cG$ whose quotient has the form $\cE_2^*$ for a rank $m_2$ vector bundle $\cE_2$. Equivalently, $(\cE_1,\cG)$ determines a short exact sequence
\begin{equation}\label{eq: formulation parabolic extension}
0\longrightarrow \cE_1\longrightarrow \cG\longrightarrow \cE_2^*\longrightarrow 0,
\end{equation}
and we denote its extension class by $e_{\cG}\in \Ext^1(\cE_2^*,\cE_1)$.

Let $\mu=(\mu_1,\ldots,\mu_r)$ be a sequence with $\mu_i\in\{\pm1\}$ and $\sum_i\mu_i=0$. For a pair $(\cE_1,\cE_2)$ as in \eqref{eq: formulation parabolic extension}, the special cycle of \cite[\S 4]{FYZ2} gives a class
\[
[\cZ_{\cE_1,\cE_2}^{\mu}]\in \CH_{\frac{r}{2}(2n-m)}(\Sht_n^\mu).
\]
The cycle is naturally decomposed over the Hitchin base $\Hom(\cE_1,\cE_2^\vee)$, where $\cE_2^\vee=\cE_2^*\otimes\omega_X$:
\begin{equation}\label{eq: formulation special cycle decomposition}
[\cZ_{\cE_1,\cE_2}^{\mu}]
=
\sum_{a\in\Hom(\cE_1,\cE_2^\vee)}
[\cZ_{\cE_1,\cE_2}^{\mu}(a)].
\end{equation}

For a non-trivial additive character $\psi\co \F_q\to \ol{\Q}^{\times}$, the associated higher theta series is
\begin{equation}\label{eq: formulation theta series}
\wt{Z}_{m_1,m_2}^{\psi,\mu}\co
\Bun_{P_{(m_1,m_2)}}(\F_q)
\longrightarrow
\CH_{\frac{r}{2}(2n-m)}(\Sht_n^\mu),
\end{equation}
whose value at $(\cE_1,\cG)$ is
\begin{equation}\label{eq: formulation theta value}
\wt{Z}_{m_1,m_2}^{\psi,\mu}(\cE_1,\cG)
=
q^{n\deg\cE_2}
\sum_{a\in\Hom(\cE_1,\cE_2^\vee)}
\psi(\langle e_{\cG},a\rangle)
[\cZ_{\cE_1,\cE_2}^{\mu}(a)].
\end{equation}
Here $\langle e_{\cG},a\rangle\in\F_q$ is the Serre duality pairing between $e_{\cG}$ and $a$. When $m_2=0$, the set $\Hom(\cE_1,\cE_2^\vee)$ has a single element and \eqref{eq: formulation theta value} reduces to $[\cZ_{\cG,0}^{\mu}]$. The notation in Theorem \ref{thm: intro supermodularity} suppresses $\psi$; Lemma \ref{lem: ind of psi} below proves that the value in \eqref{eq: formulation theta value} is independent of the choice of $\psi$.

The Modularity Conjecture for general linear groups \cite[Conjecture 4.24]{FYZ2} asserts that \eqref{eq: formulation theta series} descends from $\Bun_{P_{(m_1,m_2)}}(\F_q)$ to $\Bun_m(\F_q)$, i.e., that \eqref{eq: formulation theta value} depends on the subbundle $\cE_1\subset\cG$ only through $\cG$. Supermodularity is the stronger assertion that the higher theta series is also essentially independent of the parabolic.

\begin{thm}[Supermodularity]\label{thm: intro supermodularity} For a cycle $[Z]\in \CH_*(\Sht_n^\mu)$ we write
\[
[Z]^{\shear}
:=
\sum_{d\in\Z}q^{-m_2d+m_2n(g-1)}[Z]_d,
\]
where $[Z]_d$ is the projection to the component of $\Sht_n^\mu$ on which the initial vector bundle has degree $d$.  The sum is understood componentwise with respect to this open-and-closed degree decomposition: equivalently, $[Z]^{\shear}$ is the class whose restriction to the degree-$d$ component is $q^{-m_2d+m_2n(g-1)}[Z]_d$.
Then we have
\begin{equation}\label{eq: formulation supermodularity}
\wt{Z}_{m_1,m_2}^{\mu}(\cE_1,\cG)
=
[\cZ_{\cG,0}^{\mu}]^{\shear}
=
\sum_{d\in\Z}q^{-m_2d+m_2n(g-1)}[\cZ_{\cG,0}^{\mu}]_d.
\end{equation}
\end{thm}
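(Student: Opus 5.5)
The plan follows the unitary argument of \S\ref{sec:proof-modularity-conj}: identify the theta series with a Frobenius trace, apply the derived Fourier transform, and cancel scalars. The difference is in the choice of Fourier duality. For $\GL$ we do not compare two transverse parabolic structures. We Fourier transform in the $\cE_2$-direction alone, which trades the pair $(\cE_1,\cE_2)$ for the single bundle $\cG$ with the zero second slot. This is the source of the ``super'' in supermodularity.

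\emph{Step 1.} Prove the split-cover Trace Conjecture in low corank, Theorem \ref{thm:split-trace-conjecture-n/3}, by repeating Part I. The dimension bound (Theorem \ref{thm:special-cycle-dimension-bound}) carries over with framings by sub-bundles $\cP_i\subset\cF_i$ of $\cE_1$ and quotient data for $\cE_2$. The four modification types and the balance Lemma \ref{lem:balance} are unchanged. This gives $\Tr^{\Sht}(\cc_{\cM_{\cE_1,\cE_2}})=[\cZ^\mu_{\cE_1,\cE_2}]$ whenever $m\le n/3$.

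\emph{Step 2.} Realize the value of \eqref{eq: formulation theta value} as $\Sht(\pi)_!\Tr^{\Sht}(\cc_U\otimes u_{e_\cG})$. Here $U=\Tot\,\ul{\RHom(\cF^*,\cE_1^*)}\times\Tot\,\ul{\RHom(\cF,\cE_2^*)}$ (up to conventions), and $u_{e_\cG}$ is the Artin--Schreier character sheaf of the linear function $a\mapsto\langle e_\cG,a\rangle$ composed with the Hitchin map. This follows as in Lemma \ref{lem: unitary trace theta calculation}. The same argument run on $\cG$ shows $[\cZ^\mu_{\cG,0}]=\Sht(\pi_\cG)_!\Tr^{\Sht}(\cc_{U_\cG})$.

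\emph{Step 3.} Partially Fourier transform only the $\cE_2$-factor $\Tot\,\ul{\RHom(\cF,\cE_2^*)}$. The extension \eqref{eq: formulation parabolic extension} gives a triangle whose total spaces realize $\Tot\,\ul{\RHom(\cF^*,\cG^*)}$ as built from the two factors. Twisting by $\langle e_\cG,a\rangle$ is Fourier-dual to translating by $e_\cG$, so the transform of $\cc_U\otimes u_{e_\cG}$ becomes $\cc_{U_\cG}$. The tools for this are Proposition \ref{prop:motivic-derived-FT-cc-functoriality}, Lemma \ref{lem: tensor-convolution dual} and Theorem \ref{thm:motivic-FT-formalism}. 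No Gaussian appears, because the pairing is linear rather than quadratic. For the same reason no transversality reduction is needed.

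\emph{Step 4.} Push down using Proposition \ref{prop: trace of Hitchin pushforward} and compare the scalars. The Fourier normalization contributes the factor $\sm{[-d](-d)}$, where $d$ is the virtual rank of $\ul{\RHom(\cF,\cE_2^*)}$. On the component where $\deg\cF=d_0$, Riemann--Roch gives
\[
-d=-\chi(\cF^*\otimes\cE_2^*)=m_2d_0+n\deg\cE_2+m_2n(g-1).
\]
Together with $q^{n\deg\cE_2}$ this must produce $q^{-m_2d_0+m_2n(g-1)}$; the signs of the conventions still need checking. The exponent depends on the component, which is why the shear $[\,\cdot\,]^{\shear}$ appears.

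\emph{Step 5.} Extend from $m\le n/3$ to all $m$ by the embedding trick of Theorem \ref{thm: unitary full modularity}. Pull back along $\Sht^0_{2n}\times\Sht^\mu_n\to\Sht^\mu_{3n}$, use the $r=0$ Poisson summation identity, and cancel by a nonvanishing $0$-leg value. The exponent factors multiplicatively, $q^{-m_2(d_1+d_2)+m_2(n_1+n_2)(g-1)}$, so the shear is compatible with direct sums.

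I expect the main obstacle to be Step 3. One must check that the partial Fourier transform is compatible with the Hecke correspondences, meaning that the relevant squares are globally presented and the required pushability and pullability hold. One must also check that it really identifies the $\cE_1,\cE_2$ Hitchin correspondence with the $\cG$ one on the nose, including the derived structure where the legs meet degeneracies. I would try to derive this from the triangle $\cE_1\to\cG\to\cE_2^*$ in the manner of Lemma \ref{lem:pushable-and-pullable}.
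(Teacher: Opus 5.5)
Your outline is sound and has the same architecture as the paper's proof. The shared ingredients are:
\begin{itemize}
\item the split Trace Conjecture for $m\le n/3$ (Theorem \ref{thm:split-trace-conjecture-n/3});
\item the theta value as the shtuka trace of $\cc_{\cM}$ twisted by the Artin--Schreier sheaf of $\langle e_{\cG},t_2\circ t_1\rangle$ on the Hitchin stack $\cM=V'\times_S U$, where in the paper's notation $U=\Tot\,\ul{\RHom(\cF_{\univ},\cE_2^\vee)}$ and $V'=\Tot\,\ul{\RHom(\cE_1,\cF_{\univ})}$;
\item a derived Fourier identity of cohomological correspondences;
\item the non-proper trace formula of Proposition \ref{prop: trace of Hitchin pushforward};
\item the rank-$3n$ embedding trick.
\end{itemize}

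The difference is that you run the Fourier step in the dual order. The paper pushes $\cc_U$ forward along the boundary map $f\colon U\to V=\Tot\,\ul{\RHom(\cF_{\univ},\cE_1\otimes\omega_X[1])}$. Base change in the unprimed cube gives $f_!\cc_U\cong g^*z_!\cc_S$, and it then transforms along $V$, using two Fourier compatibilities ($g$ right pullable, $z$ left pushable) to reach $f'_!\cc_{U'}$ (Proposition \ref{prop: modularity of cc}).

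Your version transforms the $\cE_2$-slot $U$ itself:
\begin{itemize}
\item $\FT_U(\cc_U)=\FT(\pi^*\cc_S)\cong z'_!\cc_S$ by the right-pullable case of Proposition \ref{prop:motivic-derived-FT-cc-functoriality};
\item the $t_2$-integral of the twisted correspondence is $g'^*$ of this, by base change along $g'\times\Id\colon V'\times_S U\to W'\times_S U$, where $W'=\wh U$ and $g'(t_1)=t_{1*}e_{\cG}$; this makes Lemma \ref{lem: tensor-convolution dual} unnecessary;
\item $g'^*z'_!\cc_S\cong f'_!\pi'^*\cc_S=f'_!\cc_{U'}$ by the base-change theorem for the primed cube of Lemma \ref{lem: cube side pushable/pullable}.
\end{itemize}
The two routes are intertwined by $\FT_V\circ f_!\cong g'^*\circ\FT_U$, since $\wh f=g'$. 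Yours needs one Fourier compatibility instead of two and never uses $V$, $W$ or their Hecke stacks. The paper's keeps the template of the $r=0$ Poisson computation in \S\ref{ssec: r=0} and of the unitary argument, where the transform along $V$ is where the Gaussian lives.

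Two precisions on Step 3. First, the translation is by the section $g'(t_1)$, which varies with $t_1$, not by the constant $e_{\cG}$. Second, what you obtain is $f'_!\cc_{U'}$ on $\Hk_{V'}^\mu$, a pushforward of $\cc_{U_{\cG}}$, not $\cc_{U_{\cG}}$ itself. The geometric inputs you flag as the main obstacle are Lemmas \ref{lem: dual Hecke stacks}, \ref{lem: hecke corr quasismooth tilde} and \ref{lem: cube side pushable/pullable}. They hold uniformly in the legs, because no torsion sheaf like $Q_1$ enters in the $\GL$ case.

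Step 4, however, is wrong as written, and the conventions are not cosmetic.
\begin{itemize}
\item The $\cE_2$-slot must be $\ul{\RHom(\cF,\cE_2^\vee)}$, with the $\omega_X$, since the Hitchin base is $\Hom(\cE_1,\cE_2^\vee)$.
\item The fibre integral gives $\Theta_{\cE_1,\cE_2}\cong\Theta_{\cG,0}\tw{-d(U/S)}$. The $[-d]$ in $\FT_U(\Qsh{U})\cong\delta\sm{[-d](-d)}$ combines with the $[-d]$ that undoes the built-in shift of $\FT$. So the shift is even, and the trace factor is $q^{+d(U/S)}$, not $q^{-d}$.
\item Riemann--Roch gives $d(U/S)=-m_2d_0-n\deg\cE_2+m_2n(g-1)$ on the component where $\deg\cF=d_0$. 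Hence $q^{n\deg\cE_2}q^{d(U/S)}=q^{-m_2d_0+m_2n(g-1)}$, as required.
\end{itemize}
With your $q^{-d}$, the $d_0$-term has the wrong sign and the $\deg\cE_2$-terms do not cancel. With $\cE_2^*$ in place of $\cE_2^\vee$, the $(g-1)$-term has the wrong sign.
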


\subsection{Proof of Theorem \ref{thm: intro supermodularity} for $r=0$}\label{ssec: r=0} 

In this subsection we prove Theorem \ref{thm: intro supermodularity} in the special case $r=0$, as a toy model for the general case. In this case, all the geometry becomes trivial and the result is classical. However, it is delicate to formulate a proof that generalizes to $r>0$, and we have not found our particular argument elsewhere in the literature. 

For a rank $n$ vector bundle $\cF$ on $X$, the $r=0$ theta function is
\begin{equation}\label{eq: r=0 theta}
\wt Z^{0}_{m}(\cE_1, \cG)_{\cF} := q^{n(\deg \cE_2)} \sum_{\substack{t_1 \in \Hom(\cE_1, \cF) \\ t_2   \in  \Hom(\cE_2, \cF^{\vee})}} \psi (\langle e_{\cG}, a(t_1,t_2) \rangle).
\end{equation}
We write the theta function \eqref{eq: r=0 theta} as follows. Serre duality gives a pairing 
\[
\ev \co \Hom(\cE_1, \cF) \times \Hom(\cF, \cE_1 \otimes \omega_X\sm{[1]}) \rightarrow \F_q.
\]
Let $f \co \Hom(\cF , \cE_2^\vee) \rightarrow \Ext^1(\cF, \cE_1 \otimes \omega_X)$ be the boundary map in the long exact sequence
\begin{equation}\label{eq: r=0 LES}
\ldots  \rightarrow \Hom(\cF, \cE_2^\vee) \xrightarrow{f} \Ext^1(\cF, \cE_1 \otimes \omega_X) \xrightarrow{g} \Ext^1(\cF, \cG \otimes \omega_X) \rightarrow \ldots
\end{equation}
For $u\in\Hom(\cF,\cE_2^\vee)$, corresponding to $t_2\in\Hom(\cE_2,\cF^\vee)$, set
\[
a(t_1,t_2):=u\circ t_1\in\Hom(\cE_1,\cE_2^\vee).
\]
With the standard connecting-homomorphism convention, $f(u)$ is the pullback of the extension
\[
0\to\cE_1\otimes\omega_X\to\cG\otimes\omega_X\to\cE_2^\vee\to0
\]
along $u$. Hence
\[
\ev(t_1,f(u))
=\Tr_{\cE_1}\bigl(f(u)\circ t_1\bigr)
=\Tr_{\cE_2^*}\bigl((u\circ t_1)\circ e_{\cG}\bigr)
=\langle e_{\cG},a(t_1,t_2)\rangle .
\]
Noting that $\Hom( \cE_2, \cF^{\vee}) \cong \Hom(\cF , \cE_2^{\vee})$, the theta function \eqref{eq: r=0 theta} can be reformulated as 
\begin{equation}\label{eq: r=0 theta 2}
\wt Z^{0}_{m}(\cE_1, \cG)_{\cF} = q^{n(\deg \cE_2)}  \sum_{\substack{t_1 \in \Hom(\cE_1, \cF) \\ s_1 \in \Ext^1(\cF, \cE_1 \otimes \omega_X)}} \psi ( \ev(t_1, s_1)) (f_! \bbm{1}_{\Hom(\cF, \cE_2^\vee)})(s_1) ,
\end{equation}
where $\bbm{1}$ is the indicator function. Set
\[
\phi:=f_! \bbm{1}_{\Hom(\cF,\cE_2^\vee)}.
\]
Consider the commutative diagram below:
\begin{equation}\label{eq: outline commutative diagram}
\begin{tikzcd}
& \Hom(\cE_1, \cF) \times \Hom(\cF, \cE_2^\vee) \ar[dd, "\Id \times f"] \ar[dr, "\pr_2"']  \\
\Hom(\cG, \cF) \ar[dd, "\wh{g}"]  & & \Hom(\cF, \cE_2^\vee) \ar[dd, "f"] \\
& \Hom(\cE_1, \cF) \times \Ext^1(\cF, \cE_1 \otimes \omega_X) \ar[dr, "\pr_2"']  \ar[dl, "\pr_1"]  \ar[dd, "\pi"] \\
\Hom(\cE_1, \cF)   \ar[dr, "\pi_1"'] \ar[dd, "\wh{f}"]  & & \Ext^1(\cF, \cE_1 \otimes \omega_X) \ar[dl, "\pi_2"] \ar[dd, "g"]  \\
& \{ \cF\}  \\
\Ext^1(\cE_2^*, \cF) & & \Ext^1(\cF, \cG \otimes \omega_X)
\end{tikzcd}
\end{equation}
Here $\wh g$ and $\wh f$ are the Serre-dual maps of $g$ and
$f$, respectively, and the left column is the linear dual of the right
column under Serre duality.
In these terms, we have 
\begin{equation}\label{eq: r=0 theta 3}
\wt Z^{0}_{m}(\cE_1, \cG)_{\cF}= q^{n(\deg \cE_2)}  \pi_! (\ev^* \psi \cdot \pr_2^* \phi).
\end{equation}
With the finite Fourier transform $\FT$ normalized as in \cite[\S 2.3.7]{FYZ3}, we have\footnote{We write $\hom(\ldots) := \dim \Hom(\ldots)$ and $\ext(\ldots) := \dim \Ext(\ldots)$, etc.}
\[
\pr_{1!}(\ev^* \psi \cdot \pr_2^*\phi)
=
(-1)^{\hom(\cE_1, \cF)}\FT(\phi).
\]
We may thus rewrite \eqref{eq: r=0 theta 3} as 
\begin{equation}\label{eq: r=0 theta 4}
\wt Z^{0}_{m}(\cE_1, \cG)_{\cF}= q^{n(\deg \cE_2)}  (-1)^{\hom(\cE_1, \cF)} \pi_{1!}\FT(\phi).
\end{equation}

Now, by exactness of \eqref{eq: r=0 LES}, we have 
\begin{equation}\label{eq: descent for functions}
\phi = q^{\dim\ker(f)} g^* \delta_{\Ext^1(\cF, \cG \otimes \omega_X)},
\end{equation}
where $\delta$ is the delta function with value $1$ at $0 \in \Ext^1(\cF, \cG \otimes \omega_X)$. Therefore, using the functoriality of $\FT$ as in \cite[(2.3.58) and Example 2.3.7]{FYZ3}, we have 
\begin{align*}
\FT(\phi) &= q^{\dim\ker(f)} \FT(g^* \delta_{\Ext^1(\cF, \cG \otimes \omega_X)})  \\
&= q^{\dim\ker(f) + \ext^1(\cF, \cE_1 \otimes \omega_X) - \ext^1(\cF, \cG \otimes \omega_X)} (-1)^{\ext^1(\cF, \cE_1 \otimes \omega_X)}  \wh{g}_! \bbm{1}_{\Hom(\cG, \cF) }.
\end{align*}
Combining this with \eqref{eq: r=0 theta 4}, and using $\hom(\cE_1, \cF) = \ext^1(\cF, \cE_1 \otimes \omega_X)$ to simplify signs, we obtain
\begin{equation}\label{eq: r=0 theta 5}
\wt Z^{0}_{m}(\cE_1, \cG)_{\cF} = q^{n \deg \cE_2 + \dim\ker(f) + \ext^1(\cF, \cE_1 \otimes \omega_X) - \ext^1(\cF, \cG \otimes \omega_X)}  \pi_{1!} \wh{g}_! \bbm{1}_{\Hom(\cG, \cF) }.
\end{equation}
By the long exact sequence \eqref{eq: r=0 LES}, we have 
\[
\dim\ker(f) = \hom(\cF, \cG \otimes \omega_X) - \hom(\cF, \cE_1 \otimes \omega_X).
\]
Therefore, the exponent of $q$ in \eqref{eq: r=0 theta 5} can be rewritten as
\[
n\deg\cE_2+\chi_{\mathrm{Eul}}(X,\RHom(\cF, \cG \otimes \omega_X)) - \chi_{\mathrm{Eul}}(X,\RHom(\cF, \cE_1 \otimes \omega_X))
=n\deg\cE_2+\chi_{\mathrm{Eul}}(X,\RHom(\cF,\cE_2^\vee)).
\]
By Riemann--Roch,
\[
\chi_{\mathrm{Eul}}(X,\RHom(\cF,\cE_2^\vee))
=
-m_2\deg\cF-n\deg\cE_2+m_2n(g-1),
\]
so the exponent of $q$ in \eqref{eq: r=0 theta 5} may be rewritten as $-m_2\deg\cF+m_2n(g-1)$.  We find that
\begin{equation}\label{eq: r=0 theta 6}
\wt Z^{0}_{m}(\cE_1, \cG)_{\cF} = q^{-m_2\deg\cF+m_2n(g-1)}  \pi_{1!} \wh{g}_! \bbm{1}_{\Hom(\cG, \cF) }.
\end{equation}
Since $\pi_{1!}\wh{g}_!\bbm{1}_{\Hom(\cG,\cF)}=\#\Hom(\cG,\cF)=[\cZ_{\cG,0}^{0}]_{\cF}$, and the prefactor is the shear factor of Theorem \ref{thm: intro supermodularity} on the component $d=\deg\cF$, this confirms Theorem \ref{thm: intro supermodularity} in the case $r=0$. In particular, this visibly depends on the choice of sub-bundle $\cE_1 \subset \cG$ only through its rank.  \qed

\subsection{Outline for general $r$} \label{ssec: outline}

For $m\le n/3$, the proof of Theorem \ref{thm: intro supermodularity} follows the same pattern as the case $r=0$. Its four ingredients are these.
\begin{enumerate}
\item Derived vector bundles $U\to V\to W$ over $\Bun_n$ geometrize the sequence
\[
\Hom(\cF, \cE_2^\vee) \rightarrow \Ext^1(\cF, \cE_1 \otimes \omega_X) \rightarrow \Ext^1(\cF, \cG \otimes \omega_X)
\]
in the right column of \eqref{eq: outline commutative diagram}. 

The bundles $U'=\wh W$, $V'=\wh V$, and $W'=\wh U$ form a second sequence $U'\to V'\to W'$ geometrizing
\[
\Hom(\cG, \cF)  \rightarrow \Hom(\cE_1, \cF)   \rightarrow \Ext^1(\cE_2^*, \cF) 
\] 
in the left column of \eqref{eq: outline commutative diagram}. 

\item The six Hecke correspondences $\Hk_U^\mu,\Hk_V^\mu,\Hk_W^\mu$ and $\Hk_{U'}^\mu,\Hk_{V'}^\mu,\Hk_{W'}^\mu$ are derived vector bundles over $\Hk_n^\mu$.

\item The cycle $[\cZ_{\cE_1,\cE_2}^\mu]$, whose components over the Hitchin base are the Fourier coefficients of $\wt Z_{m_1,m_2}^\mu(\cE_1,\cG)$, is the trace of a cohomological correspondence $\cc_U$ on $\Hk_U^\mu$; the cycle $[\cZ_{\cG,0}^\mu]$ is the trace of a correspondence $\cc_{U'}$ on $\Hk_{U'}^\mu$. The first calculation uses Theorem \ref{thm:split-trace-conjecture-n/3} under the separate bounds $m_1,m_2\le n/3$ and does not require $m_1+m_2\le n/3$. The second applies it to $(m,0)$ and uses $m=m_1+m_2\le n/3$. This lifts supermodularity to the correspondence level. The dotted arrows in the diagram are Fourier dualities.
\[
\begin{tikzcd}
{[\cZ_{\cE_1, \cE_2}^\mu]} & \cc_U \ar[d] \ar[l, dashed, "\Tr^{\Sht}"'] & U \ar[d, "f"]  \ar[ddrrr, dotted] & & & U' = \wh{W} \ar[d, "f' = \wh{g}"] \ar[ddlll, dotted]  & \cc_{U'} \ar[d] \ar[r, dashed, "\Tr^{\Sht}"]  & {[\cZ_{\cG, 0}^\mu]} \\  
& f_! \cc_U & V \ar[d, "g"]  \ar[rrr, dotted] & & & \ar[lll, dotted]   V'  = \wh{V} \ar[d, "g'   =\wh{f}"]  & f_!' \cc_{U'} \\
 & & W \ar[rrruu, dotted] & & &  W' = \wh{U} \ar[uulll, dotted] 
\end{tikzcd}
\]

\item The derived Fourier transform identifies $f_!\cc_U$ with $f'_!\cc_{U'}$ up to the explicit shift and twist. Pushforward to $\Hk_S^\mu$, followed by the trace formula of \S \ref{sec: trace formula}, gives Theorem \ref{thm: intro supermodularity} for $m\le n/3$; here $S\subset\Bun_n$ is the Harder--Narasimhan truncation of \S \ref{sssec: UVW}.
\end{enumerate}
For arbitrary $m$, apply the low-corank identity at target rank $3n$, pull it back along $\Sht_{2n}^0\times\Sht_n^\mu\to\Sht_{3n}^\mu$, and cancel the nonzero zero-leg factor.

\section{The genetic pattern of supermodularity}
In this section we prove the sheaf-theoretic results that can be regarded as the ``genetics of supermodularity'' in the sense of Figure \ref{fig:cartoon}. Again, this section is essentially agnostic of the ground field. 

\subsection{The bestiary of derived vector bundles}

We suppose a short exact sequence of vector bundles on $X$, 
\begin{equation}\label{eq: EGE SES}
0 \rightarrow \cE_1 \rightarrow \cG \rightarrow \cE_2^* \rightarrow 0
\end{equation}
where $\rank \cG = m$, $\rank \cE_1 = m_1$, and $\rank \cE_2 = m_2$. The constructions and arguments below apply uniformly for all $m_1, m_2 \ge 0$; for instance, when $m_1 = 0$ one has $V = S$, and the Fourier transform along $V$ is the identity. (The case $m_2 = 0$ is in any case tautological, cf.\ the proof of Theorem \ref{thm: gl full supermodularity}.)

Below we will construct the collection of spaces promised in \S \ref{ssec: outline}. For reasons of familiarity we will use the same notation for the setup as used in \S \ref{sec:genetics-of-modularity} (and also in the prior works \cite{FYZ3}, and \cite{FK}), but the actual meaning of the spaces is very different from taking those earlier definitions and specializing them to the situation at hand.

\subsubsection{Definition of $U, V$ and $W$}\label{sssec: UVW}

Below we write $S$ for a Harder--Narasimhan truncation $\Bun_n^{\leq \nu}$. (The reason for this truncation is to guarantee \emph{global presentability} in the sense of \cite[\S 6.3.1]{FYZ3}.) We define several derived vector bundles over $S$. Let $\cF_{\univ}$ be the universal vector bundle over $X \times S$. Below $R$ denotes any animated $\F_{q}$-algebra.
\begin{itemize}

\item Define $\cU:=\ul{\RHom(\cF_{\univ}, \cE_2^\vee)}$ to be the perfect complex on $S$ whose pullback to an $R$-point $\cF \in S(R)$ is naturally in $R$ isomorphic to  $\RHom_{X_R}(\cF, \cE_2^\vee \ot R)$ regarded as a derived $R$-module. Let $U := \Tot_S(\cU)$ be the associated derived vector bundle over $S$.

\item Define $\cV:=\ul{\RHom(\cF_{\univ}, \cE_1 \otimes \omega_X\sm{[1]})}$ to be the perfect complex on $S$ whose pullback to an $R$-point $\cF \in S(R)$ is naturally in $R$ isomorphic to $\RHom_{X_R}(\cF,  \cE_1 \ot \omega_{X} \ot R\sm{[1]})$ regarded as a derived $R$-module. Let $V := \Tot_S(\cV)$ be the associated derived vector bundle over $S$.

\item Define $\cW:=\ul{\RHom(\cF_{\univ}, \cG \ot \omega_X\sm{[1]})}$ to be the perfect complex on $S$ whose pullback to an $R$-point $\cF \in S(R)$ is naturally in $R$ isomorphic to $\RHom_{X_R}(\cF, \cG \ot \omega_X \ot  R\sm{[1]})$ regarded as a derived $R$-module. Let $W := \Tot_S(\cW)$ be the associated derived vector bundle over $S$. 
\end{itemize} 

The short exact sequence \eqref{eq: EGE SES} induces an exact triangle $\cU \to \cV \to \cW$ in $\Perf(S)$, which geometrically is equivalent to the derived Cartesian square of total spaces over $S$: 
\begin{equation}\label{eq: derived cartesian UVWS}
\begin{tikzcd}
U \ar[r, "f"] \ar[d, "\pi"']  & V \ar[d, "g"] \\
S \ar[r, "z_W"] & W
\end{tikzcd}
\end{equation}
where $z_W$ is the zero section.

\subsubsection{Hecke stacks} Let $\mu = (\mu_1, \mu_2, \ldots, \mu_r)$ be a sequence such that $\mu_i \in \{\pm 1\}$ for each $i$ and $\sum_i \mu_i = 0$. Then one has the Hecke stack $\Hk_{n}^{\mu}$, whose $R$-points are diagrams
\begin{equation}\label{eq: flat hecke diagram}
\begin{tikzcd}
\cF_0 \ar[r, dashed, "x_1"']   & \cF_{1} \ar[r, dashed, "x_2"'] & \ldots \ar[r, dashed] & \cF_{r-1} \ar[r, dashed, "x_r"'] & \cF_r 
\end{tikzcd} 
\end{equation}
where each $x_i \in X(R)$ and each $\cF_i \in \Bun_n(R)$, and the dashed arrow $\cF_{i-1} \dashrightarrow \cF_i$ is an elementary modification along the graph of $x_i$, which is upper of length $1$ if $\mu_i = +1$ and lower of length $1$ if $\mu_i = -1$, in the sense of \cite[Definition 5.1]{YZ}. We shall abbreviate such diagrams as $(\cF_{\star}) \in \Hk_{n}^\mu(R)$. 

Given $(\cF_\star)$, we define for each $i = 1, \ldots, r$,
\[
\cF_{i-1/2}^\flat := \cF_{i-1}\cap \cF_i
\quad \text{and} \quad 
\cF_{i-1/2}^\sharp := \cF_{i-1}+\cF_i
\]
inside the common meromorphic bundle determined by the modification. This fits into an obvious pullback square
\begin{equation}\label{eq: sharp/flat}
\begin{tikzcd}
& \cF_{i-1/2}^\flat \ar[dl, hook'] \ar[dr, hook] \\
\cF_{i-1} \ar[dr, hook] & & \cF_i \ar[dl, hook'] \\
& \cF_{i-1/2}^\sharp
\end{tikzcd}
\end{equation}

There are maps 
\begin{equation}\label{eq: h_i untruncated}
h_i \co \Hk_{n}^{\mu} \rightarrow \Bun_{n}, \quad i =0, \ldots, r
\end{equation}
projecting to the datum of $\cF_i$, as well as $\pr\co \Hk_{n}^{\mu} \rightarrow X^r$ projecting to the data of $(x_1, \ldots, x_r)$. 

We define the open substack $\Hk_S^\mu \subset \Hk_{n}^\mu$ as 
\[
\Hk_S^\mu := h_0^{-1}(S) \cap h_1^{-1} (S) \cap \ldots \cap h_r^{-1}(S).
\]
Therefore the maps \eqref{eq: h_i untruncated} restrict to give 
\begin{equation}\label{eq: h_i}
h_i \co \Hk_{S}^\mu \rightarrow S, \quad i = 0, \ldots, r.
\end{equation}

Given a diagram $(\cF_{\star}) \in \Hk_{S}^\mu(R)$, define $\cF_{\bu}^{\flat}$ to be the object of $\Perf(X_R)$ represented by the two-term complex in cohomological degrees $0$ and $1$
\[
(\cF_{1/2}^{\flat} \oplus \ldots \oplus \cF_{r-1/2}^{\flat}) \rightarrow (\cF_1 \oplus \ldots \oplus \cF_{r-1}),
\]
where the map sends $(s_{1/2},\cdots, s_{r-1/2})$ to $(s_{1/2}-s_{3/2}, \cdots, s_{r-3/2}-s_{r-1/2})$, with reference to the commutative diagram \eqref{eq: sharp/flat}. This object need not be quasi-isomorphic to a coherent sheaf.

Define $\cF_{\bu}^{\sh}$ to be the object represented by the two-term complex in cohomological degrees $-1$ and $0$
\[
 (\cF_1 \oplus \ldots \oplus \cF_{r-1})\rightarrow (\cF_{1/2}^{\sh} \oplus \ldots \oplus \cF_{r-1/2}^{\sh})
\]
where the map sends $(s_1, \ldots, s_{r-1})$ to $(s_1, s_1-s_2, \ldots,  s_{r-2}-s_{r-1}, -s_{r-1})$. As in the bestiary construction of \cite[\S 9.1]{FYZ3}, this object is quasi-isomorphic to a coherent \emph{sheaf} on $X_R$, which may not be locally free; we view it as a perfect complex concentrated in degree $0$.

We have a natural map of perfect complexes on $X_{R}$
\begin{equation}\label{eq: frp_i^flat}
\frp_{i}^\flat: \cF_{\bu}^{\flat}\to \cF_{i}, \quad i=0,1,\cdots, r
\end{equation}
that is the composition of the projection to $\cF^{\flat}_{i-1/2}$ and the natural inclusion $\cF^{\flat}_{i-1/2}\incl \cF_{i}$ when $i>0$, and  the composition of the projection to $\cF^{\flat}_{i+1/2}$ and the natural inclusion $\cF^{\flat}_{i+1/2}\incl \cF_{i}$ when $i<r$. (Both constructions give the same map up to explicit chain homotopy when $0<i<r$.) Similarly, we have a natural map 
\begin{equation}\label{eq: frp_i^sh}
\frp_i^{\sharp} \co  \cF_i \rightarrow \cF_{\bu}^{\sh}, \quad i=0, \ldots, r
\end{equation}

As $\cF_{\star}$ varies in $\Hk_{S}^{\mu}$, the construction $\cF_{\star}\mapsto \cF_{\bu}^{\flat}$ gives a universal perfect complex $\cF_{\univ, \bu}^{\flat}$ over $X\times \Hk_{S}^{\mu}$, and similarly $\cF_\star \mapsto \cF_{\bu}^\sharp$ gives a universal coherent sheaf $\cF_{\univ, \bu}^{\sh}$ over $X\times \Hk_{S}^{\mu}$, viewed as a perfect complex concentrated in degree $0$. These fit into an exact triangle in $\Perf(X\times\Hk_S^\mu)$,
\begin{equation}\label{eq: sharp flat triangle}
\cF_{\univ, \bu}^{\flat} \rightarrow h_0^* \cF_{\univ} \oplus h_r^* \cF_{\univ} \rightarrow  
\cF_{\univ, \bu}^{\sh} 
\end{equation}
in which the first map has components $(\frp_0^\flat,\frp_r^\flat)$ and the second is the difference $\frp_0^\sharp - \frp_r^\sharp$.

When $r=0$, the previous paragraph should be understood as follows: $\Hk_S^\mu=S$, the Hecke correspondence is the identity correspondence, and $\cF_\bu^\flat:=\cF_0=:\cF_\bu^\sharp$. Both correspondence maps are the identity, and the triangle \eqref{eq: sharp flat triangle} is defined to be the split triangle
\[
\cF_0\xrightarrow{(1,1)}\cF_0\oplus\cF_0
\xrightarrow{(1,-1)}\cF_0.
\]

\subsubsection{Hecke stacks for $U, V$ and $W$}\label{sss: Hk UVW}

We now define several perfect complexes on $\Hk_S^\mu$, using similar notation as in \S \ref{sssec: UVW}, so the explanations will be abbreviated. 
\begin{itemize} 
\item Define $\cU^{\mu}_{\Hk}:=\ul{\RHom(\cF_{\univ, \bu}^{\sh}, \cE_2^\vee)}$.
Let $\Hk_U^\mu := \Tot_{\Hk_S^\mu}(\cU^{\mu}_{\Hk})$ be the associated derived vector bundle over $\Hk_S^\mu$.
\item Define $\cV^{\mu}_{\Hk}:=\ul{\RHom(\cF^{\sh}_{\univ, \bu}, \cE_1 \otimes \omega_X\sm{[1]})}$.
Let $\Hk_V^\mu := \Tot_{\Hk_S^\mu}(\cV^{\mu}_{\Hk})$ be the associated derived vector bundle over $\Hk_S^\mu$. 
\item Define $\cW^{\mu}_{\Hk}:=\ul{\RHom(\cF^{\sh}_{\univ, \bu}, \cG \otimes \omega_X\sm{[1]})}$.
Let $\Hk_W^\mu := \Tot_{\Hk_S^\mu}(\cW^{\mu}_{\Hk})$ be the associated derived vector bundle over $\Hk_S^\mu$. 
\end{itemize}
From \eqref{eq: EGE SES}, we get an exact triangle $\cU^{\mu}_{\Hk}\to \cV^{\mu}_{\Hk}\to \cW^{\mu}_{\Hk}$ of perfect complexes on $\Hk_S^\mu$, which geometrically is equivalent to the derived Cartesian square of derived vector bundles over $\Hk_S^\mu$:
\begin{equation}\label{eq: Hk UVW Cart}
\begin{tikzcd}
\Hk_U^\mu \ar[r, "f"] \ar[d, "\pi"] & \Hk_V^\mu \ar[d, "g"] \\
\Hk_S^\mu \ar[r, "z"] & \Hk_W^\mu
\end{tikzcd}
\end{equation}
where $z$ is the zero section.

\subsubsection{Definition of $U', V', W'$}

We define further perfect complexes on $S$, using the same notational pattern as in \S \ref{sssec: UVW}. 
\begin{itemize}
\item Define $\cU':=\ul{\RHom(\cG, \cF_{\univ})}$.
Let $U':=\Tot_S(\cU')$ be its associated derived vector bundle over $S$.

\item Define $\cV':=\ul{\RHom(\cE_1, \cF_{\univ})}$.
Let $V':=\Tot_S(\cV')$ be its associated derived vector bundle over $S$.

\item Define $\cW' :=\ul{\RHom(\cE_2^*, \cF_{\univ}\sm{[1]})}$.
Let $W':=\Tot_S(\cW')$ be its associated derived vector bundle over $S$.
\end{itemize}

From \eqref{eq: EGE SES}, we have a derived Cartesian square
\begin{equation}\label{eq: derived cartesian U'V'W'S'}
\begin{tikzcd}
U' \ar[r, "f'"] \ar[d, "\pi'"']  & V' \ar[d, "g'"] \\
S \ar[r, "z_{W'}"] & W'
\end{tikzcd}
\end{equation}
where $z_{W'}$ is the zero section. 

\subsubsection{Hecke stacks for $U',V',W'$} 

We now define several perfect complexes on $\Hk_S^\mu$ which will be used to construct the Hecke stacks for $U',V',W'$. 
\begin{itemize} 
\item Define $(\cU')^{\mu}_{\Hk}:=\ul{\RHom(\cG, \cF_{\univ, \bu}^{\flat})}$.
Let $\Hk_{U'}^\mu := \Tot_{\Hk_S^\mu}((\cU')^{\mu}_{\Hk})$ be the associated derived vector bundle over $\Hk_S^\mu$.
\item Define $(\cV')^{\mu}_{\Hk}:=\ul{\RHom(\cE_1, \cF^{\flat}_{\univ, \bu})}$.
Let $\Hk_{V'}^\mu := \Tot_{\Hk_S^\mu}((\cV')^{\mu}_{\Hk})$ be the associated derived vector bundle over $\Hk_S^\mu$. 
\item Define $(\cW')^{\mu}_{\Hk}:=\ul{\RHom(\cE_2^*,  \cF_{\univ,\bu}^{\flat}\sm{[1]})}$.
Let $\Hk_{W'}^\mu := \Tot_{\Hk_S^\mu}((\cW')^{\mu}_{\Hk})$ be the associated derived vector bundle over $\Hk_S^\mu$. 
\end{itemize}
From \eqref{eq: EGE SES}, we get an exact triangle $(\cU')^{\mu}_{\Hk}\to (\cV')^{\mu}_{\Hk}\to (\cW')^{\mu}_{\Hk}$ of perfect complexes on $\Hk_S^\mu$, which geometrically is equivalent to the derived Cartesian square of derived vector bundles over $\Hk_S^\mu$:
\begin{equation}\label{eq: Hk U'V'W' Cart}
\begin{tikzcd}
\Hk_{U'}^\mu \ar[r, "f'"] \ar[d, "\pi'"] & \Hk_{V'}^\mu \ar[d, "g'"] \\
\Hk_S^\mu \ar[r, "z'"] & \Hk_{W'}^\mu
\end{tikzcd}
\end{equation}
where $z'$ is the zero section.

\subsection{Dualities} We record the dualities that relate $U,V,W$ and their Hecke stacks with $U',V',W'$ and their Hecke stacks. 

\subsubsection{Dual bundles} 
\begin{lemma}\label{lem: dual vector bundles} 
As derived vector bundles over $S$, Serre duality identifies $V'$ with the dual $\wh V$ of $V$, $U'$ with the dual $\wh{W}$ of $W$, and $W'$ with the dual $\wh{U}$ of $U$. Under this identification, the derived fiber square \eqref{eq: derived cartesian U'V'W'S'} is identified with the dual fiber square to \eqref{eq: derived cartesian UVWS}
\begin{equation*}
\begin{tikzcd}
\wh W \ar[r, "\wh{g}"] \ar[d]  & \wh V \ar[d, "\wh{f}"] \\
S \ar[r, "z_{\wh U}"] & \wh U
\end{tikzcd}
\end{equation*}
\end{lemma}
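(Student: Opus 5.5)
The plan is to reduce everything to relative Serre duality on $X\times S\to S$, applied universally in the animated test algebra $R$. Recall that for a derived vector bundle $E=\Tot_S(\cE)$ the dual is $\wh E=\Tot_S(\cE^*)$. So it suffices to produce natural equivalences of perfect complexes on $S$:
\[
\cV'\simeq \cV^*,\qquad \cU'\simeq \cW^*,\qquad \cW'\simeq\cU^*,
\]
compatible with the triangles $\cU\to\cV\to\cW$ and $\cU'\to\cV'\to\cW'$. Relative Grothendieck--Serre duality for the smooth proper curve $X\times S\to S$ of relative dimension $1$ gives, for perfect $\cA,\cB$ on $X\times S$,
\[
\ul{\RHom(\cA,\cB\otimes\omega_X)}^*\;\simeq\;\ul{\RHom(\cB,\cA)}\sm{[1]}.
\]
This holds because $\pi_*$ commutes with duality up to the twist by $\omega_X\sm{[1]}$, and the formation of both sides commutes with arbitrary base change in $R$, since everything is perfect and $\pi$ is proper flat.

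Next I check the three identifications.

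\textbf{$V$.} Take $\cA=\cF_{\univ}$ and $\cB=\cE_1$. Then
\[
\cV^*=\ul{\RHom(\cF_{\univ},\cE_1\otimes\omega_X)\sm{[1]}}^*\simeq\ul{\RHom(\cE_1,\cF_{\univ})}=\cV'.
\]

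\textbf{$W$.} Take $\cB=\cG$. The same computation gives $\cW^*\simeq\ul{\RHom(\cG,\cF_{\univ})}=\cU'$.

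\textbf{$U$.} Take $\cB=\cE_2^*$ and use $\cE_2^\vee=\cE_2^*\otimes\omega_X$. Then
\[
\cU^*=\ul{\RHom(\cF_{\univ},\cE_2^*\otimes\omega_X)}^*\simeq\ul{\RHom(\cE_2^*,\cF_{\univ})}\sm{[1]}=\cW'.
\]

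All shifts match the definitions exactly.

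For the compatibility of squares, both triangles are obtained by applying the respective functors to the same short exact sequence \eqref{eq: EGE SES}. The duality isomorphism is natural in $\cB$, being a natural transformation of exact functors. Hence dualizing $\cU\to\cV\to\cW$ gives $\cW^*\to\cV^*\to\cU^*$, and this is identified with $\cU'\to\cV'\to\cW'$. Passing to total spaces identifies the dual Cartesian square, namely $\wh W\to\wh V$ over the zero section of $\wh U$, with \eqref{eq: derived cartesian U'V'W'S'}. In particular, under these identifications $f'=\wh g$ and $g'=\wh f$. The zero section goes to the zero section and $\pi'$ to the structure map, since these are canonical.

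The main care point is signs and naturality. One must verify that the connecting map $\cW\to\cU\sm{[1]}$ dualizes to the connecting map of the primed triangle, possibly up to a sign from the shift. A sign on a linear map does not affect the identification of derived Cartesian squares of total spaces up to the automorphism $[-1]$, so it can be absorbed by this automorphism. One could instead fix the convention of the Serre duality pairing to match \cite[\S 9.1]{FYZ3}.
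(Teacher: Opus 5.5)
Your proposal is correct and is the same argument as the paper. The paper's proof consists only of the remark that the lemma is immediate from the definitions and relative Serre duality on $X_R$ for all animated $\F_q$-algebras $R$; you have made the shift bookkeeping and the naturality in the second variable explicit.

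Your closing sign caveat is harmless, and in fact unnecessary. Both sequences arise by applying naturally equivalent exact functors, namely $\cB\mapsto\ul{\RHom(\cF_{\univ},\cB\otimes\omega_X\sm{[1]})}^*$ and $\cB\mapsto\ul{\RHom(\cB,\cF_{\univ})}$, to the same cofiber sequence $\cE_1\to\cG\to\cE_2^*$, so the squares are identified with no correction by $[-1]$.
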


\begin{proof}
Immediate from the definitions and Serre duality (applied on $X_R$ for all animated $\F_q$-algebras $R$). \end{proof}

\subsubsection{Hecke stacks} Let $\wh{\Hk}_E^\mu$ be the dual derived bundle to $\Hk_E^\mu$.

For each $i \in \{0,r\}$, let $S_i$ be the corresponding copy of $S$. For $E \in \{U,V,W, U', V', W'\}$, write $E_i$ for the corresponding copy over $S_i$, and let $\wt{E}_i \rightarrow \Hk_S^\mu$ be the pullback of $E_i \rightarrow S_i$ along $h_i \co \Hk_S^\mu \rightarrow S_i$. The base change maps are denoted
\begin{equation}\label{eq: hiUVW}
h_{i}^{E}: \wt E_{i}\to E_i.
\end{equation} 
For $i \in \{0,r\}$, the map $\Hk_E^\mu \rightarrow \wt{E}_i $ dualizes to $\wh{\wt{E}}_i \rightarrow \wh{\Hk}_E^\mu$. 

\begin{lemma}\label{lem: dual Hecke stacks}
Under the identification of $U'_i \xrightarrow{f_i'} V'_i \xrightarrow{g_i'} W'_i$ with $\wh{W}_i \xrightarrow{\wh{g}_i} \wh{V}_i \xrightarrow{\wh{f}_i} \wh{U}_i$ from Lemma \ref{lem: dual vector bundles}, the six diagrams in \eqref{eq: GL three squares 1} and \eqref{eq: GL three squares 2} are derived Cartesian.
\begin{equation}\label{eq: GL three squares 1}
\adjustbox{max width=\textwidth, center}{%
\begin{tikzcd}
& \Hk_U^{\mu} \ar[dl, "\wt{a}_0"'] \ar[dr, "\wt{a}_r"] \\ 
\wt{U}_0 \ar[dr, "\wh{\wt c_0'}"'] & & \wt{U}_r \ar[dl, "\wh{\wt c_r'}"]  \\
& \wh{\Hk}_{W'}^{\mu} 
\end{tikzcd} \hspace{1cm}
\begin{tikzcd}
& \Hk_V^{\mu} \ar[dl, "\wt{b}_0"'] \ar[dr, "\wt{b}_r"] \\ 
\wt{V}_0 \ar[dr, "\wh{\wt b_0'}"'] & & \wt{V}_r \ar[dl, "\wh{\wt b_r'}"]  \\
& \wh{\Hk}_{V'}^{\mu} 
\end{tikzcd}\hspace{1cm}
\begin{tikzcd}
& \Hk_W^{\mu} \ar[dl, "\wt{c}_0"'] \ar[dr, "\wt{c}_r"] \\ 
\wt{W}_0 \ar[dr, "\wh{\wt a_0'}"'] & & \wt{W}_r \ar[dl, "\wh{\wt a_r'}"]  \\
& \wh{\Hk}_{U'}^{\mu} 
\end{tikzcd}
}
\end{equation}
\begin{equation}\label{eq: GL three squares 2}
\adjustbox{max width=\textwidth, center}{%
\begin{tikzcd}
& \Hk_{U'}^{\mu} \ar[dl, "\wt{a}_0'"'] \ar[dr, "\wt{a}_r'"] \\ 
\wt{U}_0' \ar[dr, "\wh{\wt c_0}"'] & & \wt{U}_r' \ar[dl, "\wh{\wt c_r}"]  \\
& \wh{\Hk}_{W}^{\mu} 
\end{tikzcd} \hspace{1cm}
\begin{tikzcd}
& \Hk_{V'}^{\mu} \ar[dl, "\wt{b}_{0}'"'] \ar[dr, "\wt{b}_{r}'"] \\ 
\wt V_0' \ar[dr, "\wh{\wt b_0}"'] & & \wt V_r' \ar[dl, "\wh{\wt b_r}"]  \\
& \wh{\Hk}_{V}^{\mu} 
\end{tikzcd}\hspace{1cm}
\begin{tikzcd}
& \Hk_{W'}^{\mu} \ar[dl, "\wt{c}_0'"'] \ar[dr, "\wt{c}_r'"] \\ 
\wt{W}_0' \ar[dr, "\wh{\wt a_0}"'] & & \wt{W}_r' \ar[dl, "\wh{\wt a_r}"]  \\
& \wh{\Hk}_{U}^{\mu} 
\end{tikzcd}
}
\end{equation}
\end{lemma}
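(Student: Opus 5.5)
The plan is to reduce each of the six diamonds to a statement about exact triangles of perfect complexes on $X\times\Hk_S^\mu$, and then pass to total spaces. For a derived vector bundle $\Tot(\cE)$, a square of total spaces over a common base is derived Cartesian exactly when the underlying square of perfect complexes is a pullback, i.e.\ a fiber sequence. The squares here are of that type, since all vertices are derived vector bundles over $\Hk_S^\mu$ and all maps are linear. Using the dictionary $\Tot(\cE)\times_{\Tot(\cE'')}\Tot(\cE')=\Tot(\cE\times_{\cE''}\cE')$, it therefore suffices to show that each diamond comes from a fiber square of perfect complexes on $\Hk_S^\mu$. It also suffices to identify the bottom vertex as the dual derived vector bundle, with the two lower edges being the duals of the maps indicated.

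\emph{The $V$-diamond in \eqref{eq: GL three squares 1}.} Apply $\ul{\RHom(-,\cE_1\otimes\omega_X\sm{[1]})}$ to the exact triangle \eqref{eq: sharp flat triangle}. The result is a fiber sequence
\[
\cV^\mu_{\Hk}\to h_0^*\cV\oplus h_r^*\cV\to \ul{\RHom(\cF^\flat_{\univ,\bu},\cE_1\otimes\omega_X\sm{[1]})}.
\]
This gives a derived Cartesian diamond with top vertex $\Hk_V^\mu$ and sides $\wt V_0,\wt V_r$. Relative Serre duality on $X\times\Hk_S^\mu\to\Hk_S^\mu$ identifies the third term with the dual of $(\cV')^\mu_{\Hk}=\ul{\RHom(\cE_1,\cF^\flat_{\univ,\bu})}$. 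It remains to check that under this identification the two maps become the duals of $\wt b_0',\wt b_r'$, i.e.\ the maps induced by $\frp_i^\flat$. This follows from the definition of \eqref{eq: sharp flat triangle}, whose first map is $(\frp_0^\flat,\frp_r^\flat)$, together with naturality of Serre duality. The same argument with $\cE_2^\vee$ in place of $\cE_1\otimes\omega_X\sm{[1]}$ gives the $U$-diamond: the third term is $\ul{\RHom(\cF^\flat,\cE_2^\vee)}$, Serre dual to $\ul{\RHom(\cE_2^*,\cF^\flat\sm{[1]})}$. Using $\cG\otimes\omega_X\sm{[1]}$ gives the $W$-diamond, with third term dual to $\ul{\RHom(\cG,\cF^\flat)}$. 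In each case the shifts match because the Serre duality pairing lands in $\omega_X\sm{[1]}$.

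\emph{The diamonds in \eqref{eq: GL three squares 2}.} Apply $\ul{\RHom(\cE_1,-)}$ (resp.\ $\ul{\RHom(\cG,-)}$, $\ul{\RHom(\cE_2^*,-\sm{[1]})}$) to \eqref{eq: sharp flat triangle}, whose first term is $\cF^\flat$. This yields fiber sequences $(\cV')^\mu_{\Hk}\to h_0^*\cV'\oplus h_r^*\cV'\to\ul{\RHom(\cE_1,\cF^\sharp)}$, and analogously for $U'$ and $W'$. The third term $\ul{\RHom(\cE_1,\cF^\sharp)}$ is Serre dual to $\cV^\mu_{\Hk}=\ul{\RHom(\cF^\sharp,\cE_1\otimes\omega_X\sm{[1]})}$. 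Here we use Lemma \ref{lem: dual vector bundles} to identify $\wt V'_i$ with $\wh{\wt V_i}$, and similarly for the other bundles. The maps are then identified with the duals of $\wt b_i$ (resp.\ $\wt c_i$, $\wt a_i$), this time via $\frp_i^\sharp$.

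The main obstacle is the bookkeeping of signs and maps. One has to check that the second map of \eqref{eq: sharp flat triangle}, namely $\frp_0^\sharp-\frp_r^\sharp$, and the first map, $(\frp_0^\flat,\frp_r^\flat)$, dualize to exactly the maps labelled in the diagrams, up to the sign convention for the lower edges of a diamond. One also has to check that the relative Serre duality identification is compatible with base change along $h_i$. I would settle both points by working on $R$-points with the explicit two-term complexes representing $\cF^\flat_\bu$ and $\cF^\sharp_\bu$, exactly as in \cite[\S 9.1--9.2]{FYZ3}. I would absorb any sign discrepancy into the automorphism $[-1]$ on one side, which does not affect Cartesianness.
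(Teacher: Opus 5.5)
Your proposal is correct and follows the same route as the paper. The paper's proof is the one-line statement that the six diamonds follow from the exact triangle \eqref{eq: sharp flat triangle} together with Serre duality on $X_R$ for all animated $R$. Your write-up spells this out: you write down the fiber sequences obtained by applying $\ul{\RHom}$ to that triangle, identify the third term as the Serre dual of the corresponding $\flat$/$\sharp$ Hecke complex, and observe that the sign ambiguities can be absorbed by $[-1]$ without affecting Cartesianness.
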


\begin{proof}
This follows from Serre duality (applied on $X_R$ for all animated $\F_q$-algebras $R$) and the exact triangle \eqref{eq: sharp flat triangle}. 
\end{proof}

\subsubsection{Summary}\label{sssec: summary} For $i\in\{0,r\}$, the maps \eqref{eq: frp_i^sh} induce the unprimed maps and the maps \eqref{eq: frp_i^flat} induce the primed maps in
\begin{equation}\label{eq: tilde maps}
\begin{aligned}
\wt a_i &: \Hk_U^\mu\to\wt U_i, &
\wt a_i' &: \Hk_{U'}^\mu\to\wt U_i',\\
\wt b_i &: \Hk_V^\mu\to\wt V_i, &
\wt b_i' &: \Hk_{V'}^\mu\to\wt V_i',\\
\wt c_i &: \Hk_W^\mu\to\wt W_i, &
\wt c_i' &: \Hk_{W'}^\mu\to\wt W_i'.
\end{aligned}
\end{equation}

Composing the maps in \eqref{eq: tilde maps} with the corresponding maps $h_i^E$ of \eqref{eq: hiUVW}, we obtain
\begin{equation}\label{eq: no tilde maps}
\begin{aligned}
a_i:=h_i^U\circ\wt a_i &: \Hk_U^\mu\to U_i, &
a_i':=h_i^{U'}\circ\wt a_i' &: \Hk_{U'}^\mu\to U_i',\\
b_i:=h_i^V\circ\wt b_i &: \Hk_V^\mu\to V_i, &
b_i':=h_i^{V'}\circ\wt b_i' &: \Hk_{V'}^\mu\to V_i',\\
c_i:=h_i^W\circ\wt c_i &: \Hk_W^\mu\to W_i, &
c_i':=h_i^{W'}\circ\wt c_i' &: \Hk_{W'}^\mu\to W_i'.
\end{aligned}
\end{equation}

We have a pair of commutative diagrams
\begin{equation}\label{eq: big diagram}
\adjustbox{scale=0.8, center}{\begin{tikzcd}
& & \Hk_U^{\mu} \ar[dl, "\wt{a}_0"'] \ar[dr, "\wt{a}_r"] \ar[ddd, bend left, "f"'] \\
& \ar[dl, "h_0^U"'] \wt{U}_0 \ar[ddd, "\wt{f}_0"'] \ar[dr, "\wh{\wt c_0'}"'] && \wt{U}_r \ar[dl, "\wh{\wt c_r'}"] \ar[ddd, "\wt{f}_r"] \ar[dr, "h_r^U"] \\
U_0 \ar[ddd, "f_0"]  & & \wh{\Hk}_{W'}^{\mu}  \ar[ddd, bend right, "\wh{g'}"]   &  &  U_r  \ar[ddd, "f_r"] \\
& & \Hk_V^{\mu} \ar[dl, "\wt{b}_0"'] \ar[dr, "\wt{b}_r"] \ar[ddd, bend left, "g"']   \\
& \wt{V}_0 \ar[ddd, "\wt{g}_0"']  \ar[dr, "\wh{\wt b_0'}"']  \ar[dl, "h_0^V"'] && \wt{V}_r \ar[dl, "\wh{\wt b_r'}"]  \ar[ddd, "\wt{g}_r"]  \ar[dr, "h_r^V"] \\
V_0 \ar[ddd, "g_0"] & &  \wh{\Hk}_{V'}^{\mu}   \ar[ddd, bend right, "\wh{f'}"]  & & V_r  \ar[ddd, "g_r"] \\
& & \Hk_W^{\mu}  \ar[dl, "\wt{c}_0"'] \ar[dr, "\wt{c}_r"] \\
& \wt{W}_0 \ar[dr, "\wh{\wt a_0'}"'] \ar[dl, "h_0^W"']  & &  \wt{W}_r \ar[dl, "\wh{\wt a_r'}"]  \ar[dr, "h_r^W"]  \\
W_0 & &  \wh{\Hk}_{U'}^{\mu}    & & W_r 
\end{tikzcd}\hspace{1cm}
\begin{tikzcd}
& & \Hk_{U'}^{\mu} \ar[dl, "\wt{a}_0'"'] \ar[dr, "\wt{a}_r'"] \ar[ddd, bend left, "f'"'] \\
& \ar[dl, "h_0^{U'}"'] \wt{U}_0' \ar[ddd, "\wt{f}_0'"'] \ar[dr, "\wh{\wt c_0}"'] && \wt{U}_r' \ar[dl, "\wh{\wt c_r}"] \ar[ddd, "\wt{f}_r'"] \ar[dr, "h_r^{U'}"]  \\
U_0' \ar[ddd, "f_0'"] & & \wh{\Hk}_{W}^{\mu}  \ar[ddd, bend right, "\wh{g}"]   & & U_r' \ar[ddd, "f_r'"] \\
& & \Hk_{V'}^{\mu} \ar[dl, "\wt{b}_0'"'] \ar[dr, "\wt{b}_r'"] \ar[ddd, bend left, "g'"']   \\
& \ar[dl, "h_0^{V'}"']  \wt{V}_0' \ar[ddd, "\wt{g}_0'"']  \ar[dr, "\wh{\wt b_0}"']  && \wt{V}_r' \ar[dl, "\wh{\wt b_r}"]  \ar[ddd, "\wt{g}_r'"]  \ar[dr, "h_r^{V'}"]  \\
V_0' \ar[ddd, "g_0'"]  & & \wh{\Hk}_{V}^{\mu} \ar[ddd, bend right, "\wh{f}"] & & V_r' \ar[ddd, "g_r'"]  \\	
& & \Hk_{W'}^{\mu}  \ar[dl, "\wt{c}_0'"'] \ar[dr, "\wt{c}_r'"] \\
& \ar[dl, "h_0^{W'}"']  \wt{W}_0' \ar[dr, "\wh{\wt a_0}"']  & &  \wt{W}_r' \ar[dl, "\wh{\wt a_r}"]  \ar[dr, "h_r^{W'}"]  \\
W_0' & & \wh{\Hk}_U^\mu & & W_r'
\end{tikzcd}
}
\end{equation}
In each diagram: 
\begin{itemize}
\item The maps in the columns come from exact triangles of perfect complexes. 
\item The three diamonds in the middle are derived Cartesian. 
\item The four parallelograms on the left and right sides are derived  Cartesian. 
\end{itemize}
The primed diagram in \eqref{eq: big diagram} is dual to the unprimed diagram. The duality exchanges $U$ with $W'$, $V$ with $V'$, and $W$ with $U'$.
	
\begin{remark}[Global presentability]
By the same considerations as in \cite[\S 9.3]{FYZ3}, the diagram \eqref{eq: big diagram} is globally presented. 
\end{remark}

\subsection{Geometric properties}\label{sssec: geometric properties}  We establish some needed geometric properties of the maps in \eqref{eq: big diagram}. 

\begin{lemma}\label{lem: hecke corr quasismooth tilde} For $i\in\{0,r\}$, we have the following properties of the morphisms in \eqref{eq: tilde maps}.
\begin{enumerate}
\item Each of $\wt{a}_i$, $\wt{a}_i'$, and $\wt{b}_i'$ is a quasi-smooth closed embedding. 
\item Each of $\wt{b}_i, \wt{c}_i$, and $\wt{c}_i'$ is a smooth vector bundle. 
\end{enumerate}
\end{lemma}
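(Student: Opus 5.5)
Each map in \eqref{eq: tilde maps} is induced, via $\Tot$ of relative $\RHom$, by one of the maps $\frp_i^\flat\co \cF_\bu^\flat\to\cF_i$ or $\frp_i^\sharp\co\cF_i\to\cF_\bu^\sharp$. We therefore reduce every assertion to a tor-amplitude computation for the relative tangent complex. That tangent complex is $\ul{\RHom(-,-)}$ applied to the cofiber $T_i$ of $\frp_i^\sharp$, or to the fiber of $\frp_i^\flat$. At classical points of $\Hk_S^\mu$, both $\operatorname{cofib}(\frp_i^\sharp)$ and $\operatorname{cofib}(\frp_i^\flat)$ are torsion sheaves on $X_R$ supported on the graphs of the legs. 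The first is a sheaf because $\cF_\bu^\sharp$ is a coherent sheaf containing $\cF_i$. For the second, $\cF_\bu^\flat$ is a two-term complex whose comparison with $\cF_i$ differs by length-one modifications. The argument follows \cite[Lemma 9.1.3]{FYZ3}, and also Lemma \ref{lem: V-row endpoint maps vdim zero} and Lemma \ref{lem: one-leg V-row endpoint maps lci} above. We use the criterion \cite[Lemma 6.1.5]{FYZ3}: a map $\Tot(\cA)\to\Tot(\cB)$ induced by $\cA\to\cB$ is a closed embedding (resp.\ smooth) when the fiber has tor-amplitude $[1,\infty)$ (resp.\ the cofiber is locally free in degree $0$), and it is quasi-smooth when the fiber has amplitude in $[-1,\infty)$.

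For (1), take $\wt a_i$. We have $\cU_\Hk^\mu=\ul{\RHom(\cF_\bu^\sharp,\cE_2^\vee)}\to h_i^*\ul{\RHom(\cF_i,\cE_2^\vee)}$, whose fiber is $\ul{\RHom(T_i,\cE_2^\vee)}\sm{[1]}$ up to a shift. Here $T_i$ is torsion and $\cE_2^\vee$ is locally free. Hence $\Hom(T_i,\cE_2^\vee)=0$ and $\RHom(T_i,\cE_2^\vee)$ is $\Ext^1$, locally free in degree $1$ by flatness of $T_i$ over the base. The fiber of the map of complexes therefore sits in degree $[1,1]$, so $\wt a_i$ is a closed embedding. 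It is quasi-smooth because the cotangent complex is a shifted vector bundle: $\wt a_i$ is the derived zero locus of a section of a vector bundle. The case of $\wt a_i'$ is analogous with $\ul{\RHom(\cG,\cF_\bu^\flat)}\to\ul{\RHom(\cG,\cF_i)}$. Its cone is $\ul{\RHom(\cG,\operatorname{cofib}\frp_i^\flat)}$, and since $\cG$ is locally free and the cofiber is torsion in degree $0$, this is locally free in degree $0$. Thus the fiber lies in degree $1$. The same argument works for $\wt b_i'$ with $\cE_1$ in place of $\cG$.

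For (2), take $\wt b_i$. Here the coefficients carry a shift, $\cE_1\otimes\omega_X\sm{[1]}$. The map $\ul{\RHom(\cF_\bu^\sharp,\cE_1\otimes\omega_X[1])}\to\ul{\RHom(\cF_i,\cE_1\otimes\omega_X[1])}$ has cofiber $\ul{\RHom(T_i,\cE_1\otimes\omega_X)}\sm{[2]}$ shifted appropriately. This equals $\Ext^1(T_i,\cdot)$ placed in degree $0$ (after tracking shifts), which is locally free. So the map of total spaces is a surjection of vector-bundle type, i.e.\ a smooth vector bundle torsor projection. The same holds for $\wt c_i$ with $\cG$ in place of $\cE_1$. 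For $\wt c_i'$, the coefficient is $\cE_2^*$ in $\ul{\RHom(\cE_2^*,\cF_\bu^\flat[1])}$, and the cofiber $\ul{\RHom(\cE_2^*,\operatorname{cofib}\frp_i^\flat)}\sm{[1]}$ is a vector bundle in degree $-1$ in the cohomological convention. This gives smoothness in the same way, which also agrees with the duality in Lemma \ref{lem: dual Hecke stacks}: $\wt c_i'$ is dual to the closed embedding $\wt a_i$.

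The main obstacle is the bookkeeping that the cofibers of $\frp_i^\flat$ and $\frp_i^\sharp$ are flat torsion sheaves concentrated in a single degree for general $r$ and arbitrary signs $\mu$. For $\frp^\flat$ in particular, the two-term complex must be compared with $\cF_i$ through the chain of intersections. We handle this by factoring along one-leg steps, as in the proof of Proposition \ref{prop: Fourier Gaussian cc}, and reducing to the single-modification case. In that case the cofiber is a length-one sheaf on the graph $\Gamma_{x_j}$ twisted by a line bundle, which is visibly flat. Composites of closed embeddings, and of vector-bundle projections, are then handled by two-out-of-three.
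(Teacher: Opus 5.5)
Your strategy is the paper's: identify the relative tangent complex of each map in \eqref{eq: tilde maps} with a relative $\RHom$ out of $\cT_i=\operatorname{cofib}(\frp_i^\sharp)$ or into $\operatorname{cofib}(\frp_i^\flat)$, note that these cofibers are torsion sheaves at classical points, and conclude with \cite[Lemma 6.1.5]{FYZ3}. No idea is missing. However, the criteria in your first paragraph are off by one, and as written your proof of (2) rests on one of them.

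For $\Tot(\cA)\to\Tot(\cB)$ induced by $\cA\to\cB$, the relative \emph{tangent} complex is $\operatorname{fib}(\cA\to\cB)$. The map is:
\begin{itemize}
\item a closed embedding when this lies in $[1,\infty)$;
\item quasi-smooth when it lies in $(-\infty,1]$ (your $[-1,\infty)$ is the condition on the \emph{cotangent} complex);
\item smooth when it is locally free in degree $0$, i.e.\ when the cofiber sits in degree $-1$.
\end{itemize}
Your rule ``smooth when the cofiber is locally free in degree $0$'' actually characterizes quasi-smooth closed embeddings. For instance, it would make the zero section $S\to\Tot(\cB)$ smooth.

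With the correct criteria, your computations need adjusting as follows.
\begin{itemize}
\item For $\wt b_i$, your cofiber $\RHom(\cT_i,\cE_1\otimes\omega_X)[2]$ has its $\Ext^1$ in degree $-1$, not $0$. Smoothness comes from the fiber $\RHom(\cT_i,\cE_1\otimes\omega_X)[1]$ being a vector bundle in degree $0$, exactly as in the paper.
\item Your $\wt c_i'$ computation (cofiber in degree $-1$) is right, but it contradicts the rule you stated.
\item For $\wt a_i$, the fiber is $\RHom(\cT_i,\cE_2^\vee)$ with no shift, which is already in degree $1$.
\end{itemize}

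The one-leg factorization is an unnecessary detour. The paper simply uses that both cofibers are torsion sheaves in degree $0$ at every classical point, and tor-amplitude of perfect complexes can be checked at points. For $\cT_i$ this holds because $\frp_i^\sharp$ is a generic isomorphism out of a locally free sheaf. For $\frp_i^\flat$ some care is needed, since $\cF_\bu^\flat$ can have $H^1\neq0$ when legs collide. Nevertheless $H^0(\cF_\bu^\flat)=\bigcap_j\cF_j\hookrightarrow\cF_i$, so the cofiber has no $H^{-1}$. Its $H^0$ is an extension of the torsion sheaf $H^1(\cF_\bu^\flat)$ by $\cF_i/\bigcap_j\cF_j$.

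If you keep the factorization, note three things. It works through the iterated fiber-product description of the Hecke bundles together with stability under base change and composition, not two-out-of-three. In a one-leg step the relevant cofiber may be zero rather than of length one. Finally, $\wt c_i'$ is not dual to $\wt a_i$: in Lemma \ref{lem: dual Hecke stacks}, $\wt a_0$ is the base change of $\wh{\wt c_r'}$.
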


\begin{proof}
We will first discuss the maps $\wt{a}_i$, $\wt{b}_i, \wt{c}_i$. For an $R$-point over $(\cF_\star)$, let
\[
\cT_i := \operatorname{cofib}(\cF_i \xrightarrow{\frp_{i}^\sh} \cF_{\bu}^{\sh}).
\]
This glues to a universal complex $\cT_{i,\univ}$ on $X\times\Hk_S^\mu$. Note that on classical truncations, $\cT_i$ is a torsion coherent sheaf on $X \times \Hk_S^\mu$, supported on the graph of the legs. The relative tangent complex of $\wt a_i$ to the $R$-point $(\cF_\star)$ is $\RHom_{X_R}(\cT_i,\cE_2^\vee\otimes R)$, i.e., the pullback of $\ul{\RHom(\cT_{i,\univ},\cE_2^\vee)}$ from $X \times \Hk_S^\mu$. As in \cite[Lemma 9.1.3]{FYZ3}, this complex has tor-amplitude $[0,1]$, so \cite[Lemma 6.1.5]{FYZ3} makes $\wt a_i$ quasi-smooth. Closedness may be checked on classical truncations, where since $\cT_i$ is torsion, this complex is represented by a vector bundle in degree $1$. Thus $\wt a_i$ is a closed embedding, by \cite[Lemma 6.1.5(3)]{FYZ3}.

The analysis of $\wt b_i$ and $\wt c_i$ is similar. The relative tangent complexes of $\wt b_i$ and $\wt c_i$ are the pullbacks of $\ul{\RHom(\cT_{i,\univ},\cE_1\otimes\omega_X\sm{[1]})}$ and $\ul{\RHom(\cT_{i,\univ},\cG\otimes\omega_X\sm{[1]})}$. Both are locally vector bundles in degree $0$; hence $\wt b_i$ and $\wt c_i$ are smooth vector bundles.

Next we turn to $\wt{a}_i', \wt{b}_i'$, and $\wt{c}_i'$. Let $\cK_i:=\operatorname{fib}(\cF_{\bu}^{\flat}\xrightarrow{\frp_i^\flat}\cF_i)$
be the derived fiber in $\Perf(X_R)$, and let $\cQ_{i,\univ}$ be the universal cofiber of $\cF_{\univ,\bu}^\flat\to h_i^*\cF_{\univ}$. Then $\cK_i$ is the pullback of $\cQ_{i,\univ}\sm{[-1]}$. At an $R$-point, the relative tangent complexes of $\wt a_i'$, $\wt b_i'$, and $\wt c_i'$ are computed by $\RHom_{X_R}(\cG,\cK_i)$, $\RHom_{X_R}(\cE_1,\cK_i)$, and $\RHom_{X_R}(\cE_2^*,\cK_i\sm{[1]})$, respectively. Globally, they are the pullbacks of $\ul{\RHom(\cG,\cQ_{i,\univ})}\sm{[-1]}$, $\ul{\RHom(\cE_1,\cQ_{i,\univ})}\sm{[-1]}$, and $\ul{\RHom(\cE_2^*,\cQ_{i,\univ})}$. The first two are locally vector bundles in degree $1$, and the last in degree $0$. Thus $\wt a_i'$ and $\wt b_i'$ are quasi-smooth closed embeddings, while $\wt c_i'$ is a smooth vector bundle.
\end{proof}

By the argument of \cite[Lemma 6.9]{FYZ}, each map $h_i \co \Hk_S^\mu \rightarrow S$ is smooth, schematic, and separated. Since, for every $E\in\{U,V,W,U',V',W'\}$, the map $h_i^E$ in \eqref{eq: hiUVW} is a base change of $h_i$, we get the following corollary. 

\begin{cor}\label{cor: hecke corr quasismooth} For $i\in\{0,r\}$, we have the following properties of the morphisms in \eqref{eq: no tilde maps}.
\begin{enumerate}
\item Each of $a_i, a_i', b_i'$ is quasi-smooth, schematic, and separated. 
\item Each of $b_i,c_i, c_i'$ is smooth, schematic, and separated. 
\end{enumerate}
\end{cor}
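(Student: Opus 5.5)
The plan is to deduce each assertion from the corresponding assertion of Lemma \ref{lem: hecke corr quasismooth tilde}, using the factorizations $a_i=h_i^U\circ\wt a_i$, etc., recorded in \eqref{eq: no tilde maps}. The input about $h_i^E$ is the one stated just before the corollary. By the argument of \cite[Lemma 6.9]{FYZ}, $h_i\co\Hk_S^\mu\to S$ is smooth, schematic and separated. Each $h_i^E\co\wt E_i\to E_i$ is the derived base change of $h_i$ along the structure map $E_i\to S_i$, so it inherits all three properties, since each is stable under derived base change. Its virtual relative dimension is $d(h_i)$.

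For (1), write $a_i=h_i^U\circ\wt a_i$, $a_i'=h_i^{U'}\circ\wt a_i'$ and $b_i'=h_i^{V'}\circ\wt b_i'$. By Lemma \ref{lem: hecke corr quasismooth tilde}(1), each of $\wt a_i,\wt a_i',\wt b_i'$ is a quasi-smooth closed embedding. A closed embedding is schematic and separated, the latter because a monomorphism has diagonal an isomorphism, and both properties can be checked on classical truncations. Quasi-smoothness is stable under composition: the transitivity triangle $\bL_{h}\to\bL_{h\circ j}\to\bL_{j}$, with $\bL_h$ in tor-amplitude $[0,\infty)$ and $\bL_j$ in $[-1,\infty)$, forces $\bL_{h\circ j}$ into $[-1,\infty)$. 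Composites of schematic separated maps are schematic and separated. This gives (1).

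For (2), write $b_i=h_i^V\circ\wt b_i$, $c_i=h_i^W\circ\wt c_i$ and $c_i'=h_i^{W'}\circ\wt c_i'$. By Lemma \ref{lem: hecke corr quasismooth tilde}(2), the first factors $\wt b_i,\wt c_i,\wt c_i'$ are smooth vector bundle projections. Total spaces of locally free sheaves in degree $0$ are affine, hence schematic and separated, over their base. Composition with the smooth, schematic, separated map $h_i^E$ then gives (2).

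There is no serious obstacle here. The one point needing care is making sure that being ``schematic'' (representable in derived schemes) composes correctly for derived Artin stacks. This is formal, since a base change of $h\circ j$ to a derived scheme factors through a base change of $h$, which is a derived scheme, followed by a base change of $j$ over that derived scheme. The rest is bookkeeping of stability under base change and composition.
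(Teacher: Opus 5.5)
Your proposal is correct and follows the paper's own argument. The paper likewise combines Lemma \ref{lem: hecke corr quasismooth tilde} with the fact that each $h_i^E$ is a base change of the smooth, schematic, separated map $h_i$; your extra checks (composition of quasi-smooth maps via the transitivity triangle, and closed embeddings and vector-bundle projections being schematic and separated) only make explicit what the paper leaves implicit.
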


\begin{lemma}\label{lem: cube side pushable/pullable} The commutative diagram 
\begin{equation}\label{eq: base change cube}
\xymatrix{ U_{0}\ar[ddd]^{\pi_{0}}\ar[dr]^{f_{0}} && \ar[ll]_-{a_{0}}\ar[rr]^-{a_{r}}\Hk^{\mu}_{U}\ar[ddd]^{\pi}\ar[dr]^{f} && U_{r}\ar[ddd]^{\pi_{r}}\ar[dr]^{f_{r}} \\
& V_{0}\ar[ddd]^{g_{0}} && \ar[ll]_-{b_{0}}\ar[rr]^-{b_{r}} \Hk^{\mu}_{V}\ar[ddd]^{g} && V_{r}\ar[ddd]^{g_{r}}\\
\\
S_{0}\ar[dr]^{z_{0}} && \ar[ll]^-{h_{0}}\ar[rr]_-{h_{r}} \Hk^{\mu}_{S}\ar[dr]^{z} && S_{r}\ar[dr]^{z_{r}}\\
& W_{0} && \ar[ll]_-{c_{0}}\ar[rr]^-{c_{r}} \Hk^{\mu}_{W} && W_{r} 
}
\end{equation}
satisfies the conditions in \cite[\S 4.4.1]{FK}. (Here, $z_{i}$ and $z$ are the inclusions of zero sections, and $\pi_{i}$ and $\pi$ are the natural projections.) The analogous diagram, obtained by replacing every object and map in the list $U,V,W,f,g,\pi,z,a_i,b_i,c_i$ by its counterpart with $'$, satisfies the same conditions.
\end{lemma}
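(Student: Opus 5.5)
We must verify the conditions of \cite[\S 4.4.1]{FK} for the cube \eqref{eq: base change cube}. These are the hypotheses of the Base Change Theorem \cite[Theorem 4.4.2]{FK}. Concretely, there are four requirements. The top face (the map of correspondences from the $U$-row to the $V$-row, vertical arrow $f$) must be pullable. The back face (from the $U$-row to the $S$-row, vertical arrow $\pi$) must be pushable. The front face (from the $V$-row to the $W$-row, vertical arrow $g$) must be pushable. The bottom face (from the $S$-row to the $W$-row, vertical arrow $z$) must be pullable. In addition, the vertical squares in each of the three columns must be derived Cartesian. The plan is to follow the proof of Lemma \ref{lem:pushable-and-pullable} almost verbatim, now using the geometric inputs of Lemma \ref{lem: hecke corr quasismooth tilde} in place of \cite[Lemma 9.1.3]{FYZ3}.

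First, the Cartesian conditions. The column squares over $S_0$, $\Hk_S^\mu$ and $S_r$ are exactly \eqref{eq: derived cartesian UVWS}, its base changes, and \eqref{eq: Hk UVW Cart}, so they are derived Cartesian by construction.

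Second, pullability of the bottom face. After the base-change identification $\Hk_W^\mu\times_{W_r}S_r\cong \Hk_W^\mu\times_{\wt W_r}\Hk_S^\mu$, the comparison map of the right square becomes the comparison map of the square with top $\Id_{\Hk_S^\mu}$, left $z$, right $z_{\wt W_r}$ and bottom $\wt c_r$. By Lemma \ref{lem: hecke corr quasismooth tilde}(2), $\wt c_r$ is a smooth vector bundle projection, so this square is pullable by \cite[Example 3.1.3]{FYZ3}.

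Third, pullability of the top face. The right comparison square for $f$ is the derived base change along $z$ of the one for the bottom face. This follows from the two derived Cartesian squares identifying $\Hk_U^\mu=\Hk_V^\mu\times_{\Hk_W^\mu}\Hk_S^\mu$ and $\wt U_r=\wt V_r\times_{\wt W_r}\Hk_S^\mu$. Quasi-smoothness is stable under derived base change, so this gives pullability. Alternatively, one can argue directly: $\wt a_r$ and $\wt b_r$ are quasi-smooth and smooth respectively, and pullability follows from the two-out-of-three property for quasi-smooth maps.

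Fourth, pushability of the front and back faces. For the front face, the left comparison map $\Hk_V^\mu\to\Hk_W^\mu\times_{W_0}V_0=\Hk_W^\mu\times_{\wt W_0}\wt V_0$ is a map of derived vector bundles over $\Hk_S^\mu$. I would check properness on classical truncations. Its relative tangent complex is the cofiber comparison of $\ul{\RHom(\cT_{0,\univ},-)}$ applied to the triangle $\cE_1\otimes\omega_X[1]\to\cG\otimes\omega_X[1]\to\cE_2^\vee[1]$. This should have fiber in tor-amplitude $[1,\infty)$, and then \cite[Lemma 6.1.5]{FYZ3} gives a closed embedding. Alternatively, pushability of the front face is dual to pullability of the top face of the primed cube, via Lemma \ref{lem: dual Hecke stacks} and \cite[Lemma 7.2.1]{FYZ3}; this is how the proof of Lemma \ref{lem:pushable-and-pullable} deduced (3) from (1). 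For the back face, $\wt a_0$ is a closed embedding by Lemma \ref{lem: hecke corr quasismooth tilde}(1), so the comparison map $\Hk_U^\mu\to h_0^*U_0=\wt U_0$ is proper.

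The primed cube is handled identically. Its bottom face uses that $\wt c_r'$ is a smooth vector bundle, and its back face uses that $\wt a_0'$ is a closed embedding. The step I expect to be the main obstacle is the pushability of the front face in the primed cube, i.e.\ identifying the comparison map for $g'$ as a closed embedding. There the $\flat$-complex is not a sheaf, so I would derive that case by the duality argument rather than by a direct amplitude computation.
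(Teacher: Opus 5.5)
Your proposal verifies the transposed set of hypotheses. The assignment you list ($f$-face pullable, $\pi$-face pushable, $g$-face pushable, $z$-face pullable) is the one from Lemma \ref{lem:pushable-and-pullable}. That configuration feeds the unitary base change $\pi_!f^*\cong z^*g_!$.

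In the general linear setting the cube is used in the opposite direction. Here $\cc_U$ is \emph{defined} as $\pi^*\cc_S$ and $\cc_W$ as $z_!\cc_S$. Proposition \ref{prop: modularity of cc}(1) then applies \cite[Theorem 4.4.2]{FK} in the form $f_!\pi^*\cc_S\cong g^*z_!\cc_S$. So the conditions to check are:
\begin{itemize}
\item the left squares of the $f$-face and the $z$-face are pushable;
\item the right squares of the $\pi$-face and the $g$-face are pullable.
\end{itemize}
You also need the derived Cartesian vertical squares, which you handle. You further need the schematic and separated property of the horizontal maps, which you omit; it is Corollary \ref{cor: hecke corr quasismooth}.

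The individual facts you establish are essentially correct, but they do not imply the needed ones. For instance, left-pushability of the $\pi$-face (via $\wt a_0$) says nothing about its right square. That right square is what is needed even to define $\pi^*\cc_S$.

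Each correct check is one line from Lemma \ref{lem: hecke corr quasismooth tilde} after base change to $\Hk_S^\mu$.
\begin{itemize}
\item For the $f$-face, the comparison map $\Hk_U^\mu\to\Hk_V^\mu\times_{\wt V_0}\wt U_0$ is proper. Its composite with the projection to $\wt U_0$ is the closed embedding $\wt a_0$, and that projection is a base change of the separated map $\wt b_0$ \cite[Example 3.1.2]{FYZ3}.
\item For the $z$-face, the same argument works with $\Id_{\Hk_S^\mu}$ and $\wt c_0$.
\item For the $\pi$-face, the right comparison map is $\wt a_r$ itself, which is quasi-smooth.
\item For the $g$-face, $\wt b_r$ and $\wt c_r$ are both smooth, so \cite[Example 3.1.3]{FYZ3} applies.
\end{itemize}
The primed cube is handled identically, using in the last step that $\wt b_r'$ is quasi-smooth and $\wt c_r'$ is smooth. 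In particular, the obstacle you anticipated does not arise. The primed front face only needs to be right pullable, not pushable, so neither an amplitude computation for the $\flat$-complex nor a duality argument is required.
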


\begin{proof}
The schematic and separated properties needed for the horizontal maps in \eqref{eq: base change cube} follow from Corollary \ref{cor: hecke corr quasismooth}. 

The vertical squares $(U_{0},V_{0}, S_{0}, W_{0})$, $(\Hk^{\mu}_{U},\Hk^{\mu}_{V},\Hk^{\mu}_{S},\Hk^{\mu}_{W})$ and $(U_{r}, V_{r}, S_{r}, W_{r})$ are derived Cartesian by \eqref{eq: derived cartesian UVWS} and \eqref{eq: Hk UVW Cart}. In particular, the rightmost square is pullable with defect zero. It remains to check the pushability or pullability of the remaining squares.
\begin{enumerate}[(a)]
\item The square $(\Hk^{\mu}_{U}, U_{0}, \Hk^{\mu}_{V},V_{0})$ is pushable. For this it suffices to base change all relevant spaces to $\Hk^{\mu}_{S}$ and show instead that
\begin{equation*}
\xymatrix{ \wt U_{0}\ar[d]^{\wt f_{0}} & \Hk^{\mu}_{U} \ar[l]_{\wt a_{0}}\ar[d]^{f}\\
\wt V_{0} & \Hk^{\mu}_{V}\ar[l]_{\wt b_{0}}
}
\end{equation*}
is pushable. As $\wt a_{0}$ is a closed embedding and $\wt b_{0}$ is separated by Lemma \ref{lem: hecke corr quasismooth tilde}, the pushability follows from \cite[Example 3.1.2]{FYZ3}. 
\item The square $(\Hk^{\mu}_{S}, S_{0}, \Hk^{\mu}_{W},W_{0})$ is pushable. After base change to $\Hk^{\mu}_{S}$, it suffices to show that
\begin{equation*}
\xymatrix{ \Hk^{\mu}_{S}\ar[d]^{\wt z_{0}} & \Hk^{\mu}_{S} \ar@{=}[l]\ar[d]^{z}\\
\wt W_{0} & \Hk^{\mu}_{W}\ar[l]_{\wt c_{0}}
}
\end{equation*}
is pushable. As $\Id_{\Hk^{\mu}_{S}}$ is proper and $\wt c_{0}$ is separated by Lemma \ref{lem: hecke corr quasismooth tilde}, the pushability follows from \cite[Example 3.1.2]{FYZ3}. 
\item The square $(\Hk^{\mu}_{U}, U_{r}, \Hk^{\mu}_{S}, S_{r})$ is pullable. It suffices to check that
\begin{equation*}
\xymatrix{\Hk^{\mu}_{U} \ar[d]^{\pi}\ar[r]^{\wt a_{r}} & \wt U_{r}\ar[d]^{\wt \pi_{r}}\\
\Hk^{\mu}_{S} \ar@{=}[r]&\Hk^{\mu}_{S}}
\end{equation*}
is pullable. As $\wt a_{r}$ is quasi-smooth (by Lemma \ref{lem: hecke corr quasismooth tilde}) and $\Id$ is smooth, this follows from \cite[Example 3.1.3]{FYZ3}. 
\item The square $(\Hk^{\mu}_{V}, V_{r}, \Hk^{\mu}_{W}, W_{r})$ is pullable. It suffices to check that
\begin{equation*}
\xymatrix{\Hk^{\mu}_{V} \ar[d]^{g}\ar[r]^{\wt b_{r}} & \wt V_{r}\ar[d]^{\wt g_{r}}\\
\Hk^{\mu}_{W} \ar[r]^{\wt c_{r}} &\wt W_{r}
}
\end{equation*}
is pullable. As $\wt c_{r}$ and $\wt b_{r}$ are smooth by Lemma \ref{lem: hecke corr quasismooth tilde}, this follows from \cite[Example 3.1.3]{FYZ3}. 
\end{enumerate}
The analogous diagram with $'$ is treated identically, using \eqref{eq: derived cartesian U'V'W'S'} and \eqref{eq: Hk U'V'W' Cart} for the vertical derived Cartesian squares. The four side squares are checked by the same base-change reductions as in (a)--(d), with Lemma \ref{lem: hecke corr quasismooth tilde} applied to the $'$ maps; in the analogue of (d), one uses that $\wt b_r'$ is quasi-smooth and $\wt c_r'$ is smooth.
\end{proof}

\section{The proof of supermodularity}

Finally, in this section we complete the proof of Theorem \ref{thm: intro supermodularity} (supermodularity).

\subsection{Recollections on higher theta series}\label{ssec: higher theta series} The higher theta series
\[
\wt{Z}^{\psi, \mu}_{m_1,m_2} \co \Bun_{P_{(m_1, m_2)}}(\F_q) \rightarrow \CH_{\frac{r}{2}(2n-m)}(\Sht^\mu_n)
\]
is defined in \cite[\S 4.8]{FYZ2}\footnote{Our $E_1$ and $E_2$ here correspond to $E^{(1)}$ and $E^{(2)}$ of \cite[\S 4.8]{FYZ2}.}. Unlike in \cite{FYZ2}, we emphasize here the a priori dependence on the non-trivial additive character $\psi$, although we will shortly see that (in this general linear case) it is independent of the choice of $\psi$. The value of $\wt{Z}^{\psi, \mu}_{m_1,m_2}$ on a pair $(\cE_1, \cG) \in  \Bun_{P_{(m_1, m_2)}}(\F_q) $ is
\[
q^{n \deg \cE_2} \sum_{a \in \Hom(\cE_1, \cE_2^\vee)} \psi(\langle e_{\cG}, a \rangle) [\cZ^{\mu}_{\cE_1, \cE_2}(a)]
\]
where $e_{\cG} \in \Ext^1(\cE_2^*, \cE_1)$ is the extension class of $\cG$, paired with $a$ using Serre duality. 
When $m_2=0$, the Hom set in the displayed formula has the single element $0$, and the formula reduces to $[\cZ_{\cG,0}^{\mu}]$.

\begin{lemma}\label{lem: ind of psi}
For any two non-trivial additive characters $\psi, \psi' \co \F_q \rightarrow \ol{\Q}^\times$, we have 
\begin{equation}\label{eq: ind of psi}
\wt{Z}_{m_1, m_2}^{\psi, \mu} = \wt{Z}_{m_1, m_2}^{\psi', \mu}.
\end{equation}
\end{lemma}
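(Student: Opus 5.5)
The plan is to compare the two theta series by a rescaling of the Hitchin parameter. Any two non-trivial additive characters of $\F_q$ differ by a scalar: there is a unique $c\in\F_q^\times$ with $\psi'(x)=\psi(cx)$ for all $x\in\F_q$. Since the Serre duality pairing is $\F_q$-bilinear, we have $\psi'(\langle e_{\cG},a\rangle)=\psi(\langle e_{\cG},ca\rangle)$. Reindexing the sum by $a\mapsto c^{-1}a$ then reduces \eqref{eq: ind of psi} to the identity
\[
[\cZ^{\mu}_{\cE_1,\cE_2}(ca)]=[\cZ^{\mu}_{\cE_1,\cE_2}(a)]\in\CH_{\frac r2(2n-m)}(\Sht_n^\mu)
\qquad\text{for all } a\in\Hom(\cE_1,\cE_2^\vee),\ c\in\F_q^\times .
\]
The prefactor $q^{n\deg\cE_2}$ plays no role.

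To prove this identity, recall that in the general linear case a point of $\cZ^\mu_{\cE_1,\cE_2}$ is a shtuka $\cF_\star$ together with compatible maps $t_{1,i}\co\cE_1\to\cF_i$ and $t_{2,i}\co\cE_2\to\cF_i^\vee$. Its Hitchin parameter is $a=t_2^\vee\circ t_1$. I would use the automorphism of the whole derived Hitchin stack $\cM_{\cE_1,\cE_2}$ and of its Hecke stack given by $(t_1,t_2)\mapsto(ct_1,t_2)$, or equivalently by scaling $\cE_1$ via $c\in\Aut(\cE_1)(\F_q)$. This automorphism is defined over $\F_q$, because $c$ is Frobenius-fixed. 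It commutes with the legs and with $h_0,h_r$, and it acts on the Hitchin base by multiplying $a$ by $c$.

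Consequently it induces an isomorphism of derived special cycles $\cZ^\mu_{\cE_1,\cE_2}(a)\xrightarrow{\sim}\cZ^\mu_{\cE_1,\cE_2}(ca)$ over $\Sht^\mu_n$, which is compatible with the derived structure since it comes from an automorphism of the perfect complexes $\ul{\RHom(\cE_1,\cF^\flat_\bu)}$ defining the derived vector bundles. Because virtual fundamental classes are preserved by isomorphisms over the base, their pushforwards to $\Sht^\mu_n$ agree. This gives the identity, and hence the lemma.

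The only point needing care is that the virtual class $[\cZ^\mu_{\cE_1,\cE_2}(a)]$ of \cite[\S 4]{FYZ2} is indeed defined intrinsically from this derived moduli description, and hence is functorial for such automorphisms. If it is instead defined through the explicit excess-class formula of \cite{FYZ2}, I would check the same invariance term by term. Each term is built from the degeneracy strata $\cZ^{\circ}$ for the various saturated subbundles and from Chern classes of excess bundles, and scaling $t_1$ by $c$ permutes these strata compatibly while preserving the excess bundles up to canonical isomorphism. I expect this bookkeeping to be the only, and mild, obstacle.
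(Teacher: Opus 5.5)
Your proposal is correct and is essentially the paper's argument. Write $\psi'=\psi(c\,\cdot)$, use the $\F_q$-rational scaling of $t_1$ by $c$ to identify $\cZ^{\mu}_{\cE_1,\cE_2}(a)\cong\cZ^{\mu}_{\cE_1,\cE_2}(ca)$ over $\Sht_n^\mu$ so that the pushed-forward classes agree, and reindex the sum. The term-by-term excess-formula check you mention is unnecessary: the class is the derived fundamental class, which is functorial under this automorphism of the derived Hitchin stack.
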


\begin{proof} There are two scaling $\F_q^\times$-actions on $\cZ_{\cE_1, \cE_2}^\mu$: one scaling the map $t_1 \co \cE_1 \rightarrow \cF_i$, and the other scaling the map $t_2 \co \cF_i \rightarrow \cE_2^\vee$. The derived fundamental class $[\cZ_{\cE_1, \cE_2}^\mu] \in \CH(\cZ_{\cE_1, \cE_2}^\mu)$ is equivariant for each of these two actions; equivalently, it is pulled back from $\CH(\cZ_{\cE_1, \cE_2}^\mu/(\F_q^\times \times  \F_q^\times))$.

Fix the action scaling $t_1$, and set $\psi_s(x):=\psi(sx)$ for $s\in\F_q^\times$. If $p_a\co\cZ_{\cE_1,\cE_2}^{\mu}(a)\to\Sht_n^\mu$ is the projection, scaling $t_1$ induces an isomorphism $\cZ_{\cE_1,\cE_2}^{\mu}(a)
\xrightarrow{\sim}
\cZ_{\cE_1,\cE_2}^{\mu}(sa)$. Therefore, their pushforwards to $\CH_{\frac{r}{2}(2n-m)}(\Sht^\mu_n)$ satisfy $[\cZ_{\cE_1,\cE_2}^{\mu}(sa)]
=[\cZ_{\cE_1,\cE_2}^{\mu}(a)]$. Using the notation of \eqref{eq: formulation theta value} and reindexing by $b=sa$, we obtain
\[
\wt Z_{m_1,m_2}^{\psi_s,\mu}
=q^{n\deg\cE_2}\sum_b
\psi(\langle e_{\cG},b\rangle)
[\cZ_{\cE_1,\cE_2}^{\mu}(s^{-1}b)]
=\wt Z_{m_1,m_2}^{\psi,\mu}.
\]
Every two non-trivial additive characters of $\F_q$ are related by $\psi'=\psi_s$ for a unique $s\in\F_q^\times$, proving \eqref{eq: ind of psi}.
\end{proof}

The stack $\Sht_S^\mu$, defined as the fibered product 
\[
\begin{tikzcd}
\Sht_S^\mu \ar[r] \ar[d] & \Hk_S^\mu \ar[d] \\
S \ar[r, "\Id \times \Frob"] & S \times S
\end{tikzcd}
\]
is the open Harder--Narasimhan truncation of $\Sht_n^\mu$ on which every intermediate bundle $\cF_i$ lies in $S=\Bun_n^{\leq\nu}$.
As $\nu$ ranges over all HN polygons, the union of the $\Sht_S^\mu$ exhausts $\Sht_n^\mu$. Therefore, we have 
\begin{equation}\label{eq: gl HN Chow limit}
\CH_*(\Sht_n^\mu) = \limit_{\nu} \CH_*(\Sht_S^\mu).
\end{equation}
Thus, to prove Theorem \ref{thm: intro supermodularity}, it suffices to prove the restriction of the statement to $\CH(\Sht_S^\mu)$ for all $S$. The pullback of $\cZ^{\mu}_{\cE_1, \cE_2}$ to $\Sht_S^\mu$ is $\Sht_{V' \times_S U}^\mu$, the derived fibered product 
\[
\begin{tikzcd}
\Sht_{V' \times_S U}^\mu \ar[r] \ar[d] & \Hk_{V'}^\mu \times_{\Hk_S^\mu} \Hk_U^\mu \ar[d] \\
V' \times_S U \ar[r, "\Id \times \Frob"] & (V' \times_S U) \times (V' \times_S U)
\end{tikzcd}
\]

\subsection{Supermodularity in low corank} 

We first prove supermodularity in the low-corank range using the
following refinement of the Trace Conjecture in the split case.

Let $X'=X^{(1)}\coprod X^{(2)}$, with the involution exchanging the
two components, and identify a Hermitian bundle with its restriction to
$X^{(1)}$.  Under this convention, a leg on $X^{(2)}$ gives an upper
modification and has sign $\mu_i=+1$, whereas a leg on $X^{(1)}$
gives a lower modification and has sign $\mu_i=-1$.

\begin{thm}[Trace Conjecture in low corank, split case]\label{thm:split-trace-conjecture-n/3}
Let $\cE=(\cE_1,\cE_2)$ be a pair of vector bundles on $X$ of ranks $m_1,m_2$, such that $0\leq m_1\leq n/3$ and $0\leq m_2\leq n/3$. Let $\cM_{\cE_1,\cE_2}$ be the corresponding localized split Hitchin stack. For a sequence $\mu\in\{\pm1\}^r$ with
$\sum_i\mu_i=0$, put
\[
r_\pm:=\#\{i:\mu_i=\pm1\},
\qquad
d_\mu:=r_+(n-m_1)+r_-(n-m_2)
=\frac r2(2n-m_1-m_2).
\]
Let $\cc^\mu_{\cM_{\cE_1,\cE_2}}$ denote the restriction of the cohomological correspondence $\cc_{\cM_{\cE_1,\cE_2}}$, defined as in \S\ref{ssec: localized trace conjecture}, to the open-and-closed component of the leg space $(X')^r$ indexed by $\mu$.
Then
\begin{equation}\label{eq:split-refined-trace-conjecture}
\Tr^{\Sht}\bigl(\cc_{\cM_{\cE_1,\cE_2}}^\mu\bigr)
=
[\cZ_{\cE_1,\cE_2}^{\mu}]
\in
\CH_{d_\mu}\bigl(\cZ_{\cE_1,\cE_2}^{\mu}\bigr).
\end{equation}
\end{thm}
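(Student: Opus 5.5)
The proof will repeat the structure of the proof of Theorem \ref{thm: trace conjecture E} in the split setting, with the two ranks $m_1,m_2$ handled separately. It has three ingredients: a dimension bound for the injective part, the trace identity on the injective locus, and the degeneracy stratification.

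\emph{Step 1: dimension bound.} Let $\cZ^{\mu,\circ}_{\cE_1,\cE_2}$ be the open locus where both $t_1\co\cE_1\to\cF_i$ and the dual map $t_2\co\cE_2\to\cF_i^\vee$ are fiberwise injective. We aim to show
\[
\dim \cZ^{\mu,\circ}_{\cE_1,\cE_2}\le d_\mu=r_+(n-m_1)+r_-(n-m_2).
\]
In the split case the saturation $\cP_i=\Sat(t_1(\cE_1))\subset\cF_i$ and the saturation of $t_2(\cE_2)\subset\cF_i^\vee$ are independent data, since there is no Hermitian form linking them. We would therefore frame by a pair $(\cP_i,\cP_i')$ and stratify as in Lemma \ref{lem:stratify-saturation}. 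The one-step classification then splits by the sign of the leg. A leg with $\mu_i=-1$ is a lower modification of $\cF$. It interacts with $\cP\subset\cF$ through the types (hyperplane containing $\cP_x$ or not), and it interacts dually with $\cP'\subset\cF^\vee$ through an upper modification (line inside $\cP'$ or not). A leg with $\mu_i=+1$ does the same with the roles reversed.

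This yields projective-bundle envelopes as in Proposition \ref{prop:one-step-sat} and Corollary \ref{cor:local-envelope}. Lemma \ref{lem:VL-local} then gives fiber dimension over $X^r$ bounded by the sum of the envelope dimensions. Lemma \ref{lem:balance} applies to $\deg\cP$ and $\deg\cP'$ separately and gives two balance equations. Lemma \ref{lem:quasi-finite} carries over via quasi-finiteness of the split special cycle over $\Sht^\mu_n$.

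The main obstacle is the resulting numerical inequality. Per leg, the worst case trades a generic contribution $n-m_j$ against $n$ on one saturation together with $m$-type contributions on the other. We expect the slack to reduce to sums of nonnegative multiples of $n-3m_1$, $n-3m_2$, $n-2m_1$ and $n-2m_2$. If a single leg moves both saturations simultaneously, terms of the form $n-2m_1-m_2$ might appear instead. In that case we would try to show such configurations are controlled by using the pairing $t_2\circ t_1=a$, which is constant along the chain, to couple the two saturations.

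\emph{Step 2: injective locus.} We prove $\Tr^{\Sht}(\cc^\mu_{\cM^\circ})=[\cZ^{\mu,\circ}]$ exactly as in Proposition \ref{prop: trace of injective part}. Fix an HN truncation and use Step 1 to choose $x_0$ avoiding the non-dominant leg images of top-dimensional components. Twist $\cE_j$ by $-d x_0$ so that the bounded Hitchin stack becomes smooth, and apply \cite[Corollary 6.4.1]{FK}. Restrict back using contraction, via \cite[Theorem 7.5.1]{FK}, over $(X\setminus x_0)^r$, and extend across the boundary by excision, since $\dim\partial\cZ<d_\mu$.

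\emph{Step 3: degeneracy stratification.} Stratify by the pair of ranks of $\ker t_1\subset\cE_1$ and $\ker t_2\subset\cE_2$. On the stratum indexed by saturated subbundles $(\cK_1,\cK_2)$, the excess intersection argument of Lemma \ref{lem: excess square} and Corollary \ref{cor: pullback cc excess} expresses the correspondence as $\cc_{\cM^\circ_{\cE_1/\cK_1,\cE_2/\cK_2}}$ twisted by the top Chern class of the split excess bundle. Since the quotients again have ranks at most $n/3$, Step 2 applies to them; the case of zero quotient is \cite[Corollary 6.4.1]{FK}. Trace additivity and contraction, arguing as in \eqref{eq: trace of stratification}, then assemble the strata. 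The sum is matched with $[\cZ^\mu_{\cE_1,\cE_2}]$ by the split analogue of \cite[Theorem 6.6]{FYZ2}.
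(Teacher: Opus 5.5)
Your proposal follows the same route as the paper. The paper's proof of this theorem only asserts that the argument for Theorem~\ref{thm: trace conjecture E} specializes to the split cover on the $\mu$-component; that argument consists of the dimension bound on the injective locus, the trace identity on the injective locus via twisting and contraction, and the degeneracy stratification with excess Chern classes. Your Steps 1--3 spell out exactly that specialization.

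The step you flagged as the main obstacle closes directly, and you do not need to couple the two saturations through $a$. Write $a_?$ (resp.\ $b_?$) for the number of type-$?$ legs among those lowering (resp.\ raising) $\cF$. For legs lowering $\cF$, the types $0,+,-,\pm$ contribute $(n-m_1,n,m_2,m_2)$ against the target $n-m_1$. For legs raising $\cF$, they contribute $(n-m_2,n,m_1,m_1)$ against the target $n-m_2$. Your two balance equations read $a_++a_\pm=b_-+b_\pm$ and $a_-+a_\pm=b_++b_\pm$. Eliminating $a_+$ and $b_+$ with them leaves the slack
\[
(n-m_1-2m_2)\,a_-+(n-2m_2)\,a_\pm+(n-2m_1-m_2)\,b_-+(n-2m_1)\,b_\pm .
\]
This is nonnegative whenever $m_1,m_2\le n/3$. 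In particular, the cross terms you were worried about are harmless, since $n-m_1-2m_2=\tfrac13(n-3m_1)+\tfrac23(n-3m_2)$, and symmetrically for $n-2m_1-m_2$.
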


\begin{proof}
Fix $\mu$ and work on the corresponding open-and-closed component of
the leg space $(X')^r$.  The split Hitchin stack, its cohomological
correspondence, and its kernel decomposition are the split-cover
specializations of the Hermitian constructions used in Theorem
\ref{thm: trace conjecture E}. The proof of Theorem \ref{thm: trace conjecture E} respects this
component and goes through with no essential changes.
\end{proof}

In the application to Theorem \ref{thm: modularity in low corank}, $m=m_1+m_2\leq n/3$.  Therefore the
split theorem applies both to $(\cE_1,\cE_2)$ and to $(\cG,0)$, whose
rank pairs are $(m_1,m_2)$ and $(m,0)$, respectively.

\begin{thm}[Supermodularity in low corank]\label{thm: modularity in low corank} If $m \leq n/3$, we have 
\begin{equation}\label{eq: modularity in low corank}
\wt{Z}_{m_1,m_2}^{\psi,\mu}(\cE_1,\cG)
=
[\cZ_{\cG,0}^{\mu}]^{\shear}
=
\sum_{d\in\Z}q^{-m_2d+m_2n(g-1)}[\cZ_{\cG,0}^{\mu}]_d
\in \CH_{\frac{r}{2}(2n-m)}(\Sht_{n}^\mu).
\end{equation}
\end{thm}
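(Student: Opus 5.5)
The plan is to follow the four-step outline of \S\ref{ssec: outline}, mirroring the unitary argument of Lemma \ref{lem: unitary trace Fourier comparison}, but using the GL bestiary \eqref{eq: big diagram}. There are two simplifications: no Gaussian twist is needed, and $V,V'$ are genuinely Fourier dual rather than self-dual. By \eqref{eq: gl HN Chow limit} it suffices to work over a fixed Harder--Narasimhan truncation $S$.

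\textbf{Trace interpretation.} First express both sides as traces.
\begin{itemize}
\item Let $\cc_V$ be the pullback of the identity correspondence along the smooth maps $b_i$ (Corollary \ref{cor: hecke corr quasismooth}), and set $\cc_U:=f^*\cc_V$; this is allowed by the pullability in Lemma \ref{lem: cube side pushable/pullable}.
\item Let $u$ be the isomorphism $a_0^*\sL\cong a_r^*\sL$ induced by the equality $\langle e_{\cG},-\rangle\circ\mathrm{Hitchin}\circ a_0=\langle e_{\cG},-\rangle\circ\mathrm{Hitchin}\circ a_r$, where $\sL$ is the Artin--Schreier pullback $\langle e_{\cG},a\rangle^*\AS_\psi$ on $U$.
\item Theorem \ref{thm:split-trace-conjecture-n/3} applies because $m_1,m_2\le n/3$. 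Together with Lemma \ref{lem: twist trace by Chern class}, the coefficient decomposition, and Proposition \ref{prop: trace of Hitchin pushforward} for $\pi$, it identifies $\Sht(\pi)_!\Tr^{\Sht}(\cc_U\otimes u)$ with $q^{-n\deg\cE_2}\wt Z^{\psi,\mu}_{m_1,m_2}(\cE_1,\cG)|_S$.
\item Likewise, $\Sht(\pi')_!\Tr^{\Sht}(\cc_{U'})=[\cZ^\mu_{\cG,0}]|_S$. This applies the split theorem to the pair $(\cG,0)$, using $m\le n/3$.
\end{itemize}
One needs to check that $\Sht(\Hk^\mu_U)$ and $\Sht(\Hk^\mu_{U'})$ recover the special cycles, which is done as in Lemma \ref{lem: unitary trace theta calculation}.

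\textbf{Fourier comparison at the correspondence level.} Apply the Base Change Theorem \cite[Theorem 4.4.2]{FK} to the cube \eqref{eq: base change cube}, whose hypotheses are Lemma \ref{lem: cube side pushable/pullable}. This gives
\[
\pi_!f^*(\cc_V\otimes u_V)\cong z^*g_!(\cc_V\otimes u_V),
\]
where $u_V$ is the Artin--Schreier isomorphism for the linear function $\ev(-,e_{\cG})$ on $V$. The key observation, the analogue of \eqref{eq: descent for functions}, is that the additive character twist along $U$ is pulled back from a linear functional on $V$. On the dual side, the Fourier transform of $\langle -,\lambda\rangle^*\AS_\psi$ is a translated delta function, by Theorem \ref{thm:motivic-FT-formalism}(4) together with translation. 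Hence $\FT_{V^\flat}(\cc_V\otimes u_V)$ should be the translate of $\FT_{V^\flat}(\cc_V)$ by the section $e_{\cG}$ of $V'$. Under the identification $V'=\wh V$ (Lemma \ref{lem: dual vector bundles}), $\FT_{V^\flat}(\cc_V)=\cc_{V'}$ up to shift and twist; this is proved like Proposition \ref{prop: Fourier Gaussian cc}, but it is easier, because $\wt b_i$ is smooth and $\wt b'_i$ is a quasi-smooth closed embedding.

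Next apply Proposition \ref{prop:motivic-derived-FT-cc-functoriality} twice, once for $g_!$ and once for $z^*$, to move everything to the primed side. The dual of $g_!$ is $(\wh g)^*=(f')^*$ and the dual of $z^*$ is $(\pi')_!$. Translation by $e_{\cG}$ on $V'$ restricts to $U'$, because $e_{\cG}$ is the image of $\Id_\cG$-type data; concretely, the translated locus is $\Hom(\cG,\cF)$ sitting over the fiber of $g'$. Translation is therefore absorbed by an automorphism and does not change the pushforward to $\Hk^\mu_S$. The result is
\[
\pi_!f^*(\cc_V\otimes u_V)\cong \pi'_!f'^*\cc_{V'}\,\langle\text{shift/twist}\rangle,
\]
where the shift/twist is a rank function on $S$ that is locally constant in $\deg\cF$.

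\textbf{Taking traces and matching constants.} Take $\Tr^{\Sht}$ and use Proposition \ref{prop: trace of Hitchin pushforward} on both sides. The Tate twist then contributes $q^{\chi}$ on the component of degree $d$, where
\[
\chi=\chi_{\mathrm{Eul}}(X,\RHom(\cF,\cE_2^\vee))=-m_2d-n\deg\cE_2+m_2n(g-1),
\]
exactly as in \S\ref{ssec: r=0}; the sign contributions cancel. Multiplying by $q^{n\deg\cE_2}$ yields the shear factor $q^{-m_2d+m_2n(g-1)}$.

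\textbf{Main obstacle.} I expect the hardest part to be the precise bookkeeping of the Fourier transform of the character-twisted correspondence, specifically checking that the translation by $e_{\cG}$ is compatible with the Hecke correspondences. This step replaces the Gaussian analysis. I would first try to realize $\sL$ as the pullback along a linear map $V^\flat\to\A^1$, i.e. $\ev$ against a section of $\wh{V}^\sharp$, so that Theorem \ref{thm:motivic-FT-formalism}(3) converts it into a pushforward along the zero section shifted by that section.
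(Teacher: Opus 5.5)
There is a genuine gap, and it sits at the start. You have carried over the unitary template, in which $U$ is the Hitchin stack and the character is a quadratic function pulled back from $V$, to the $\mathrm{GL}$ bestiary, where neither holds.

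Here $U=\Tot_S\bigl(\ul{\RHom(\cF_{\univ},\cE_2^\vee)}\bigr)$ records only the datum $t_2$, viewed as a map $\cF\to\cE_2^\vee$. There is no $t_1\colon\cE_1\to\cF$ on $U$. Consequently:
\begin{itemize}
\item The Hitchin map $a$, your Artin--Schreier sheaf on $U$, and the isomorphism $u$ are not defined.
\item $\Sht(\Hk^\mu_U)$ is the special cycle of the pair $(0,\cE_2)$, not $\cZ^\mu_{\cE_1,\cE_2}|_S$.
\item The Hitchin stack for $(\cE_1,\cE_2)$ over $S$ is $\cM|_S=V'\times_S U$, with $V'=\Tot_S\bigl(\ul{\RHom(\cE_1,\cF_{\univ})}\bigr)$ supplying $t_1$. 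This is the identification $\cZ^\mu_{\cE_1,\cE_2}|_S=\Sht^\mu_{V'\times_S U}$ in \S\ref{ssec: higher theta series}.
\end{itemize}
Similarly, $e_{\cG}\in\Ext^1(\cE_2^*,\cE_1)$ is neither a section of $V'$ nor a linear functional on $V$, so there is no ``translation by $e_{\cG}$''. The extension class enters only through the connecting map $f\colon U\to V$.

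Finally, $\FT(\cc_V)$ is not $\cc_{V'}$ up to shift and twist. By Theorem \ref{thm:motivic-FT-formalism}(4), the Fourier transform exchanges constant and delta objects. Your route from $\pi_!f^*$ to $\pi'_!(f')^*$ starts from a constant-coefficient correspondence on $V$, so it would land on delta-type objects on $U'$ and could not produce $\cc_{U'}$.

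The missing idea is already visible in the $r=0$ model of \S\ref{ssec: r=0}: the character \emph{is} the Fourier kernel. On $V'\times_S U$ one has $\langle t_1,f(t_2)\rangle=\langle e_{\cG},t_2\circ t_1\rangle$, so the sum over $t_1$ is the Fourier integration variable. The proof then runs as follows.
\begin{enumerate}
\item Realize $q^{-n\deg\cE_2}\wt Z^{\psi,\mu}_{m_1,m_2}(\cE_1,\cG)|_S$ as the trace of $\pi_{V'!}\pr_{1!}\bigl(\pr_2^*f_!\cc_U\otimes\cL_V\bigr)=\TT_{\sm{[-d(V/S)]}}\pi_{V'!}\FT(f_!\cc_U)$, where $\cL_V$ is the Artin--Schreier kernel on $V'\times_S V$.
\item Use the \emph{other} base-change direction: $f_!\cc_U=f_!\pi^*\cc_S\cong g^*z_!\cc_S=g^*\cc_W$. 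This lifts $f_!\Qsh{U}\cong g^*\delta_W$.
\item Apply Proposition \ref{prop:motivic-derived-FT-cc-functoriality} to $z_!$ (dual $(\pi')^*$) and to $g^*$ (dual $\wh g_!=f'_!$). This shows $\FT(f_!\cc_U)$ agrees with $f'_!\cc_{U'}$ up to an explicit shift and twist; it is the passage from delta on $W$ to constant on $U'=\wh W$.
\item On the right side, take traces using Proposition \ref{prop: trace of Hitchin pushforward} for $\pi_{V'}\circ f'$ and Theorem \ref{thm:split-trace-conjecture-n/3} for $(\cG,0)$.
\item On the left side, base change to $\pi_{V'!}\pr_{1!}(\Id\times f)_!\bigl(\pr_2^*\cc_U\otimes(\Id\times f)^*\cL_V\bigr)$ and apply the same trace formula. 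Then use Theorem \ref{thm:split-trace-conjecture-n/3} for $\cM=V'\times_S U$, where $\pr_2^*\cc_U=\cc_{\cM}$.
\item Obtain the weights $\psi(\lambda)$ by decomposing $\cM$ over $\lambda\in\F_q$ and applying \cite[Theorem 7.5.1]{FK}.
\end{enumerate}

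Your constant bookkeeping is correct: $q^{n\deg\cE_2}q^{d(U/S)}=q^{-m_2d+m_2n(g-1)}$, with $d(U/S)=\chi_{\mathrm{Eul}}(X,\RHom(\cF,\cE_2^\vee))$. This is exactly what the argument above produces.
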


\noindent\emph{Proof of Theorem \ref{thm: modularity in low corank}.}
We write $d(U/S)$, $d(V/S)$, $d(W/S)$, etc. for the virtual relative dimensions of the corresponding derived vector bundles over $S$. These integers are locally constant on connected components of $S$. In formulas involving the degree of the universal bundle, such as \eqref{eq: modularity RHS trace}, all shifts, Tate twists, and Chow degrees are read componentwise on the fixed-degree connected component under consideration.

\subsubsection{Theta sheaves}  Consider the commutative diagram 
\begin{equation}\label{eq: theta FT diagram}
\begin{tikzcd}
& V' \times_S U \ar[dd, "\Id \times f"] \ar[dr, "\pr_2"]  \\
U'\ar[dd, "f'"]  & & U\ar[dd, "f"] \\
& V' \times_S V \ar[dr, "\pr_2"']  \ar[dl, "\pr_1"]  \\
V'   \ar[dr, "\pi_{V'}"'] \ar[dd, "g'"]  & & V\ar[dl, "\pi_V"] \ar[dd, "g"]  \\
& S \\
W' & & W
\end{tikzcd}
\end{equation}
Let
\[
\cL_V:=\langle\, ,\,\rangle_V^*\AS_{\psi,S}\in \Dmot{V'\times_S V}
\]
be the Artin--Schreier kernel appearing in \eqref{eq:motivic-derived-FT}. We define 
\[
\Theta_{\cE_1, \cE_2} = \pi_{V'!} \pr_{1!} (\pr_2^* f_! \Qsh{U} \otimes \cL_V )  \in \Dmotg{S},
\]
and
\[
\Theta_{\cG, 0} = \pi_{V'!}  f'_! (\Qsh{U'}) \in \Dmotg{S}.
\]
Unwinding the definition of the motivic Fourier transform \eqref{eq:motivic-derived-FT}, we may rewrite 
\begin{equation}\label{eq: theta as FT}
\Theta_{\cE_1, \cE_2} =  \pi_{V'!}  \FT_V(f_! \Qsh{U})\sm{[-d(V/S)]}
\end{equation}
where $\FT_V$ is the motivic Fourier transform of \eqref{eq:motivic-derived-FT}.

\begin{constr}We will construct an isomorphism 
\begin{equation}\label{eq: theta sheaf isom}
\Theta_{\cE_1, \cE_2} \cong \Theta_{\cG,0}  \tw{-d(U/S)} \in \Dmotg{S}.
\end{equation}
Note that $f_! \Qsh{U} \cong g^* \delta_W$ by proper base change applied to the (derived) Cartesian square \eqref{eq: derived cartesian UVWS}. Then, using the Fourier Transform functorialities obtained from Theorem \ref{thm:motivic-FT-formalism} and Lemma \ref{lem: dual vector bundles}, we have
\begin{align*}
\Theta_{\cE_1, \cE_2}
&=\pi_{V'!}\FT_V(f_!\Qsh{U})\sm{[-d(V/S)]} \\
&\cong\pi_{V'!}\FT_V(g^*\delta_W)\sm{[-d(V/S)]} \\
&\cong\pi_{V'!}\wh g_!\FT_W(\delta_W)
  \tw{-d(V/S)}\sm{[d(W/S)](d(W/S))} \\
&\cong\pi_{V'!}f'_!\Qsh{U'}\tw{-d(V/S)+d(W/S)} \\
&=\Theta_{\cG,0}\tw{-d(U/S)}.
\end{align*}
\end{constr}

\subsubsection{Cohomological correspondences}

We define the following cohomological correspondences. 
\begin{enumerate}[(i)]
\item Since $h_r \co \Hk_S^\mu \rightarrow S$ is smooth, the map of correspondences  
\[
\begin{tikzcd}
S \ar[d] & \Hk_S^\mu \ar[l, "h_0"'] \ar[d] \ar[r, "h_r"] & S \ar[d] \\
\pt & \pt \ar[l] \ar[r] & \pt 
\end{tikzcd}
\]
is right pullable. We define $\cc_S \in \Corr_{\Hk_S^\mu}(\Qsh{S}, \Qsh{S}\tw{-d(h_r)})$ to be the pullback of the identity correspondence in $\Corr_{\pt}(\Qsh{\pt}, \Qsh{\pt})$. More explicitly, it is the composition 
\[
h_0^* \Qsh{S} \cong \Qsh{\Hk_S^\mu}\cong h_r^* \Qsh{S} \xrightarrow{\gys_{h_r}} h_r^!\Qsh{S}\tw{-d(h_r)}.
\]
\item As shown in part (c) of the proof of Lemma \ref{lem: cube side pushable/pullable}, the map of correspondences 
\[
\begin{tikzcd}
U \ar[d, "\pi_0"] & \Hk_U^\mu \ar[l, "a_0"'] \ar[d, "\pi"] \ar[r, "a_r"] & U \ar[d, "\pi_r"] \\
S  & \Hk_S^\mu \ar[l, "h_0"'] \ar[r, "h_r"] & S 
\end{tikzcd}
\]
is right pullable. We define $\cc_U = \pi^* \cc_S \in \Corr_{\Hk_U^\mu}(\Qsh{U}, \Qsh{U}\tw{-d(a_r)})$. Note that we could also have defined $\cc_U$ as the pullback from the identity correspondence in $\Corr_{\pt}(\Qsh{\pt}, \Qsh{\pt})$. Explicitly, $\cc_U$ is the composition 
\[
a_0^* \Qsh{U} \cong \Qsh{\Hk_U^\mu}\cong a_r^* \Qsh{U} \xrightarrow{\gys_{a_r}} a_r^!\Qsh{U} \tw{-d(a_r)}. 
\]
\item The analogous construction to (ii) defines $\cc_{U'} \in \Corr_{\Hk_{U'}^\mu}(\Qsh{U'}, \Qsh{U'}\tw{-d(a_r')})$.
\item As shown in part (b) of the proof of Lemma \ref{lem: cube side pushable/pullable}, the map of correspondences 
\[
\begin{tikzcd}
S \ar[d, "z_0"] & \Hk_S^\mu \ar[l, "h_0"'] \ar[r, "h_r"] \ar[d, "z"] & S  \ar[d,"z_r"] \\
W  & \Hk_W^\mu \ar[l, "c_0"']  \ar[r, "c_r"] & W
\end{tikzcd}
\]
is left pushable. We define $\cc_W = z_! \cc_S \in \Corr_{\Hk_W^\mu}(\delta_W, \delta_W \tw{-d(h_r)})$. 

\end{enumerate}

\begin{defn}[Twist and shift of cohomological correspondences]
For a cohomological correspondence $\cc \in \Corr_C(\cK_0, \cK_1)$ and $c \in \Z$, we abbreviate
\[
\TT_{\tw{c}} \cc := \TT_{\sm{[2c](c)}} \cc \in \Corr_C(\cK_0\tw{c}, \cK_1 \tw{c})
\]
where the notation is as in \cite[\S 6.5]{FK}. 
\end{defn}

\begin{prop}\label{prop: modularity of cc} 
(1) With respect to the isomorphism $f_! \Qsh{U} \cong g^* \delta_W$ coming from proper base change for the Cartesian square \eqref{eq: derived cartesian UVWS}, we have 
\begin{equation}\label{eq: modularity of cc (1)} 
f_! \cc_U =  g^* \cc_W \in \Corr_{\Hk_V^\mu}(f_! \Qsh{U}, f_! \Qsh{U} \tw{-d(a_r)} ).
\end{equation}
(2) We have 
\begin{equation}\label{eq: modularity of cc (2)} 
\begin{aligned}
\FT(f_! \cc_U)
&\cong \TT_{\sm{[2d(W/S)-d(V/S)](d(W/S)-d(V/S))}}
f'_!(\cc_{U'}) \\
&\in \Corr_{\Hk_{V'}^\mu}
\bigl(\FT_V(f_!\Qsh{U}),\FT_V(f_!\Qsh{U})\tw{-d(a_r')}\bigr).
\end{aligned}
\end{equation}
\end{prop}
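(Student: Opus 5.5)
For part (1), I will apply the Base Change Theorem for cohomological correspondences, \cite[Theorem 4.4.2]{FK}, to the cube \eqref{eq: base change cube}. Lemma \ref{lem: cube side pushable/pullable} verifies its hypotheses. The front and back vertical faces are the derived Cartesian squares \eqref{eq: derived cartesian UVWS} and \eqref{eq: Hk UVW Cart}. The left faces are pushable, by (a) and (b) of that lemma. The right faces are pullable, by (c) and (d); the rightmost square has defect zero. The theorem then gives $f_!\pi^*\cc_S\cong g^*z_!\cc_S$, compatibly with the proper base change identification $f_!\Qsh{U}\cong g^*\delta_W$.

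The definitions give $\cc_U=\pi^*\cc_S$ and $\cc_W=z_!\cc_S$, so this is \eqref{eq: modularity of cc (1)}. Only the twists remain to be matched. Since $\Hk_U^\mu\to\Hk_S^\mu\times_{S}U_r$ is the comparison map with $d(a_r)-d(h_r)$ as its defect, pulling back along $\pi$ introduces the twist $\tw{-d(a_r)}$. The defect-zero base change along $g$ introduces no further twist.

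For part (2), I will Fourier transform both sides of (1) with Proposition \ref{prop:motivic-derived-FT-cc-functoriality}, using the global presentation of \eqref{eq: big diagram}. The duality of Lemma \ref{lem: dual Hecke stacks} sends $g$ to $\wh g=f'$. The pull--push part of Proposition \ref{prop:motivic-derived-FT-cc-functoriality}, applied to $g$, which is right pullable by (d), then gives
\[
\FT_{\Hk_V^\mu}(g^*\cc_W)\cong\TT\,f'_!\,\FT_{\Hk_W^\mu}(\cc_W),
\]
up to a shift and twist built from $d(g_0)=d(V/S)-d(W/S)$.

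It then remains to show $\FT_{\Hk_W^\mu}(z_!\cc_S)\cong\cc_{U'}$, up to the twist coming from \eqref{eq:motivic-FT-delta-constant}. The push--pull part of the same proposition, applied to the left pushable zero-section map $z$, gives $\FT(z_!\cc_S)=(\wh z^\sharp)^*\FT_{\Hk_S^\mu}(\cc_S)$. Here $\wh z^\sharp$ is dual to the zero section and is the projection $\pi'$ of the $U'$-correspondence, and on the zero bundle $S$ the Fourier transform is the identity. So the right-hand side is $\pi'^*\cc_S=\cc_{U'}$, with the twist identified by $\FT_W(\delta_W)\cong\Qsh{U'}\sm{[d(W/S)]}$.

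The main obstacle is the bookkeeping of shifts and twists, which has to come out to exactly $\sm{[2d(W/S)-d(V/S)](d(W/S)-d(V/S))}$ with target twist $\tw{-d(a_r')}$. Two sources must be combined: the normalizations $\sm{[d(\wt p_0)+d(\wt p_1)](d(\wt p_0))}$ of the varying-base transform \eqref{eq:motivic-derived-FT-corr-varying-base}, and the defect twists of the two pullbacks. I would track this by rank additivity, as in the table in the proof of Lemma \ref{lem: unitary trace Fourier comparison}. I would also use $d(a_r)-d(a_r')=d(U/S)-d(U'/S)$, which holds because both sides are governed by the torsion complexes $\cT_i$ and $\cQ_i$. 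The check against \eqref{eq: theta sheaf isom}, whose total twist is $\tw{-d(U/S)}$, serves as a consistency test.
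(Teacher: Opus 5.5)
Your argument is the paper's proof. Part (1) is \cite[Theorem 4.4.2]{FK} applied to the cube \eqref{eq: base change cube}, with hypotheses from Lemma \ref{lem: cube side pushable/pullable}. Part (2) composes two cases of Proposition \ref{prop:motivic-derived-FT-cc-functoriality}:
the left-pushable case for $z$ gives $\FT(\cc_W)\cong\TT_{\sm{[d(W/S)]}}\cc_{U'}$, using $\wh z^{\sharp}=\pi'$ and $\FT_0(\cc_S)=\cc_S$;
the right-pullable case for $g$ contributes $\TT_{\sm{[d(W/S)-d(V/S)](d(W/S)-d(V/S))}}$, with $\wh g=f'$.
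Together these give the stated shift.

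Two of your bookkeeping remarks are wrong, though neither is load-bearing.

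The identity $d(a_r)-d(a_r')=d(U/S)-d(U'/S)$ is false. The left side equals $d(\wt a_r)-d(\wt a_r')$. That difference is computed from $\RHom$'s out of or into the torsion complexes at the legs, so it does not depend on $\deg\cF$. By Riemann--Roch, however, $d(U/S)-d(U'/S)$ has $\deg\cF$-coefficient $-(m+m_2)$. What actually holds is $d(a_r)-d(a_r')=d(\wt b_0)=d(\wt b_r)$, which is exactly the twist absorbed by the normalization of the varying-base Fourier transform. You need no such identity, though. The target twist $\tw{-d(a_r')}$ in (2) is simply the twist carried by $\cc_{U'}$, transported through $\FT_V(f_!\Qsh{U})\cong f'_!\Qsh{U'}\sm{[2d(W/S)-d(V/S)](d(W/S)-d(V/S))}$.

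Also, in (1) the right square of the map $g$ is not defect-zero. Applying the triangle $\cU\to\cV\to\cW$ with $\cF$ replaced by the torsion cofiber $\cT_r$ shows its defect is $d(\wt b_r)-d(\wt c_r)=d(\wt a_r)$. This is the same as the defect of $\pi$, which is why $f_!\pi^*\cc_S$ and $g^*z_!\cc_S$ both carry the twist $\tw{-d(a_r)}$. The defect-zero square in Lemma \ref{lem: cube side pushable/pullable} is the vertical face $(U_r,V_r,S_r,W_r)$.
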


\begin{proof} (1) Thanks to Lemma \ref{lem: cube side pushable/pullable}, we may apply \cite[Theorem 4.4.2]{FK}, in the middle isomorphism below: 
\[
f_! \cc_U = f_! \pi^* \cc_S \cong g^* z_! \cc_S = g^* \cc_W.
\]

(2) By the global-presentation statement preceding Lemma \ref{lem: hecke corr quasismooth tilde}, Proposition \ref{prop:motivic-derived-FT-cc-functoriality} applies to these diagrams. Lemma \ref{lem: cube side pushable/pullable}(b), together with Lemmas \ref{lem: dual vector bundles} and \ref{lem: dual Hecke stacks}, identifies the dual map with $\pi'=\wh z$ and gives
\begin{equation}\label{eq: FT of delta cc}
\FT (\cc_W)  = \FT ( z_! \cc_S) \cong \TT_{\sm{[d(W/S)]}} (\pi')^* \FT_0 (\cc_S)  =  \TT_{\sm{[d(W/S)]}} (\pi')^* \cc_S = \TT_{\sm{[d(W/S)]}} \cc_{U'},
\end{equation}
where the isomorphism is the left-pushable case of Proposition \ref{prop:motivic-derived-FT-cc-functoriality}. We used here that $\FT_0(\cc_S)=\cc_S$ because the zero derived vector bundle is self-dual.

Lemma \ref{lem: cube side pushable/pullable}(d) gives right pullability for $g$. The right-pullable case of Proposition \ref{prop:motivic-derived-FT-cc-functoriality}, together with (1), gives
\begin{align*}
\FT(f_! \cc_U) & \stackrel{\eqref{eq: modularity of cc (1)}}\cong \FT(g^* \cc_W) =  \TT_{\sm{[d(W/S)-d(V/S)](d(W/S)-d(V/S))}}\wh{g}_! \FT (\cc_W) \\
& \stackrel{\eqref{eq: FT of delta cc}}= \TT_{\sm{[2d(W/S)-d(V/S)](d(W/S)-d(V/S))}} f'_!   (\cc_{U'}),
\end{align*}
as desired.
\end{proof}

\subsubsection{Supermodularity} We will prove \eqref{eq: modularity in low corank} after restriction to $\CH_*(\Sht_S^\mu)$; as explained in \S \ref{ssec: higher theta series}, this suffices to prove Theorem \ref{thm: modularity in low corank}.
We write $(\ldots)|_S$ for base change of all projections along $S\inj\Bun_n$: for Hecke correspondences it means restriction to $\Hk_S^\mu=\bigcap_{i=0}^r h_i^{-1}(S)$, and for shtuka stacks and Chow classes it means restriction to $\Sht_S^\mu$.

By Lemma \ref{lem: hecke corr quasismooth tilde}, the map of correspondences
\[
\begin{tikzcd}
V'_0 \ar[d] & \Hk_{V'}^\mu \ar[d, "\pi_{V'}"]  \ar[l, "b'_0"'] \ar[r,"b'_r"]  & V'_r   \ar[d] \\
S_0 & \Hk_S^\mu \ar[l, "h_0"'] \ar[r, "h_r"] & S_r
\end{tikzcd}
\]
is pushable.  It also satisfies the remaining hypotheses of Proposition
\ref{prop: trace of Hitchin pushforward}.  Indeed, the complexes
$\cV'=\ul{\RHom(\cE_1,\cF_{\univ})}$ and
$(\cV')_{\Hk}^\mu=
\ul{\RHom(\cE_1,\cF_{\univ,\bu}^\flat)}$ are coconnective.
The correspondence maps have fiber
$\ul{\RHom(\cE_1,\cQ_{i,\univ})}[-1]$, which has tor-amplitude
$[1,\infty)$, and Lemma \ref{lem: hecke corr quasismooth tilde}
identifies $\wt b_i'$ as the resulting closed embedding of derived vector
bundles.  The relevant coefficient objects are obtained from unit objects
by pullback, tensor product, and pushforward along schematic maps locally of
finite type, so they are geometric by \cite[Remark 3.3.2]{FK}.
Proposition \ref{prop: trace of Hitchin pushforward} therefore implies that
pushforward along $\pi_{V'}$ is compatible with formation of $\Tr^{\Sht}$.

With respect to the identification $\Theta_{\cE_1, \cE_2} \cong \Theta_{\cG,0}\tw{-d(U/S)}$ from \eqref{eq: theta sheaf isom}, Proposition \ref{prop: modularity of cc}(2), together with the shift in \eqref{eq: theta as FT}, gives an identification
\begin{equation}\label{eq: modularity cc prop}
\TT_{\sm{[-d(V/S)]}}\pi_{V'!} \FT( f_! \cc_U) \cong \TT_{\tw{-d(U/S)}} \pi_{V'!} f'_!(\cc_{U'})  \in \Corr_{\Hk_S^\mu}(\Theta_{\cE_1, \cE_2}, \Theta_{\cE_1, \cE_2}\tw{-d(a_r')}).
\end{equation}
Indeed, Proposition \ref{prop: modularity of cc}(2) gives
\[
\pi_{V'!}\FT(f_!\cc_U)\cong
\TT_{\sm{[d(V/S)]}}\TT_{\tw{-d(U/S)}}\pi_{V'!}f'_!(\cc_{U'}).
\]
Recall from \eqref{eq: theta as FT} that $\Theta_{\cE_1,\cE_2}=\pi_{V'!}\FT_V(f_!\Qsh U)\sm{[-d(V/S)]}$. 

Now we invoke the \emph{motivic sheaf-cycle correspondence} of \cite[\S 6]{FK}. It is clear from \cite[Remark 3.3.2]{FK} that $\Theta_{\cE_1, \cE_2} \cong \Theta_{\cG,0}\tw{-d(U/S)}$ lies in $\Dmotg{S} \subset \Dmot{S}$, so we may form the (shtuka-twisted) \emph{trace} of \eqref{eq: modularity cc prop} in the sense of \cite[\S 5.4]{FK}. On the RHS, Theorem
\ref{thm:split-trace-conjecture-n/3} gives the final equality in
\begin{align}\label{eq: modularity RHS trace}
q^{n\deg\cE_2}\Tr^{\Sht}(\TT_{\tw{-d(U/S)}} \pi_{V'!} f'_! (\cc_{U'})) &\stackrel{\text{Prop \ref{prop: trace of Hitchin pushforward}}}{=}  q^{n\deg\cE_2}\Sht(\pi_{V'} f')_!  \Tr^{\Sht}(\TT_{\tw{-d(U/S)}} \cc_{U'}) \nonumber \\
&=  \sum_{d \in \Z} q^{- m_2d + m_2 n (g-1)}  [\cZ_{\cG, 0}^{\mu}]_d |_S
\end{align}
Here $[\cZ_{\cG,0}^{\mu}]_d$ is the projection of $[\cZ_{\cG,0}^{\mu}]$ to the component of $\Sht_n^\mu$ consisting of shtukas with degree $d \in \Z$. On that component, the trace identity of \cite[\S 6.5]{FK} gives
\[
\Tr^{\Sht}(\TT_{\tw{-d(U/S)}}\cc)=q^{d(U/S)}\Tr^{\Sht}(\cc),
\]
because $\tw{-d(U/S)}=[-2d(U/S)](-d(U/S))$. Using Theorem \ref{thm:split-trace-conjecture-n/3} to compute the shtuka-twisted trace of $\cc_{U'}$ and the calculation from \S \ref{ssec: r=0}, we obtain 
\[
d(U/S) = \chi_{\mathrm{Eul}}(X,\RHom(\cF, \cE_2^\vee))  = -n\deg\cE_2 - m_2 \deg \cF +  m_2n(g-1),
\]
so that
$q^{n\deg\cE_2}q^{d(U/S)}
=q^{-m_2d+m_2n(g-1)}$.
Finally, the virtual dimension calculation gives
$d(a_r')=\frac r2(2n-m)$, so every term lies in the Chow degree asserted in \eqref{eq: modularity in low corank}.

On the left side of \eqref{eq: modularity cc prop}, referring to the notation of \eqref{eq: theta FT diagram}, we will show that 
\begin{equation}\label{eq: modularity LHS trace}
q^{n\deg\cE_2}\Tr^{\Sht}(\TT_{\sm{[-d(V/S)]}}\pi_{V'!} \FT(f_! \cc_U) ) = \wt{Z}_{m_1, m_2}^{\psi, \mu}(\cE_1, \cG)|_S.
\end{equation}
\vspace{-6pt}
To begin, we have
\begin{align*}
\Tr^{\Sht}(\TT_{\sm{[-d(V/S)]}}\pi_{V'!} \FT(f_! \cc_U) ) &   = \Tr^{\Sht}(\pi_{V'!} \pr_{1!} (\pr_2^* f_!  \cc_U \otimes \cL_V))\\
& \stackrel{(1)}=  \Tr^{\Sht}(\pi_{V'!} \pr_{1!}  (\Id \times f)_! (\pr_2^* \cc_U \otimes  (\Id \times f)^*\cL_V)) \\
& \stackrel{(2)}= \Sht(\pi_{V'}  \pr_1  (\Id \times f) )_!  \Tr^{\Sht}(\pr_2^* \cc_U \otimes  (\Id \times f)^*\cL_V)
\end{align*}
where the steps are justified as follows. 
\begin{enumerate}
\item This follows from the Base Change Theorem for cohomological correspondences,
\cite[Theorem 4.4.2]{FK}, whose hypotheses are justified by Lemma \ref{lem: cube side pushable/pullable}. 
\item This follows from  Proposition \ref{prop: trace of Hitchin pushforward} applied to the
composite projection from $V'\times_S U$, whose hypotheses are satisfied by Lemma
\ref{lem: hecke corr quasismooth tilde}, and \eqref{eq:motivic-AS-sheaf}.

%\item is an instance of the compatibility of pullbacks of cohomological correspondences with compositions, \cite[\S 4.2.6]{FK}. 
\end{enumerate}

We claim that 
\begin{equation}\label{eq: modularity claim 1}
q^{n\deg\cE_2}\Sht(\pi_{V'}  \pr_1  (\Id \times f) )_!  \Tr^{\Sht}(\pr_2^* \cc_U \otimes  (\Id \times f)^*\cL_V)  =  \wt{Z}_{m_1, m_2}^{\psi, \mu}(\cE_1, \cG)|_S,
\end{equation}
which would complete the proof of \eqref{eq: modularity LHS trace}.
First, write $\cM:=\cM_{\cE_1,\cE_2}$ for the localized Hitchin stack of Theorem \ref{thm:split-trace-conjecture-n/3}, so that $V'\times_S U=\cM|_S$.  Both $\pr_2^*\cc_U$ and $\cc_{\cM}$ are obtained by pulling back the identity correspondence on the point, so $\pr_2^*\cc_U=\cc_{\cM}$ by transitivity of pullback of cohomological correspondences.
Since $m_1,m_2\leq m\leq n/3$,
\eqref{eq:split-refined-trace-conjecture} gives
\begin{equation}\label{eq: modularity claim 1.5}
\Tr^{\Sht}(\pr_2^* \cc_U) = \Tr^{\Sht}(\cc_{\cM}) =  [\cZ_{\cE_1, \cE_2}^\mu]|_S \in \CH_*(\cZ_{\cE_1, \cE_2}^\mu|_S).
\end{equation}
The map of correspondences, in which the vertical maps are the composite of the projection to the Hitchin base $\Hom(\cE_1,\cE_2^\vee)$ with the linear functional $\langle e_{\cG},-\rangle$, 
\begin{equation}\label{eq: ev map of correspondences}
\begin{tikzcd}
\cM \ar[d] & \Hk_{\cM}^\mu \ar[r] \ar[l] \ar[d]  &  \cM \ar[d] \\
\A^1 & \ar[l, equals] \A^1 \ar[r, equals] & \A^1  
\end{tikzcd}
\end{equation}
induces after applying the $\Sht$ construction a map $\cZ_{\cE_1, \cE_2}^\mu \xrightarrow{\ev} \F_q$, which in turn induces a decomposition 
\[
\cZ_{\cE_1, \cE_2}^{\mu} = \coprod_{\lambda \in \F_q}  \underbrace{(\cZ_{\cE_1, \cE_2}^{\mu} )_\lambda}_{ \ev^{-1}(\lambda)}. 
\]
By inspection of the definitions, the map $\ev$ coincides with the pairing of $e_{\cG} \in \Ext^1(\cE_2^*, \cE_1)$ with the map from $\cZ_{\cE_1, \cE_2}^\mu$ to the Hitchin base $\Hom(\cE_1, \cE_2^\vee)$. Therefore, we may rewrite the higher theta series as 
\begin{equation}\label{eq: modularity claim 3}
\wt{Z}_{m_1, m_2}^{\psi, \mu}(\cE_1, \cG) = q^{n\deg\cE_2}\sum_{a \in \Hom(\cE_1, \cE_2^\vee)} \psi(\langle e_{\cG}, a \rangle)[\cZ_{\cE_1, \cE_2}^{\mu}(a)] = q^{n\deg\cE_2}\sum_{\lambda \in \F_q} \psi(\lambda)  [\cZ_{\cE_1, \cE_2}^{\mu} ]_\lambda 
\end{equation}
where $[\cZ_{\cE_1, \cE_2}^{\mu} ]_\lambda $ is the projection of $[\cZ_{\cE_1, \cE_2}^{\mu} ]$ to $\CH_*((\cZ_{\cE_1, \cE_2}^{\mu})_\lambda)$\footnote{This projection is the same as the derived fundamental class of $(\cZ_{\cE_1, \cE_2}^{\mu} )_\lambda$.} followed by pushforward to $\Sht_n^\mu$. After restricting to $S$, this pushforward is what was denoted $\Sht(\pi_{V'}  \pr_1  (\Id \times f) )_!$ in \eqref{eq: modularity claim 1}, but we suppress it below for ease of notation. 

%The scaling $\G_m$-actions on $\Hk_{V'}^\mu$ and on $\Hk_U^\mu$, as derived vector bundles over $\Hk_S^\mu$, induce two scaling $\F_q^\times$-action on $\cZ_{\cE_1, \cE_2}^\mu$. 

For $\lambda \in \F_q \subset \A^1_{\F_q}$, write $\cM_\lambda$, etc. for the derived fiber over $\lambda$, and $\iota_\lambda \co \cM_\lambda \inj \cM$ for the tautological closed embedding. By the commutativity of \eqref{eq: ev map of correspondences}, the correspondence $\cM \leftarrow \Hk_{\cM}^\mu \rightarrow \cM$ \emph{stabilizes} $\cM_\lambda$ in the sense of \cite[Definition 7.2.1]{FK}, hence its Frobenius twist is \emph{contracting} near $\cM_{\lambda}$ \cite[Example 7.2.2]{FK}. Hence \cite[Construction 7.2.3]{FK} applies, to give a restriction of (cohomological) correspondences 
\[
\iota_{\lambda}^* \co \Corr_{\Hk_{\cM}^\mu} (\cK_0, \cK_1) \rightarrow \Corr_{\Hk_{\cM_\lambda}^\mu}(\iota_\lambda^* \cK_0, \iota_\lambda^* \cK_1).
\]
By \cite[Theorem 7.5.1]{FK}, this restriction is compatible with the formation of $\Tr^{\Sht}$, so that we have 
\begin{equation}\label{eq: modularity claim 4}
\Tr^{\Sht}(\iota_\lambda^* \cc_{\cM} \otimes \iota_\lambda^*(\Id \times f)^*\cL_V)  = \Sht(\iota_\lambda)^* \Tr^{\Sht}(\cc_{\cM} \otimes (\Id \times f)^*\cL_V)  = \psi(\lambda)  [\cZ_{\cE_1, \cE_2}^{\mu} ]_\lambda |_S
\end{equation}
Here, the first equality is the compatibility \cite[Theorem 7.5.1]{FK}. For the second equality, note that $\iota_\lambda^*(\Id\times f)^*\cL_V$ is the constant invertible object with Frobenius trace $\psi(\lambda)$ (the restriction of the Artin--Schreier kernel along the constant map $\lambda$), so the left side equals $\psi(\lambda)\cdot\Sht(\iota_\lambda)^*\Tr^{\Sht}(\cc_{\cM})$, which is $\psi(\lambda)[\cZ_{\cE_1,\cE_2}^{\mu}]_\lambda|_S$ by \eqref{eq: modularity claim 1.5} and \cite[Theorem 7.5.1]{FK} applied to $\cc_\cM$. Summing \eqref{eq: modularity claim 4} over all $\lambda \in \F_q$, we find that 
\[
\Tr^{\Sht}(\cc_{\cM} \otimes (\Id \times f)^*\cL_V)  = \sum_{\lambda \in \F_q} \psi(\lambda) [\cZ_{\cE_1, \cE_2}^{\mu} ]_\lambda|_S,
\]
which, in combination with \eqref{eq: modularity claim 3} and the normalizing factor $q^{n\deg\cE_2}$ from \eqref{eq: formulation theta value}, finally establishes \eqref{eq: modularity claim 1}, hence also \eqref{eq: modularity LHS trace}.

Putting things together, we have found
\begin{align*}
\wt{Z}_{m_1, m_2}^{\psi,\mu}(\cE_1, \cG)|_S & \stackrel{\eqref{eq: modularity LHS trace}}=  q^{n\deg\cE_2}\Tr^{\Sht}(\TT_{\sm{[-d(V/S)]}}\pi_{V'!} \FT (f_! \cc_U)) \\
&\stackrel{\eqref{eq: modularity cc prop}}=  q^{n\deg\cE_2}\Tr^{\Sht}(\TT_{\tw{-d(U/S)}} \pi_{V'!}  f'_! (\cc_{U'})) \\
& \stackrel{\eqref{eq: modularity RHS trace}} =  \sum_{d \in \Z} q^{- m_2d + m_2 n (g-1)}  [\cZ_{\cG, 0}^{\mu}]_d|_S
\end{align*}
as desired.  \qed 

\subsection{Embedding and cancellation in all coranks}\label{ssec: gl all corank reduction}

We now pass from the low-corank supermodularity identity to the full statement by the same embedding-and-cancellation mechanism used in \S \ref{ssec: unitary all corank reduction}.  Since the argument compares shtukas of different ranks, we temporarily write the target rank as a superscript:
\[
\wt Z_{m_1,m_2}^{N,\mu}(\cE_1,\cG)
\in \CH_*(\Sht_N^\mu),
\qquad
[\cZ_{\cG,0}^{N,\mu}]\in \CH_*(\Sht_N^\mu).
\]
For a cycle $Z\in \CH_*(\Sht_N^\mu)$, define
\begin{equation}\label{eq: gl target-rank shear}
Z^{\shear_N}
:=
\sum_{d\in\Z}q^{-m_2d+m_2N(g-1)}Z_d,
\end{equation}
where $Z_d$ is the component on which the initial rank $N$ bundle has degree $d$, and the sum is interpreted componentwise as in Theorem \ref{thm: intro supermodularity}.  Thus $\shear_n$ is the operator denoted $\shear$ in \eqref{eq: formulation supermodularity}.

\begin{lemma}[Direct-sum factorization]\label{lem: gl direct sum factorization}
Let $N_1,N_2\ge m$.  The direct-sum morphism
\begin{equation}\label{eq: gl direct-sum add map}
\add_{N_1,N_2}^{\mu}\co
\Sht_{N_1}^{0}\times\Sht_{N_2}^{\mu}
\longrightarrow
\Sht_{N_1+N_2}^{\mu}
\end{equation}
is LCI (both source and target being smooth Deligne--Mumford stacks, cf.\ \cite[\S 5]{YZ}), so that the Gysin pullback $(\add_{N_1,N_2}^{\mu})^*$ of \eqref{eq: Gysin pullback} exists on Chow groups. For every $(\cE_1,\cG)\in\Bun_{P_{(m_1,m_2)}}(\F_q)$, one has
\begin{equation}\label{eq: gl theta direct-sum factorization}
(\add_{N_1,N_2}^{\mu})^*
\wt Z_{m_1,m_2}^{N_1+N_2,\mu}(\cE_1,\cG)
=
\wt Z_{m_1,m_2}^{N_1,0}(\cE_1,\cG)
\bt
\wt Z_{m_1,m_2}^{N_2,\mu}(\cE_1,\cG).
\end{equation}
Moreover, the sheared special cycles attached to $(\cG,0)$ satisfy
\begin{equation}\label{eq: gl sheared-special-cycle-factorization}
(\add_{N_1,N_2}^{\mu})^*
\bigl[\cZ_{\cG,0}^{N_1+N_2,\mu}\bigr]^{\shear_{N_1+N_2}}
=
\bigl[\cZ_{\cG,0}^{N_1,0}\bigr]^{\shear_{N_1}}
\bt
\bigl[\cZ_{\cG,0}^{N_2,\mu}\bigr]^{\shear_{N_2}}.
\end{equation}
\end{lemma}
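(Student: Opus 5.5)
The plan mirrors Lemma \ref{lem: unitary direct sum factorization}, adapted to the general linear setting. There are three tasks: establish the LCI property, prove the theta factorization \eqref{eq: gl theta direct-sum factorization} via functoriality of special cycles plus additivity of $\psi$, and prove \eqref{eq: gl sheared-special-cycle-factorization} by tracking the shear factors componentwise.

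\emph{LCI property.} The source and target of \eqref{eq: gl direct-sum add map} are smooth. The stack $\Sht^0_{N_1}$ is the discrete groupoid $\Bun_{N_1}(\F_q)$, and $\Sht^\mu_N$ is smooth of dimension $rN$, including the legs, by \cite[\S 5]{YZ}. Hence $\add^\mu_{N_1,N_2}$ is LCI, of virtual relative dimension $rN_2-r(N_1+N_2)=-rN_1$. Its Gysin pullback therefore carries $\CH_{\frac r2(2(N_1+N_2)-m)}$ to $\CH_{\frac r2(2N_2-m)}$. This matches the degree of the right-hand sides, since the $r=0$ factor sits in degree $0$.

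\emph{Theta factorization.} I would invoke functoriality of special cycles in the target group, \cite[Proposition 7.5]{FYZ2}, now for $\GL(N_1)\times\GL(N_2)\to\GL(N_1+N_2)$. Restrict to the open-closed component on which all legs modify the second factor. This gives, for every $a\in\Hom(\cE_1,\cE_2^\vee)$,
\[
(\add^\mu_{N_1,N_2})^*[\cZ^{N_1+N_2,\mu}_{\cE_1,\cE_2}(a)]=\sum_{a_1+a_2=a}[\cZ^{N_1,0}_{\cE_1,\cE_2}(a_1)]\bt[\cZ^{N_2,\mu}_{\cE_1,\cE_2}(a_2)].
\]
The underlying geometric reason is that a pair $t_1\co\cE_1\to\cF^{(1)}\oplus\cF^{(2)}$, $t_2\co\cF^{(1)}\oplus\cF^{(2)}\to\cE_2^\vee$ splits into components, and the Hitchin parameter $t_2\circ t_1$ is the sum of the two block contributions. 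The identity holds at the level of derived stacks, so it also holds for virtual fundamental classes, using base change for Gysin pullback. Next, additivity $\psi(\langle e_\cG,a_1+a_2\rangle)=\psi(\langle e_\cG,a_1\rangle)\psi(\langle e_\cG,a_2\rangle)$ splits the sum in \eqref{eq: formulation theta value}. Finally, the normalizing factors factor as $q^{(N_1+N_2)\deg\cE_2}=q^{N_1\deg\cE_2}q^{N_2\deg\cE_2}$. No character $\chi$ appears in the general linear case, so nothing further needs to match.

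\emph{Sheared factorization.} First apply the $m_2=0$ case of the factorization above, with $(\cE_1,\cE_2)$ replaced by $(\cG,0)$. Here the Hitchin base is a point and no character enters, so this gives $\add^*[\cZ^{N_1+N_2,\mu}_{\cG,0}]=[\cZ^{N_1,0}_{\cG,0}]\bt[\cZ^{N_2,\mu}_{\cG,0}]$. Then track the shear. The map $\add$ sends the product of the degree-$d_1$ and degree-$d_2$ components to the degree-$(d_1+d_2)$ component, and the initial bundle of the sum is $\cF^{(1)}\oplus\cF_0^{(2)}$. Since Gysin pullback commutes with restriction to open-closed components,
\[
q^{-m_2(d_1+d_2)+m_2(N_1+N_2)(g-1)}=q^{-m_2d_1+m_2N_1(g-1)}\cdot q^{-m_2d_2+m_2N_2(g-1)}
\]
gives \eqref{eq: gl sheared-special-cycle-factorization}.

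The main obstacle is the functoriality step. One must check that \cite[Proposition 7.5]{FYZ2} applies in the split ($\GL$) setting with two bundles $\cE_1,\cE_2$, and that its derived-stack isomorphism respects the decomposition by Hitchin parameter. I would first verify the identity on $R$-points, then compare cotangent complexes: $\RHom$ of a direct sum splits, so the derived structures split compatibly.
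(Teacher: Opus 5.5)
Your proposal is correct and follows the paper's proof essentially step for step: the dimension count $d(\add^\mu_{N_1,N_2})=-rN_1$ for the degree bookkeeping, the per-$a$ identity from \cite[Proposition 7.5]{FYZ2} via additivity of the Hitchin parameter under direct sums, additivity of $\psi$ and of the $q^{N\deg\cE_2}$ normalization, and then the degree-by-degree decomposition of $\add^*[\cZ_{\cG,0}^{N_1+N_2,\mu}]_d$ over $d_1+d_2=d$, using that the shear exponent is additive. The paper also simply cites \cite[Proposition 7.5]{FYZ2} for the split setting, so the step you flag as the main obstacle is taken as given there rather than re-verified.
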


\begin{proof}
The dimension formula for the smooth stacks in \cite[\S 5]{YZ} gives
$\dim\Sht_N^\mu=rN$ and $\dim\Sht_{N_1}^0=0$.  Hence
\[
d(\add_{N_1,N_2}^\mu)=-rN_1.
\]
In particular, the Gysin pullback sends the target degree
$\frac r2(2(N_1+N_2)-m)$ to
\[
\frac r2(2(N_1+N_2)-m)-rN_1
=\frac r2(2N_2-m),
\]
the degree of either product class in
\eqref{eq: gl theta direct-sum factorization} and
\eqref{eq: gl sheared-special-cycle-factorization}.
Write $\cZ_{\cE_1,\cE_2}^{N,\mu}(a)$ when the target rank is $N$. Functoriality of derived special cycles in the target variable, \cite[Proposition 7.5]{FYZ2}, and the identity $a(t^{(1)}\oplus t^{(2)})=a(t^{(1)})+a(t^{(2)})$ give
\[
(\add_{N_1,N_2}^{\mu})^*
[\cZ_{\cE_1,\cE_2}^{N_1+N_2,\mu}(a)]
=\sum_{a_1+a_2=a}
[\cZ_{\cE_1,\cE_2}^{N_1,0}(a_1)]\bt
[\cZ_{\cE_1,\cE_2}^{N_2,\mu}(a_2)].
\]
Multiplying by $\psi(\langle e_{\cG},a\rangle)$, summing over $a$, and using $q^{(N_1+N_2)\deg\cE_2}=q^{N_1\deg\cE_2}q^{N_2\deg\cE_2}$ proves \eqref{eq: gl theta direct-sum factorization}.

The same derived pullback for $(\cG,0)$, decomposed by the degrees of the two summands, gives, for every $d$,
\[
(\add_{N_1,N_2}^{\mu})^*
[\cZ_{\cG,0}^{N_1+N_2,\mu}]_d
=
\sum_{d_1+d_2=d}
[\cZ_{\cG,0}^{N_1,0}]_{d_1}\bt
[\cZ_{\cG,0}^{N_2,\mu}]_{d_2}.
\]
Equation \eqref{eq: gl sheared-special-cycle-factorization} follows after summing over $d_1,d_2$, since the exponent in \eqref{eq: gl target-rank shear} is additive under $(N,d)=(N_1+N_2,d_1+d_2)$.
\end{proof}

\begin{lemma}\label{lem: gl zero leg nonvanishing}
Assume $m\le n$ and fix $(\cE_1,\cG)\in\Bun_{P_{(m_1,m_2)}}(\F_q)$.  Then there is a point $\cF_0\in\Sht_{2n}^{0}(k)$ such that
\[
\wt Z_{m_1,m_2}^{2n,0}(\cE_1,\cG)(\cF_0)\ne0.
\]
\end{lemma}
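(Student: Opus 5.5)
The plan is to copy the unitary argument of Lemma \ref{lem: unitary zero leg nonvanishing}, but to exploit supermodularity at $r=0$ so that no Gauss-sum nonvanishing is needed. By \S\ref{ssec: r=0}, the $r=0$ case of Theorem \ref{thm: intro supermodularity} holds for every rank $N$ and every $m\le N$, since that computation uses no low-corank hypothesis. Applying it with $N=2n$ at a point $\cF_0\in\Bun_{2n}(\F_q)=\Sht^0_{2n}(k)$ of degree $d$, we get
\[
\wt Z_{m_1,m_2}^{2n,0}(\cE_1,\cG)(\cF_0)
=q^{-m_2d+2m_2n(g-1)}\,\#\Hom_X(\cG,\cF_0).
\]

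This value is a nonzero power of $q$ times a positive integer. The set $\Hom_X(\cG,\cF_0)$ always contains the zero map, so its cardinality is at least $1$. Hence any choice of $\cF_0$ works; for concreteness take $\cF_0=\cO_X^{\oplus 2n}$.

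If one prefers not to invoke \S\ref{ssec: r=0}, the same conclusion follows directly from \eqref{eq: r=0 theta}. Evaluate
\[
q^{2n\deg\cE_2}\sum_{t_1,t_2}\psi(\langle e_\cG,a(t_1,t_2)\rangle).
\]
Sum first over $t_1\in\Hom(\cE_1,\cF_0)$ for fixed $t_2$. The map $t_1\mapsto\langle e_\cG,u\circ t_1\rangle=\ev(t_1,f(u))$ is a linear functional, so the inner sum is $\#\Hom(\cE_1,\cF_0)$ when $f(u)=0$ and vanishes otherwise. The total is therefore
\[
q^{2n\deg\cE_2}\,\#\Hom(\cE_1,\cF_0)\,\#\ker f,
\]
which is a positive integer times a power of $q$.

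There is no real obstacle here. The only thing to check is that the $r=0$ computation of \S\ref{ssec: r=0} was indeed carried out for arbitrary rank without any corank restriction. It was, since it is pure finite Fourier analysis.
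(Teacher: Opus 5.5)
Your proposal is correct and matches the paper's proof: both apply the $r=0$ computation of \S\ref{ssec: r=0} at target rank $2n$ to write the value as $q^{-m_2\deg\cF_0+2m_2n(g-1)}\,\#\Hom(\cG,\cF_0)$, which is nonzero for every $\cF_0\in\Bun_{2n}(\F_q)$. Your supplementary orthogonality computation, giving $q^{2n\deg\cE_2}\,\#\Hom(\cE_1,\cF_0)\,\#\ker f$, is also valid and agrees with the first formula by Riemann--Roch, though it is not needed.
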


\begin{proof}
Note that $\Sht_{2n}^{0}(k) \simeq \Bun_{2n}(k)$ is always non-empty. For any $\cF_0\in\Bun_{2n}(k)$,
the calculation of \S\ref{ssec: r=0}, applied with target rank $2n$, gives
\[
\wt Z_{m_1,m_2}^{2n,0}(\cE_1,\cG)(\cF_0)
= q^{-m_2\deg\cF_0+2m_2n(g-1)}\,\#\Hom(\cG,\cF_0)\ne 0,
\]
as in \eqref{eq: r=0 theta 6}: the value is a positive integer times a power of $q$.
\end{proof}

\begin{thm}[Supermodularity]\label{thm: gl full supermodularity}
For all $m\le n$, all decompositions $m=m_1+m_2$, all $\mu$ with entries in $\{\pm1\}$ and $\sum_i\mu_i=0$, and all $(\cE_1,\cG)\in\Bun_{P_{(m_1,m_2)}}(\F_q)$, one has
\begin{equation}\label{eq: gl full supermodularity}
\wt Z_{m_1,m_2}^{\mu}(\cE_1,\cG)
=
\bigl[\cZ_{\cG,0}^{\mu}\bigr]^{\shear}
=
\sum_{d\in\Z}q^{-m_2d+m_2n(g-1)}
[\cZ_{\cG,0}^{\mu}]_d.
\end{equation}
\end{thm}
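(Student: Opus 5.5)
The plan is to mirror the embedding trick of Theorem \ref{thm: unitary full modularity}. First I dispose of the trivial case $m_2=0$. There $\Hom(\cE_1,\cE_2^\vee)$ is a point, $\cE_1=\cG$, the extension class vanishes, and the shear factor is $q^{0}=1$, so both sides of \eqref{eq: gl full supermodularity} equal $[\cZ_{\cG,0}^{\mu}]$ by definition. So assume $m_2>0$.

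For general $m\le n$, apply Theorem \ref{thm: modularity in low corank} at target rank $3n$. This is legitimate because $m\le n=(3n)/3$, and it gives
\[
\wt Z_{m_1,m_2}^{3n,\mu}(\cE_1,\cG)=\bigl[\cZ_{\cG,0}^{3n,\mu}\bigr]^{\shear_{3n}}\in\CH_*(\Sht_{3n}^\mu).
\]
Next pull this back along $\add_{2n,n}^\mu\co\Sht_{2n}^0\times\Sht_n^\mu\to\Sht_{3n}^\mu$. By Lemma \ref{lem: gl direct sum factorization}, applied via \eqref{eq: gl theta direct-sum factorization} and \eqref{eq: gl sheared-special-cycle-factorization}, the pullback becomes
\[
\wt Z_{m_1,m_2}^{2n,0}(\cE_1,\cG)\bt\wt Z_{m_1,m_2}^{n,\mu}(\cE_1,\cG)
=\bigl[\cZ_{\cG,0}^{2n,0}\bigr]^{\shear_{2n}}\bt\bigl[\cZ_{\cG,0}^{n,\mu}\bigr]^{\shear_n}.
\]
The $r=0$ case proved in \S\ref{ssec: r=0}, taken at target rank $2n$, identifies the two left factors:
\[
\wt Z_{m_1,m_2}^{2n,0}(\cE_1,\cG)=\bigl[\cZ_{\cG,0}^{2n,0}\bigr]^{\shear_{2n}}.
\]
This is exactly \eqref{eq: r=0 theta 6} with $n$ replaced by $2n$, evaluated pointwise on $\Bun_{2n}(\F_q)$. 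The identity therefore becomes
\[
\wt Z_{m_1,m_2}^{2n,0}(\cE_1,\cG)\bt\Bigl(\wt Z_{m_1,m_2}^{n,\mu}(\cE_1,\cG)-\bigl[\cZ_{\cG,0}^{n,\mu}\bigr]^{\shear_n}\Bigr)=0.
\]

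To cancel the zero-leg factor, use Lemma \ref{lem: gl zero leg nonvanishing} to choose $\cF_0\in\Sht_{2n}^0(k)$ at which that factor takes a nonzero value. Then pull back along $\{\cF_0\}\times\Sht_n^\mu\inj\Sht_{2n}^0\times\Sht_n^\mu$, which is an open-closed embedding since $\Sht_{2n}^0$ is discrete. Under the identification of $\CH_0(\Sht_{2n}^0)$ with functions, this restriction takes an exterior product $\phi\bt Z$ to $\phi(\cF_0)\cdot Z$. Dividing by the nonzero scalar $\phi(\cF_0)$ gives \eqref{eq: gl full supermodularity}, since $\shear_n=\shear$.

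I expect the main difficulty to be bookkeeping rather than anything conceptual. There are three points to check. First, the normalizations $q^{N\deg\cE_2}$ and the shear exponents $q^{-m_2d+m_2N(g-1)}$ must be genuinely multiplicative, so that Lemma \ref{lem: gl direct sum factorization} applies to both sides simultaneously. Second, the pullback of an exterior product along the inclusion of the point $\cF_0$ must behave as claimed, including the componentwise meaning of the infinite degree sums, which are locally finite on each component. Third, the $r=0$ identity at rank $2n$ must hold on the nose as an equality in $\CH_0(\Sht_{2n}^0)$, and not merely after evaluation at points. This last point is immediate, because $\CH_0$ of a discrete groupoid is a space of functions.
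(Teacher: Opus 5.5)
Your proposal is correct and follows the paper's proof essentially step for step: the tautological $m_2=0$ case, Theorem \ref{thm: modularity in low corank} at target rank $3n$, pullback along $\add_{2n,n}^\mu$ via Lemma \ref{lem: gl direct sum factorization}, the rank-$2n$ case of the $r=0$ identity, and cancellation at a point $\cF_0$ supplied by Lemma \ref{lem: gl zero leg nonvanishing}. One small imprecision: a $k$-point $\cF_0$ maps finite \'etale onto the open-closed component $[\pt/\Aut(\cF_0)]$ of $\Sht^0_{2n}$, so the inclusion is an open-closed embedding only if $\{\cF_0\}$ means that component. Either reading works, since restriction or pullback there still multiplies by a nonzero scalar, and the cancellation goes through.
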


\begin{proof}
If $m_2=0$, the identity is tautological from \eqref{eq: formulation theta value} and \eqref{eq: formulation supermodularity}.  We therefore assume $m_2>0$.  Since $m\le n=(3n)/3$, the low-corank supermodularity identity proved in Theorem \ref{thm: modularity in low corank} gives
\begin{equation}\label{eq: gl rank-3n-supermodularity}
\wt Z_{m_1,m_2}^{3n,\mu}(\cE_1,\cG)
=
\bigl[\cZ_{\cG,0}^{3n,\mu}\bigr]^{\shear_{3n}}
\quad\text{in}\quad
\CH_*(\Sht_{3n}^{\mu}).
\end{equation}
Pulling \eqref{eq: gl rank-3n-supermodularity} back along
$\add_{2n,n}^{\mu}\co
\Sht_{2n}^{0}\times\Sht_n^\mu
\longrightarrow
\Sht_{3n}^{\mu}$ 
gives, using Lemma \ref{lem: gl direct sum factorization},
\begin{equation}\label{eq: gl tensor equality before cancellation}
\wt Z_{m_1,m_2}^{2n,0}(\cE_1,\cG)
\bt
\wt Z_{m_1,m_2}^{n,\mu}(\cE_1,\cG)
=
\bigl[\cZ_{\cG,0}^{2n,0}\bigr]^{\shear_{2n}}
\bt
\bigl[\cZ_{\cG,0}^{n,\mu}\bigr]^{\shear_n}.
\end{equation}
The $r=0$ calculation in \S \ref{ssec: r=0}, applied with target rank $2n$, gives
\[
\wt Z_{m_1,m_2}^{2n,0}(\cE_1,\cG)
=
\bigl[\cZ_{\cG,0}^{2n,0}\bigr]^{\shear_{2n}}.
\]
Substituting this identity into \eqref{eq: gl tensor equality before cancellation} yields
\begin{equation}\label{eq: gl common-factor equality}
\wt Z_{m_1,m_2}^{2n,0}(\cE_1,\cG)
\bt
\left(
\wt Z_{m_1,m_2}^{n,\mu}(\cE_1,\cG)
-
\bigl[\cZ_{\cG,0}^{n,\mu}\bigr]^{\shear_n}
\right)
=0
\end{equation}
in $\CH_*(\Sht_{2n}^{0}\times\Sht_n^\mu)$.

By Lemma \ref{lem: gl zero leg nonvanishing}, there is a point $\cF_0\in\Sht_{2n}^{0}(k)$ such that
\begin{equation}\label{eq: gl nonvanishing}
\wt Z_{m_1,m_2}^{2n,0}(\cE_1,\cG)(\cF_0)\ne0.
\end{equation}
Pulling back along $\{\cF_0\}\times\Sht_{n}^\mu\to\Sht_{2n}^{0}\times\Sht_{n}^\mu$ gives
\[
\wt Z_{m_1,m_2}^{2n,0}(\cE_1,\cG)(\cF_0)
\left(
\wt Z_{m_1,m_2}^{n,\mu}(\cE_1,\cG)
-
\bigl[\cZ_{\cG,0}^{n,\mu}\bigr]^{\shear_n}
\right)
=0.
\]
The scalar in \eqref{eq: gl nonvanishing} is nonzero, so
\[
\wt Z_{m_1,m_2}^{n,\mu}(\cE_1,\cG)
=
\bigl[\cZ_{\cG,0}^{n,\mu}\bigr]^{\shear_n},
\]
completing the proof.
\end{proof}

\bibliographystyle{amsalpha}
\bibliography{Bibliography}

@article {BHKRY1,
    AUTHOR = {Bruinier, Jan H. and Howard, Benjamin and Kudla, Stephen S.
              and Rapoport, Michael and Yang, Tonghai},
     TITLE = {Modularity of generating series of divisors on unitary
              {S}himura varieties},
   JOURNAL = {Ast\'{e}risque},
  FJOURNAL = {Ast\'{e}risque},
    NUMBER = {421, Diviseurs arithm\'{e}tiques sur les vari\'{e}t\'{e}s orthogonales
              et unitaires de Shimura},
      YEAR = {2020},
     PAGES = {7--125},
      ISSN = {0303-1179},
      ISBN = {978-2-85629-927-2},
   MRCLASS = {14G35 (11F27 11F55 11G18 14G40)},
       DOI = {10.24033/ast},
       URL = {https://doi.org/10.24033/ast},
}

@article {Bor99,
    AUTHOR = {Borcherds, Richard E.},
     TITLE = {The {G}ross-{K}ohnen-{Z}agier theorem in higher dimensions},
   JOURNAL = {Duke Math. J.},
  FJOURNAL = {Duke Mathematical Journal},
    VOLUME = {97},
      YEAR = {1999},
    NUMBER = {2},
     PAGES = {219--233},
      ISSN = {0012-7094},
   MRCLASS = {11F55 (11F30 11F50 11G18)},
MRREVIEWER = {Rainer Schulze-Pillot},
       DOI = {10.1215/S0012-7094-99-09710-7},
       URL = {https://doi.org/10.1215/S0012-7094-99-09710-7},
}

@article {BWR15,
    AUTHOR = {Bruinier, Jan Hendrik and Westerholt-Raum, Martin},
     TITLE = {Kudla's modularity conjecture and formal {F}ourier-{J}acobi
              series},
   JOURNAL = {Forum Math. Pi},
  FJOURNAL = {Forum of Mathematics. Pi},
    VOLUME = {3},
      YEAR = {2015},
     PAGES = {e7, 30},
   MRCLASS = {11F46 (14C25)},
MRREVIEWER = {Haigang Zhou},
       DOI = {10.1017/fmp.2015.6},
       URL = {https://doi.org/10.1017/fmp.2015.6},
}

@misc{BR25,
      title={Correction to ``{K}udla's {M}odularity {C}onjecture and {F}ormal {F}ourier-{J}acobi {S}eries''},
      author={Bruinier, Jan Hendrik and Raum, Martin},
      year={2025},
      eprint={2510.05031},
      archivePrefix={arXiv},
      primaryClass={math.NT}
}

@misc{BRZ24,
      title={Modularity of special {$0$}-cycles on toroidal compactifications of {S}himura varieties},
      author={Bruinier, Jan Hendrik and Rosu, Eugenia and Zemel, Shaul},
      year={2024},
      eprint={2404.06254},
      archivePrefix={arXiv},
      primaryClass={math.NT}
}

@book {CD19,
    AUTHOR = {Cisinski, Denis-Charles and D\'{e}glise, Fr\'{e}d\'{e}ric},
     TITLE = {Triangulated categories of mixed motives},
    SERIES = {Springer Monographs in Mathematics},
 PUBLISHER = {Springer, Cham},
      YEAR = {[2019] \copyright 2019},
     PAGES = {xlii+406},
      ISBN = {978-3-030-33241-9; 978-3-030-33242-6},
   MRCLASS = {14F42 (14C15 14C35 18G80 19D55)},
MRREVIEWER = {Igor A. Rapinchuk},
       DOI = {10.1007/978-3-030-33242-6},
       URL = {https://doi.org/10.1007/978-3-030-33242-6},
}

@misc{Chen1,
      title={Co-rank 1 Arithmetic Siegel--Weil I: Local non-Archimedean},
      author={Chen, Ryan},
      year={2024},
      eprint={2405.01426},
      archivePrefix={arXiv},
      primaryClass={math.NT}
}

@misc{Chen2,
      title={Co-rank 1 Arithmetic Siegel--Weil III: Geometric local-to-global},
      author={Chen, Ryan},
      year={2024},
      eprint={2405.01428},
      archivePrefix={arXiv},
      primaryClass={math.NT}
}

@misc{FK,
      title={Modularity of higher theta series {I}{I}: {C}how group of the generic fiber}, 
      author={Tony Feng and Adeel Khan},
      year={2024},
      eprint={2403.19711},
      archivePrefix={arXiv},
      primaryClass={math.NT}
}

@article {FYZ,
    AUTHOR = {Feng, Tony and Yun, Zhiwei and Zhang, Wei},
     TITLE = {Higher {S}iegel--{W}eil formula for unitary groups: the
              non-singular terms},
   JOURNAL = {Invent. Math.},
  FJOURNAL = {Inventiones Mathematicae},
    VOLUME = {235},
      YEAR = {2024},
    NUMBER = {2},
     PAGES = {569--668},
      ISSN = {0020-9910},
   MRCLASS = {11F72 (11F67 11F70 14D23)},
       DOI = {10.1007/s00222-023-01228-y},
       URL = {https://doi.org/10.1007/s00222-023-01228-y},
}

@article {FYZ2,
    AUTHOR = {Feng, Tony and Yun, Zhiwei and Zhang, Wei},
     TITLE = {Higher theta series for unitary groups over function fields},
   JOURNAL = {Ann. Sci. \'{E}c. Norm. Sup\'{e}r. (4)},
  FJOURNAL = {Annales scientifiques de l'\'{E}cole normale sup\'{e}rieure},
    VOLUME = {58},
      YEAR = {2025},
    NUMBER = {2},
     PAGES = {275--388},
      ISSN = {0012-9593, 1873-2151},
       DOI = {10.24033/asens.2606},
       URL = {https://doi.org/10.24033/asens.2606},
}

@misc{FYZ3,
    title={Modularity of higher theta series {I}: cohomology of the generic fiber},
    author={Feng, Tony and Yun, Zhiwei and Zhang, Wei},
    year={2023},
    eprint={2308.10979},
    archivePrefix={arXiv},
    primaryClass={math.NT},
    note={To appear in Publ. Math. Inst. Hautes \'{E}tudes Sci.; arXiv:2308.10979}
}

@incollection {FH,
    AUTHOR = {Feng, Tony and Harris, Michael},
     TITLE = {Derived structures in the {L}anglands correspondence},
 BOOKTITLE = {The {L}anglands Program},
    SERIES = {Proceedings of Symposia in Pure Mathematics},
    VOLUME = {112.2},
     PAGES = {471--568},
 PUBLISHER = {American Mathematical Society, Providence, RI},
      YEAR = {2025},
       DOI = {10.1090/pspum/112.2/02068},
       URL = {https://doi.org/10.1090/pspum/112.2/02068},
}

@misc{FHM,
    title={Higher {S}iegel--{W}eil formula for unitary groups {II}: corank one terms},
    author={Feng, Tony and Howard, Benjamin and Mkrtchyan, Mikayel},
    year={2025},
    eprint={2507.13473},
    archivePrefix={arXiv},
    primaryClass={math.NT}
}

@article{HM22,
      title={Kudla's modularity conjecture on integral models of orthogonal {S}himura varieties}, 
      author={Howard, Benjamin and Madapusi, Keerthi},
      journal={Compos. Math.},
      fjournal={Compositio Mathematica},
      volume={161},
      year={2025},
      number={12},
      pages={3380--3454},
      doi={10.1017/S0010437X26102966},
      url={https://doi.org/10.1017/S0010437X26102966}
}

@Article{Jin24,
 Author = {Jin, Fangzhou},
 Title = {Trace maps in motivic homotopy and local terms},
 FJournal = {Transactions of the American Mathematical Society. Series B},
 Journal = {Trans. Am. Math. Soc., Ser. B},
 ISSN = {2330-0000},
 Volume = {11},
 Pages = {215--247},
 Year = {2024},
 Language = {English},
 DOI = {10.1090/btran/169},
 zbMATH = {7803080}
}

@article {KM90,
    AUTHOR = {Kudla, Stephen S. and Millson, John J.},
     TITLE = {Intersection numbers of cycles on locally symmetric spaces and
              {F}ourier coefficients of holomorphic modular forms in several
              complex variables},
   JOURNAL = {Inst. Hautes \'{E}tudes Sci. Publ. Math.},
  FJOURNAL = {Institut des Hautes \'{E}tudes Scientifiques. Publications
              Math\'{e}matiques},
    NUMBER = {71},
      YEAR = {1990},
     PAGES = {121--172},
      ISSN = {0073-8301},
   MRCLASS = {11F32 (11F30 11F46 11F67 32N10 32N15)},
       URL = {http://www.numdam.org/item?id=PMIHES_1990__71__121_0},
}

@article {Kud21,
    AUTHOR = {Kudla, Stephen S.},
     TITLE = {Remarks on generating series for special cycles on orthogonal
              {S}himura varieties},
   JOURNAL = {Algebra Number Theory},
  FJOURNAL = {Algebra \& Number Theory},
    VOLUME = {15},
      YEAR = {2021},
    NUMBER = {10},
     PAGES = {2403--2447},
      ISSN = {1937-0652},
   MRCLASS = {14C25 (11F27 11F46 11G18 14G35)},
       DOI = {10.2140/ant.2021.15.2403},
       URL = {https://doi.org/10.2140/ant.2021.15.2403},
}

@misc{KhanI,
      title={Virtual fundamental classes of derived stacks {I}}, 
      author={Adeel A. Khan},
      year={2019},
      eprint={1909.01332},
      archivePrefix={arXiv},
      primaryClass={math.AG}
}

@article {Kudla1997a,
    AUTHOR = {Kudla, Stephen S.},
     TITLE = {Algebraic cycles on {S}himura varieties of orthogonal type},
   JOURNAL = {Duke Math. J.},
  FJOURNAL = {Duke Mathematical Journal},
    VOLUME = {86},
      YEAR = {1997},
    NUMBER = {1},
     PAGES = {39--78},
      ISSN = {0012-7094},
   MRCLASS = {11F32 (11F30 11G18 14C25 14G35)},
MRREVIEWER = {Dipendra Prasad},
       DOI = {10.1215/S0012-7094-97-08602-6},
       URL = {https://doi.org/10.1215/S0012-7094-97-08602-6},
}

@incollection {Kud04,
    AUTHOR = {Kudla, Stephen S.},
     TITLE = {Special cycles and derivatives of {E}isenstein series},
 BOOKTITLE = {Heegner points and {R}ankin {$L$}-series},
    SERIES = {Math. Sci. Res. Inst. Publ.},
    VOLUME = {49},
     PAGES = {243--270},
 PUBLISHER = {Cambridge Univ. Press, Cambridge},
      YEAR = {2004},
   MRCLASS = {11G18 (11F37 11F46 11F67 14G40)},
MRREVIEWER = {Alexey A. Panchishkin},
       DOI = {10.1017/CBO9780511756375.009},
       URL = {https://doi.org/10.1017/CBO9780511756375.009},
}

@article {KRI,
    AUTHOR = {Kudla, Stephen and Rapoport, Michael},
     TITLE = {Special cycles on unitary {S}himura varieties {I}.
              {U}nramified local theory},
   JOURNAL = {Invent. Math.},
  FJOURNAL = {Inventiones Mathematicae},
    VOLUME = {184},
      YEAR = {2011},
    NUMBER = {3},
     PAGES = {629--682},
      ISSN = {0020-9910},
   MRCLASS = {14G35 (11G18 14L05)},
MRREVIEWER = {Jeffrey D. Achter},
       DOI = {10.1007/s00222-010-0298-z},
       URL = {https://doi.org/10.1007/s00222-010-0298-z},
}

@article {KRII,
    AUTHOR = {Kudla, Stephen and Rapoport, Michael},
     TITLE = {Special cycles on unitary {S}himura varieties {II}: {G}lobal
              theory},
   JOURNAL = {J. Reine Angew. Math.},
  FJOURNAL = {Journal f\"{u}r die Reine und Angewandte Mathematik. [Crelle's
              Journal]},
    VOLUME = {697},
      YEAR = {2014},
     PAGES = {91--157},
      ISSN = {0075-4102},
   MRCLASS = {11G18 (14G35)},
MRREVIEWER = {Jeffrey D. Achter},
       DOI = {10.1515/crelle-2012-0121},
       URL = {https://doi.org/10.1515/crelle-2012-0121},
}

@book {KRY,
    AUTHOR = {Kudla, Stephen S. and Rapoport, Michael and Yang, Tonghai},
     TITLE = {Modular forms and special cycles on {S}himura curves},
    SERIES = {Annals of Mathematics Studies},
    VOLUME = {161},
 PUBLISHER = {Princeton University Press, Princeton, NJ},
      YEAR = {2006},
     PAGES = {x+373},
      ISBN = {978-0-691-12551-0; 0-691-12551-1},
   MRCLASS = {11G18 (11-02 11F66 14G35 14G40)},
MRREVIEWER = {Jens Funke},
       DOI = {10.1515/9781400837168},
       URL = {https://doi.org/10.1515/9781400837168},
}

@misc{luo2025kudlarapoportconjectureunramifiedmaximal,
      title={Kudla-Rapoport conjecture for unramified maximal parahoric level}, 
      author={Yu Luo},
      year={2025},
      eprint={2504.18528},
      archivePrefix={arXiv},
      primaryClass={math.NT},
      url={https://arxiv.org/abs/2504.18528}, 
}

@misc{Mad22,
      title={Derived special cycles on {S}himura varieties},
      author={Madapusi, Keerthi},
      year={2022},
      eprint={2212.12849},
      archivePrefix={arXiv},
      primaryClass={math.AG}
}

@article {Mae21,
    AUTHOR = {Maeda, Yota},
     TITLE = {The modularity of special cycles on orthogonal {S}himura
              varieties over totally real fields under the
              {B}eilinson-{B}loch conjecture},
   JOURNAL = {Canad. Math. Bull.},
  FJOURNAL = {Canadian Mathematical Bulletin. Bulletin Canadien de
              Math\'{e}matiques},
    VOLUME = {64},
      YEAR = {2021},
    NUMBER = {1},
     PAGES = {39--53},
      ISSN = {0008-4395},
       DOI = {10.4153/S000843952000020X},
       URL = {https://doi.org/10.4153/S000843952000020X},
}

@misc{Pollack,
      title={Automatic convergence for Siegel modular forms},
      author={Pollack, Aaron},
      year={2024},
      eprint={2408.16392},
      archivePrefix={arXiv},
      primaryClass={math.NT}
}

@misc{Raum,
      title={The Geometric Unitary Kudla Conjecture},
      author={Raum, Martin},
      year={2026},
      eprint={2603.04282},
      archivePrefix={arXiv},
      primaryClass={math.NT}
}

@misc{stacks-project,
    shorthand    = {Stacks},
    author       = {The {Stacks Project Authors}},
    title        = {\textit{Stacks Project}},
    howpublished = {\url{https://stacks.math.columbia.edu}},
    year         = {2020},
  }

@article {SSTT22,
    AUTHOR = {Shankar, Ananth N. and Shankar, Arul and Tang, Yunqing and
              Tayou, Salim},
     TITLE = {Exceptional jumps of {P}icard ranks of reductions of {K}3
              surfaces over number fields},
   JOURNAL = {Forum Math. Pi},
  FJOURNAL = {Forum of Mathematics. Pi},
    VOLUME = {10},
      YEAR = {2022},
     PAGES = {Paper No. e21, 49},
   MRCLASS = {14J28 (11G10 11G18 14G35 14J20)},
MRREVIEWER = {Sajad Salami},
       DOI = {10.1017/fmp.2022.14},
       URL = {https://doi.org/10.1017/fmp.2022.14},
}

@article {YZ,
    AUTHOR = {Yun, Zhiwei and Zhang, Wei},
     TITLE = {Shtukas and the {T}aylor expansion of {$L$}-functions},
   JOURNAL = {Ann. of Math. (2)},
  FJOURNAL = {Annals of Mathematics. Second Series},
    VOLUME = {186},
      YEAR = {2017},
    NUMBER = {3},
     PAGES = {767--911},
      ISSN = {0003-486X},
   MRCLASS = {11F67 (11F70 14G35 14H60)},
MRREVIEWER = {Shouwu Zhang},
       DOI = {10.4007/annals.2017.186.3.2},
       URL = {https://doi.org/10.4007/annals.2017.186.3.2},
}

@article {YZZ,
    AUTHOR = {Yuan, Xinyi and Zhang, Shou-Wu and Zhang, Wei},
     TITLE = {The {G}ross-{K}ohnen-{Z}agier theorem over totally real
              fields},
   JOURNAL = {Compos. Math.},
  FJOURNAL = {Compositio Mathematica},
    VOLUME = {145},
      YEAR = {2009},
    NUMBER = {5},
     PAGES = {1147--1162},
      ISSN = {0010-437X,1570-5846},
   MRCLASS = {11G18 (11F46)},
  MRNUMBER = {2551992},
MRREVIEWER = {Min\ Ho\ Lee},
       DOI = {10.1112/S0010437X08003734},
       URL = {https://doi-org.libproxy.mit.edu/10.1112/S0010437X08003734},
}

@book {Zh09,
    AUTHOR = {Zhang, Wei},
     TITLE = {Modularity of generating functions of special cycles on
              {S}himura varieties},
      NOTE = {Thesis (Ph.D.)--Columbia University},
 PUBLISHER = {ProQuest LLC, Ann Arbor, MI},
      YEAR = {2009},
     PAGES = {48},
      ISBN = {978-1109-34157-7},
   MRCLASS = {Thesis},
       URL =
              {http://gateway.proquest.com/openurl?url_ver=Z39.88-2004&rft_val_fmt=info:ofi/fmt:kev:mtx:dissertation&res_dat=xri:pqdiss&rft_dat=xri:pqdiss:3373585},
}

@article{Zh21,
    AUTHOR = {Zhang, W.},
     TITLE = {Weil representation and arithmetic fundamental lemma},
   JOURNAL = {Ann. of Math. (2)},
  FJOURNAL = {Annals of Mathematics. Second Series},
    VOLUME = {193},
      YEAR = {2021},
    NUMBER = {3},
     PAGES = {863--978},
      ISSN = {0003-486X},
   MRCLASS = {11F27 (11F67 11G40 14C25 14G35)},
MRREVIEWER = {Rolf Berndt},
       DOI = {10.4007/annals.2021.193.3.5},
       URL = {https://doi.org/10.4007/annals.2021.193.3.5},
}

@phdthesis{ZeffThesis,
    AUTHOR = {Zeff, Chaim Avram},
     TITLE = {Toward Generic Modularity of Higher Theta Series over Global Function Fields and {$p$}-Adic Local Fields},
    SCHOOL = {Columbia University},
      YEAR = {2025},
      NOTE = {ProQuest document ID 3201333388},
       URL = {https://www.proquest.com/docview/3201333388},
}

@misc{ZhouMotivicFourier,
    title={A motivic derived {F}ourier transform},
      author={Zhou, Tong},
      year={2026},
      eprint={2608.22501},
      archivePrefix={arXiv},
      primaryClass={math.AG}
}

\appendix

\end{document}